\declaretheoremstyle[
qed={//}
]{defstyle}
\newtheorem{theorem}{Theorem}[section]
\declaretheorem[style=defstyle,sibling=theorem]{definition}
\newtheorem{assumption}[theorem]{Assumption}
\numberwithin{equation}{section}
\newtheorem{lemma}[theorem]{Lemma}
\newtheorem{proposition}[theorem]{Proposition}
\newtheorem{corollary}[theorem]{Corollary}
\newtheorem{remark}[theorem]{Remark}
\newtheorem*{claim*}{Claim}
\newtheorem*{lemma*}{Lemma}
\newtheorem{conjecture}[theorem]{Conjecture}
\newtheorem{open_problem}[theorem]{Open Problem}
\numberwithin{equation}{section}
\newcommand{\Var}{{\mathrm{Var}}}
\newcommand{\1}{{\text{\Large $\mathfrak 1$}}}
\newcommand{\Z}{\mathbb{Z}}
\newcommand{\Zpos}{\Z_{\ge1}}
\newcommand{\Znonneg}{\Z_{\ge0}}
\newcommand{\R}{\mathbb{R}}
\newcommand{\Rpos}{\R_{>0}}
\newcommand{\tree}{T}
\newcommand{\quasi}{\text{quasi-tree}}
\newcommand{\rball}[1]{#1-{$R$}-\text{ball}}
\newcommand{\til}{\widetilde}
\newcommand{\trel}{t_{\mathrm{rel}}}
\newcommand{\trelabs}{\trel^{\mathrm{abs}}}
\newcommand{\tmix}[1]{t_{\mathrm{mix}}\left(#1\right)}
\newcommand{\hit}[2]{\mathrm{hit}_{#1}\left(#2\right)}
\newcommand{\tunif}[1]{t_{\mathrm{unif}}\left(#1\right)}
\newcommand{\tauLR}{\tau_{\mathrm{LR}}}
\newcommand{\tmixtext}{t_{\mathrm{mix}}}
\newcommand{\hittext}{\mathrm{hit}}
\newcommand{\tavetext}{t_{\mathrm{ave}}}
\newcommand{\diam}[1]{\mathrm{diam}\left(#1\right)}
\DeclareMathOperator{\argmin}{argmin}
\newcommand{\B}{\mathcal{B}}
\newcommand{\h}{\mathfrak{h}}
\newcommand{\V}{\mathfrak{V}}
\newcommand{\F}{\mathcal{F}}
\newcommand{\I}{\mathcal{I}}
\newcommand{\U}{\mathcal{U}}
\newcommand{\Dirform}{\mathcal{E}}
\newcommand{\trunc}[2]{\mathrm{Tr}(#1,#2)}
\newcommand{\trunctil}[2]{\widetilde{\mathrm{Tr}}(#1,#2)}
\newcommand{\truncprime}[1]{\mathrm{Tr}'(#1)}
\newcommand{\Geom}[1]{\mathrm{Geom}\!\left(#1\right)}
\newcommand{\Geompos}[1]{\mathrm{Geom}_{\ge1}\!\left(#1\right)}
\newcommand{\Geomnonneg}[1]{\mathrm{Geom}_{\ge0}\!\left(#1\right)}
\newcommand{\Bin}[2]{\mathrm{Bin}\!\left(#1,#2\right)}
\newcommand{\NegBin}[2]{\mathrm{NegBin}\!\left(#1,#2\right)}
\newcommand{\pr}[1]{\mathbb{P}\!\left(#1\right)}
\newcommand{\E}[1]{\mathbb{E}\!\left[#1\right]}
\newcommand{\estart}[2]{\mathbb{E}_{#2}\!\left[#1\right]}
\newcommand{\prstart}[2]{\mathbb{P}_{#2}\!\left(#1\right)}
\newcommand{\prcond}[3]{\mathbb{P}_{#3}\!\left(#1\;\middle\vert\;#2\right)}
\newcommand{\econd}[2]{\mathbb{E}\!\left[#1\;\middle\vert\;#2\right]}
\newcommand{\vr}[1]{\mathrm{Var}\left(#1\right)}
\newcommand{\vrc}[2]{\mathrm{Var}\left(#1\;\middle\vert\;#2\right)}
\newcommand{\vrstart}[2]{\mathrm{Var}_{#2}\left(#1\right)}
\newcommand{\cvc}[3]{\mathrm{Cov}\left(#1,#2\;\middle\vert\;#3\right)}
\newcommand{\cvstart}[3]{\mathrm{Cov}_{#3}\left(#1,#2\right)}
\newcommand{\Div}[2]{\mathrm{D}\left(#1||#2\right)}
\newcommand{\dtv}[2]{d_{\mathrm{TV}}\left({#1},{#2}\right)}
\newcommand{\dinfty}[2]{\left|\left|{#1}-{#2}\right|\right|_{\infty}}
\newcommand{\eqdist}{\stackrel{\mathrm{d}}{=}}
\newcommand{\eps}{\varepsilon}
\newcommand{\lazytext}{\mathrm{lazy}}
\begin{document}
\title{\bf Phase transition for random walks on graphs with added weighted random matching}
\author{Zsuzsanna Baran
\thanks{University of Cambridge, Cambridge, UK. {zb251@cam.ac.uk}}
\and
Jonathan Hermon
\thanks{University of British Columbia, Vancouver, CA. {jhermon@math.ubc.ca}. }
\and An{\dj}ela \v{S}arkovi\'c
\thanks{University of Cambridge, Cambridge, UK. {as2572@cam.ac.uk}}
\and Perla Sousi
\thanks{
University of Cambridge, Cambridge, UK. {p.sousi@statslab.cam.ac.uk}}
}
\date{}
\maketitle
\vspace{-1cm}

\begin{abstract}
For a finite graph $G=(V,E)$ let $G^*$ be obtained by considering a random perfect matching of $V$ and adding the corresponding edges to $G$ with weight $\eps$, while assigning weight 1 to the original edges of $G$. We consider whether for a sequence $(G_n)$ of graphs with bounded degrees and corresponding weights $(\eps_n)$, the (weighted) random walk on $(G_n^*)$ has cutoff. For graphs with polynomial growth we show that  $\log\left(\frac{1}{\eps_n}\right)\ll\log|V_n|$ is a sufficient condition for cutoff. Under the additional assumption of vertex-transitivity we establish that this condition is also necessary. For graphs where the entropy of the simple random walk grows linearly up to some time of order $\log|V_n|$ we show that $\frac{1}{\eps_n}\ll\log|V_n|$ is sufficient for cutoff. In case of expander graphs we also provide a complete picture for the complementary regime $\frac{1}{\eps_n}\gtrsim\log|V_n|$.
\end{abstract}

\section{Introduction}
This paper is motivated by the question of what little random perturbation one can apply to a given sequence $(G_n)$ of graphs so that the random walk on the resulting graphs $(G_n^*)$ will exhibit cutoff whp.

\begin{definition}\label{def:Gstar}
Let $(G_n=(V_n,E_n))$ be a sequence of connected graphs, with $|V_n|\to\infty$ as $n\to\infty$ and $|V_n|$ even for each $n$, and let $(\eps_n)$ be a sequence taking values in $(0,1)$. We define the sequence $(G_n^*)$ of weighted random graphs as follows. We let $G_n^*$ have vertex set $V_n$ and edge set $E_n\sqcup E^*_n$ where $E^*_n$ is a uniformly random perfect matching of $V_n$, and let each edge in $E_n$ have weight 1 and each edge in $E^*_n$ have weight $\eps_n$. (Note that $G_n^*$ might have multiple edges.)\\
If $|V_n|$ is odd then we can define $G_n^*$ similarly, considering matchings of $V_n$ that leave one vertex unmatched. For simplicity we will assume in the proofs that $|V_n|$ is even.
\end{definition}

An unweighted version of the above model (i.e.\ having $\eps_n\equiv1$) was considered in 2013 by Diaconis~\cite{Diaconis_question} who asked about the order of the mixing time of the random walks on $G_n^*$. In 2022~\cite{random_matching} proved that for a sequence of connected graphs $(G_n)$ with uniformly bounded degrees, the walks on $(G_n^*)$ exhibit cutoff whp at a time of order $\log|V_n|$.

If $\eps_n$ is of constant order, it follows analogously to~\cite{random_matching} that the walks on $(G_n^*)$ exhibit cutoff whp. On the other hand, it is easy to see intuitively that if $\eps_n$ is sufficiently small so that by the mixing time of $G_n$ the walk is unlikely to cross any of the added edges $E^*_n$ of $G^*_n$, then the added edges do not affect the occurrence of cutoff, i.e.\ $(G_n^*)$ exhibits cutoff if and only if $(G_n)$ does.
In this paper we seek to understand what happens between these two regimes.

Intuitively, one would expect that if the random walk on $(G_n)$ did not exhibit cutoff, then for the added edges to introduce cutoff, the walk on $G_n^*$ would need to cross a diverging number of the added edges up to its mixing time. In fact this can be verified very similarly to the proof of Proposition~\ref{pro:smaller_eps}. So in order to introduce cutoff it is a necessary condition to have $\tmixtext^{G^*_n}\gtrsim\frac1{\eps_n}$. A very closely related condition, that is often easily verifiable, would be that for any starting vertex the entropy of the first added edge crossed by the random walk is of strictly smaller order than $\log|V_n|$. (See Section~\ref{sec:general_graphs} for more discussion on this condition.) In case of two very large families of graphs (graphs with polynomial growth and graphs with linear growth of entropy) we prove that this latter assumption is sufficient for $(G_n^*)$ to have cutoff. In case of two families (vertex-transitive graphs with polynomial growth and expander graphs) we also prove that if the entropy is of strictly larger order than $\log|V_n|$ then the added edges do not affect whether cutoff occurs, which can be viewed as a phase transition.

Before stating the main results, we recall the definition of total variation mixing time and cutoff.

Given a finite connected graph $G$ we define the total variation mixing time of the discrete-time random walk $X$ on $G$ as follows. Let $P$ be the transition matrix of $X$ and let $\pi$ be the corresponding invariant distribution. Then for any $\theta\in(0,1)$ we define
\[\tmixtext^G(\theta):=\quad\min\left\{t:\dtv{P^t(x,\cdot)}{\pi(\cdot)}\le\theta\text{ for all }x\right\},\]
where the total variation distance $\dtv{\mu}{\nu}$ of distributions $\mu$ and $\nu$ on state space $S$ is defined as
\[\dtv{\mu}{\nu}:=\quad\frac12\sum_{x\in S}\left|\mu(x)-\nu(x)\right|.\]
We say that a sequence $(G_n)$ of graphs exhibits cutoff at time $t_n$ with window of order $s_n$ if $\frac{s_n}{t_n}\to0$ as $n\to\infty$, and for every $\theta\in(0,1)$ there exists a constant $c(\theta)$ such that for all $n$ we have
\begin{align}\label{eq:cutoff_def}
t_n-c(\theta)s_n \quad\le\quad\tmixtext^{G_n}(\theta)\quad\le\quad t_n+c(\theta)s_n.
\end{align}

For a random sequence $(G_n)$ of graphs we say that $(G_n)$ exhibits cutoff with high probability (whp) if \eqref{eq:cutoff_def} holds with probability tending to 1 as $n\to\infty$.
The lazy version of a random walk with transition matrix $P$ is defined to have transition matrix $P_{\lazytext}:=\frac12(I+P)$. For lazy random walks we define the mixing time and cutoff analogously and write \emph{lazy} in the superscript.

We also recall that the entropy of a random variable $W$ talking values in a countable set $\mathcal{W}$ is defined as 
\[H(W):=\quad\sum_{w\in\mathcal{W}}\pr{W=w}\left(-\log\pr{W=w}\right),\]
where $p(-\log p)$ is considered to be $0$ when $p=0$.

Given two functions $f,g:\Znonneg\to\Rpos$ or $f,g:\Rpos\to\Rpos$ we write $f(t)\lesssim g(t)$ if there exists a constant $c>0$ such that we have $f(t)\le cg(t)$ for all sufficiently large values of $t$. We write $f(t)\ll g(t)$ if we have $\frac{f(t)}{g(t)}\to0$ as $t\to\infty$. We define $\gtrsim$ and $\gg$ analogously. We write $f(t)\asymp g(t)$ if we have $f(t)\lesssim g(t)$ and $f(t)\gtrsim g(t)$.
In case $f,g$ take negative values, we write $f(t)\lesssim g(t)$ if $(-f(t))\gtrsim(-g(t))$, and we define $\gtrsim$, $\ll$, $\gg$ and $\asymp$ analogously.

\subsection{Results}

Now we state the main results of the paper.

\begin{theorem}\label{thm:results_lin_entropy}
Let $\Delta$, $a$, $b$ and $c$ be given positive constants. Let $(\eps_n)$ be a sequence of constants in $(0,1)$ and let $(G_n)$ be a sequence of connected graphs with the following properties: $|V_n|\to\infty$ as $n\to\infty$, all vertices have degree $\le\Delta$, and for any $n$ and any $x_n\in V_n$ the entropy of the simple random walk on $G_n$ from $x_n$ at any time $t\le a\log|V_n|$ satisfies $H(X_t)\in[bt,ct]$. Let $G_n^*$ be as in Definition~\ref{def:Gstar}.
\begin{enumerate}[(a)]
\item If $\eps_n\gg\frac{1}{\log|V_n|}$ then the random walk on $(G_n^*)$ exhibits cutoff whp, at a time of order $\log|V_n|$.
\item If $\eps_n\ll\frac{1}{\tmixtext^{G_n}(\theta)}$ for some $\theta\in(0,1)$, then the random walk on $(G_n^*)$ exhibits cutoff if and only if the random walk on $(G_n)$ does.
\item If $\eps_n\asymp\frac{1}{\tmixtext^{G_n,\lazytext}\left(\frac14\right)}$ and the lazy random walk on $(G_n)$ does not exhibit cutoff, then neither does the random walk on $(G_n^*)$.
\end{enumerate}
\end{theorem}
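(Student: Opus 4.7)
The three parts call for rather different techniques, reflecting that $\eps_n$ sits in three qualitatively distinct regimes.

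For part (a), I would adapt the entropy-based argument of~\cite{random_matching} (where $\eps_n\equiv 1$) to the weighted setting. The idea is to approximate the walk on $G_n^*$ up to time $O(\log|V_n|)$ by the walk on an infinite random \emph{quasi-tree}, obtained by revealing the matching lazily along the trajectory; this is valid whp because on this scale the random matching produces few short cycles involving matching edges. On the quasi-tree the walk is a mixture of $G_n$-steps and matching-steps (the latter taken with probability of order $\eps_n$). I would show its entropy grows linearly in $t$ at a rate $r_n$ satisfying $b\le r_n\le c+O(1)$: the lower bound is inherited from the entropy assumption on $G_n$, and the upper bound holds because each matching step adds at most $O(\eps_n\log(1/\eps_n))=O(1)$ extra entropy on average. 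Concentration of the log-probability $-\log p_t(x,X_t)$ along the trajectory (via an Azuma-type bound on the increments) then yields that the walk's entropy reaches $\log|V_n|$ in a window of size $o(\log|V_n|)$ around $\log|V_n|/r_n$; combined with a matching coupling-based TV upper bound, this gives cutoff. The main obstacle is carrying the quasi-tree approximation through over a whole window of times while keeping $\eps_n$ as a free, possibly vanishing, parameter.

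For part (b), with $\eps_n\ll 1/\tmixtext^{G_n}(\theta)$, I would use a direct coupling: couple the walks $(Y_t)$ on $G_n^*$ and $(X_t)$ on $G_n$ from the same vertex so that they agree until $Y$ first uses a matching edge. Since each step of $Y$ uses a matching edge with probability at most $\eps_n/(1+\eps_n)$, by a union bound the coupling persists up to time $t$ with probability at least $1-C\eps_n t$, which is $1-o(1)$ when $t=\tmixtext^{G_n}(\theta)$. Together with $\dtv{\pi^{G_n^*}}{\pi^{G_n}}=O(\eps_n)$, this gives $|\tmixtext^{G_n^*}(\theta')-\tmixtext^{G_n}(\theta')|=o(\tmixtext^{G_n}(\theta'))$ for every $\theta'\in(0,1)$, and cutoff for $(G_n^*)$ is therefore equivalent to cutoff for $(G_n)$. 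The only nuance is the small discrepancy between the two stationary distributions, which is absorbed by the $O(\eps_n)$ bound above.

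For part (c), with $\eps_n\asymp 1/\tmixtext^{G_n,\lazytext}(1/4)$, the number of matching edges used by the walk on $G_n^*$ up to time $t\asymp 1/\eps_n$ is stochastically dominated by $\Bin{t}{C\eps_n}$, hence tight and of order $1$. I would decompose the time-$t$ distribution on $G_n^*$ as a mixture over this random count: the ``zero-crossing'' event has probability bounded below by a constant $c_0>0$ uniformly in $n$, and on this event the walk coincides with a slightly-lazy random walk on $G_n$ (rejecting matching edges acts as laziness of parameter $O(\eps_n)$), whose law at time $t$ is within $O(1)$ time-steps of the $\tfrac12$-lazy walk on $G_n$ at a time $t'\asymp t$. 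Pushing this through with the triangle inequality sandwiches $\tmixtext^{G_n^*}(\theta)$ between $\tmixtext^{G_n,\lazytext}(\alpha(\theta,c_0))$ and $\tmixtext^{G_n,\lazytext}(\beta(\theta,c_0))$ up to multiplicative constants. Since the lazy walk on $G_n$ has no cutoff, the ratio $\tmixtext^{G_n,\lazytext}(\alpha)/\tmixtext^{G_n,\lazytext}(\beta)$ is bounded away from $1$ along a subsequence, and the sandwich transfers this spread to $G_n^*$, ruling out cutoff. The main obstacle here is making the mixture argument work uniformly at both the small-TV and large-TV ends, since the mass on high-crossing events, while small, still needs to be controlled in TV.
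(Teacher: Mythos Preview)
Your argument for part (b) is correct and is essentially the paper's Proposition~\ref{pro:smaller_eps}: couple the two walks until the first long-range crossing, use $\pr{\tauLR\le t}=O(\eps t)=o(1)$ for $t\asymp\tmixtext^{G}(\theta)$, and absorb the $O(\eps)$ discrepancy between the two stationary distributions.

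For part (a) there is a genuine gap. The entropy argument of~\cite{random_matching} that you propose does not cover the full range $\eps_n\gg 1/\log|V_n|$; it only works for $\eps_n\gg(\log|V_n|)^{-1/3}$. The quasi-tree coupling is not the obstacle---that part goes through (Lemma~\ref{lem:coupling_succ_prob_from_x}). The problem is the final step where you turn entropic concentration into a TV upper bound. In~\cite{random_matching} this is done by bounding the $L^2$ distance at the entropic time by $\exp\bigl(O\bigl(\sqrt{\V\log n/\h}\bigr)\bigr)$ and then contracting via the Poincar\'e inequality at cost $\trelabs$ per $e$-fold. Here $\h\asymp 1/\eps$, $\V\asymp 1/\eps^2$ (Lemma~\ref{lem:entropy_from_assump_f}) and $\trelabs\asymp 1/\eps$ (Lemma~\ref{lem:bound_trelabs}), so the Poincar\'e phase costs $\frac{1}{\eps}\sqrt{\log n/\eps}$, which is $o(\log n)$ only when $\eps\gg(\log n)^{-1/3}$. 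The paper uses exactly your approach in this restricted regime (Proposition~\ref{pro:larger_eps}), but for smaller $\eps$ it needs a different upper bound: define the random time $\tau=\tau_{2L}$ at which the walk has travelled long-range distance $2L$, show directly that $X_\tau$ is close to uniform by reversing the second half of the path (Lemma~\ref{lem:bound_tv_dist_at_tau_two_ell}), coupling two independent explorations from $x$ and from $y$ to quasi-trees, and applying the matching-concentration Lemma~\ref{lem:w_pair_sum} to the resulting bilinear sum (Proposition~\ref{pro:aux_bound_Ptau_x_y}); finally convert closeness-to-uniform at $\tau$ into a mixing upper bound via the hitting-time characterisation of~\cite{characterisation_of_cutoff} (Lemma~\ref{lem:hit_mix_comparison}). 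This avoids the $\trelabs$ factor altogether.

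For part (c) the mixture/sandwich argument is incomplete. The zero-crossing event does give a \emph{lower} bound on $\tmixtext^{G^*}$: if the lazy walk on $G$ is far from $\pi_G$ at some time $t\asymp 1/\eps$, then on the positive-probability zero-crossing event the $G^*$-walk is equally far, so $\dtv{P_{G^*}^t(x,\cdot)}{\pi_{G^*}}$ is bounded below. But the decomposition does not yield the matching \emph{upper} bound $\tmixtext^{G^*}(\beta)\lesssim\tmixtext^{G,\lazytext}(\beta')$ that your sandwich needs: after one or more crossings the walk lands at an essentially uncontrolled vertex, and you have no handle on the TV of that component. The paper sidesteps this by working with hitting times rather than mixing times (Proposition~\ref{pro:eps_asymp_tmix_inv_no_cutoff_implies_no_cutoff}): the zero-crossing coupling gives \emph{two-sided} control on $\hittext_\alpha(\theta)$, since $\prstart{\tau^Y_A>t,\tauLR>t}{x}\asymp\prstart{\tau^X_A>t}{x}$ and $\prstart{\tau^Y_A\le t,\tauLR>t}{x}\asymp\prstart{\tau^X_A\le t}{x}$; this transfers the spread in $\hittext^{G,\lazytext}_\alpha$ to a spread in $\hittext^{G^*,\lazytext}_\alpha$, contradicting $\hittext$-cutoff for $G^*$ (which follows from mixing cutoff via~\cite{characterisation_of_cutoff}).
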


\begin{theorem}\label{thm:results_poly_balls}
Let $\Delta$ be a given positive constant and let $p$ be a given polynomial. Let $(\eps_n)$ be a sequence of constants in $(0,1)$ and let $(G_n)$ be a sequence of connected graphs with the following properties: $|V_n|\to\infty$ as $n\to\infty$, all vertices have degree $\le\Delta$, and for any $r$ the volume of any ball of radius $r$ in any $G_n$ is upper bounded by $p(r)$. Let $G_n^*$ be as in Definition~\ref{def:Gstar}.
\begin{enumerate}[(a)]
\item If $\eps_n\gtrsim|V_n|^{-o(1)}$ then the random walk on $(G_n^*)$ exhibits cutoff with high probability, at a time of order $\frac{1}{\eps_n}\frac{\log|V_n|}{\log\left(\frac{1}{\eps_n}\right)}$.
\item If $\eps_n\ll\frac{1}{\tmixtext^{G_n}(\theta)}$ for some $\theta\in(0,1)$, then the random walk on $(G_n^*)$ exhibits cutoff if and only if the random walk on $(G_n)$ does.
\item If $\eps_n\asymp\frac{1}{\tmixtext^{G_n,\lazytext}\left(\frac14\right)}$ and the lazy random walk on $(G_n)$ does not exhibit cutoff, then neither does the random walk on $(G_n^*)$.
\end{enumerate}
\end{theorem}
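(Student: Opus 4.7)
Set $T:=1/\eps_n$ (the typical inter-matching-crossing time for the walk) and $t_n:=T\log|V_n|/\log T$. Under the hypothesis of (a), $\eps_n\ge|V_n|^{-o(1)}$, so $T\le|V_n|^{o(1)}$, and by polynomial growth every ball of $G_n$ of radius $T\cdot\mathrm{polylog}|V_n|$ has volume $|V_n|^{o(1)}$. Since the matching is uniformly random, up to time $(1+o(1))t_n$ the walk on $G_n^*$ whp neither re-uses a matching edge nor revisits a previously explored part of $G_n$; hence it can be coupled with a walk on a random \emph{quasi-tree} $\widetilde G_n$ obtained by gluing fresh independent copies of $G_n$-balls at every matching-edge endpoint encountered.

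\textbf{Part (a).} On $\widetilde G_n$ the walk consists of independent SRW segments on $G_n$-balls of mean length $T$ interspersed with deterministic matching jumps. Using only bounded degree and polynomial growth one shows that such a length-$L$ segment has endpoint entropy $\Theta(\log L)$ and that the corresponding transition log-probability concentrates about its mean. Summing over the $k\approx\log|V_n|/\log T$ segments that fit into time $t_n$ gives total entropy $(1+o(1))\log|V_n|$ with fluctuations $o(\log|V_n|)$, so $-\log\mu_t(X_t)$ concentrates at $\log|V_n|$ at time $(1+o(1))t_n$, yielding $\dtv{\mu_t}{\pi}\to 0$ (upper bound on mixing). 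At time $(1-\delta)t_n$ the same computation gives entropy $\le(1-\delta+o(1))\log|V_n|$, whence $\dtv{\mu_t}{\pi}\ge\delta/2$ by the standard entropy-TV inequality (lower bound). The coupling with $G_n^*$ then transfers cutoff at $t_n$ from the quasi-tree to $(G_n^*)$.

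\textbf{Parts (b) and (c).} Both rely on matching edges being crossed very rarely at the relevant time-scale. For (b), $\eps_n\ll 1/\tmixtext^{G_n}(\theta)$ gives a probability $o(1)$ of any matching crossing by time $\tmixtext^{G_n}(\theta)(1+o(1))$; coupling $X^*$ with the walk on $G_n$ until its first matching step yields $\dtv{P^{*,t}(x,\cdot)}{P^t(x,\cdot)}=o(1)$ on this window, so the cutoff profiles of $(G_n^*)$ and $(G_n)$ coincide. For (c), $\eps_n\asymp 1/\tmixtext^{G_n,\lazytext}(1/4)$ gives $O(1)$ matching crossings by the lazy mixing time of $G_n$. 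Conditioning on the crossing times and coupling the rest with the lazy walk on $G_n$, a standard perturbation argument shows that $O(1)$ matching jumps cannot collapse the non-cutoff multiplicative TV-window of the lazy walk on $(G_n)$ into a cutoff, so non-cutoff is inherited by $(G_n^*)$.

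\textbf{Main obstacle.} The crux is the quantitative concentration in (a): from only bounded degree and polynomial growth, one must prove that the log-probability of a single length-$L$ SRW segment's endpoint has Gaussian-type fluctuations $O(\sqrt{\log L})$ uniformly over starting vertices, and that summing over $k\asymp\log|V_n|/\log T$ independent segments the total concentrates at scale $o(\log|V_n|)$. Since polynomial growth alone does not pin down the heat kernel of $G_n$, extracting such uniform entropic concentration from coarse geometric input (rather than the sharper diffusive estimates that hold for, say, Cayley graphs) is the delicate step and will require the bulk of the technical work.
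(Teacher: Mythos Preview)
Your sketch for (b) matches the paper's Proposition~\ref{pro:smaller_eps}. The gaps are in (a) and (c).

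\textbf{Part (a).} The implication ``$-\log\mu_t(X_t)$ concentrates at $\log|V_n|$ with fluctuations $o(\log|V_n|)$, hence $\dtv{\mu_t}{\pi}\to 0$'' is false. Concentration at scale $o(\log n)$ only gives $\mu_t(X_t)\le n^{-1+o(1)}$ whp, and a measure can satisfy this while having $\dtv{\mu_t}{\pi}$ bounded away from $0$ (e.g.\ uniform on $n^{1-o(1)}$ vertices). To get the upper bound from entropy you would need the $L^2$ route of \cite{random_matching}: bound $\|\mu_t/\pi-1\|_2$ by $\exp\big(O(\sqrt{\V\log n/\h})\big)$ and then contract via Poincar\'e. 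That costs an extra $\trelabs\cdot\sqrt{\V\log n/\h}$ steps; with $\trelabs\asymp\tfrac{1}{\eps}$, $\h\asymp\log(1/\eps)$, $\V\asymp\h^2$ this is $\tfrac{1}{\eps}\cdot\tfrac{\log n}{\sqrt{g(n)}}$, which is $o(t_0)$ only when $g(n)\gg(\log n)^{2/3}$, i.e.\ $\eps\gg\exp(-(\log n)^{1/3})$. For smaller $\eps$ in the range $n^{-o(1)}$ the method fails, and the paper explicitly says so (see the discussion around the footnote in Section~\ref{subsec:overview}: for non-transitive polynomial-growth graphs the required varentropy bound can fail, e.g.\ two tori of different dimensions joined by few edges). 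Your ``main obstacle'' paragraph anticipates that concentration is delicate, but the actual obstruction is that even the best concentration available, $\sqrt{k\V}\asymp\sqrt{\log n\cdot\log(1/\eps)}$, is too coarse for the $L^2$ argument in this regime. The paper therefore abandons the $L^2$ route and instead (i) stops at a random time $\tau_{2L}$, (ii) uses a path-reversal identity (Lemma~\ref{lem:reversal_under_Omega_zero_one}) to write $\prcond{X_{\tau_{2L}}=y}{G^*}{x}$ as a bilinear sum over two independent half-walks, (iii) applies the pairing concentration Lemma~\ref{lem:w_pair_sum} to lower bound that sum by $\gtrsim 1/n$ for most pairs $(x,y)$, and (iv) converts closeness to uniform at $\tau$ into a mixing upper bound via the hitting-time characterisation of \cite{characterisation_of_cutoff}. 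None of this is in your sketch.

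\textbf{Part (c).} ``$O(1)$ matching crossings cannot collapse the non-cutoff window by a standard perturbation argument'' is not an argument. A single matching crossing sends the walk to an essentially uniform vertex, which \emph{helps} mixing, so it is not clear a priori that cutoff cannot be created. The paper (Proposition~\ref{pro:eps_asymp_tmix_inv_no_cutoff_implies_no_cutoff}) argues by contradiction through the hitting-time characterisation: if $G^*$ had cutoff then $\hittext^{G^*,\lazytext}_{\alpha_1}(u_1)=(1+o(1))\hittext^{G^*,\lazytext}_{\alpha_2}(u_2)$ for all $\alpha_i,u_i$; comparing $\hittext$ on lazy $G^*$ and lazy $G$ on the event $\{\tauLR>t\}$ then forces the same for lazy $G$, contradicting its non-cutoff. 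Your coupling idea is an ingredient here, but the passage through $\hittext_\alpha$ and \cite{characterisation_of_cutoff} is essential and missing.
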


\begin{remark}
We will refer to graphs satisfying the assumptions of Theorem~\ref{thm:results_lin_entropy} as graphs having linear growth of entropy, while we refer to graphs satisfying the assumptions of Theorem~\ref{thm:results_poly_balls} as graphs having polynomial growth of balls.
\end{remark}

We say that a sequence of graphs $(G_n)$ is an expander family if $|V_n|\to\infty$ as $n\to\infty$, all vertices have degree $\le\Delta$ for some constant $\Delta$, and there exists a constant $a>0$ such that for any set $A_n\subseteq V_n$ with $|A_n|\le\frac12|V_n|$ we have $|\partial A_n|\ge a|A_n|$, where $\partial A_n$ is the set of edges between $A_n$ and $V_n\setminus A_n$ in $G_n$.

In Section~\ref{sec:specific_graphs} we present a proof of the standard fact that expander graphs have a linear growth of entropy. In this case we get a phase transition for the occurrence of cutoff around the value $\eps_n=\frac{1}{\log|V_n|}$ as the following result shows.

\begin{theorem}\label{thm:results_expanders}
Let $(\eps_n)$ be a sequence of constants in $(0,1)$ and let $(G_n)$ be a family of connected expander graphs. Let $G_n^*$ be as in Definition~\ref{def:Gstar}.
\begin{enumerate}[(a)]
\item If $\eps_n\gg\frac{1}{\log|V_n|}$ then the random walk on $(G_n^*)$ exhibits cutoff with high probability, at a time of order $\log|V_n|$.
\item If $\eps_n\ll\frac{1}{\log|V_n|}$ then the random walk on $(G_n^*)$ exhibits cutoff if and only if the random walk on $(G_n)$ does.
\item If $\eps_n\asymp\frac{1}{\log|V_n|}$ and the lazy random walk on $(G_n)$ does not exhibit cutoff, then neither does the random walk on $(G_n^*)$.
\item There exists a sequence of expanders $(G_n)$ such that the lazy random walk on $(G_n)$ exhibits cutoff, and for any sequence $(\eps_n)$ with $\eps_n\asymp\frac{1}{\log|V_n|}$, the random walk on $(G_n^*)$ also exhibits cutoff whp.
\item There exists a sequence of expanders $(G_n)$ such that the lazy random walk on $(G_n)$ exhibits cutoff, but for any sequence $(\eps_n)$ with $\eps_n\asymp\frac{1}{\log|V_n|}$, the random walk on $(G_n^*)$ does not exhibit cutoff whp.
\end{enumerate}
\end{theorem}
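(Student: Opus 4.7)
These are direct corollaries of Theorem~\ref{thm:results_lin_entropy} once we invoke two facts about expander families: (i) they exhibit linear entropy growth up to time of order $\log|V_n|$ (a standard fact whose proof is promised in Section~\ref{sec:specific_graphs}), and (ii) $\tmixtext^{G_n}(\theta)\asymp\log|V_n|$ for any fixed $\theta\in(0,1)$, and similarly $\tmixtext^{G_n,\lazytext}(1/4)\asymp\log|V_n|$. The three regimes $\eps_n\gg 1/\log|V_n|$, $\eps_n\ll 1/\log|V_n|$, $\eps_n\asymp 1/\log|V_n|$ therefore correspond exactly to the three regimes $\eps_n\gg 1/\log|V_n|$, $\eps_n\ll 1/\tmixtext^{G_n}(\theta)$, $\eps_n\asymp 1/\tmixtext^{G_n,\lazytext}(1/4)$ appearing in Theorem~\ref{thm:results_lin_entropy}(a)--(c), and the three conclusions follow immediately.

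\textbf{Part (d).} A natural candidate is $(G_n)$ a sequence of random $d$-regular graphs for some fixed $d\ge 3$, which is an expander family whose lazy walk has cutoff whp by Lubetzky--Sly. For any $\eps_n\asymp 1/\log|V_n|$ the expected number of $E_n^*$-crossings by the mixing time is $O(1)$ and Poisson-concentrated, so I would decouple the $G_n^*$-dynamics into the underlying $G_n$-dynamics interspersed with an approximately Poisson sequence of matching jumps, and then apply a second-moment / entropy concentration argument parallel to the one behind Theorem~\ref{thm:results_lin_entropy}(a) to show that the resulting TV profile remains sharp, yielding cutoff whp.

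\textbf{Part (e).} This is the main obstacle, as it requires an explicit construction in which the weighted matching destroys cutoff. My plan is to engineer $(G_n)$ so that the added matching introduces a second mixing mechanism operating at a distinct time scale of order $\log|V_n|$. A natural candidate is to glue two expander pieces $H_n^{(1)},H_n^{(2)}$ of comparable size but distinct spectral gaps via a bounded number of connecting edges preserving expansion: the lazy walk on $(G_n)$ inherits cutoff from the slower piece, but on $(G_n^*)$ the matching provides a cross-piece equilibration channel at a different time constant, producing a biphasic TV decay inconsistent with cutoff. The delicate step will be a quantitative lower bound on TV distance at a time strictly after the main drop, which requires precise control over the number and placement of $E_n^*$-crossings in the critical time window; this is typically more subtle than proving cutoff, because one must exhibit two separated threshold times with a constant gap in TV distance.
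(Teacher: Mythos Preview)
Your treatment of parts (a)--(c) matches the paper's proof: reduce to Theorem~\ref{thm:results_lin_entropy} via linear entropy growth and the fact that mixing times on expanders are of order $\log|V_n|$. One small imprecision: you claim $\tmixtext^{G_n}(\theta)\asymp\log|V_n|$ for \emph{every} $\theta$, but for the non-lazy walk the upper bound is not automatic (think of near-bipartite expanders). The paper only asserts $\tmixtext^{G_n}(\theta)\asymp\log|V_n|$ for \emph{some} $\theta$, obtained from $\tmixtext^{G_n,\lazytext}$ via a separate comparison lemma; fortunately Theorem~\ref{thm:results_lin_entropy}(b) only needs ``some $\theta$'', so your conclusion survives.

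For part (d) you pick the right candidate (random regular graphs), and so does the paper. But the paper's reasoning is different from your Poisson-decoupling sketch: since $G_n$ is itself random regular, the associated quasi-tree is simply the infinite regular tree with one extra $\eps$-weighted edge per vertex, and the entire machinery of \cite{RWs_on_random_graph} and \cite{random_matching} applies directly to that tree to give cutoff on $G_n^*$.

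Part (e) is where your plan breaks down. Gluing two expanders by a \emph{bounded} number of edges does not give an expander: the glued edge set is a bottleneck with $|\partial A|=O(1)$ while $|A|\asymp n$. If instead you glue with $\Theta(n)$ edges to preserve expansion, the two pieces are no longer dynamically distinguishable in the way your biphasic argument requires, and it is far from clear that the lazy walk on the glued graph has cutoff at all. The paper takes a completely different route: it uses a single explicit $5$-regular expander (from \cite{explicit_expanders}) that has a distinguished root $\rho$ from which the simple walk takes time $(1+o(1))t_0\ge 8L^2h$ to reach the bulk, whereas from every vertex outside a small neighbourhood of $\rho$ the walk mixes in time $<6L^2h$. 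On $G_n^*$, with probability bounded away from zero no matching edge is crossed by time $2t_0$, so the slow hitting from $\rho$ persists; meanwhile from \emph{any} starting vertex a single matching crossing (which happens with positive probability by time $O(h)$) whp lands outside the small neighbourhood, giving fast mixing. These two facts together force $\hittext_b(\theta)\gtrsim t_0$ but $\tmixtext(x;1-\theta')\le 6L^2h+o(h)$ for all $x$, and the hitting--mixing comparison then rules out cutoff. The key idea you are missing is that the failure of cutoff comes from a single anomalous \emph{starting vertex}, not from two competing bulk time-scales.
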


\begin{theorem}\label{thm:results_poly_growth_vertex_transitive}
Let $\Delta$ be a given positive integer and let $p$ be a given polynomial. Let $(\eps_n)$ be a sequence of constants in $(0,1)$ and let $(G_n)$ be a sequence of connected graphs with the following properties: each $G_n$ is vertex-transitive, $|V_n|\to\infty$ as $n\to\infty$, all vertices have degree $\Delta$, and for any $r$ the volume of any ball of radius $r$ in any $G_n$ is upper bounded by $p(r)$. Let $G_n^*$ be as in Definition~\ref{def:Gstar}.
\begin{enumerate}[(a)]
\item If $\eps_n\gtrsim|V_n|^{-o(1)}$ then the random walk on $(G_n^*)$ exhibits cutoff with high probability, at a time of order $\frac{1}{\eps_n}\frac{\log|V_n|}{\log\left(\frac{1}{\eps_n}\right)}$.
\item\label{part:poly_growth_no_cutoff} If $\eps_n\lesssim|V_n|^{-a_n}$ where $a_n\asymp1$, then whp the random walk on $(G_n^*)$ does not exhibit cutoff.
If in addition we have $\eps_n\gg \frac{1}{\diam{G_n}^2}$ where $\diam{G_n}$ is the diameter of $G_n$, then whp the mixing time of $G_n^*$ satisfies $\tmixtext^{G_n^*}\left(\frac{1}{4}\right)\asymp \frac{1}{\eps_n}$.
\end{enumerate}
\end{theorem}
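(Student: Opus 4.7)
Part (a) is immediate from Theorem~\ref{thm:results_poly_balls}(a), since vertex-transitivity with common degree $\Delta$ provides the uniform degree bound and the polynomial ball-volume growth is assumed. The substance lies in part (b), which I would split according to how $\eps_n$ compares with $1/\diam{G_n}^2$. Recall that for a sequence of vertex-transitive graphs with polynomial ball-volume growth, classical Poincar\'e / Dirichlet-form arguments give $\trel^{G_n}\asymp\tmixtext^{G_n}\asymp\diam{G_n}^2\asymp|V_n|^{2/d}$, where $d$ is the intrinsic growth exponent (so $V(B(r))\asymp r^d$ for $r\leq\diam{G_n}$).

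When $\eps_n\lesssim 1/\tmixtext^{G_n}(\theta)$ for some $\theta\in(0,1)$ (essentially the same as $\eps_n\lesssim 1/\diam{G_n}^2$), parts (b)--(c) of Theorem~\ref{thm:results_poly_balls} reduce cutoff on $(G_n^*)$ to cutoff on $(G_n)$ or its lazy version. The latter does not occur: cutoff in reversible chains implies $\tmixtext/\trel\to\infty$ (Peres' necessary product condition), which fails here because $\trel^{G_n}\asymp\tmixtext^{G_n}$. In the complementary regime $1/\diam{G_n}^2\ll\eps_n\lesssim|V_n|^{-a_n}$, I would establish directly that $\tmixtext^{G_n^*}(\theta)\asymp(\log(1/\theta))/\eps_n$; this both gives $\tmixtext^{G_n^*}(1/4)\asymp 1/\eps_n$ and rules out cutoff, since $\tmixtext^{G_n^*}(1/4)-\tmixtext^{G_n^*}(3/4)\asymp 1/\eps_n\asymp\tmixtext^{G_n^*}(1/4)$.

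For the upper bound in this direct regime I would reveal the random matching lazily: writing $\tau$ for the first time the walk uses a matching edge, the conditional distribution of $X_\tau=\sigma(X_{\tau-1})$ given the $G_n$-trajectory up to time $\tau-1$ is uniform on $V_n\setminus\{X_{\tau-1}\}$, since the matching has not been interrogated before. Vertex-transitivity together with the constant weighted degree $\Delta+\eps_n$ makes the uniform distribution stationary for $G_n^*$, so by monotonicity of total variation to stationary, $\dtv{X_T}{\pi}\leq\pr{\tau>T}+O(1/|V_n|)$. Since $\pr{\tau>T}\leq\exp(-\alpha T\eps_n)$ for some $\alpha=\alpha(\Delta)>0$, this is at most $\theta$ when $T\asymp(\log(1/\theta))/\eps_n$. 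For the lower bound, at time $t=c_0/\eps_n$ with $c_0$ small, $\pr{\tau>t}\gtrsim e^{-2\alpha c_0}$, and conditionally on $\{\tau>t\}$ the walk is distributed as the simple random walk on $G_n$ at time $t$. By diffusive concentration (Gaussian heat-kernel bounds for polynomial-growth vertex-transitive graphs), the walk lies in a ball of radius $O(\sqrt{1/\eps_n})$ with high probability; this ball has volume $\asymp\eps_n^{-d/2}$, and $\eps_n^{-d/2}/|V_n|\asymp(\eps_n\diam{G_n}^2)^{-d/2}=o(1)$ precisely because $\eps_n\gg 1/\diam{G_n}^2$. Hence $\dtv{X_t}{\pi}\gtrsim e^{-2\alpha c_0}-o(1)$.

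The main obstacle will be formalizing the lazy matching exposure: one must set up a filtration under which the walk's steps along original $G_n$-edges leak no information about $\sigma$, so that the first matching crossing genuinely lands $X_\tau$ uniformly on $V_n\setminus\{X_{\tau-1}\}$ in total variation rather than merely in expectation. A secondary technical point is verifying the Gaussian heat-kernel bound on $G_n$ uniformly in $n$, with constants depending only on $\Delta$ and the growth polynomial $p$; this is standard in the infinite Cayley-graph setting but needs care in the finite vertex-transitive context.
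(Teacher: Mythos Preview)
Part (a) and the regime $\eps_n\lesssim 1/\diam{G_n}^2$ of part (b) are handled essentially as in the paper (the latter via Theorem~\ref{thm:results_poly_balls}(b)--(c) together with the fact that the lazy walk on $G_n$ has $\trel\asymp\tmixtext$ and hence no cutoff). Your lower bound in the regime $\eps_n\gg 1/\diam{G_n}^2$ is also fine and matches Lemma~\ref{lem:vertex_transitive_lower_bound_tmix}.

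The genuine gap is in your upper bound for $\eps_n\gg 1/\diam{G_n}^2$. The ``lazy matching exposure'' argument is an \emph{annealed} statement: averaging over the random matching $\sigma$, the law of $X_\tau$ is close to uniform. But the mixing time of $G_n^*$ is a \emph{quenched} quantity, defined for a fixed realization of $\sigma$. For fixed $\sigma$, the map $x\mapsto\sigma(x)$ is a bijection, so $X_\tau=\sigma(X_{\tau-1})$ has exactly the same entropy as $X_{\tau-1}$. Since by time $\tau\asymp1/\eps_n$ the walk on $G_n$ has only spread over $\asymp V(1/\sqrt{\eps_n})=n^{A}$ vertices with $A<1$, the quenched law of $X_\tau$ is supported on $\asymp n^A\ll n$ vertices and is therefore far from uniform. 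No filtration construction can repair this: the obstacle is not information leakage but the fact that a single application of a fixed permutation cannot increase entropy.

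This is precisely why the paper's argument (Proposition~\ref{pro:Gstar_no_cutoff} via Lemma~\ref{lem:tau_transtion_probs}) is substantially more involved. One must cross a bounded number $M\ge 1$ of matching edges, with $M$ chosen so that $V(1/\sqrt{\eps_n})^M\gtrsim n$ (Definition~\ref{def:A_B}), and then establish a \emph{quenched} lower bound $\prcond{X_\tau=y}{G^*}{x}\ge c/n$ holding with probability $1-o(1/n^2)$ over the matching. The passage from annealed to quenched is done via the concentration inequality of Lemma~\ref{lem:w_pair_sum} applied to the bilinear form $\sum_{z}w_x(z)w_y(\eta(z))$ after reversing the second half of the path, together with a union bound over all pairs $(x,y)$. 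This second-moment machinery is the essential missing ingredient in your proposal.
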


We also obtain a result for general families of graphs with bounded degrees, as follows.

\begin{theorem}\label{thm:results_general_graphs}
Let $\Delta$ be a given positive constant. Let $(\eps_n)$ be a sequence of constants in $(0,1)$ and let $(G_n)$ be a sequence of connected graphs such that $|V_n|\to\infty$ as $n\to\infty$ and all vertices have degree $\le\Delta$. Let $G_n^*$ be as in Definition~\ref{def:Gstar}.
\begin{enumerate}[(a)]
\item\label{cond:eps_gg_loglogn_over_logn} If $\eps_n\gg\frac{\log\log|V_n|}{\log|V_n|}$ then the random walk on $(G_n^*)$ exhibits cutoff with high probability.
\item\label{cond:general_graph_small_eps} If $\eps_n\ll\frac{1}{\tmixtext^{G_n}(\theta)}$ for some $\theta\in(0,1)$, then the random walk on $(G_n^*)$ exhibits cutoff if and only if the random walk on $(G_n)$ does.
\item\label{cond:general_graph_eps_one_over_tmix} If $\eps_n\asymp\frac{1}{\tmixtext^{G_n,\lazytext}\left(\frac14\right)}$ and the lazy random walk on $(G_n)$ does not exhibit cutoff, then neither does the random walk on $(G_n^*)$.
\end{enumerate}
\end{theorem}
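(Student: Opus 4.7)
Parts (b) and (c) place no structural requirements on $(G_n)$ beyond bounded degrees, and their proofs are word-for-word the same as parts (b)--(c) of Theorems~\ref{thm:results_lin_entropy} and~\ref{thm:results_poly_balls}. The underlying mechanism is that when $\eps_n\ll 1/\tmixtext^{G_n}(\theta)$, a first-moment bound shows that with probability $1-o(1)$ the random walk on $G_n^*$ crosses no $\eps$-edge up to time $\tmixtext^{G_n}(\theta)$; hence the walks on $G_n$ and on $G_n^*$ can be coupled to agree on this time scale, and cutoff transfers in both directions. Part (c) is the corresponding statement for lazy walks, with a coupling window of a constant multiple of $\tmixtext^{G_n,\lazytext}\!\left(1/4\right)$.

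Part (a) is the new content. Without any entropy- or volume-growth assumption on $G_n$, the only reliable source of entropy is the matching itself, so my plan is to couple the walk $X$ on $G_n^*$ with a walk $Y$ on a random quasi-tree $\mathcal{T}_n$ in the spirit of \cite{random_matching}. The quasi-tree is built by revealing the matching progressively: each $\eps$-edge crossing from a vertex $v$ whose matching partner has not yet been exposed reveals that partner as a uniformly chosen still-unmatched vertex, and attaches a fresh copy of $G_n$ rooted at that vertex on the far side of the $\eps$-edge. The coupling of $X$ and $Y$ succeeds as long as the freshly revealed partners do not lie in the region already explored by $X$; a birthday-style union bound over the $\eps$-edge crossings, combined with the bounded-degree assumption, controls this failure probability on the relevant time scale.

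With the coupling in place, cutoff for $Y$ reduces to two ingredients: (i) sharp concentration of the number $N(T)$ of $\eps$-edge crossings by time $T$, which is stochastically close to $\Bin{T}{p}$ for some $p\asymp\eps_n$; and (ii) given $N(T)=k$ with $k$ sufficiently large, closeness in total variation to uniform of the projection of $Y$'s position onto $V_n$, by virtue of the uniform choice of revealed partners. The assumption $\eps_n\gg\log\log|V_n|/\log|V_n|$ arises from the interplay of (i) and (ii): a Chernoff-type bound delivers the concentration in (i) at a cutoff-compatible window only when $T\eps_n\gg\log\log|V_n|$, while (ii) is ensured at this scale via the uniformity of the matching. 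I expect the main obstacle to be a quantitative form of (ii): with only polylogarithmically many crossings we must establish fast TV-convergence of the projection to the uniform distribution on $V_n$, carefully controlling the joint law of the $Y$-trajectory in $\mathcal{T}_n$ and the revealed matching. Cutoff for $X$ then transfers through the coupling.
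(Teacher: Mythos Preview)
Your treatment of parts (b) and (c) is correct: they follow from Propositions~\ref{pro:smaller_eps} and~\ref{pro:eps_asymp_tmix_inv_no_cutoff_implies_no_cutoff}, which impose no structural hypotheses on $G_n$ beyond bounded degree.

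For part (a) there is a real gap. Your step (ii) rests on the uniform choice of matching partners, but that is an \emph{annealed} source of randomness: once the matching is fixed---which is the quenched setting of the theorem---crossing a long-range edge from $v$ lands deterministically at $\eta(v)$, and the only entropy gained is the entropy of the exit vertex $v$ within the current ball. So the crossing count $N(T)$ is not the governing quantity; what matters is the entropy $\h$ per level of the loop-erased path on the quasi-tree, and one needs roughly $\log n/\h$ levels, not $O(1)$ crossings, before the walk's law can spread over $V_n$. This is also why your Chernoff explanation of the threshold misses the mark: $N(T)$ concentrates as soon as $T\eps\to\infty$, which does not single out $\eps\gg\log\log n/\log n$. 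You do flag (ii) as the main obstacle, but the plan as written gives no mechanism to pass from the annealed uniformity of revealed partners to a quenched statement, and it is precisely this passage that consumes all the work.

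The paper instead adapts the full entropic machinery of Sections~\ref{sec:quasi-tree}--\ref{sec:proof_of_cutoff}. With no control on ball growth or entropy growth in $G_n$, one takes $R\asymp\frac{1}{\eps}\log\log n$, $K\asymp\frac{\log\log n}{\log(1/\eps)}$, and restricts to the event $A_k$ that consecutive regeneration vertices lie within graph distance $R$; this event holds whp up to the relevant level. On $A_1$ the per-level log-probability has $b$-th moment $\lesssim\h\cdot\left(\frac{\log\log n}{\eps}\right)^{b-1}+(\log\log n)^b$, yielding $\V\lesssim\h\cdot\frac{\log\log n}{\eps}+(\log\log n)^2$, and the cutoff-window requirement $\V\ll\h\log n$ becomes exactly $\eps\gg\frac{\log\log n}{\log n}$. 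From there the argument proceeds as before: the truncation of Section~\ref{subsec:truncation}, the path-reversal of Section~\ref{subsec:K_roots_reversing_paths}, and the matching-concentration Lemma~\ref{lem:w_pair_sum} convert entropic concentration on the quasi-tree into quenched mixing on $G^*$.
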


\begin{remark}
We conjecture that the above condition in~\eqref{cond:eps_gg_loglogn_over_logn} can be improved and in fact the random walk on $(G_n^*)$ exhibits cutoff for any sequence $\eps_n\gg\frac{1}{\log|V_n|}$. See Section~\ref{subsec:conjectures} for further discussion.
\end{remark}

\subsection{Relation to other works}

In this work we establish cutoff for random walks on randomly generated graphs at an entropic time. There have been multiple recent works proving cutoff at an entropic time, including~\cite{RWs_on_random_graph}, \cite{comparing_mixing_times_sparse_random_graphs}, \cite{cutoff_for_almost_all_RWs_on_Abelian_groups}, \cite{speeding_up_MCs}, \cite{cutoff_random_lifts}, \cite{cutoff_at_entorpic_time_RWs_on_covered_expanders}, \cite{cutoff_at_entropic_time_sparse_MCs}, \cite{mixing_time_of_Chung-Diaconis-Graham_process} and~\cite{random_matching}. For a more detailed overview please refer to~\cite{random_matching}. For some exciting recent progress which extends the connection between cutoff and entropic concentration to non-random graphs, see~\cite{non-negatively_curved_MCs}, \cite{varentropy} and~\cite{entropic_pf_of_cutoff_Ramanujan_graphs}.

Our model is a generalisation of the model in~\cite{random_matching}, where $\eps_n\equiv1$. In both cases the graph $G^*$ locally looks like a tree-like structure, as we explain in the overview below. In the case when $\eps_n$ is of constant order or goes to 0 sufficiently slowly, the proofs in~\cite{random_matching} become more technical, but can be adapted to prove cutoff. To establish cutoff for the full range of $\eps_n$ as in Theorems~\ref{thm:results_lin_entropy} and~\ref{thm:results_poly_balls} we need to use a different method. (We explain in more detail why this is necessary in Section~\ref{subsec:overview}.)

Our approach is inspired by~\cite{cutoff_random_regular_graphs} and~\cite{cutoff_NBRW_on_sparse_random_graphs} that establish cutoff for non-backtracking random walks. To approximate the time $2t$ transition probability between two vertices $x$ and $y$ of the random graph they consider reversing the second half of a length $2t$ path and study the first $t$ steps of two independent walks from $x$ and $y$. These in turn can be approximated by the first $t$ steps of two independent walks on two independent copies of the limiting tree. As far as we are aware this method has only been used for non-backtracking walks at fixed times, which makes the reversal of the second half of the path straightforward and allows to get a good control over the position of the walk on the tree.

In our work we use this idea for a simple random walk, and instead of looking at a fixed time we use it for a random time $\tau$. This means that we face additional challenges regarding both the reversal of paths and the study of a walk on the limiting tree-like structure. We prove that at the random time $\tau$ the position of the walk is close to the uniform distribution on the vertices, which has bounded $\ell^{\infty}$ distance from the stationary distribution, and we also prove that $\tau$ is concentrated around some given time. To conclude cutoff our approach relies crucially on the connection between cutoff and concentration of hitting times of large sets established in~\cite{characterisation_of_cutoff}. To the best of our knowledge, this is the first time the results of~\cite{characterisation_of_cutoff} have been utilised in this fashion. We believe this method and variants of the arguments in this paper can also be used to analyse the random walk on more involved random graph models.

In case $\eps_n\ll\frac{1}{\tmixtext^{G_n}}$ it is intuitively clear whether or not the walk on $G^*_n$ has cutoff and we formalise this intuition. In Theorem~\ref{thm:results_poly_growth_vertex_transitive}~\eqref{part:poly_growth_no_cutoff} there is a regime with $\eps_n\gtrsim\frac{1}{\tmixtext^{G_n}}$ but no cutoff and the proof of this is quite demanding. Here we need to use a different argument relying on reversing the second half of a path and considering walks on independent tree-like structures.

\subsection{Overview}\label{subsec:overview}

Below we give an overview of the methods we use to establish an upper bound on the mixing time in the cutoff regime, which is the most difficult part of the proof.

In many commonly used random graph models (e.g.\ in Erd\H os-R\'enyi graphs $G_n\sim\mathcal{G}\left(n,\frac{d}{n}\right)$) the graph can be approximated locally with a Galton-Watson tree. In our case the situation is not this simple; the graph $G^*$ also retains some of the original structure of $G$, so it does not quite look like a tree, but it can still be approximated locally with a tree-like structure.

Similarly to~\cite{random_matching} we define the random \emph{quasi-tree} $T$ corresponding to a graph $G$, radius $R$ and weight $\eps$ as follows. We consider a ball $B$ of radius $R$ around a uniformly chosen vertex $\rho$ of $G$. For each vertex $v$ of the ball, except for $\rho$, we draw a new edge from $v$ and attach an independently sampled copy of $B$ to the other end. We repeat this for all vertices of the newly added balls, except their centres. Then proceed similarly, resulting in an infinite graph. We call the edges joining different balls of $T$ \emph{long-range edges} and we assign weight $\eps$ to them. We call $\rho$ the root of $T$. We sometimes refer to the balls in $T$ as $R$-balls. We will usually work with the "long-range distance" on $T$. (For vertices $x$ and $y$ this is defined as the minimum number of long-range edges a path from $x$ to $y$ has to cross.)

\begin{figure}[h]
\centering
\includegraphics[width=70mm]{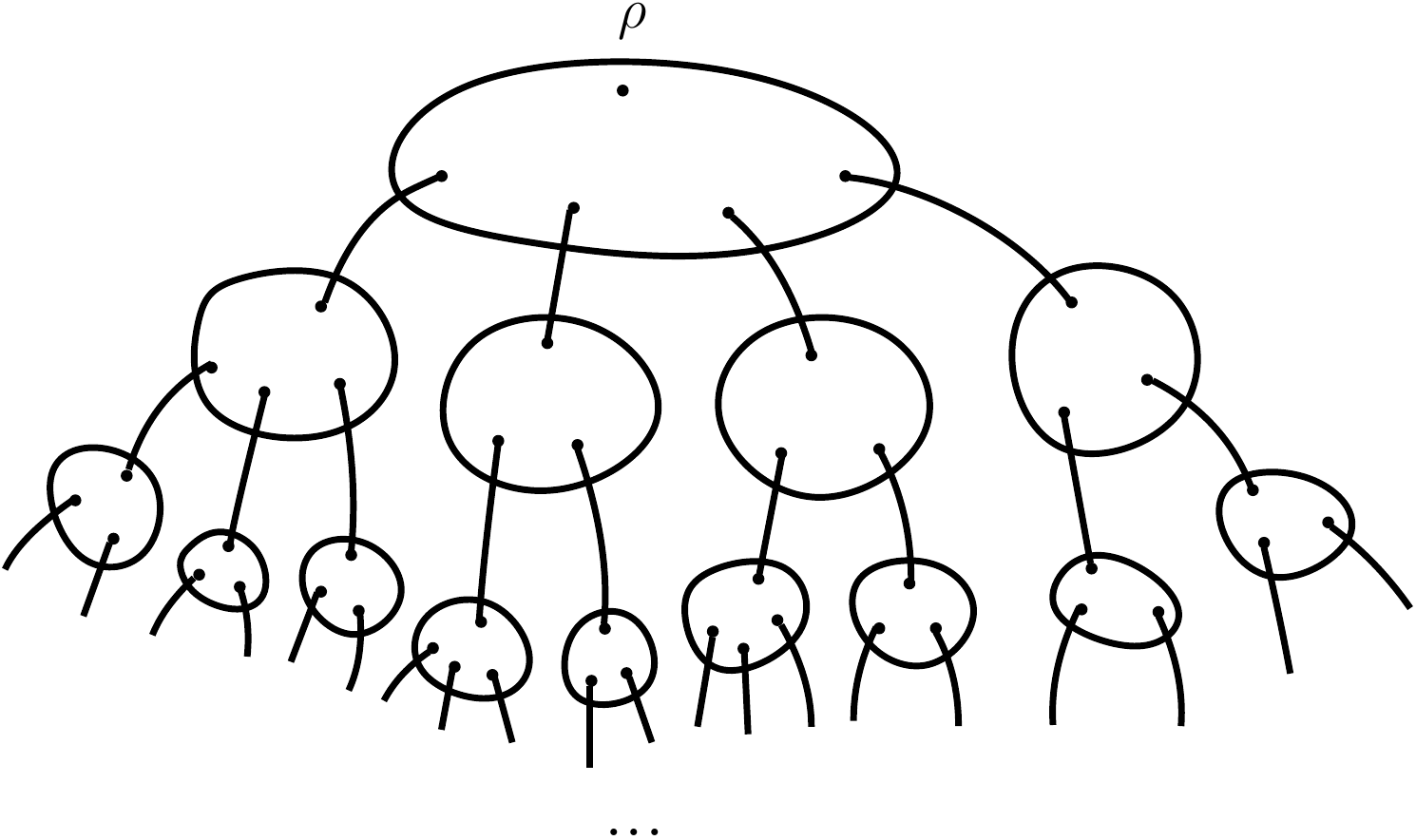}
\caption{An illustration of a quasi-tree.}
\label{fig:eg_quasi_tree}
\end{figure}

Note that the offsprings of each ball in $T$ are independent and identically distributed (iid), which will make it easier to study the behaviour of a random walk $\til{X}$ on $T$ than studying a random walk $X$ on $G^*$ directly. We will show that locally $G^*$ and a random walk $X$ on it can be well approximated by a quasi-tree $T$ and a random walk $\til{X}$ on it from its root.

In~\cite{random_matching} the proof proceeds as follows. The first step is to obtain a concentration result on the speed and entropy of a random walk $\til{X}$ on the quasi-tree $T$. After this a coupling is defined between the random graph $G^*$ and the walk $X$ on it, and the random quasi-tree $T$ and the walk $\til{X}$. Using these it is then proved that at the entropic time, which is of order $\log n$, the $L_2$-distance between the distribution of the walk $X$ conditioned on a certain typical event and the invariant distribution $\pi_{G^*}$ is bounded by $\exp\left(c\sqrt{\log n}\right)$ for a positive constant $c$. After that the proof of the upper bound on the mixing time can be concluded by showing that the absolute relaxation time is of constant order and using the Poincar\'e inequality.

In our model we are also able to establish concentration results for the speed and entropy of the walk on the quasi-tree (see Lemma~\ref{lem:speed} and Proposition~\ref{pro:aux_entropy}). The bounds on the fluctuations of speed and entropy and the upper bound on the absolute relaxation time are all functions of $\eps$, so the above approach would only work for weights $\eps$ approaching 0 sufficiently slowly. To prove cutoff for $\eps$ decaying faster, we need a different approach.

More precisely in order for the method of~\cite{random_matching} to work one requires an entropic concentration estimate, which is analogous to the varentropy condition of~\cite[Theorem 5]{non-negatively_curved_MCs}. Namely, the required estimate is that the standard deviation of the random variable whose mean is the entropy of the law of the random walk on the quasi-tree at the entropic time is $o\left(\frac{\tmixtext}{\trel}\right)$. In case of some graphs covered by Theorems~\ref{thm:results_lin_entropy} and~\ref{thm:results_poly_balls} the deviations of the entropy are too large.\footnote{For Theorem~\ref{thm:results_lin_entropy} the examples are graphs with Cheeger constant $O\left(\eps\right)$, for example a lamplighter graph on a $d\ge3$ dimensional torus or a locally expanding graph in the sense of Proposition~\ref{pro:local_expanders_lin_entropy} with a bottleneck. In this case one can show that $\trel(G^*)\asymp\frac1\eps$. For Theorem~\ref{thm:results_poly_balls} one can consider two graphs of comparable sizes with polynomial growth of balls whose ratio of entropy is bounded away from 1 (e.g.\ two tori of different dimensions), connected by relatively few edges.}
For the graphs from Theorem~\ref{thm:results_poly_growth_vertex_transitive} one can prove stronger entropic concentration estimates which satisfy the above varentropy condition, however because in Theorem~\ref{thm:results_poly_growth_vertex_transitive} we consider very small values of $\eps$, an additional technical difficulty arises. The argument in~\cite{random_matching} involves a certain exploration process in which a portion of the random graph is coupled with a quasi-tree. When $\eps$ is sufficiently small, a substantial challenge that arises is that it becomes difficult to control the size of the revealed graph and ensure it is much smaller than $n$, which is required for the coupling to work. With the approach we describe below we only have to reveal slightly more than $\sqrt{n}$ vertices, regardless of the value of $\eps$.

%https://ems.press/journals/jems/articles/10525918

In order to upper bound the mixing time, we define a random time $\tau$ and show that wherever the walk $X$ starts from, (i) $\tau$ is likely to be $\le t$ for a given $t$ that agrees with our lower bound on the $1-o(1)$ mixing time up to lower order terms, and (ii) the distribution of $X_{\tau}$ is close to the uniform distribution $\U$ on the set $V$ of vertices. This implies that for any fixed small $\theta\in\left(0,\frac14\right)$, if $n$ is sufficiently large, then any set $A\in V$ with $\pi(A)\ge1-\theta$ is hit by time $t$ with probability at least $1-\theta$. Using a result from~\cite{characterisation_of_cutoff} relating mixing and hitting times, together with an upper bound of order $\frac1\eps$ on the absolute relaxation time of the walk on $G^*$, we get that $\tmixtext^{G^*}(2\theta)\le t+\frac{C(\theta)}{\eps}$.

%This provides an upper bound on a certain hitting time quantity of $X$, which using a result from~\cite{characterisation_of_cutoff} relating mixing and hitting times, implies an upper bound on the mixing time.

Assuming the walk $X$ starts from a vertex $x$ with a sufficiently nice neighbourhood in $G^*$, the random time we consider is $\tau_{2L}$, which is roughly speaking the first time when $X$ has travelled long-range distance $2L$ from $x$. \footnote{The actual definition of $\tau_{2L}$ is more involved and uses the fact that the long-range distance is well-defined on the quasi-tree.} To upper bound
\[\dtv{\prcond{X_{\tau_{2L}}=\cdot}{G^*}{x}}{\U(\cdot)}\quad=\quad\sum_{y}\left(\frac1n-\prcond{X_{\tau_{2L}}=y}{G^*}{x}\right)^+,\]
it is sufficient to lower bound $\prcond{X_{\tau_{2L}}=y}{G^*}{x}$ for most values of $y$.

We would like to be able to express $\prcond{X_{\tau_{2L}}=y}{G^*}{x}$ as
\begin{align}\label{eq:Xtau_transitions_approx_sum}
\prcond{X_{\tau_{2L}}=y}{G^*}{x}\quad\approx\quad\sum_{w,z}\prcond{X_{{\tau}^{(X)}_{L}}=z}{G^*}{x}\prcond{Y_{{\tau}^{(Y)}_{L}}=w}{G^*}{\eta(y)}\1_{w=\eta(z)},
\end{align}
where $X$ and $Y$ are independent random walks on $G^*$ from $x$ and $y$ respectively, ${\tau}^{(X)}_{L}$ and ${\tau}^{(Y)}_{L}$ are the first times when $X$ and $Y$ respectively, have travelled long-range distance $L$, and $\eta(v)$ denotes the long-range neighbour of vertex $v$. A decomposition similar to~\eqref{eq:Xtau_transitions_approx_sum} was considered in~\cite{cutoff_NBRW_on_sparse_random_graphs} and~\cite{cutoff_two_community}, but there it was applied for a non-backtracking random walk and deterministic times $2t$ and $t$ instead of $\tau_{2L}$ and $\tau_L$, hence it could be written as an exact equality.

Now we explain how to obtain the approximate equality in~\eqref{eq:Xtau_transitions_approx_sum} and how this decomposition can be used to lower bound $\prcond{X_{\tau_{2L}}=y}{G^*}{x}$.

\subsubsection*{Approximate reversibility at a random time}

The reason we do not have an exact equality in~\eqref{eq:Xtau_transitions_approx_sum} is that for a path $(z_0,...,z_k)$ the event that $k$ is the first time when the path reached long-range distance $L$ is not equivalent to the event that $k$ is the first time that the path $(z_{k-1},...,z_0,\eta(z_0))$ reached long-range distance $L$. See a counterexample in Figure~\ref{fig:eg_not_reversible}. For a path $z=(z_0,...,z_k)$ we define an event $\Omega^{z}_1(L)$ which is equivalent for the path and its reversal and on this event $k$ being the first time when $z$ reaches long-range distance $L$ is equivalent to $k$ being the first time that $(z_{k-1},...,z_0,\eta(z_0))$ reaches long-range distance $L$. See an illustration in Figure~\ref{fig:eg_reversibility_condition}. Using that the walks only rarely cross the same long-range edge multiple times, we will show that for any $G^*$ the event $\Omega^X_1(L)$ holds whp.

\begin{figure}[h]
\centering
\includegraphics[width=100mm]{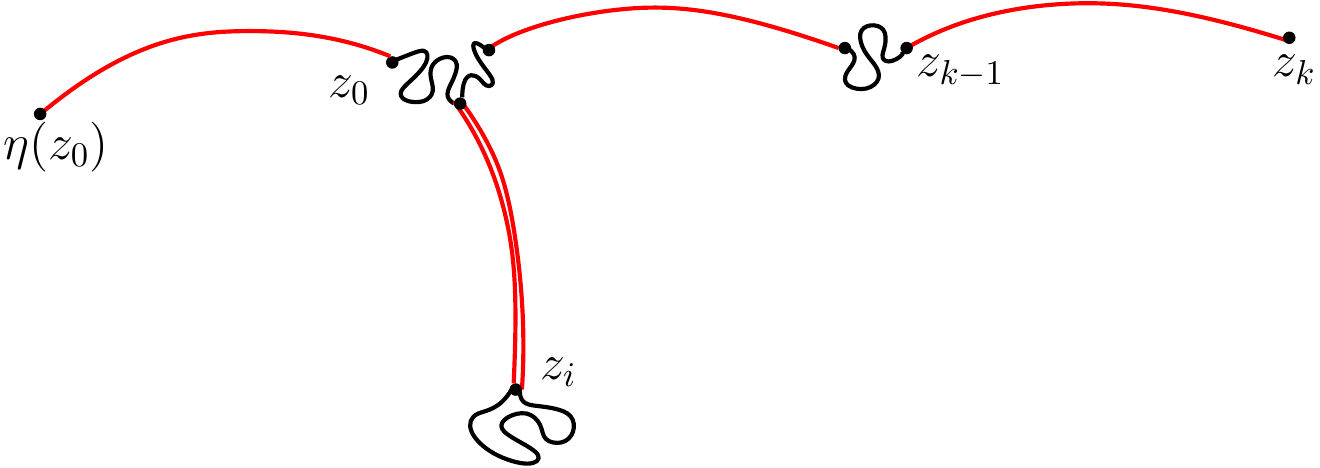}
\caption{Long-range edges are denoted by red. In this example the path $(z_0,...,z_k)$ first reaches long-range distance 2 from $z_0$ at $z_k$, but the reverse path $(z_{k-1},...,z_0,\eta(z_0))$ already reaches long-range distance 2 from $z_{k-1}$ at $z_i$.}
\label{fig:eg_not_reversible}
\end{figure}

\begin{figure}[h]
\centering
\includegraphics[width=130mm]{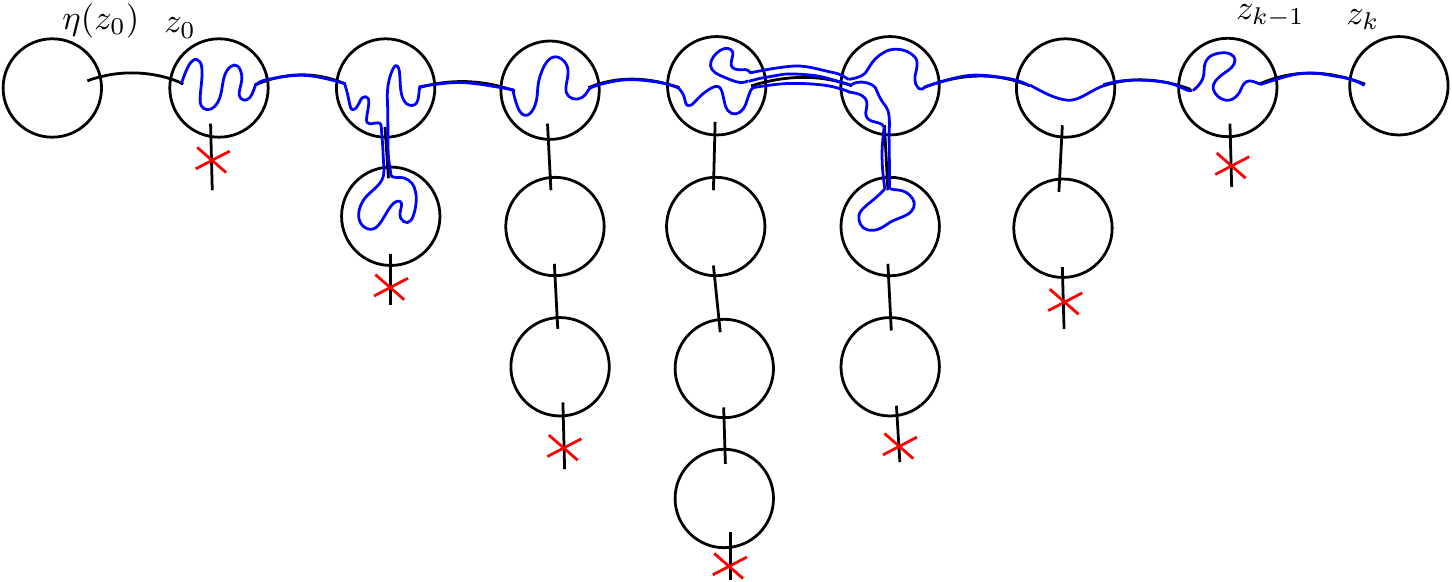}
\caption{An illustration of the condition $\Omega_1^z(7)$ for a path $z=(z_0,...,z_k)$.}
\label{fig:eg_reversibility_condition}
\end{figure}

\subsubsection*{Lower bounding $\prcond{X_{\tau_{2L}}=y}{G^*}{x}$ via~\eqref{eq:Xtau_transitions_approx_sum}}

We couple certain random neighbourhoods of $x$ and $y$ in $G^*$ and the walks $X$ and $Y$ on them by independent random quasi-trees $T_x$ and $T_y$ and independent random walks $\til{X}$ and $\til{Y}$ on them. We then define subsets $\partial T_x$ and $\partial T_y$ of the vertices of $T_x$ and $T_y$, respectively so that the long-range edges from these vertices are not yet revealed, for each pair $z\in\partial T_x$, $w\in\partial T_y$ the probability $\prcond{X_{{\tau}^{(X)}_{L}}=\eta(z)}{G^*}{x}\prcond{Y_{{\tau}^{(Y)}_{L}}=\eta(w)}{G^*}{\eta(y)}$ is sufficiently small, but the overall probability $\sum_{z\in\partial T_x,w\in\partial T_y}\prcond{X_{{\tau}^{(X)}_{L}}=\eta(z)}{G^*}{x}\prcond{Y_{{\tau}^{(Y)}_{L}}=\eta(w)}{G^*}{\eta(y)}$ is close to 1.

Conditioning on the explorations of the neighbourhoods of $x$ and $y$, we can complete the exploration of $G^*$ by choosing a uniform random matching of the yet unmatched long-range half-edges. 
We then use the following concentration result from~\cite{cutoff_NBRW_on_sparse_random_graphs} (based on~\cite{Steins_method}) to lower bound~\eqref{eq:Xtau_transitions_approx_sum}.

\begin{lemma}[Lemma 5.1 in \cite{cutoff_NBRW_on_sparse_random_graphs}]\label{lem:w_pair_sum}
Let $\I$ be a set of even size. Let $(w_{i,j})_{(i,j)\in\I\times\I}$ be some non-negative weights and let $\eta$ be a uniform random paring of $\I$. Then for all $a>0$ we have
\[\pr{\sum_{i\in\I}w_{i,\eta(i)}<m-a}\quad\le\quad\exp\left(-\frac{a^2}{4bm}\right)\]
where $m=\frac{1}{|\I|-1}\sum_{i\in\I}\sum_{j\in\I\setminus\{i\}}w_{i,j}$ and $b=\max_{i\ne j}(w_{i,j}+w_{j,i})$.
\end{lemma}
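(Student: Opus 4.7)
The plan is a Doob-martingale argument combined with Freedman's inequality, exploiting non-negativity of the weights to obtain a Bernstein-type (rather than Hoeffding-type) bound. Write $W := \sum_{i\in\I} w_{i,\eta(i)}$; since each pair $\{i,j\}$ of $\eta$ contributes $w_{i,j}+w_{j,i}$ to $W$, direct computation gives $\E{W}=m$. I would enumerate $\I=\{1,\dots,N\}$ and expose $\eta$ pair-by-pair in canonical order: at step $k$, let $a_k$ be the smallest as-yet-unpaired element and reveal $j_k := \eta(a_k)$, uniform on the $N-2k+1$ remaining unpaired candidates. Let $\mathcal{F}_k$ be the resulting filtration, $M_k := \E{W\mid\mathcal{F}_k}$, so that $M_0=m$, $M_{N/2}=W$, and $(M_k)$ is a martingale.

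Writing $\I_k$ for the set of unpaired elements after step $k$ and $Z_\ell := w_{a_\ell,j_\ell}+w_{j_\ell,a_\ell}$, an explicit formula for the conditional expectation is
\[ M_k \;=\; \sum_{\ell=1}^k Z_\ell \;+\; \frac{1}{|\I_k|-1}\sum_{\substack{i,j\in\I_k\\ i\ne j}} w_{i,j}, \]
so $M_k-M_{k-1}$ equals $Z_k$ minus an average of row- and column-sums of the weight matrix indexed by $\{a_k,j_k\}$. Using $0\le w_{i,j}$ and $w_{i,j}+w_{j,i}\le b$, two estimates need to be verified with absolute constants: (i) $|M_k-M_{k-1}|\le Cb$, and (ii) $\E{(M_k-M_{k-1})^2\mid\mathcal{F}_{k-1}}\le Cb\cdot\E{Z_k\mid\mathcal{F}_{k-1}}$. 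Summing (ii) telescopes to $\sum_k \E{(M_k-M_{k-1})^2\mid\mathcal{F}_{k-1}}\le Cbm$ because $\sum_k\E{Z_k}=\E{W}=m$. Feeding (i) and (ii) into Freedman's inequality then yields, after absorbing constants,
\[ \pr{W<m-a} \;=\; \pr{M_{N/2}-M_0<-a} \;\le\; \exp\!\left(-\frac{a^2}{4bm}\right), \]
the case $a\ge m$ being trivial since $W\ge 0$.

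The main obstacle I expect is the variance bound (ii): the crude bound $(M_k-M_{k-1})^2\le(Cb)^2$ combined with Azuma would produce a denominator of order $Nb^2$, which is far too weak since the right scale is $bm$. The improvement from $Nb^2$ to $bm$ exploits that a non-negative random variable bounded by $b$ has variance at most $b$ times its mean, propagated through the dependence of the martingale increment on the entire remaining pairing. This is precisely where the approach of \cite{Steins_method} enters: one identifies an exchangeable-pair-type structure on uniform matchings which lets the squared increment at step $k$ be dominated by $b$ times the actual pair contribution $Z_k$, rather than just by $b^2$. Once this bound is in place, Freedman's inequality is essentially Bernstein's inequality for martingales and delivers the stated tail.
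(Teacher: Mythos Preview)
The paper does not prove this lemma: it is quoted verbatim as Lemma~5.1 of \cite{cutoff_NBRW_on_sparse_random_graphs} (which in turn relies on Chatterjee's \cite{Steins_method}) and used as a black box in Proposition~\ref{pro:aux_bound_Ptau_x_y} and Lemma~\ref{lem:tau_transtion_probs}. So there is no proof in the paper to compare your proposal against.

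For what it is worth, the proof in \cite{cutoff_NBRW_on_sparse_random_graphs} does \emph{not} proceed via a Doob martingale exposure. It applies Chatterjee's exchangeable-pair concentration inequality directly: one builds a coupled matching $\eta'$ by re-randomising a single pair of $\eta$, checks that $\econd{(W-W')^2}{\eta}$ is bounded by an affine function of $W$ with slope proportional to $b$, and reads off the tail. Your plan sets up a martingale and Freedman's inequality, but you yourself flag that the crucial variance bound (ii) --- getting $Cb\cdot\econd{Z_k}{\mathcal{F}_{k-1}}$ rather than $Cb^2$ --- is ``precisely where the approach of \cite{Steins_method} enters''. That is a genuine gap: the increment $M_k-M_{k-1}$ involves not just $Z_k$ but the change in the conditional mean of the \emph{entire remaining} pairing, which depends on full row and column sums of $w$ at $a_k$ and $j_k$, not just on $Z_k$. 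It is not at all clear that these extra terms have conditional variance $O(b\cdot\econd{Z_k}{\mathcal F_{k-1}})$, and if you end up importing the exchangeable-pair estimate to control them, the martingale scaffolding becomes redundant --- you may as well apply Chatterjee's theorem in one line, as the cited reference does.
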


We let $w_{ij}=\prcond{X_{{\tau}^{(X)}_{L}}=\eta(i)}{G^*}{x}\prcond{Y_{{\tau}^{(Y)}_{L}}=\eta(j)}{G^*}{\eta(y)}\1_{i\in\partial T_x,j\in\partial T_y}$ and $a=\frac12m$. Having defined $T_x$, $\partial T_x$, ... appropriately, this will give us that with probability $1-o\left(\frac{1}{n^2}\right)$ the graph $G^*$ is such that for most pairs of vertices $x$, $y$ (for all pairs satisfying a certain condition) the sum~\eqref{eq:Xtau_transitions_approx_sum} is lower bounded by $\frac1n(1-\theta)$, where $\theta$ is an arbitrarily small constant.

Note that the probability $1-o\left(\frac{1}{n^2}\right)$ in the above result is high enough so that we can take a union bound over all pairs $x$, $y$ satisfying the required condition and still get a high probability statement.

\subsection{Organisation}

In Section~\ref{sec:quasi-tree} we define the notion of a random quasi-tree $T$ and prove concentration results on the speed and entropy of a random walk on $T$. In this section we work under minimal assumptions on $G$ and $\eps$.

In Section~\ref{sec:relating_Gstar_and_T} we introduce additional assumptions. We define an exploration process of $G^*$ and a coupling of $X$ on $G^*$ to $\til{X}$ on $T$. In this section we prove that the coupling succeeds with a large probability.

In Section~\ref{sec:proof_of_cutoff} we still work under the assumptions of Section~\ref{sec:relating_Gstar_and_T}. Here we prove the upper and lower bounds on the mixing time.

In Section~\ref{sec:specific_graphs} we prove that sequences of graphs with polynomial growth of balls or linear growth of entropy satisfy the assumptions of Section~\ref{sec:relating_Gstar_and_T}, hence they have cutoff in the regimes $\eps_n=|V_n|^{-o(1)}=o(1)$ and $\frac{1}{\log|V_n|}\ll\eps_n\ll1$ respectively. We also show that expanders and locally expanding graphs (to be defined later) have linear growth of entropy.

In Section~\ref{sec:other_eps} we discuss the other regimes for $\eps_n$ and in case of expanders and vertex-transitive graphs with polynomial growth of balls we complete the picture for every possible sequence $\eps_n$.

In Section~\ref{sec:general_graphs} we discuss some conjectures regarding more general families of graphs and present a sketch of the proof of Theorem~\ref{thm:results_general_graphs}.

Appendix~\ref{app:aux_results} lists a few standard results that are used in various proofs. Some of the proofs about speed and entropy that are analogous to the corresponding proofs in~\cite{random_matching} are deferred to Appendix~\ref{app:proofs_speed_entropy}, and some other proofs are also deferred to Appendices~\ref{app:proof_of4lemmas},~\ref{app:other_remainings_pfs} and~\ref{app:estimates_for_transitive_poly_growth_graphs}.

\section{Limiting tree}\label{sec:quasi-tree}

In what follows we will work with $(G_n)$, $(\eps_n)$ and $(G^*_n)$ as in Definition~\ref{def:Gstar} and in addition we also assume that all vertices of all graphs have degree $\le\Delta$ (where $\Delta$ is a given constant) and $\eps_n\to0$ as $n\to\infty$. For ease of notation we will also assume that $|V_n|=n$.

In this section we start by defining a tree-like structure that will be used to approximate the graph~$G_n^*$. It is the same as the definition of a {\emph{\quasi}} in~\cite{random_matching} except that the weights of the long-range edges in our setting are equal to $\eps_n$.
The goal is then to study the entropy of a random walk on these weighted {\emph{quasi-trees}}.

\begin{definition}\label{def:tree}

Given a graph $G$ and constants $\eps>0$ and $R\in\Zpos$ we define the associated \emph{quasi-tree} $\tree=\tree_{G,R,\eps}$, which is a random weighted infinite graph, as follows. 

Let $B$ be a random ball (in the graph distance of $G$) obtained by first sampling a uniform vertex and then considering its $R$-neighbourhood. We call such a ball a $\tree\rball$.

Let $\rho$ be the centre of $B$ that we will call the root of $\tree$. Next join by an edge each other vertex of $B$ (except for the root) to the centre of an i.i.d.\ copy of $B$. Repeat the same procedure for every vertex of the new balls except for their centres. We call edges joining different balls \emph{long-range edges} and we assign weight $\eps$ to them, while we assign weight $1$ to all other edges.

Let $\mathcal{T}$ be the topological space consisting of all rooted locally finite unlabelled connected graphs with a collection of distinguished edges, called \emph{long-range edges}, with the property that every simple path between a pair of vertices must cross the same collection of long-range edges. (In other words, the long-range edges give rise to a tree structure.) We call a member $T$ of $\mathcal{T}$ a \emph{quasi-tree} and we call the connected components of $T$ without the long-range edges the $T$-balls of $T$. The graph $T_{G,R,\eps}$ described above is a random variable taking values in this topological space $\mathcal{T}$ and its $T$-$R$-balls are exactly the $T$-balls in the above sense.

For a quasi-tree $T$ and $x,y \in \tree$ we write $d_\tree(x,y)$ (or simply $d(x,y)$ when $\tree$ is clear from the context), for the number of long-range edges on the shortest path from $x$ to $y$. Note that this is not the usual graph distance on $\tree$, but for us this will be a useful notion of distance. One can think of this distance as ``the long-range distance", but since we rarely consider the graph distance on $T$, we do not use this terminology.  A level of $T$ consists of all vertices at the same distance from $\rho$, i.e.\ when $d(\rho,x)=r$, then~$x$ belongs to the $r$-th level. We denote the level of a vertex $x$ by $\ell(x)$. We write $\B_r(x)=\B(x,r)=\{y:d_\tree(x,y) \le r \}$ for the ball of radius $r$ centred at $x$.
\end{definition}

\begin{definition}\label{def:long_range_dist_on_G}
In the graph $G^*$ we define the (long-range) distance between $x$ and $y$ to be the minimal number of long-range edges needed to be crossed to go from $x$ to $y$, when we only allow at most~$R$ consecutive edges of $G$ in the path from $x$ to $y$ and we do not allow any long-range edge to be crossed more than once. (Like for the {\quasi} $\tree$, we rarely use the regular graph distance on~$G^*$, so unless specified otherwise the term ''distance'' will refer to the aforementioned distance.)

We write $\B_K^*(x)$ for the ball of radius~$K$ and centre~$x$ in this metric.   
We write $\B_{G}(x,r)$ for a ball centred at $x$ of radius $r$ in the graph metric of $G$. When $r=R$, we call it the $G^*\rball$ centred at~$x$.    
\end{definition}

Below we establish a few more notations and conventions.

Let $X$ be a random walk on the weighted {\quasi} $\tree$. 
For an edge $(x,y)$ we write 
\[
\tau^+_{(x,y)} = \inf\{t\geq 1: (X_{t-1},X_t)= (x,y)\},
\]
i.e.\ $\tau^+_{(x,y)}$ is the first time that $X$ crosses the edge $(x,y)$.

Similarly we let $\tau_x$ be the first time when $X$ hits vertex $x$ and $\tau_k$ be the first time when $X$ reaches (long-range) distance $k$.

For a long-range edge $e$ we will often write $e=(e^-,e^+)$ where $e^-$ is the endpoint of $e$ closer to the root (or other reference vertex made clear from the context) and $e^+$ is the other endpoint. We define $\ell(e):=\ell(e^+)$. For a vertex $x$ we will denote its long-range neighbour by $x^+$ if it is further from the root (or reference vertex) than $x$, and denote by $x^-$ if it is closer.

In what follows most statements will concern sequences of graphs or sequences of quasi-trees. To simplify notation, we will often drop the super-/subscripts $n$. Some statements are about random quasi-trees associated to a sequence of graphs. In this case we assume that the graphs and the sequence of weights satisfy the assumptions in Definition~\ref{def:Gstar}, the graphs $G_n$ have bounded degrees, $\eps_n\to0$, and that $R_n\gtrsim\frac{1}{\eps_n}$.
Some statements are about sequences of non-random quasi-trees from the topological space $\mathcal{T}$. In this case (unless specified otherwise) we assume that the sequence is a possible realisation of random quasi-trees associated to a sequence of graphs as above.

Later we will talk about the boundary of an $R$-ball. By that (with some abuse of notation) we mean the vertices in the $R$-ball that are at graph distance $R$ from its centre.

\begin{definition}\label{def:loop-erasure}
Given a random walk $X$ on a transient quasi-tree $T$ let us define its loop-erasure~$\xi$ as follows. Let $\xi_k$ be the long-range edge between levels $k-1$ and $k$ that was last visited by $X$, which exists almost surely by transience. Note that this is equivalent to erasing the loops of $X$ in the chronological order in which they were created and then only keeping the long-range edges. We say that a random sequence $\xi$ of long-range edges is a loop-erased random walk (LERW) on $T$ if it has the same distribution as the loop-erasure of a random walk on $T$.
\end{definition}

\subsection{Some preliminary bounds}

\begin{lemma}\label{lem:neverbacktrack}
Let $C>0$ and $\Delta\in \Zpos$. Let $(\tree^{(n)})$ be a sequence of weighted {\quasi}s and suppose that the weights $\eps_n$ converge to 0 as $n\to\infty$, $R_n\geq\frac{C}{\eps_n}$ for all $n$ and each vertex in each tree has degree $\le\Delta$ (not counting the long-range edges). Let $X^{(n)}$ be a random walk on $\tree^{(n)}$ starting from its root.
Then
\[
\inf_{(x_n,y_n)}\prstart{\tau_{(y_n,x_n)}^+=\infty}{y_n}=1-\delta_n,\quad\text{where}\quad\delta_n=o(1)\quad\text{as}\quad n\to\infty
\]
and the infimum is taken over all long-range edges $(x_n,y_n)$ of $T^{(n)}$ (with $x_n$ being closer to the root than~$y_n$).

Furthermore there exist positive constants $c_1$, $c_2$ depending only on $\Delta$ such that
\[
c_1\eps_n\le\delta_n\le c_2\eps_n^{1/3}.
\]
\end{lemma}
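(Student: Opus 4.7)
Starting at $y_n$, the walk crosses $(y_n,x_n)$ at the first step with probability $\eps_n/(d(y_n)+\eps_n) \ge \eps_n/(\Delta+1)$, since $y_n$ has at most $\Delta$ ordinary neighbors and exactly one incident long-range edge. Hence $\prstart{\tau^+_{(y_n,x_n)}<\infty}{y_n} \ge \eps_n/(\Delta+1)$ uniformly in the choice of edge, giving the lower bound $\delta_n \ge c_1\eps_n$ with $c_1 = 1/(\Delta+1)$.

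\paragraph{Upper bound via first-step analysis.} For the upper bound my plan is to reduce the question to an effective-resistance estimate on a subtree. Let $T'$ denote the connected component of $T^{(n)} \setminus \{(x_n,y_n)\}$ containing $y_n$, and let $\tilde X$ be the weighted random walk on $T'$. Because $T^{(n)}$ has the structure of a quasi-tree, every path from the $y_n$-side to $x_n$ must traverse $(y_n,x_n)$. Setting $g(w) := \prstart{\tilde X \text{ hits } y_n}{w}$, the hitting probability $h(w) := \prstart{X \text{ hits } x_n}{w}$ in $T^{(n)}$ factorises as $h(w) = g(w)\cdot h(y_n)$ for $w \in T'$. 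A first-step analysis at $y_n$ then gives
\[
\delta_n = h(y_n) = \frac{p_1}{1 - (1 - p_1)\bar g}, \qquad p_1 := \frac{\eps_n}{d(y_n) + \eps_n},
\]
where $\bar g$ is the average of $g$ over the ordinary neighbors of $y_n$, equivalently the return probability of $\tilde X$ to $y_n$. The classical identity $1 - \bar g = \bigl(d(y_n)\,R_{\mathrm{eff}}(y_n,\infty;T')\bigr)^{-1}$ then yields
\[
\delta_n \le \frac{p_1}{1 - \bar g} \le \Delta\,\eps_n\,R_{\mathrm{eff}}(y_n,\infty;T'),
\]
so it suffices to prove $R_{\mathrm{eff}}(y_n,\infty;T') \lesssim \eps_n^{-2/3}$.

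\paragraph{Flow construction and main obstacle.} To bound the effective resistance I would invoke Thomson's principle with an explicit unit flow from $y_n$ to infinity exploiting the tree-of-balls structure of $T'$. Because the underlying graph is connected with $|V_n| \to \infty$ and $R_n \ge C/\eps_n$, every ball contains at least $\min(R_n+1,|V_n|)$ vertices, so in the tree of balls each node has many children attached via forward long-range edges. Splitting the flow evenly among the children at each level, the energy contribution of the long-range edges (each of resistance $1/\eps_n$) forms a geometric series whose ratio is the inverse branching, delivering the required $O(\eps_n^{-2/3})$ bound on the total energy. The main technical obstacle is controlling the within-ball contribution to the flow's energy uniformly in the ball structure: a single ball can have diameter up to $\asymp R_n \gtrsim 1/\eps_n$, so routing the flow from a ball's entry vertex to the non-centre vertices that carry outgoing long-range edges could naively be expensive. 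The looser exponent $1/3$ in the bound, rather than the $1$ that a crude between-ball analysis would suggest, leaves enough slack to absorb these within-ball corrections using only the degree bound $\Delta$.
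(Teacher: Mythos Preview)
Your lower bound and the first-step reduction $\delta_n \le \eps_n\, R_{\mathrm{eff}}(y_n,\infty;T')$ are both correct, and the approach is genuinely different from the paper's. The paper never invokes effective resistance: it bounds directly the probability that the walk, upon first exiting the $R$-ball of $y_n$, does so via the edge to $x_n$. Splitting on whether the exit time $N$ is $\le\eps_n^{-2/3}$ (union bound, probability $\lesssim\eps_n^{1/3}$) or larger (then $X_{N-1}=y_n$; compare to simple random walk on the ball and use the on-diagonal bound $P^j(y_n,y_n)\lesssim j^{-1/2}$, valid because the ball has $\gtrsim 1/\eps_n$ vertices), the paper gets $\mathbb P_{y_n}[X_N=x_n]\lesssim\eps_n^{1/3}$ and concludes by dominating the induced walk on long-range levels by a biased walk on~$\mathbb Z$.

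Your proposal, however, has a real gap at the point you yourself flag: you never bound the within-ball energy, and the flow description does not specify how many children you branch into. If you split among all $\gtrsim 1/\eps_n$ children of a path-like ball, the within-ball energy is $\asymp R_n\gtrsim1/\eps_n$, giving only $\delta_n\lesssim 1$. The fix is to branch into only the $b$ non-centre vertices nearest the entry point of each ball; these lie within graph distance $b$ (any connected graph on $\ge b+1$ vertices has $\ge b$ vertices within distance $b$ of any point), so routing $1/b$ to each along shortest paths costs energy $\le b$, while the long-range energy summed over all generations is $\asymp 1/(b\eps_n)$. Balancing gives $b\asymp\eps_n^{-1/2}$ and $R_{\mathrm{eff}}\lesssim\eps_n^{-1/2}$, hence $\delta_n\lesssim\eps_n^{1/2}$, which more than suffices. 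So your route works once completed, but as written the proposal stops before the key estimate.
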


The lemma above immediately implies the following.

\begin{corollary}\label{cor:neverbacktrack}
In the setup of Lemma~\ref{lem:neverbacktrack} for any $z_n$ in the ball of $y_n$ we have
\[
\prstart{\tau_{(y_n,x_n)}^+<\infty}{z_n}\le\delta_n.
\]
\end{corollary}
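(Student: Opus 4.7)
The claim follows from Lemma~\ref{lem:neverbacktrack} by a single application of the strong Markov property. The driving observation is that $y_n$ is the unique endpoint from which the directed step $y_n\to x_n$ can be taken, so on the event $\{\tau^+_{(y_n,x_n)}<\infty\}$ the walk must be at $y_n$ at time $\tau^+_{(y_n,x_n)}-1$. Hence, regardless of the starting vertex $z_n$,
\[
\{\tau^+_{(y_n,x_n)}<\infty\}\subseteq \{\tau_{y_n}<\infty\}, \qquad \tau_{y_n}\le \tau^+_{(y_n,x_n)}\ \text{on this event.}
\]

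My plan is then to condition at the stopping time $\tau_{y_n}$. The strong Markov property applied under $\prstart{\cdot}{z_n}$ gives
\[
\prstart{\tau^+_{(y_n,x_n)}<\infty}{z_n}
\;=\;\prstart{\tau_{y_n}<\infty}{z_n}\cdot \prstart{\tau^+_{(y_n,x_n)}<\infty}{y_n}.
\]
Lemma~\ref{lem:neverbacktrack} bounds the second factor by $\delta_n$, the first is trivially at most $1$, and the claim follows. The hypothesis that $z_n$ lies in the ball of $y_n$ is not used in the argument; the same bound actually holds for any starting vertex, and the restriction in the statement merely reflects the intended application.

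There is no real obstacle here: the only nontrivial ingredient is Lemma~\ref{lem:neverbacktrack}, and the rest is routine. The one bookkeeping point to verify is that the convention $\tau^+_{(y_n,x_n)}\ge 1$ from the excerpt keeps the decomposition valid even in the degenerate case $z_n=y_n$ (where $\tau_{y_n}=0$), so that no separate case analysis is needed.
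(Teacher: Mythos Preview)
Your argument is correct and is exactly the one-line strong Markov argument the paper has in mind when it says the corollary follows immediately from Lemma~\ref{lem:neverbacktrack}. Your observation that the hypothesis $z_n$ lies in the ball of $y_n$ is unnecessary for the bound is also correct.
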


\begin{proof}[Proof of Lemma~\ref{lem:neverbacktrack}]

We fix a long-range edge $(x,y)$. We immediately get that $\prstart{\tau_{(y,x)}^+<\infty}{y}\ge\prstart{X_1=x}{y}\ge\frac{\eps}{\Delta+\eps}$, showing the lower bound on $\delta$.

Let $N$ be the first time when $X$ leaves the $\tree\rball$ of $y$. We will show that
\begin{align}\label{eq:delta_upper_bd_goal}
\prstart{X_N=x}{y}\le c\eps^{1/3}
\end{align}
as $n\to\infty$ for some constant $c$ depending only on $\Delta$. Once we have that we can establish the upper bound on $\delta$ as follows. Let $Y$ be the random walk on the directed long-range edges induced by $X$. Then $d(\rho,Y)$ stochastically dominates a biased random walk that steps to the right with probability $1-c\eps^{1/3}$ and to the left with probability $c\eps^{1/3}$. The probability of this biased walk returning to the starting point is $\asymp\eps^{1/3}$ as $n\to\infty$.

We will now prove \eqref{eq:delta_upper_bd_goal}. Note that

\begin{align*}
\prstart{X_N=x}{y}\le\prstart{N\le\frac{1}{\eps^{2/3}}}{y}+\sum_{i\ge\frac{1}{\eps^{2/3}}}\prstart{N=i,X_i=x}{y}\qquad\text{and}\\
\prstart{N=i,X_i=x}{y}=\sum_{(x_0,...,x_{i-1})}\prstart{X_1=x_1,...,X_{i-1}=x_{i-1}}{y}\prstart{X_1=x}{y}
\end{align*}

where the sum is taken over all paths $(x_0,...,x_{i-1})$ inside the $\tree\rball$ of $y$ with $x_0=x_{i-1}=y$. This sum is equal to
\[
\sum_{(x_0,...,x_{i-1})}\left(\prod_{j=0}^{i-2}\frac{1}{\deg(x_j)+\eps}\right)\frac{\eps}{\deg(y)+\eps},
\]
where the long-range edges are not counted towards $\deg$. Let $Z$ be a simple random walk on the~$\tree\rball$ of~$y$. The last expression above then equals
\begin{align*}
&\sum_{(x_0,...,x_{i-1})}\left(\prod_{j=0}^{i-2}\frac{1}{\deg(x_j)}\right)\left(\prod_{j=0}^{i-2}\frac{\deg(x_j)}{\deg(x_j)+\eps}\right)\frac{\eps}{\deg(y)+\eps}\\
&=\quad\sum_{(x_0,...,x_{i-1})}\prstart{Z_1=x_1,...,Z_{i-1}=x_{i-1}}{y}\left(\prod_{j=0}^{i-2}\frac{\deg(x_j)}{\deg(x_j)+\eps}\right)\frac{\eps}{\deg(y)+\eps}\\
&\le\quad\prstart{Z_{i-1}=y}{y}\left(\frac{\Delta}{\Delta+\eps}\right)^{i-1}\frac{\eps}{1+\eps}.
\end{align*}
Since $Z$ is a simple random walk on a graph with $\ge R$ vertices and degrees bounded by $\Delta$, from Lemma~\ref{lem:Pt_upper_bound} we know that for every $j\le R^2$ we have $\prstart{Z_j=y}{y}\lesssim\frac{1}{\sqrt{j}}$.
Since $\prstart{Z_{2j}=y}{y}\le\prstart{Z_{2j-2}=y}{y}$ (see for instance~\cite[Proposition~10.18]{MTMC_book}) and by Cauchy-Schwarz $\prstart{Z_{2j+1}=y}{y}\le\sqrt{\prstart{Z_{2j+2}=y}{y}\prstart{Z_{2j}=y}{y}}$, we also know that for all $j>R^2$ we have $\prstart{Z_j=y}{y}\lesssim\frac{1}{R}$.

Since at every step the probability of $X$ leaving the $\tree\rball$ of $y$ is $\ge\frac{\eps}{\Delta+\eps}$, we also have by a union bound
\[
\prstart{N\le\frac{1}{\eps^{2/3}}}{y}\le \frac{1}{\eps^{2/3}}\frac{\eps}{\Delta+\eps}\le\eps^{1/3}.
\]

Together these give
\begin{align*}
\prstart{X_N=x}{y}\quad&\lesssim\quad \eps^{1/3} + \sum_{i=\frac{1}{\eps^{2/3}}}^{R^2}\frac{1}{\sqrt{i-1}}\left(\frac{\Delta}{\Delta+\eps}\right)^{i-1}\frac{\eps}{1+\eps} + \sum_{i\ge R^2}\frac{1}{R}\left(\frac{\Delta}{\Delta+\eps}\right)^{i-1}\frac{\eps}{1+\eps}\\
&\lesssim\quad\eps^{1/3}+\eps^{1/3}\frac{\Delta+\eps}{1+\eps}+\frac{1}{R}\frac{\Delta+\eps}{1+\eps}\quad\lesssim\quad\eps^{1/3}+\frac1R\quad\asymp\quad\eps^{1/3}.
\end{align*}

(Note that $\frac1R\lesssim\eps\ll\eps^{1/3}$, so the constant in the last $\asymp$ does not depend on $\inf_n\{R_n\eps_n\}$.)
\end{proof}

\begin{definition}\label{def:sigma_varphi}
For a walk $X$ on a quasi-tree $T$ let us say that $t$ is a regeneration time of $X$ if $(X_{t-1},X_{t})$ is a long-range edge and is crossed by the walk exactly once. For a walk $X$ on $T$ and a given non-negative integer $K$ let us define the sequence $(\sigma_i)_{i\ge0}$ as follows. Let $\sigma_0$ be the first time when $X$ reaches the $K$th level, and for each $i\ge1$ let $\sigma_i$ be the $i$th regeneration time that happens at a level $\ge K+1$. If $K$ is not specified, we take $K=0$.
Let $\varphi_i=d_T(\rho,X_{\sigma_i})$. (Remember that we are working with long-range distances.)
\end{definition}

The following two statements will allow us to relate the walk at the regeneration times to the loop-erased random walk.

\begin{lemma}\label{lem:firstgen}
Let $(\tree^{(n)})$ be a sequence of weighted {\quasi}s as in Lemma~\ref{lem:neverbacktrack}, let $\rho_n$ be the root of~$\tree^{(n)}$ and let $X^{(n)}$ be a random walk on $\tree^{(n)}$. Let $\sigma_1^{(n)}$ be the first regeneration time of $X^{(n)}$. Let~$\xi^{(n)}$ be a loop-erased random walk on $\tree^{(n)}$ and let $(x_n,y_n)$ be a long-range edge of $\tree^{(n)}$ in the first level. Then we have
\[
\prstart{\left(X^{(n)}_{\sigma_1^{(n)}-1},X^{(n)}_{\sigma_1^{(n)}}\right)=(x_n,y_n)}{\rho_n} = (1-o(1))\cdot  \prstart{\xi^{(n)}_1=(x_n,y_n)}{\rho_n},
\]
uniformly in $(x_n,y_n)$.
\end{lemma}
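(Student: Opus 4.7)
The plan is to establish the set identity
\[
\{(X_{\sigma_1-1},X_{\sigma_1})=(x_n,y_n)\}\;=\;\{\xi_1=(x_n,y_n)\}\cap\{(x_n,y_n)\text{ is crossed exactly once by }X\},
\]
and then to use Lemma~\ref{lem:neverbacktrack} to show that the second factor has conditional probability $1-o(1)$ given the first. I will suppress the dependence on $n$ and write $A$, $B$, $C$ for the three events above, so that the target identity reads $A=B\cap C$.

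For the identity, the forward inclusion $A\subseteq B\cap C$ is immediate from the definition of $\sigma_1$: if the first regeneration crosses $(x,y)$, then $C$ holds by definition, and the walk can never return to level $0$ after $\sigma_1$ (the only bridge between the two sides of $(x,y)$ is $(x,y)$ itself), so $(x,y)$ is the last level-$0$-to-$1$ edge ever crossed and $B$ holds. For the reverse inclusion $B\cap C\subseteq A$, I would use that every long-range edge in the quasi-tree is a cut edge: any long-range edge $f$ crossed strictly before the unique crossing of $(x,y)$ lies in a subtree hanging off some level-$0$-to-$1$ edge $e\ne (x,y)$ (possibly $f=e$), and in order to reach $x$ eventually the walk must exit that subtree, which forces both $e$ and $f$ to be crossed a second time. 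Hence no long-range edge crossed before the unique crossing of $(x,y)$ can be a regeneration, so $\sigma_1$ is exactly that crossing time and $A$ holds.

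Given the identity, since $A\subseteq B$ it remains to show $\prstart{B\setminus A}{\rho}\le o(1)\cdot\prstart{B}{\rho}$ uniformly in $(x,y)$. Writing $\tau_y$ for the first time the walk hits $y$ (which coincides with the first outward crossing of $(x,y)$, because the centre $y$ of its level-$1$ ball is only accessible through $(x,y)$), the strong Markov property at $\tau_y$ together with Lemma~\ref{lem:neverbacktrack} gives
\[
\prstart{B\setminus A}{\rho}\;\le\;\prstart{(x,y)\text{ crossed }\ge 2}{\rho}\;\le\;\prstart{\tau_y<\infty}{\rho}\cdot\delta_n.
\]
On the other hand, the event $\{\tau_y<\infty,\ \tau^+_{(y,x)}\circ\theta_{\tau_y}=\infty\}$ is contained in $B$ (by the same cut-edge argument: once the walk reaches $y$ without ever re-crossing $(x,y)$, its remaining trajectory stays in the subtree of $(x,y)$, so no further level-$0$-to-$1$ edge is crossed), and by strong Markov and Lemma~\ref{lem:neverbacktrack} its probability is at least $\prstart{\tau_y<\infty}{\rho}(1-\delta_n)$. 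Dividing, $\prstart{A}{\rho}=(1-O(\delta_n))\prstart{B}{\rho}$, and the conclusion follows uniformly in $(x,y)$ because the bound $\delta_n=o(1)$ of Lemma~\ref{lem:neverbacktrack} is uniform over all long-range edges.

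The main obstacle is the reverse inclusion $B\cap C\subseteq A$, which rests on the combinatorial observation that every long-range edge of the quasi-tree is a cut edge, so any excursion by the walk into a subtree must be retraced edge by edge. Once this structural point is in place, the proof reduces to two direct applications of Lemma~\ref{lem:neverbacktrack} via the strong Markov property at $\tau_y$.
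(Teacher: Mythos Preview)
Your proof is correct and follows essentially the same approach as the paper: both rely on the cut-edge structure of $(x,y)$, decompose via the strong Markov property at $\tau_y$, and invoke Lemma~\ref{lem:neverbacktrack} for the $o(1)$. The only cosmetic difference is that the paper writes down the two probabilities explicitly---$\prstart{\xi_1=(x,y)}{\rho}=\prstart{\tau_y<\infty}{\rho}\sum_{k\ge0}\prstart{\tau_x<\infty}{y}^k\prstart{\tau_y<\infty}{x}^k\prstart{\tau_x=\infty}{y}$ and $\prstart{(X_{\sigma_1-1},X_{\sigma_1})=(x,y)}{\rho}=\prstart{\tau_y<\infty}{\rho}\prstart{\tau_x=\infty}{y}$---and reads off the ratio, whereas you package the same computation as the set identity $A=B\cap C$ and a bound on $\prstart{B\setminus A}{\rho}$.
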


\begin{proof}
Let $(x,y)$ be a long-range edge. We have
\begin{align*}
&\prstart{\xi_1=(x,y)}{\rho}=\prstart{\tau_y<\infty}{\rho}\sum_{k=0}^{\infty}\prstart{\tau_x<\infty}{y}^k\prstart{\tau_y<\infty}{x}^k\prstart{\tau_x=\infty}{y},\\
&\prstart{\left(X_{\sigma_1-1},X_{\sigma_1}\right)=(x,y)}{\rho}=\prstart{\tau_y<\infty}{\rho}\prstart{\tau_x=\infty}{y}.
\end{align*}
Lemma~\ref{lem:neverbacktrack} gives $\prstart{\tau_x<\infty}{y}=o(1)$, hence $\sum_{k=0}^{\infty}\prstart{\tau_x<\infty}{y}^k\prstart{\tau_y<\infty}{x}^k=1+o(1)$.
\end{proof}

\begin{lemma}\label{lem:bound_prob_Xsigma1_equals_x}
Let $T$ be a quasi-tree as in Definition~\ref{def:tree} with root $\rho$, let $X$ be a random walk on $T$ from $\rho$ and let $\xi$ be a loop-erased random walk on $T$ from $\rho$. Let $K$ be any non-negative integer and let $(\sigma_i)_{i\ge0}$ be as in Definition~\ref{def:sigma_varphi}. Then for any $r\ge1$ and any vertex $y$ that is the centre of an $R$-ball at (long-range) distance $K+r$ from $\rho$ we have
\begin{align*}
\prstart{X_{\sigma_1}=y}{\rho}\quad\le\quad (2\delta^2)^{r-1}\prstart{\tau_y<\infty}{\rho}\quad\le\quad (2\delta^2)^{r-1}\frac{1}{1-\delta}\prstart{\xi_{K+r}=(x,y)}{\rho},\end{align*}
where $x$ is the long-range neighbour of $y$ and $\delta$ is defined as in Lemma~\ref{lem:neverbacktrack} via
\[
\delta =1- \inf_{(w,z)}\prstart{\tau_{(z,w)}^+=\infty}{w}.\]
\end{lemma}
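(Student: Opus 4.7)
The plan is to handle the two inequalities separately. The second is essentially immediate from Lemma~\ref{lem:neverbacktrack}: on the event $\{\tau_y<\infty\}\cap\{\tau^+_{(y,x)}=\infty\}$, after time $\tau_y$ the walk is trapped in the subtree below $y$, making $(x,y)$ the last long-range edge between levels $K+r-1$ and $K+r$ crossed by $X$, so $\xi_{K+r}=(x,y)$. The strong Markov property at $\tau_y$ combined with the uniform lower bound $\prstart{\tau^+_{(y,x)}=\infty}{y}\ge 1-\delta$ then gives
\[
\prstart{\xi_{K+r}=(x,y)}{\rho}\ \ge\ (1-\delta)\,\prstart{\tau_y<\infty}{\rho},
\]
which rearranges into the required second inequality.

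For the first inequality I would set up notation: write $y=y_{K+r}$ and let $y_0=\rho,y_1,\dots,y_{K+r}$ denote the successive $R$-ball centres on the unique path from $\rho$ to $y$ in the quasi-tree, with associated long-range edges $e_j=(w_j,y_j)$. The event $\{X_{\sigma_1}=y\}$ imposes three requirements: $\tau_y<\infty$; regeneration of $e_{K+r}=(x,y)$, i.e.\ $\tau^+_{(y,x)}=\infty$; and non-regeneration of each $e_{K+i}$ for $1\le i\le r-1$. Because the regeneration of $e_{K+r}$ traps the walk in the subtree below $y$ from $\tau_y$ onwards, no crossing of the intermediate edges is possible after $\tau_y$; combined with the $+1$ net flow across each such edge at time $\tau_y$, the non-regeneration condition forces each of them to be crossed at least three times within $[0,\tau_y]$.

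The proof then iterates a per-level bound of $2\delta^2$ by applying the strong Markov property at each hitting time $\tau_{y_{K+i}}$. For each $i\in\{1,\dots,r-1\}$ the walk from $y_{K+i}$ must backward-cross $e_{K+i}$ at some later time (contributing a factor of at most $\delta$ by Lemma~\ref{lem:neverbacktrack}), land at $w_{K+i}$, and then forward-cross $e_{K+i}$ once more to ever reach $y$; the latter event supplies a further factor of order $\delta$. A brief case analysis distinguishing whether the extra excursion at $e_{K+i}$ occurs before or after the walk first hits the next centre $y_{K+i+1}$ absorbs the combinatorial constants into the overall factor~$2$, producing the per-level cost of $2\delta^2$. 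Iterating over the $r-1$ intermediate levels and telescoping the hitting-probability factorisation $\prstart{\tau_y<\infty}{\rho}=\prod_{j=1}^{K+r}\prstart{\tau_{y_j}<\infty}{y_{j-1}}$ yields $\prstart{X_{\sigma_1}=y}{\rho}\le (2\delta^2)^{r-1}\prstart{\tau_y<\infty}{\rho}$.

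The main technical obstacle is establishing the bound of order $\delta$ on the forward-crossing probability $\prstart{\tau_{y_{K+i}}<\infty}{w_{K+i}}$, since Lemma~\ref{lem:neverbacktrack} only controls backward crossings from the centre of a ball. The idea is to replicate its proof: splitting the trajectory at time $1/\eps^{2/3}$ and comparing the walk inside the parent ball of $y_{K+i-1}$ with a simple random walk $Z$, one uses the heat-kernel bound $\prstart{Z_t=w_{K+i}}{w_{K+i}}\lesssim 1/\sqrt{t}$ (for $t\le R^2$) together with the per-step probability $\lesssim\eps$ of taking the specific long-range edge $e_{K+i}$ to conclude that the probability of exiting the ball via $e_{K+i}$ is $O(\eps^{1/3})$. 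A geometric-series argument (each excursion that leaves the ball returns with probability at most $\delta$ by Lemma~\ref{lem:neverbacktrack}) then extends this estimate to the full forward-crossing probability, giving $\prstart{\tau_{y_{K+i}}<\infty}{w_{K+i}}=O(\eps^{1/3})\asymp\delta$ and closing the argument.
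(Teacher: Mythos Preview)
Your approach to both inequalities is essentially the same as the paper's. The second inequality is handled identically. For the first, your ``per-level $2\delta^2$ via a binary case analysis'' is the informal version of what the paper does: the paper works with the induced walk $Y$ on directed long-range edges and associates to each admissible trajectory $p$ a canonical subsequence $\tilde p$ of length exactly $3r-2$, where the breakpoints $(j_k)$ record the furthest level reached before each successive back-crossing; since $\tilde p$ is a subsequence of $p$, the strong Markov property bounds the total mass of all paths with a given $\tilde p$ by the product of hitting probabilities along $\tilde p$, and summing over the $2^{r-1}$ possible $\tilde p$'s gives the factor $2^{r-1}$. This $\tilde p$ construction is precisely the clean way to formalize your case split (``excursion at $e_{K+i}$ before or after first hitting $y_{K+i+1}$'') and to handle the interleaving of excursions at different levels that makes a naive level-by-level strong Markov argument awkward. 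The ``main technical obstacle'' you flag---controlling the forward-crossing probability $\prstart{\tau_{y_{K+i}}<\infty}{w_{K+i}}$ from a non-centre endpoint---is genuinely the delicate spot; your replication of the proof of Lemma~\ref{lem:neverbacktrack} yields $O(\eps^{1/3})\asymp\delta$ rather than literally $\le\delta$, so strictly speaking you obtain $(C\delta^2)^{r-1}$. The paper absorbs these factors into the claimed $\delta^{2r-2}$ without further comment, so your treatment is arguably more explicit. In any case the bound is more than adequate for every downstream application (Corollary~\ref{cor:varphi_tail_bound} and Lemma~\ref{lem:tail_bounds}).
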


\begin{proof} Let $z$ be the level $K$ centre of the $R$-ball that is the ancestor of $x$ and $y$. (Note that in case $K=0$ we have $z=\rho$.) Then
\begin{align*}
\prstart{X_{\sigma_1}=y}{\rho}&=\prstart{\tau_z<\infty}{\rho}\prcond{X_{\sigma_1}=y}{\tau_z<\infty}{\rho}\qquad\text{and} \\
\prstart{\tau_y<\infty}{\rho}&=\prstart{\tau_z<\infty}{\rho}\prstart{\tau_y<\infty}{z}.
\end{align*}

Let $y_0=z$ and let $(x_i,y_i)_{i=1}^r$ be the directed long-range edges between $z$ and $y$. Let $Y$ be the walk on the long-range edges induced by $X$. Then
\begin{align*}
&\prcond{X_{\sigma_1}=y}{\tau_z<\infty}{\rho}\\
&=\prstart{Y\text{ crosses each }(x_i,y_i)_{i=1}^{r-1}\text{ at least 3 times, then crosses }(x_r,y_r)\text{ for the first and last time}}{z}\\
&\le\prstart{Y\text{ crosses each }(x_i,y_i)_{i=1}^{r-1}\text{ at least 3 times, then crosses }(x_r,y_r)\text{ for the first time}}{z}.
\end{align*}
For a path $p$ of $Y$ starting from $z$, ending at $(x_r,y_r)$ and crossing each of $(x_i,y_i)_{i=1}^{r-1}$ at least three times, let us associate a path $\til{p}$ as follows. Let $(x_{j_1},y_{j_1})$ be the furthest edge $p$ reaches before crossing $(y_1,x_1)$ again. Then let $\til{p}$ start as $(x_1,y_1),...,(x_{j_1},y_{j_1}),(y_{j_1},x_{j_1}),...,(y_1,x_1),$ $(x_1,y_1),...,(x_{j_1},y_{j_1})$. Note that $p$ must also cross these edges in this order before first crossing $(x_{j_1+1},y_{j_1+1})$. Then in each step let $(x_{j_{k+1}},y_{j_{k+1}})$ be the furthest edge $p$ reaches before crossing $(y_{j_k+1},x_{j_k+1})$ again. Then append $(x_{j_k+1},y_{j_k+1}),...,(x_{j_{k+1}},y_{j_{k+1}}),(y_{j_{k+1}},x_{j_{k+1}}),$ $...,(y_{j_k+1},x_{j_k+1}),(x_{j_k+1},y_{j_k+1}),...,(x_{j_{k+1}},y_{j_{k+1}})$ to the end of $\til{p}$. Continue this until $\tilde{p}$ reaches $(x_{r-1},y_{r-1})$. Then finally append $(x_r,y_r)$ to it. Note that $\til{p}$ is a subsequence of $p$. Also note that $\til{p}$ is always of length $(3r-2)$, can take $2^{r-1}$ different possible values (we can choose $(j_k)$ to be any subsequence of $(1,2,...,r-1)$) and that for each $\til{p}$ the sum of the probabilities of the paths $p$ associated to it is upper bounded by
\[\prod_{i=1}^{3r-2}\prstart{\tau_{\til{p}_{i}}<\infty}{\til{p}_{i-1}}\le
\left(\prod_{i=1}^r\prstart{\tau_{(x_i,y_i)}<\infty}{(x_{i-1},y_{i-1})}\right)\delta^{2r-2}
=\delta^{2r-2}\prstart{\tau_{(x_r,y_r)}<\infty}{z},\]
where $\til{p}_i$ is the $i$th point of $\til{p}$.
Summing over all $\tilde{p}$ gives
\[\prcond{X_{\sigma_1}=y}{\tau_z<\infty}{\rho}\quad\le\quad2^{r-1}\delta^{2r-2}\prstart{\tau_y<\infty}{z}.\]
We also know that 
\[
\prstart{\xi_{K+r}=(x,y)}{\rho}\quad\ge\quad\prstart{\tau_y<\infty}{\rho}\prstart{\tau_x=\infty}{y}\quad\ge\quad(1-\delta)\prstart{\tau_y<\infty}{\rho}.\]
This finishes the proof.
\end{proof}

Summing over all $y$ at level $K+r$ in Lemma~\ref{lem:bound_prob_Xsigma1_equals_x} we immediately get the following bounds.

\begin{corollary}\label{cor:varphi_tail_bound}
In the setup of Lemma~\ref{lem:bound_prob_Xsigma1_equals_x} for any $r\ge1$ we have
\begin{align*}\prstart{\varphi_1=K+r}{\rho}\quad&\le\quad \frac{1}{1-\delta}(2\delta^2)^{r-1},\quad\text{and}\\
\prstart{\varphi_1\ge K+r}{\rho}\quad&\le\quad \frac{1}{(1-\delta)(1-2\delta^2)}(2\delta^2)^{r-1}.
\end{align*}
\end{corollary}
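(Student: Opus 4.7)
The plan is to deduce both bounds almost mechanically from Lemma~\ref{lem:bound_prob_Xsigma1_equals_x} by summing over appropriate sets of vertices. First I would observe that $\sigma_1$ is by definition a regeneration time, so $(X_{\sigma_1-1},X_{\sigma_1})$ is a long-range edge, and hence $X_{\sigma_1}$ is necessarily the centre of an $R$-ball of $T$ (long-range edges in the construction of Definition~\ref{def:tree} always point from a non-centre vertex in one ball to the centre of the next). Moreover, since $\sigma_1$ is the first regeneration time at level $\ge K+1$, the level of $X_{\sigma_1}$ is of the form $K+r$ with $r\ge 1$. Thus
\[
\prstart{\varphi_1 = K+r}{\rho} \;=\; \sum_{y} \prstart{X_{\sigma_1}=y}{\rho},
\]
where the sum runs over centres $y$ of $R$-balls at (long-range) distance $K+r$ from $\rho$.

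Now I would apply Lemma~\ref{lem:bound_prob_Xsigma1_equals_x} term by term to obtain
\[
\prstart{\varphi_1=K+r}{\rho} \;\le\; \frac{(2\delta^2)^{r-1}}{1-\delta}\sum_{y} \prstart{\xi_{K+r}=(x_y,y)}{\rho},
\]
where $x_y$ is the long-range neighbour of $y$ on the $\rho$-side. The events $\{\xi_{K+r}=(x_y,y)\}$ are disjoint as $y$ ranges over the centres at level $K+r$ (the loop-erased walk has at most one edge at level $K+r$), so the sum is bounded by $1$. This yields the first inequality.

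For the tail bound I would simply sum the pointwise bound as a geometric series:
\[
\prstart{\varphi_1 \ge K+r}{\rho} \;=\; \sum_{s \ge r}\prstart{\varphi_1 = K+s}{\rho} \;\le\; \frac{1}{1-\delta}\sum_{s \ge r}(2\delta^2)^{s-1} \;=\; \frac{(2\delta^2)^{r-1}}{(1-\delta)(1-2\delta^2)},
\]
which is the second claim. Implicitly I am using $2\delta^2 < 1$, which is harmless since by Lemma~\ref{lem:neverbacktrack} we have $\delta = o(1)$ for all $n$ large enough, so the geometric series converges and all the manipulations above are legitimate. There is no real obstacle here; the statement really is a one-line summation given Lemma~\ref{lem:bound_prob_Xsigma1_equals_x}, and the only subtlety worth flagging is the combinatorial point that $X_{\sigma_1}$ lands on a centre of an $R$-ball, which justifies applying the lemma directly.
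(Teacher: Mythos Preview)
Your proof is correct and follows exactly the paper's approach: the paper simply states ``Summing over all $y$ at level $K+r$ in Lemma~\ref{lem:bound_prob_Xsigma1_equals_x} we immediately get the following bounds,'' and you have spelled out precisely that summation together with the geometric-series step for the tail. Your observation that $X_{\sigma_1}$ must be the centre of an $R$-ball and that the events $\{\xi_{K+r}=(x_y,y)\}$ are disjoint (indeed, their union has probability exactly $1$) is the only content needed, and your remark about $2\delta^2<1$ is apt.
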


\begin{definition}\label{def:Tprime}
Given a quasi-tree $T$ with root $\rho$, let $T^a$ be obtained from $T$ by adding an $\eps$-weighted long-range edge from $\rho$ to a new vertex $\rho^a$. Given a quasi-tree $T$ and a vertex $x$ that is the centre of an $R$-ball in $T$, let $T(x)$ denote the quasi-tree formed by the $R$-ball of $x$ and the $R$-balls that are its descendants.

Given a random quasi-tree $T$ corresponding to a graph $G$, let $(T',X')$ be distributed as $(T^a,X^a)$ conditioned on $X^a$ never hitting $\rho^a$ (where $T^a$ is as above and $X^a$ is a random walk on it started from $\rho$). Note that in general $T'$ does not have the same distribution as $T$, and given $T'$, the process $X'$ is not a random walk on it. Let $\xi'$ be a loop-erased random walk on $T'$ from its root and let the times $(\sigma'_i)_{i\ge0}$ and their levels $(\varphi'_i)_{i\ge0}$ be defined as in Definition~\ref{def:sigma_varphi} for the tree $T'$ and the process $X'$ (instead of $T$ and $X$).
\end{definition}

Then $T'$, $X'$ and $\xi'$ also satisfy a version of Lemma~\ref{lem:firstgen} and Lemma~\ref{lem:bound_prob_Xsigma1_equals_x} as follows.

\begin{lemma}\label{lem:bound_prob_Xprime_sigma1_equals_x}
Let $K=0$ and let $T'$, $X'$, $\sigma'_1$ and $\xi'$ be as in Definition~\ref{def:Tprime}. Then for any realisation of $T'$ and for any $r\ge1$ and any vertex $y$ that is the centre of an $R$-ball at distance $r$ from $\rho$ we almost surely have 
\[\prcond{X'_{\sigma'_1}=y}{T'}{\rho}\quad\lesssim\quad (2\delta^2)^{r-1}\prcond{\xi'_{r}=(x,y)}{T'}{\rho}\]
where $x$ is the long-range neighbour of $y$.
If $r=1$ then we almost surely also have
\[\prcond{X'_{\sigma'_1}=y}{T'}{\rho}\quad\asymp\quad\prcond{\xi'_{1}=(x,y)}{T'}{\rho}.\]
\end{lemma}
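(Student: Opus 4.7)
The strategy is to reduce to the unconditional statements of Lemmas~\ref{lem:firstgen} and~\ref{lem:bound_prob_Xsigma1_equals_x} applied to $X^a$ on $T^a$, absorbing the conditioning into a uniformly bounded normalising factor. For a fixed realisation $T' = \tau$, by the definition of the conditional law of $(T', X')$,
\[
\prcond{X'_{\sigma'_1}=y}{T' = \tau}{\rho}\;=\;\frac{\prstart{X^a_{\sigma_1}=y,\;\tau_{\rho^a}=\infty}{\rho}}{h(\tau)},\qquad h(\tau):=\prstart{\tau_{\rho^a}=\infty}{\rho},
\]
with the right-hand side referring to the walk $X^a$ on $T^a = \tau \cup \{\rho^a\}$ started at $\rho$. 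The first step of the plan is to establish $h(\tau)\gtrsim1$ uniformly in $\tau$: since $\rho^a$ is a leaf accessible only through $\rho$, a first-step decomposition yields $1-h(\tau)=\eps/(\deg+\eps-\deg\cdot\bar q(\tau))$ where $\bar q(\tau)$ is the averaged return probability to $\rho$ from a ball-neighbour of $\rho$. Considering the possibility of stepping onto the long-range edge at that neighbour and never backtracking, Corollary~\ref{cor:neverbacktrack} gives $1-\bar q(\tau)\gtrsim\eps$, so the denominator is $\gtrsim \eps$ and $h(\tau)$ is bounded below by a positive constant depending only on $\Delta$.

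With the normalisation dispatched, the argument of Lemma~\ref{lem:bound_prob_Xsigma1_equals_x} carries over to $X^a$ on $T^a$ essentially verbatim: the pendant $\rho^a$ has no long-range successors and therefore does not participate in the path-compression construction of the reduced paths $\tilde p$ or in the uniform backtracking bound $\delta$. This yields
\[
\prstart{X^a_{\sigma_1}=y}{\rho}\;\le\;2^{r-1}\delta^{2r-2}\prstart{\tau_y<\infty}{\rho}.
\]
On the LERW side, the geometric-series expansion used in Lemma~\ref{lem:firstgen} gives $\prcond{\xi'_r=(x,y)}{T'}{\rho}\ge(1-\delta)\prstart{\tau_y<\infty}{\rho}$. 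Combining these two bounds with $h(\tau)\asymp 1$ proves the first (upper) inequality of the lemma.

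For the case $r = 1$, the path-compression argument becomes an equality: one checks that $\{X^a_{\sigma_1}=y\}$ coincides with the event that $X^a$ crosses $(x,y)$ exactly once, giving $\prstart{X^a_{\sigma_1}=y}{\rho}=\prstart{\tau_y<\infty}{\rho}\prstart{\tau_x=\infty}{y}$. Expanding $\prcond{\xi'_1=(x,y)}{T'}{\rho}$ as in Lemma~\ref{lem:firstgen} and using $\prstart{\tau_x<\infty}{y}\le\delta$ then gives $\prcond{\xi'_1=(x,y)}{T'}{\rho}\asymp\prstart{\tau_y<\infty}{\rho}\prstart{\tau_x=\infty}{y}$. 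It remains to show $\prstart{X^a_{\sigma_1}=y,\;\tau_{\rho^a}=\infty}{\rho}\asymp h(\tau)\prstart{X^a_{\sigma_1}=y}{\rho}$, which follows from a strong-Markov decomposition at $\tau_y$: after the unique crossing of $(x,y)$ the walk is trapped in $y$'s subtree and cannot reach $\rho^a$, so the probability of avoiding $\rho^a$ on the event $\{X^a_{\sigma_1}=y\}$ is governed by the excursion behaviour of $X^a$ on the $\rho$-side up to $\tau_y$, which is comparable to the unconditional $h(\tau)$ by the same first-step analysis. The main obstacle throughout is the uniform lower bound on $h(\tau)$; once that is in place the remainder of the proof is a direct adaptation of the earlier lemmas.
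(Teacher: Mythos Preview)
Your proof is correct and follows essentially the same route as the paper: pass from $(T',X')$ to the unconditioned pair $(T^a,X^a)$, bound the normalising factor $\prcond{\tau_{\rho^a}=\infty}{T^a}{\rho}$ away from zero, and then invoke Lemmas~\ref{lem:firstgen} and~\ref{lem:bound_prob_Xsigma1_equals_x} on $T^a$. The only notable difference is cosmetic: the paper dispatches the normalisation in one line by citing Lemma~\ref{lem:neverbacktrack} (which in fact gives the stronger $\prcond{\tau_{\rho^a}=\infty}{T^a}{\rho}=1-o(1)$), whereas you redo a first-step computation; and for the $r=1$ lower bound the paper chains the inequalities $\prcond{X^a_{\sigma^a_1}=y,\tau_{\rho^a}=\infty}{T^a}{\rho}\gtrsim\prcond{\tau_y<\tau_{\rho^a}}{T^a}{\rho}\gtrsim\prcond{\xi^a_1=(x,y),\tau_y<\tau_{\rho^a}}{T^a}{\rho}\asymp\prcond{\xi'_1=(x,y)}{T'}{\rho}$ directly rather than going through your explicit formula for $\prstart{X^a_{\sigma_1}=y}{\rho}$ and the ratio comparison.
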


\begin{proof}Using the definition of $X'$, and that by Lemma~\ref{lem:neverbacktrack} we have $\prcond{\tau_{\rho^a}=\infty}{T^a}{\rho}=1-o(1)$ and writing $(\sigma_i^a)$ for the regeneration times of $X^a$ we get that 
\begin{align*} \prcond{X'_{\sigma'_1}=y}{T'}{\rho}\quad&=\prcond{X^a_{\sigma^a_1}=y}{T^a,\tau_{\rho^a}=\infty}{\rho}\\
=\frac{\prcond{X^a_{\sigma^a_1}=y,\tau_{\rho^a}=\infty}{T^a}{\rho}}{\prcond{\tau_{\rho^a}=\infty}{T^a}{\rho}}\quad&\asymp\prcond{X^a_{\sigma^a_1}=y,\tau_{\rho^a}=\infty}{T^a}{\rho}\quad\le\prcond{X^a_{\sigma^a_1}=y}{T^a}{\rho}.\end{align*}

Repeating the proof of Lemma~\ref{lem:bound_prob_Xsigma1_equals_x} for $T^a$ instead of $T$ we get that this is
\[\lesssim\quad(2\delta^2)^{r-1}\prcond{\xi^a_{r}=(x,y)}{T^a}{\rho}\quad =\quad (2\delta^2)^{r-1}\prcond{\xi'_{r}=(x,y)}{T'}{\rho}   ,\]
where the last equality follows by noting that $\xi^a$ and $\xi'$ are loop-erased random walks on $T^a$ and $T'$ respectively.
This finishes the proof of the first part of the result. For the second part of the result note that
\begin{align*}\prcond{X'_{\sigma'_1}=y}{T'}{\rho}\quad&\asymp\quad\prcond{X^a_{\sigma^a_1}=y,\tau_{\rho^a}=\infty}{T^a}{\rho}\quad\gtrsim\quad\prcond{\tau_{y}<\infty,\tau_{y}<\tau_{\rho^a}}{T^a}{\rho}\\
&\gtrsim\quad\prcond{\xi^a_1=(x,y),\tau_{y}<\tau_{\rho^a}}{T^a}{\rho}\quad\asymp\quad\prcond{\xi'_{1}=(x,y)}{T'}{\rho}\end{align*}
and this finishes the proof.
\end{proof}

The proof of the following lemma is identical to the proof of the first and third points of~\cite[Lemma 3.6]{random_matching} and we omit the proof.

\begin{lemma}\label{lem:indep_between_regenerations}
Let $T$ be a random quasi-tree associated to a graph $G$, with root $\rho$. Let $K\ge0$ and let $T_0$ be a realisation of the first $K$ levels of $T$. Let $X$ be a simple random walk on $T$ started from the root.
Conditional on $\B(\rho,K)=T_0$ we have that
\begin{enumerate}[(i)]
\item $\left(T(X_{\sigma_i})\setminus T(X_{\sigma_{i+1}}),(X_t)_{\sigma_i\le t\le\sigma_{i+1}}\right)$ are i.i.d.\ for $i\ge1$ and are jointly independent from\\ $\left(T\setminus T(X_{\sigma_{1}}),(X_t)_{0\le t\le\sigma_{1}}\right)$, and
\item for all $i\ge1$, the pair $\left(T(X_{\sigma_i}),(X_t)_{t\ge\sigma_i}\right)$ has the same distribution as $(T',X')$.
\end{enumerate}
\end{lemma}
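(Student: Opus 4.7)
The plan is to run the standard regeneration argument for random walks on trees, built on two pillars: the strong Markov property at the times $\sigma_i$, and the fact that the random quasi-tree $T$ is constructed recursively from i.i.d.\ $R$-balls. The key observation is that at a regeneration time $\sigma_i$ the walk crosses the long-range edge $(X_{\sigma_i-1},X_{\sigma_i})$ and, by definition, never crosses it again; consequently from time $\sigma_i$ onwards the walk lives entirely in the descendant subtree $T(X_{\sigma_i})$, which makes the future and the past interact only through the position $X_{\sigma_i}$.

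First I would formalise an exploration process, revealing the quasi-tree one $R$-ball at a time in the order the walk visits them. Starting from $\B(\rho,K)=T_0$ and then running $X$, at any moment the revealed region is a finite sub-quasi-tree, and by Definition~\ref{def:tree} the yet-unrevealed subtrees attached to the boundary vertices are conditionally i.i.d.\ copies of the random quasi-tree construction. Stopping at the first regeneration time $\sigma_1$, the vertex $X_{\sigma_1}$ sits in a fresh $R$-ball whose subtree $T(X_{\sigma_1})$ has not yet influenced anything; hence conditional on $\left(T\setminus T(X_{\sigma_{1}}),(X_t)_{0\le t\le \sigma_1}\right)$, the pair $\left(T(X_{\sigma_1}),(X_t)_{t\ge \sigma_1}\right)$ is (by the strong Markov property) a random walk started from $X_{\sigma_1}$ on a freshly sampled random quasi-tree rooted at $X_{\sigma_1}$, with an extra long-range edge attached leading back to the already-explored region. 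Imposing the defining regeneration property that this back-edge is never traversed matches exactly the definition of $(T',X')$ in Definition~\ref{def:Tprime}, proving the identity in law claimed in (ii) for $i=1$.

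Next I would iterate. Apply the previous paragraph to the configuration $\left(T(X_{\sigma_1}),(X_t)_{t\ge \sigma_1}\right)$, which is distributionally $(T',X')$: its own first regeneration time (shifted) is $\sigma_2$, and the excursion segment $\left(T(X_{\sigma_1})\setminus T(X_{\sigma_2}),(X_t)_{\sigma_1\le t\le \sigma_2}\right)$ is a deterministic function of the exploration between $\sigma_1$ and $\sigma_2$. The same decomposition then shows that $\left(T(X_{\sigma_2}),(X_t)_{t\ge \sigma_2}\right)$ is again distributed as $(T',X')$ and is independent of everything revealed up to $\sigma_2$. Induction yields both claims: the identity in law in (ii) for every $i\ge 1$, and the i.i.d.\ property of the inter-regeneration slices in (i), together with their joint independence from the initial piece up to $\sigma_1$.

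The main technical subtlety is that "being a regeneration time" is a tail-type event, so one cannot naively apply the strong Markov property at $\sigma_i$ as if it were an ordinary stopping time for the exploration filtration. This is precisely what the conditioning in the definition of $(T',X')$ (the event $\{\tau_{\rho^a}=\infty\}$) is designed to absorb: by Lemma~\ref{lem:neverbacktrack} this conditioning has positive (in fact $1-o(1)$) probability, and combining it with the i.i.d.\ nature of the attached subtrees converts the regeneration constraint into a clean factorisation. All other steps are bookkeeping once the exploration-based viewpoint is set up, which is why the proof is essentially identical to that of Lemma~3.6 in~\cite{random_matching}.
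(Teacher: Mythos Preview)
Your proposal is correct and follows exactly the approach the paper has in mind: the paper omits the proof entirely, stating that it is identical to the proof of the first and third points of \cite[Lemma~3.6]{random_matching}, which is precisely the regeneration/exploration argument you outline. Your identification of the key technical subtlety---that regeneration times are not stopping times and that the conditioning defining $(T',X')$ absorbs this---is on point and matches the standard treatment.
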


The tail probability bounds for $(\sigma_i)$ and $(\varphi_i)$ from~\cite[Lemma~3.6]{random_matching} are no longer valid here and instead we get the following.

\begin{lemma}\label{lem:tail_bounds}
There exist positive constants $c_1, c_2$ and $C$ so that the following holds. Let $T$ be a quasi-tree as in Definition~\ref{def:tree}, let $X$ be a random walk on it from its root $\rho$ and let $(\sigma_i)$ and $(\varphi_i)$ be defined as in Definition~\ref{def:sigma_varphi} with $K=0$. Assume that $\eps$ is small enough so that $\delta<\frac13$. Then for any $r\ge1$ we have
\begin{align*}
\prstart{\varphi_1\ge r}{\rho}\quad&\leq \quad  C e^{-c_1(r-1)\log\left(\frac1\eps\right)},\\
\prstart{\sigma_1\ge r}{\rho}\quad&\leq \quad Ce^{-c_2r\eps}.
\end{align*}
\end{lemma}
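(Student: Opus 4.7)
The first inequality is immediate from Corollary~\ref{cor:varphi_tail_bound} applied with $K=0$, combined with the bound $\delta \le c\,\eps^{1/3}$ from Lemma~\ref{lem:neverbacktrack}: together these give $(2\delta^2)^{r-1} \le (2c^2)^{r-1}\eps^{2(r-1)/3}$, and for $\eps$ small enough, absorbing the constant base into the exponent yields the required $C\exp(-c_1(r-1)\log(1/\eps))$. Note that the prefactor $\frac{1}{(1-\delta)(1-2\delta^2)}$ in Corollary~\ref{cor:varphi_tail_bound} is bounded by an absolute constant under the assumption $\delta<1/3$.

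For the second inequality, the plan is to decompose $\sigma_1 = \tau_K$, where $\tau_k$ is the time of the $k$-th long-range crossing by $X$ (with $\tau_0:=0$) and $K$ is the smallest $k \ge 1$ such that the $k$-th crossing is a regeneration, and then bound $K$ and the inter-crossing gaps $\tau_k-\tau_{k-1}$ separately. Let $v_k$ denote the destination of the $k$-th crossing and set $D_k := \ell(v_k)$. Since any path between two balls at the same long-range level must pass through a ball at strictly smaller level, $(D_k)$ evolves as a $\pm1$ walk on $\Z_{\ge 0}$, reflected at $0$, and by Lemma~\ref{lem:neverbacktrack} satisfies $\pr{D_{k+1}=D_k+1\mid \text{past}} \ge 1-\delta\ge 2/3$ whenever $D_k\ge 1$. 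The key observation is that the $k$-th crossing is a regeneration iff it is a forward step (i.e.\ $D_k=D_{k-1}+1$) and $D_{k'}\ge D_k$ for all $k'>k$: re-crossing the edge used at step $k$ requires returning to depth $D_k-1$, and by the ball structure this can only be done via that same edge; a backward step, on the other hand, is never a regeneration since its edge has already been crossed. By the strong Markov property, the number of ``forward attempts'' before one stays above its starting depth is stochastically dominated by a $\Geom{(1-2\delta)/(1-\delta)}$ variable, and each failed attempt has length with exponential tail (the first-return time of a biased random walk on $\Z$). Hence $\pr{K\ge m} \le Ce^{-c_0 m}$ for constants depending only on the upper bound $\delta<1/3$.

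For the gaps, observe that each non-root vertex of the quasi-tree has its unique long-range edge of weight $\eps$ and at most $\Delta$ other edges of weight $1$, so, conditional on the past, the walk takes its long-range edge at each step with probability at least $\eps/(\Delta+\eps)$. Hence each gap $\tau_k-\tau_{k-1}$ is stochastically dominated by a $\Geom{\eps/(\Delta+\eps)}$ variable $G_i$, possibly with a harmless $O(1)$ correction for $\tau_1$ due to the root having no long-range edge. Choosing $\alpha>0$ small enough that $\alpha r\eps\cdot(\Delta+\eps)/\eps \le r/2$, one obtains
\[
\pr{\sigma_1 \ge r} \;\le\; \pr{K \ge \lceil \alpha r\eps\rceil} + \pr{\sum_{i=1}^{\lceil\alpha r\eps\rceil} G_i \ge r},
\]
and both terms are $\le Ce^{-c r\eps}$: the first by the tail bound on $K$, the second by a standard Chernoff estimate for sums of geometric random variables.

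The main obstacle, beyond bookkeeping, is to rigorously justify the conditional comparison between the depth process $D$ and a biased random walk: one needs the forward-step bound $\pr{D_{k+1}=D_k+1\mid\text{past}}\ge 1-\delta$ to hold uniformly in arbitrary past histories, not just unconditionally. This should follow from a careful application of the strong Markov property combined with the fact that the portion of the quasi-tree not yet explored by the walk remains i.i.d.\ given the past, in the spirit of Lemma~\ref{lem:indep_between_regenerations}.
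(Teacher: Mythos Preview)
Your argument for the first inequality matches the paper's exactly.

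For the second inequality, the overall decomposition into ``number of long-range crossings until first regeneration'' plus ``inter-crossing gaps dominated by geometrics'' is sound, and the gap bound is correct. However, your characterisation of regeneration has a genuine error: the claimed ``if'' direction fails. A forward step $k$ with $D_{k'}\ge D_k$ for all $k'>k$ guarantees the edge is never crossed \emph{again}, but says nothing about whether it was crossed \emph{before}. Concretely, if the depth sequence begins $1,0,1,2,1,2,3,\dots$ with steps $1,2,3$ all using the same edge $e_1$, then step $3$ satisfies your criterion yet is not a regeneration (the edge is crossed three times); the first regeneration is strictly later. Iterating this pattern shows $K$ can exceed the first index meeting your criterion by an arbitrary amount, so the Geometric bound on ``forward attempts'' does not control $K$. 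The depth process alone cannot detect which edge is crossed at a forward step, so there is no characterisation of regeneration purely in terms of $(D_k)$.

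The paper avoids this by not bounding $K=\sigma_1^Y$ directly. Writing $Y$ for the induced walk on long-range edges, it uses a three-way split
\[
\prstart{\sigma_1\ge r}{\rho}\le \prstart{\varphi_1\ge a\eps r}{\rho}+\prstart{\varphi_1\le a\eps r,\ \sigma_1^Y\ge b\eps r}{\rho}+\prstart{\sigma_1^Y\le b\eps r,\ \sigma_1\ge r}{\rho}.
\]
The first term is controlled by the already-proved tail of $\varphi_1$; the third is exactly your negative-binomial/Chernoff bound on sums of geometrics. For the middle term, observe that $\varphi_1=\ell(Y_{\sigma_1^Y})$, so the event forces $\inf_{j\ge b\eps r}\ell(Y_j)\le a\eps r$; since $\ell(Y)$ stochastically dominates a $(1-\delta,\delta)$-biased walk, this is exponentially unlikely via a standard binomial tail and the ruin probability $(\delta/(1-\delta))^m$. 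Thus the paper leverages the first inequality (on $\varphi_1$) to handle the second, rather than analysing the regeneration structure of $Y$ directly.
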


\begin{lemma}\label{lem:tail_bounds_Tprime}
There exist positive constants $c_1, c_2$ and $C$ so that the following holds. Let $T$ be a quasi-tree with $\rho$ as in Definition~\ref{def:tree}. Let $(\sigma'_i)$ and $(\varphi'_i)$ be defined for $X'$ as before, with $K=0$. Assume that $\eps<\frac{1}{27}$. Then
\begin{align*}\prstart{\varphi'_1\ge r}{\rho}\quad&\leq\quad Ce^{-c_1(r-1)\log\left(\frac1\eps\right)},\\
\prstart{\sigma'_1\ge r}{\rho}\quad&\leq \quad  Ce^{-c_2r\eps}.
\end{align*}
We also have
$\estart{\sigma'_1}{\rho}\geq \frac1{C\eps}.$
\end{lemma}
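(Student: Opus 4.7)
The plan is to deduce the three bounds using Lemma~\ref{lem:bound_prob_Xprime_sigma1_equals_x}, the decay of $\delta$ in $\eps$ from Lemma~\ref{lem:neverbacktrack}, and the fact that the conditioning on $\{\tau_{\rho^a}=\infty\}$ defining $X'$ has probability $1-o(1)$. For the first bound I would sum Lemma~\ref{lem:bound_prob_Xprime_sigma1_equals_x} over $R$-ball centres at distance exactly $s$ from $\rho$ and then over $s\ge r$. Conditionally on $T'$,
\[
\prcond{\varphi'_1\ge r}{T'}{\rho}\;=\;\sum_{s\ge r}\sum_{y:\,d(\rho,y)=s}\prcond{X'_{\sigma'_1}=y}{T'}{\rho}\;\lesssim\;\sum_{s\ge r}(2\delta^2)^{s-1}\sum_{y:\,d(\rho,y)=s}\prcond{\xi'_s=(x,y)}{T'}{\rho}.
\]
Since $\xi'$ is almost surely transient the inner sum is at most $1$, so summing a geometric series with ratio $2\delta^2<1$ yields $\prcond{\varphi'_1\ge r}{T'}{\rho}\lesssim(2\delta^2)^{r-1}$. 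The hypothesis $\eps<1/27$ combined with $\delta\le c_2\eps^{1/3}$ then turns this into $\le C\exp(-c_1(r-1)\log(1/\eps))$.

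For the tail bound on $\sigma'_1$, I would pass from $X'$ back to the unconditioned walk $X^a$ via
\[
\prstart{\sigma'_1\ge r}{\rho}\;=\;\frac{\prstart{\sigma^a_1\ge r,\;\tau_{\rho^a}=\infty}{\rho}}{\prstart{\tau_{\rho^a}=\infty}{\rho}}\;\le\;\frac{\prstart{\sigma^a_1\ge r}{\rho}}{\prstart{\tau_{\rho^a}=\infty}{\rho}}.
\]
The denominator is bounded below by $1-O(\eps)$ by decomposing at successive visits of $X^a$ to $\rho$: at each such visit the walk moves to $\rho^a$ with probability $\Theta(\eps)$, but with positive constant probability it instead leaves $\rho$ via a long-range $T$-edge and never returns, by Lemma~\ref{lem:neverbacktrack}. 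For the numerator, observe that since $\rho^a$ is a leaf in $T^a$, the edge $(\rho,\rho^a)$ is crossed an even number of times and can never be a regeneration edge; hence $\sigma^a_1$ is determined only by $T$-edges. Mimicking the argument used for Lemma~\ref{lem:tail_bounds}, at each step the probability that the next step is a long-range crossing of a $T$-edge into a previously unvisited $R$-ball that is never re-crossed is $\gtrsim\eps(1-\delta)\gtrsim\eps$, combining the $\eps$ weight of long-range edges with the uniform transience estimate of Corollary~\ref{cor:neverbacktrack}. Stochastic domination by a geometric random variable of parameter $c\eps$ then gives $\prstart{\sigma^a_1\ge r}{\rho}\le Ce^{-c_2 r\eps}$.

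For the lower bound on $\estart{\sigma'_1}{\rho}$, I would use $\sigma'_1\ge\tau_*$, where $\tau_*$ is the first time $X'$ crosses any long-range edge. The first step of $X'$ cannot be to $\rho^a$ (it has been forbidden) and $\rho$'s only long-range edge in $T^a$ is to $\rho^a$, so $X'_1$ is a non-centre neighbour of $\rho$ in the initial $R$-ball. From any non-centre vertex $v$, the unconditional probability of the next step being long-range is $\eps/(\deg(v)+\eps)\le\eps/(1+\eps)\le\eps$; the conditioning on $\{\tau_{\rho^a}=\infty\}$ changes this by at most a multiplicative factor $1+O(\eps^{1/3})$, because every potential target $w$ satisfies $\prstart{\tau_{\rho^a}=\infty}{w}=1-O(\eps^{1/3})$ by Corollary~\ref{cor:neverbacktrack}. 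Therefore $\tau_*$ stochastically dominates a geometric random variable of parameter $\le C\eps$, giving $\estart{\sigma'_1}{\rho}\ge\estart{\tau_*}{\rho}\gtrsim1/\eps$. The delicate step is the second one: one has to verify that at every time step the probability of a regeneration is uniformly $\gtrsim\eps$, which in turn requires that the walk is almost always about to enter a fresh $R$-ball. Lemma~\ref{lem:neverbacktrack} supplies exactly this, since it guarantees that the walk on the quasi-tree is overwhelmingly outward-directed.
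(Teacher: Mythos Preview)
Your arguments for the $\varphi'_1$ tail and for the lower bound on $\estart{\sigma'_1}{\rho}$ are correct and take slightly different routes from the paper. For $\varphi'_1$, the paper repeats the proof of Lemma~\ref{lem:tail_bounds} on $T^a$ to bound $\prstart{\varphi^a_1\ge r}{\rho}$ and then divides by $\prstart{\tau_{\rho^a}=\infty}{\rho}\ge 1-\delta$; your direct summation of Lemma~\ref{lem:bound_prob_Xprime_sigma1_equals_x} over centres at each level is cleaner. For the lower bound, the paper writes $\estart{\sigma'_1}{\rho}\ge\estart{\tau^a_1\1_{\tau_{\rho^a}=\infty}}{\rho}$ (with $\tau^a_1$ the first long-range crossing of $X^a$) and evaluates the tail sum via $\prstart{\tau^a_1\ge k}{\rho}\ge(1-\eps)^k$ and $\prcond{\tau_{\rho^a}=\infty}{\tau^a_1\ge k}{\rho}\ge 1-\delta$; your Doob-transform bound on the one-step long-range probability of $X'$ is an equally valid alternative.

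For the $\sigma'_1$ tail there is a genuine gap. The reduction to $\prstart{\sigma^a_1\ge r}{\rho}$ is fine, but two issues remain. First, your denominator argument is mis-stated: $\rho$ is the centre of its $R$-ball and has \emph{no} long-range edge in $T$, so it cannot ``leave $\rho$ via a long-range $T$-edge''. The correct bound $\prstart{\tau_{\rho^a}=\infty}{\rho}\ge 1-\delta$ comes directly from Lemma~\ref{lem:neverbacktrack}, with $\rho$ playing the role of the far endpoint of the edge $(\rho^a,\rho)$. Second, and more substantially, the claimed stochastic domination of $\sigma^a_1$ by a geometric does not follow from the per-step estimate you give. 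A regeneration at time $t$ requires the crossed edge to be fresh (never crossed before $t$) \emph{and} never re-crossed after; the freshness condition fails at centres, at $\rho$, and at any non-centre whose outward edge was already visited, so the per-step probability is not uniformly $\gtrsim\eps$. Moreover, since the event depends on both past and future it cannot be directly fed into a geometric comparison. The paper's resolution is precisely what you gestured at but did not carry out: repeat verbatim the three-term decomposition from the proof of Lemma~\ref{lem:tail_bounds}, bounding $\{\varphi^a_1\ge a\eps r\}$, $\{\varphi^a_1\le a\eps r,\ \sigma^{Y}_1\ge b\eps r\}$ and $\{\sigma^{Y}_1\le b\eps r,\ \sigma^a_1\ge r\}$ separately for the walk $X^a$ on $T^a$.
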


Before proving these tail bounds, we will state and prove two immediate corollaries.

\begin{corollary}\label{cor:tail_bounds}
There exist positive constants $c_1, c_2$ and $C$ so that the following holds.
Let $T$ be a quasi-tree as in Definition~\ref{def:tree}, let $X$ be a random walk from its root $\rho$ and let $(\sigma_i)$ and $(\varphi_i)$ be defined as in Definition~\ref{def:sigma_varphi} for some $K\ge0$. Then for all $i\ge1$ we have
\begin{align*}
\prstart{\varphi_{i+1}-\varphi_{i}\ge r}{\rho}\leq C e^{-c_1(r-1)\log\left(\frac1\eps\right)}, \quad& \prstart{\sigma_{i+1}-\sigma_{i}\ge r}{\rho}\leq C e^{-c_2r\eps} \quad \text{ and }\\
&\estart{\sigma_{i+1}-\sigma_{i}}{\rho}\quad\geq \quad\frac1{C\eps}.
\end{align*}
In case $K\ge1$ we also have
$$\prstart{\varphi_{1}-\varphi_{0}\ge r}{\rho}\quad\leq\quad Ce^{-c_1(r-1)\log\left(\frac1\eps\right)}.$$
(Note that in case $K=0$ we already have this bound.)
\end{corollary}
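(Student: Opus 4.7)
My plan is to reduce all four assertions of the corollary to the already-established Lemma~\ref{lem:tail_bounds_Tprime} and Lemma~\ref{lem:bound_prob_Xsigma1_equals_x}, by exploiting the renewal/i.i.d.\ structure of the regenerations.

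First I would handle the bounds on the increments $\sigma_{i+1}-\sigma_i$ and $\varphi_{i+1}-\varphi_i$ for $i\ge 1$, together with the lower bound on $\estart{\sigma_{i+1}-\sigma_i}{\rho}$. The key input is Lemma~\ref{lem:indep_between_regenerations}(ii), which states that $(T(X_{\sigma_i}),(X_t)_{t\ge\sigma_i})\stackrel{\mathrm{d}}{=}(T',X')$. Crucially, from the point of view of the subtree $T(X_{\sigma_i})$ rooted at $X_{\sigma_i}$, the random variable $\sigma_{i+1}-\sigma_i$ is exactly the first regeneration time of the walk $(X_t)_{t\ge\sigma_i}$, and $\varphi_{i+1}-\varphi_i$ is the long-range distance from the root of that subtree to the endpoint of the corresponding regeneration edge. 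Hence
\[
(\sigma_{i+1}-\sigma_i,\varphi_{i+1}-\varphi_i)\;\stackrel{\mathrm{d}}{=}\;(\sigma'_1,\varphi'_1),
\]
and all three bounds follow by direct application of Lemma~\ref{lem:tail_bounds_Tprime}.

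Next I would handle the last claim: for $K\ge 1$, the tail bound on $\varphi_1-\varphi_0=\varphi_1-K$. Here I would reproduce the summation argument from Corollary~\ref{cor:varphi_tail_bound}, which is valid for arbitrary $K\ge 0$ since Lemma~\ref{lem:bound_prob_Xsigma1_equals_x} is stated in that generality. Summing the inequality
\[
\prstart{X_{\sigma_1}=y}{\rho}\;\le\;(2\delta^2)^{r-1}\frac{1}{1-\delta}\prstart{\xi_{K+r}=(x,y)}{\rho}
\]
over all centres $y$ of $R$-balls at level $K+r$ (with $x$ the long-range neighbour of $y$), and using that $\sum_y\prstart{\xi_{K+r}=(\cdot,y)}{\rho}=1$ by transience of the LERW $\xi$, gives $\prstart{\varphi_1=K+r}{\rho}\le(2\delta^2)^{r-1}/(1-\delta)$ and hence $\prstart{\varphi_1-K\ge r}{\rho}\le(2\delta^2)^{r-1}/((1-\delta)(1-2\delta^2))$. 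Assuming $\delta<1/3$, this is at most $Ce^{-c(r-1)\log(1/\delta)}$; finally I would invoke the quantitative bound $\delta\le c_2\eps^{1/3}$ of Lemma~\ref{lem:neverbacktrack} to convert $\log(1/\delta)$ into a constant multiple of $\log(1/\eps)$ (for all sufficiently small $\eps$), obtaining the required $Ce^{-c_1(r-1)\log(1/\eps)}$.

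There is no real obstacle: the heavy lifting is entirely done by the preceding lemmas. The only points that require some care are (i) checking that $\sigma_{i+1}-\sigma_i$ and $\varphi_{i+1}-\varphi_i$, as measured from $\sigma_i$, really do coincide with the first regeneration time and level of the shifted walk inside the subtree $T(X_{\sigma_i})$ (this is immediate from the definitions of regeneration time and of $T(x)$), and (ii) the routine translation of $(2\delta^2)^{r-1}$ into $\exp(-c_1(r-1)\log(1/\eps))$ via the cube-root bound on $\delta$, which ensures the corollary's constants depend only on $\Delta$.
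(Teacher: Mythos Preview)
Your proof is correct and, for the increments $\sigma_{i+1}-\sigma_i$, $\varphi_{i+1}-\varphi_i$ with $i\ge 1$ and the expectation lower bound, identical to the paper's: both identify their law with that of $(\sigma'_1,\varphi'_1)$ via Lemma~\ref{lem:indep_between_regenerations} and then quote Lemma~\ref{lem:tail_bounds_Tprime}.

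For the last bound on $\varphi_1-\varphi_0$ when $K\ge 1$ you take a slightly different route. The paper applies Lemma~\ref{lem:tail_bounds_Tprime} to the subtree $T(X_L)$, where $L$ is the last time $X$ enters level $K$, arguing that $(T(X_L),(X_t)_{t\ge L})$ is distributed as $(T',X')$ and hence $\varphi_1-K\stackrel{\mathrm{d}}{=}\varphi'_1$. You instead invoke Corollary~\ref{cor:varphi_tail_bound} (equivalently Lemma~\ref{lem:bound_prob_Xsigma1_equals_x}) directly, which is already stated for arbitrary $K\ge 0$, and then convert $(2\delta^2)^{r-1}$ into the exponential form using $\delta\lesssim\eps^{1/3}$. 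Your argument is arguably the cleaner of the two, since it bypasses the need to verify the distributional identity for $T(X_L)$; the paper's route has the advantage of reusing a single lemma for all four claims. Either way the content is the same.
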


\begin{proof}
Applying Lemmas~\ref{lem:indep_between_regenerations} and~\ref{lem:tail_bounds_Tprime} gives the bounds for $(\sigma_{i+1}-\sigma_{i})$ and $(\varphi_{i+1}-\varphi_{i})$. In case $K\ge1$, applying Lemma~\ref{lem:tail_bounds_Tprime} to $T(X_L)$, where $L$ is the last time when $X$ enters level $K$ gives the bound for $(\varphi_1-\varphi_0)$.
\end{proof}

Corollary~\ref{cor:varphi_tail_bound} and Lemma~\ref{lem:tail_bounds} immediately imply the following.

\begin{corollary}\label{cor:varphi_sigma_E_Var_bounds} In the setup of Corollary~\ref{cor:tail_bounds} for any $i\ge1$ we have
\[\E{\varphi_i-\varphi_{i-1}-1}\quad\lesssim\quad\delta^2,\qquad \E{(\varphi_i-\varphi_{i-1}-1)^2}\quad\lesssim\quad\delta^2,\]
and for any $i\ge2$ we have
\[\E{\sigma_i-\sigma_{i-1}}\quad\asymp\quad\frac{1}{\eps},\qquad \E{\left(\sigma_i-\sigma_{i-1}\right)^2}\quad\lesssim\quad\frac{1}{\eps^2}.\]
\end{corollary}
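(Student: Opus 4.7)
The plan is to reduce all four moment bounds to the tail estimates that are already in hand and then perform elementary tail-integration. The key reductions come from the distributional identification provided by Lemma~\ref{lem:indep_between_regenerations}: for $i\ge 2$, $(\varphi_i-\varphi_{i-1},\sigma_i-\sigma_{i-1})$ is distributed as $(\varphi'_1,\sigma'_1)$ on an independent copy of $T'$. For $i=1$, the case $K=0$ gives $\varphi_0=\sigma_0=0$ and I can appeal directly to Corollary~\ref{cor:varphi_tail_bound} and Lemma~\ref{lem:tail_bounds}; for $K\ge 1$, applying Lemma~\ref{lem:indep_between_regenerations} at the last visit to level $K$ (as in the proof of Corollary~\ref{cor:tail_bounds}) again identifies the law of $\varphi_1-K$ with that of $\varphi'_1$ on a fresh $T'$.

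To obtain the $\delta^2$-type tail for $\varphi'_1$, I would sum the pointwise estimate of Lemma~\ref{lem:bound_prob_Xprime_sigma1_equals_x} over all vertices $y$ at long-range distance $r$ from the root of $T'$: this yields $\prstart{\varphi'_1=r}{\rho}\lesssim (2\delta^2)^{r-1}$ and hence $\prstart{\varphi'_1\ge r}{\rho}\lesssim (2\delta^2)^{r-1}$, exactly analogous to how Corollary~\ref{cor:varphi_tail_bound} is deduced from Lemma~\ref{lem:bound_prob_Xsigma1_equals_x}. Using $\varphi_i-\varphi_{i-1}\ge 1$ a.s.\ and writing $Y:=\varphi_i-\varphi_{i-1}-1\in\Znonneg$, I would compute
\[
\E{Y}=\sum_{r\ge 2}\pr{\varphi_i-\varphi_{i-1}\ge r}\lesssim\sum_{r\ge 2}(2\delta^2)^{r-1}\lesssim\delta^2,
\]
and similarly $\E{Y^2}=\sum_{r\ge 1}(2r-1)\pr{Y\ge r}\lesssim\sum_{r\ge 1}(2r-1)(2\delta^2)^r\lesssim \delta^2$, where the geometric series is summable because $\delta<1/3$ by hypothesis.

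For the $\sigma$ bounds, I would apply the exponential tail $\prstart{\sigma'_1\ge r}{\rho}\le Ce^{-c_2 r\eps}$ from Lemma~\ref{lem:tail_bounds_Tprime} together with the reduction above to get
\[
\E{\sigma_i-\sigma_{i-1}}=\sum_{r\ge 1}\pr{\sigma_i-\sigma_{i-1}\ge r}\le\sum_{r\ge 1}Ce^{-c_2 r\eps}\lesssim\frac{1}{\eps},
\]
which combined with the matching lower bound $\estart{\sigma'_1}{\rho}\ge\frac{1}{C\eps}$ also from Lemma~\ref{lem:tail_bounds_Tprime} yields the $\asymp$ assertion. Finally, $\E{(\sigma_i-\sigma_{i-1})^2}=\sum_{r\ge 1}(2r-1)\pr{\sigma_i-\sigma_{i-1}\ge r}\lesssim\sum_{r\ge 1}r e^{-c_2 r\eps}\lesssim 1/\eps^2$.

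There is no substantive obstacle: once the tail bounds of the cited lemmas and corollaries are granted, the proof is pure tail-summation of a geometric series (for $\varphi$) and an exponential series (for $\sigma$). The only subtle point worth writing carefully is the identification of $\varphi_i-\varphi_{i-1}$ and $\sigma_i-\sigma_{i-1}$ with their $T'$-counterparts for $i\ge 2$, and of $\varphi_1-K$ (when $K\ge 1$) with $\varphi'_1$, which is why the $(2\delta^2)^{r-1}$ bound extracted from Lemma~\ref{lem:bound_prob_Xprime_sigma1_equals_x} rather than the weaker $e^{-c_1(r-1)\log(1/\eps)}$ bound from Corollary~\ref{cor:tail_bounds} is used to obtain the sharp $\delta^2$ rate.
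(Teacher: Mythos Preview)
Your approach is correct and matches the paper's: the paper's proof is the single line ``Corollary~\ref{cor:varphi_tail_bound} and Lemma~\ref{lem:tail_bounds} immediately imply the following,'' and what you have written is precisely the tail-summation argument that unpacks this. Your care in extracting the sharp $(2\delta^2)^{r-1}$ tail for $\varphi_i-\varphi_{i-1}$ when $i\ge 2$ via Lemma~\ref{lem:bound_prob_Xprime_sigma1_equals_x} (rather than the weaker $e^{-c_1(r-1)\log(1/\eps)}$ bound of Corollary~\ref{cor:tail_bounds}) is exactly what is needed to land on $\delta^2$ rather than a power of $\eps$; note, however, that for $i=1$ and any $K\ge 0$ you can read the $(2\delta^2)^{r-1}$ bound directly off Corollary~\ref{cor:varphi_tail_bound} (which is stated for general $K$), so the detour through $T'$ in that case is unnecessary.
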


\begin{proof}[Proof of Lemma~\ref{lem:tail_bounds}]
By Corollary~\ref{cor:varphi_tail_bound} and the bound on $\delta$ from Lemma~\ref{lem:neverbacktrack} we have
\[\prstart{\varphi_1\ge r}{\rho}\quad\le\quad \frac{1}{(1-\delta)(1-2\delta^2)}(2\delta^2)^{r-1}\quad\lesssim\quad(2\delta^2)^{r-1}\quad\lesssim\quad e^{-c_1(r-1)\log\left(\frac1\eps\right)}.\]
Let $Y$ be the random walk on the directed long-range edges induced by $X$ and let $\sigma^{Y}_1$ be the number of long-range edges crossed by $X$ up to time $\sigma_1$. Then for any positive constants $a$ and $b$ we have
\[\prstart{\sigma_1\ge r}{\rho}\quad\le\quad
\prstart{\varphi_1\ge a\eps r}{\rho}\:+\:
\prstart{\varphi_1\le a\eps r,\sigma^{Y}_1\ge b\eps r}{\rho}\:+\:
\prstart{\sigma^{Y}_1\le b\eps r,\sigma_1\ge r}{\rho}.\]
By the first part of the proof we have
\[\prstart{\varphi_1\ge a\eps r}{\rho}\quad\le\quad e^{-c_1\left(\lceil a\eps r\rceil-1\right)\log\left(\frac1\eps\right)}\quad\lesssim\footnotemark\quad e^{-c_1'\eps r}\]
for some constant $c_1'>0$. \footnotetext{If $a\eps r\le1$, then LHS=1 and $\eps r\lesssim1$, so $e^{-c_1'\eps r}\asymp1$. If $a\eps r>1$, then $\left(\lceil a\eps r\rceil-1\right)\log\left(\frac1\eps\right)\ge\frac12a\eps r\log\left(\frac1\eps\right)\gtrsim \eps r$.}

For $i=1,2,...$ let $\tau^{(i)}$ be the number of steps between the $i$th and $(i+1)$th time that the walk crosses a long-range edge. Note that $\left(\tau^{(i)}\right)_{i\ge1}$ stochastically dominates a sequence of independent $\Geom{\eps}$ random variables. Also note that if $\sigma^{Y}_1\le b\eps r$ and $\sigma_1\ge r$ then $\tau^{(1)}+...+\tau^{({\lfloor b\eps r\rfloor})}\ge r$. Hence (given that $b<1$) we have
\begin{align*}
\prstart{\sigma^{Y}_1\le b\eps r,\sigma_1\ge r}{\rho}\quad&\le\quad \pr{\NegBin{\lfloor b\eps r\rfloor}{\eps}\ge r}\quad=\quad\pr{\Bin{r}{\eps}\le \lfloor b\eps r\rfloor}\\
\le\footnotemark\quad\exp\left(-r\Div{b\eps}{\eps}\right)\quad&=\quad
\exp\left(-rb\eps\log\left(\frac{b\eps}{\eps}\right)-r(1-b\eps)\log\left(\frac{1-b\eps}{1-\eps}\right)\right).
\end{align*}

\footnotetext{Here we use the following tail bound for binomial random variables. If $0<q<p<1$ then $\pr{\Bin{n}{p}\le qn}\le\exp\left(-n\Div{q}{p}\right)$ where $\Div{q}{p}=q\log\left(\frac{q}{p}\right)+(1-q)\log\left(\frac{1-q}{1-p}\right)$. This can be proved using the Chernoff bound.}
Note that $\eps<\frac12$ and $b<1$, hence $\frac{(1-b)\eps}{1-\eps}<1$ and so
\[\log\left(\frac{1-b\eps}{1-\eps}\right)\quad=\quad\log\left(1+\frac{(1-b)\eps}{1-\eps}\right)\quad\ge\quad\frac12\frac{(1-b)\eps}{1-\eps}.\]
Also for any sufficiently small $b\le\frac14$ we have
\[\frac12(1-b\eps)\frac{(1-b)\eps}{1-\eps}\quad>\quad\frac12(1-b)\eps\quad\ge\quad b\log\left(\frac1b\right)\eps.\]
Together these show that for $b\le\frac14$ and some positive constant $c'$ (depending on $b$) we have
\[\prstart{\sigma^{Y}_1\le b\eps r,\sigma_1\ge r}{\rho}\quad\le\quad\exp\left(-c'r\eps\right).\]

Note that if $\varphi_1\le a\eps r$ and $\sigma^{Y}_1\ge b\eps r$, then $\inf_{j\ge b\eps r}\ell(Y_{j})\le a\eps r$. We know that $\ell(Y)$ stochastically dominates a biased random walk $S$ on $\Z$ that steps right with probability $1-\delta$ and left with probability $\delta$. Hence (given that $\frac{2a+b}{2}\le (1-\delta)b$) we have
\begin{align*}
&\prstart{\varphi_1\le a\eps r,\sigma^{Y}_1\ge b\eps r}{\rho}\quad\le\quad
\prstart{\inf_{j\ge b\eps r}\ell(Y_{j})\le a\eps r}{\rho}\quad\le\quad
\prstart{\inf_{j\ge b\eps r}S_j\le a\eps r}{0}\\
&\le\quad\prstart{S_{\lceil b\eps r\rceil}\le 2a\eps r}{0}\quad+\quad\prstart{\inf_{j}S_j\le a\eps r}{\lceil 2a\eps r\rceil}\\
&\le\quad\pr{\Bin{\lceil b\eps r\rceil}{1-\delta}\le\frac{2a\eps r+\lceil b\eps r\rceil}{2}}\quad+\quad
\left(\frac{\delta}{1-\delta}\right)^{a\eps r}\\
&\le\footnotemark\:\exp\left(-2\lceil b\eps r\rceil\left(1-\delta-\frac{2a\eps r+\lceil b\eps r\rceil}{2\lceil b\eps r\rceil}\right)^2\right)+\exp\left(-a\eps\left(\log\left(\frac1\delta\right)-\log\left(\frac{1}{1-\delta}\right)\right)r\right)\\
&\le\quad\exp\left(-2b\eps r\left(1-\delta-\frac{a\eps r}{\lceil b\eps r\rceil}-\frac12\right)^2\right)\quad+\quad\exp\left(-\frac12a\eps\log\left(\frac1\delta\right)r\right).
\end{align*}
\footnotetext{Here we used the following tail bound. If $0<q<p<1$ then $\pr{\Bin{n}{p}\le qn}\le\exp\left(-2n\left(p-q\right)^2\right)$. This can be proved using Hoeffding's inequality.}
Choosing say $b=\frac14$ and $a=\frac{1}{12}b$ we have
$1-\delta-\frac{a\eps r}{\lceil b\eps r\rceil}-\frac12\ge 1-\frac13-\frac{a}{b}-\frac12\ge\frac{1}{12}$.\\
This gives an overall bound of form
$$\prstart{\sigma_1\ge r}{\rho}\quad\lesssim\quad e^{-c_2r\eps}$$
and this finishes the proof.
\end{proof}

\begin{proof}[Proof of Lemma~\ref{lem:tail_bounds_Tprime}]
Repeating the above proof for $T^a$ and $X^a$ we get the desired tail bounds for $\varphi^a_1$ and $\sigma^a_1$.

Using that $\pr{X^a\text{ does not hit }\rho^a}\ge1-\delta\gtrsim1$ we get that
\begin{align*}
\prstart{\varphi'_1\ge r}{\rho}\quad&=\quad\prcond{\varphi^a_1\ge r}{\tau_{\rho^a}=\infty}{\rho}\quad=\quad\frac{\prstart{\varphi^a_1\ge r,\tau_{\rho^a}=\infty}{\rho}}{\prstart{\tau_{\rho^a}=\infty}{\rho}}\\
&\lesssim\quad\prstart{\varphi^a_1\ge r}{\rho}\quad\lesssim\quad e^{-c_1(r-1)\log\left(\frac1\eps\right)}.
\end{align*}
Similarly, we get
\[\prstart{\sigma'_1\ge r}{\rho}\quad\lesssim\quad e^{-c_2r\eps}.\]

Let $\tau^a_1$ be the first time when $X^a$ crosses a long-range edge. Then $\sigma^a_1\ge\tau^a_1\ge_{\textrm{st}}\Geompos{\eps}$, hence

\begin{align*}
\estart{\sigma'_1}{\rho}\quad&=\quad\frac{\estart{\sigma^a_1\1_{\tau_{\rho^a}=\infty}}{\rho}}{\prstart{\tau_{\rho^a}=\infty}{\rho}}\quad\ge\quad\estart{\tau^a_1\1_{\tau_{\rho^a}=\infty}}{\rho}\quad=\quad\sum_{k\ge1}\prstart{\tau^a_1\1_{\tau_{\rho^a}=\infty}\ge k}{\rho}\\
&=\quad\sum_{k\ge1}\prstart{\tau^a_1\ge k}{\rho}\prcond{\tau_{\rho^a}=\infty}{\tau^a_1\ge k}{\rho}\quad\ge\quad\sum_{k\ge1}(1-\eps)^k(1-\delta)\quad\asymp\quad\frac1\eps.
\end{align*}

The last inequality holds because $\prstart{\tau^a_1\ge k}{\rho}\ge(1-\eps)^k$ and 
\begin{align*}\prcond{\tau_{\rho^a}=\infty}{\tau^a_1\ge k}{\rho}\quad&=\quad \sum_{u}\prcond{X^a_{k-1}=u}{\tau^a_1>k-1}{\rho}\prstart{\tau_{\rho^a}=\infty}{u}\\
&\ge\quad\sum_{u}\prcond{X^a_{k-1}=u}{\tau^a_1>k-1}{\rho}(1-\delta)\quad=\quad1-\delta.
\end{align*}
This finishes the proof.
\end{proof}

We conclude this section by proving two useful statements that we will use later.

\begin{lemma} \label{lem:compare_inxi_fisthit}
There exist positive constants $c_1, c_2$ and $C$ so that the following holds.
Let $T$ be a quasi-tree with root $\rho$ and let $e$ be a long-range edge from the $R$-ball of $\rho$. Let $X$ be a random walk and $\xi$ a loop-erased random walk on $T$. Let $\tau_e$ be the first time when $X$ crosses $e$ and let $\tau_1$ be the first time when $X$ crosses a long-range edge (i.e.\ it hits level 1). Then for any vertex $v$ in the $R$-ball of $\rho$ and for any $\ell_0$ we have
\[
\prstart{e\in\xi}{v}\quad=\quad(1-o(1))\prstart{\tau_e<\infty}{v}\quad\leq \quad C (e^{c_1\ell_0\eps\delta}\prstart{X_{\tau_1}=e^+}{v}+e^{-c_2\ell_0\eps}).
\]
\end{lemma}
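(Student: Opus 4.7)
Since $e$ is a level-$1$ edge, $e\in\xi$ is equivalent to $\xi_1=e$, which holds iff $X$ crosses $e$ at some time and is afterwards confined to $T(e^+)$. By Lemma~\ref{lem:neverbacktrack} applied at $e^+$, conditional on $\tau_e<\infty$ the walk avoids recrossing $e$ with probability $1-\delta=1-o(1)$, which gives $\prstart{e\in\xi}{v}=(1-o(1))\prstart{\tau_e<\infty}{v}$.

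\textbf{Part 2 (upper bound).} First I would establish $\prstart{\tau_e<\infty}{v}=\prstart{\tau_e\le\sigma_1}{v}$. Indeed, if the first regeneration $\sigma_1$ occurs at a long-range edge $f\ne e$, then after $\sigma_1$ the walk is trapped in $T(X_{\sigma_1})$, which by the quasi-tree structure and the fact that $X$ had not entered $T(e^+)$ by $\sigma_1$ is disjoint from $T(e^+)$; hence $\tau_e=\infty$. If $f=e$ then $\tau_e\le\sigma_1$ trivially. Splitting at $\ell_0$ gives $\prstart{\tau_e<\infty}{v}\le\prstart{\tau_e\le\ell_0}{v}+\prstart{\sigma_1>\ell_0}{v}$, and the second term is $\le Ce^{-c_2\ell_0\eps}$ by Lemma~\ref{lem:tail_bounds} (whose proof works with the same exponential rate from any vertex in the root's ball).

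The core estimate is $\prstart{\tau_e\le\ell_0}{v}\le Ce^{c_1\ell_0\eps\delta}\prstart{X_{\tau_1}=e^+}{v}$. Given $T$, introduce the projection $\bar Z$ of $X$ onto the $R$-ball of $\rho$: $\bar Z$ performs the internal random-walk transitions $\tilde p(u,w)$ inside the ball; is absorbed at $e^-$ with rate $\eps/(\deg(e^-)+\eps)$ (since $e^-$'s only long-range edge is $e$); and at every other vertex $u$ it either makes a self-loop of probability $s_u=\tilde q(u)\delta_{f_u}\le\eps\delta$, corresponding to an out-and-back excursion of $X$ through the long-range edge $f_u$ at $u$, or is killed, corresponding to a permanent departure of $X$. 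Crucially each $\bar Z$-step consumes at least one $X$-step, so $\prstart{\tau_e\le\ell_0}{v}$ is dominated by the probability that $\bar Z$ is absorbed within $\ell_0$ $\bar Z$-steps.

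Decomposing each $\bar Z$-trajectory into its reduced (stay-free) skeleton $v=v_0'\to\cdots\to v_{t'}'=e^-$ together with geometric counts of stays at each intermediate vertex and summing over stay configurations yields
\[\sum_{t=0}^{\ell_0-1}\tilde p_{\bar Z}^{\,t}(v,e^-)\;\le\;\sum_{\substack{\text{reduced paths}\\v\to e^-,\;t'\le\ell_0}}\prod_{i=0}^{t'-1}\tilde p(v_i',v_{i+1}')\cdot\prod_{i=0}^{t'-1}(1-s_{v_i'})^{-1}.\]
For $t'\le\ell_0$ the stay-product is at most $\exp(2\ell_0\eps\delta)$, while the remaining sum is the sub-stochastic Green's function $G_{\text{sub}}(v,e^-)$ of the walk in the ball killed on any long-range attempt. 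Since $\prstart{X_{\tau_1}=e^+}{v}=G_{\text{sub}}(v,e^-)\cdot\eps/(\deg(e^-)+\eps)$, multiplying through by $\eps/(\deg(e^-)+\eps)$ gives the claim. The principal obstacle is the clean setup of $\bar Z$---in particular verifying that a $\bar Z$-time-$\ell_0$ absorption event dominates the $X$-time-$\ell_0$ crossing event---and the bookkeeping that isolates the geometric stay contribution from the reduced-path Green's function.
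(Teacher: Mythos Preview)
Your argument is correct, and the core computation---projecting the walk onto the root ball and absorbing the out-and-back excursions through foreign long-range edges as a geometric factor $(1-s_u)^{-1}\le(1-\eps\delta)^{-1}$ per ball-step---is exactly what the paper does. The paper writes this directly as
\[
\prstart{\tau_e<\infty}{v}\le\sum_{\ell}\Bigl(\tfrac{1}{1-\eps\delta}\Bigr)^{\ell}\prstart{\tau_1=\ell,\,X_{\tau_1}=e^+}{v},
\]
where $\ell$ is the length of the path restricted to the ball (your ``reduced skeleton''), and then splits this sum at $\ell=\ell_0$: the $\ell\le\ell_0$ part gives the $e^{c_1\ell_0\eps\delta}\prstart{X_{\tau_1}=e^+}{v}$ term, and the $\ell>\ell_0$ part is bounded in place via $\prstart{\tau_1=\ell}{v}\lesssim(1-\tfrac{\eps}{\Delta+1})^{\ell/2}\eps$ and a geometric summation.

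Your organization differs in where the cutoff lives: you split by actual $X$-time rather than skeleton length, using the observation $\{\tau_e<\infty\}=\{\tau_e\le\sigma_1\}$ to convert the tail into $\prstart{\sigma_1>\ell_0}{v}$ and then invoking Lemma~\ref{lem:tail_bounds}. This is a legitimate alternative---the proof of Lemma~\ref{lem:tail_bounds} indeed goes through from any $v$ in the root ball since none of its ingredients (the $\varphi_1$ tail from Corollary~\ref{cor:varphi_tail_bound}, the biased-walk domination of $\ell(Y)$, the geometric lower bound on inter-crossing gaps) are specific to $\rho$. The trade-off is that your route imports an external tail bound while the paper's route is self-contained, but both yield the same exponent.
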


\begin{proof} Let $e=(e^-,e^+)$ where $e^-$ is the vertex in the $R$-ball of $\rho$, and for each vertex $x\neq \rho$ in this $R$-ball let $x^+$ denote its long-range neighbour. 
\\
For the equality simply note that
\[\prstart{\tau_e<\infty}{v}\quad\ge\quad\prstart{e\in\xi}{v}\quad\ge\quad(1-\delta)\prstart{\tau_e<\infty}{v}.\]
For the inequality, grouping the possible paths to $e$ by their restriction $v=x_0,x_1,...,x_{\ell-1}=e^-$ to the $R$-ball of $\rho$, we get
\begin{align*}
\prstart{\tau_e<\infty}{v}&= \sum_{\ell}\sum_{x_0,...,x_{\ell-1}}\left(\prod_{i=0}^{\ell-2}\left(P(x_i,x_{i+1})\sum_{j=0}^{\infty}\left(P(x_i,x_i^+)\prstart{\tau_{x_i}<\infty}{x_i^+}\right)^j\right)\right)P(e^-,e^+)\\
&\le\quad\sum_{\ell}\sum_{x_0,...,x_{\ell-1}}\left(\frac{1}{1-\eps\delta}\right)^{\ell}\prstart{X_1=x_1,...,X_{\ell-1}=x_{\ell-1},X_\ell=e^+}{v}
\end{align*}
where the sum is taken over all sequences such that $x_0=v$, $x_{\ell-1}=e^-$ and $x_0,...,x_{\ell-1}$ are in the $R$-ball of $\rho$, and $P$ is the transition matrix of $X$. For the inequality we used that $P(x_i,x_i^+)\prstart{\tau_{x_i}<\infty}{x_i^+}\le\eps\delta$. The sum on the last line above is then equal to 
\begin{align*}
&=\quad\sum_{\ell}\left(\frac{1}{1-\eps\delta}\right)^\ell\prstart{\tau_1=\ell,X_{\tau_1}=e^+}{v}\\
&\le\quad\left(\frac{1}{1-\eps\delta}\right)^{\ell_0}\prstart{X_{\tau_1}=e^+}{v}\quad+ \quad\sum_{\ell\ge\ell_0}\left(\frac{1}{1-\eps\delta}\right)^\ell\left(1-\frac{\eps}{\Delta+1}\right)^{\frac{\ell}{2}-1}\eps\\
&\lesssim\quad\left(\frac{1}{1-\eps\delta}\right)^{\ell_0}\prstart{X_{\tau_1}=e^+}{v}\quad+ \quad\left(\frac{\sqrt{1-\frac{\eps}{\Delta+1}}}{1-\eps\delta}\right)^{\ell_0}\frac{\eps}{1-\left(\frac{\sqrt{1-\frac{\eps}{\Delta+1}}}{1-\eps\delta}\right)}\\
&\lesssim\quad e^{c_1\ell_0\eps\delta}\prstart{X_{\tau_1}=e^+}{v}\quad+\quad e^{-c_2\ell_0\eps}.
\end{align*}
For the second sum in the second line above we used that
for any $x\neq \rho$ there is a long-range edge coming out of $x$ and that the walk cannot be at $\rho$ at two consecutive steps.
In the last inequality we used that $\log(1-\theta)\asymp-\theta$ around 0 and that $1-\left(\frac{\sqrt{1-\frac{\eps}{\Delta+1}}}{1-\eps\delta}\right)\asymp\eps$.
\end{proof}

\begin{lemma}\label{lem:visit_far_then_hit_e}
There exist positive constants $C$ and $c'$ so that the following holds.
Let $T$ be a quasi-tree as in Definition~\ref{def:tree}. Let $v$ be a vertex of $T$ and let $e$ be a long-range edge from the $R$-ball of $v$. Let $X$ be a random walk on $T$ from $v$. Then for any $\ell_0$ we have
\[\prstart{X\text{ hits level }\ell(v)+m\text{ and then hits }e^+}{v}\hspace{3cm}\]
\[\hspace{3cm}\le\quad  C\left(\ell_0\delta^m\prstart{X\text{ hits }e^+}{v}+\delta^{m}e^{-c'\eps\ell_0}\left(\ell_0+\frac1\eps\right)\right).\]
\end{lemma}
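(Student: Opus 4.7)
My plan is to condition on $\tau_m$, the first time $X$ hits level $\ell(v)+m$, apply the strong Markov property, and then bound the residual hitting probability via iterated use of Corollary~\ref{cor:neverbacktrack}. Writing $E$ for the event in question,
\[
\prstart{E}{v}=\sum_{z}\prstart{\tau_m<\infty,\,X_{\tau_m}=z}{v}\,\prstart{\tau_{e^+}<\infty}{z},
\]
where the sum is over vertices $z$ at level $\ell(v)+m$. I would split the sum based on whether $z$ lies in the quasi-tree subtree rooted at $e^+$ (Case A) or not (Case B). The key common tool is that a backward crossing of any particular long-range edge has probability $\le\delta$ (Corollary~\ref{cor:neverbacktrack}), so by the strong Markov property at each successive crossing, descending $k$ consecutive long-range edges has probability at most $\delta^k$.

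\emph{Case A.} The unique long-range path from $z$ down to $e^+$ has $m-1$ edges, all backward, so $\prstart{\tau_{e^+}<\infty}{z}\le \delta^{m-1}$. Since reaching a vertex in the subtree of $e^+$ forces $X$ to have crossed $e$ forward before $\tau_m$, we have $\prstart{\tau_m<\infty,\,X_{\tau_m}\in\mathrm{subtree}(e^+)}{v}\le \prstart{X\text{ hits }e^+}{v}$, giving
\[
\prstart{E\cap\mathrm{A}}{v}\le \delta^{m-1}\,\prstart{X\text{ hits }e^+}{v}.
\]
Using $\delta\gtrsim \eps$ from Lemma~\ref{lem:neverbacktrack}, this is $\lesssim (\delta^m/\eps)\prstart{X\text{ hits }e^+}{v}$, which is absorbed into the stated bound (into the first term when $\ell_0\gtrsim 1/\delta$, and into the second term otherwise, noting that $\ell_0<1/\delta$ together with $\delta\lesssim\eps^{1/3}$ forces $\eps\ell_0$ bounded and hence $e^{-c'\eps\ell_0}\asymp 1$).

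\emph{Case B.} From $z\notin\mathrm{subtree}(e^+)$ at level $\ell(v)+m$, the walk must descend $m$ long-range edges to enter $v$'s $R$-ball and then cross $e$. Iterated Corollary~\ref{cor:neverbacktrack} gives the descent probability $\le\delta^m$, so
\[
\prstart{E\cap\mathrm{B}}{v}\le \delta^m \sum_{w\in \text{ball of }v}\bigl(\text{prob.\ of entering at }w\bigr)\cdot \prstart{X\text{ hits }e^+}{w}.
\]
To bound $\prstart{X\text{ hits }e^+}{w}$, I would adapt the length-cutoff argument from the proof of Lemma~\ref{lem:compare_inxi_fisthit}: paths that spend at most $\ell_0$ steps in the $R$-ball before the first long-range crossing yield a $\lesssim \ell_0\cdot \prstart{X\text{ hits }e^+}{v}$ contribution via a Markov-style path comparison inside the bounded-degree ball, while longer excursions are controlled by the tail bound from Lemma~\ref{lem:tail_bounds}, producing the $\delta^m e^{-c'\eps\ell_0}(\ell_0+1/\eps)$ term (with $1/\eps$ arising from $\E{\sigma_1}\asymp 1/\eps$ as in Corollary~\ref{cor:varphi_sigma_E_Var_bounds}).

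\emph{Main obstacle.} The principal difficulty is in Case B: comparing $\prstart{X\text{ hits }e^+}{w}$ to $\prstart{X\text{ hits }e^+}{v}$ uniformly in $w\in$ $v$'s $R$-ball. A naive path-concatenation would pay a factor that grows with the ball radius $R\gtrsim 1/\eps$, which is far too large. The $\ell_0$-cutoff is essential to keep the short-path factor polynomial in $\ell_0$, and the trade-off between the polynomial first term and the exponentially-decaying second term in $\ell_0$ is precisely what produces the form of the bound stated in the lemma.
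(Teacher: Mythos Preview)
Your decomposition at $\tau_m$ is different from the paper's, and Case~B has a genuine gap. The paper does \emph{not} stop at $\tau_m$; instead it groups paths from $v$ to $e^+$ by their entire \emph{trace} $x_0=v,x_1,\dots,x_{\ell-1}=e^-,x_\ell=e^+$ inside the $R$-ball of $v$ (with excursions out of the ball allowed between consecutive $x_s,x_{s+1}$). The event ``visit level $\ell(v)+m$'' is then pinned to one excursion index $r\in\{0,\dots,\ell-1\}$, costing a factor $\delta^m$, and the union bound over $r$ produces the factor $\ell$. Splitting on $\ell\le\ell_0$ versus $\ell>\ell_0$ gives the two terms: the short-trace part is bounded by $\ell_0\delta^m$ times the sum over all traces (which is exactly $\prstart{\tau_{e^+}<\infty}{v}$), and the long-trace part uses that staying in the ball for $\ell$ steps costs $\lesssim(1-\tfrac{\eps}{\Delta+1})^{\ell/2}\eps$, summing to $\delta^m e^{-c'\eps\ell_0}(\ell_0+1/\eps)$.

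Your approach instead lands at the re-entry vertex $w$ and needs to bound $\sum_w q_w\,\prstart{\tau_{e^+}<\infty}{w}$ in terms of $\prstart{\tau_{e^+}<\infty}{v}$. This is exactly where the argument breaks: there is no uniform comparison between $\prstart{\tau_{e^+}<\infty}{w}$ and $\prstart{\tau_{e^+}<\infty}{v}$ --- the ratio can be exponentially large in $d_G(v,w)$, and $w$ may lie anywhere in a ball of radius $R\gtrsim 1/\eps$. Your proposed ``Markov-style path comparison'' does not explain how an $\ell_0$-step path from $w$ becomes a path from $v$; the factor $\ell_0$ in the lemma is \emph{not} a path-length bound on the after-segment, it is the number of places in a full trace from $v$ where the level-$m$ excursion could sit. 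To make your decomposition work you would have to record the \emph{pre}-$\tau_m$ trace $v\to w$ as well and concatenate it with the post-trace $w\to e^+$ --- at which point you have reconstructed the paper's trace-in-ball decomposition. (Your Case~A is fine, and the $\delta^{m-1}$ versus $\delta^m$ bookkeeping is harmless, but it is also unnecessary once you use the trace decomposition, which handles both cases at once.)
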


\begin{proof}
By grouping the possible paths from $v$ to $e^+$ that pass through level $\ell(v)+m$ by their restriction to the $R$-ball of $v$ we get that
\begin{align*}&\prstart{X\text{ hits level }\ell(v)+m\text{ and then hits }e^+}{v}\\
&\le\quad\sum_{\ell}\sum_{x_0,...,x_\ell}\sum_{r=0}^{\ell-1}\left(\prod_{s\ne r}\prstart{\text{hit }x_{s+1}\text{ without visiting another vertex in the }R\text{-ball}}{x_s}\right)\\
&\quad\cdot\prstart{\text{hit }x_{r+1}\text{ after visiting level }\ell(v)+m,\text{ without visiting another vertex in the }R\text{-ball}}{x_r}
\end{align*}
where the second sum is taken over all paths $x_0,...,,x_\ell$ such that $x_0=v$, $x_{\ell}=e^{+}$ and $x_0,...x_{\ell-1}$ are in the $R$-ball of $v$. This is then
\[\le\quad\sum_{\ell}\sum_{x_0,...,x_\ell}\ell\delta^{m}\prod_{s}\prstart{\text{hit }x_{s+1}\text{ without visiting another vertex in the }R\text{-ball}}{x_s}.\]
First let us bound the terms with $\ell\le\ell_0$.
\begin{align*}
&\sum_{\ell\le\ell_0}\sum_{x_0,...,x_\ell}\ell\delta^{m}\prod_{s}\prstart{\text{hit }x_{s+1}\text{ without visiting another vertex in the }R\text{-ball}}{x_s}\\
&\le\quad\sum_{\ell\le\ell_0}\ell\delta^{m}\prstart{\text{hit }e^+\text{ after visiting exactly }(\ell-1)\text{ vertices in the }R\text{-ball}\footnotemark}{v}\\
&\le\quad\ell_0\delta^m\prstart{\text{hit }e^+}{v}.
\end{align*}

\footnotetext{counting with multiplicity}

Now let us bound the terms with $\ell>\ell_0$. Note that 
\begin{align*}&\prstart{\text{hit }x_{s+1}\text{ without visiting another vertex in the }R\text{-ball}}{x_s}\\
&=\quad\sum_{j=0}^\infty\left(P(x_s,x_s^+)\prstart{\text{hit }x_s}{x_s^+}\right)^jP(x_s,x_{s+1})\\
&\le\quad \sum_{j=0}^\infty\left(\eps\delta\right)^jP(x_s,x_{s+1})\quad=\quad\frac{1}{1-\eps\delta}P(x_s,x_{s+1}).
\end{align*}
Hence
\begin{align*}&\sum_{\ell>\ell_0}\sum_{x_0,...,x_\ell}\ell\delta^{m}\prod_{s}\prstart{\text{hit }x_{s+1}\text{ without visiting another vertex in the }R\text{-ball}}{x_s}\\
&\le\quad\sum_{\ell>\ell_0}\sum_{x_0,...,x_\ell}\ell\delta^{m}\left(\frac{1}{1-\eps\delta}\right)^\ell\prod_{s}P(x_s,x_{s+1})\\
&=\quad\sum_{\ell>\ell_0}\ell\delta^{m}\left(\frac{1}{1-\eps\delta}\right)^\ell\prstart{\text{hit }e^+\text{ after exactly }\ell\text{ steps, without leaving the }R\text{-ball}}{v}.
\end{align*}
(More precisely, in the last probability above we mean that only the last step is allowed to be outside of the $R$-ball.) At each step when the walk is not in $\rho$, it has probability $\le1-\frac{\eps}{\Delta+1}$ of not leaving the $R$-ball. When the walk is in $e^-$ it has probability $\le\eps$ of crossing to $e^+$. The walk cannot be at~$\rho$ in two consecutive steps, hence
\[\prstart{\text{hit }e^+\text{ after exactly }\ell\text{ steps, without leaving the }R\text{-ball}}{v}\quad\le\quad \left(1-\frac{\eps}{\Delta+1}\right)^{\frac{\ell}{2}-1}\eps.\]
This then gives
\begin{align*}&\sum_{\ell>\ell_0}\ell\delta^{m}\left(\frac{1}{1-\eps\delta}\right)^\ell\prstart{\text{hit }e^+\text{ after exactly }\ell\text{ steps, without leaving the }R\text{-ball}}{v}\\
&\le\quad\delta^{m}\sum_{\ell\ge\ell_0}\ell\left(\frac{1}{1-\eps\delta}\right)^\ell\left(1-\frac{\eps}{\Delta+1}\right)^{\frac{\ell}{2}-1}\eps\quad=\quad\delta^{m}\frac{\eps}{1-\frac{\eps}{\Delta+1}} \sum_{\ell\ge\ell_0}\ell\left(\frac{\sqrt{1-\frac{\eps}{\Delta+1}}}{1-\eps\delta}\right)^\ell\\
&\asymp\quad\delta^{m}\left(\frac{\sqrt{1-\frac{\eps}{\Delta+1}}}{1-\eps\delta}\right)^{\ell_0}\left(\ell_0+\frac1\eps\right)\quad\lesssim\quad \delta^{m}e^{-c'\eps\ell_0}\left(\ell_0+\frac1\eps\right).
\end{align*}

In the second last step we used that for any $\ell_0$ and any $p\in(0,1)$ we have
\[\sum_{\ell\ge\ell_0}\ell p^\ell\quad = \quad  p^{\ell_0}\left(\frac{\ell_0}{1-p}+\frac{p}{(1-p)^2}\right).\]
Putting everything together gives the result.
\end{proof}

\subsection{Speed and entropy}

\begin{lemma}\label{lem:speed}
Let $X$ be a simple random walk on a random quasi-tree $\tree$ induced by a graph $G$ and let $\nu:=\frac{\E {\varphi_2-\varphi_1}}{\E {\sigma_2-\sigma_1}} $. Then $\nu\asymp \varepsilon$ and almost surely we have 
\[
\frac{  d_\tree(\rho, X_t)}{t}\to \nu \text{ as } t\to\infty.
\]
Moreover, for all $\theta>0$, $J\ge1$ there exists a positive constant $C$ so that for all $t\ge\frac{J}{\eps}$ we have
\[
\pr{|d_\tree(\rho,X_t)-\nu t|>C\sqrt{\eps t}}\leq \theta \quad \text{ and } \quad \pr{\sup_{s:\,s\leq t}d_\tree(\rho,X_s)>\nu t+ 2C\sqrt{\eps t}}\leq \theta.
\] 
\end{lemma}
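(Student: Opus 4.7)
The key structural input is the renewal decomposition of Lemma~\ref{lem:indep_between_regenerations}: the pairs $(\sigma_{i+1}-\sigma_i,\varphi_{i+1}-\varphi_i)$ for $i\ge 1$ are iid, and by Corollary~\ref{cor:varphi_sigma_E_Var_bounds} their first two moments satisfy $\mu_\sigma:=\E{\sigma_2-\sigma_1}\asymp 1/\eps$, $\mu_\varphi:=\E{\varphi_2-\varphi_1}\asymp 1$, $\vr{\sigma_2-\sigma_1}\lesssim 1/\eps^2$ and $\vr{\varphi_2-\varphi_1}\lesssim\delta^2$. The bound $\nu=\mu_\varphi/\mu_\sigma\asymp\eps$ is then immediate. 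Corollary~\ref{cor:tail_bounds} furthermore gives exponential tails for these increments, with rates of order $\log(1/\eps)$ and $\eps$ respectively.

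For the almost-sure convergence, I would introduce the renewal counting $N_t:=\max\{i\ge 1:\sigma_i\le t\}$. The SLLN applied to the iid sums gives $\sigma_n/n\to\mu_\sigma$ and $\varphi_n/n\to\mu_\varphi$ almost surely, and standard renewal inversion then yields $N_t/t\to 1/\mu_\sigma$ and hence $\varphi_{N_t}/t\to\nu$ a.s. What remains is to control the overshoot $d_\tree(\rho,X_t)-\varphi_{N_t}$ and its supremum analogue: from time $\sigma_{N_t}$ onwards the walk lives in the subtree rooted at $X_{\sigma_{N_t}}$, so this gap is bounded by the maximum level reached in a single regeneration excursion, which by the same reasoning as in Lemma~\ref{lem:tail_bounds_Tprime} has exponential tails. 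A Borel--Cantelli argument then shows the overshoot is $o(t)$ a.s., completing the convergence statement.

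For the concentration, fix $\theta>0$ and $J\ge 1$, take $t\ge J/\eps$ and abbreviate $t_*:=t/\mu_\sigma\asymp\eps t$. Chebyshev applied to $\sigma_n-n\mu_\sigma$ at $n=\lfloor t_*\pm K\sqrt{\eps t}\rfloor$ shows $|N_t-t_*|\le K\sqrt{\eps t}$ with probability at least $1-\theta/3$ for $K=K(\theta)$ large: the natural fluctuation scale for $\sigma_n$ is $\sqrt{n}/\eps\asymp\sqrt{t/\eps}$, which after dividing by $\mu_\sigma\asymp 1/\eps$ becomes $\sqrt{\eps t}$ on the $N_t$ side. Conditional on this event, another Chebyshev bound applied to $\varphi_n-n\mu_\varphi$ using $\vr{\varphi_n}\lesssim n\delta^2$ gives $|\varphi_{N_t}-N_t\mu_\varphi|\lesssim\sqrt{\eps t}$ with probability at least $1-\theta/3$, and combining via the triangle inequality yields $|\varphi_{N_t}-\nu t|\lesssim\sqrt{\eps t}$. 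Finally, the exponential excursion tail together with a union bound over the at most $O(\eps t)$ excursions occurring by time $t$ gives $\max_{i\le N_t}(\text{excursion}_i)=O(\log t/\log(1/\eps))$ with probability at least $1-\theta/3$; for $t\ge J/\eps$ and $\eps$ small this is $o(\sqrt{\eps t})$. This simultaneously bounds $d_\tree(\rho,X_t)-\varphi_{N_t}$ and $\sup_{s\le t}d_\tree(\rho,X_s)-\varphi_{N_t}$, delivering the two displayed inequalities.

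The main technical obstacle I anticipate is uniformity in $\eps$: the tails in Corollary~\ref{cor:tail_bounds} degrade as $\eps\to 0$, so verifying that the log-correction $\log t/\log(1/\eps)$ remains $o(\sqrt{\eps t})$ throughout the regime $t\ge J/\eps$ requires care (it does hold once $\eps$ is sufficiently small, with the implicit constant depending on $J$ and $\theta$). A minor separate point is that $(\sigma_1,\varphi_1)$ is not part of the iid sequence, but its contribution is $O(1/\eps)$ in expectation and $O(\sqrt{\eps t})$ in fluctuation, so it is easily absorbed into the constants.
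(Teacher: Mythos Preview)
Your approach is essentially the same as the paper's: renewal decomposition via regeneration times, SLLN/ergodic theorem for the almost-sure limit, and Chebyshev plus the moment bounds of Corollary~\ref{cor:varphi_sigma_E_Var_bounds} for the concentration. The paper likewise introduces the counting process $M_k=\max\{i:\sigma_i\le k\}$ and sandwiches $d_T(\rho,X_k)$ between $\varphi_{M_k}$ and $\varphi_{M_k+1}$ plus an overshoot term.

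There is one point where your argument differs and needs a small patch. You control the overshoot $d_T(\rho,X_t)-\varphi_{N_t}$ and its supremum analogue by appealing to an exponential tail for the \emph{maximum level reached during an excursion}, citing ``the same reasoning as Lemma~\ref{lem:tail_bounds_Tprime}''. But that lemma only gives tails for $\varphi'_1$ and $\sigma'_1$, i.e.\ for the \emph{endpoint} of the excursion, not for its running maximum, and these can differ (the walk may climb high and backtrack before regenerating). The paper sidesteps this entirely: for the pointwise bound it brackets $t$ between two deterministic indices $s^-<s^+$ with $s^+-s^-\asymp\sqrt{\eps t}$, and for the supremum bound it writes
\[
\pr{\sup_{s\le t}d_T(\rho,X_s)>\nu t+2C\sqrt{\eps t}}\le\pr{d_T(\rho,X_t)>\nu t+C\sqrt{\eps t}}+t\cdot\delta^{C\sqrt{\eps t}},
\]
using only that any trajectory overshooting by $2C\sqrt{\eps t}$ at some $s<t$ and then sitting below $\nu t+C\sqrt{\eps t}$ at time $t$ must have backtracked $C\sqrt{\eps t}$ levels, which costs $\delta^{C\sqrt{\eps t}}$. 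Your excursion-max route can be repaired by the same backtracking idea (split on $\varphi_{j+1}-\varphi_j\ge r/2$ versus a backtrack of $\ge r/2$ levels, each of probability $\lesssim\delta^{r/2}$), but as written it is a gap. Also, your ``$o(\sqrt{\eps t})$'' for the overshoot should be ``$O(\sqrt{\eps t})$'': at $t=J/\eps$ the quantity $\sqrt{\eps t}=\sqrt{J}$ is a fixed constant, so you only get a bound absorbable into $C$, not something vanishing.
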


The proof of this is deferred to Appendix~\ref{app:proofs_speed_entropy}.

\begin{proposition}\label{pro:aux_entropy}
Let $\tree$ be a random quasi-tree corresponding to a graph $G$ as in Definition~\ref{def:tree}, let $\xi$ and $\til{\xi}$ be two independent loop-erased random walks on $\tree$, both started from the root $\rho$ and let $T'$, $X'$, $\sigma'_1$ and $\xi'$ be as in Definition~\ref{def:Tprime}, with $\xi'$ independent of $X'$ (conditional on $T'$).
Let
\begin{align*}
\h:=&\quad\frac{1}{\E{\varphi_2-\varphi_1}}\cdot\E{-\log\prcond{X'_{\sigma'_1}\in\xi'}{X',T'}{\rho}},\\
\V:=&\quad\h^2\vee\vr{-\log\prcond{X'_{\sigma'_1}\in\xi'}{T',X'}{\rho}}.
\end{align*}
Then almost surely
\begin{align*}
\frac{-\log \prcond{\xi_k\in \til{\xi}}{T,\xi}{} }{k}\to \h \text{ as } k\to\infty.
\end{align*}

Let $Y'=-\log\prcond{X'_{\sigma'_1}\in\xi'}{X',T'}{\rho}$. Assume that $\E{(Y'-\E{Y'})^2}\lesssim\E{Y'}^2$ and\\ $\E{(Y'-\E{Y'})^4}\lesssim\E{Y'}^4$. (This will in particular imply that $\V\asymp\h^2$.)

Fix $K\geq 0$ and let $T_0$ be a realisation of the first $K$ levels of $\tree$.
Let $b(R)$ be such that $b(R)\ge\frac{c}{\eps}$ and $\log b(R)\ge c\h$ for some constant $c$ and the number of vertices in any $R$-ball of $T$ is at most $b(R)$.
Then for all $\theta>0$, there exists a positive constant $C$ (depending only on $\theta$ and $\Delta$) so that for all $k\geq\frac{(K\log b(R))^2}{\V}\vee1$ we have
\begin{align}\label{eq:entropy_deviations}
\prcond{\left|-\log \prcond{\xi_k\in \til{\xi}}{T,\xi}{} -\h k\right|>C\sqrt{k\V}}{\B_K(\rho)=T_0}{}\leq \theta.
\end{align}
\end{proposition}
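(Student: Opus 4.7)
The plan is to decompose
\[
M_k := -\log\prcond{\xi_k\in\til{\xi}}{T,\xi}{}
\]
along the regeneration structure of the LERW $\xi$, identify the resulting summands with i.i.d.\ copies of $Y' := -\log\prcond{X'_{\sigma'_1}\in\xi'}{X',T'}{\rho}$, and invoke renewal theory for the almost-sure limit and Chebyshev's inequality for the concentration bound.

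\textbf{Decomposition.} Given $T$ and $\xi$, the long-range edges traversed by $\xi$ form a tree-path from $\rho$, so $\{\xi_k\in\til{\xi}\}$ is equivalent to the walk $\til{X}$ generating $\til{\xi}$ successively crossing every edge along this path. Let $R_1<R_2<\ldots$ be the regeneration levels of $\xi$, i.e.\ the levels of edges crossed exactly once by the walk underlying $\xi$, and let $N_k:=\max\{i:R_i\le k\}$. Splitting the path at the regeneration edges and applying the strong Markov property of $\til{X}$ at the entrance of each new regeneration subtree, together with Lemma~\ref{lem:indep_between_regenerations}, yields a product decomposition
\[
\prcond{\xi_k\in\til{\xi}}{T,\xi}{}
\;=\; q_{\mathrm{first}}\prod_{i=1}^{N_k-1} q_i\cdot q_{\mathrm{last}},
\]
where each $q_i$ is the conditional probability (under the randomness of $\til{X}$) of completing the $i$-th regeneration segment of $\xi$, and $q_{\mathrm{first}},q_{\mathrm{last}}$ handle the first and final partial segments. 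Setting $Z_i:=-\log q_i$ and $E_k:=-\log(q_{\mathrm{first}}q_{\mathrm{last}})$, we obtain $M_k=\sum_{i=1}^{N_k-1}Z_i+E_k$. By Lemma~\ref{lem:indep_between_regenerations} and the $(T',X')$ structure of Definition~\ref{def:Tprime}, the segments are i.i.d.\ copies of the initial segment of $\xi'$ on an independent $T'$, so the $(Z_i)$ are i.i.d.\ copies of $Y'$ and the level-gaps $R_{i+1}-R_i$ are i.i.d.\ copies of $\varphi_2-\varphi_1$.

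\textbf{Almost-sure convergence.} By Corollary~\ref{cor:tail_bounds} both $Z_i$ and $R_{i+1}-R_i$ have finite moments. The strong law and the renewal theorem give $N_k/k\to 1/\E{\varphi_2-\varphi_1}$ and $\tfrac{1}{N_k}\sum_{i=1}^{N_k}Z_i\to\E{Y'}$ almost surely, and $E_k$ is $o(k)$ almost surely since $\sigma'_1,\varphi'_1$ have exponential tails (Lemma~\ref{lem:tail_bounds_Tprime}). Hence $M_k/k\to \E{Y'}/\E{\varphi_2-\varphi_1}=\h$ almost surely.

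\textbf{Concentration and boundary terms.} The moment assumptions give $\vr{Y'}\asymp\h^2\asymp\V$ (using $\E{\varphi_2-\varphi_1}\asymp 1$ from Corollary~\ref{cor:varphi_sigma_E_Var_bounds}). Conditional on $N_k$, Chebyshev's inequality applied to $\sum_{i=1}^{N_k-1}Z_i$ gives deviations of order $\sqrt{N_k\V}\asymp\sqrt{k\V}$. Chebyshev applied separately to $N_k$ (whose gaps have bounded variance by Corollary~\ref{cor:varphi_sigma_E_Var_bounds}) shows $|N_k-k/\E{\varphi_2-\varphi_1}|\lesssim\sqrt{k}$ with high probability, contributing an extra $|\E{Y'}|\cdot O(\sqrt{k})=O(\sqrt{k\V})$. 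For the conditioning on $\B_K(\rho)=T_0$: fixing the first $K$ levels affects $M_k$ only through the finitely many first regeneration segments contained in these levels, and since each $R$-ball has at most $b(R)$ vertices, the resulting boundary contribution to $M_k$ is bounded deterministically by $K\log b(R)$. The hypothesis $k\geq (K\log b(R))^2/\V$ ensures this is dominated by $\sqrt{k\V}$. Combining the estimates yields~\eqref{eq:entropy_deviations}.

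\textbf{Main obstacle.} The main difficulty is justifying that each factor $q_i$ in the product decomposition is genuinely distributed as $\prcond{X'_{\sigma'_1}\in\xi'}{X',T'}{\rho}$, despite the fact that $\til{X}$ may backtrack within an already-traversed subtree before completing the next segment. To handle this one combines Lemma~\ref{lem:firstgen} (comparing first-LERW-edge probabilities with first-regeneration-edge probabilities), Lemma~\ref{lem:bound_prob_Xprime_sigma1_equals_x} (relating probabilities under $T$ and under $T'$), and the strong Markov property at regeneration edges, and absorbs the resulting $(1+o(1))$ factors into $E_k$. The exponential tail on $\varphi'_1$ from Lemma~\ref{lem:tail_bounds_Tprime} then controls both $q_{\mathrm{last}}$ and the boundary error uniformly.
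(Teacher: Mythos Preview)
Your decomposition along regeneration levels is the right framework and matches the paper, but the central claim that the $(Z_i)$ are i.i.d.\ is false, and this is where the real work lies. Each $Z_i = -\log\prcond{\xi_{\varphi_i}\in\til{\xi}}{T,\xi,\xi_{\varphi_{i-1}}\in\til{\xi}}{}$ equals $-\log\prcond{X_{\sigma_i}\in\xi(i)}{T(X_{\sigma_{i-1}}),X}{}$, where $\xi(i)$ is a fresh LERW on the \emph{entire} subtree $T(X_{\sigma_{i-1}})$. Thus $Z_i$ is a function of the whole subtree below $X_{\sigma_{i-1}}$, not just of the $i$-th regeneration block $T(X_{\sigma_{i-1}})\setminus T(X_{\sigma_i})$. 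Since $T(X_{\sigma_{i-1}})\supset T(X_{\sigma_{j-1}})$ for $j>i$, the variables $Z_i$ and $Z_j$ share randomness and are genuinely correlated. Lemma~\ref{lem:indep_between_regenerations} gives that the blocks are i.i.d., and hence that the $(Z_i)_{i\ge 2}$ are identically distributed as $Y'$ and form a stationary sequence, but it does not give independence.

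The paper handles this via Lemma~\ref{lem:aux_entropy_var_bound}: one shows $\cvc{Y_i}{Y_j}{T_0}\lesssim\V e^{-a(j-i)}$ by constructing a truncated variable $Y_{i,j}$ (using a LERW stopped at level $\varphi_{j-1}$ instead of the full $\xi(i)$) together with an event $B_{i,j}$ on which $|Y_i-Y_{i,j}|$ is exponentially small, and such that $Y_{i,j}\1_{B_{i,j}}$ is independent of $Y_j$. Bounding the error on $B_{i,j}^c$ is precisely where the fourth-moment assumption $\E{(Y'-\E{Y'})^4}\lesssim\E{Y'}^4$ is used (via Cauchy--Schwarz). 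For the almost-sure limit the ergodic theorem for the stationary sequence suffices, so that part survives; but your Chebyshev step needs $\vr{\sum Z_i}\lesssim k\V$, which requires this covariance estimate. Your ``main obstacle'' paragraph is about a different and comparatively minor point (the identification of the marginal law of $Z_i$ with $Y'$, which follows directly from the LERW restriction property and Lemma~\ref{lem:indep_between_regenerations}); the actual obstacle is the dependence between the $Z_i$.
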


Below we will state and prove a lemma used in the proof of the above proposition. Given the lemma the proof of Proposition~\ref{pro:aux_entropy} is analogous to the proof of \cite[Proposition 3.15]{random_matching} and is deferred to Appendix~\ref{app:proofs_speed_entropy}.

\begin{lemma}\label{lem:aux_entropy_var_bound}
Let us consider the setup of Proposition~\ref{pro:aux_entropy}. Then we have 
\[\vrc{-\log\prcond{\xi_{\varphi_k}\in\til\xi}{\xi,T}{\rho}}{T_0}\quad\lesssim\quad k\V+\left(K\log b(R)\right)^2.\]
\end{lemma}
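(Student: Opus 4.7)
The plan is to decompose $-\log\prcond{\xi_{\varphi_k}\in\til\xi}{\xi,T}{}$ into a boundary contribution from the first $K$ levels plus a sum of $k$ conditionally i.i.d.\ increments, one per regeneration. Since $\xi_{\varphi_1},\xi_{\varphi_2},\dots$ are nested ancestors along the single ray of $\xi$, the events $\{\til\xi\ni\xi_{\varphi_i}\}$ are nested in $i$, so telescoping yields
\[
-\log \prcond{\xi_{\varphi_k}\in\til\xi}{\xi,T}{} \;=\; Z_0 + \sum_{i=1}^{k} Z_i,
\]
where $Z_0:=-\log\prcond{\til\xi\ni\xi_K}{\xi,T}{}$ and (setting $\xi_{\varphi_0}:=\xi_K$) $Z_i:=-\log\prcond{\til\xi\ni\xi_{\varphi_i}}{\til\xi\ni\xi_{\varphi_{i-1}},\xi,T}{}$ for $i\ge 1$. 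Using $\vr{A+B}\le 2\vr{A}+2\vr{B}$, it will be enough to control $\vrc{Z_0}{T_0}$ and $\vrc{\sum_i Z_i}{T_0}$ separately.

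For the bulk sum the key input will be a strong-Markov--type property of the LERW: conditional on $\til\xi$ passing through a long-range edge $e=(e^-,e^+)$, the continuation of $\til\xi$ beyond $e^+$ is distributed as the loop-erasure of a walk on the sub-quasi-tree $T(e^+)$ conditioned never to return to $e^-$, which is precisely the law of $\xi'$ on $T'$ from Definition~\ref{def:Tprime}. Applying this at each $\xi_{\varphi_{i-1}}$ and combining with Lemma~\ref{lem:indep_between_regenerations} -- which asserts that $\bigl(T(X_{\sigma_{i-1}}),(X_t)_{t\ge\sigma_{i-1}}\bigr)$ has the law of $(T',X')$ and is independent of the previously revealed region -- we will conclude that, conditional on $T_0$, the variables $(Z_i)_{i\ge 1}$ are independent with common law equal to that of $Y':=-\log\prcond{X'_{\sigma'_1}\in\xi'}{X',T'}{\rho}$. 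For $i=1$ one first applies the strong Markov property at the last visit of $X$ to $\xi_K^-$ before time $\sigma_1$, so that the remaining walk is distributed as $X'$ on $T'$ and $\xi_{\varphi_1}$ is its first regeneration edge. Since $\V\ge\vr{Y'}$ by definition, this gives $\vrc{\sum_{i=1}^k Z_i}{T_0}\le k\V$.

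The main obstacle will be bounding $\vrc{Z_0}{T_0}$ by $(K\log b(R))^2$, which we handle via truncation. Conditional on $T_0$ the set $\mathcal E_K$ of level-$K$ long-range edges of $T$ is already determined, with $|\mathcal E_K|\le b(R)^K$. Writing $q_T(e):=\prcond{\til\xi\ni e}{T}{\rho}$ so that $Z_0=-\log q_T(\xi_K)$, and noting that $\xi_K$ has law $q_T$ given $T$ (as $\xi$ and $\til\xi$ are i.i.d.\ given $T$), the mean is bounded by the entropy, $\E{Z_0\mid T_0}\le\E{H(\xi_K\mid T)\mid T_0}\le\log|\mathcal E_K|\le K\log b(R)$. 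For the second moment we split at the threshold $b(R)^{-2K}$: on $\{q_T(\xi_K)\ge b(R)^{-2K}\}$ we have $Z_0\le 2K\log b(R)$ deterministically, contributing at most $(2K\log b(R))^2$; on the complement, a crude path-following lower bound $q_T(e)\gtrsim\eps^K(\Delta+1)^{-K(2R+1)}$ -- obtained by following a single direct path from $\rho$ to $e^+$ in $T$ and using Lemma~\ref{lem:neverbacktrack} for the $\gtrsim 1$ no-return factor -- gives the deterministic estimate $Z_0\lesssim KR$, while the probability of this event is at most $|\mathcal E_K|\cdot b(R)^{-2K}\le b(R)^{-K}$. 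Since $R\lesssim b(R)$ and $b(R)\ge c/\eps\gg 1$, the contribution of this rare event, $b(R)^{-K}(CKR)^2$, is negligible compared with $(K\log b(R))^2$. Combining, $\vrc{Z_0}{T_0}\le\E{Z_0^2\mid T_0}\lesssim(K\log b(R))^2$, which together with the bulk bound yields the claim.
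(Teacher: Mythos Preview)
Your telescoping $Z_0+\sum_{i=1}^k Z_i$ matches the paper's decomposition, and the marginal identification $Z_i\eqdist Y'$ for $i\ge 2$ is correct. The gap is the claim that the $(Z_i)_{i\ge 1}$ are \emph{independent} given $T_0$. Lemma~\ref{lem:indep_between_regenerations} only gives independence of the slabs $\bigl(T(X_{\sigma_{i-1}})\setminus T(X_{\sigma_i}),\,(X_t)_{\sigma_{i-1}\le t\le\sigma_i}\bigr)$, but $Z_i$ is not a function of the $i$-th slab alone: the conditional probability $\prcond{\xi(i)\ni\xi_{\varphi_i}}{T(X_{\sigma_{i-1}})}{}$ involves the return probability from $X_{\sigma_i}$ back across the regeneration edge, and that depends on the structure of $T(X_{\sigma_i})$ \emph{beyond} the edge. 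Since $Z_j$ for $j>i$ is also a function of $T(X_{\sigma_{j-1}})\subseteq T(X_{\sigma_i})$, the $Z_i$ are genuinely correlated, and the inequality $\vrc{\sum_i Z_i}{T_0}\le\sum_i\vrc{Z_i}{T_0}$ fails.

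This is exactly the work the lemma is meant to do. The paper handles it by constructing, for each pair $i<j$, a truncated variable $Y_{i,j}$ that uses the loop-erasure of the auxiliary walk $X^i$ only up to its first visit to level $\varphi_{j-1}$; this makes $Y_{i,j}$ measurable with respect to the slabs before $\sigma_{j-1}$ and hence independent of $Z_j$. One then shows (via the backtracking estimate of Lemma~\ref{lem:visit_far_then_hit_e}) that on a high-probability event $|Z_i-Y_{i,j}|\le e^{-b(j-i)}$, and combined with the fourth-moment assumption on $Y'$ this yields $\cvc{Z_i}{Z_j}{T_0}\lesssim\V e^{-a(j-i)}$, whence the cross terms contribute $O(k\V)$. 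A secondary issue: your last-visit argument for $i=1$ does not give $Z_1\eqdist Y'$ either, since between $\sigma_0$ and $\sigma_1$ the walk can dip below level $K$ and see the fixed $T_0$; the paper sidesteps this by bounding $Z_0+Z_1$ together via Lemma~\ref{lem:bound_prob_Xsigma1_equals_x} rather than treating $Z_1$ as part of the bulk.
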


\begin{proof}[Proof of Lemma~\ref{lem:aux_entropy_var_bound}.]
The proof idea is similar to the proof of \cite[Lemma 3.14]{random_matching}. The full details are given in Appendix~\ref{app:proofs_speed_entropy}.
\end{proof}

The following lemma will help us to find the order of $\h$ and $\V$ (as functions of $n$ and $\eps$) in Proposition~\ref{pro:aux_entropy} for specific sequences of graphs and to check that the assumptions of \eqref{eq:entropy_deviations} hold. The proof is given in Appendix~\ref{app:proof_of4lemmas}.

\begin{lemma}\label{lem:main_aux_entropy}
Let $G$ be as before. Let $\rho$ be any vertex of $G$ and let $T$ be any realisation of the random quasi-tree corresponding to $G$, rooted at $\rho$. Let $v$ be any vertex of $G$ such that the ball of radius $\frac{R}{2}$ around $v$ is contained in the ball of radius $R$ around $\rho$ (the balls are in graph distance). Let $X$ and $\til{X}$ be independent random walks on $T$ from $v$ and let $\tau_1$ be the first time when $X$ hits level 1 and $\tilde{\tau}_1$ be the first time when $\til{X}$ hits level 1. Let $Y$ be a simple random walk on $G$ from $v$ and let $E$ be a random variable taking values on $\Znonneg$ such that $\prcond{E=k}{E\ge k,Y_k=u}{v}=\frac{\eps}{\deg(u)+\eps}$ for all $k$ and all $u$. Let $\left(\til{Y},\til{E}\right)$ be an independent copy of $\left(Y,E\right)$. Let $b\in\Zpos$.  Let $\xi$ and $\til{\xi}$ be independent loop-erased random walks on $T$. Also let $T'$, $X'$ and $\xi'$ be as in Definition~\ref{def:Tprime}, with $T'$ rooted at $\rho$. Let $\left(\til{X}',\til{\xi}'\right)$ be an independent copy of $\left(X',\xi'\right)$ given $T'$.

Assume that there exist $h_b$ and $b(R)$ (depending on $n$) such that the following properties hold.
\begin{enumerate}[(i)]
\item\label{assump:h_b} For any realisation of $T$ and any choice of $v$ we have 
\[\E{\left(-\log\prcond{Y_E=\til{Y}_{\til{E}}}{Y}{v}\right)^b}\quad\asymp\quad h_b.\]
\item\label{assump:ball_growth} The size of all $R$-balls in $G$ is upper bounded by $b(R)$, and $b(R)$ satisfies
\[\left(\log b(R)\right)^b\left(1-\frac{\eps}{\Delta+\eps}\right)^{\frac{R}{2}}\quad\ll\quad h_b.\]
\item\label{assump:R} $R\gg\frac1\eps$.
\item\label{assump:R_eq_n} The above assumptions also hold, with the same value of $h_b$, if we set the value of $R$ to be $n$.
\end{enumerate}
Then we have
\begin{align}\label{eq:aux_entropy_main}
\econd{\left(-\log\prcond{X'_{\sigma'_1}\in\til{\xi}'}{X',T'}{\rho}\right)^b}{T'}\quad\asymp\quad h_b.
\end{align}
\end{lemma}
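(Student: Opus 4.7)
The plan is to show that $\prcond{X'_{\sigma'_1}\in\til\xi'}{X',T'}{\rho}$ is, up to negligible errors, the probability that two independent walks on $T'$ from $\rho$ agree on the vertex at which they first cross a long-range edge, and then to identify this with $\prcond{Y_E = \til Y_{\til E}}{Y}{v}$ via the natural coupling between the walk on $T'$ up to its first long-range attempt and the walk $Y$ on $G$ equipped with the geometric clock $E$.

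First I would decompose the conditional expectation according to the level $\varphi'_1$ of $X'_{\sigma'_1}$. By Lemma~\ref{lem:tail_bounds_Tprime}, $\prcond{\varphi'_1 \geq r}{T'}{\rho} \lesssim e^{-c(r-1)\log(1/\eps)}$, so crude bounds on $-\log\prcond{X'_{\sigma'_1}\in\til\xi'}{X',T'}{\rho}$ using assumption~\ref{assump:ball_growth} and Lemma~\ref{lem:bound_prob_Xprime_sigma1_equals_x} show that the contribution of $\{\varphi'_1 \geq 2\}$ to the $b$-th moment is $o(h_b)$. On $\{\varphi'_1 = 1\}$ we have $X'_{\sigma'_1} = X'_{\tau'_1}$ at level~$1$. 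For such a level-$1$ vertex $y$, applying the analogue of Lemma~\ref{lem:compare_inxi_fisthit} to $T'$ and $\til\xi'$ with the choice $\ell_0 \asymp 1/\eps$ makes $e^{c_1\ell_0\eps\delta} = 1+o(1)$ (as $\delta=o(1)$) while $e^{-c_2\ell_0\eps}$ is bounded by a constant strictly less than $1$. Assumption~\ref{assump:ball_growth} ensures that typical values of $\prcond{\til X'_{\til\tau'_1} = y}{T'}{\rho}$ dominate this constant in the $b$-th moment, yielding
\[-\log\prcond{X'_{\sigma'_1}\in\til\xi'}{X',T'}{\rho} \;=\; -\log\prcond{\til X'_{\til\tau'_1} = X'_{\tau'_1}}{\til X',T'}{\rho} + O(1)\]
on $\{\varphi'_1=1\}$, uniformly in $X'$.

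Next I would couple the evolution of $X'$ on $T'$ up to time $\tau'_1$ with the pair $(Y,E)$ on $G$ started at the centre $v$ of the $R$-ball of $\rho$: both processes move as a simple random walk inside the $R$-ball and make a long-range attempt at vertex $u$ with probability $\eps/(\deg(u)+\eps)$, matching the conditional law of $E$. The conditioning defining $T'$ (never returning to $\rho^a$) only contributes a multiplicative factor $1\pm o(1)$ by Lemma~\ref{lem:neverbacktrack}. Under this coupling $X'_{\tau'_1-1}$ corresponds to $Y_E$, hence $\prcond{X'_{\tau'_1} = \til X'_{\til\tau'_1}}{T'}{\rho} \asymp \prcond{Y_E = \til Y_{\til E}}{Y}{v}$, and taking $(-\log(\cdot))^b$ and the corresponding expectations gives~\eqref{eq:aux_entropy_main}. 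When $R$ is not large enough for the walks to stay inside the $R$-ball with probability $1-o(1)$ before time $E$, assumption~\ref{assump:R_eq_n} lets us effectively replace $R$ by $n$ in the comparison, since $h_b$ is unchanged.

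The main obstacle is making the error terms in Lemma~\ref{lem:compare_inxi_fisthit} uniformly negligible: the term $e^{-c_2\ell_0\eps}$ must be dominated by typical values of $\prcond{\til X'_{\til\tau'_1} = y}{T'}{\rho}$, which can themselves be as small as $\eps/b(R)$. Assumption~\ref{assump:ball_growth}, which reads $(\log b(R))^b\bigl(1-\eps/(\Delta+\eps)\bigr)^{R/2} \ll h_b$, is precisely what makes this work, ensuring the atypical-$y$ contributions do not overwhelm the dominant contribution coming from the level-$1$ regime.
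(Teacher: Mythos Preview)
There are two genuine gaps.

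First, the claim ``on $\{\varphi'_1 = 1\}$ we have $X'_{\sigma'_1} = X'_{\tau'_1}$'' is false: the walk may hit level~$1$ at some vertex, return to level~$0$, and later cross to level~$1$ at a different vertex which then becomes the first regeneration point. What is true (Lemma~\ref{lem:bound_prob_Xprime_sigma1_equals_x}) is that the \emph{law} of $X'_{\sigma'_1}$ restricted to level~$1$ is comparable to that of $\xi'_1$, so the level-$1$ contribution to the target expectation is $\asymp H_b(\xi'_1)$, not $\asymp H_b(X'_{\tau'_1})$; you therefore still need $H_b(\xi'_1)\asymp h_b$.

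Second, and more seriously, your reduction from $-\log\prcond{y\in\til\xi'}{T'}{\rho}$ to $-\log\prcond{\til X'_{\til\tau'_1}=y}{T'}{\rho}$ via Lemma~\ref{lem:compare_inxi_fisthit} with $\ell_0\asymp 1/\eps$ does not give the pointwise $O(1)$ claim. With that choice the additive error $e^{-c_2\ell_0\eps}$ is only a \emph{constant}, and for any level-$1$ vertex $y$ with $\prcond{\til X'_{\til\tau'_1}=y}{T'}{\rho}$ below that constant the lemma yields merely $\prcond{y\in\til\xi'}{T'}{\rho}\lesssim 1$, hence $-\log\prcond{y\in\til\xi'}{T'}{\rho}\gtrsim 0$ rather than $\gtrsim -\log\prcond{\til X'_{\til\tau'_1}=y}{T'}{\rho}$. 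But these small-probability $y$ are precisely the ones carrying the bulk of $h_b$, so neither a pointwise nor an averaging argument rescues the lower bound on the $b$-th moment. The paper instead proves $H_b(\xi_1)\asymp H_b(X_{\tau_1})$ by decomposing $\prstart{\xi_1=e}{\rho}$ according to the number and location of the walk's returns to the root ball after long-range excursions, bounding each piece via $H_b(X^{(u)}_{\tau_1})\asymp h_b$ for \emph{every} restart vertex $u$ in that ball. This uniformity over $u$ is exactly why assumption~(iv) is essential: only when the root ball is all of $G$ does assumption~(i) apply from every $u$, and the case of general $R$ is then recovered by a conditioning argument. Your sketch invokes assumption~(iv) only loosely and misses this mechanism.
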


In what follows we will use the following notation for entropy and the analogous expectation with a higher power of the $\log$. We will also make use of some simple results listed in Appendix~\ref{app:aux_results}.

\begin{definition}\label{def:Hb}
For $b\in\Zpos$ and for a sequence $(p_i)$ of real numbers taking values in $[0,1]$ let
\[H_b(p_1,p_2,...):=\quad\sum_{i}p_i(-\log p_i)^b,\]
and for a random variable $W$ taking values in a (countable) set $\mathcal{W}$ let
\[H_b(W):=\quad\sum_{w\in\mathcal{W}}\pr{W=w}\left(-\log\pr{W=w}\right)^b.\]
In both cases $p(-\log p)^b$ is considered to be $0$ when $p=0$.
\end{definition}

\begin{definition}\label{def:Hb_conditional}
For random variables $W$, $Z$ taking values in (countable) sets $\mathcal{W}$ and $\mathcal{Z}$ respectively let
\begin{align*}H_b(W|Z):&=\quad\sum_{z\in\mathcal{Z}}\pr{Z=z}H_b(W|Z=z)\\
&=\quad\sum_{z\in\mathcal{Z}}\pr{Z=z}\sum_{w\in\mathcal{W}}\prcond{W=w}{Z=z}{}\left(-\log\prcond{W=w}{Z=z}{}\right)^b.
\end{align*}
Here $\pr{Z=z}\prcond{W=w}{Z=z}{}\left(-\log\prcond{W=w}{Z=z}{}\right)^b$ is considered to be $0$ when\\ $\pr{Z=z}=0$ or $\prcond{W=w}{Z=z}{}=0$.
\end{definition}

\section{Relating $G^*$ and $T$}\label{sec:relating_Gstar_and_T}

As mentioned earlier, we will be mostly interested in two classes of graphs $G$, graphs with linear growth of entropy, and graphs with polynomial growth of balls. In the former case we will prove cutoff when $\eps_n\gg\frac{1}{\log n}$, i.e.\ when $\eps_n$ can be written as $\eps_n=\frac{g(n)}{\log n}$ where $g(n)\le\log n$ is a function growing to infinity. In the latter case we will prove cutoff when $\eps_n=n^{-o(1)}$, i.e.\ when $\eps_n$ can be written as $\eps_n=\exp\left(-\frac{\log n}{g(n)}\right)$ where $g(n)$ is a function growing to infinity. In the two cases we will be able to write $\h$ and $\V$ in the same form using $g(n)$ and $\log n$ and will also choose the parameters in the proofs to be of the same form in terms of $\eps_n$, $g(n)$ and $\log n$.

The reason that it is possible is that in both cases the entropy of the random walk on $G$ and the growth of balls of $G$ can be expressed using the same function $f$ that relates $\eps_n$ and $\frac{\log n}{g(n)}$. This motivates the following assumption.

\begin{assumption}\label{assump:f}
$ $
\begin{itemize}
\item $f(t)$ is a continuous increasing function $\Rpos\to\Rpos$ that satisfies $f(0^+)=0^+$, $f(\infty)=\infty$ and there exist positive constants $c$ and $C$ such that $c\log t\leq f(t)\leq C t$.
\item For any positive constant $\widehat{c},$ there exist positive constants $c$ and $C$ depending on $\widehat{c}$ such that for any $t\le \frac{\widehat{c}}{\eps_n}$, $b\in\{1,2,4\}$ and for all $x_0$, a random walk $X$ on $G$ with $X_0=x_0$ satisfies that $c f(t)^b\leq H_b(X_t)\leq C f(t)^b$.
\item There exist a function $b$ and positive constants $c$ and $C$ such that for any $t$ the size of any ball of radius $t$ in $G$ is upper bounded by $b(t)$, and we have $cf(t)\le \log b(t)\le C f(t)$ for all $t$.
\item There exist positive constants $C$ and $\beta$ such that $\eps_n\leq C(\log n)^{-\beta}$. \footnote{We will consider the regime $\eps_n\gg(\log n)^{-\frac13}$ in Section~\ref{sec:other_eps}.}
\item There exists a function $g(n)$ growing to infinity and there exist positive constants $c$ and $C$ such that $c\frac{\log n}{g(n)} \leq f\left(\frac1{\eps_n}\right)\leq C\frac{\log n}{g(n)}$.
\item For any positive constant $\widehat{c}$ there exist positive constants $c$ and $C$ such that $c f\left(\frac1{\eps_n}\right) \leq f\left(\frac{\widehat{c}}{\eps_n}\right)\leq C f\left(\frac1{\eps_n}\right)$ for all sufficiently large values of $n$.
\item There exists a positive constant $c$ such that for all $x,y\geq c$ we have $f(xy)\leq f(x)f(y)$.
\end{itemize}
\end{assumption}

Throughout this section and the next section, we will assume that Assumption~\ref{assump:f} holds (in addition to the assumptions in Definition~\ref{def:Gstar} and the assumption that the graphs $G_n$ have bounded degrees and $\eps_n\to0$).

In Section~\ref{sec:specific_graphs} we will show that graphs with linear growth of entropy satisfy it with $f(t)=t$, while graphs with polynomial growth of balls satisfy it with $f(t)=\log (1+t)$.

\begin{lemma}\label{lem:entropy_from_assump_f}
Assume that Assumption~\ref{assump:f} holds and assume that $R\gtrsim\frac1\eps\log g(n)$. Let $K\asymp\frac{g(n)\log g(n)}{\log n}\asymp\frac{\log g(n)}{f\left(\frac1\eps\right)}$ and let $T_0$ be a realisation of the first $K$ levels of the random quasi-tree $T$ corresponding to $G$.
Then for some $\h\asymp f\left(\frac1\eps\right)$ and $\V\asymp f\left(\frac1\eps\right)^2$ for all $\theta>0$ there exists a positive constant $C$ (depending only on $\theta$ and $\Delta$) so that for all $k\ge\frac{(K\log b(R))^2}{\V}$ we have
\begin{align}
\prcond{\left|-\log \prcond{\xi_k\in \til{\xi}}{T,\xi}{} -\h k\right|>C\sqrt{k\V}}{\B_K(\rho)=T_0}{}\leq \theta.
\end{align}
\end{lemma}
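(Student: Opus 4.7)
My plan is to apply Proposition~\ref{pro:aux_entropy} directly with the prescribed $K$; the conclusion that proposition delivers is exactly \eqref{eq:entropy_deviations}. To invoke it I must (a) identify $\h\asymp f(1/\eps)$ and $\V\asymp f(1/\eps)^2$; (b) verify the variance hypotheses $\E{(Y'-\E{Y'})^2}\lesssim\E{Y'}^2$ and $\E{(Y'-\E{Y'})^4}\lesssim\E{Y'}^4$ for $Y'=-\log\prcond{X'_{\sigma'_1}\in\xi'}{X',T'}{\rho}$; and (c) produce a ball-size envelope $b(R)$ with $b(R)\ge c/\eps$ and $\log b(R)\ge c\h$. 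Item (c) is immediate from the ball-growth clause of Assumption~\ref{assump:f}, which gives $\log b(R)\asymp f(R)\gtrsim f(1/\eps)\asymp\h$, and $\log b(R)\gtrsim\log(1/\eps)$ because $f(x)\ge c\log x$. Items (a) and (b) both follow from the single moment statement $\E{(Y')^b}\asymp f(1/\eps)^b$ for $b\in\{1,2,4\}$: indeed, $\E{\varphi_2-\varphi_1}\asymp1$ by Corollary~\ref{cor:varphi_sigma_E_Var_bounds} so $\h\asymp f(1/\eps)$; the bound $\vr{Y'}=\E{(Y')^2}-\E{Y'}^2\lesssim\E{Y'}^2$ together with $\E{(Y'-\E{Y'})^4}\le 8(\E{(Y')^4}+\E{Y'}^4)\lesssim\E{Y'}^4$ yields $\V\asymp\h^2\asymp f(1/\eps)^2$.

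I would establish the moment asymptotics by applying Lemma~\ref{lem:main_aux_entropy} with $h_b=f(1/\eps)^b$ for $b\in\{1,2,4\}$. Hypotheses (iii) and (iv) of that lemma are routine: (iii) $R\gg1/\eps$ is immediate from $R\gtrsim(1/\eps)\log g(n)$, and for (iv), with $R$ replaced by $n$, the bound $\log(1/\eps)\le f(1/\eps)/c\lesssim\log n/g(n)$ gives $1/\eps\le n^{C/g(n)}$ and hence $n\eps\ge n^{1-o(1)}$, so $\exp(-cn\eps)$ beats any polynomial in $\log n$. For hypothesis (ii), the sub-multiplicativity and linear majorant of $f$ yield $f(R)\le f(R\eps)f(1/\eps)\lesssim(R\eps)f(1/\eps)$, while $R\gtrsim(1/\eps)\log g(n)$ gives $\exp(-cR\eps)\le g(n)^{-c}$, so
\[
(\log b(R))^b\Bigl(1-\tfrac{\eps}{\Delta+\eps}\Bigr)^{R/2}\lesssim (R\eps)^b\exp(-cR\eps)\,f(1/\eps)^b=o\bigl(f(1/\eps)^b\bigr)
\]
because $R\eps\gtrsim\log g(n)\to\infty$.

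The remaining and most delicate step is hypothesis (i) of Lemma~\ref{lem:main_aux_entropy},
\[
\E{\bigl(-\log\prcond{Y_E=\til{Y}_{\til{E}}}{Y}{v}\bigr)^b}\asymp f(1/\eps)^b,\qquad b\in\{1,2,4\},
\]
and I expect this to be the principal obstacle. The per-step killing probability of $E$ lies in $[\eps/(\Delta+\eps),\eps/(1+\eps)]$, so $E$ concentrates at a time of order $1/\eps$ with exponential tails, and likewise for $\til E$. Writing $p(Y):=\prcond{Y_E=\til{Y}_{\til E}}{Y}{v}=\sum_{s,t}\prcond{E=s}{Y}{v}\pr{\til E=t,\,\til Y_t=Y_s}$ and splitting the $(s,t)$-sum according to whether $(s,t)$ lies in a window of width $\const/\eps$ around $1/\eps$, one reduces matters to estimating $-\log\pr{Y_s=\til Y_t}$ for deterministic $s,t\asymp 1/\eps$; by the linear-growth-of-entropy clause of Assumption~\ref{assump:f}, the $H_b$-moments of this deterministic quantity are $\asymp f(1/\eps)^b$. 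The upper bound on $\E{(-\log p(Y))^b}$ will then come from Jensen's inequality applied to $(-\log)^b$ after the conditioning, with the trivial estimate $-\log\pr{Y_s=y}\le Cs$ controlling the rare large values of $E,\til E$. For the lower bound, one shows that the $H_b$-envelope of Assumption~\ref{assump:f} forces the typical value of $\prcond{Y_E=w}{Y}{v}$ to be at most $\exp(-cf(1/\eps))$, for instance by comparing to $H_1(Y_E)$ via the standard entropy-concavity results in Appendix~\ref{app:aux_results}. Once hypothesis (i) is in place, Lemma~\ref{lem:main_aux_entropy} followed by Proposition~\ref{pro:aux_entropy} finishes the proof.
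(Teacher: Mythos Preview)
Your overall plan—verify the hypotheses of Lemma~\ref{lem:main_aux_entropy} with $h_b\asymp f(1/\eps)^b$ for $b\in\{1,2,4\}$ and then feed the resulting moment asymptotics into Proposition~\ref{pro:aux_entropy}—is exactly the paper's approach, and your treatment of hypotheses~(ii)--(iv) as well as of the side conditions $b(R)\ge c/\eps$, $\log b(R)\gtrsim\h$ matches the paper essentially line for line.

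The one substantive difference is in how hypothesis~(i) is handled. The left-hand side of~(i) is simply $H_b(Y_E)$, and the paper exploits this by conditioning on $E$ via Lemma~\ref{lem:H_b_cond_properties}: one gets the sandwich $H_b(Y_E\mid E)\le H_b(Y_E)\lesssim H_b(Y_E\mid E)+H_b(E)$. Since $E$ is dominated by a geometric of rate $\asymp\eps$, $H_b(E)\lesssim(\log(1/\eps))^b\lesssim f(1/\eps)^b$; and a short computation with the bounded-degree killing rates shows $\prcond{Y_t=u}{E=t}{v}\asymp\prstart{Y_t=u}{v}$ for all $t\le C/\eps$, whence $H_b(Y_t\mid E=t)\asymp H_b(Y_t)\asymp f(t)^b$ in that range. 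Splitting $\sum_t\pr{E=t}H_b(Y_t\mid E=t)$ at $C/\eps$ and using sub-multiplicativity of $f$ for the tail then gives $H_b(Y_E\mid E)\asymp f(1/\eps)^b$. This avoids your double $(s,t)$-sum entirely: by conditioning on the scalar $E$ rather than on the whole path $Y$, the argument reduces immediately to the deterministic-time entropy bound that Assumption~\ref{assump:f} supplies, and neither a Jensen step nor a separate ``typical value'' lower-bound argument is needed.
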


\begin{proof}
We will show that for $b\in\{1,2,4\}$ the assumptions of Lemma~\ref{lem:main_aux_entropy} hold with $h_b\asymp f\left(\frac1\eps\right)^b$ and that Proposition~\ref{pro:aux_entropy} holds.

Let $Y$ and $E$ be defined as in Lemma~\ref{lem:main_aux_entropy}. First we wish to show that $H_b(Y_E)\asymp f\left(\frac1\eps\right)^b$. By Lemma~\ref{lem:H_b_cond_properties} we know that
\[
H_b\left(Y_E\middle|E\right)\quad\le\quad H_b(Y_E)\quad\lesssim\quad H_b\left(Y_E\middle|E\right)+H_b(E),
\]
so it is sufficient to show that $H_b\left(Y_E\middle|E\right)\asymp f\left(\frac1\eps\right)^b$ and $H_b(E)\lesssim f\left(\frac1\eps\right)^b$.

First, note that for any $k\in\Znonneg$ we have
\begin{align*}
\pr{E=k}\quad&=\quad\left(\prod_{i=1}^k\prcond{E\ge i}{E\ge i-1}{}\right)\prcond{E=k}{E\ge k}{}\\
&\le\quad\left(\frac{\Delta}{\Delta+\eps}\right)^k\frac{\eps}{1+\eps}\quad\asymp\quad\left(\frac{\Delta}{\Delta+\eps}\right)^k\frac{\eps}{\Delta+\eps},
\end{align*}
therefore $H_b(E)\lesssim H_b\left(\Geomnonneg{\frac{\eps}{\Delta+\eps}}\right)\asymp\log\left(\frac1\eps\right)^b\lesssim f\left(\frac1\eps\right)^b$.

Note that for any $t$ we have
\[H_b(Y_t|E=t)\quad\lesssim\quad\left(\log b(t)\right)^b\quad\lesssim\quad f(t)^b.\]

Using this and that $f$ is increasing we get that for any fixed constant $C$ we have
\begin{align*}
H_b\left(Y_E\middle|E\right)d&=\sum_{t}\pr{E=t}H_b(Y_t|E=t)\lesssim\sum_{t}\pr{E=t}f(t)^b\\
&\le \pr{E\le \frac{C}{\varepsilon}}f\left(\frac{C}{\varepsilon}\right)^b +\sum_{t\geq \frac{C}{\eps}}\pr{E=t}f(t)^b\lesssim f\left(\frac{1}{\varepsilon}\right)^b+ \sum_{t\geq \frac{C}{\eps}}\pr{E=t}f(t)^b.
\end{align*}
Let $C$ be a sufficiently large constant so that $f(xy)\le f(x)f(y)$ for all $x,y\ge C$. Using this submultiplicativity and that $f(s)\lesssim s$, we get that
\begin{align*}
\sum_{t\ge \frac{C}{\eps}}\pr{E=t}f(t)^b\quad\lesssim\sum_{t\ge \frac{C}{\eps}}\pr{E=t}f\left(\frac1\eps\right)^b\left(\eps t\right)^b\quad\le f\left(\frac1\eps\right)^b\E{(\eps E)^b} \quad\asymp f\left(\frac1\eps\right)^b.
\end{align*}

This proves $H_b\left(Y_E\middle|E\right)\lesssim f\left(\frac{1}{\eps}\right)^b$.

For the lower bound on $H_b\left(Y_E\middle|E\right)$ we will show that for any positive constant $C$ for all $t\le\frac{C}{\eps}$ and all $u$ we have
\begin{align}\label{eq:Y_cond_E}
\prcond{Y_t=u}{E=t}{v}\quad\asymp\quad\prstart{Y_t=u}{v},
\end{align}
hence
\begin{align}\label{eq:Hb_Y_cond_E}
H_b(Y_t|E=t)\quad\asymp\quad H_b(Y_t)\quad\asymp\quad f(t)^b.
\end{align}
For any $t\le\frac{C}{\eps}$ and any $u$ we have $\prcond{Y_t=u}{E=t}{}=\frac{\prcond{E=t}{Y_t=u}{}}{\pr{E=t}}\prstart{Y_t=u}{}$. For any given path $u_1,...,u_t$ in $G$ we have
\[\left(\frac{1}{1+\eps}\right)^{t}\frac{\eps}{\Delta+\eps}\quad\le\quad\prcond{E=t}{Y_1=u_1,...,Y_t=u_t}{}\quad\le\quad\left(\frac{\Delta}{\Delta+\eps}\right)^{t}\frac{\eps}{1+\eps}.\]
Here
\[0\quad\ge\quad\log\left(\left(\frac{1}{1+\eps}\right)^t\right)\quad\asymp\quad t\log\left(1-\frac{\eps}{1+\eps}\right)\quad\asymp\quad-t\eps\quad\gtrsim\quad-1,\]
hence $\left(\frac{1}{1+\eps}\right)^{t}\frac{\eps}{\Delta+\eps}\asymp\eps$. Similarly $\left(\frac{\Delta}{\Delta+\eps}\right)^{t}\frac{\eps}{1+\eps}\asymp\eps$. This shows that for any $u$ we have $\prcond{E=t}{Y_t=u}{}\asymp\eps\asymp\pr{E=t}$. This finishes the proof of~\eqref{eq:Y_cond_E}.

Now using \eqref{eq:Hb_Y_cond_E} we get that
\begin{align*}
H_b(Y_E|E)\quad&=\quad\sum_{t}\pr{E=t}H_b(Y_t|E=t)\quad\gtrsim\quad\sum_{t=1}^{C/\eps}\pr{E=t}f(t)^b\\
&\ge\quad\pr{\frac{c}{\eps}\le E\le\frac{C}{\eps}}f\left(\frac{c}{\eps}\right)^b\quad\gtrsim\quad\pr{\frac{c}{\eps}\le E\le\frac{C}{\eps}} f\left(\frac{1}{\eps}\right)^b\quad\asymp\quad f\left(\frac{1}{\eps}\right)^b
\end{align*}
for sufficiently small values of the constant $c$ and sufficiently large values of the constant $C$. This proves $H_b\left(Y_E\middle|E\right)\gtrsim f\left(\frac{1}{\eps}\right)^b$, finishing the proof of $H_b(Y_E)\asymp f\left(\frac1\eps\right)^b$.

Now we show that all assumptions of Lemma~\ref{lem:main_aux_entropy} hold. We proved above that assumption~\eqref{assump:h_b} holds with $h_b\asymp f\left(\frac1\eps\right)^b$. It is immediate to see that assumption~\eqref{assump:R} holds. Note that
\[\left(\log b(R)\right)^b\quad\asymp\quad f(R)^b\quad\lesssim\quad f\left(\frac1\eps\right)^bf(\log g(n))^b\quad\lesssim\quad f\left(\frac1\eps\right)^b\left(\log g(n)\right)^b\]
and
\[\left(1-\frac{\eps}{\Delta+\eps}\right)^{\frac{R}{2}}\quad\lesssim\quad\exp\left(-c_1\eps R\right)\quad\lesssim\quad\exp\left(-c_2\log g(n)\right)\quad\ll\quad\left(\log g(n)\right)^{-b}\]
for some positive constants $c_1$, $c_2$, hence
\[\left(\log b(R)\right)^b\left(1-\frac{\eps}{\Delta+\eps}\right)^{\frac{R}{2}}\quad\ll\quad f\left(\frac1\eps\right)^b,\]
so assumption~\eqref{assump:ball_growth} also holds.

Note that they would also hold with $R=n$. Then using Lemma~\ref{lem:main_aux_entropy}, we get that in Proposition~\ref{pro:aux_entropy} we have $\h\asymp\E{Y'}\asymp f\left(\frac1\eps\right)$, $\V\asymp\E{(Y')^2}\asymp f\left(\frac1\eps\right)^2$, $\E{(Y'-\E{Y'})^2}\lesssim\E{(Y')^2}\asymp f\left(\frac1\eps\right)^2$ and $\E{(Y'-\E{Y'})^4}\lesssim\E{(Y')^4}\asymp f\left(\frac1\eps\right)^4$. Also $b(R)\ge R\gtrsim\frac1\eps$ and $\log b(R)\asymp f(R)\gtrsim f\left(\frac1\eps\right)\asymp\h$. Applying Proposition~\ref{pro:aux_entropy} gives the result.
\end{proof}

\subsection{Values of some parameters}

\begin{definition}\label{def:variable_values}
Let
\[t_0:=\frac{\log n}{\nu\h}\asymp\frac1\eps g(n),\qquad t_w:=\frac{1}{\nu\h}\frac{\log n}{\sqrt{g(n)}}\asymp\frac1\eps\sqrt{g(n)}\]
where $\nu$ is the speed from Lemma~\ref{lem:speed} and~$\h$ is from Lemma~\ref{lem:entropy_from_assump_f}.

In what follows also let
\[R:=C_1\frac1\eps\log g(n),\qquad K:=C_2\log g(n),\qquad M:=C_3\log g(n),\qquad L:=\frac12\nu(t_0+Bt_w)\]
for some constants $C_1$, $C_2$, $C_3$ and $B$ to be chosen later. Let us assume that $C_2\ge2C_3$.
\end{definition}

\subsection{$K$-roots and reversing paths}\label{subsec:K_roots_reversing_paths}

In this section we define $K$-roots the same way as in~\cite{random_matching} and we define events $\Omega_0$ and $\Omega_1$ on which we will be able to consider reversal of paths. We prove some results that will mean that in the later proofs it is sufficient to only consider $K$-roots and only consider paths that can be reversed.

\begin{definition}\label{def:Kroot}
We call a vertex $x$ of $G^*$ a $K$-root if $\B_K^*(x)$ (as in Definition~\ref{def:long_range_dist_on_G}) is a possible realisation of the first $K$ levels of the random quasi-tree $T$ corresponding to $G$. If $x$ is a $K$-root and $i \le K$, we denote by $\partial \B^*_{i}(x)$ the collection of centres of $\tree\rball$s at (long-range) distance $i$ from~$x$. 
\end{definition}

\begin{lemma}\label{lem:hit_K_root_quickly}
There exists a positive constant $b$ such that with high probability $G^*$ is such that starting a random walk from any vertex $x$ with high probability we hit a $K$-root within $\frac{bK}{\eps}$ steps.
\end{lemma}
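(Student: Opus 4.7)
The plan is to prove the lemma in two parts: a \emph{density estimate} showing that all but $n^{o(1)}$ vertices of $G^*$ are $K$-roots whp, and an \emph{escape estimate} showing that the walk from any starting vertex quickly visits such a vertex.

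\textbf{Density estimate.} Fix a vertex $v\in V_n$ and reveal the edges of $E^*$ incident to $\B_K^*(v)$ via a BFS from $v$. The vertex $v$ is a $K$-root precisely when this exploration exhibits no \emph{collision}, i.e.\ every revealed long-range edge lands in a previously unexplored $R$-ball of $G_n$. Each $R$-ball has at most $b(R)$ vertices and the centres of $R$-balls within long-range distance $K$ of $v$ form a tree of branching at most $\Delta b(R)$, so the total number of revelations is bounded by $N_K := (\Delta b(R))^{K+1}$. The conditional collision probability at each revelation is at most $\Delta N_K/(n - 2 N_K)$, so a union bound gives $\prstart{v \notin V\setminus\mathcal{B}}{} \le \Delta^2 N_K^2/(n - 2 N_K)$, where $\mathcal{B}$ denotes the set of non-$K$-roots. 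Using Assumption~\ref{assump:f} together with the parameter choices $K = C_2\log g(n)$, $R = C_1 \eps^{-1}\log g(n)$, the bound $\log b(R) \asymp f(R)$, the submultiplicativity of $f$, and $f(1/\eps) \asymp \log n / g(n)$, one gets $\log N_K \lesssim (\log g(n))^2 \log n/g(n) = o(\log n)$, so $N_K = n^{o(1)}$ and $\prstart{v\notin V\setminus\mathcal{B}}{} = n^{-1+o(1)}$. By Markov $|\mathcal{B}| = n^{o(1)}$ with probability $1-o(1)$.

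\textbf{Escape estimate.} Fix a starting vertex $x$ and consider the regeneration times $\sigma_1 < \sigma_2 < \cdots$ of the walk on $G_n^*$ (coupled to the walk on the associated random quasi-tree). By Corollary~\ref{cor:tail_bounds} together with Lemma~\ref{lem:speed}, for a sufficiently large constant $b$ the walk accomplishes at least $K$ regenerations by time $bK/\eps$ whp, and in this case the regeneration vertices $y_i := X_{\sigma_i}$ lie at strictly increasing long-range distances from $x$. Conditioning on the walk's trajectory up to $\sigma_i$, the ball $\B_K^*(y_i)$ splits into a backward portion (already revealed and automatically tree-like by non-backtracking) and a side + forward portion consisting of at most $N_K$ half-edges of $E^*$ that have not yet been revealed. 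A second BFS-collision argument (identical to the one in the density estimate) shows that this unrevealed portion is tree-like with conditional probability $1-O(N_K^2/n)$, in which case $y_i$ is a $K$-root. Since the unrevealed regions explored at different regenerations are essentially disjoint, the failure events $\{y_i \text{ is not a }K\text{-root}\}$ are approximately independent, giving a joint failure probability bounded by $(CN_K^2/n)^K = n^{-K(1-o(1))}$, which is $o(1)$.

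\textbf{Main obstacle.} The delicate issue is to make the escape estimate hold uniformly in the starting vertex $x$: the conditional part of the argument yields a joint failure probability that is easily $o(1/n)$, but the unconditional part - controlling $\prstart{\sigma_K > bK/\eps}{x}$ via Corollary~\ref{cor:tail_bounds} - only gives a bound of the form $e^{-\Omega(bK)} = g(n)^{-\Omega(b)}$, which for constant $b$ is not $o(1/n)$. Closing this gap requires combining the regeneration-based independence with the absolute relaxation time estimate $\trelabs(G_n^*)\lesssim 1/\eps$ (established elsewhere in the paper using the expansion provided by the random matching with weight $\eps$): together these provide enough partial $L^2$-mixing by time $bK/\eps$ to conclude, by Markov over $G_n^*$ and a union bound over $x$, that whp $G_n^*$ is such that $\prstart{\text{walk from }x\text{ misses }V\setminus\mathcal{B}\text{ in }[0,bK/\eps]}{x} = o(1)$ for every $x$. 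Establishing this final uniformity, tying together the combinatorial density estimate and the spectral input, is the main technical step.
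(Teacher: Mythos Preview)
Your ``main obstacle'' is a genuine gap, and the fix you propose does not close it. With $\trelabs(G^*)\lesssim 1/\eps$ and time horizon $bK/\eps$, the $L^2$-decay factor is $\exp\bigl(-bK/\trelabs\bigr)\asymp\exp(-\Theta(bK))=g(n)^{-\Theta(b)}$, which is exactly the same order as the regeneration-time tail you already have and is still not $o(1/n)$ for constant $b$. So invoking the spectral gap buys nothing here; you remain unable to take a union bound over starting vertices~$x$. Moreover, the regeneration/independence part of the escape estimate is shakier than you indicate: regeneration times are defined on the quasi-tree, and transporting the argument to $G^*$ in a neighbourhood that may already contain collisions (this is precisely the case you need to handle) is not automatic.

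The paper avoids this obstacle by shifting the $o(1/n)$-bound from the walk to the graph. Instead of proving a quenched escape estimate with error $o(1/n)$, it proves a purely combinatorial fact: for each vertex $x$, the probability that the larger ball $\B^*_{3K}(x)$ contains \emph{two or more} collisions is $\ll 1/n$ (this is a second-moment count on the BFS exploration, exactly as in your density estimate but asking for two collisions rather than one). Union-bounding over $x$ gives: whp $G^*$ is such that every $\B^*_{3K}(x)$ contains at most one collision. Conditioning on this structural event, the escape argument becomes elementary and uniform in $x$: if $\B^*_{3K}(x)$ has no collision then $x$ is already a $K$-root; if it has exactly one, there are at most two ``bad'' directions $y_0,z_0$ in the $R$-ball of $x$ leading towards the collision, and by Lemma~\ref{lem:neverbacktrack} the walk whp leaves this $R$-ball through a different vertex and then reaches long-range distance $2K$ without returning. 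The first vertex hit at distance $2K$ has its $K$-ball inside $\B^*_{3K}(x)$ and disjoint from the collision, so it is a $K$-root; this happens in time $\asymp K/\nu\asymp K/\eps$ whp. The key point you are missing is that controlling the structure of $G^*$ (two collisions in a $3K$-ball are rare enough for a union bound) removes the need for any $o(1/n)$ control on the walk itself.
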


\begin{proof}

Let $x$ be any vertex of $G$ and let us explore the ball $\B_{3K}^*(x)$ around it as follows. First let us consider the $R$-ball of $x$ (i.e.\ the set of vertices that are within graph distance $R$ from $x$). For each $v\ne x$ in this $R$-ball let us reveal the long-range edge starting from it and consider the $R$-ball around the newly revealed endpoint. If it contains any previously revealed vertex, let us say that an overlap occurred. Proceed similarly from each vertex that is in one of the new $R$-balls, but not a centre and was not already considered before. Continue the exploration for $3K$ levels.

Note that $|\B^*_{3K}(x)|\le b(R)^{3K+1}$, hence every time we reveal the long-range edge and the corresponding $R$-ball from a vertex $v$ of $\B_{3K}^*(x)$, the probability that an overlap occurs is $\le\frac{b(R)^{3K+2}}{n}$. Therefore the total number $I$ of overlaps is $\le_{\textrm{st}}\Bin{b(R)^{3K+1}}{\frac{b(R)^{3K+2}}{n}}$. Then
\[\pr{I\ge2}\quad\lesssim\quad\frac{1}{n^2}b(R)^{12K+6}\quad\ll\quad\frac{1}{n}\]
as $\log\left(b(R)^{12K+6}\right)\asymp K\log b(R)\asymp Kf(R)\lesssim Kf\left(\frac1\eps\right)f(\log g(n))\lesssim\log g(n)\frac{\log n}{g(n)}\log g(n)$ $\ll\log n$.

Taking union bound over all vertices of $G$ we get that whp for each $x$ there is at most one overlap in the ball $\B^*_{3K}(x)$. If there is no overlap, then $x$ is a $K$-root and we are done. Assume there is one overlap, say $y$ and $z$ are distinct vertices in $\B^*_{3K}(x)$ and $(y,y')$ and $(z,z')$ are long-range edges such that $\B_{G}(y',R)\cap\B_{G}(z',R)\ne\emptyset$. Let us consider the path from $x$ to $y$ that contains the fewest possible long-range edges, contains at most $R$ edges of $G$ in a row and does not cross the same long-range edge twice and does not pass through $z$. Let $y_0$ be the last vertex of this path that is in the $R$-ball of $x$. Let us define $z_0$ similarly. Repeating the proof of Lemma~\ref{lem:neverbacktrack} we get that with high probability the walk from $x$ will first leave the $R$-ball of $x$ via a vertex other than $y_0$ and $z_0$ and will reach (long-range) distance $2K$ without returning to this $R$-ball. In this case the first vertex reached by the walk that is at distance $2K$ from $x$ will be a $K$-root. Whp this vertex is reached in time $\le \frac{4K}{\nu}\asymp\frac{K}{\eps}$.
\end{proof}

\begin{lemma}\label{lem:few_non_K_roots}
With high probability the number of vertices in $G^*$ that are not $K$-roots is $o(n)$.
\end{lemma}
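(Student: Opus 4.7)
The plan is to show that the expected number of non-$K$-roots is $n^{o(1)}$, and then apply Markov's inequality. The key observation is that a vertex $x$ is a $K$-root exactly when the exploration of $\B_K^*(x)$ produces no ``overlaps'' in the sense of the proof of Lemma~\ref{lem:hit_K_root_quickly}: every time we reveal a long-range edge from an already-explored vertex and attach the corresponding $R$-ball of $G$, the newly revealed $R$-ball is disjoint from the previously revealed structure. If all revealed $R$-balls are pairwise disjoint up to level $K$, then $\B_K^*(x)$ is locally tree-like in the sense of Definition~\ref{def:Kroot}.

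First I would fix a vertex $x$ and explore $\B_K^*(x)$ one long-range edge at a time, in the order dictated by a breadth-first traversal of levels up to $K$. At each step, conditionally on the history so far, the partner of the currently revealed long-range half-edge is uniform over the remaining unmatched half-edges, so the probability that its $R$-ball in $G$ meets any already revealed vertex is at most $\frac{\Delta \cdot |\text{already revealed}|}{n/2}$. Since $|\B_K^*(x)| \le b(R)^{K+1}$, a union bound over the at most $b(R)^{K+1}$ long-range edges revealed during the exploration yields
\[
\pr{x\text{ is not a }K\text{-root}} \;\lesssim\; \frac{b(R)^{2K+3}}{n}.
\]

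The next step is to check that $b(R)^{2K+3}=n^{o(1)}$, using Assumption~\ref{assump:f} and Definition~\ref{def:variable_values}. By the submultiplicativity of $f$ and the linear upper bound $f(t)\le Ct$, we have
\[
\log b(R) \;\asymp\; f(R) \;\lesssim\; f\!\left(\tfrac{1}{\eps}\right)\cdot f(\log g(n)) \;\lesssim\; \frac{\log n}{g(n)}\cdot\log g(n),
\]
so that
\[
(2K+3)\log b(R) \;\lesssim\; \log g(n)\cdot \frac{(\log g(n))\log n}{g(n)} \;=\; \frac{(\log g(n))^2}{g(n)}\log n \;=\; o(\log n),
\]
since $g(n)\to\infty$. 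Consequently $\pr{x\text{ is not a }K\text{-root}} \le n^{-1+o(1)}$, and summing over all $n$ vertices,
\[
\E{\#\{x\in V: x\text{ is not a }K\text{-root}\}} \;\le\; n^{o(1)}.
\]
Markov's inequality then gives that whp the number of non-$K$-roots is at most $n^{o(1)}\cdot\omega(n)$ for any $\omega(n)\to\infty$, which can be taken to be $o(n)$, as required.

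The main obstacle is really only bookkeeping: making sure that the union bound over all long-range edges revealed in $\B_K^*(x)$ is valid, that the exploration process can be set up so that each new long-range partner is uniform on a set of size $\ge n/2$ (which holds as long as at most a constant fraction of half-edges have been revealed, and $b(R)^{K+1}\ll n$, which follows from the same computation above), and that the combinatorial bound $|\B_K^*(x)|\le b(R)^{K+1}$ is correctly applied to both the number of exploration steps and the size of the set a new $R$-ball could accidentally meet. Once those points are settled, the estimate is immediate from Assumption~\ref{assump:f} and the choices of $R$ and $K$ in Definition~\ref{def:variable_values}.
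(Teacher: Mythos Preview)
Your proposal is correct and follows essentially the same route as the paper: bound the probability that a fixed vertex is not a $K$-root by $\frac{b(R)^{2K+3}}{n}$ via the exploration process, verify this is $n^{-1+o(1)}$ using $K\log b(R)\lesssim\frac{(\log g(n))^2}{g(n)}\log n=o(\log n)$, and conclude by Markov's inequality. One small slip: in the per-step overlap probability you write $\frac{\Delta\cdot|\text{already revealed}|}{n/2}$, but the correct factor is $b(R)$, not $\Delta$, since the newly revealed $R$-ball meets the explored set exactly when the new endpoint lies in the $R$-neighbourhood (in $G$) of that set; your final bound $\frac{b(R)^{2K+3}}{n}$ already incorporates this correctly, so only the intermediate display needs fixing.
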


\begin{proof}
To determine whether a given vertex $x$ of $G$ is a $K$-root let us explore the ball $\B_K^*(x)$ around it as in the proof of Lemma~\ref{lem:hit_K_root_quickly}.

Every time we reveal the long-range edge and the corresponding $R$-ball from a vertex $v$ of $\B_K^*(x)$, the probability that an overlap occurs is $\le\frac{b(R)^{K+2}}{n}$. Therefore the total number $I$ of overlaps is $\le_{\textrm{st}}\Bin{b(R)^{K+1}}{\frac{b(R)^{K+2}}{n}}$. Then
\[\pr{x\text{ is not a }K\text{-root}}\quad=\quad\pr{I\ge1}\quad\lesssim\quad \frac1nb(R)^{2K+3}\quad=:p.\]
Similarly to the calculations in the proof of Lemma~\ref{lem:hit_K_root_quickly} we get that $p\ll1$.

Let $J$ be the total number of non-$K$-roots in $G^*$. Then for any constant $u>0$ we have
\[\pr{J\ge n\sqrt{p}}\quad\le\quad\frac{\E{J}}{n\sqrt{p}}\quad\le\quad\frac{np}{n\sqrt{p}}\quad=\quad\sqrt{p}\quad\ll\quad1.\]
This shows that whp the number of $K$-roots is $<n\sqrt{p}=o(n)$.
\end{proof}

\begin{lemma}\label{lem:few_close_vertices}
For any realisation of $G^*$ and any vertex $x$, the number of vertices $y$ with\\ $\B_K^*(x)\cap \B_K^*(y)\ne\emptyset$ is $o(n)$.
\end{lemma}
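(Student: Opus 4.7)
The plan is to give a purely combinatorial/deterministic bound that does not need any probabilistic input beyond the already-established parameter choices in Definition~\ref{def:variable_values}.

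First I would reduce the problem to bounding a single ball. If $z \in \B_K^*(x)\cap \B_K^*(y)$, then $y \in \B_K^*(z)$ for some $z\in \B_K^*(x)$, so the set of vertices $y$ with $\B_K^*(x)\cap \B_K^*(y)\ne\emptyset$ is contained in $\bigcup_{z\in \B_K^*(x)}\B_K^*(z)$. Since each $\B_K^*(\,\cdot\,)$ is a union of at most $K+1$ layers of $G^*$-$R$-balls in the tree-like structure, and each $G^*$-$R$-ball contains at most $b(R)$ vertices each with degree at most $\Delta+1$ worth of long-range children, we have the crude bound $|\B_K^*(w)|\le b(R)^{K+1}$ for every vertex $w$. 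Hence the number of $y$ in question is at most $b(R)^{2K+2}$.

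Next I would show $b(R)^{2K+2}=o(n)$ using the parameter values from Definition~\ref{def:variable_values} and Assumption~\ref{assump:f}. Writing the exponent in log form, it suffices to show $(2K+2)\log b(R)=o(\log n)$. By Assumption~\ref{assump:f} we have $\log b(R)\asymp f(R)$. With $R\asymp \frac{1}{\eps}\log g(n)$, submultiplicativity of $f$ gives
\[
f(R)\;\lesssim\; f\!\left(\tfrac{1}{\eps}\right)\,f(\log g(n))\;\lesssim\;\frac{\log n}{g(n)}\,\log g(n),
\]
where we used $f(1/\eps)\asymp \log n/g(n)$ and $f(t)\lesssim t$. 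Since $K\asymp \log g(n)$, this yields
\[
(2K+2)\log b(R)\;\lesssim\;\log g(n)\cdot\frac{\log n}{g(n)}\cdot\log g(n)\;=\;\frac{(\log g(n))^2}{g(n)}\,\log n\;=\;o(\log n),
\]
because $g(n)\to\infty$. Therefore $b(R)^{2K+2}=n^{o(1)}=o(n)$, which completes the proof.

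There is no real obstacle here; the argument is essentially the same volume–growth bookkeeping that already appears in the proofs of Lemmas~\ref{lem:hit_K_root_quickly} and~\ref{lem:few_non_K_roots}. The only point worth being careful about is that the concatenation of paths witnessing $d^*(x,z)\le K$ and $d^*(z,y)\le K$ need not itself satisfy the restrictions in Definition~\ref{def:long_range_dist_on_G}; this is why I bound the target set by $\bigcup_{z\in \B_K^*(x)}\B_K^*(z)$ directly rather than by a single ball $\B_{2K}^*(x)$. With this adjustment the argument is deterministic and uniform in the realisation of $G^*$, as required.
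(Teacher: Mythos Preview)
Your proof is correct and essentially identical to the paper's, which bounds the count by $|\B_{2K}^*(x)|\le b(R)^{2K+1}$ and then verifies $K\log b(R)\lesssim K f(R)\ll\log n$ via the same chain of estimates you give. Your caution about using $\bigcup_{z\in\B_K^*(x)}\B_K^*(z)$ rather than $\B_{2K}^*(x)$ is a fair technical point (concatenated paths may violate the at-most-$R$-consecutive-$G$-edges constraint at the junction), but it only changes the exponent by~$1$ and the paper is implicitly relying on the same crude $b(R)^{O(K)}$ volume bound.
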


\begin{proof}
The number of such vertices $y$ is $\le|\B_{2K}^*(x)|\le b(R)^{2K+1}$, which is $o(n)$ since\\ $\log\left(b(R)^{2K+1}\right)\lesssim Kf(R)\ll\log n$ as seen before.
\end{proof}

Given a realisation of $G^*$ we now define the corresponding quasi-tree $T$ and given a walk $X$ on $G^*$ we define the corresponding walk $\til{X}$ on $T$. In what follows we will measure how much the walk $X$ travelled by considering the long-range distance travelled by $\til{X}$ (which does not coincide with the previous definition of long-range distance on $G^*$).

\begin{definition}\label{def:T_of_Gstar}
For a vertex $v$ of $G^*$ let $\eta(v)$ denote its long-range neighbour in $G^*$.	

Given the graph $G^*$ and a vertex $v$ in it, the associated full quasi-tree $T$ and the map $\iota:T\to G^*$ are constructed as follows. Let $T$ be rooted at a copy of $v$ and let us add a copy of the $n$-ball of $v$ in $G$ around it. Let $\iota$ map each vertex in this ball to the corresponding vertex of $G^*$. For each vertex $u$ in this ball (including the root) let us add a long-range edge of weight $\eps$ from $u$ and attach a copy of the $n$-ball of $\eta(\iota(u))$ in $G$ to it. Let $\iota$ map each vertex of the new $n$-ball to the corresponding vertex in $G^*$. Repeat the same procedure from every vertex of the newly added $n$-balls except for their centres. Then proceed similarly, resulting in an infinite quasi-tree.

Given the graph $G^*$ and a vertex $v$ the associated $R$-quasi-tree $T$ and map $\iota_R$ are constructed analogously, but instead of $n$-balls we only consider $R$-balls and we do not add a long-range edge from the root. Note that this $R$-quasi-tree is a subgraph of the associated full quasi-tree and $\iota_R$ is a restriction of the map $\iota$ on the full quasi-tree.

Given a walk $X$ on $G^*$ started from $v$ we define the associated walk $\til{X}$ on the associated full quasi-tree $T$ as follows. Let $\til{X}$ start from the root of $T$. Then $\iota(\til{X}_0)=X_0$. Assume that $\iota(\til{X}_k)=X_k$. If $X_{k+1}$ is a neighbour of $X_k$ in $G$, then let $\til{X}_{k+1}$ be the unique vertex in the $n$-ball of $\til{X}_k$ in $T$ such that $\iota(\til{X}_{k+1})=X_{k+1}$. Then $\til{X}_{k+1}$ is also a neighbour of $\til{X}_k$ in $T$. If $X_{k+1}$ is the long-range neighbour of $X_k$ in $G^*$ then let $\til{X}_{k+1}$ be the long-range neighbour of $\til{X}_{k}$ in $T$. Then we also have $\iota(\til{X}_{k+1})=X_{k+1}$. (If $X_{k+1}$ is a neighbour of $X_k$ in $G$ and is also the long-range neighbour of $X_k$, then we apply the first case with probability $\frac{1}{1+\eps}$ and the second case otherwise.)

The corresponding walk on the corresponding $R$-quasi-tree is the restriction of $\til{X}$ to the vertices of the $R$-quasi-tree, only defined up to the first time that $\til{X}$ leaves the set of these vertices.

Given a walk $X$ on $G^*$ from a vertex $v$ and given a positive integer $\ell$ we define $\tau_{\ell}$ to be the first time when the corresponding walk $\til{X}$ in the corresponding full quasi-tree $T$ reaches level $\ell$.
\end{definition}

Note that if $X$ is a random walk on $G^*$ started from $v$, then $\til{X}$ is a random walk on $T$ started from the root, with $\iota(\til{X}_k)=X_k$ for all $k$. Also if $\til{X}$ is a random walk on $T$ from the root, then $\iota(\til{X}_k)$ is a random walk on $G^*$ from $v$.

\begin{definition}\label{def:Omega_zero}
Let $X$ be a walk on $G^*$ and let $\ell$ be a positive integer. Let $T$ be the corresponding full quasi-tree and $\til{X}$ be the corresponding walk on $T$. Let $\Omega^{X}_0(\ell)$ be the event that up to time $\tau_{\ell}$ the walk $\til{X}$ visits at most $\frac{R}{2}$ distinct vertices in each ball of $T$ and that up to time $\tau_{\ell}$ the walk $\til{X}$ does not cross the long-range edge from the root of $T$.
\end{definition}

Note that if the event $\Omega^{X}_0(\ell)$ holds then the walk corresponding to $X$ on the corresponding $R$-quasi-tree is defined up to time at least $\tau_{\ell}$.

\begin{lemma}\label{lem:bound_Omega_zero_prob}
Let $X$ be a random walk on $G^*$ from vertex $x$ and let $\ell\lesssim g(n)$ be a positive integer. Then for any sufficiently large values of $C_1$ in the definition of $R$ we have
\[
\prcond{\Omega^X_0(\ell)}{G^*}{x}\quad=\quad1-o(1)
\]
uniformly over all possible realisations of $G^*$ and all choices of $x$.
\end{lemma}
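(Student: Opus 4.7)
The plan is to bound separately the two bad events $E_1$ (that $\til X$ crosses the long-range edge from the root of $T$ before $\tau_\ell$) and $E_2$ (that some ball of $T$ contains more than $R/2$ distinct vertices visited by $\til X$ before $\tau_\ell$), where $\til X$ is the lift of $X$ to the full quasi-tree $T$ from Definition~\ref{def:T_of_Gstar}. All estimates below will be uniform in $G^*$ and in the starting vertex $x$.

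For $E_1$, I would first note $\prcond{E_1}{G^*}{x} \le \frac{\eps}{\deg(x)+\eps}\cdot\estart{N_\rho}{x}$, where $N_\rho$ counts visits of $\til X$ to the root of $T$ up to $\tau_\ell$; this holds by linearity of expectation, since whenever $\til X$ is at the root the root's long-range edge is taken at the next step with probability exactly $\eps/(\deg(x)+\eps)$. To bound $\estart{N_\rho}{x}$ I would decompose into excursions in the root's ball of $T$. By Corollary~\ref{cor:neverbacktrack}, once $\til X$ leaves the root's ball through a long-range edge it can only return by crossing that same edge backwards, and this happens with probability at most $\delta=O(\eps^{1/3})$, so the number of excursions is stochastically dominated by $1+\Geom{1-\delta}$ and has expectation $O(1)$. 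Within a single excursion the $\iota$-projection of $\til X$ behaves as an SRW on $G$ with per-step ``killing'' probability at least $\eps/(\Delta+\eps)$; a Radon--Nikodym comparison with SRW on $G$ then yields, for every starting vertex $u$ of the ball,
\[
\estart{V_\rho}{u} \;\lesssim\; \sum_{t\ge0} p_t^G(u,x)\Bigl(1-\tfrac{\eps}{\Delta+\eps}\Bigr)^t \;\lesssim\; \frac{1}{\sqrt{\eps}},
\]
where the heat-kernel bound $p_t^G(u,x)\lesssim 1/\sqrt{t}$ comes from Lemma~\ref{lem:Pt_upper_bound} via Cauchy--Schwarz. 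Combining gives $\estart{N_\rho}{x}\lesssim 1/\sqrt{\eps}$ and hence $\prcond{E_1}{G^*}{x}\lesssim\sqrt{\eps}=o(1)$.

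For $E_2$, observe that the number of distinct vertices visited in any ball $b$ is at most the total time $T_b=\sum_{i=1}^{M_b}\sigma_i^b$ that $\til X$ spends in $b$. The number $M_b$ of excursions into $b$ is again geometrically dominated, while by Lemma~\ref{lem:tail_bounds_Tprime} (applied through the regeneration structure of Lemma~\ref{lem:indep_between_regenerations}) each $\sigma_i^b$ satisfies $\pr{\sigma_i^b\ge r}\le Ce^{-c_2 r\eps}$. A standard moment-generating-function computation at $\lambda=c_2\eps/2$ gives $\estart{e^{\lambda T_b}}{x}=O(1)$, and Chernoff then yields
\[
\prcond{T_b\ge R/2}{G^*}{x} \;\le\; C\bigl(g(n)\bigr)^{-c_2 C_1/4}.
\]
By Corollary~\ref{cor:tail_bounds} the time $\tau_\ell$ is concentrated around its mean, which is of order $\ell/\eps=O(g(n)/\eps)$, so the number of long-range edges crossed (and hence the number of distinct balls of $T$ visited) by $\tau_\ell$ is $O(g(n))$ with high probability. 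A union bound then gives $\prcond{E_2}{G^*}{x}\lesssim (g(n))^{1-c_2 C_1/4}$, which is $o(1)$ provided $C_1>4/c_2$; this is precisely where the hypothesis that $C_1$ is sufficiently large enters.

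The main technical obstacle is applying the tail bound of Lemma~\ref{lem:tail_bounds_Tprime} uniformly at interior balls of $T$ rather than only at the root: excursions into an interior ball start at the endpoint of the incoming long-range edge, not at that ball's centre, so the identification of the subsequent trajectory with $(T',X')$ must be made carefully via Lemma~\ref{lem:indep_between_regenerations} in order to obtain a tail bound that is uniform across all $O(g(n))$ visited balls. A secondary but minor point is that every estimate must be pathwise in $G^*$; this is automatic from the coupling in Definition~\ref{def:T_of_Gstar}, since each of the ingredients depends on $G^*$ only through the tree-like structure of neighbourhoods of $x$ already encoded by $T$.
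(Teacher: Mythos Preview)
Your proposal is correct and would work, but the paper's argument is considerably more direct on both fronts.

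For $E_1$ the paper does not compute anything: it simply observes that the probability of $\til X$ \emph{ever} crossing the long-range edge $(\rho,\rho^+)$ is at most $\delta=o(1)$, which is exactly the content of Lemma~\ref{lem:neverbacktrack} applied to the full quasi-tree. Your heat-kernel computation yielding $O(\sqrt{\eps})$ is valid but unnecessary.

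For $E_2$ the paper again avoids any excursion decomposition or MGF computation. It argues in three lines: (i) the induced walk $Z$ on directed long-range edges has $\ell(Z)$ dominating a biased walk on $\Z$ with right-step probability $1-o(1)$, so $\pr{\tau^Z_\ell\ge 2\ell}=o(1)$ and on the complement at most $2\ell$ balls are visited; (ii) at every step the walk has probability at least $\epsilon:=\tfrac{\eps}{\Delta+\eps}(1-\delta)\ge\tfrac{\eps}{2\Delta}$ of crossing a long-range edge for the last time, after which the current ball is never revisited; (iii) hence a single union bound $2\ell\cdot\pr{\Geomnonneg{\epsilon}\ge R/2}\lesssim g(n)^{1-C_1/(4\Delta)}$ handles all balls at once. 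Because every ball's visits are confined to a single inter-regeneration interval, the ``technical obstacle'' you flag---extending the tail bound uniformly from the root ball to interior balls---never arises in the paper's framing.

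One caution about your route: Lemmas~\ref{lem:indep_between_regenerations} and~\ref{lem:tail_bounds_Tprime} concern the \emph{random} quasi-tree of Definition~\ref{def:tree} and the conditioned pair $(T',X')$, whereas here you are working on the \emph{deterministic} full quasi-tree of Definition~\ref{def:T_of_Gstar} associated to a fixed realisation of $G^*$. The tail bounds you need (for single-ball excursion lengths and for return probabilities) do hold, but they follow directly from the per-step exit probability $\ge\eps/(\Delta+\eps)$ and from Corollary~\ref{cor:neverbacktrack}, not from the regeneration machinery of Section~\ref{sec:quasi-tree}.
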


\begin{proof}
Let $T$ be the corresponding full quasi-tree and $\til{X}$ the corresponding walk on $T$. Let $Z$ be the induced walk on the directed long-range edges of $T$. We know that $\ell(Z)$ dominates a biased random walk on $\Z$ from 0 that steps right with probability $1-o(1)$, therefore $\pr{\tau^Z_\ell\ge2\ell}=o(1)$. Also the probability that $\til{X}$ ever crosses the long-range edge $(\rho,\rho^+)$ from the root is $o(1)$.

At each step the probability that the walk $\til{X}$ crosses a long-range edge and later never crosses that long-range edge back is $\ge\frac{\eps}{\Delta+\eps}(1-\delta)=:\epsilon\ge\frac{\eps}{2\Delta}$, therefore
\[
1-\prcond{\Omega^X_0(\ell)}{G^*}{x}\le\prcond{\tau^{\til{X}}_{\rho^+}<\infty}{G^*}{}+\prcond{\tau^Z_\ell\ge2\ell}{G^*}{}+2\ell\cdot\pr{\Geomnonneg{\epsilon}\ge\frac12R}.
\]

We already know that $\prcond{\tau^{\til{X}}_{\rho^+}<\infty}{G^*}{}=o(1)$ and $\prcond{\tau^Z_\ell\ge2\ell}{G^*}{}=o(1)$. We have
\[
\log\pr{\Geomnonneg{\epsilon}\ge\frac12R}\quad=\quad\frac12R\log\left(1-\epsilon\right)\quad\le\quad -\frac{1}{4\Delta}R\eps\quad\le\quad-\frac{C_1}{4\Delta}\log g(n).
\]
This gives 
\[
2\ell\cdot\pr{\Geomnonneg{\epsilon}\ge\frac12R}\quad\lesssim\quad g(n)^{1-C_1/(4\Delta)}
\]
which is $o(1)$ for sufficiently large values of $C_1$.
\end{proof}

\begin{definition}\label{def:Omega_one}
Let $X$ be a walk on $G^*$ and let $\ell$ be a positive integer. Let $T$ be the corresponding full quasi-tree and let $\til{X}$ be the corresponding walk on $T$. Let $\Omega^{X}_1(\ell)$ be the event that the following is satisfied. For $k=1,2,...,\lfloor\frac{\ell}{2}\rfloor$ up to time $\tau_\ell$ the walk $\til{X}$ only visits level $2k-1$ of $T$ at descendants of $\til{\xi}^{(\ell)}_k$, where $\til{\xi}^{(\ell)}$ denotes the loop-erasure of $(\til{X}_i)_{i=0}^{\tau_\ell}$.
\end{definition}

\begin{lemma}\label{lem:bound_Omega_one_prob}
Let $X$ be a random walk on $G^*$ from vertex $x$ and let $\ell\lesssim g(n)$ be a positive integer. Then we have
\[
\prcond{\Omega^X_1(\ell)}{G^*}{x}\quad=\quad1-o(1)
\]
uniformly over all possible realisations of $G^*$ and all choices of $x$.
\end{lemma}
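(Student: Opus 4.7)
The plan is to reduce the failure of $\Omega^X_1(\ell)$ to an event in which the long-range distance process of $\til X$ makes a deep downward excursion, and then to bound the probability of such an excursion by iterating Corollary~\ref{cor:neverbacktrack}. The resulting bound will be uniform over $G^*$ and $x$.

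\textbf{Geometric reduction.} Suppose $\Omega^X_1(\ell)$ fails at some $k\in\{1,\ldots,\lfloor\ell/2\rfloor\}$: there is a vertex $v$ at level $2k-1$ visited by $\til X$ before $\tau_\ell$ with $v$ not a descendant of $\til\xi^{(\ell)}_k$. Let $j\le k$ be the level at which the unique paths in $T$ from $\rho$ to $v$ and from $\rho$ to $\til X_{\tau_\ell}$ first diverge (this is well-defined since $v$ and $\til X_{\tau_\ell}$ disagree on their level-$k$ ancestor). Then $v$ lies in a subtree $S$ rooted at some level-$j$ vertex, while $\til X_{\tau_\ell}\notin S$, so in order to reach $\til X_{\tau_\ell}$ after visiting $v$ the walk must exit $S$ through the unique long-range edge joining $S$'s root to its parent at level $j-1$. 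Hence the long-range distance of $\til X$ drops from $2k-1$ (when it visits $v$) to at most $j-1\le k-1$ at some later time $\le\tau_\ell$. Therefore the failure of $\Omega^X_1(\ell)$ is contained in $\bigcup_{k=1}^{\lfloor\ell/2\rfloor}E_k$, where
\[
E_k:=\left\{\til X\text{ reaches level }2k-1\text{ and subsequently reaches level }k-1\right\}.
\]

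\textbf{Bounding $E_k$ and concluding.} To drop from level $2k-1$ down to level $k-1$, the walk must cross $k$ specific long-range edges backward in succession: the unique level-$h$ edge from the $R$-ball at level $h$ to its parent, for $h=2k-1,2k-2,\ldots,k$. The quasi-tree $T$ associated to $G^*$ has balls of radius $n\gg 1/\eps$, so Corollary~\ref{cor:neverbacktrack} applies and each such backward crossing has probability at most $\delta$, starting from any vertex in the relevant $R$-ball. Iterating via the strong Markov property at the first downward crossing of each of these $k$ edges yields $\prcond{E_k}{G^*}{x}\le\delta^k$, uniformly in $G^*$ and $x$. A union bound then gives
\[
\prcond{(\Omega^X_1(\ell))^c}{G^*}{x}\;\le\;\sum_{k=1}^{\lfloor\ell/2\rfloor}\delta^k\;\le\;\frac{\delta}{1-\delta}\;=\;o(1),
\]
since $\delta\to 0$ as $\eps\to 0$ by Lemma~\ref{lem:neverbacktrack}.

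The only delicate step is the geometric reduction: one has to identify the divergence level $j\le k$ correctly and argue that the walk is forced to exit the subtree $S$ rooted at level $j$, so that the required drop in long-range distance is at least $2k-j\ge k$. Everything else is routine bookkeeping on the quasi-tree structure and a standard iterated-Markov argument; note in particular that the bound does not depend on $\ell$, so the assumption $\ell\lesssim g(n)$ is not used.
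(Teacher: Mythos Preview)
Your proof is correct and follows the same approach as the paper: reduce the failure of $\Omega_1^X(\ell)$ to a union of events $E_k$ (a drop of at least $k$ levels after reaching level $2k-1$), bound each $E_k$ by $\delta^k$ via iterated application of Corollary~\ref{cor:neverbacktrack}, and sum the geometric series. The paper phrases the reduction as ``if $\Omega_1'$ holds then $\Omega_1(\ell)$ holds for all $\ell$'' (where $\Omega_1' = \bigcap_k E_k^c$) without spelling out the justification; your geometric argument via the divergence level $j\le k$ supplies precisely the missing detail, and your observation that the bound is independent of $\ell$ matches the paper's use of the $\ell$-free event $\Omega_1'$.
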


\begin{proof} Let $T$ be the corresponding full quasi-tree and let $\til{X}$ be the corresponding walk on it. Let $\Omega'_1$ be the event that for $k=1,2,...$ after hitting level $2k-1$ the walk $\til{X}$ never returns to level $k-1$. Note that if $\Omega'_1$ holds, then $\Omega_1(\ell)$ holds for all $\ell$. Also note that by Lemma~\ref{lem:neverbacktrack} the probability that $\til{X}$ ever returns to a level $k$ after hitting level $k+m$ is $\delta^m$. Hence
\[
1-\prcond{\Omega^X_1(\ell)}{G^*}{x}\quad\le\quad1-\prcond{\Omega'_1}{G^*}{x}\quad\le\quad\sum_{k=1}^{\infty}\delta^k\quad\lesssim\quad\delta\quad=\quad o(1).\qedhere
\]	
\end{proof}

For a finite path $\gamma$ in $G^*$ we define $\Omega_0^\gamma(\ell)$ and $\Omega_1^\gamma(\ell)$ analogously to Definitions~\ref{def:Omega_zero} and~\ref{def:Omega_one}. (If the path $\til{\gamma}$ on the corresponding full quasi-tree $T$ does not reach distance $\ell$, then these events are not defined.)

\begin{lemma}\label{lem:reversal_under_Omega_zero_one}
Let $x=(x_0,\ldots, x_{k})$ be a path in $G^*$ such that $k=\tau^x_{\ell}$ and $\Omega^x_0(\ell)$ and $\Omega^x_1(\ell)$ hold. Define a path $z$ by setting $z_0=\eta(x_k)=x_{k-1}$, $z_1=x_{k-2}$, $z_2=x_{k-3}$, ..., $z_{k-1}=x_0$, $z_k=\eta(x_0)$. Then we have $k=\tau^z_{\ell}$, and the events $\Omega^z_0(\ell)$ and $\Omega^z_1(\ell)$ hold.
\end{lemma}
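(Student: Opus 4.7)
The plan is to build an isomorphism between the region of $T^x$ explored by $\tilde{x}$ up to $\tau^x_\ell = k$ and the region of $T^z$ explored by $\tilde{z}$ up to step $k-1$, and use it to transfer level information. Under $\Omega^x_0(\ell)$, the walk $\tilde{x}$ never crosses the long-range edge from the root of $T^x$, so the explored region $H^x$ lies strictly in the subtree below the root. Since $z$ traverses the $G^*$-edges of $x$ in reverse, together with the one extra long-range edge $(x_0,\eta(x_0))$ that $\tilde{x}$ did not use, the corresponding walk $\tilde{z}$ explores in its first $k-1$ steps a region $H^z \subset T^z$ isomorphic to $H^x$ via a map $\phi$ sending $\tilde{x}_{k-1-i}$ to $\tilde{z}_i$ for $i=0,\ldots,k-1$.

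The tree of balls of $T^z$ is obtained from that of $T^x$ by re-rooting at the ball $B_{\ell-1}$ containing $\tilde{x}_{k-1}$. For $v \in H^x$, write $m(v)$ for the depth of the deepest $T^x$-backbone ball on the unique path from the root of $T^x$ to $v$ (so $m(v) = L^x(v)$ if $v$ lies on the backbone, and $m(v) < L^x(v)$ otherwise). A short distance computation in the tree of balls yields the level correspondence
\[
L^z(\phi(v)) \;=\; (\ell-1-m(v)) + (L^x(v)-m(v)) \;=\; \ell-1-2m(v)+L^x(v).
\]
The identity $L^z_k = \ell$ is immediate: at step $k$, $\tilde{z}$ takes the long-range edge $(x_0,\eta(x_0))$ out of $B_0$, which sits at $T^z$-level $\ell-1$, landing in a fresh copy of the ball of $\eta(x_0)$ at $T^z$-level $\ell$.

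The main technical step, and the principal obstacle, is to show that $m(\tilde{x}_j) \ge \lceil L^x_j/2 \rceil$ for every $j \le k-1$. For odd $L^x_j = 2k^*-1$ with $k^* \le \lfloor \ell/2 \rfloor$ this is exactly $\Omega^x_1(\ell)$. For the remaining levels I would argue by induction on $j$, tracking how $m$ evolves: within a ball $m$ is unchanged; a long-range step to a deeper $T^x$-ball keeps $m$ unchanged if the new ball is off the backbone and otherwise increases it by one; a long-range step toward the root either preserves $m$ (in the side-branch case) or decreases it by exactly one (from backbone to backbone). Hence the first visit to an even level $L^x = 2k^*$ must arrive from the adjacent odd level $2k^*-1$, at which $\Omega^x_1$ gives $m \ge k^*$, and subsequent visits can only reach $L^x = 2k^*$ from levels $2k^*\pm 1$, which themselves satisfy the bound by induction. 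The boundary case $L^x_j = \ell-1$ for $\ell$ odd is handled identically since $\lceil (\ell-1)/2\rceil = \lfloor \ell/2 \rfloor$. Combined with the level relation, this gives $L^z_i \le \ell-1$ for all $i < k$, completing $\tau^z_\ell = k$.

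The verifications of $\Omega^z_0(\ell)$ and $\Omega^z_1(\ell)$ are then straightforward. For $\Omega^z_0$: via $\phi$, the number of distinct vertices visited by $\tilde{z}$ in each ball of $T^z$ equals the number visited by $\tilde{x}$ in the corresponding ball, which is at most $R/2$ by $\Omega^x_0$, while the fresh ball is visited at a single vertex; the long-range edge from the root $z_0 = x_{k-1}$ of $T^z$ leads to the ball of $\eta(x_{k-1}) = x_k$, and $z$ does not visit $x_k$. For $\Omega^z_1$: the loop-erasure of $\tilde{z}$ up to $\tau^z_\ell$ uses the reverse of the $T^x$-backbone edges followed by $(x_0,\eta(x_0))$, so $\tilde{\xi}^{(\ell),z}_{k'} = \tilde{\xi}^{(\ell),x}_{\ell-k'}$ for $k' \le \ell-1$, and its subtree in $T^z$ is precisely the set of vertices with backbone branching $m \le \ell-k'-1$. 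Setting $L^z = 2k'-1$ in the level relation yields $L^x = 2k'+2m-\ell$, and the bound $L^x \le \ell-1$ then forces $m \le \ell-k'-1$, giving the required subtree containment.
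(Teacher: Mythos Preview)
Your proof is correct and captures the same mathematical content as the paper's, but via a more computational route. The paper observes that the full quasi-tree $T^x$, re-rooted at $y_{k-1}$, \emph{is} the full quasi-tree $T^z$, and that the reversed lift $(y_{k-1-i})$ is the walk $\tilde z$. It then shows that the package $\{k=\tau_\ell,\Omega_0(\ell),\Omega_1(\ell)\}$ for both $x$ and $z$ is equivalent to one symmetric condition on the unrooted tree: the path $y_0,\ldots,y_{k-1}$ never visits a ball at long-range distance $>\ell$ from either endpoint $B_{-1}$ or $B_\ell$ of the extended backbone. This is manifestly invariant under swapping the two ends, so the lemma follows without any case analysis.

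Your level formula $L^z=\ell-1-2m+L^x$ and the key inequality $m(\tilde x_j)\ge\lceil L^x_j/2\rceil$ are exactly the ``distance from $B_\ell$ is at most $\ell$'' half of this symmetric condition, and your observation that $\Omega_1^z$ follows from the bound $L^x\le\ell-1$ alone is precisely the other half being swapped under reversal. So the two arguments are the same underneath; the paper's formulation just makes the involution structure visible up front and avoids the induction on $j$ (which is fine, but is doing work that the symmetry already gives for free). One small wording issue: your justification of the root-edge part of $\Omega_0^z$ should say that $\tilde z$ never visits $y_k$ in $T$, not that $z$ never visits $x_k$ in $G^*$ (the latter can fail if the path revisits vertices, but the former is what you actually need and is immediate since $y_0,\ldots,y_{k-1}$ stay below level $\ell$).
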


\begin{proof}
Let $T$ be the full quasi-tree corresponding to $G^*$ and $x_0$, let $\iota$ be the corresponding map and let $y$ be the path on $T$ corresponding to $x$. Let $y_{-1}$ be the long-range neighbour of $y_0$. Note that rerooting $T$ at $y_{k-1}$ it becomes a full quasi-tree corresponding to $G^*$ and $z_{0}$, and $(y_{k-1-i})$ is the path on it corresponding to $z$.

The condition that $k=\tau^z_{\ell}$, $\Omega^z_0(\ell)$ and $\Omega^z_1(\ell)$ hold, and the condition that $k=\tau^x_{\ell}$, $\Omega^x_0(\ell)$ and $\Omega^x_1(\ell)$ hold are both equivalent to the following points being satisfied.
\begin{itemize}
\item The vertex $y_k$ is at long-range distance $(\ell+1)$ from $\eta(y_0)$.
\item The path $\eta(y_0),y_0,y_1,...,y_k$ has at most $\frac{R}{2}$ distinct vertices in each ball of $T$.
\item Let $B_{-1}$, $B_0$, $B_1$, ..., $B_\ell$ (in this order) be the balls in $T$ visited by the loop-erasure of the path $\eta(y_0),y_0,y_1,...,y_k$. For a ball $B$ in $T$ let $i_B=\argmin_{i\in\{-1,0,...,\ell\}}d_T(B,B_i)$, i.e.\ the index of the ball $B_i$ at minimal long-range distance from $B$. (This is well-defined because of the tree structure of $T$.)\\
The path $y_0,y_1,...,y_{k-1}$ never visits a ball $B$ with $i_B+d_T(B,B_{i_B})>\ell$ or $(\ell-i_B)+d_T(B,B_{i_B})>\ell$. (Note that this is equivalent to saying that $y_0,y_1,...,y_{k-1}$ never visits a ball $B$ that is at long-range distance $>\ell$ from $B_{-1}$ or at long-range distance $>\ell$ from $B_{\ell}$.) See Figure~\ref{fig:eg_reversibility_condition} for an illustration.
\end{itemize}
Also see Figures~\ref{fig:noneg_Omega_one_1} and~\ref{fig:noneg_Omega_one_2} for an example where $\Omega_1^x(\ell)$ does not hold and $\tau^z_{\ell}\ne k$.
\end{proof}

\begin{minipage}{0.6\linewidth}
\includegraphics[width=0.9\textwidth]{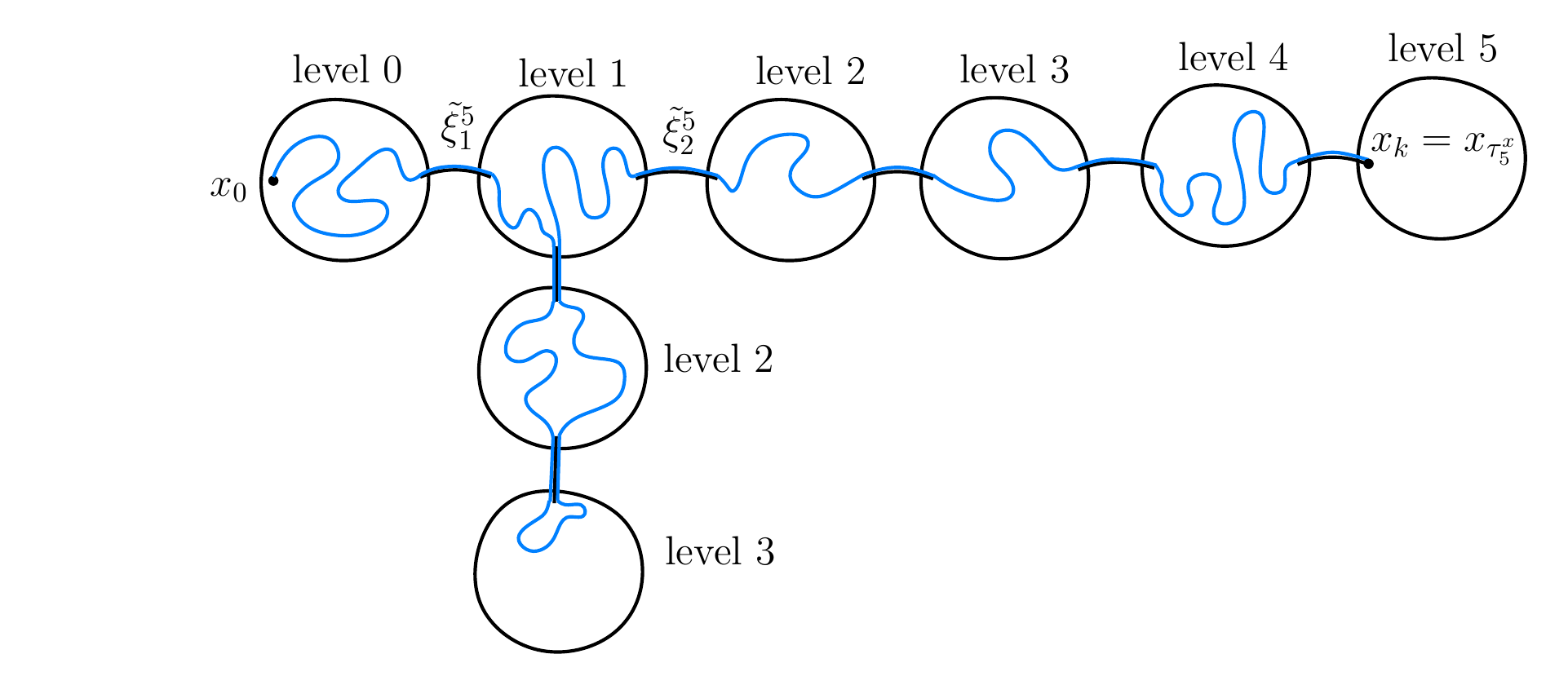}
\end{minipage}
\begin{minipage}{0.38\linewidth}
\captionof{figure}{In this example $\Omega^x_1(5)$ does not hold, since $x$ reaches (long-range) distance $3$ from $x_0$ at an $R$-ball that is not a descendant of $\til{\xi}^{5}_2$.}
\label{fig:noneg_Omega_one_1}
\end{minipage}

\begin{minipage}{0.6\linewidth}
\includegraphics[width=0.9\textwidth]{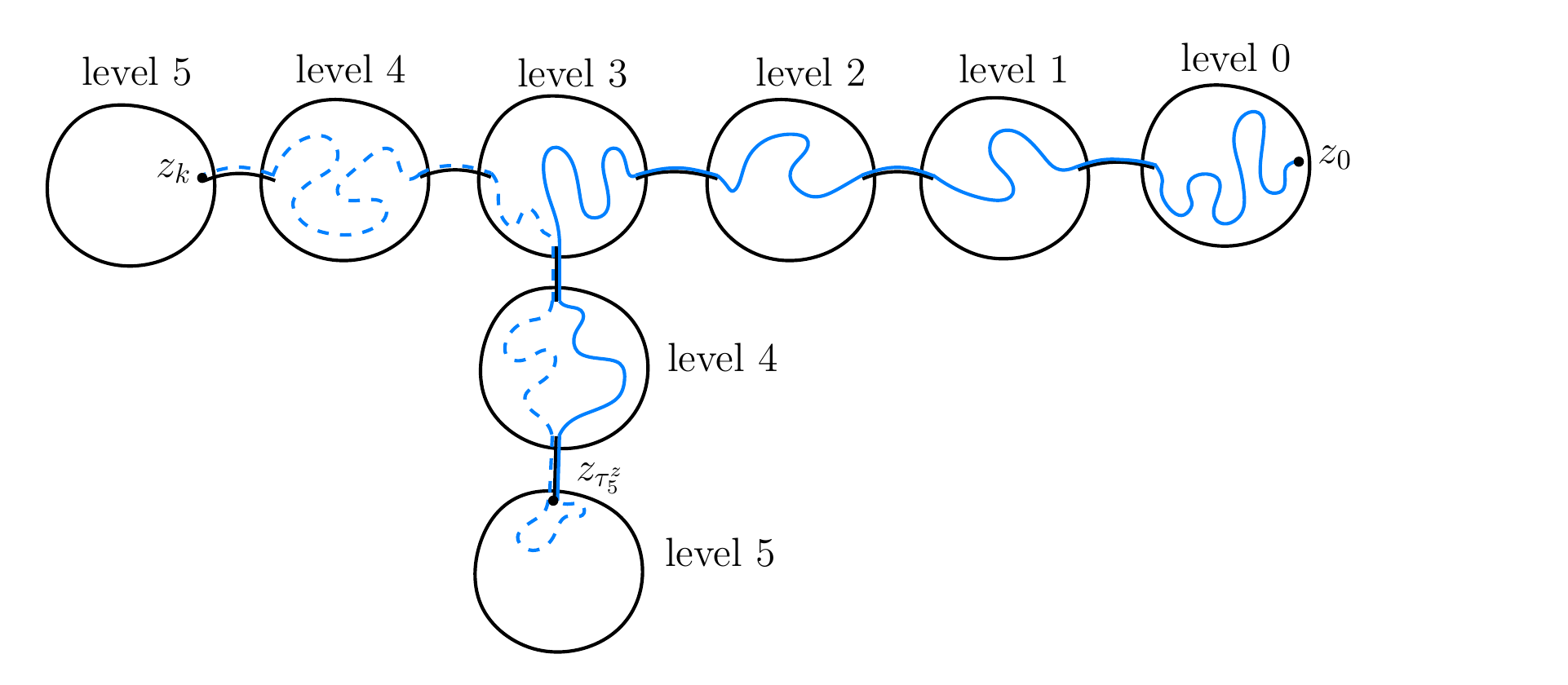}
\end{minipage}
\begin{minipage}{0.38\linewidth}
\captionof{figure}{Here $\tau^z_{\ell}\ne k$, since the reversed path $z$ reaches distance $5$ from $z_0$ before time $k$.}
\label{fig:noneg_Omega_one_2}
\end{minipage}

\begin{lemma}\label{lem:bound_tv_dist_at_tau_two_ell}
Let $\mu$ be a distribution on the vertices with $\mu(z)\asymp\frac1n$ $\forall z$ and let $\ell\lesssim g(n)$ be a positive integer. Then for any realisation of $G^*$ and any vertices $x$ and $y$ we have
\begin{align*}
&\dtv{\prcond{X_{\tau_{2\ell}}\in\cdot}{G^*}{x}}{\mu(\cdot)}\\
&\le\sum_y\left(\mu(y)-\sum_{z}\prcond{X_{\tau_{\ell}}=z,\Omega_0(\ell),\Omega_1(\ell)}{G^*}{x}\prcond{X_{\tau_{\ell}}=\eta(z),\Omega_0(\ell),\Omega_1(\ell)}{G^*}{\eta(y)}\right)^++o(1).
\end{align*}
\end{lemma}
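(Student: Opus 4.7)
The plan is to combine the positive-part expression for total variation, $\dtv{\mu_1}{\mu_2}=\sum_y(\mu_2(y)-\mu_1(y))^+$, with a strong-Markov decomposition at the intermediate hitting time $\tau_\ell$ and the path reversal identity of Lemma~\ref{lem:reversal_under_Omega_zero_one}. First I will write
\[
\dtv{\prcond{X_{\tau_{2\ell}}\in\cdot}{G^*}{x}}{\mu(\cdot)}=\sum_y\left(\mu(y)-\prcond{X_{\tau_{2\ell}}=y}{G^*}{x}\right)^+
\]
and lower bound the probability inside by $\prcond{X_{\tau_{2\ell}}=y,\Omega_0(2\ell),\Omega_1(2\ell)}{G^*}{x}$; this is valid because by Lemmas~\ref{lem:bound_Omega_zero_prob} and~\ref{lem:bound_Omega_one_prob} the events $\Omega_0(2\ell)\cap\Omega_1(2\ell)$ hold with probability $1-o(1)$ uniformly in the starting vertex and in $G^*$ (note $2\ell\lesssim g(n)$).

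Next I will apply the strong Markov property at $\tau_\ell$. Because $\Omega_0(2\ell)\cap\Omega_1(2\ell)$ for the full walk implies that the first half on $[0,\tau_\ell]$ satisfies $\Omega_0(\ell)\cap\Omega_1(\ell)$, and, since $\Omega_1$ prevents the walk from ever returning to a lower level, the second half viewed as a fresh walk from $X_{\tau_\ell}$ also satisfies $\Omega_0(\ell)\cap\Omega_1(\ell)$ with its own $\tau_\ell$ equal to $\tau_{2\ell}-\tau_\ell$, I obtain
\[
\prcond{X_{\tau_{2\ell}}=y,\Omega_0(2\ell),\Omega_1(2\ell)}{G^*}{x}\le B(y),
\]
where
\[
B(y):=\sum_z\prcond{X_{\tau_\ell}=z,\Omega_0(\ell),\Omega_1(\ell)}{G^*}{x}\prcond{X_{\tau_\ell}=y,\Omega_0(\ell),\Omega_1(\ell)}{G^*}{z}.
\]
Summing over $y$, both $\sum_y B(y)$ and $\sum_y\prcond{X_{\tau_{2\ell}}=y,\Omega_0(2\ell),\Omega_1(2\ell)}{G^*}{x}$ equal $1-o(1)$ by the uniform bound on $\Omega_0\cap\Omega_1$, so their nonnegative gap is $o(1)$. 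The pointwise inequality $(\mu(y)-a)^+\le(\mu(y)-B(y))^++(B(y)-a)$, valid when $a\le B(y)$, then absorbs the gap into the claimed additive $o(1)$.

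Finally, I will use Lemma~\ref{lem:reversal_under_Omega_zero_one} to reverse the paths contributing to the second factor of $B(y)$: each length-$k$ path $(z=x_0,\ldots,x_k=y)$ with $k=\tau_\ell$ and $\Omega_0(\ell)\cap\Omega_1(\ell)$ holding corresponds bijectively to the length-$k$ path $(\eta(y)=x_{k-1},x_{k-2},\ldots,x_0,\eta(z))$ with the analogous hitting-time and event conditions. A short detailed-balance computation on $G^*$---using $\pi(u)\propto\deg_G(u)+\eps$ and the fact that both $(x_{k-1},x_k)$ and the appended $(x_0,\eta(x_0))$ are long-range edges, so that $\pi(x_0)P(x_0,\eta(x_0))=\pi(x_{k-1})P(x_{k-1},x_k)=\eps/Z$---shows that the ratio of reversed to forward path probabilities telescopes to $1$. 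Summing over paths gives
\[
\prcond{X_{\tau_\ell}=y,\Omega_0(\ell),\Omega_1(\ell)}{G^*}{z}=\prcond{X_{\tau_\ell}=\eta(z),\Omega_0(\ell),\Omega_1(\ell)}{G^*}{\eta(y)},
\]
which rewrites $B(y)$ as the sum in the statement. The main obstacle I anticipate is step two: cleanly verifying that $\Omega_0(2\ell)\cap\Omega_1(2\ell)$ embeds into the intersection of the two half-events (for which the role of $\Omega_1$ in preventing backtracking across $\tau_\ell$ is crucial); the reversal and detailed-balance identity is routine given Lemma~\ref{lem:reversal_under_Omega_zero_one} and the long-range weight structure.
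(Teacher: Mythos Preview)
Your reversal step and the detailed-balance computation are correct and match the paper. The gap is in your strong-Markov decomposition. You assert that
\[
\prcond{X_{\tau_{2\ell}}=y,\ \Omega_0(2\ell),\ \Omega_1(2\ell)}{G^*}{x}\ \le\ B(y),
\]
arguing that ``$\Omega_1$ prevents the walk from ever returning to a lower level'' so the two halves each inherit $\Omega_0(\ell)\cap\Omega_1(\ell)$. That reading of $\Omega_1$ is incorrect: $\Omega_1(2\ell)$ only constrains at which subtrees the walk may sit at odd levels; it does not forbid crossing back over $(z,\eta(z))$ after $\tau_\ell$. Concretely (take $\ell=4$): let the walk go straight to level~$4$ at $z$, step back to $\eta(z)$, move within the level-$3$ ball to a different vertex $w$, cross to $z'=\eta(w)$ at level~$4$, and then go straight to level~$8$ at $y$. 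One checks $\Omega_0(8)$ and $\Omega_1(8)$ hold for the full walk (all level-$3$ visits lie in the single level-$3$ ball, a descendant of $\xi^{(8)}_2$), and $\Omega_0(4),\Omega_1(4)$ hold for the first half. But the continuation from $z$, viewed as a fresh walk, immediately crosses the long-range edge from its root, violating its own $\Omega_0(4)$. Hence this path contributes to the left side above but not to $B(y)$, and your pointwise inequality fails. Without it, the identity $\sum_y B(y)=\sum_y a(y)+o(1)$ does not control $\sum_y(B(y)-a(y))^+$.

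The repair is to drop the detour through $\Omega_0(2\ell)\cap\Omega_1(2\ell)$ and instead prove the \emph{opposite} pointwise inequality,
\[
\prcond{X_{\tau_{2\ell}}=y}{G^*}{x}\ \ge\ B(y),
\]
directly by strong Markov: the second factor in $B(y)$ already carries $\Omega_0(\ell)$ for the fresh walk from $z$, which by definition forbids crossing $(z,\eta(z))$; hence the fresh walk stays in the subtree below $z$ in the $x$-rooted quasi-tree, its levels are the $x$-levels minus $\ell$, and its $\tau_\ell$ coincides with the original walk's $\tau_{2\ell}-\tau_\ell$. From $\prcond{X_{\tau_{2\ell}}=y}{G^*}{x}\ge B(y)$ one gets $(\mu(y)-\prcond{X_{\tau_{2\ell}}=y}{G^*}{x})^+\le(\mu(y)-B(y))^+$ with no error term, and your reversal step finishes the proof. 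This is exactly the paper's argument (the paper additionally writes a correction term with $X_{\tau_1}=\eta(z)$, but that event is incompatible with $\Omega_0(\ell)$ for the walk from $z$, so the term vanishes and their subsequent bound by $\delta$ is harmless redundancy).
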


\begin{proof}
In what follows we consider $G^*$ fixed and omit it from the notation.

For any $x$ and $y$ we have
\begin{align*}
\prstart{X_{\tau_{2\ell}}=y}{x}\quad \ge &\quad\sum_{z}\prstart{X_{\tau_{\ell}}=z,\Omega_0(\ell),\Omega_1(\ell)}{x}\prstart{X_{\tau_{1}}\ne\eta(z),X_{\tau_{\ell}}=y,\Omega_0(\ell),\Omega_1(\ell)}{z} \\ 
=& \quad \sum_{z}\prstart{X_{\tau_{\ell}}=z,\Omega_0(\ell),\Omega_1(\ell)}{x}\prstart{X_{\tau_{\ell}}=y,\Omega_0(\ell),\Omega_1(\ell)}{z} \\ &\quad - \sum_z\prstart{X_{\tau_{\ell}}=z,\Omega_0(\ell),\Omega_1(\ell)}{x}\prstart{X_{\tau_{1}}=\eta(z),X_{\tau_{\ell}}=y,\Omega_0(\ell),\Omega_1(\ell)}{z}
\end{align*}

Note that by Lemma~\ref{lem:reversal_under_Omega_zero_one} we have that 
\[
\prstart{X_{\tau_{\ell}}=y,\Omega_0(\ell),\Omega_1(\ell)}{z}\quad=\quad\prstart{X_{\tau_{\ell}}=\eta(z),\Omega_0(\ell),\Omega_1(\ell)}{\eta(y)}.
\]
Using the definition of the events $\Omega_0$ and $\Omega_1$ we also see that 
\[
\prstart{X_{\tau_{1}}=\eta(z),X_{\tau_{\ell}}=y,\Omega_0(\ell),\Omega_1(\ell)}{z}\leq 
\prstart{X_{\tau_{1}}=\eta(z)}{z}\prstart{X_{\tau_{\ell-1}}=y,\Omega_0(\ell-1),\Omega_1(\ell-1)}{\eta(z)}.
\]
Using this we then obtain
\begin{align*}
&\sum_{y,z}\prstart{X_{\tau_{\ell}}=z,\Omega_0(\ell),\Omega_1(\ell)}{x}\prstart{X_{\tau_{1}}=\eta(z),X_{\tau_{\ell}}=y,\Omega_0(\ell),\Omega_1(\ell)}{z}\\
&\le\quad\sum_{y,z}\prstart{X_{\tau_{\ell}}=z,\Omega_0(\ell),\Omega_1(\ell)}{x}\prstart{X_{\tau_{1}}=\eta(z)}{z}\prstart{X_{\tau_{\ell-1}}=y,\Omega_0(\ell-1),\Omega_1(\ell-1)}{\eta(z)}\\
&\le\quad\sum_{y,z}\prstart{X_{\tau_{\ell}}=z,\Omega_0(\ell),\Omega_1(\ell)}{x}\cdot\delta\cdot\prstart{X_{\tau_{\ell-1}}=y,\Omega_0(\ell-1),\Omega_1(\ell-1)}{\eta(z)}\\
&\le\quad\delta\sum_{z}\prstart{X_{\tau_{\ell}}=z,\Omega_0(\ell),\Omega_1(\ell)}{x}\quad\le\quad\delta\quad =\quad  o(1).
\end{align*}
Altogether these give
\begin{align*}
&\dtv{\prstart{X_{\tau_{2\ell}}\in\cdot}{x}}{\mu(\cdot)}\quad=\quad\sum_y\left(\mu(y)-\prstart{X_{\tau_{2\ell}}=y}{x}\right)^+\\
&\le\quad\sum_y\left(\mu(y)-\sum_{z}\prstart{X_{\tau_{\ell}}=z,\Omega_0(\ell),\Omega_1(\ell)}{x}\prstart{X_{\tau_{\ell}}=\eta(z),\Omega_0(\ell),\Omega_1(\ell)}{\eta(y)}\right)^++o(1).
\end{align*}
This now finishes the proof.
\end{proof}

In what follows we will work towards finding a lower bound on $$\sum_{z}\prstart{X_{\tau_{\ell}}=z,\Omega_0(\ell),\Omega_1(\ell)}{x}\prstart{X_{\tau_{\ell}}=\eta(z),\Omega_0(\ell),\Omega_1(\ell)}{\eta(y)}$$ for most values of $y$ and showing that whp the contribution from the remaining vertices $y$ is $o(1)$. This will in turn give an upper bound on the total variation distance from Lemma~\ref{lem:bound_tv_dist_at_tau_two_ell} and then in Section~\ref{sec:proof_of_cutoff} we will use a relationship between mixing times and hitting times to get an upper bound on the mixing time. 

\subsection{Truncation}\label{subsec:truncation}

\begin{definition}\label{def:W_tilW}
For a quasi-tree $T$, a long-range edge $e$ let
\[
\theta_T(e):=\quad\prcond{e\in\xi}{p(e)\in\xi}{\rho}
\]
where $\xi$ is a loop-erased random walk and $p(e)$ is the "parent" of $e$, i.e.\ the first long-range on the shortest path from $e$ to $\rho$. (If $e$ is between levels 0 and 1, then $p(e)$ is not defined and $\theta_T(e)=\prstart{e\in\xi}{\rho}$.) For a long-range edge $e$ at level $1$ we let
\[
\til{\theta}_{T,M}(e):=\quad\prstart{X_{\tau_M}\in V_e}{\rho}
\]
where $X$ is a simple random walk on $T$, $M$ is as in Definition~\ref{def:variable_values}, $\tau_M$ is the first time $X$ hits level~$M$ and $V_e$ is the set of descendants of $e$.

Note that
\[
W_T(e):=\quad-\log\prcond{e\in\xi}{T}{}\quad=\quad-\sum_{i=1}^{\ell(e)}\log\theta_{T(y_{i-1})}(e_i)
\]
where we recall that $\ell(e)$ is the level of $e$ and $e_1,...,e_{\ell(e)}$ are the long-range edges from $\rho$ to $e$, $e_i=(x_i,y_i)$ and $y_0=\rho$.

Also, define
\[
\til{W}_T(e):=\quad-\sum_{i=1}^{\ell(e)}\log\til\theta_{T(y_{i-1}),M}(e_i).\qedhere
\]
\end{definition}

\begin{definition}\label{def:trunc}
For a long-range edge $e$, constant $A>0$ and given positive integer $K$ define the truncation event as
\[
\trunc{e}{A}: =\quad \left\{\til{W}_\tree(e) > \frac12\log n  + A\sqrt{\frac{\V\log n}{\h}}\right\}\cap \{\ell(e)\geq K\},
\]
where $\V$ is from Lemma~\ref{lem:entropy_from_assump_f}.
\end{definition}

This is similar to the truncation criterion used in \cite{RWs_on_random_graph}.

Note that $\sqrt{\frac{\V\log n}{\h}}\asymp\frac{\log n}{\sqrt{g(n)}}$ and we know that $\frac{\log n}{g(n)}\asymp f\left(\frac1\eps\right)\gtrsim\log\left(\frac1\eps\right)\gtrsim\log\left((\log n)^{\beta}\right)\asymp\log\log n$, hence $g(n)\lesssim\frac{\log n}{\log\log n}$, therefore $\sqrt{\frac{\V\log n}{\h}}\gtrsim\sqrt{(\log\log n)\log n}$. We will use these 
multiple times in the following proofs.

\begin{lemma} \label{lem:if_W_large_then_Wtil_large}
Let $G$ be as before, assume that Assumption~\ref{assump:f} holds and let $R$, $K$, $M$ and $L$ be as in Definition~\ref{def:variable_values}. Let $T$ be the random quasi-tree associated to $G$.

Then for any positive constants $A$ and $A'<A$ the following holds for any sufficiently large $n$.
For any realisation of $T$ and for any long-range edge $e$ of $T$ at level $L$
\begin{align*}
\text{if}\qquad&\til{W}_T(e)\ge\frac12\log n+A\sqrt{\frac{\V\log n}{\h}},\\
\text{then}\qquad&{W}_T(e)\ge\frac12\log n+A'\sqrt{\frac{\V\log n}{\h}}.
\end{align*}
\end{lemma}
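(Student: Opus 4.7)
The plan is to establish a pointwise multiplicative comparison between each factor in $W_T(e)=-\sum_{i=1}^{\ell(e)}\log\theta_{T(y_{i-1})}(e_i)$ and the corresponding factor in $\til{W}_T(e)=-\sum_{i=1}^{\ell(e)}\log\til\theta_{T(y_{i-1}),M}(e_i)$, and then sum along the long-range path from $\rho$ to $e$. Specifically, I plan to show that for any quasi-tree $T$ and any level-$1$ long-range edge $e'$ of $T$,
\[
\theta_T(e')\le\til\theta_{T,M}(e')\bigl(1+O(\delta^M)\bigr),
\]
where $\delta$ is the constant from Lemma~\ref{lem:neverbacktrack}. Once this is in hand, taking $-\log$ of both sides and summing over $i=1,\ldots,\ell(e)=L$ yields the path-level inequality $W_T(e)\ge\til{W}_T(e)-L\cdot O(\delta^M)$.

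To establish the pointwise bound, I would decompose
\[
\prstart{e'\in\xi}{\rho}\;=\;\prstart{e'\in\xi,\,X_{\tau_M}\in V_{e'}}{\rho}\;+\;\prstart{e'\in\xi,\,X_{\tau_M}\notin V_{e'}}{\rho}.
\]
The first term is at most $\til\theta_{T,M}(e')$ by definition. On the complementary event, the walk must first reach level $M$ outside $V_{e'}$ and then return all the way to level $0$ (since $e'\in\xi$ forces eventual residence in $V_{e'}$, which in turn requires crossing $e'$); iterating Lemma~\ref{lem:neverbacktrack} along the $M$ long-range edges on the return path bounds this return event by $O(\delta^M)$. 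Conditioning on the first such return via strong Markov, the subsequent probability of committing to $V_{e'}$ from a level-$0$ vertex needs to be compared with $\til\theta_{T,M}(e')$; this should follow from a harmonic-measure-type comparison inside the bounded-size root ball, giving a multiplicative rather than only an additive $O(\delta^M)$ correction.

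Finally I verify that $L\cdot O(\delta^M)\le(A-A')\sqrt{\V\log n/\h}$ for sufficiently large $n$. Using the scalings from Definition~\ref{def:variable_values} together with Lemma~\ref{lem:entropy_from_assump_f} -- namely $L\asymp\log n/\h\asymp g(n)$, $\V\asymp\h^2$, and thus $\sqrt{\V\log n/\h}\asymp\log n/\sqrt{g(n)}$ -- combined with $\delta^M\le\eps^{M/3}\le(\log n)^{-\beta C_3\log g(n)/3}$ for $M=C_3\log g(n)$, the error $L\delta^M$ decays super-polynomially in $\log n$ for large enough $C_3$, easily beating the required scale $\log n/\sqrt{g(n)}$. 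The main obstacle is establishing the multiplicative (rather than merely additive) pointwise comparison: the straightforward bound $\theta_T(e')\le\til\theta_{T,M}(e')+\delta^M$ loses control precisely when $\til\theta_{T,M}(e')$ is itself very small, so the harmonic-measure estimate comparing commit-to-$V_{e'}$ probabilities from different starting vertices in the root ball is the most delicate piece of the argument.
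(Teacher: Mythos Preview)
Your overall strategy---compare $\theta_T$ to $\til\theta_{T,M}$ at each level and sum---matches the paper, and you correctly identify the crux: converting the additive error $\delta^M$ into a multiplicative one. However, the fix you propose does not work. The root $R$-ball is \emph{not} of bounded size: with $R=C_1\eps^{-1}\log g(n)$ it contains up to $b(R)$ vertices, and $\log b(R)\asymp f(R)$ diverges. Consequently the ``harmonic-measure-type comparison'' fails: after returning from level $M$ the walk re-enters the root ball at some vertex $u$ that may be much closer to $e'^-$ than $\rho$ is, and $\prstart{e'\in\xi}{u}$ can exceed $\prstart{e'\in\xi}{\rho}$ by an arbitrarily large factor (think of a root ball that is a long path with $e'^-$ at the far end). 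Thus the unconditional bound $\theta_T(e')\le\til\theta_{T,M}(e')(1+O(\delta^M))$ is not available, and the additive bound $\theta_T(e')\le\til\theta_{T,M}(e')+\delta^M$ is useless precisely on the edges that matter.

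The paper resolves this with a dichotomy you are missing. It observes that since each $\theta_{T(y_{i-1})}(e_i)\le 1$, it suffices to show, for each level-$1$ edge, that \emph{either} $\theta_T(e')\le\exp\bigl(-\tfrac12\log n-A'\sqrt{\V\log n/\h}\bigr)$ (so a single factor already makes $W_T(e)$ large enough), \emph{or} $\til\theta_{T,M}(e')\ge\theta_T(e')\exp\bigl(-\tfrac{A-A'}{L}\sqrt{\V\log n/\h}\bigr)$. Under the first alternative we are done; under its negation one has the quantitative lower bound $\theta_T(e')\gtrsim n^{-c_4}$, and \emph{this} is what replaces your harmonic-measure step. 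Feeding the lower bound into Lemma~\ref{lem:visit_far_then_hit_e} with $\ell_0=\tfrac{c_4\log n}{c'\eps}$ kills the additive term $\delta^M e^{-c'\eps\ell_0}(\ell_0+1/\eps)$ relative to $\theta_T(e')$, yielding
\[
\prstart{e'\in\xi,\ X_{\tau_M}\notin V_{e'}}{\rho}\ \lesssim\ \frac{\log n}{\eps}\,\delta^M\,\theta_T(e').
\]
Note the multiplicative error is $\frac{\log n}{\eps}\delta^M$, not $\delta^M$; one then checks $\frac{\log n}{\eps}\delta^M\ll\min\{1,\ \log n/g(n)^{3/2}\}$ using $\delta\lesssim\eps^{1/3}$, $M=C_3\log g(n)$, and $\eps\lesssim(\log n)^{-\beta}$.
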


\begin{proof} The subscript $T$ will be omitted. We wish to show that for any $e$ at level $L$ we have
\[
W(e)\ge\frac12\log n+A'\sqrt{\frac{\V\log n}{\h}}\qquad\text{or} \qquad\til{W}(e)\le\frac12\log n+A\sqrt{\frac{\V\log n}{\h}}.
\]
Equivalently we wish to show that
\[\prod_{i=1}^L\theta_{T(y_{i-1})}(e_i)\quad\le\quad \exp\left(-\frac12\log n-A'\sqrt{\frac{\V\log n}{\h}}\right)\hspace{3cm}\]
\[\hspace{3cm}\text{or}\quad\prod_{i=1}^L\til\theta_{T(y_{i-1}),M}(e_i)\quad\ge\quad\exp\left(-\frac12\log n-A\sqrt{\frac{\V\log n}{\h}}\right).\]

Since $\theta_{T(y_{i-1})}(e_i)\le1$ for each $i$, it is sufficient to show that for any $A'<A$ we have
\begin{align*}
&\theta_{T(y_{i-1})}(e_i)\le \exp\left(-\frac12\log n-A'\sqrt{\frac{\V\log n}{\h}}\right)\quad\text{for  some  }i\quad\text{or}\\
&\prod_{i=1}^L\til\theta_{T(y_{i-1}),M}(e_i)\ge\prod_{i=1}^L\theta_{T(y_{i-1})}(e_i)\exp\left(-(A-A')\sqrt{\frac{\V\log n}{\h}}\right).
\end{align*}
It is sufficient to show that for any edge $e$ at level 1 and any $A'<A$ we have
\[
\theta_{T}(e)\le \exp\left(-\frac12\log n-A'\sqrt{\frac{\V\log n}{\h}}\right)\quad\text{or} \quad\til\theta_{T,M}(e)\ge\theta_{T}(e)\exp\left(-\frac{A-A'}{L}\sqrt{\frac{\V\log n}{\h}}\right).
\]
Equivalently, we wish to show that for any long-range edge $e$ at level 1 and any $A'<A$ we have the following.
\begin{align} \label{eq:inxi_prob_large}
\text{If}\qquad\prstart{e\in\xi}{\rho}\ge\exp\left(-\frac12\log n-A'\sqrt{\frac{\V\log n}{\h}}\right),
\end{align}
\begin{align} \label{eq:inVe_prob_large}
\text{then}\qquad \prstart{X_{\tau_M}\in V_e}{\rho}\ge\prstart{e\in\xi}{\rho}\exp\left(-\frac{A-A'}{L}\sqrt{\frac{\V\log n}{\h}}\right).
\end{align}

In the rest of the proof let us assume that \eqref{eq:inxi_prob_large} holds. We start by noting that
\begin{align*}
\prstart{e\in\xi}{\rho}\quad&=\quad\prstart{e\in\xi,X_{\tau_M}\in V_e}{\rho}+\prstart{e\in\xi,X_{\tau_M}\not\in V_e}{\rho}\\
&\le\quad\prstart{X_{\tau_M}\in V_e}{\rho}+\prstart{e\in\xi,X_{\tau_M}\not\in V_e}{\rho}.
\end{align*}
Now our goal is to show that
\[
\prstart{e\in\xi,X_{\tau_M}\not\in V_e}{\rho}\quad\le\quad\left(1-\exp\left(-\frac{A-A'}{L}\sqrt{\frac{\V\log n}{\h}}\right)\right)\prstart{e\in\xi}{\rho}.
\]

Note that for any constants $A$ and $A'<A$, using that $\h\asymp f\left(\frac{1}{\eps}\right)$ and $\V\asymp  f\left(\frac{1}{\eps}\right)^2$ from Lemma~\ref{lem:entropy_from_assump_f} and the value of $L$ from Definition~\ref{def:variable_values} we get $\frac{A-A'}{L}\sqrt{\frac{\V\log n}{\h}}\asymp\frac{\log n}{g(n)^{3/2}}$, and hence $1-\exp\left(-\frac{A-A'}{L}\sqrt{\frac{\V\log n}{\h}}\right)\ge \frac{1}{2}\wedge\frac{c_3\log n}{g(n)^{3/2}}$ for some constant $c_3$.

Note that
\[
\prstart{e\in\xi,X_{\tau_M}\not\in V_e}{\rho}\quad\le\quad\prstart{\text{hit level }M\text{ and then hit }e^+}{\rho}.
\]

By Lemma~\ref{lem:visit_far_then_hit_e} we know that for any $\ell_0$ this is
\begin{align}\label{eq:bound_in_xi_not_in_Ve}
\lesssim\quad \ell_0\delta^M\prstart{\tau_{e^+}<\infty}{\rho}+\delta^{M}e^{-c'\eps\ell_0}\left(\ell_0+\frac1\eps\right).
\end{align}

We know that $\prstart{\tau_{e^+}<\infty}{\rho}\asymp\prstart{e\in\xi}{\rho}\ge\exp\left(-c_4\log n\right)$ for some positive constant $c_4$. Let us choose $\ell_0=\frac{c_4\log n}{c'\eps}$. Then we get that~\eqref{eq:bound_in_xi_not_in_Ve} is
\[
\lesssim\quad \frac{\log n}{\eps}\delta^M\prstart{e\in\xi}{\rho}.
\]

We have
\[
\delta^{\frac{M}{2}}\quad=\quad\exp\left(-\frac{C_3}{2}\left(\log g(n)\right)\log\left(\frac{1}{\delta}\right)\right)\quad\lesssim\quad\exp\left(-\frac32\log g(n)\right)\quad=\quad\frac{1}{g(n)^{3/2}},
\]
and hence using that $\delta\lesssim \eps^{1/3}$ from Lemma~\ref{lem:neverbacktrack} we get that 
\[
\frac{\log n}{\eps}\delta^M\quad\ll\quad\frac{\log n}{g(n)^{3/2}},
\]
and also
\[
\frac{\log n}{\eps}\delta^M\quad\lesssim\quad\exp\left(\log\log n-\frac{C_3}{2}\left(\log g(n)\right)\log\left(\frac1\delta\right)\right)\quad\ll\quad1.
\]
For the last step above we used that $\log (1/\delta)\asymp \log (1/\eps)$ and the assumption that $\eps\lesssim(\log n)^{-\beta}$.

Together these show that for all sufficiently large $n$ we have
\[
\prstart{e\in\xi,X_{\tau_M}\not\in V_e}{\rho}\quad\le\quad\left(1-\exp\left(-\frac{A'-A}{L}\sqrt{\frac{\V\log n}{\h}}\right)\right)\prstart{e\in\xi}{\rho}
\]
which finishes the proof.
\end{proof}

\begin{proposition}\label{pro:bound_trunc_prob}
Let us consider the setup of Lemma~\ref{lem:if_W_large_then_Wtil_large}. Let $T_0$ be any realisation of the first $K$ levels of $T$. Let $X$ be a simple random walk on $T$ started from its root. Then for all $\theta\in(0,1)$ there exist $B$ (depending on $\theta$) and $A$ (depending on $\theta$ and $B$) sufficiently large such that
\[
\prcond{\bigcup_{k\le\tau_{L}}\trunc{(X_{k-1},X_k)}{A}}{B_K(\rho)=T_0}{}<\theta.
\]
\end{proposition}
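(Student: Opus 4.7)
The plan is to reduce the statement to concentration of $W_T$ on the loop-erasure of $X$, plus a control on the short ``side branches'' that $X$ traverses off its loop-erasure. First, I would invoke Lemma~\ref{lem:if_W_large_then_Wtil_large}, noting that its proof extends from level $L$ to any level in $[K,L]$: the only place $L$ enters is through the exponential gap $\exp(-\tfrac{A-A'}{L}\sqrt{\V\log n/\h})$, which becomes more favourable at smaller levels. Fixing $A'=A-1$, this reduces matters to showing
\[
\prcond{\exists\, e \text{ crossed by } X \text{ before } \tau_L \text{ with } K \le \ell(e) \le L,\; W_T(e) > \tfrac12\log n + A'\sqrt{\V\log n/\h}}{\B_K(\rho)=T_0}{} < \theta.
\]

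For edges on the loop-erasure $\xi$ of $X$ up to $\tau_L$, note that $W_T(e)=-\sum_{i=1}^{\ell(e)}\log\theta_{T(y_{i-1})}(e_i)$ is non-decreasing along ancestor chains, so the supremum over $\xi_K,\ldots,\xi_L$ is $W_T(\xi_L)$. Applying Lemma~\ref{lem:entropy_from_assump_f} at $k=L$ (the assumption $L\asymp g(n)\gg(K\log b(R))^2/\V\asymp(\log g(n))^4$ follows easily from Assumption~\ref{assump:f} and the values in Definition~\ref{def:variable_values}) yields $|W_T(\xi_L)-\h L|\le C(\theta)\sqrt{L\V}$ with conditional probability at least $1-\theta/2$. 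Substituting the parameters, $\h L=\tfrac12\log n + \tfrac{B}{2}\cdot\log n/\sqrt{g(n)}$ while $\sqrt{L\V}\asymp\sqrt{\V\log n/\h}\asymp \log n/\sqrt{g(n)}$, so $W_T(\xi_L)\le \tfrac12\log n + c(B,\theta)\sqrt{\V\log n/\h}$ on the good event, for some constant $c$ linear in $B$ and $C(\theta)$. Choosing first $B$ (depending on $\theta$) and then $A$ so that $A'=A-1>c(B,\theta)$ handles the LERW edges.

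To treat off-LERW edges $e$ visited by $X$, write their ancestor chain as agreeing with $\xi$ up to some level $i$ and then deviating by $r=\ell(e)-i$ long-range edges, so $W_T(e)=W_T(\xi_i)+\sum(-\log\theta)$ over the $r$ deviating links. A short inspection shows that such deviations can only occur inside a ``non-regeneration block'' $(\varphi_j,\varphi_{j+1}]$, since if level $\ell(e)-1$ were a regeneration level the walk could never have returned to it. Hence $r\le \varphi_{j+1}-\varphi_j$ for some $j$. By Corollary~\ref{cor:tail_bounds} the gap $\varphi_{j+1}-\varphi_j$ has exponential tail with rate $\asymp\log(1/\eps)$, and since there are $\asymp L=O(g(n))$ regenerations before $\tau_L$, a union bound gives $r\le c\log g(n)$ simultaneously for every off-LERW edge with probability at least $1-\theta/4$. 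Each increment is bounded crudely by $-\log\theta_{T(y)}(e_i)\le\log(b(R)/\eps)\lesssim f(R)\lesssim f(1/\eps)\log g(n)$ using the submultiplicativity of $f$. Thus the total deviation contribution is $O((\log g(n))^2 f(1/\eps))=O((\log g(n))^2 \log n/g(n))=o(\log n/\sqrt{g(n)})=o(\sqrt{\V\log n/\h})$. Combining with the LERW bound from the previous paragraph, $W_T(e)\le \tfrac12\log n + A'\sqrt{\V\log n/\h}$ for all visited long-range edges with $\ell(e)\in[K,L]$.

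The main obstacle is the last step: a uniform-in-$e$ bound on $W_T(e)$ over the random set of off-LERW edges $X$ visits. What makes it work is the combination of the regeneration-gap tail bound of Corollary~\ref{cor:tail_bounds} (capping how far $X$ can stray from $\xi$ in levels) with a crude pointwise bound $-\log\theta\lesssim f(R)$ (capping how much $W$ can accumulate per stray step); together they keep the off-LERW contribution strictly smaller in order than the concentration window $\sqrt{\V\log n/\h}\asymp \log n/\sqrt{g(n)}$, so a single constant $A=A(\theta,B)$ absorbs the LERW fluctuation, the $B$-shift of $\h L$, and the off-LERW correction simultaneously.
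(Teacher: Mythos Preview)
Your approach differs from the paper's (which follows \cite[Lemma~4.5]{random_matching}) and contains a genuine error in Step~3.

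\textbf{The paper's reduction.} The paper does not split into LERW and off-LERW edges at all. Instead it observes that $\til W_T$ is monotone along ancestor chains, and uses a one-line regeneration argument: let $e^*$ be the \emph{first} long-range edge crossed by $X$ (necessarily forward, since any backward crossing would have been preceded by a forward one) for which $\trunc{e^*}{A}$ holds. Conditionally on the walk up to this first crossing, with probability at least $1-\delta$ the walk never recrosses $e^*$ (Lemma~\ref{lem:neverbacktrack}); on that event $e^*\in\xi$, hence $\xi_L$ is a descendant of $e^*$ and $\til W_T(\xi_L)\ge\til W_T(e^*)$. This gives
\[
\prcond{\textstyle\bigcup_{k\le\tau_L}\trunc{(X_{k-1},X_k)}{A}}{T_0}{}\ \le\ \tfrac{1}{1-\delta}\,\prcond{\trunc{\xi_L}{A}}{T_0}{},
\]
after which one applies Lemma~\ref{lem:if_W_large_then_Wtil_large} \emph{only at level $L$} and then Lemma~\ref{lem:entropy_from_assump_f} exactly as you do for the LERW part. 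No off-LERW analysis is needed.

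\textbf{The error in your Step~3.} The crude bound $-\log\theta_{T(y)}(e_i)\le\log(b(R)/\eps)$ is false in general. The quantity $\theta_{T(y)}(e_i)$ is the probability that a LERW from the centre of an $R$-ball exits through the specific long-range edge $e_i$; there is no reason this should be at least $\eps/b(R)$. For a concrete failure, take the $R$-ball to be (a piece of) a path of length $R$ with $e_i^-$ at the far end: the walk must traverse $\asymp R$ vertices while avoiding the $\asymp R$ competing long-range edges, which takes time $\asymp R^2$, so $\theta\asymp e^{-c\eps R^2}$ and $-\log\theta\asymp\eps R^2\asymp(\log g(n))^2/\eps$. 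In the polynomial-growth regime ($f(t)=\log(1+t)$) this is not $O(f(R))=O(\log R)$, and plugging it into your estimate gives an off-LERW contribution of order $(\log g(n))^3/\eps$, which need not be $o(\log n/\sqrt{g(n)})$.

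There is also a smaller slip: the off-LERW depth $r$ is bounded by the \emph{maximal} level reached in the block minus $\varphi_j$, not by $\varphi_{j+1}-\varphi_j$; the walk can overshoot $\varphi_{j+1}$ inside a non-regeneration block. This is fixable (the overshoot has exponential tail), but the $-\log\theta$ bound is not, so the off-LERW route does not close. The regeneration-to-$\xi_L$ reduction above bypasses the issue entirely.
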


\begin{proof} The proof is analogous to the proof of \cite[Lemma 4.5]{random_matching} and we omit the details.
\end{proof}

\begin{definition}\label{def:truncprime}
Given a quasi-tree $T$ and $L$ as in Definition~\ref{def:variable_values}, we define the level $L$ truncation event for a long-range edge $e$ as
\[\truncprime{e}:=\left\{\ell(e)=L,\quad-\log\prstart{X_{\tau_L}=e^+}{\rho}<\frac12\log n+\log\log n\right\}.\qedhere\]
\end{definition}

\begin{proposition} \label{pro:bound_truncprime_prob}
Let us consider the setup of Lemma~\ref{lem:if_W_large_then_Wtil_large}. Let $T_0$ be a realisation of the first $K$ levels of $T$. Let $X$ be a simple random walk on $T$ started from its root. Then for all $\theta\in(0,1)$ there exist $B$ (as in Definition~\ref{def:variable_values}, depending on $\theta$) sufficiently large such that
\[
\prcond{\truncprime{\left(X_{\tau_{L}-1},X_{\tau_{L}}\right)}}{B_K(\rho)=T_0}{}<\theta.
\]
\end{proposition}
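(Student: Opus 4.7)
The plan is to deduce the bound from the entropy concentration estimate of Lemma~\ref{lem:entropy_from_assump_f} at level $k=L$. Set $e:=(X_{\tau_L-1},X_{\tau_L})$; since vertices inside a single $R$-ball all share the same level, $X$ can only enter level $L$ through a long-range edge, so $e$ is automatically a long-range edge with $\ell(e)=L$ and $X_{\tau_L}=e^+$. The remaining content of $\truncprime{e}$ is the lower bound on $-\log\prstart{X_{\tau_L}=e^+}{\rho}$, which I will obtain by comparing this hitting probability with $\prcond{\xi_L\in\til\xi}{T,\xi}{}$, where $\xi$ is the loop-erasure of $X$ and $\til\xi$ is an independent LERW on $T$.

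The comparison has two parts. First, using the two-sided bound $(1-\delta)\prstart{\tau_{e^+}<\infty}{\rho}\le\prstart{e\in\til\xi}{\rho}$ that appears in the proof of Lemma~\ref{lem:firstgen},
$$\prstart{X_{\tau_L}=e^+}{\rho}\;\le\;\prstart{\tau_{e^+}<\infty}{\rho}\;\le\;\tfrac{1}{1-\delta}\prstart{e\in\til\xi}{\rho}.$$
Second, when $\xi$ is coupled with $X$ as its loop-erasure, Lemma~\ref{lem:neverbacktrack} shows that $X$ never re-crosses $e$ from $X_{\tau_L}$ back to $X_{\tau_L-1}$ except on an event of probability $\delta=o(1)$, and on the complement we have $\xi_L=e$, so $\prstart{e\in\til\xi}{\rho}=\prcond{\xi_L\in\til\xi}{T,\xi}{}$. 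Consequently, outside an event of probability $o(1)$,
$$-\log\prstart{X_{\tau_L}=e^+}{\rho}\;\ge\;-\log\prcond{\xi_L\in\til\xi}{T,\xi}{}-O(1).$$

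A direct calculation from Definition~\ref{def:variable_values} yields $\nu\h t_0=\log n$ and $\nu\h t_w=\log n/\sqrt{g(n)}$, so
$$\h L\;=\;\tfrac12\log n+\tfrac{B}{2}\tfrac{\log n}{\sqrt{g(n)}},\qquad\sqrt{L\V}\;\asymp\;\sqrt{\tfrac{\V\log n}{\h}}\;\asymp\;\tfrac{\log n}{\sqrt{g(n)}}.$$
Lemma~\ref{lem:entropy_from_assump_f} applies, since $L\asymp\log n/\h\asymp g(n)$ dominates $(K\log b(R))^2/\V\lesssim(\log g(n))^4$, and it gives, with conditional probability at least $1-\theta/2$,
$$-\log\prcond{\xi_L\in\til\xi}{T,\xi}{}\;\ge\;\h L-C\sqrt{L\V}\;\ge\;\tfrac12\log n+(c_0B-c_1C)\tfrac{\log n}{\sqrt{g(n)}}$$
for absolute constants $c_0,c_1>0$. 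Since $\log n/\sqrt{g(n)}\gtrsim\sqrt{\log\log n\cdot\log n}\gg\log\log n$ (as noted after Definition~\ref{def:trunc}), taking $B$ with $c_0B-c_1C\ge 1$ makes this lower bound exceed $\tfrac12\log n+\log\log n+O(1)$ for all large $n$. Combined with the reduction in the previous paragraph, this gives $\prcond{\truncprime{e}}{\B_K(\rho)=T_0}{}\le\theta/2+o(1)<\theta$, as required.

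The main obstacle is the bookkeeping: the two $o(1)$ probability losses (from the non-backtracking event of Lemma~\ref{lem:neverbacktrack} and from the failure of entropy concentration) and the additive $O(1)$ loss in $-\log$ from the factor $1/(1-\delta)$ must all be dominated by the genuine gap $\log n/\sqrt{g(n)}-\log\log n$, which diverges thanks to the constraint $g(n)\lesssim\log n/\log\log n$ noted after Definition~\ref{def:trunc}. Everything else is a routine computation from Definition~\ref{def:variable_values} together with Lemma~\ref{lem:entropy_from_assump_f}.
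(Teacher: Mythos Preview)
Your proof is correct and follows essentially the same approach as the paper: both reduce the event $\truncprime{(X_{\tau_L-1},X_{\tau_L})}$ to a lower-deviation event for $W_T(\xi_L)=-\log\prcond{\xi_L\in\til\xi}{T,\xi}{}$ and then invoke the entropy concentration of Lemma~\ref{lem:entropy_from_assump_f} at level $L$, together with the arithmetic $\h L=\tfrac12\log n+\tfrac{B}{2}\tfrac{\log n}{\sqrt{g(n)}}$ and $\sqrt{L\V}\asymp\tfrac{\log n}{\sqrt{g(n)}}\gg\log\log n$. The only cosmetic difference is that the paper transfers from $(X_{\tau_L-1},X_{\tau_L})$ to $\xi_L$ by the edge-wise inequality $\prcond{e\in\xi}{T}{}\ge(1-o(1))\prcond{(X_{\tau_L-1},X_{\tau_L})=e}{T}{}$ summed over truncated edges, whereas you work on the non-backtracking event $\{\xi_L=(X_{\tau_L-1},X_{\tau_L})\}$ of probability $1-\delta$; these are equivalent formulations of the same comparison.
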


\begin{proof}
For any long-range edge $e$ we have
\[
\prcond{e\in\xi}{T}{}\quad\ge\quad(1-o(1))\prcond{\left(X_{\tau_{\ell(e)}-1},X_{\tau_{\ell(e)}}\right)=e}{T}{},
\]
hence
\[
\prcond{\truncprime{\xi_L}}{B_K(\rho)=T_0}{}\quad\ge\quad(1-o(1))\prcond{\truncprime{\left(X_{\tau_{L}-1},X_{\tau_{L}}\right)}}{B_K(\rho)=T_0}{}
\]
and
$\truncprime{\xi_L}$ implies ${W}_T(\xi_{L})<\frac12\log n+\log\log n+o(1)$.

From Lemma~\ref{lem:entropy_from_assump_f} we know that for a sufficiently large value of $C$ we have
\[
\prcond{W_T(\xi_L)\le L\h-C\sqrt{L\V}}{T_0}{}<\theta.
\]

Note that
\[
L\h-C\sqrt{L\V}\quad=\quad\frac12\log n+\frac12B\frac{\log n}{\sqrt{g(n)}}-Ca\sqrt{\frac{(\log n)^2}{g(n)}+B\frac{(\log n)^2}{\sqrt{g(n)}}}
\]
where $a\asymp1$. For sufficiently large values of $B$ (in terms of $C$) this is $\ge\frac12\log n+\frac14B\frac{\log n}{\sqrt{g(n)}}>\frac12\log n+\log\log n+1$.

Putting these together gives the result.
\end{proof}

\subsection{Coupling}

\begin{definition}\label{def:coupling}
Let $x$ and $y$ be two distinct vertices of the graph $G$ and condition on both of them being $K$-roots in $G^*$ with disjoint $K$-neighbourhoods $T_{x,0}=\B_K^*(x)$ and $T_{y,0}=\B_K^*(y)$.

Let $z_1,...,z_{L_x}$ be the vertices of $\partial\B_{K/2}^*(x)$ and let $z_{L_x+1},...,z_{L_x+L_y}$ be the vertices of $\partial\B_{K/2}^*(y)$. For a $z_i$ in $\partial\B_{K/2}^*(x)$ let $V_{z_i}$ be the set of its descendants in $\partial\B_{K}^*(x)$.

For each $z_i$ we will define an exploration process of $G^*$ from $V_{z_i}$ by constructing a coupling between a subset of $G^*$ and two independent quasi-trees $T_x$, $T_y$ that are distributed like the random quasi-tree corresponding to graph $G$, conditioned to be $T_{x,0}$ and $T_{y,0}$ respectively at the first $K$ levels around their roots.

Let us assume that $i\le L_x$. (The exploration from $V_{z_i}$ with $L_x<i\le L_x+L_y$ is defined analogously.)

First let us reveal one by one all the long-range edges of $T_x$ which have one endpoint in the $R$-ball of some $w\in V_{z_i}$, but not in $V_{z_i}$ itself.
Let us couple each of these long-range edges with the corresponding long-range edge of $G^*$ by using the optimal coupling between the two uniform distributions at every step. (At each step the endpoint in $T_x$ is chosen uniformly from all vertices, while the endpoint in $G^*$ is chosen uniformly from all vertices whose long-range edge has not been revealed yet.)
If one of these couplings fails, let us truncate that edge and stop the exploration from this edge in $G^*$, but continue in $T_x$. If the coupling was successful for a given edge, let us also consider the $R$-ball around its newly revealed endpoint. Let us also truncate a long-range edge if this $R$-ball intersects the $R$-ball around any previously considered vertex. Once all long-range edges from the $R$-balls of $V_{z_i}$ are revealed, examine for each of them whether its level $M$ ancestor satisfies the truncation criterion $\trunc{e}{A}$ (which is defined w.r.t.\ $T_x$, not $G^*$; note that since $M\le\frac{K}{2}$, it only depends on edges we have already revealed). If the level $M$ ancestor satisfies the truncation criterion, then let us truncate that edge and stop the exploration from it in $G^*$.

Suppose we have already explored the long-range edges up to level $k$ of $T_x$ and for each edge that is a descendant of ${z_i}$ and neither the edge nor any of its ancestors  have been truncated, we have also explored the corresponding edge in $G^*$. Then let us reveal the edges between levels $k$ and $k+1$ in $T_x$ and for each edge that is a descendant of $z_i$ and whose ancestors and itself are not truncated, let us couple the corresponding edge of $G^*$ to it using the optimal coupling. If the optimal coupling fails or the $R$-ball around the newly added endpoint intersects any previously revealed $R$-ball or if the level $M$ ancestor of the edge satisfies the truncation criterion $\trunc{e}{A}$, then let us truncate it and stop the exploration from it in $G^*$. Let us continue this up to level $L-1$. Let us also consider the half-edges leading to level $L$, but do not reveal their level $L$ endpoints yet. For each such half-edge let us truncate it if its level $M$ ancestor satisfies the truncation criterion $\trunc{e}{A}$ or the half-edge itself satisfies $\truncprime{e}$ (note that we do not need the other endpoint to determine whether this holds).

Let us run the exploration processes from $V_{z_i}$ for $i=1,2,...,L_x+L_y$ in this order. Let $\F_i$ be the $\sigma$-algebra generated by $T_{x,0}$, $T_{y,0}$ and the exploration processes started from $V_{z_1},...V_{z_i}$. Say that the vertex $z_i$ is \emph{good} if no vertex from the $R$-balls of $V_{z_i}$ has been explored during the exploration processes corresponding to the sets $V_{z_1},...V_{z_{i-1}}$. Otherwise call $z_i$ \emph{bad}. Note that the event $\{z_i\text{ good}\}$ is $\F_{i-1}$-measurable.

Let $\partial T_x$ denote the set of vertices at level $L-1$ of $T_x$ that are the endpoints of non-truncated long-range half-edges leading to level $L$. Define $\partial T_y$ analogously.

Note that by construction $G^*$ agrees with $T_x$ and $T_y$ on the (random) regions around $x$ and $y$ enclosed by the truncated edges and the half-edges leading from levels $L-1$ to levels $L$.

After running the explorations from each $V_{z_i}$ let $\I$ be the set of yet unpaired long-range half-edges in $G^*$. Let us complete the exploration of $G^*$ by considering a uniform random matching of $\I$ and adding the corresponding long-range edges.

For $u\in V_{z_i}$ where $1\le i\le L_x$, given the explorations from all $V_{z_k}$ let us define the coupling of a random walk $X$ on $G^*$ from $u$ with a random walk $\til{X}$ on $T_x$ from $u$ as follows.
Let us run $\til{X}$ until it reaches level $L$ in $T_x$ and move $X$ together with it as long as none of the following happen.
\begin{enumerate}[(i)]
\item $\til{X}$ crosses a truncated edge;
\item $\til{X}$ visits level $K-1$ of $T_x$;
\item $\Omega^X_0(L-K)$ or $\Omega^X_1(L-K)$ fails to hold.
\end{enumerate}
If none of the above events happen, we say that the coupling is successful. Otherwise we say that the coupling fails.

We can also define the coupling of a random walk $X$ on $G^*$ from $x$ and a random walk $\til{X}$ on $T_x$ from $x$ as follows. Let us run $\til{X}$ until it hits level $K$ and move $X$ together with it. After that use the above coupling from the level $K$ vertices. We say that the coupling is successful if $\Omega_0^X(K)$ and $\Omega_1^X(K)$ both hold (note that these only depend on $\til{X}$ up to the first time it hits level $K$) and the coupling from the level $K$ vertices is also successful. Otherwise let us say that the coupling fails. Let $\Omega_2^X$ denote the event that the coupling is successful. Note that on this event $\Omega_0^X(L)$ and $\Omega_1^X(L)$ also hold.

For $v\in V_{z_j}$ where $L_x<j\le L_x+L_y$ let us define the coupling of random walks on $G^*$ and $T_y$ from $v$ analogously. Also define the coupling of random walks on $G^*$ and $T_y$ from $y$ and the event $\Omega_2^Y$ analogously.
\end{definition}

\begin{lemma} \label{lem:bound_num_bad}
Let us consider the setup of Definition~\ref{def:coupling}.

For each $i$ let $D_i$ be the set of vertices explored during the exploration process from the set $V_{z_i}$. Then for all sufficiently large values of $n$ we have
\[
\left|\bigcup D_i\right|\quad\le\quad N:=\quad\sqrt{n}\exp\left(2A\sqrt{\frac{\V\log n}{\h}}\right).
\]
Also there exists a positive constant~$C$ (not depending on $T_{x,0}$ and $T_{y,0}$) so that the number $\rm{Bad}$ of bad vertices $z$ satisfies
\[
\prcond{\rm{Bad}\ge C}{\B_K^*(x)=T_{x,0},\B_K^*(y)=T_{y,0},\B_K^*(x)\cap \B_K^*(y)=\emptyset}{}\quad=\quad o\left(\frac{1}{n^2}\right).
\]
\end{lemma}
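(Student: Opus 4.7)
The plan is to handle the two assertions separately. For the size bound \(\bigl|\bigcup_i D_i\bigr|\le N\), the key ingredient is the truncation criterion \(\trunc{\cdot}{A}\), which limits how many branches of the quasi-trees \(T_x, T_y\) survive to be explored. An edge \(e\) revealed at depth \(k\) survives only if its ancestor \(e'\) at level \(k-M\) (which is the first level where \(\til W_T(e')\) is determined by already-revealed parts of the tree) satisfies \(\til W_T(e')\le\tfrac12\log n+A\sqrt{\V\log n/\h}\), equivalently \(\prod_{i=1}^{k-M}\til\theta(e'_i)\ge\exp\bigl(-\tfrac12\log n-A\sqrt{\V\log n/\h}\bigr)\). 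Since for every vertex the weights \(\til\theta\) on its long-range children sum to one, iterating gives \(\sum_{e':\ell(e')=k-M}\prod_i\til\theta(e'_i)\le 1\), whence the number of surviving level-\((k-M)\) ancestors is at most \(\sqrt n\exp(A\sqrt{\V\log n/\h})\). Each carries at most \(b(R)^M\) descendants at level \(k\), each such descendant contributes at most \(b(R)\) vertices in \(G^\ast\) from its \(R\)-ball, and summation over the \(\le L\) levels bounds the total. Using \(\log b(R)\asymp f(R)\), the submultiplicativity from Assumption~\ref{assump:f}, and the parameter choices in Definition~\ref{def:variable_values}, the prefactor satisfies
\[
\log\bigl(L\cdot b(R)^{M+1}\bigr)=O\!\left(\frac{(\log g(n))^2\log n}{g(n)}\right)=o\!\left(\frac{\log n}{\sqrt{g(n)}}\right),
\]
so it can be absorbed into \(\exp(A\sqrt{\V\log n/\h})\) for a slightly larger \(A\), yielding \(\bigl|\bigcup_i D_i\bigr|\le\sqrt n\exp(2A\sqrt{\V\log n/\h})=N\).

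For the bound on \(\mathrm{Bad}\), I would argue vertex by vertex in the exploration order. The vertex \(z_j\) is bad iff the \(R\)-neighborhood of \(V_{z_j}\), a deterministic subset of \(G^\ast\) of size at most \(b(R)^{K/2+1}\) that depends only on \(T_{x,0},T_{y,0}\), meets \(\bigcup_{i<j}D_i\). The latter has size at most \(N\) by the first part, and each of those vertices was exposed through a matching half-edge whose endpoint was uniform over the at least \(n-N\) yet-unmatched vertices. A union bound over the at most \(N\) exposed vertices and the at most \(b(R)^{K/2+1}\) targets gives, conditional on the prior explorations,
\[
\pr{z_j\text{ is bad}}\le p:=\frac{N\cdot b(R)^{K/2+1}}{n-N}=n^{-1/2+o(1)},
\]
where we used \(b(R)^{K/2}=n^{o(1)}\) because \(K\log b(R)\asymp(\log g(n))^2\log n/g(n)=o(\log n)\). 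Combined with \(L_x+L_y\le 2b(R)^{K/2}=n^{o(1)}\), this gives \(\E{\mathrm{Bad}}=n^{-1/2+o(1)}\).

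To upgrade Markov's inequality to the required \(o(1/n^2)\) tail bound, I would estimate the \(k\)-th factorial moment for a fixed constant \(k\). Conditioning sequentially on which candidates among \(z_{j_1},\dots,z_{j_k}\) have already been discovered, and using that the remaining unpaired half-edges still form a uniform random matching, each successive requirement ``\(z_{j_s}\) is bad'' contributes an independent factor of at most \(p\). This yields \(\E{\binom{\mathrm{Bad}}{k}}\le\binom{L_x+L_y}{k}p^k\le (n^{o(1)}p)^k=n^{-k/2+o(k)}\). Taking \(k=5\) gives, for any fixed \(C\ge 5\),
\[
\prcond{\mathrm{Bad}\ge C}{\cdot}{}\le\E{\binom{\mathrm{Bad}}{5}}=o(n^{-5/2})=o(1/n^2),
\]
as required. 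The main technical obstacle is the sequential dependence in this moment computation: the set of vertices exposed during \(D_1,\dots,D_{j-1}\) is itself random and correlated with which future \(z_j\)'s become bad. I would resolve this by conditioning at each stage on \(\bigcup_{i<j}D_i\), using the first part to keep its size below \(N\) almost surely, and then invoking the uniform-matching structure of the still-unrevealed half-edges to recover the factor-of-\(p\) estimate at every step.
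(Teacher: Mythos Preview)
Your argument for the first assertion matches the paper's: bound the number of non-truncated level-$(k-M)$ edges via the inductive inequality $\sum_e e^{-\til W_T(e)}\le 1$, inflate by $b(R)^M$ to pass to level $k$, multiply by $b(R)$ per explored ball and by $L$ for the number of levels, and then verify that $\log(L\,b(R)^{M+1})=o(\sqrt{\V\log n/\h})$ so that the prefactor is absorbed into the exponential. This is correct.

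For the second assertion, the factorial-moment strategy is valid and gives the right order, but your justification of $\Pr[z_{j_1},\dots,z_{j_k}\text{ all bad}]\le p^k$ via ``conditioning sequentially on which $z_{j_s}$ are bad'' does not work as described. The event $\{z_j\text{ bad}\}$ is $\F_{j-1}$-measurable (it is fully determined by the explorations preceding $V_{z_j}$), so conditioning on that filtration renders each indicator $0$ or $1$ and yields no fresh factor $\le p$. The correct mechanism is the one you would need to invoke anyway: the $k$ target regions are \emph{disjoint} (since $x,y$ are $K$-roots), so $k$ distinct reveals out of the $\le N$ must each land in a fixed set of size $n^{o(1)}$; a union bound over the at most $N^k$ choices of reveals then gives $(O(1)\cdot N\cdot n^{o(1)}/n)^k=(O(1)\,p)^k$. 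The sequential conditioning has to be on the \emph{reveals}, not on the $z_j$-indicators.

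The paper short-circuits this by skipping the per-tuple analysis: it observes directly that $\mathrm{Bad}$ is bounded by the number of reveals (out of $\le N$) whose $R$-ball meets the fixed set $\partial T_{x,0}\cup\partial T_{y,0}$ of size $\le 2b(R)^{K+1}$, and uses the single binomial-type tail estimate
\[
\prcond{\mathrm{Bad}\ge C}{\,\cdot\,}{}\ \le\ \binom{N}{C}\Bigl(\tfrac{2\,b(R)^{K+1}}{n}\Bigr)^{C}\ =\ n^{-C/2+o(1)},
\]
which is $o(1/n^2)$ for $C$ large. This is exactly the repaired moment argument, applied once to the global target rather than separately to every $k$-tuple of $z_j$'s.
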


\begin{proof} The proof is similar to the derivation of \cite[equation (3.11)]{RWs_on_random_graph}. \footnote{The improvement to constant $C$ compared to~\cite{RWs_on_random_graph} is due to the fact that we reveal much fewer vertices for each $z_i$.}

First we show that for each $k$ the number of vertices $\widehat{S}_k$ at level $k$ of $T_x$ whose level $M$ ancestor (if exists) does not satisfy the truncation criterion is $\le\sqrt{n}\exp\left(\frac32A\sqrt{\frac{\V\log n}{\h}}\right)$.

Let $\til{S}_k$ be the set of level $k$ vertices of $T_x$ not satisfying the truncation criterion. Note that by induction on $k$ we have
\[
\sum_{e\in \til S_k}\exp\left(-\til W_{T}(e)\right)\quad\le\quad1
\]
and by the definition of truncation criterion this gives that $|\til{S}_k|\le \sqrt{n}\exp\left(A\sqrt{\frac{\V\log n}{\h}}\right)$.

Then using that
\[
M\log b(R)\:\asymp\:\left(\log g(n)\right)f(R):\lesssim\:\left(\log g(n)\right)\frac{\log n}{g(n)}(\log g(n))\:\ll\:\frac{\log n}{\sqrt{g(n)}}\:\asymp\:\sqrt{\frac{\V\log n}{\h}}
\]
we get that
\[
|\widehat{S}_k|\quad\le\quad b(R)^M|\til{S}_{k-M}|\quad\le\quad\sqrt{n}\exp\left(\frac32A\sqrt{\frac{\V\log n}{\h}}\right).
\]

Analogously we get that the number of vertices at level $k$ of $T_y$ whose level $M$ ancestor does not satisfy the truncation criterion is also $\sqrt{n}\exp\left(\frac32A\sqrt{\frac{\V\log n}{\h}}\right)$.

For each non-truncated long-range edge in $T_x$ and $T_y$ we reveal its $R$-ball, which has size $\le b(R)$. We continue the exploration for $L$ levels in each tree, so overall the number of vertices we explore is
\[
\le\quad 2Lb(R)\sqrt{n}\exp\left(\frac32A\sqrt{\frac{\V\log n}{\h}}\right)\quad\le\quad \sqrt{n}\exp\left(2A\sqrt{\frac{\V\log n}{\h}}\right).
\]

The last inequality holds for sufficiently large values of $n$ and is because
\begin{align*}
&\log b(R)\quad\asymp\quad f(R)\quad\lesssim\quad\frac{\log n}{g(n)}\log g(n)\quad\ll\quad\frac{\log n}{\sqrt{g(n)}}\quad\asymp\quad\sqrt{\frac{\V\log n}{\h}}\qquad\text{and}\\
&\log L\quad\asymp\quad \log g(n)\quad\lesssim\quad\log\left(\frac{\log n}{\log\log n}\right)\quad\ll\quad\sqrt{(\log\log n)\log n}\quad\lesssim\quad\sqrt{\frac{\V\log n}{\h}}.
\end{align*}

At each step of the exploration the probability of intersecting $\partial T_{x,0}$ or $\partial T_{y,0}$ is
\[
\le\quad\frac{|\partial T_{x,0}|+|\partial T_{y,0}|}{n}\quad\le\quad \frac{2b(R)^{K+1}}{n},
\]
independently for different steps, so the probability of having $\ge C$ bad vertices is
\begin{align*}
&\prcond{{\rm{Bad}}\geq C }{T_0}{}\quad\le\quad\binom{N}{C} \left(\frac{2b(R)^{K+1}}{n}\right)^{C}\\
&\le\quad\exp\left(C\left(\frac12\log n+2A\sqrt{\frac{\V\log n}{\h}}+2K\log b(R)-\log n\right)\right)\quad\ll\quad\frac{1}{n^2}
\end{align*}
if $C$ is sufficiently large.

In the last step, we used that
\begin{align*}
K\log b(R)\quad&\lesssim\quad\log g(n)\frac{\log n}{g(n)}\log g(n)\quad\ll\quad \log n\qquad\text{and}\\
\sqrt{\frac{\V\log n}{\h}}\quad&\asymp\quad\frac{\log n}{\sqrt{g(n)}}\quad\ll\quad\log n.
\end{align*}
\end{proof}

\begin{lemma} \label{lem:coupling_succ_prob}
Let us consider the setup of Definition~\ref{def:coupling} and consider the coupling of $X$ with $\til{X}$ from $u\in V_{z_i}$ where $1\le i\le L_x+L_y$. Then for all $\theta>0$ there exist $B$ and $A$ sufficiently large such that for all large enough $n$ we have
\[
\prcond{\text{the coupling of $X$ and $\til{X}$ succeeds}}{\F_{i-1}}{u}  \quad\ge\quad (1-\theta)\1_{\{z_i\text{ good}\}}.
\]
\end{lemma}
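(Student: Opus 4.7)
When $z_i$ is bad the right-hand side is zero and there is nothing to prove, so assume $z_i$ is good; then by definition the exploration from $V_{z_i}$ does not touch any vertex revealed during the earlier explorations from $V_{z_1}, \ldots, V_{z_{i-1}}$. The plan is to union-bound the conditional probability of each failure mode in Definition~\ref{def:coupling} by a small multiple of $\theta$ (or by $o(1)$) and then rescale $\theta$. The modes to control are: (A) $\til X$ revisits level $K-1$; (B) $\Omega_0^X(L-K)$ or $\Omega_1^X(L-K)$ fails; (C) $\til X$ crosses an edge $e$ with $\trunc{e}{A}$; (D) $\til X$ crosses an edge $e$ with $\truncprime{e}$; (E) $\til X$ crosses an edge truncated because its optimal coupling with $G^*$ failed or because its newly revealed $R$-ball in $G^*$ intersected a previously revealed one.

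Modes (A)--(D) follow from the estimates already proved. Since $u \in V_{z_i}$ lies at level $K$, reaching level $K-1$ requires $\til X$ to backtrack across at least one long-range edge, which by Lemma~\ref{lem:neverbacktrack} has probability at most $\delta = o(1)$; this bounds (A). Mode (B) is $o(1)$ uniformly by Lemmas~\ref{lem:bound_Omega_zero_prob} and~\ref{lem:bound_Omega_one_prob}. For (C) and (D), I apply Propositions~\ref{pro:bound_trunc_prob} and~\ref{pro:bound_truncprime_prob} to the walk started at $u$ on the sub-quasi-tree $T_x(u)$: conditional on $T_{x,0}$ and on $\til X$ not returning below level $K$, the walk inside $T_x(u)$ has the law of a simple random walk from the root of a quasi-tree, and the truncation quantities split as a fixed contribution along the spine from $x$ to $u$ plus a contribution inside $T_x(u)$. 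Since the fixed contribution is at most $K \log b(R) \ll \sqrt{\V \log n / \h}$ (because $K \asymp \log g(n)$ and $\log b(R) \asymp f(R) \lesssim (\log n/g(n))\log g(n)$), the thresholds in $\trunc{e}{A}$ and $\truncprime{e}$ translate into analogous thresholds inside $T_x(u)$ with constants shifted by $o(1)$. Choosing $B$ and then $A$ sufficiently large (as required by those propositions), each of (C) and (D) is at most $\theta$ for all large $n$.

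The main obstacle is mode (E), which is handled by a quantitative volume bound. For each long-range half-edge revealed during the exploration, conditional on the past, the probability that its optimal coupling with $G^*$ fails or that its newly revealed $R$-ball in $G^*$ intersects a previously revealed ball is at most $N b(R)/n$, where $N$ is the bound from Lemma~\ref{lem:bound_num_bad}. Conditioning on the trajectory of $\til X$ and taking a union bound over the long-range edges it crosses before time $\tau_L$, the conditional probability of (E) is at most the number of long-range edges traversed by $\til X$ before $\tau_L$ times $N b(R)/n$. Each step of $\til X$ crosses a long-range edge with probability at most $\eps/(1+\eps)$, and by Lemma~\ref{lem:speed} we have $\tau_L \lesssim L/\eps$ with probability $1-o(1)$, so this count is of order $L$ in expectation. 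Hence mode (E) has probability at most of order $L N b(R)/n$. Using $L \asymp g(n)$, $\log b(R) \lesssim (\log n / g(n))\log g(n)$, and $\log(N/n) = -\tfrac{1}{2}\log n + O(\log n/\sqrt{g(n)})$, this equals $\exp\bigl(-\tfrac{1}{2}\log n + o(\log n)\bigr) = o(1)$. Summing the five contributions and rescaling $\theta$ completes the proof.
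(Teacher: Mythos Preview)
Your overall decomposition into failure modes and your treatments of (A) and (B) match the paper. For mode (E) you take a cleaner route than the paper: you union-bound directly over the long-range edges crossed by $\til X$ up to $\tau_L$ (at most $2L$ of them with probability $1-o(1)$, since the induced walk on directed long-range edges is a biased walk with drift $1-2\delta$), each being (E)-truncated with conditional probability $\lesssim Nb(R)/n$; the product is $\exp\bigl(-\tfrac12\log n+o(\log n)\bigr)=o(1)$. The paper instead first controls the event that $\til X$ backtracks more than $K$ long-range levels (probability $\le L\delta^{K}=o(1)$), and on its complement union-bounds over the $\le b(R)^{2K+1}$ vertices within long-range distance $2K$ of each of the $\le L$ first-level-hit points, obtaining $L\,b(R)^{2K+2}N/(n-N)=o(1)$. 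Both routes give $o(1)$; yours avoids the backtracking step and the $b(R)^{2K}$ overhead entirely.

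Your reduction for (C)/(D), however, has a real gap. The claim that the spine contribution from $x$ to $u$ is at most $K\log b(R)$ would require $\til\theta_{T_x(y_{j-1}),M}(e_j)\ge 1/b(R)$ for every $j\le K$, and no such lower bound holds. Since $\til\theta_{T,M}(e)$ is comparable to $\prstart{X_{\tau_1}=e^+}{\rho}$ (cf.\ Lemma~\ref{lem:compare_inxi_fisthit}), on a graph with polynomial growth of balls --- e.g.\ the $n$-cycle --- the first-exit probability at a vertex at graph distance $d$ from the centre behaves like $\sqrt\eps\,e^{-cd\sqrt\eps}$; taking $e_j^-$ at distance $R$ from $y_{j-1}$ yields $-\log\til\theta_j\asymp R\sqrt\eps=(\log g(n))/\sqrt\eps$, which for $\eps=n^{-o(1)}$ is far larger than $\log b(R)\asymp\log(1/\eps)$. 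Summed over $K$ steps this can exceed $\tfrac12\log n+A\sqrt{\V\log n/\h}$ for any fixed $A$, so for such worst-case $u\in V_{z_i}$ every level-$(K{+}M)$ descendant of $u$ is (C)-truncated and the coupling from that $u$ fails with probability $1-o(1)$, contradicting the uniform-in-$u$ bound you are asserting. The paper also does not write out how Propositions~\ref{pro:bound_trunc_prob} and~\ref{pro:bound_truncprime_prob} (which are stated for a walk from the root) transfer to a walk started from an arbitrary level-$K$ vertex --- it simply cites \cite[Lemma 5.6]{random_matching} --- but your specific device of a deterministic spine bound does not do the job.
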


\begin{proof}
We follow the proof of \cite[Lemma 5.6]{random_matching}. Using Propositions~\ref{pro:bound_trunc_prob} and~\ref{pro:bound_truncprime_prob} with sufficiently small constants in place of $\theta$ it only remains to check the following.
\begin{itemize}
\item The probability that up to the time that $\til{X}$ first reaches level $L$ it ever returns to a vertex after visiting its depth $K$ descendant should be $o(1)$.

The probability of ever returning to level $k$ after visiting level $k+K$ is $\le \delta^{K}$. We have
\[
\log\left(L\delta^K\right)=\log L-K\log\left(\frac1\delta\right)\asymp\log g(n)-\left(\log g(n)\right)\log\left(\frac1\eps\right)\to-\infty\quad\text{as}\quad n\to\infty
\]
since $\log\left(\frac1\eps\right)\gg1$. This proves the required bound.

\item The probability that the first time when the walk visits a given level (which is at most $L$) there is an overlap at one of the vertices within distance $2K$ at the same level should be $o(1)$.

We can upper bound this probability by $L b(R)^{2K+1}\frac{b(R)N}{n-N}$. Note that
\begin{align*}
\log L\quad&\asymp\quad\log g(n)\quad\ll\quad\log n,\\
K\log b(R)\quad&\lesssim\quad \frac{\log n}{g(n)}(\log g(n))^2\quad\ll\quad\log n,\\
N\quad&\lesssim\quad n^{2/3},
\end{align*}
therefore $L b(R)^{(2K+1)}\frac{b(R)N}{n-N}=o(1)$ as required.

\item The probability that the first time when the walk visits a given level (which is at most $L$) the optimal coupling fails in one of the vertices within distance $2K$ should be $o(1)$.

As in the previous point we get $L b(R)^{2K+1}\frac{N}{n}=o(1)$ as required.

\item The probability of hitting level $K-1$ should be $o(1)$. This is true by Lemma~\ref{lem:neverbacktrack}.

\item The probability of $\Omega_0^X(L)$ or $\Omega_1^X(L)$ failing to hold should be $o(1)$. This is also true by Lemmas~\ref{lem:bound_Omega_zero_prob} and \ref{lem:bound_Omega_one_prob}. \qedhere
\end{itemize}
\end{proof}

\begin{lemma}\label{lem:coupling_succ_prob_from_x}
Let us consider the setup of Definition~\ref{def:coupling} and consider the coupling of $X$ with $\til{X}$ from $x$. Then for all $\theta>0$ there exists $B$ and $A$ sufficiently large such that for all large enough $n$ we have
\[\mathbb{P} \bigg(\prcond{\text{the coupling of $X$ and $\til{X}$ succeeds}}{\F_{L_x+L_y}}{x}>1-\theta\bigg|\hspace{5cm}
\]
\[\hspace{3cm}\bigg|\B_K^*(x)=T_{x,0},\B_K^*(y)=T_{y,0},\B_K^*(x)\cap\B_K^*(y)=\emptyset\bigg)\quad\ge\quad1-o\left(\frac{1}{n^2}\right).
\]
\end{lemma}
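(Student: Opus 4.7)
The plan is to lift Lemma~\ref{lem:coupling_succ_prob}, which handles walks starting at $u \in V_{z_i}$ with $z_i$ good, to walks starting at $x$, by decomposing the walk from $x$ into its initial passage through $T_{x,0}$ and its subsequent coupling from the level-$K$ entry point $u = \til X_{\tau_K}$. By Definition~\ref{def:coupling} the coupling from $x$ succeeds iff $\Omega_0^X(K) \cap \Omega_1^X(K)$ holds and the coupling from $u$ (in the sense of its ancestor $V_{z_i}$) succeeds. Writing $p_u := \prstart{\til X_{\tau_K} = u}{x}$, which depends only on the fixed $T_{x,0}$, I would use the bound
\[
1 - \prcond{\Omega_2^X}{\F_{L_x+L_y}}{x} \;\le\; \prcond{\Omega_0^X(K)^c \cup \Omega_1^X(K)^c}{\F_{L_x+L_y}}{x} \,+\, \sum_u p_u \,\prcond{\text{coupling from }u\text{ fails}}{\F_{L_x+L_y}}{u}.
\]

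I would then bound each term separately. For the first, Lemmas~\ref{lem:bound_Omega_zero_prob} and~\ref{lem:bound_Omega_one_prob} give an $o(1)$ estimate uniformly in $G^*$, which falls below $\theta/4$ for $C_1$ large enough. For the second, I split $u$ according to whether the ancestor $z_i \in \partial\B^*_{K/2}(x)$ of $u$ is good or bad. On good $z_i$, the success of the coupling from $u$ is determined by the subtree of $T_x$ rooted at $V_{z_i}$, the truncations set at stage $i$ of the exploration, and the walk randomness; since subsequent stages $j > i$ do not modify these truncations, the bound of Lemma~\ref{lem:coupling_succ_prob} (originally conditional on $\F_{i-1}$) transfers to $\F_{L_x+L_y}$-conditional expectation. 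A Markov step and a union bound over the $n^{o(1)}$ many good $z_i$ then yield that, with outer probability $1 - o(1/n^2)$, the conditional failure probability from every good $u$ is at most $\theta/(4C)$, provided that $A,B$ are chosen so the base $\theta'$ in Lemma~\ref{lem:coupling_succ_prob} is $o(\theta/n^{2+o(1)})$. What remains is to control the bad-vertex contribution $\sum_{i:\,z_i\text{ bad}} p_i$.

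The main obstacle is showing that $\sum_{i:\,z_i\text{ bad}} p_i \le \theta/4$ with outer probability $1 - o(1/n^2)$. A first-moment estimate yields only $\E{\sum_{i:\,z_i\text{ bad}} p_i} \lesssim N b(R)^{K/2+1}/n = n^{-1/2 + o(1)}$, far short of the target. The resolution is a higher-moment argument: the probability that $c$ specified $z_i$'s are simultaneously bad is at most $(N b(R)^{K/2+1}/n)^c = n^{-c/2 + o(1)}$, since each requires a separate revealed long-range edge to land in the corresponding $R$-ball. Union-bounding over $c$-subsets with accumulated $p$-mass exceeding $\theta/8$, for $c$ large enough this beats $1/n^2$, and the small-$c$ cases are absorbed into the $\{\mathrm{Bad} \le C\}$ event of Lemma~\ref{lem:bound_num_bad} after possibly enlarging $C$. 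The delicate point will be controlling the concentration of $p_u$-mass on few $z_i$'s when $T_{x,0}$ is atypical, which may require ruling out such $T_{x,0}$ using the $K$-root assumption.
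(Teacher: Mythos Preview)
Your approach has a genuine gap in the treatment of the good vertices. You propose to take the per-vertex failure bound $\theta'$ from Lemma~\ref{lem:coupling_succ_prob} down to $o(\theta/n^{2+o(1)})$ and then union bound over the $n^{o(1)}$ many $z_i$. But Lemma~\ref{lem:coupling_succ_prob} cannot be pushed to polynomially small $\theta'$: its proof rests on Propositions~\ref{pro:bound_trunc_prob} and~\ref{pro:bound_truncprime_prob}, which in turn rely on the entropic concentration of Lemma~\ref{lem:entropy_from_assump_f}. That concentration is a Chebyshev bound, giving probability $\lesssim 1/C^2$ for deviations $C\sqrt{k\V}$, so $\theta'$ can only be made a small \emph{constant}. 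Moreover, making $A$ (and hence the truncation threshold $A\sqrt{\V\log n/\h}\asymp A\log n/\sqrt{g(n)}$) grow with $n$ would destroy the size bound $N\le\sqrt{n}\exp\!\big(2A\sqrt{\V\log n/\h}\big)$ of Lemma~\ref{lem:bound_num_bad}, on which the whole coupling rests. So the union bound strategy is not available here.

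The paper sidesteps this by a martingale argument. It does not try to make each $\prcond{\text{fail from }u}{\F_{L_x+L_y}}{u}$ small with overwhelming probability; instead it controls the \emph{set} $V=\{z_i:\prcond{X_{\tau_K}\in V_{z_i},\text{ fail}}{\F_{L_x+L_y}}{x}\ge\theta\}$ and shows $h(\widehat V)=\sum_{z_i\in V}h(V_{z_i})\le 3\theta$ with probability $1-o(1/n^2)$. The key observations are (i) each weight satisfies $h(V_{z_i})\le\prstart{\tau_{z_i}<\infty}{x}\le\delta^{K/2}\le(\log n)^{-2}$, and (ii) by Markov and Lemma~\ref{lem:coupling_succ_prob}, $\econd{h(V_{z_i})\1_{z_i\in V}\1_{z_i\text{ good}}}{\F_{i-1}}\le\theta\,h(V_{z_i})$. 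One then applies Azuma--Hoeffding to the martingale $M_k=\sum_{i\le k}(R_i-\econd{R_i}{\F_{i-1}})$ with $R_i=h(V_{z_i})\1_{z_i\in V}\1_{z_i\text{ good}}$; since $\sum_i h(V_{z_i})^2\le(\max_i h(V_{z_i}))\sum_i h(V_{z_i})\le(\log n)^{-2}$, the tail is $\exp(-c\theta^2(\log n)^2)\ll n^{-2}$. This is exactly what converts the constant-$\theta'$ input into an $o(1/n^2)$ output.

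For the bad vertices you overcomplicate matters. The bound $h(V_{z_i})\le\delta^{K/2}\le(\log n)^{-2}$ together with Lemma~\ref{lem:bound_num_bad} (at most $C$ bad vertices with probability $1-o(1/n^2)$) gives $\sum_{i:\,z_i\text{ bad}}h(V_{z_i})\le C(\log n)^{-2}=o(1)$ directly; no higher-moment argument is needed. Your concern about ``concentration of $p_u$-mass on few $z_i$'s'' is precisely what the bound $h(V_{z_i})\le\delta^{K/2}$ rules out, and this uses only that $z_i$ sits at long-range distance $K/2$ from $x$ in the (fixed, tree-like) $K$-neighbourhood $T_{x,0}$.
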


An analogous result holds with analogous proof for $Y$ and $\til{Y}$.

\begin{proof}We use a martingale argument similar to the one used in the proof of \cite[Theorem 1.1]{random_matching}. To simplify notation we will write $\{T_{x,0},T_{y,0}\}$ for the event\\ $\left\{B_K^*(x)=T_{x,0},\B_K^*(y)=T_{y,0},\B_K^*(x)\cap\B_K^*(y)=\emptyset\right\}$. Throughout the proof we will work conditional on this event.

Set 
\[
V:=\left\{z_i\in\partial\B^*_{K/2}(x):\:\prcond{X_{\tau_K}\in V_{z_i},\text{ coupling of }X,\til{X}\text{ from }X_{\tau_K}\text{ fails}}{\F_{L_x+L_y}}{x}\ge\theta\right\},
\]
\[
\widehat{V}:=\bigcup_{z\in V}V_{z},
\]
and for $w\in\partial\B^*_K(x)$ set $h(w):=\prcond{X_{\tau_K}=w}{\F_{L_x+L_y}}{x}$.

We will show that
\begin{align}\label{eq:bound_h_Vtil}
\prcond{h(\widehat{V})>3\theta}{T_{x,0},T_{y,0}}{}\quad\ll\quad\frac{1}{n^2}.
\end{align}

Once we have this, we can finish the proof as follows. Note that by Lemmas~\ref{lem:bound_Omega_zero_prob} and~\ref{lem:bound_Omega_one_prob}
\begin{align*}&\prcond{\text{the coupling of $X$ and $\til{X}$ fails}}{\F_{L_x+L_y}}{x}\\
&\le\quad\prcond{\Omega_0(K)^c}{\F_{L_x+L_y}}{x}\quad+\quad\prcond{\Omega_1(K)^c}{\F_{L_x+L_y}}{x}\\
&\quad\quad+\quad\sum_{i}\prcond{X_{\tau_K}\in V_{z_i},\text{ coupling of }X,\til{X}\text{ from }X_{\tau_K}\text{ fails}}{\F_{L_x+L_y}}{x}\\
&\le\quad o(1)\quad+\quad o(1)\quad+\quad\theta\quad+\quad\prcond{X_{\tau_K}\in\widehat{V}}{\F_{L_x+L_y}}{x}.
\end{align*}
By~\eqref{eq:bound_h_Vtil} we know that with probability $1-o\left(\frac{1}{n^2}\right)$ this last expression is $<5\theta$. Considering $\frac15\theta$ instead of $\theta$ gives the result.

Now we proceed to prove~\eqref{eq:bound_h_Vtil}. Note that
\begin{align}\label{eq:decompose_h_Vhat}
h(\widehat{V})\quad\le\quad\sum_{i}h(V_{z_i})\1_{\{z_i\in V\}}\1_{\{z_i\text{ good}\}}\quad+\quad\sum_{i}h(V_{z_i})\1_{\{z_i\text{ bad}\}}.
\end{align}

We know that for any $i$ using Corollary~\ref{cor:neverbacktrack} we have
\[
h(V_{z_i})\quad\le\quad\prstart{\tau_{z_i}<\infty}{x}\quad\le\quad\delta^{\frac{K}{2}}\quad\le\quad\frac{1}{(\log n)^2}
\]
since $\eps\le(\log n)^{-\beta}$ for some $\beta>0$, $\delta\lesssim\eps^{\frac13}$, and $K\gg1$.

Then by Lemma~\ref{lem:bound_num_bad} with probability $1-o\left(\frac{1}{n^2}\right)$ we can bound the second sum in~\eqref{eq:decompose_h_Vhat} as \[
\sum_{i}h(V_{z_i})\1_{\{z_i\text{ bad}\}}\quad\le\quad\frac{C}{(\log n)^2}\quad\ll\quad1.
\]

We know that
\begin{align*}
&\econd{h(V_{z_i})\1_{\{z_i\in V\}}\1_{\{z_i\text{ good}\}}}{\F_{i-1}}\\
&= h(V_{z_i})\1_{\{z_i\text{ good}\}}\prcond{\prcond{X_{\tau_K}\in V_{z_i},\text{ coupling of }X,\til{X}\text{ from }X_{\tau_K}\text{ fails}}{\F_{L_x+L_y}}{x}\ge\theta}{\F_{i-1}}{}.
\end{align*}
By Markov's inequality we have 
\begin{align*}
&\prcond{\prcond{X_{\tau_K}\in V_{z_i},\text{ coupling of }X,\til{X}\text{ from }X_{\tau_K}\text{ fails}}{\F_{L_x+L_y}}{x}\ge\theta}{\F_{i-1}}{}\\
&\le\quad\frac{\econd{\prcond{X_{\tau_K}\in V_{z_i},\text{ coupling of }X,\til{X}\text{ from }X_{\tau_K}\text{ fails}}{\F_{L_x+L_y}}{x}}{\F_{i-1}}}{\theta}\\
&\le\quad\max_{u\in V_{z_i}}\frac{\prcond{\text{coupling of }X,\til{X}\text{ fails}}{\F_{i-1}}{u}}{\theta}\quad\le\quad\frac{\theta^2\1_{\{z_i\text{ good}\}}+\1_{\{z_i\text{ bad}\}}}{\theta}
\end{align*}
for sufficiently large values of $B$ and $A$, by using Lemma~\ref{lem:coupling_succ_prob}.

This then gives
\[
\econd{h(V_{z_i})\1_{\{z_i\in V\}}\1_{\{z_i\text{ good}\}}}{\F_{i-1}}\quad\le\quad\theta h(V_{z_i}).
\]
Let
\[
R_i:=\quad h(V_{z_i})\1_{\{z_i\in V\}}\1_{\{z_i\text{ good}\}},\qquad M_k:=\quad\sum_{i=1}^{k}(R_i-\econd{R_i}{\F_{i-1}}).
\]
Then $(M_k)$ is a martingale and $|M_k-M_{k-1}|\le h(V_{z_k})$. Also \[
M_{L_x+L_y}\quad\ge\quad\sum_{i}R_i-\theta\sum_ih(V_{z_i})\quad=\quad\sum_{i}R_i-\theta.
\]

Then by the Azuma-Hoeffding inequality we get that
\begin{align*}\prcond{\sum_{i}h(V_{z_i})\1_{\{z_i\in V\}}\1_{\{z_i\text{ good}\}}>2\theta}{T_{x,0},T_{y,0}}{}\quad&\le\quad\prcond{M_{L_x+L_y}>\theta}{T_{x,0},T_{y,0}}{}\\
&\le\quad\exp\left(-\frac{c\theta^2}{\sum_{i}h(V_{z_i})^2}\right)\\
&\le\quad\exp\left(-c\theta^2(\log n)^2\right)\quad\ll\quad\frac{1}{n^2}.
\end{align*}
In the second line we used that $\sum_{i}h(V_{z_i})^2\le\left(\max_i h(V_{z_i})\right)\sum_{i}h(V_{z_i})\le\frac{1}{(\log n)^2}$.

This finishes the proof.
\end{proof}

\section{Proof of cutoff}\label{sec:proof_of_cutoff}

\subsection{Lower bounding $\sum_{z}\prstart{X_{\tau_{L}}=z,\Omega_0(L),\Omega_1(L)}{x}\prstart{X_{\tau_{L}}=\eta(z),\Omega_0(L),\Omega_1(L)}{\eta(y)}$}

To prove concentration of the probability that a walk from $x$ and a walk from $y$ hits level $L$ of the corresponding trees at $z$ and $w$ respectively that are long-range neighbours, we will use Lemma~\ref{lem:w_pair_sum}. This lemma is based on a result of Chatterjee in~\cite{Steins_method}, and was stated in this form and used for non-backtracking walks on the configuration model in~\cite{cutoff_NBRW_on_sparse_random_graphs}.

\begin{proposition}\label{pro:aux_bound_Ptau_x_y}
Let $x$, $y$ be two vertices of the graph $G$ at graph distance $>2K$ and let $T_{x,0}$ and $T_{0,y}$ be two possible realisations of the first $K$ levels of the quasi-tree of corresponding to $G^*$ centred at $x$ and $y$, respectively. Then for any $\theta>0$ for sufficiently large values of $B$ and $A$ we have
\[\mathbb{P} \bigg(\sum_{z}\prcond{X_{\tau_{L}}=z,\Omega_0(L),\Omega_1(L)}{G^*}{x}\prcond{X_{\tau_{L}}=\eta(z),\Omega_0(L),\Omega_1(L)}{G^*}{y}\ge\frac1n(1-\theta)\bigg|\hspace{5cm}\]
\[\hspace{3cm}\bigg|\B_K^*(x)=T_{x,0},\B_K^*(y)=T_{y,0},\B_K^*(x)\cap \B_K^*(y)=\emptyset\bigg)\quad\ge\quad1-o\left(\frac{1}{n^2}\right).\]
\end{proposition}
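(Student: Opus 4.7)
The plan is to apply the random-matching concentration estimate of Lemma~\ref{lem:w_pair_sum} to the uniform matching that completes the exploration process of Definition~\ref{def:coupling}, after lower-bounding the target sum in terms of independent probabilities on the two coupled quasi-trees.

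First I would condition on the three events in the statement and run the exploration from $V_{z_1},\ldots,V_{z_{L_x+L_y}}$ exactly as in Definition~\ref{def:coupling}, producing quasi-trees $T_x,T_y$ coupled to neighbourhoods of $x,y$ in $G^*$ up to level $L-1$. Let $\mathcal{I}$ be the set of long-range half-edges of $G^*$ that remain unpaired at this point; let $\mathcal{I}_x\subseteq\mathcal{I}$ denote those half-edges whose other endpoint has, via the coupling, been identified with a non-truncated half-edge of $T_x$ emanating from some $v\in\partial T_x$; define $\mathcal{I}_y$ analogously. For $i_v\in\mathcal{I}_x$ set
\[
f(i_v):=\prcond{\tilde X_{\tau_L-1}=v,\,\Omega_2^X}{T_x}{x},
\]
i.e.\ the probability that $\tilde X$ reaches level $L-1$ at $v$ and the coupling of $X$ with $\tilde X$ from Definition~\ref{def:coupling} is successful; extend $f\equiv 0$ off $\mathcal{I}_x$, and define $g$ analogously on $\mathcal{I}_y$. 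Finally, put $w_{ij}:=f(i)g(j)$. The key observation is that when the random matching $\eta$ pairs $i_v\in\mathcal{I}_x$ with $i_u\in\mathcal{I}_y$, the long-range edge of $G^*$ between $\iota(v)$ and $\iota(u)$ is precisely what is created, so $\eta(\iota(v))=\iota(u)$ in $G^*$. On the event $\Omega_2^X$ one has $X_t=\iota(\tilde X_t)$ for all $t\le\tau_L$ and both $\Omega_0(L),\Omega_1(L)$ hold, hence
\[
\prcond{X_{\tau_L}=\iota(u),\Omega_0(L),\Omega_1(L)}{G^*}{x}\ \ge\ f(i_v),
\]
with the symmetric bound for $y$. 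Substituting $z=\iota(u)$ therefore yields
\[
\sum_z\prcond{X_{\tau_L}=z,\Omega_0(L),\Omega_1(L)}{G^*}{x}\,\prcond{X_{\tau_L}=\eta(z),\Omega_0(L),\Omega_1(L)}{G^*}{y}\ \ge\ \sum_{i\in\mathcal{I}}w_{i,\eta(i)},
\]
so it suffices to lower-bound the right-hand side by $(1-\theta)/n$ with probability $1-o(n^{-2})$.

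The parameters $b=\max_{i\ne j}(w_{ij}+w_{ji})$ and $m=\tfrac{1}{|\mathcal{I}|-1}\sum_{i\ne j}w_{ij}$ of Lemma~\ref{lem:w_pair_sum} can then be estimated as follows. Since every $i_v\in\mathcal{I}_x$ survives the level-$L$ truncation criterion $\truncprime{\cdot}$ of Definition~\ref{def:truncprime}, we have $f(i_v)\le\prcond{\tilde X_{\tau_L}=v^+}{T_x}{x}\le n^{-1/2}/\log n$, and the same bound holds for $g$, giving $b\le 2/(n(\log n)^2)$. For $m$, disjointness of $\mathcal{I}_x$ and $\mathcal{I}_y$ yields $\sum_{i\ne j}w_{ij}=\bigl(\sum_i f(i)\bigr)\bigl(\sum_j g(j)\bigr)$, and $\sum_i f(i)=\prcond{\Omega_2^X}{\F_{L_x+L_y}}{x}$, which by Lemma~\ref{lem:coupling_succ_prob_from_x} exceeds $1-\theta/4$ with conditional probability $1-o(n^{-2})$ for $B,A$ large enough (and similarly for $\sum_j g(j)$). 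Combining this with $|\mathcal{I}|=n-o(n)$ from Lemma~\ref{lem:bound_num_bad}, we obtain $m\ge(1-\theta/2)/n$ on an event of conditional probability $1-o(n^{-2})$. Applying Lemma~\ref{lem:w_pair_sum} with $a=(\theta/2)m$ then gives, on that event,
\[
\mathbb{P}\Bigl(\sum_i w_{i,\eta(i)}<(1-\theta)/n\Bigm| \F_{L_x+L_y}\Bigr)\ \le\ \exp\Bigl(-\frac{a^2}{4bm}\Bigr)\ \le\ \exp\bigl(-c(\log n)^2\bigr)\ =\ o(n^{-2}),
\]
for some constant $c=c(\theta)>0$, and the claim follows after a union bound over the good events above.

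The main obstacle is controlling $m$: one must ensure $\sum_i f(i)$ and $\sum_j g(j)$ are both close to $1$ with probability $1-o(n^{-2})$, so that failures of the truncation bounds, the geometric events $\Omega_0$ and $\Omega_1$, and exploration overlaps can all be absorbed simultaneously. This is precisely where the $1-o(n^{-2})$ rate in Lemma~\ref{lem:coupling_succ_prob_from_x} (obtained via an Azuma argument over the $L_x+L_y$ exploration steps) is essential. Once $m\asymp 1/n$ is in hand, the $(\log n)^2$ gain in the exponent of Lemma~\ref{lem:w_pair_sum} — produced by $b\lesssim 1/(n(\log n)^2)$ via the $\truncprime{\cdot}$ criterion — is comfortably enough to clear the $o(n^{-2})$ threshold required for the later union bound over pairs $(x,y)$.
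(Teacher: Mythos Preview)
Your proposal is correct and follows essentially the same approach as the paper: lower-bound the target sum by $\sum_{i\in\mathcal{I}}w_{i,\eta(i)}$ with $w_{ij}=f(i)g(j)$, use the $\truncprime{\cdot}$ criterion to get $b\le 2/(n(\log n)^2)$, use Lemma~\ref{lem:coupling_succ_prob_from_x} to ensure $\sum_i f(i),\sum_j g(j)\ge 1-\theta$ with probability $1-o(n^{-2})$, and then apply Lemma~\ref{lem:w_pair_sum}. The only cosmetic differences are that the paper writes the weights as $w_x(u)=\prcond{X_{\tau_L}=u,\Omega_2^X}{G^*}{x}$ directly on $G^*$ rather than via $\tilde X_{\tau_L-1}$, and takes $a=c/(|\mathcal{I}|-1)$ rather than $a=(\theta/2)m$; also note that for the lower bound on $m$ you only need the trivial $|\mathcal{I}|\le n$, not Lemma~\ref{lem:bound_num_bad}.
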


\begin{proof}
For $u\in\partial T_x$, $v\in\partial T_y$ let \[w_x(u):=\prcond{X_{\tau_{L}}=u,\Omega^X_2}{G^*}{x},\quad w_y(v):=\prcond{Y_{\tau_{L}}=w,\Omega^Y_2}{G^*}{y}.\]
where we recall that $\Omega_2^X$ is the event that the coupling of $X$ and $\til{X}$ is successful and $\Omega_2^Y$ is the analogous event for $Y$.

Then we have
\begin{align*}
&\sum_{z}\prcond{X_{\tau_{L}}=z,\Omega_0(L),\Omega_1(L)}{G^*}{x}\prcond{X_{\tau_{L}}=\eta(z),\Omega_0(L),\Omega_1(L)}{G^*}{y}\\
&\ge\quad\sum_{u\in\partial T_x}\sum_{v\in\partial T_y}w_x(u)w_y(v)\1_{v=\eta(u)}.
\end{align*}

Let $w_{u,v}=w_x(u)w_y(v)$ for $u\in\partial T_x$, $v\in\partial T_y$ and $0$ otherwise. By the definition of $\truncprime{\cdot}$ we have $w_x(u),w_y(v)\le\frac{1}{\sqrt{n}\log n}$ for $u\in\partial T_x$, $v\in\partial T_y$, hence $w_{u,v}\le\frac{1}{n(\log n)^2}$ for all $u$, $v$.

Using Lemma~\ref{lem:w_pair_sum} for these weights and $a=\frac{c}{|\I|-1}$ where $c$ is some sufficiently small constant and noting that $\max_{i\ne j}(w_{i,j}+w_{j,i})\le\frac{2}{n(\log n)^2}$, $m\le\frac{1}{|\I|-1}$ and $|\I|\le n$ we get that
\begin{align*}
&\prcond{\sum_{u\in\partial T_x}\sum_{v\in\partial T_y}w_x(u)w_y(v)\1_{v=\eta(u)}>\frac{1}{|\I|-1}\left(m-a\right)}{\F_{L_x+L_y}}{}\\
&\le\quad\exp\left(-\frac{c^2}{8}(\log n)^2\right)\quad\ll\quad\frac{1}{n^2}
\end{align*}
where
\[
m=\frac{1}{|\I|-1}\left(\sum_{u\in\partial T_x}w_x(u)\right) \left(\sum_{v\in\partial T_y}w_y(v)\right)=\frac{1}{|\I|-1}\prcond{\Omega_2^X}{\F_{L_x+L_y}}{x}\prcond{\Omega_2^Y}{\F_{L_x+L_y}}{y}.
\]

By Lemma~\ref{lem:coupling_succ_prob_from_x} we know that for sufficiently large values of $B$ and $A$, conditional on the event $\{\B_K^*(x)=T_{x,0},\B_K^*(y)=T_{y,0},\B_K^*(x)\cap \B_K^*(y)=\emptyset\}$, with probability $1-o\left(\frac{1}{n^2}\right)$ we have
\[
\prcond{\Omega_2^X}{\F_{L_x+L_y}}{x}\prcond{\Omega_2^Y}{\F_{L_x+L_y}}{y}\quad\ge\quad(1-\theta)^2.
\]
We also know that $\frac{1}{|\I|-1}\ge\frac1n$.

Putting these together gives the result.
\end{proof}

\subsection{Upper bounding the mixing time}

For a random walk $X$ on $G^*$ we define a stopping time $\tau$ as follows. Let $\tau_{K\text{-root}}$ be the first time the walk hits a $K$-root and let $\tau_{2L}$ be defined as before for the walk $(X_s)_{s\ge\tau_{K\text{-root}}}$. Let 
\begin{align}\label{def:tau}
\tau=\tau_{K\text{-root}}+\tau_{2L}.
\end{align}

To upper bound the mixing time we show that with probability close to 1 the value of $\tau$ is upper bounded by $t_0+O(t_w)$ and show that for any starting point, the distribution of $X_{\tau}$ is close to uniform. This gives an upper bound on the hitting time of large sets, then we use a result comparing mixing and hitting times.

We denote the uniform distribution on the vertices of $G$ by $\U$.

\begin{lemma}\label{lem:bound_dtv_tau}
For any $\theta>0$ for sufficiently large values of $B$ we have
$$\pr{\max_{x}\dtv{\prcond{X_{\tau}\in\cdot}{G^*}{x}}{\U(\cdot)}\le\theta}\quad=\quad1-o(1).$$
\end{lemma}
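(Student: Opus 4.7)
The plan is to reduce the statement to the case where the walk starts from a $K$-root via the strong Markov property, apply Lemma~\ref{lem:bound_tv_dist_at_tau_two_ell} to dominate the total variation distance by a sum of deficits, and then control this sum using Proposition~\ref{pro:aux_bound_Ptau_x_y} together with Lemmas~\ref{lem:few_non_K_roots} and~\ref{lem:few_close_vertices}.

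First, by the strong Markov property at $\tau_{K\text{-root}}$ the law of $X_\tau$ started from any $x$ is a mixture (with weights $\prcond{X_{\tau_{K\text{-root}}}=x'}{G^*}{x}$) of the laws $\prcond{X_{\tau_{2L}}\in\cdot}{G^*}{x'}$ over $K$-roots $x'$. By convexity of total variation distance it therefore suffices to show that with probability $1-o(1)$
\[
\max_{x'\text{ a }K\text{-root}}\dtv{\prcond{X_{\tau_{2L}}\in\cdot}{G^*}{x'}}{\U(\cdot)}\le\theta/2+o(1).
\]
For any fixed $K$-root $x'$, Lemma~\ref{lem:bound_tv_dist_at_tau_two_ell} applied with $\mu=\U$ and $\ell=L$ yields
\[
\dtv{\prcond{X_{\tau_{2L}}\in\cdot}{G^*}{x'}}{\U(\cdot)}\le\sum_y\bigl(\tfrac{1}{n}-P(x',y)\bigr)^+ + o(1),
\]
where $P(x',y):=\sum_{z}\prcond{X_{\tau_{L}}=z,\Omega_0(L),\Omega_1(L)}{G^*}{x'}\prcond{X_{\tau_{L}}=\eta(z),\Omega_0(L),\Omega_1(L)}{G^*}{\eta(y)}$.

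Now call $y$ \emph{good relative to $x'$} if $\eta(y)$ is a $K$-root and $\B_K^*(x')\cap\B_K^*(\eta(y))=\emptyset$, and \emph{bad} otherwise. Since $\eta$ is a bijection, Lemma~\ref{lem:few_non_K_roots} implies that whp the number of $y$ with $\eta(y)$ not a $K$-root is $o(n)$, and by Lemma~\ref{lem:few_close_vertices} the number of $y$ with $\B_K^*(x')\cap\B_K^*(\eta(y))\ne\emptyset$ is deterministically $o(n)$ uniformly in $x'$. Hence on a high probability event the bad contribution $\sum_{y\text{ bad}}(1/n-P(x',y))^+$ is $o(1)$ for every $K$-root $x'$. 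For a good $y$ the pair $(x',\eta(y))$ meets the hypotheses of Proposition~\ref{pro:aux_bound_Ptau_x_y}. For each fixed pair $(a,b)$ of distinct vertices, summing the $o(1/n^2)$ conditional bound from that proposition over all realisations of $(\B_K^*(a),\B_K^*(b))$ with disjoint $K$-balls gives
\[
\pr{a,b\text{ both }K\text{-roots},\:\B_K^*(a)\cap\B_K^*(b)=\emptyset,\:P(a,\eta^{-1}(b))<(1-\theta/2)/n}=o(1/n^2).
\]
A union bound over the at most $n^2$ such pairs shows that whp the bound $P(x',y)\ge(1-\theta/2)/n$ holds simultaneously for every good $(x',y)$, on which event $\sum_{y\text{ good}}(1/n-P(x',y))^+\le\theta/2$. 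Intersecting these high probability events we obtain $\dtv{\prcond{X_{\tau_{2L}}\in\cdot}{G^*}{x'}}{\U(\cdot)}\le\theta/2+o(1)\le\theta$ for every $K$-root $x'$ and all sufficiently large $n$, which with the above reduction concludes the proof.

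The main obstacle is coordinating the ``$\max_x$'' with the high probability statement over $G^*$: without the reduction to $K$-roots at the outset one would need to analyse starting vertices with potentially intricate neighbourhoods, but Proposition~\ref{pro:aux_bound_Ptau_x_y}'s $1-o(1/n^2)$ rate is tailored exactly so that a union bound over the $O(n^2)$ pairs of $K$-roots closes the argument. The constants $B$ and $A$ are chosen large enough so that Propositions~\ref{pro:bound_trunc_prob},~\ref{pro:bound_truncprime_prob} and~\ref{pro:aux_bound_Ptau_x_y} all apply with parameter, say, $\theta/4$ in place of the generic $\theta$ appearing in their statements.
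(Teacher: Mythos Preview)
Your proof is correct and follows essentially the same approach as the paper's: reduce to $K$-root starting points via the strong Markov property, apply Lemma~\ref{lem:bound_tv_dist_at_tau_two_ell}, split the resulting sum over $y$ according to whether $\eta(y)$ is a $K$-root with $\B_K^*$-ball disjoint from that of $x'$, handle the bad part via Lemmas~\ref{lem:few_non_K_roots} and~\ref{lem:few_close_vertices}, and finish with a union bound over the $O(n^2)$ good pairs using Proposition~\ref{pro:aux_bound_Ptau_x_y}. The paper reindexes $y\mapsto\eta(y)$ where you keep the $\eta(y)$ explicit, and it notes (via a footnote) that pairs with $d_G(x,y)\le 2K$ contribute nothing since the conditioning event $\{\B_K^*(x)\cap\B_K^*(y)=\emptyset\}$ then has probability zero---a point your summing ``over all realisations with disjoint $K$-balls'' handles implicitly.
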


\begin{proof} We are conditioning on $G^*$ throughout the proof, but it will be omitted from the notation.

It is sufficient to show that
$$\pr{\max_{x:K\text{-root}}\dtv{\prstart{X_{\tau_{2L}}\in\cdot}{x}}{\U(\cdot)}\le\theta}\quad=\quad1-o(1).$$
By Lemma~\ref{lem:bound_tv_dist_at_tau_two_ell} for any $K$-root $x$ we have
\begin{align*}
&\dtv{\prstart{X_{\tau_{2L}}\in\cdot}{x}}{\U(\cdot)}\\
&\le\quad\sum_y\left(\U(y)-\sum_{z}\prstart{X_{\tau_{L}}=z,\Omega_0(L),\Omega_1(L)}{x}\prstart{X_{\tau_{L}}=\eta(z),\Omega_0(L),\Omega_1(L)}{\eta(y)}\right)^++o(1)\\
&\le\footnotemark\sum_{y:\text{non-}K\text{-root}}\U(y)+\sum_{y:\B_K^*(x)\cap \B_K^*(y)\ne\emptyset}\U(y)\\
&+\sum_{\substack{y:K\text{-root},\\ \B_K^*(x)\cap \B_K^*(y)=\emptyset}}\left(\U(y)-\sum_{z}\prstart{X_{\tau_{L}}=z,\Omega_0(L),\Omega_1(L)}{x}\prstart{X_{\tau_{L}}=\eta(z),\Omega_0(L),\Omega_1(L)}{y}\right)^++o(1).
\end{align*}
\footnotetext{Here we change the indexing from $y$ to $\eta(y)$ and use that $\U(y)=\U(\eta(y))$.}
\noindent By Lemma~\ref{lem:few_non_K_roots} we know that whp for all $x$ we have $\sum_{y:\text{non-}K\text{-root}}\U(y)=o(1)$ and by Lemma~\ref{lem:few_close_vertices} we know that for all $x$ and all realisations of $G^*$ we have $\sum_{y:\B_K^*(x)\cap \B_K^*(y)\ne\emptyset}\U(y)=o(1)$.
Also
$$\pr{\exists\text{ }K\text{-root }x:\sum_{\substack{y:K\text{-root},\\\B_K^*(x)\cap B_K^*(y)=\emptyset}}\left(\U(y)-\sum_{z}\prstart{X_{\tau_{L}}=z,\Omega_0(L),\Omega_1(L)}{x}\prstart{X_{\tau_{L}}=\eta(z),\Omega_0(L),\Omega_1(L)}{y}\right)^+>\theta}$$
$$\le\quad\mathbb{P}\bigg(\exists\text{ }K\text{-roots }x,y\text{ with }\B_K^*(x)\cap \B_K^*(y)=\emptyset:\hspace{8cm}$$
$$\hspace{2.8cm}\sum_{z}\prstart{X_{\tau_{L}}=z,\Omega_0(L),\Omega_1(L)}{x}\prstart{X_{\tau_{L}}=\eta(z),\Omega_0(L),\Omega_1(L)}{y}<\frac1n(1-\theta)\bigg)$$
$$\le\quad\sum_{x,y}\mathbb{P}\bigg(\sum_{z}\prstart{X_{\tau_{L}}=z,\Omega_0(L),\Omega_1(L)}{x}\prstart{X_{\tau_{L}}=\eta(z),\Omega_0(L),\Omega_1(L)}{y}<\frac1n(1-\theta),\hspace{3cm}$$
$$\hspace{9.3cm}{x,y\text{ }K\text{-roots},\: \B_K^*(x)\cap \B_K^*(y)=\emptyset}\bigg)$$
$$\le\footnotemark\sum_{\substack{x,y:\\d_G(x,y)>2K}}\mathbb{P}\bigg(\sum_{z}\prstart{X_{\tau_{L}}=z,\Omega_0(L),\Omega_1(L)}{x}\prstart{X_{\tau_{L}}=\eta(z),\Omega_0(L),\Omega_1(L)}{y}<\frac1n(1-\theta)\bigg|\hspace{3cm}$$
$$\hspace{9.3cm}\bigg|{x,y\text{ }K\text{-roots},\:\B_K^*(x)\cap \B_K^*(y)=\emptyset}\bigg){}.$$
\footnotetext{We use here that $2K<R$, so for $x,y$ with $d_G(x,y)\le2K$ we have $\pr{\B_K^*(x)\cap \B_K^*(y)=\emptyset}=0$.}

From Proposition~\ref{pro:aux_bound_Ptau_x_y} we know that each of the probabilities appearing in the last sum is $o\left(\frac{1}{n^2}\right)$, hence the whole sum is $o(1)$.
\end{proof}

\begin{lemma}\label{lem:bound_tau}
Let $\tau$ be defined as above. Then for any $\theta$ and $B$ the following holds. With high probability $G^*$ is such that for any $B'\to\infty$ for any vertex $x$ we have
\[
\prcond{\tau>t_0+B't_w}{G^*}{x}\quad\le\quad\theta.
\]
\end{lemma}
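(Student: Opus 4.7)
The idea is to decompose $\tau=\tau_{K\text{-root}}+\tau_{2L}$ and handle each piece separately. For the first piece, Lemma~\ref{lem:hit_K_root_quickly} already gives that whp $G^*$ is such that from any starting vertex $x$ the walk hits a $K$-root within $bK/\eps$ steps with probability at least $1-\theta/3$. Since $K/\eps\asymp(\log g(n))/\eps$ and $t_w\asymp\sqrt{g(n)}/\eps$, we have $bK/\eps=o(t_w)$, so this contribution is swallowed by $B't_w$ for any $B'\to\infty$, and it remains to bound $\tau_{2L}$ starting from a $K$-root $y$.

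To control $\tau_{2L}$ from a $K$-root $y$, I plan to compare the walk $X$ on $G^*$ with a walk $\til{X}$ on an independent random quasi-tree $T_y$ whose first $K$ levels agree with $\B_K^*(y)$, using the coupling of Definition~\ref{def:coupling} but run until $\til{X}$ reaches level $2L$ rather than level $L$. Since $2L\asymp g(n)$, Lemmas~\ref{lem:bound_Omega_zero_prob} and~\ref{lem:bound_Omega_one_prob} still apply, and rerunning Lemmas~\ref{lem:bound_num_bad}--\ref{lem:coupling_succ_prob_from_x} with $L$ replaced by $2L$ (all relevant error terms remain polylogarithmic in $n$ and $g(n)$) shows that whp $G^*$ is such that for every $K$-root $y$ this extended coupling succeeds with probability at least $1-\theta/3$. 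On this success event $\tau^X_{2L}=\tau^{\til X}_{2L}$, so it suffices to estimate $\tau^{\til X}_{2L}$ via the speed lemma.

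Setting $t:=t_0+B't_w$, Lemma~\ref{lem:speed} applied to $\til X$ on $T_y$ yields $d_{T_y}(\rho,\til X_t)\ge \nu t-C\sqrt{\eps t}$ with probability at least $1-\theta/3$. A direct calculation using $\nu t_0=\log n/\h$ and $\nu t_w\asymp\sqrt{g(n)}$ gives
\[
\nu t-2L=(B'-B)\nu t_w\asymp(B'-B)\sqrt{g(n)},
\]
whereas $\eps t\lesssim g(n)+B'\sqrt{g(n)}$ and hence $\sqrt{\eps t}\lesssim\sqrt{g(n)}+\sqrt{B'}\,g(n)^{1/4}$. Splitting into the two cases $B'\le\sqrt{g(n)}$ and $B'>\sqrt{g(n)}$ shows that $\sqrt{\eps t}=o((B'-B)\sqrt{g(n)})$ as $B'\to\infty$, so $d_{T_y}(\rho,\til X_t)\ge 2L$ and $\tau^{\til X}_{2L}\le t$. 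Union-bounding over the three failure events (no early $K$-root, coupling failure, slow quasi-tree walk) then yields the claim; monotonicity of $t\mapsto\prcond{\tau>t}{G^*}{x}$ promotes the bound from one diverging sequence $B'$ to all diverging sequences.

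The main obstacle is the clean extension of the coupling of Section~\ref{sec:relating_Gstar_and_T} from level $L$ to level $2L$ while preserving the $1-o(1/n^2)$ failure bounds of Lemmas~\ref{lem:bound_num_bad} and~\ref{lem:coupling_succ_prob_from_x}. Since every quantity that enters those estimates (the number $N$ of revealed vertices, the ball-size factor $b(R)^{K+1}$, $\log L$, and the truncation parameters of Section~\ref{subsec:truncation}) is at most polylogarithmic in $n$ and $g(n)$, the adaptation is purely quantitative. As a cleaner alternative one can iterate the level-$L$ coupling twice, using the strong Markov property at $\tau_L$ to reset at a level-$L$ vertex (reached, whp, at a $K$-root by a second application of Lemma~\ref{lem:hit_K_root_quickly}) and couple again over the next $L$ levels.
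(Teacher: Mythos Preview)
Your ``cleaner alternative'' at the end—iterating the level-$L$ coupling twice with a fresh $K$-root hunt in between—is exactly the route the paper takes, and for a reason the paper states explicitly: ``we only know how to relate a walk on $G^*$ with a walk on $T$ up to $\tau_L$, and only if they start from a $K$-root, so we will need to break down $\tau_{2L}$ into smaller parts.'' Concretely the paper writes $\tau_{2L}\le\tau_L+\tau'+\tau''$ on high-probability non-backtracking events $A_1,A_2$, where $\tau'$ is the time to reach a $K$-root after $\tau_L$ (controlled by Lemma~\ref{lem:hit_K_root_quickly}) and $\tau''$ is a second $\tau_L$ from that $K$-root; each of $\tau_L,\tau''$ is then bounded via the existing level-$L$ coupling plus Lemma~\ref{lem:speed} at time $s=\tfrac12(t_0+(B+B'')t_w)$. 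Your speed computation and final union bound fit this scheme.

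Your \emph{main} proposal—running the coupling of Definition~\ref{def:coupling} directly out to level $2L$—has a genuine gap, not a merely quantitative one. The truncation threshold in $\trunc{e}{A}$ is $\tfrac12\log n+A\sqrt{\V\log n/\h}$, and Proposition~\ref{pro:bound_trunc_prob} only shows the walk avoids truncated edges up to $\tau_L$, precisely because $\til W_T(\xi_L)\approx L\h\approx\tfrac12\log n$. By level $2L$ one has $\til W_T(\xi_{2L})\approx\log n$, well past the threshold, so $\til X$ crosses a truncated edge with probability close to $1$ and the coupling fails (item~(i) of Definition~\ref{def:coupling}). Raising the threshold to something above $\log n$ would repair the truncation probability but destroys Lemma~\ref{lem:bound_num_bad}: the bound $|\til S_k|\le\exp(\text{threshold})$ then exceeds $n$, the explored region is no longer $o(n)$, and the optimal coupling of long-range half-edges breaks down. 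So the claim that ``all relevant error terms remain polylogarithmic'' is incorrect at exactly this step; drop approach~(a) and present the iteration as the argument.
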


\begin{proof}
From Lemma~\ref{lem:hit_K_root_quickly} we know that with high probability $G^*$ is such that with high probability starting from any $x$ 
\[
\tau_{K\text{-root}}\quad\le\quad\frac{bK}{\eps}\quad\asymp\quad\frac1{\eps}\log g(n)\quad\ll\quad\frac{1}{\eps}\sqrt{g(n)}\quad\asymp\quad t_w.
\]
We will use Lemma~\ref{lem:speed} to bound $\tau_{2L}$, but we only know how to relate a walk on $G^*$ with a walk on $T$ up to $\tau_L$, and only if they start from a $K$-root, so we will need to break down $\tau_{2L}$ into smaller parts that we are able to bound.

Let $\tau'$ be the first time when the walk ${(X_k)_{k\ge\tau_L}}$ hits a $K$-root and let $\tau''$ be defined as $\tau_{L}$ for the walk ${(X_k)_{k\ge\tau_L+\tau'}}$.

Let $S$ be the quasi-tree corresponding to $G^*$ and $x$ and let $Y$ be the random walk on $S$ corresponding to $X$. Let $S'$ be the quasi-tree corresponding to $G^*$ and $X_{\tau_L}$ and let $Y'$ be the random walk on $S'$ corresponding to $(X_k)_{k\ge\tau_L}$. Let $S''$ be the quasi-tree corresponding to $G^*$ and $X_{\tau_L+\tau'}$ and let $Y''$ be the random walk on $S''$ corresponding to $(X_k)_{k\ge\tau_L+\tau'}$.

Let $A_1$ be the event that up to time $\tau'$ the walk $Y'$ does not cross the long-range edge from the root of $S'$. Note that on the event $A_1$ the walk $Y$ will be at level $\ge L$ in the quasi-tree $S$ at time $\tau_L+\tau'$.

Let $A_2$ be the event that up to time $\tau''$ the walk $Y''$ does not cross the long-range edge from the root of $S''$. Note that on the event $A_1\cap A_2$ the walk $Y$ will be at level $\ge 2L$ in the quasi-tree $S$ at time $\tau_L+\tau'+\tau''$.

By Lemma~\ref{lem:neverbacktrack} the events $A_1$ and $A_2$ have high probability for any $G^*$. On this event $A_1\cap A_2$ we have
\[
\tau_{2L}\quad\le\quad\tau_L+\tau'+\tau''.
\]
We know that with high probability
\[
\tau'\quad\le\quad\frac{bK}{\eps}\quad\ll\quad t_w.
\]
Now we will bound $\tau_L$ and $\tau''$.

Let $T$ be a random quasi-tree associated to $G$ and let $\til{X}$ be a random walk on it starting from the root, coupled to $(G^*,X)$ as in Definition~\ref{def:coupling}. Let $A_3$ be the event that the coupling is successful. From Lemmas~\ref{lem:bound_num_bad} and~\ref{lem:coupling_succ_prob_from_x} we know that with high probability $G^*$ is such that for sufficiently large values of $B$ we have $\prcond{A_3}{G^*}{}\ge1-\theta'$. On the event $A_3$ we have $\tau^X_L=\tau^{\til{X}}_L$.

Let $s=\frac12\left(t_0+\left(B+B''\right)t_w\right)$ with $B''$ to be chosen later. By Lemma~\ref{lem:speed} we know that for sufficiently large values of $C$ we have
\[
\pr{d_T(\rho,\til{X}_s)<\nu s-C\sqrt{\eps s}}\quad\le\quad\theta'.
\]
Here
\begin{align*}
\nu s-C\sqrt{\eps s}\quad&=\quad\frac12\nu t_0+\frac12\nu\left(B+B''\right)t_w-C\sqrt{\frac12\eps t_0+\frac12\eps\left(B+B''\right)t_w}\\
&\ge\quad L+\frac12B''\frac{1}{\h}\frac{\log n}{\sqrt{g(n)}}-C\sqrt{\frac{1}{2c}\left(\frac{\log n}{\h}+\left(B+B''\right)\frac{1}{\h}\frac{\log n}{\sqrt{g(n)}}\right)}\quad\ge\quad L
\end{align*}
for sufficiently large values of $B''$ since
\[
\frac{\log n}{\h}\quad\gg\quad\frac{1}{\h}\frac{\log n}{\sqrt{g(n)}}\qquad\text{and}
\]
\[
\sqrt{\frac{\log n}{\h}}\quad\asymp\quad\sqrt{g(n)}\quad\asymp\quad \frac{1}{\h}\frac{\log n}{\sqrt{g(n)}}.
\]

This then gives
\[
\pr{\tau^{\til{X}}_{L}>s}\quad\le\quad\pr{d_T(\rho,\til{X}_s)<L}\quad=\quad\pr{d_T(\rho,\til{X}_s)<\nu s-C\sqrt{\eps s}}\quad\le\quad\theta'.
\]
Therefore
\[
\pr{\tau^{\til{X}}_{L}>s}\quad\le\quad\pr{\tau^{\til{X}}_{L}>s}+\pr{A_3^c}\quad\le\quad2\theta'.
\]
Applying the same reasoning for $(X_k)_{k\ge\tau_L+\tau'}$ instead of $X$ we also get that
\[
\pr{\tau'>s}\quad\le\quad2\theta'.
\]
Overall this shows that
\begin{align*}
&\pr{\tau>t_0+(B+B''+1)t_w}\\
&\le\quad\pr{A_1^c}+\pr{A_2^c}+\pr{\tau_{K\text{-root}}>\frac{bK}{\eps}}+\pr{\tau'>\frac{bK}{\eps}}+\pr{\tau_L>s}+\pr{\tau''>s}\\
&\le\quad o(1)+o(1)+o(1)+o(1)+2\theta'+2\theta'\quad<\quad5\theta'.
\end{align*}
We have
\begin{align*}
&\pr{\max_{x:K\text{-root}}\prcond{\tau>t_0+(B+B''+1)t_w}{G^*}{x}>\theta}\\
&\le\quad\frac{\E{\max_{x:K\text{-root}}\prcond{\tau>t_0+(B+B''+1)t_w}{G^*}{x}}}{\theta}\\
&=\quad\frac{\max_{x:K\text{-root}}\prstart{\tau>t_0+(B+B''+1)t_w}{x}}{\theta}\quad<\quad\frac{5\theta'}{\theta}.
\end{align*}
By letting $\theta'$ go to 0 arbitrarily slowly, we will get that $\frac{5\theta'}{\theta}=o(1)$, while $(B+B''+1)$ goes to $\infty$ arbitrarily slowly. This finishes the proof.
\end{proof}

\begin{lemma}\label{lem:bound_trelabs}
With high probability $G^*$ is such that the absolute relaxation time of the random walk on $G^*$ satisfies $\trelabs\lesssim\frac1\eps$.
\end{lemma}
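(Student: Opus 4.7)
The plan is to compare the Dirichlet form of the walk on $G^*$ for general $\eps\in(0,1)$ to that of the unweighted walk on $G+E^*_n$ (the $\eps=1$ case), and then invoke the bound $\trelabs\lesssim 1$ for the latter, which is essentially established in~\cite{random_matching}. Since all $G$-degrees are bounded by $\Delta$, the stationary distribution $\pi^*$ satisfies $\pi^*(v)\asymp 1/n$ uniformly in $v$, and the total edge weight $W_\eps := 2|E(G)|+\eps n$ is $\asymp n$. Writing $S_G(f):=\sum_{(u,v)\in E(G)}(f(u)-f(v))^2$ and $S_M(f):=\sum_{(u,v)\in E_n^*}(f(u)-f(v))^2$, the Dirichlet form of the walk on $G^*$ reads $\mathcal{E}_\eps(f,f) = W_\eps^{-1}[S_G(f)+\eps S_M(f)]$; the analogous ``anti-Dirichlet form'' $\mathcal{E}_\eps^+(f,f) = W_\eps^{-1}[S_G^+(f)+\eps S_M^+(f)]$, obtained by replacing $(f(u)-f(v))^2$ with $(f(u)+f(v))^2$ in each sum, controls the most negative eigenvalue via the identity $1+\lambda_n = \inf_f \mathcal{E}^+(f,f)/\|f\|_\pi^2$.

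The key step is the pointwise-in-$f$ inequality $S_G(f)+\eps S_M(f)\ge\eps[S_G(f)+S_M(f)]$, valid for $\eps\le 1$ because $(1-\eps)S_G(f)\ge 0$; the same inequality holds for the ``$+$'' variants. Combined with $W_1/W_\eps\asymp 1$, this yields $\mathcal{E}_\eps(f,f)\gtrsim\eps\,\mathcal{E}_1(f,f)$ and $\mathcal{E}_\eps^+(f,f)\gtrsim\eps\,\mathcal{E}_1^+(f,f)$ uniformly in $f$. Because both $\pi_\eps$ and $\pi_1$ are $\asymp 1/n$ uniformly, $\Var_{\pi_\eps}(f)\asymp\Var_{\pi_1}(f)$ and $\|f\|_{\pi_\eps}^2\asymp\|f\|_{\pi_1}^2$, so the Poincar\'e-type inequalities $\mathcal{E}_1(f,f)\gtrsim\Var_{\pi_1}(f)$ and $\mathcal{E}_1^+(f,f)\gtrsim\|f\|_{\pi_1}^2$ (which together are equivalent to $\trelabs(P^*_1)\lesssim 1$) transfer to the bounds $1-\lambda_2(P^*_\eps)\gtrsim\eps$ and $1+\lambda_n(P^*_\eps)\gtrsim\eps$, giving $\trelabs\lesssim 1/\eps$ on the same high-probability event over the random matching.

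The main obstacle is the $\eps=1$ input. The spectral-gap bound $\trel(P^*_1)\lesssim 1$ whp is the assertion that adding a random perfect matching to any bounded-degree connected graph produces an expander; it is essentially contained in the cutoff proof of~\cite{random_matching}, and if needed can be proved directly by Cheeger's inequality after showing that whp, for every set $S\subseteq V$ with $cn\le|S|\le n/2$, the random matching places $\Omega(|S|)$ edges across $\partial S$ (a standard matching concentration argument). The complementary bound $1+\lambda_n(P^*_1)\gtrsim 1$ reduces to quantitative non-bipartiteness of $G+E^*_n$: for any bipartition $V=A\sqcup B$ with $|A|,|B|\asymp n$, the number of matching edges with both endpoints in the same class is $\Omega(n)$ whp by concentration of a sum of weakly dependent indicators on the uniformly random matching, forcing $|\lambda_n|$ bounded away from $1$ by a universal constant.
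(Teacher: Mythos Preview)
Your approach is essentially the same as the paper's: compare the Dirichlet form (and its ``$+$'' variant governing $1+\lambda_n$) of $G^*$ to that of the unweighted graph $K$ via $\mathcal{E}_\eps\gtrsim\eps\,\mathcal{E}_1$ and $\pi_\eps\asymp\pi_1$, then invoke the $\eps=1$ bound $\trelabs(K)\lesssim 1$ from~\cite{random_matching}. The only difference is that the paper simply cites \cite[Theorem~6.1]{random_matching} for the full $\trelabs(K)\lesssim 1$ (both eigenvalues), so your separate sketch for $1+\lambda_n(P_1^*)\gtrsim 1$ is unnecessary; if you did want to carry it out, note that bounding $1+\lambda_n$ via bipartitions requires the appropriate Cheeger-type inequality for the smallest eigenvalue and an exponential concentration strong enough to beat the $2^n$ union bound.
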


\begin{proof}[Sketch proof]
Let $K$ be the graph with the same vertices and edges as $G^*$, but all edges having weight 1. From \cite[Theorem 6.1]{random_matching} we know that whp $\trel^{\textrm{abs},K}\lesssim1$. We can get the required bound on $\trel^{\textrm{abs},G^*}$ by showing that the transition probabilities and the invariant distributions of the simple random walks on $G^*$ and $K$ satisfy $\eps P_{K}(x,y)\leq P_{G^*}(x,y)\lesssim P_{K}(x,y)$, $\pi_{G^*}(x)\asymp\pi_{K}(x)$, then using these to compare the Dirichlet forms~\cite[(2.1) and (2.2)]{comparison_thms_for_reversible_MCs} and using the characterisation of eigenvalues in terms of these Dirichlet forms.
%See Appendix~\ref{app:other_remainings_pfs} for more details.
\end{proof}

\begin{lemma}[Proposition 1.8 and Remark 1.9 in \cite{characterisation_of_cutoff}] \label{lem:hit_mix_comparison}
For any reversible irreducible finite chain and any $\theta\in\left(0,\frac14\right]$ we have
$$\hit{1-\theta/4}{\frac54\theta}\quad\le\quad\tmix{\theta}\quad\le\quad \hit{1-\theta/4}{\frac34\theta}+\Bigg\lceil\frac32\trelabs\left|\log\left(\frac\theta4\right)\right|\Bigg\rceil$$
where $\hit{\alpha}{\theta}=\min\left\{s:\forall x,\forall A\text{ with }\pi(A)\ge\alpha\text{ we have }\prstart{\tau_A>s}{x}\le\theta\right\}$.
\end{lemma}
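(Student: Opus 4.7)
The lower bound is direct from the definition of total variation mixing. Let $t = \tmix{\theta}$, fix any $x$, and let $A$ be any set with $\pi(A) \ge 1 - \theta/4$. Then $\prstart{X_t \in A}{x} \ge \pi(A) - \dtv{P^t(x,\cdot)}{\pi} \ge 1 - 5\theta/4$, and since $\{X_t \in A\} \subseteq \{\tau_A \le t\}$ we obtain $\prstart{\tau_A > t}{x} \le 5\theta/4$. This gives $\hit{1-\theta/4}{5\theta/4} \le t = \tmix{\theta}$.

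For the upper bound, set $s = \hit{1-\theta/4}{3\theta/4}$ and $r = \lceil \tfrac{3}{2}\trelabs |\log(\theta/4)|\rceil$. The plan is to identify a ``good'' set $A$ of states from which $r$ further steps bring the walk to within TV distance $\theta/4$ of $\pi$, and such that $\pi(A) \ge 1 - \theta/4$. Given such an $A$, the strong Markov property at $\tau_A$ decomposes, for any test set $B$,
\[
P^{s+r}(x,B) - \pi(B) = \estart{\1_{\tau_A \le s}\bigl(P^{s+r-\tau_A}(X_{\tau_A},B) - \pi(B)\bigr)}{x} + \estart{\1_{\tau_A > s}\bigl(P^r(X_s,B) - \pi(B)\bigr)}{x}.
\]
Because $X_{\tau_A}\in A$ and $s + r - \tau_A \ge r$, the first expectation is bounded in absolute value (uniformly in $B$) by $\theta/4$; the second is bounded by $\prstart{\tau_A > s}{x} \le 3\theta/4$. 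Taking the supremum over $B$ yields $\dtv{P^{s+r}(x,\cdot)}{\pi} \le \theta$, i.e.\ $\tmix{\theta}\le s+r$, as required.

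The main obstacle is the construction of the good set $A$. A pointwise $L^2$-to-TV bound $\dtv{P^r(y,\cdot)}{\pi} \le \frac{1}{2\sqrt{\pi(y)}}\,e^{-r/\trelabs}$ together with the chosen value of $r$ yields the desired contraction only on states with $\pi(y)\gtrsim\theta/16$, and $\pi\{y:\pi(y)<\theta/16\}$ need not be small in general. The way around this is an averaged form of the contraction: for any distribution $\nu$ with density $f = d\nu/d\pi$, reversibility and the definition of $\trelabs$ give $\dtv{\nu P^r}{\pi} \le \tfrac12 \lambda_*^r \|f-1\|_{L^2(\pi)}$, which still controls $\nu P^r$ when $\nu$ charges states of tiny $\pi$-mass, provided $\nu$ itself is not too far from $\pi$ in $L^2$. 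Applying this to the hitting distribution on a candidate set $A$, combined with a Markov-style bound on the $\pi$-average of $y\mapsto\dtv{P^r(y,\cdot)}{\pi}$ and a careful optimisation of thresholds, produces an $A$ with both required properties; the sharp constant $3/2$ in the $\trelabs|\log(\theta/4)|$ term drops out of this optimisation. The full details are carried out in~\cite{characterisation_of_cutoff}.
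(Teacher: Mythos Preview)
The paper does not provide a proof of this lemma; it is quoted directly from~\cite{characterisation_of_cutoff} (Proposition~1.8 and Remark~1.9). There is therefore no in-paper argument to compare against --- both the paper and your proposal ultimately defer to that reference.

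Your lower-bound argument is correct and self-contained, and in fact adds detail beyond what the paper records. Your upper-bound sketch has the right architecture: the strong-Markov decomposition at $\tau_A$ with $A=\{y:\dtv{P^r(y,\cdot)}{\pi}\le\theta/4\}$, combined with monotonicity of total variation in time and the definition of $\hit{1-\theta/4}{3\theta/4}$, does give $\tmix{\theta}\le s+r$ once one knows $\pi(A)\ge 1-\theta/4$. Your identification of the obstacle --- that the pointwise $L^2$-to-TV bound only controls states with $\pi(y)\gtrsim\theta$ --- is also accurate. The closing paragraph, however, is vague: a ``Markov-style bound on the $\pi$-average of $y\mapsto\dtv{P^r(y,\cdot)}{\pi}$'' does not by itself yield $\pi(A^c)\le\theta/4$, since that average generically carries an $n$-dependent factor coming from the full eigenvalue sum, and ``applying this to the hitting distribution on a candidate set $A$'' does not connect cleanly to the construction of $A$ in the first place. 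Since you explicitly hand off the remaining details to~\cite{characterisation_of_cutoff}, this is not a genuine gap so much as an imprecise sketch --- which, given that the paper itself offers no argument at all, is already more than what you were asked to match.
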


\begin{lemma}[Corollary 3.4 in \cite{characterisation_of_cutoff}] \label{lem:hitting_times_comparison}
For any reversible irreducible finite chain and any $0<\theta_1<\theta_2<0$ and $0<\alpha_1\le\alpha_2<1$ we have
\[
\hit{\alpha_2}{\theta_2}\quad\le\quad\hit{\alpha_1}{\theta_2}\quad\le\quad\hit{\alpha_2}{\theta_2-\theta_1}+\Bigg\lceil\frac{1}{\alpha_1}\trel\log\left(\frac{1-\alpha_1}{(1-\alpha_2)\theta_1}\right)\Bigg\rceil.
\]
\end{lemma}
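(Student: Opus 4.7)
The first inequality $\hit{\alpha_2}{\theta_2} \le \hit{\alpha_1}{\theta_2}$ is immediate by monotonicity of the defining family: since $\alpha_1 \le \alpha_2$, any set $A$ with $\pi(A) \ge \alpha_2$ also satisfies $\pi(A) \ge \alpha_1$, so the condition defining $\hit{\alpha_1}{\theta_2}$ ranges over a strictly larger class of sets. For the harder second inequality, my plan is to fix an arbitrary $A$ with $\pi(A) \ge \alpha_1$ and an arbitrary starting vertex $x$, set $t^* := \hit{\alpha_2}{\theta_2 - \theta_1}$ and $s := \bigl\lceil \tfrac{1}{\alpha_1}\trel \log\tfrac{1-\alpha_1}{\theta_1(1-\alpha_2)}\bigr\rceil$, and show $\prstart{\tau_A > t^* + s}{x} \le \theta_2$.

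The strategy is to introduce a \emph{good} set
\[
B \;:=\; \bigl\{y : \prstart{\tau_A > s}{y} \le \theta_1\bigr\}.
\]
If I can show $\pi(B) \ge \alpha_2$, then splitting on $\tau_B$ and applying the strong Markov property gives
\[
\prstart{\tau_A > t^* + s}{x} \;\le\; \prstart{\tau_B > t^*}{x} \;+\; \estart{\1\{\tau_B \le t^*\}\,\prstart{\tau_A > s}{X_{\tau_B}}}{x}.
\]
The first term is at most $\theta_2 - \theta_1$ by the definition of $t^*$ together with $\pi(B) \ge \alpha_2$, and the second is at most $\theta_1$ since $X_{\tau_B} \in B$ on $\{\tau_B \le t^*\}$; summing yields $\theta_2$, as required.

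It remains to show $\pi(B) \ge \alpha_2$, equivalently $\pi(B^c) \le 1-\alpha_2$. By Markov's inequality applied to $y \mapsto \prstart{\tau_A > s}{y}$,
\[
\pi(B^c) \;\le\; \frac{1}{\theta_1}\, \prstart{\tau_A > s}{\pi},
\]
so it is enough to prove $\prstart{\tau_A > s}{\pi} \le \theta_1(1-\alpha_2)$. By reversibility, the sub-Markov operator $P_A$ of the chain killed on $A$ is self-adjoint on $\ell^2(A^c,\pi)$, and one can identify $\prstart{\tau_A > s}{\pi}$ with the bilinear form $\langle \1_{A^c}, P_A^s \1_{A^c}\rangle_\pi$, where $\|\1_{A^c}\|_{2,\pi}^2 = \pi(A^c) \le 1-\alpha_1$. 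A standard Dirichlet form computation, combining the variational formula for the top eigenvalue $\beta_A$ of $P_A$ with the fact that every $f$ supported on $A^c$ satisfies $\vr{f} \ge \pi(A)\estart{f^2}{\pi}$ and the Poincar\'e inequality $\vr{f} \le \trel \cdot \Dirform(f,f)$, yields the key bound $1-\beta_A \ge \pi(A)/\trel \ge \alpha_1/\trel$. The spectral theorem then gives $\prstart{\tau_A > s}{\pi} \le (1-\alpha_1)\beta_A^s \le (1-\alpha_1)\exp(-s\alpha_1/\trel)$, which equals $\theta_1(1-\alpha_2)$ at exactly our chosen value of $s$.

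The main obstacle I anticipate is the spectral step: if $P_A$ has negative eigenvalues, the naive inequality $\langle f, P_A^s f\rangle_\pi \le \beta_A^s \|f\|_{2,\pi}^2$ can fail for odd $s$. A clean workaround is to decompose $\1_{A^c} = \pi(A^c)\1 + g$ with $g \perp_\pi \1$ and handle the two pieces separately, or else replace $P_A^s$ by $P_A^{2\lfloor s/2\rfloor}$ and absorb the constant into the definition of $s$; either adaptation preserves the stated bound up to the ceiling.
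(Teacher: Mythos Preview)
The paper does not supply its own proof of this lemma; it is quoted verbatim as Corollary~3.4 of \cite{characterisation_of_cutoff}. So there is no in-paper argument to compare against, and your task was effectively to reconstruct the proof from the cited source.

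Your overall strategy is the standard one and is correct: the first inequality is monotonicity, and for the second you introduce the good set $B=\{y:\prstart{\tau_A>s}{y}\le\theta_1\}$, show $\pi(B)\ge\alpha_2$ via Markov's inequality and a bound on $\prstart{\tau_A>s}{\pi}$, then split on $\tau_B$. The spectral computation $1-\beta_A\ge\pi(A)/\trel$ via Poincar\'e plus the variance lower bound $\vr{f}\ge\pi(A)\|f\|_{2,\pi}^2$ for $f$ supported on $A^c$ is also correct.

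However, your proposed workarounds for negative eigenvalues do not actually close the gap. The decomposition $\1_{A^c}=\pi(A^c)\1+g$ lives on the full space, not on $\ell^2(A^c,\pi)$, so it does not interact usefully with the killed operator $P_A$. And replacing $s$ by $2\lfloor s/2\rfloor$ still leaves you bounding $\|P_A^{\lfloor s/2\rfloor}\1_{A^c}\|_{2,\pi}^2$ by the spectral radius of $P_A$ raised to the $s$, which can exceed $\beta_A^s$ when $P_A$ has a large negative eigenvalue. The clean resolution is that the cited result in \cite{characterisation_of_cutoff} is formulated for lazy (or continuous-time) chains, where $P_A$ is positive semidefinite and the issue vanishes; in the present paper the lemma is only ever invoked alongside the bound $\trelabs\lesssim\frac1\eps$ (Lemma~\ref{lem:bound_trelabs}) or for lazy walks, so the distinction is immaterial in application. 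If you want a version valid for arbitrary reversible chains, the correct fix is to replace $\trel$ by $\trelabs$ throughout the spectral step.
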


\begin{proposition}\label{pro:tmix_upper_bound}
For any $\theta\in(0,1)$ and any $t_w^+\gg t_w$ with high probability the mixing time of the random walk on $G^*$ satisfies\[
\tmix{\theta}\quad\le\quad t_0+t_w^+
\]
for all sufficiently large $n$.
\end{proposition}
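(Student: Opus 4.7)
The plan is to combine the two key estimates from the previous subsection---the concentration of $\tau$ (Lemma~\ref{lem:bound_tau}) and the closeness of the distribution of $X_\tau$ to the uniform measure $\U$ (Lemma~\ref{lem:bound_dtv_tau})---to obtain a bound on the hitting time of large sets, and then invoke Lemma~\ref{lem:hit_mix_comparison} to translate this into a bound on the mixing time, using the absolute relaxation time bound of Lemma~\ref{lem:bound_trelabs} to control the error term. It suffices to treat $\theta\in(0,1/4]$ (for larger $\theta$ the bound follows by monotonicity of $\tmix{\cdot}$).

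First I would observe that each stationary weight $w(x)=\deg_G(x)+\eps_n$ lies in $[1,\Delta+1]$, so $\pi_{G^*}(x)\asymp\frac{1}{n}$ and more precisely $\U(A^c)\le(\Delta+1)\pi_{G^*}(A^c)$ for every $A\subseteq V_n$. Fix $\delta>0$ small enough (depending on $\theta$ and $\Delta$) that $(\Delta+1)\delta<\theta/16$, set $\theta'=\theta/16$, choose $B$ large enough for Lemma~\ref{lem:bound_dtv_tau} to apply with parameter $\theta'$, and let $B'(n)\to\infty$ slowly (to be chosen later). By Lemmas~\ref{lem:bound_dtv_tau} and~\ref{lem:bound_tau}, whp $G^*$ is such that for every starting vertex $x$,
\[
\dtv{\prcond{X_\tau\in\cdot}{G^*}{x}}{\U(\cdot)}\le\theta' \qquad\text{and}\qquad \prcond{\tau>t_0+B't_w}{G^*}{x}\le\theta'.
\]
For any set $A$ with $\pi_{G^*}(A)\ge1-\delta$ the inclusion $\{\tau_A>s\}\subseteq\{X_\tau\notin A\}\cup\{\tau>s\}$ (valid since $X_\tau\in A$ forces $\tau_A\le\tau$) combined with the above yields, with $s=t_0+B't_w$,
\[
\prcond{\tau_A>s}{G^*}{x}\le \U(A^c)+\theta'+\theta'\le(\Delta+1)\delta+\tfrac{\theta}{8}<\tfrac{\theta}{2},
\]
so that whp $\hit{1-\delta}{\theta/2}\le t_0+B't_w$.

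Next I would apply Lemma~\ref{lem:hitting_times_comparison} with $\alpha_2=1-\delta$, $\alpha_1=1-\theta/4$, $\theta_2=3\theta/4$, $\theta_1=\theta/4$ (noting $\alpha_1\le\alpha_2$ because $\delta<\theta/4$) to conclude $\hit{1-\theta/4}{3\theta/4}\le\hit{1-\delta}{\theta/2}+O\!\bigl(\trel\log(1/(\delta\theta))\bigr)$. Since $\trel\le\trelabs\lesssim 1/\eps_n$ whp by Lemma~\ref{lem:bound_trelabs}, and $1/\eps_n\ll t_w$ (recall $t_w\asymp\tfrac{1}{\eps_n}\sqrt{g(n)}$), this correction is $o(t_w)$. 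Lemma~\ref{lem:hit_mix_comparison} now gives
\[
\tmix{\theta}\le\hit{1-\theta/4}{3\theta/4}+\Big\lceil\tfrac{3}{2}\trelabs\bigl|\log(\theta/4)\bigr|\Big\rceil\le t_0+B't_w+O\!\left(\tfrac{1}{\eps_n}\right),
\]
and choosing $B'(n)\to\infty$ slowly enough that $B't_w+O(1/\eps_n)\le t_w^+$ (possible because $t_w^+\gg t_w\gg 1/\eps_n$) completes the proof.

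The main obstacle is conceptual rather than computational: the concentration estimates naturally yield a bound on hitting times of sets large under $\U$, whereas Lemma~\ref{lem:hit_mix_comparison} requires sets large under $\pi_{G^*}$, and since the ratio $\pi_{G^*}/\U$ can be as large as $\Delta+1$, one cannot absorb the discrepancy by rescaling constants. The fix is to establish the hitting bound at the stricter level $1-\delta$ and then bridge the gap via Lemma~\ref{lem:hitting_times_comparison}, whose additive $O(\trel\log(1/\delta))$ correction is harmless thanks to the crude bound $\trel\lesssim1/\eps_n\ll t_w\ll t_w^+$.
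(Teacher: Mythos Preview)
Your proposal is correct and follows essentially the same argument as the paper: both bound the hitting time of large sets via $\dtv{\prcond{X_\tau\in\cdot}{G^*}{x}}{\U}$ and $\prcond{\tau>t_0+B't_w}{G^*}{x}$, handle the $\pi_{G^*}$--$\U$ discrepancy by working at a stricter level and bridging with Lemma~\ref{lem:hitting_times_comparison}, then apply Lemma~\ref{lem:hit_mix_comparison} using $\trelabs\lesssim1/\eps\ll t_w$. The only differences are cosmetic choices of constants (the paper takes $\delta=\tfrac{\theta}{4(\Delta+1)}$ and $\theta'=\tfrac{\theta}{8}$ directly).
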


\begin{proof}
From Lemma~\ref{lem:bound_dtv_tau} we know that for sufficiently large values of $B$ with high probability we have
\[
\max_{x}\dtv{\prcond{X_{\tau}\in\cdot}{G^*}{x}}{\U(\cdot)}\quad\le\quad\frac18\theta.
\]
From Lemma~\ref{lem:bound_tau} we know that for the above $B$ and for any $B'\to\infty$ with high probability we have
\[
\max_x\prcond{\tau>t_0+B't_w}{G^*}{x}\quad\le\quad\frac18\theta.
\]
Let $x$ be any vertex and let $A$ be any set with $\pi(A)\ge1-\frac{1}{4(\Delta+1)}\theta$. Then $\pi(A^c)\ge\frac{|A^c|}{(\Delta+1)n}=\frac{1}{\Delta+1}\U(A^c)$, hence $\U(A)\ge1-\frac{1}{4}\theta$.
Then on the above high probability event we have
\begin{align*}
&\prstart{\tau_A>t_0+B't_w}{x}\quad\le\quad\prstart{X_{\tau}\not\in A}{x}+\prstart{\tau>t_0+B't_w}{x}\\
&\le\quad\dtv{\prstart{X_{\tau}\in\cdot}{x}}{\U(\cdot)}+\U(A^c)+\prstart{\tau>t_0+B't_w}{x}\quad\le\quad\frac18\theta+\frac14\theta+\frac18\theta.
\end{align*}
Therefore $\hit{1-\frac{1}{4(\Delta+1)}\theta}{\frac12\theta}\le t_0+B't_w$. From Lemma~\ref{lem:bound_trelabs} we know that $\trel\le\trelabs\lesssim\frac1\eps\ll t_w$. These together with Lemmas~\ref{lem:hitting_times_comparison} and \ref{lem:hit_mix_comparison} give the result.
\end{proof}

\subsection{Lower bounding the mixing time}\label{sec:tmix_lower_bound}

The lower bound on the mixing time can be proved analogously to the proof in~\cite{random_matching}. The exploration and coupling used here will be very similar to the ones used for the upper bound, but we will only explore around one vertex $x$, and the threshold for the truncation criterion and the number of explored levels will be different.

Given a quasi-tree $T$ let us define a truncation event for a long-range edge $e$ and a constant $A>0$ as
\[
\trunctil{e}{A}:=\left\{\til{W}_T(e)>\log n-A\sqrt{\frac{\V\log n}{\h}}\right\}.
\]
Let
\[
t:=t_0-Bt_w,\qquad\til{L}:=\nu t+2D\sqrt{\eps t}
\]
for some constants $B$ and $D$ to be chosen later.

\begin{proposition}
Let $G$ be as before, assume that Assumption~\ref{assump:f} holds and let $R$, $K$ and $M$ be as in Definition~\ref{def:variable_values}. Let $T$ be the random quasi-tree associated to $G$. Let $T_0$ be any realisation of the first $K$ levels of $T$. Let $X$ be a simple random walk on $T$ started from its root. Then for all $\theta\in(0,1)$ there exist $B$ (depending on $\theta$) and $A$ (depending on $\theta$ and $B$) sufficiently large such that
\[
\prcond{\bigcup_{k\le\tau_{\til{L}}}\trunctil{(X_{k-1},X_k)}{A}}{B_K(\rho)=T_0}{}<\theta.
\]
\end{proposition}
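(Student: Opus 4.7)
The argument parallels Proposition~\ref{pro:bound_trunc_prob}, with the threshold $\tfrac12\log n + A\sqrt{\V\log n/\h}$ replaced by $\log n - A\sqrt{\V\log n/\h}$ and the stopping level $L$ replaced by $\til L = \nu t + 2D\sqrt{\eps t}$, $t = t_0 - Bt_w$. Note $\til L \asymp L \asymp \log n/f(1/\eps)$, so the order-of-magnitude estimates from Section~\ref{subsec:truncation} all carry over. I would proceed in three steps.

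\emph{Step 1 (reduce $\til W$ to $W$).} By a telescoping argument identical in spirit to Lemma~\ref{lem:if_W_large_then_Wtil_large} and using Lemma~\ref{lem:visit_far_then_hit_e} to bound the per-level discrepancy between $\theta_{T(y_{i-1})}(e_i)$ and $\til\theta_{T(y_{i-1}),M}(e_i)$, I would show that for any fixed $A' > A$, once $n$ is large enough, $\til W_T(e) > \log n - A\sqrt{\V\log n/\h}$ forces $W_T(e) > \log n - A'\sqrt{\V\log n/\h}$ for every edge $e$ with $\ell(e)\le\til L$. The key input (from Lemma~\ref{lem:visit_far_then_hit_e}) is that $\delta^M \log n/\eps = o(\log n/g(n)^{3/2})$, so the cumulative slack across $\le\til L$ levels is $o(\sqrt{\V\log n/\h})$ and can be absorbed into $A'-A$.

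\emph{Step 2 (crossings versus LERW).} Using Lemma~\ref{lem:compare_inxi_fisthit} (for edges near $\rho$) and its analogue for deeper edges (obtained by conditioning on having reached the parent), one has $\prcond{e \text{ ever crossed}}{T}{} \lesssim \prcond{e\in\xi}{T}{} = e^{-W_T(e)}$ for each long-range edge $e$. A union bound over long-range edges $e$ with $\ell(e)\le\til L$ and $\til W_T(e)$ exceeding the threshold, combined with Step~1, gives
\[
\text{LHS} \;\lesssim\; \sum_{\ell\le\til L}\sum_{\substack{e:\,\ell(e)=\ell\\ W_T(e)>\log n - A'\sqrt{\V\log n/\h}}} e^{-W_T(e)} \;=\; \sum_{\ell\le\til L}\prcond{W_T(\xi_\ell) > \log n - A'\sqrt{\V\log n/\h}}{\B_K(\rho)=T_0}{},
\]
using $\sum_{e:\,\ell(e)=\ell} e^{-W_T(e)} = 1$.

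\emph{Step 3 (entropy concentration).} By Lemma~\ref{lem:entropy_from_assump_f}, for each $\ell$ in the admissible range $\ell\ge(K\log b(R))^2/\V$, $W_T(\xi_\ell)$ concentrates around $\h\ell$ within $O(\sqrt{\ell\V})$ with probability $\ge 1 - \theta/(2\til L)$ (taking $C$ large in terms of $\theta$ and $\til L$). The binding case is $\ell=\til L$: using $\nu\h t_0 = \log n$, $\nu\h t_w \asymp \log n/\sqrt{g(n)}$ and $\h\sqrt{\eps t_0}\asymp \log n/\sqrt{g(n)} \asymp \sqrt{\V\log n/\h}$,
\[
\h\til L + C\sqrt{\til L\V} \;=\; \log n \;-\; \bigl(cB - c_1 D - c_2 C\bigr)\sqrt{\V\log n/\h} \;+\; o\!\bigl(\sqrt{\V\log n/\h}\bigr)
\]
for positive constants $c, c_1, c_2$. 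Choosing $B$ large (in terms of $\theta$ and $C$) and then $A$ (hence $A'$) sufficiently large but with $A' < cB - c_1 D - c_2 C$ makes this strictly less than $\log n - A'\sqrt{\V\log n/\h}$; a union bound over the $\le \til L = O(\log n)$ values of $\ell$ then closes the argument. For small $\ell$ outside the admissible range, the crude bound $W_T(\xi_\ell)\le \ell\log b(R) = o(\log n)$ is already below the threshold.

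\textbf{Main obstacle.} The delicate point is Step~3, where the gap $\log n - \h\til L \asymp B\sqrt{\V\log n/\h}$ produced by the $-Bt_w$ shift must strictly dominate the sum of the $D$-correction (from the safety cushion in $\til L$), the $C$-fluctuation (from concentration), and the $A'$-threshold---all of the same order $\sqrt{\V\log n/\h}\asymp \log n/\sqrt{g(n)}$. This is precisely why $B$ must be taken sufficiently large depending on $\theta$, and $A$ then chosen sufficiently large but below a suitable constant multiple of $B$.
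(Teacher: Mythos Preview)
Your Step~3 contains a genuine gap that breaks the argument. You want the per-level bound $\prcond{W_T(\xi_\ell)>\mathrm{thr}}{T_0}{}\le\theta/(2\til L)$ and then union-bound over the $\til L\asymp g(n)$ levels. But Lemma~\ref{lem:entropy_from_assump_f} (and its source, Proposition~\ref{pro:aux_entropy}) only gives Chebyshev-type concentration: $\prcond{|W_T(\xi_\ell)-\h\ell|>C\sqrt{\ell\V}}{T_0}{}\lesssim 1/C^2$. To make this $\le\theta/(2\til L)$ you must take $C\asymp\sqrt{\til L}\asymp\sqrt{g(n)}$, and then $C\sqrt{\til L\V}\asymp\sqrt{g(n)}\cdot\log n/\sqrt{g(n)}=\log n$, not $\asymp\sqrt{\V\log n/\h}$. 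So in your displayed identity $\h\til L+C\sqrt{\til L\V}=\log n-(cB-c_1D-c_2C)\sqrt{\V\log n/\h}+o(\cdot)$ the coefficient $c_2C$ is not a constant but grows like $\sqrt{g(n)}$, and no fixed constant $B$ can dominate it. Equivalently, a direct computation of your Step~2 sum gives $\sum_{\ell\le\til L}\prcond{W_T(\xi_\ell)>\mathrm{thr}}{T_0}{}\gtrsim\sqrt{g(n)}/B$, which diverges.

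The paper avoids this by never summing over levels. The key observation (which is the content of the reduction ``similarly to Proposition~\ref{pro:bound_trunc_prob}'', i.e.\ \cite[Lemma~4.5]{random_matching}) is that one may restrict the union bound to \emph{frontier} edges $F=\{e:\til W_T(e)>\mathrm{thr},\ \til W_T(p(e))\le\mathrm{thr}\}$: if the walk ever crosses an edge with $\til W_T>\mathrm{thr}$, it must first cross its frontier ancestor. Since $\til W_T$ is non-decreasing along any root-to-leaf path, $F$ is an antichain and $|F\cap\xi|\le 1$; hence
\[
\sum_{e\in F,\ \ell(e)\le\til L}\prcond{e\in\xi}{T}{}\;=\;\prcond{F\cap\{\xi_1,\dots,\xi_{\til L}\}\ne\emptyset}{T}{}\;=\;\prcond{\til W_T(\xi_{\til L})>\mathrm{thr}}{T}{},
\]
the last equality again by monotonicity. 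This collapses everything to a \emph{single} level, so one only needs concentration once at $\ell=\til L$, with $C$ a genuine constant. Your Steps~1 and the arithmetic in Step~3 (for constant $C$) are then exactly what the paper does to finish.
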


\begin{proof}
Similarly to the proof of Proposition~\ref{pro:bound_trunc_prob} it is sufficient to show that
\[
\prcond{\til{W}_T(\xi_{\til{L}})\ge\log n-A\sqrt{\frac{\V\log n}{\h}}}{T_0}{}\quad\le\quad\theta.
\]
Similarly to the proof of Lemma~\ref{lem:if_W_large_then_Wtil_large} we can show that $\til{W}_T(e)\ge\log n-A\sqrt{\frac{\V\log n}{\h}}$ implies\\ ${W}_T(e)\ge\log n-A'\sqrt{\frac{\V\log n}{\h}}$ for all $A'>A$ for sufficiently large $n$.

From Proposition~\ref{pro:aux_entropy} we know that for a sufficiently large value of $C'$ we have
\[
\prcond{W_T(\xi_{\til{L}})\ge\til{L}\h+C'\sqrt{\til{L}\V}}{T_0}{}\quad\le\quad\theta.
\]
For any value of $A'$ for sufficiently large values of $B$ and $n$ we have
\[
\til{L}\h+C'\sqrt{\til{L}\V}\quad\le\quad\log n-A'\sqrt{\frac{\V\log n}{\h}},
\]
hence
\[
\pr{{W}_T(\xi_{\til{L}})\ge\log n-A'\sqrt{\frac{\V\log n}{\h}}}\quad\le\quad\theta.
\]
This finishes the proof.
\end{proof}

\begin{definition}\label{def:coupling_2}
Let $x$ be a vertex of $G$ and let us condition on it being a $K$-root with $K$-neighbourhood $T_0=\B^*_{K}(x)$.

Let us define $z_1,...,z_{L_x}$ and $V_{z_i}$ as in Definition~\ref{def:coupling}. For each $z_i$ we will define an exploration process of $G^*$ corresponding to the set $V_{z_i}$ by constructing a coupling between a subset of $G^*$ and a quasi-tree $T$ that is distributed like the random quasi-tree corresponding to the graph $G$, conditioned to be $T_{0}$ at the first $K$ levels around its root.

Let us define the exploration as in Definition~\ref{def:coupling} with the following modifications. We use the truncation criterion $\trunctil{e}{A}$ instead of $\trunc{e}{A}$. We explore up to level $\til{L}$. We do not consider the half-edges from the last explored level of $T$.

Let us define $\F_i$ and good and bad vertices as in Definition~\ref{def:coupling}.

Define the coupling of a random walk $X$ on $G^*$ from $u\in V_{z_i}$ with a random walk $\til{X}$ on $T$ from $u$ by moving them together until $\til{X}$ reaches level $\til{L}$ as long as none of the following happen.
\begin{enumerate}[(i)]
\item $\til{X}$ crosses a truncated edge;
\item $\til{X}$ visits level $K-1$ of $T$;
\item $\Omega_0(\til{L}-K)$ fails to hold, i.e.\ $\til{X}$ crosses more than $\frac{R}{2}$ short edges in a row.
\end{enumerate}
If none of these events happen, we say that the coupling is successful. Otherwise we say that the coupling fails.

Let us also define the coupling of a random walk $X$ on $G^*$ from $x$ and a random walk $\til{X}$ on $T$ from $x$ by moving them together until $\til{X}$ hits level $K$ and then using the above coupling from the level $K$ vertices. We say that the coupling is successful if $\Omega_0^X(K)$ holds (note that this only depends on $X$ up to the first time it hits level $K$) and the coupling from the level $K$ vertices is also successful. Otherwise let us say that the coupling fails. Let $\Omega_2^X$ denote the event that the coupling is successful.
\end{definition}

\begin{lemma}\label{lem:bound_num_bad_2}
Let $T_0$ be a possible realisation of the first $K$ levels of $T$. For each $i$ let $D_i$ be the set of vertices explored during the exploration process from the set $V_{z_i}$. Then for all sufficiently large values of $n$ we have
$$\left|\bigcup D_i\right|\quad\le\quad N:=\quad n\exp\left(-\frac13A\sqrt{\frac{\V\log n}{\h}}\right).$$
Also there exists a positive constant~$C$ (not depending on $T_0$) so that the number $\rm{Bad}$ of bad vertices $z$ satisfies
\[
\prcond{{\rm{Bad}}\ge C\sqrt{\frac{\log n}{\h}} }{\B_K^*(x_0)=T_0}{}\quad\le\quad o\left(\frac{1}{n^2}\right).
\]
\end{lemma}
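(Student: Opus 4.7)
The plan is to follow the structure of the proof of Lemma~\ref{lem:bound_num_bad} closely, adapting the quantitative estimates to the three differences of the present setup: the truncation threshold in $\trunctil{\cdot}{A}$ is $\log n-A\sqrt{\V\log n/\h}$ (instead of $\tfrac12\log n+A\sqrt{\V\log n/\h}$), the exploration runs up to level $\til L$ (instead of $L$), and it takes place only around the single vertex~$x$.

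For the bound on $|\bigcup D_i|$, I let $\til S_k$ denote the level-$k$ non-truncated vertices of $T$ and run the usual induction $\sum_{e\in\til S_k}\exp(-\til W_T(e))\le 1$, which reflects the fact that the children probabilities $\til\theta_{T(y),M}(\cdot)$ sum to at most one at every branch. The new threshold in $\trunctil{\cdot}{A}$ turns this into $|\til S_k|\le n\exp(-A\sqrt{\V\log n/\h})$. Accounting for the level-$M$ delay in the truncation (which gives $|\widehat S_k|\le b(R)^M|\til S_{k-M}|$ for the vertices whose level-$M$ ancestor is not yet truncated) and for the $R$-ball attached to each non-truncated endpoint, the total is at most $\til L\cdot b(R)^{M+1}\cdot n\exp(-A\sqrt{\V\log n/\h})$. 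Because $\log\til L+(M+1)\log b(R)=o(\sqrt{\V\log n/\h})$ by Assumption~\ref{assump:f} (the key scale comparison being $(\log g(n))^2/\sqrt{g(n)}\to 0$), this prefactor is absorbed into $\exp(\tfrac23 A\sqrt{\V\log n/\h})$ for all large~$n$, and the bound $|\bigcup D_i|\le N$ follows.

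For the bound on $\mathrm{Bad}$, I use the fact that at each sampling step during the exploration the next long-range endpoint in $G^*$ is uniform over the remaining unmatched half-edges, so conditionally on the past it lies in $\partial T_0=V_{z_1}\cup\cdots\cup V_{z_{L_x}}$ with probability at most $p:=b(R)^{K+1}/(n-N)\asymp b(R)^{K+1}/n$. Each such incidence makes at most one new $z_i$ bad and there are at most $N$ sampling steps, so by the standard stochastic-domination induction $\mathrm{Bad}\le_{\mathrm{st}}\Bin{N}{p}$. Setting $t:=C\sqrt{\log n/\h}\asymp C\sqrt{g(n)}$, the Chernoff-type bound $\pr{\Bin{N}{p}\ge t}\le(eNp/t)^t$ applies since $Np\to 0$. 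The same scale comparisons yield $Np\le\exp(-\tfrac14 A\sqrt{\V\log n/\h})$, hence $\log(eNp/t)\lesssim -A\log n/\sqrt{g(n)}$ and
\[
t\log(t/(eNp))\quad\gtrsim\quad AC\log n\cdot\frac{\sqrt\V}{\h}\quad\asymp\quad AC\log n,
\]
using $\V\asymp\h^2$. Choosing $A$ and $C$ large enough (say $AC$ above an absolute constant) then forces $(eNp/t)^t\le n^{-3}=o(1/n^2)$.

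The main obstacle, relative to Lemma~\ref{lem:bound_num_bad}, is that the higher truncation threshold makes $N$ only an exponential factor smaller than~$n$, so the expected number of bad vertices is no longer $o(1)$ and a direct Markov bound is not available. This is what forces the use of the full binomial Chernoff bound and the enlargement of the tolerated $\mathrm{Bad}$ count by the factor $\sqrt{\log n/\h}$ in the statement; tracking the various logarithmic scales ($\log\til L$, $K\log b(R)$, $M\log b(R)$) against $\sqrt{\V\log n/\h}$ is the most delicate part of the argument, although it remains essentially mechanical once Assumption~\ref{assump:f} is unpacked.
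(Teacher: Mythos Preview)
Your proposal is correct and follows essentially the same approach as the paper: the bound on $|\bigcup D_i|$ is obtained exactly as in Lemma~\ref{lem:bound_num_bad} with the new threshold, and the bound on $\mathrm{Bad}$ is via the same binomial tail estimate $\binom{N}{t}p^t$ combined with the scale relation $\sqrt{\log n/\h}\cdot\sqrt{\V\log n/\h}\asymp\log n$. The paper uses the slightly cruder bound $\binom{N}{t}p^t\le(Np)^t$ rather than your $(eNp/t)^t$, but the arithmetic is otherwise identical.
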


\begin{proof}
The proof of the bound on $\left|\bigcup D_i\right|$ is analogous to the proof in Lemma~\ref{lem:bound_num_bad}. For the bound on the number of bad vertices we get that
\begin{align*}
&\prcond{{\rm{Bad}}\ge C\sqrt{\frac{\log n}{\h}} }{T_0}{}\quad\le\quad\binom{N}{C\sqrt{\frac{\log n}{\h}}} \left(\frac{b(R)^{K+1}}{n}\right)^{C\sqrt{\frac{\log n}{\h}}}\\
&\le\quad\exp\left(C\sqrt{\frac{\log n}{\h}}\left(\log n-\frac13A\sqrt{\frac{\V\log n}{\h}}+2K\log b(R)-\log n\right)\right)\quad\ll\quad\frac{1}{n^2}
\end{align*}
if $C$ is sufficiently large, since
\begin{align*}K\log b(R)\quad\lesssim\quad\log g(n)\frac{\log n}{g(n)}\log g(n)\quad&\ll\quad \frac{\log n}{\sqrt{g(n)}}\quad\asymp\quad\sqrt{\frac{\V\log n}{\h}},\qquad\text{and}\\
\sqrt{\frac{\log n}{\h}}\sqrt{\frac{\V\log n}{\h}}\quad&\asymp\quad\log n.
\end{align*}
\end{proof}

\begin{lemma}\label{lem:coupling_succ_prob_2}
Let us consider the setup of Definition~\ref{def:coupling_2} and consider the coupling of $X$ with $\til{X}$ from $u\in V_{z_i}$ . Then for all $\theta>0$ there exist $B$ and $A$ sufficiently large such that for all large enough $n$ we have
\[
\prcond{\text{the coupling of $X$ and $\til{X}$ succeeds}}{\F_{i-1}}{u}  \quad\ge\quad (1-\theta)\1_{\{z_i\text{ good}\}}.
\]
\end{lemma}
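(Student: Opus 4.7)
The plan is to mirror the proof of Lemma~\ref{lem:coupling_succ_prob}, bounding each failure mode of the coupling conditional on $\{z_i\text{ good}\}$ and showing each contributes $o(1)$. Compared to Lemma~\ref{lem:coupling_succ_prob}, three things change: the truncation threshold is $\trunctil{e}{A}$ rather than $\trunc{e}{A}$, the walk is run only up to level $\til{L}$ rather than $L$, and the event $\Omega_1$ is no longer required; only $\Omega_0^X(\til{L}-K)$ needs to hold.

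First I would apply the analog of Proposition~\ref{pro:bound_trunc_prob} for the criterion $\trunctil{e}{A}$ (the proposition established just above Definition~\ref{def:coupling_2}) to bound the probability that $\til{X}$ hits a truncated edge before time $\tau_{\til{L}}$ by an arbitrarily small constant for $B$ and $A$ large. Next I would control the probability of revisiting a level after first visiting one of its depth-$K$ descendants: by Lemma~\ref{lem:neverbacktrack} each such event has probability $\le\delta^K$, so a union bound over at most $\til{L}$ levels requires $\til{L}\,\delta^K=o(1)$. Using $\til{L}\asymp g(n)$, $K\asymp\log g(n)$ and $\log(1/\delta)\asymp\log(1/\eps)\gg 1$, we get $\log(\til{L}\delta^K)\asymp\log g(n)-(\log g(n))\log(1/\eps)\to-\infty$. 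By the same lemma the walk reaches level $K-1$ with probability $O(\delta^K)=o(1)$, and Lemma~\ref{lem:bound_Omega_zero_prob} applied with $\ell=\til{L}-K\lesssim g(n)$ gives $\prcond{\Omega_0^X(\til{L}-K)^c}{G^*}{}=o(1)$.

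The main obstacle is controlling the overlap and optimal-coupling failures along the exploration. At each level the number of already-explored vertices that lie within distance $2K$ of a newly visited vertex is at most $b(R)^{2K+1}$, and each contributes a failure probability of at most $\tfrac{b(R)N}{n-N}$ for an overlap and $\tfrac{N}{n}$ for a coupling failure, where $N=n\exp\bigl(-\tfrac13 A\sqrt{\V\log n/\h}\bigr)$ by Lemma~\ref{lem:bound_num_bad_2}. Taking a union bound over the at most $\til{L}$ levels, the cumulative log-probability is of order
\[
\log\til{L}+(2K+2)\log b(R)+\log(N/n)\;\asymp\;\log g(n)+(\log g(n))^2\,\frac{\log n}{g(n)}-\tfrac13 A\,\frac{\log n}{\sqrt{g(n)}}.
\]
This is the delicate step: the bound $N/n=\exp(-\Theta(\log n/\sqrt{g(n)}))$ here is only polylogarithmically small, as opposed to essentially $n^{-1/2}$ in the upper-bound version, so the margin is smaller. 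However, since $(\log g(n))^2/g(n)\ll 1/\sqrt{g(n)}$ as $g(n)\to\infty$, the last (negative) term dominates for $A$ large and the expression tends to $-\infty$. Summing all the contributions yields the claim after choosing $B$ and $A$ sufficiently large.
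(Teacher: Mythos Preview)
Your proposal is correct and follows exactly the route the paper intends: its own proof is simply ``Analogous to the proof of Lemma~\ref{lem:coupling_succ_prob}'', and you have carried out precisely that analogy, correctly identifying that the delicate point is the weaker bound $N/n=\exp\bigl(-\Theta(\log n/\sqrt{g(n)})\bigr)$ from Lemma~\ref{lem:bound_num_bad_2} and verifying $(\log g(n))^2/g(n)\ll 1/\sqrt{g(n)}$. One small slip: the probability of hitting level $K-1$ from $u\in V_{z_i}$ (which sits at level $K$) is $O(\delta)$, not $O(\delta^K)$, but this is still $o(1)$ so the argument is unaffected.
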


\begin{proof}
Analogous to the proof of Lemma~\ref{lem:coupling_succ_prob}.
\end{proof}

\begin{proposition}\label{pro:tmix_lower_bound}
For any $\theta\in(0,1)$ and any $t_w^+\gg t_w$ with high probability the mixing time of the random walk on $G^*$ satisfies
\[
\tmix{\theta}\quad\ge\quad t_0-t_w^+
\]
for all sufficiently large $n$.
\end{proposition}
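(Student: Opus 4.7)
The plan is to exhibit a set $S\subseteq V(G^*)$ with $\pi(S)=o(1)$ such that, starting from a typical vertex $x$, the walk $X_t$ lies in $S$ with probability at least $1-3\theta$ at time $t:=t_0-Bt_w$ for sufficiently large $B=B(\theta)$. This yields $\dtv{P^t(x,\cdot)}{\pi(\cdot)}>\theta$, hence $\tmix{\theta}>t$; since $t_w^+\gg t_w$ gives $t_0-t_w^+<t_0-Bt_w$ for all large $n$, the claim $\tmix{\theta}\ge t_0-t_w^+$ follows.

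We fix a $K$-root $x$ of $G^*$ (possible whp by Lemma~\ref{lem:few_non_K_roots}) and run the exploration and coupling of Definition~\ref{def:coupling_2}, pairing $X$ from $x$ with a random walk $\til X$ on a random quasi-tree $T$. Define $S$ to be the image under $\iota$ of those vertices of $T$ at long-range distance $\le\til L$ from the root whose $M$-ancestor (if any) does not satisfy $\trunctil{\cdot}{A}$. Using the identity $\sum_{e\text{ at level }k}e^{-\til W_T(e)}\le 1$ (by induction on $k$) together with the bound $e^{-\til W_T(e)}\ge n^{-1}e^{A\sqrt{\V\log n/\h}}$ for a non-truncated level-$k$ edge, the number of non-truncated edges at each level is at most $n\,e^{-A\sqrt{\V\log n/\h}}$. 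Multiplying by $b(R)^{M+1}(\til L+1)$ to account for $M$-ancestor descendants, the attached $R$-balls, and the $\til L$ levels yields $|S|\le n\,e^{-A\sqrt{\V\log n/\h}+O((\log g(n))^2\log n/g(n))}=o(n)$ for $A$ large, and hence $\pi(S)=o(1)$ since $\pi(y)\asymp 1/n$.

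Next we establish $\prcond{X_t\in S}{G^*}{x}\ge 1-3\theta$ from two ingredients. The first is the analogue of Lemma~\ref{lem:coupling_succ_prob_from_x} for the coupling of Definition~\ref{def:coupling_2}: its proof (a martingale/Azuma--Hoeffding aggregation of the per-$z_i$ success probabilities from Lemma~\ref{lem:coupling_succ_prob_2}, combined with the bound on bad vertices from Lemma~\ref{lem:bound_num_bad_2}) goes through with only cosmetic changes to the truncation threshold and the absence of the reversibility event $\Omega_1$, and yields that whp in $G^*$ the coupling from $x$ succeeds with conditional probability $\ge 1-\theta$. The second is Lemma~\ref{lem:speed}, which for $D\ge C(\theta)$ in the definition of $\til L$ gives $\pr{\sup_{s\le t}d_T(\rho,\til X_s)>\til L}\le\theta$, so $\tau_{\til L}^{\til X}>t$ whp. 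On the intersection of these two events $X_t=\iota(\til X_t)$ and $\til X_t$ lies in the non-truncated explored portion, hence $X_t\in S$. The main technical step, and the one requiring the most care, is the whp coupling-success estimate from $x$; all other bounds are direct adaptations of tools already built in Section~\ref{sec:relating_Gstar_and_T}.
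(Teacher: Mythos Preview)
Your proposal is correct and follows essentially the same approach as the paper, which simply says the proof is ``analogous to the proof of the lower bound in \cite[Theorem 1.1]{random_matching}''. You have correctly identified the three ingredients that underpin that argument: (i) the explored region has size $o(n)$ by Lemma~\ref{lem:bound_num_bad_2} (your counting via $\sum_e e^{-\til W_T(e)}\le 1$ is exactly how that lemma is proved); (ii) a high-probability coupling-success statement from $x$, obtained by aggregating Lemma~\ref{lem:coupling_succ_prob_2} over the $z_i$ via the same martingale/Azuma--Hoeffding argument as in Lemma~\ref{lem:coupling_succ_prob_from_x}; and (iii) the speed bound of Lemma~\ref{lem:speed} to ensure $\til X$ has not reached level $\til L$ by time $t$. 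One small point: take $S=\bigcup_i D_i$ (the vertices of $G^*$ actually revealed by the exploration) rather than the $\iota$-image of non-truncated $T$-vertices, so that $S$ is unambiguously a subset of $V(G^*)$ measurable with respect to the exploration; this avoids any concern about $\iota$ being defined past edges truncated due to coupling failure, and the size bound from Lemma~\ref{lem:bound_num_bad_2} applies directly.
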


\begin{proof} This is analogous to the proof of the lower bound in~\cite[Theorem 1.1]{random_matching}.
\end{proof}

\section{Specific graphs}\label{sec:specific_graphs}

\subsection{Graphs with polynomial growth of balls}

\begin{proposition}\label{pro:poly_ball_growth}
Let us assume that $G$ and $\eps$ satisfy the following. For any $t$ the size of all balls of radius $t$ are upper bounded by $ct^d$ where $c,d>0$ are constants; $\eps_n\lesssim(\log n)^{-\beta}$ for some constant $\beta>0$; and $\log(\eps)\gg-\log n$ (i.e.\ $\eps=n^{-o(1)}$). Then for any $\theta\in\left(0,1\right)$ and $t_w^+\gg t_w$ whp the mixing time of the random walk on $G^*$ satisfies
\[
\left|\tmixtext^{G^*}(\theta)-t_0\right|\quad\le\quad t_w^+
\]
for all sufficiently large $n$, where $t_0\asymp\frac1\eps\frac{\log n}{\log\left(\frac1\eps\right)}$ and $t_w\asymp\frac1\eps\sqrt{\frac{\log n}{\log\left(\frac1\eps\right)}}$.
This means that the chain exhibits cutoff around $t_0$ with window $t_w^+$.
\end{proposition}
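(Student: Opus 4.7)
The plan is to apply Propositions~\ref{pro:tmix_upper_bound} and~\ref{pro:tmix_lower_bound}, which give the upper and lower bounds on the mixing time in the abstract cutoff framework of Section~\ref{sec:proof_of_cutoff}, by verifying Assumption~\ref{assump:f} with the specific choice $f(t)=\log(1+t)$. With this choice, $f(1/\eps)\asymp\log(1/\eps)$, and setting $g(n)=\log n/\log(1/\eps)$ (which tends to infinity by the assumption $\eps=n^{-o(1)}$) gives $f(1/\eps)\asymp \log n/g(n)$. The identifications $\nu\asymp\eps$ (from Lemma~\ref{lem:speed}) and $\h\asymp f(1/\eps)\asymp \log(1/\eps)$ then yield $t_0=\log n/(\nu\h)\asymp\frac1\eps\frac{\log n}{\log(1/\eps)}$ and $t_w=\log n/(\nu\h\sqrt{g(n)})\asymp\frac1\eps\sqrt{\log n/\log(1/\eps)}$, matching the target rates in the statement.

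First I would dispatch the routine parts of Assumption~\ref{assump:f}. The function $f(t)=\log(1+t)$ is continuous, increasing, sandwiched between $c\log t$ and $Ct$, and satisfies $f(xy)\le f(x)f(y)$ for $x,y$ bounded away from $1$. The polynomial volume bound $|\B_G(x,t)|\le p(t)$ gives $\log b(t)\asymp\log(1+t)\asymp f(t)$. The hypothesis $\eps_n\lesssim(\log n)^{-\beta}$ is assumed directly, and $f(\hat c/\eps)=\log(1+\hat c/\eps)\asymp\log(1/\eps)\asymp f(1/\eps)$ since $\hat c$ is a constant.

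The substantive step is the entropy condition: for $b\in\{1,2,4\}$, any $t\le\hat c/\eps$ and any starting vertex $x_0$, the entropy of the simple random walk $X$ on $G$ satisfies $H_b(X_t)\asymp f(t)^b=(\log(1+t))^b$. The upper bound is immediate from the ball bound, since $X_t$ is supported on $\B_G(x_0,t)$ and therefore $H_b(X_t)\le (\log|\B_G(x_0,t)|)^b\lesssim(\log t)^b$. The lower bound is the main obstacle and I would argue it as follows. For a bounded-degree graph $G$ with volume growth $|\B_G(x,r)|\le cr^d$, standard Nash/Carne--Varopoulos type estimates give a sup-norm bound of the form $\max_y P_G^t(x_0,y)\le C t^{-\alpha}$ for some $\alpha=\alpha(d,\Delta)>0$; this yields $H(X_t)\ge-\log\max_y P_G^t(x_0,y)\gtrsim\log t$. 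To upgrade this to $H_b(X_t)\gtrsim(\log t)^b$ for $b=2,4$, I would combine the sup-norm bound with a concentration statement for the random variable $-\log P_G^t(x_0,X_t)$. Concretely, one checks that at least a constant fraction of the mass of $P_G^t(x_0,\cdot)$ lies on vertices $y$ with $P_G^t(x_0,y)\le t^{-\alpha/2}$, say, so that $-\log P_G^t(x_0,X_t)\gtrsim\log t$ with probability bounded below; raising to the $b$-th power and taking expectation then gives $H_b(X_t)\gtrsim(\log t)^b$. A lazy-walk argument may be convenient to avoid parity issues.

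With Assumption~\ref{assump:f} verified, one invokes Propositions~\ref{pro:tmix_upper_bound} and~\ref{pro:tmix_lower_bound} directly: for any $\theta\in(0,1)$ and any $t_w^+\gg t_w$, whp $G^*$ satisfies $t_0-t_w^+\le\tmixtext^{G^*}(\theta)\le t_0+t_w^+$, which is cutoff at $t_0$ with window $t_w^+$. The principal technical difficulty is the higher-moment entropy lower bound in the polynomial-growth regime; everything else is either an immediate consequence of the hypotheses or an application of the machinery already built up in Sections~\ref{sec:quasi-tree}--\ref{sec:proof_of_cutoff}.
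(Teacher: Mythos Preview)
Your proposal is correct and follows essentially the same route as the paper: verify Assumption~\ref{assump:f} with $f(t)=\log(1+t)$ and $g(n)=\log n/\log(1/\eps)$, then invoke Propositions~\ref{pro:tmix_upper_bound} and~\ref{pro:tmix_lower_bound}.

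The one place you overcomplicate matters is the lower bound on $H_b(X_t)$ for $b\ge 2$. No concentration argument is needed. The paper simply uses Lemma~\ref{lem:Pt_upper_bound} (which holds for \emph{any} bounded-degree connected graph, not just those of polynomial growth) to get $\prstart{X_t=u}{v}\le c'/\sqrt{t}$ for all $u$ and all $t\lesssim n^2$; since $1/\eps=n^{o(1)}\ll n^2$, this applies in the relevant range. Then $(-\log\prstart{X_t=u}{v})^b\ge(\log(\sqrt{t}/c'))^b\asymp(\log t)^b$ holds pointwise, and summing against the probability weights gives $H_b(X_t)\gtrsim(\log t)^b$ directly. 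Your ``constant fraction of mass'' step and the lazy-walk detour are both unnecessary.
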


\begin{proof}
Let $g(n)=\frac{\log n}{\log\left(\frac1\eps\right)}$ and let $f(t)=\log(1+t)$. Note that $f(t)\asymp\log t$ for $t\ge2$. Then by assumption $g(n)\to\infty$ as $n\to\infty$ and $f\left(\frac1\eps\right)\asymp\frac{\log n}{g(n)}$.

Also for any constant $\widehat{c}$ we have $\log\left(1+\frac{\widehat{c}}{\eps}\right)\asymp\log\left(\frac{1}{\eps}\right)+\log{\widehat{c}}\asymp\log\left(\frac1\eps\right)$, since $\frac1\eps\gg1$.\\
For any $x,y>e^2-1$ we have $\log\left(1+xy\right)\le\log(1+x)+\log(1+y)\le$\\ $2\max\{\log(1+x),\log(1+y)\}\le\log(1+x)\log(1+y)$.

We have already assumed that the sizes of balls of radius $t$ are upper bounded by $b(t)$ where $b(t)\asymp t^d$, hence $\log b(t)\asymp \log t\asymp f(t)$.

Let $X$ be a random walk on $G$. We will show that for all $t\lesssim\frac1\eps$ and all $b\in\Zpos$ we have  $H_b(X_t)\asymp(\log t)^b\asymp f(t)^b$. From Lemma~\ref{lem:H_b_properties}~\eqref{property:bound_by_set_size} we immediately get that
\[
H_b(X_t)\quad\le\quad\left(\log b(t)\right)^b\quad\lesssim\quad\left(\log t\right)^b\quad\asymp\quad f(t)^b.
\]
From Lemma~\ref{lem:Pt_upper_bound} we know that for all $t\lesssim n^2$ and any $u$ we have $\prstart{X_t=u}{v}\lesssim\frac{1}{\sqrt{t}}$.
Note that $\frac1\eps=n^{o(1)}\ll n^2$ for all sufficiently large $n$. This gives that for any $t\le\frac{C}{\eps}$ we have
\[
H_b(X_t)=\sum_{u}\prstart{X_t=u}{v}\left(-\log\prstart{X_t=u}{v}\right)^b\ge\sum_{u}\prstart{X_t=u}{v}\left(-\log\left(\frac{c'}{\sqrt{t}}\right)\right)^b\asymp\left(\log t\right)^b.
\]
This means that Assumption~\ref{assump:f} holds with $f(t)=\log(1+t)$ and $g(n)=\frac{\log n}{\log\left(\frac1\eps\right)}$. Propositions~\ref{pro:tmix_upper_bound} and \ref{pro:tmix_lower_bound} give the result.
\end{proof}

\subsection{Graphs with linear growth of entropy}

\begin{proposition}\label{pro:lin_entropy_growth}
Let us assume that $G$ and $\eps$ satisfy the following. For any vertex $x$ and any $t\le c\log n$ (where $c$ is some positive constant), the entropy of a simple random walk $X$ on $G$ starting from $x$ satisfies $H_1(X_t)\gtrsim t$; and $(\log n)^{-1}\ll\eps_n\lesssim(\log n)^{-\beta}$ for some constant $\beta>0$. Then for any $\theta\in\left(0,1\right)$ and $t_w^+\gg t_w$ whp the mixing time of the random walk on $G^*$ satisfies
\[
\left|\tmixtext^{G^*}(\theta)-t_0\right|\quad\le\quad t_w^+
\]
for all sufficiently large $n$, where $t_0\asymp\log n$ and $t_w\asymp\sqrt{\frac{\log n}{\eps}}$.

This means that the chain exhibits cutoff around $t_0$ with a window bounded by anything that is $\gg t_w$.
\end{proposition}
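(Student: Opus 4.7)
The plan is to reduce the claim to Propositions~\ref{pro:tmix_upper_bound} and~\ref{pro:tmix_lower_bound} by verifying that Assumption~\ref{assump:f} holds with the choices
\[ f(t) := t \quad\text{and}\quad g(n) := \eps_n \log n. \]
The hypothesis $\eps_n \gg (\log n)^{-1}$ ensures $g(n)\to\infty$, and $f(1/\eps_n)=1/\eps_n \asymp (\log n)/g(n)$, as required. The condition $\eps_n\lesssim(\log n)^{-\beta}$ is among the hypotheses, and the remaining structural items in Assumption~\ref{assump:f}, namely continuity/monotonicity of $f$, the sandwich $c\log t\le f(t)\le Ct$, invariance of $f(\cdot/\eps_n)$ under constant multiplicative factors, and the submultiplicativity $f(xy)\le f(x)f(y)$, are all trivial for the identity function.

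The substantive step will be the entropy condition $H_b(X_t)\asymp t^b$ for $b\in\{1,2,4\}$ and all $t\le \widehat{c}/\eps_n$. Since $1/\eps_n\ll\log n$, every such $t$ lies within the range $t\le c\log n$ where the hypothesis $H_1(X_t)\gtrsim t$ is available. For the upper bounds, the degree bound $\Delta$ forces $-\log \prstart{X_t=y}{x}\le t\log\Delta$ for every $y$ reachable in $t$ steps, whence
\[ H_b(X_t)\le (t\log\Delta)^{b-1}\,H_1(X_t)\le (t\log\Delta)^b\lesssim t^b. \]
For the matching lower bound with $b\in\{2,4\}$, Jensen's inequality applied to the convex function $z\mapsto z^b$ yields $H_b(X_t)\ge H_1(X_t)^b \gtrsim t^b$. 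For the ball-growth condition $\log b(t)\asymp t$, the upper bound is $b(t)\le(\Delta+1)^t$, while the lower bound follows from the observation $\log b(t)\ge \log |\B_G(x,t)|\ge H_1(X_t)\gtrsim t$ applied to any vertex $x$.

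Once Assumption~\ref{assump:f} is verified, Propositions~\ref{pro:tmix_upper_bound} and~\ref{pro:tmix_lower_bound} deliver cutoff at $t_0=(\log n)/(\nu\h)$ with window of any order $\gg t_w$. Lemma~\ref{lem:speed} gives $\nu\asymp\eps_n$, and Lemma~\ref{lem:entropy_from_assump_f} with this choice of $f$ gives $\h\asymp f(1/\eps_n)=1/\eps_n$, so $\nu\h\asymp 1$ and
\[ t_0\asymp \log n, \qquad t_w\asymp \frac{1}{\eps_n}\sqrt{g(n)}=\sqrt{\frac{\log n}{\eps_n}}, \]
exactly as claimed. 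All the real work is carried out by the general machinery of Section~\ref{sec:proof_of_cutoff}; the only place that requires any thought is the entropy verification, and the Jensen step together with the elementary support bound handles that cleanly.
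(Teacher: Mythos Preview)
Your proposal is correct and follows essentially the same route as the paper: set $f(t)=t$, $g(n)=\eps_n\log n$, verify the items of Assumption~\ref{assump:f}, and invoke Propositions~\ref{pro:tmix_upper_bound} and~\ref{pro:tmix_lower_bound}. The only cosmetic differences are that the paper uses Cauchy--Schwarz where you use Jensen for $H_b\ge H_1^b$, and for the ball-growth item the paper simply \emph{defines} $b(t):=\Delta^{t+1}$ (so $\log b(t)\asymp t$ is automatic and no lower-bound argument via entropy is needed).
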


\begin{proof}
Let $g(n)=\eps\log n$ and let $f(t)=t$. Then by assumption $g(n)\to\infty$ as $n\to\infty$ and $f\left(\frac1\eps\right)=\frac{\log n}{g(n)}$.

Also $f\left(\frac{\widehat{c}}{\eps}\right)\asymp f\left(\frac1\eps\right)$ for any constant $\widehat{c}$, and $f(xy)=f(x)f(y)$ for any $x,y>0$.

Since each vertex has degree $\le\Delta$, the sizes of balls of radius $t$ are upper bounded by $b(t):=\Delta^{t+1}$, which gives $\log b(t)\asymp t\asymp f(t)$.

Let $X$ be a random walk on $G$. We will show that for all $t\lesssim\frac1\eps\ll\log n$ and all $b\in\{1,2,4\}$ we have $H_b(X_t)\asymp(\log t)^b\asymp f(t)^b$. From Lemma~\ref{lem:H_b_properties}~\eqref{property:bound_by_set_size} we immediately get that for any $b\in\Zpos$ we have
\[
H_b(X_t)\quad\le\quad\left(\log b(t)\right)^b\quad\lesssim\quad t^b\quad\asymp\quad f(t)^b.
\]
By assumption we also have $H_1(X_t)\gtrsim t$, which by Cauchy-Schwarz also gives $H_2(X_t)\gtrsim t^2$ and $H_4(X_t)\gtrsim t^4$.

This means that Assumption~\ref{assump:f} holds with $f(t)=t$ and $g(n)=\eps\log n$. Propositions~\ref{pro:tmix_upper_bound} and \ref{pro:tmix_lower_bound} give the result.
\end{proof}

Now we give some examples of notable families of graphs covered by Proposition~\ref{pro:lin_entropy_growth}.

\begin{proposition}\label{pro:local_expanders_lin_entropy}
For any positive constants $a$ and $b$ there exist positive constants $C$ and $c$ with the following property. If the graph $G$ is such that for any set $A\subseteq V$ of size $\le n^a$ we have $|\partial A|\ge b|A|$ (where $\partial A$ denotes the set of edges between $A$ and $V\setminus A$) then for any vertex $x$ of $G$ and any $t\le C\log n$, the entropy of a simple random walk $X$ on $G$ starting from $x$ satisfies $H_1(X_t)\ge ct$.
\end{proposition}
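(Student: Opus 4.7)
The plan is to reduce the claim to the $\ell^\infty$ heat-kernel bound
\[
\max_y \prstart{X_t = y}{x} \;\le\; C_0\, e^{-\lambda_* t}
\qquad\text{for all } t \le C\log n,
\]
with positive constants $\lambda_*, C_0, C$ depending only on $a,b,\Delta$; the elementary inequality $H_1(X_t) \ge -\log\max_y \prstart{X_t=y}{x}$ then gives $H_1(X_t) \ge \lambda_* t - \log C_0$, which can be absorbed into $c t$ for a suitably chosen $c > 0$.

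For the heat-kernel bound, I would observe that $X$ moves at most one edge per step, so the distribution $\prstart{X_t = \cdot}{x}$ is supported in the graph-distance ball $A := \B_G(x, r)$ with $r := \lfloor a(\log n)/(2\log\Delta)\rfloor$. This choice ensures $|A| \le \Delta^{r+1} \le n^{a/2} \le n^a$, so the local expansion hypothesis applies to every $S \subseteq A$ and yields
\[
\phi_0(A) \;:=\; \min_{S\subseteq A}\frac{|\partial S|}{\sum_{v \in S}\deg(v)} \;\ge\; \frac{b}{\Delta}.
\]
Cheeger's inequality for the Dirichlet problem then gives that the principal eigenvalue of the killed substochastic operator $P_A$ satisfies $\mu_1 \le 1 - \lambda_0(A)$ with $\lambda_0(A) \ge \phi_0(A)^2/2$. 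Passing to the lazy walk (so all eigenvalues lie in $[0, 1-\lambda_*]$ with $\lambda_* := \lambda_0(A)/2 \ge (b/\Delta)^2/4$) and invoking a spectral decomposition with Cauchy--Schwarz over the orthonormal eigenbasis gives
\[
p_t^{\lazytext, A}(x, y) \;\le\; \sqrt{\pi(y)/\pi(x)}\,(1-\lambda_*)^t \;\le\; \sqrt{\Delta}\, e^{-\lambda_* t}.
\]
Since for $t \le r$ the walk cannot leave $\B_G(x, t)\subseteq A$, this bounds the full transition probability $\prstart{\lazy X_t=y}{x}$ itself.

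Combining the above yields $H_1(\lazy X_t) \ge \lambda_* t - \tfrac{1}{2}\log\Delta$ for $t \le r$, which gives $H_1(\lazy X_t) \ge ct$ once the constant is absorbed; the non-lazy version follows routinely via $\lazy X_t = X_{S_t}$ with $S_t \sim \Bin{t}{1/2}$ and concentration of $S_t$ around $t/2$. The main technical care is in the Dirichlet Cheeger inequality, where the relevant quantity is the edge boundary $|\partial S|$ of $S$ in the \emph{ambient} graph $G$ (rather than within the induced subgraph $G[A]$) -- which is exactly what the local expansion hypothesis controls; a minor additional point is handling the $\tfrac12\log\Delta$ additive loss and small-$t$ regime, which just costs shrinking $c$ by a factor depending on $a,b,\Delta$.
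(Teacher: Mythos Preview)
Your proposal is correct and arrives at the same conclusion as the paper, but via a genuinely different route for the key heat-kernel bound. The paper invokes the evolving-sets machinery of Morris--Peres (their Theorem~2) to obtain $P_{G,\lazytext}^t(x,y)\le c_2 e^{-c_3 t}$ for $t\le c_1\log n$ directly from the isoperimetric profile; this is a one-line citation. You instead localise to the ball $A=\B_G(x,r)$ of size $\le n^a$, observe that the local expansion hypothesis controls the Dirichlet conductance $\phi_0(A)$, apply the Dirichlet Cheeger inequality to bound the principal eigenvalue of the killed lazy operator, and then use the spectral expansion with Cauchy--Schwarz. Your argument is more elementary and self-contained (no black-box dependence on evolving sets), at the cost of a few more lines; the paper's approach is terser but imports heavier machinery. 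Both proofs then perform the same lazy-to-simple transfer at the end: the paper writes this out via the identity $P_{G,\lazytext}^t=\sum_k\pr{\Bin{t}{1/2}=k}P_G^k$ together with monotonicity of diagonal return probabilities, which is exactly what underlies your ``concentration of $S_t$ around $t/2$'' remark (though you should be aware that the direction needed is to extract $\max_y P_G^{t/2}(x,y)$ from the lazy bound, which uses $P_G^{2k}(x,x)$ decreasing rather than concentration per se). One small point you correctly flag: the additive $\tfrac12\log\Delta$ and the small-$t$ regime are harmless for the application but do require either absorbing into $c$ or noting that the statement is only used for $t$ bounded below.
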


\begin{remark}\label{rmk:local_expanders_lin_entropy}
This means that families of locally expanding graphs (in the above sense) satisfy the condition of Proposition~\ref{pro:lin_entropy_growth}. These include in particular families of expanders.
\end{remark}

\begin{proof}
Using~\cite[Theorem 2]{evolving_sets} for the lazy random walk on $G$, the assumption on the size of boundaries of sets, and that $\pi(x)\asymp\frac1n$ for all $n$, we get that there exist positive constants  $c_1$, $c_2$ and $c_3$ such that
\[
P^t_{G,\lazytext}(x,y)\quad\le\quad c_2\exp\left(-c_3t\right)\qquad\forall t\le c_1\log n,\quad\forall x,y\in V.
\]

Then using that $P_{G}^k(x,y)\le P_{G}^{k-2}(x,y)$ and that $\max_{x,y}P_{G}^{k}(x,y)\asymp\max_{x,y}P_{G}^{k-1}(x,y)$ we get that
\[
\max_{x,y}P^t_{G,\lazytext}(x,y)\quad=\quad\max_{x,y}\sum_{k=0}^{t}\pr{\Bin{t}{\frac12}=k}P_{G}^k(x,y)\quad\gtrsim\quad\max_{x,y}P_{G}^{t/2}(x,y).
\]
Since for $t\le\frac12c_1\log n$ the transition probabilities of $P_{G}$ are exponentially small in $t$, we also get that the entropy $H_1(X_t)$ is growing linearly in $t$.
\end{proof}

\begin{definition}\label{def:lamplighter}
For a finite connected simple graph $K=(V(K),E(K))$, the corresponding lamplighter graph $G=(V(G),E(G))$ is defined as follows. Let $V(G)=\{0,1\}^{V(K)}\times V(K)$ and for $(\sigma,u),(\rho,v)\in V(G)$ let $\left\{(\sigma,u),(\rho,v)\right\}\in E(G)$ if and only if $\{u,v\}\in E(K)$ and $\sigma(w)=\rho(w)$ for all $w\in V(K)\setminus\{u,v\}$. \footnote{Imagine that a lamp is placed at each vertex of $K$ and a lamplighter performs a simple random walk on $V(K)$, at each step switching the lamps at the two corresponding vertices on or off randomly, independently of each other and everything else. The vertices of $G$ represent the possible configurations of the system and a random walk on $G$ corresponds to the dynamics of the system.}
\end{definition}

Note that the maximal degree of $G$ is 4 times the maximal degree of $K$.

For a path $y$ on $V(K)$ and a given time $t$, let $R_t(y):=\{y_0,...,y_t\}$ denote the range of $y$ up to time~$t$.

\begin{proposition}\label{pro:suitable_lamplighters_lin_entropy}
For any positive constants $c_1$ and $c_2$ there exist positive constants $c_3$ and $c_4$ with the following property. If a graph $K$ is such that for any $t\le c_1|V(K)|$ and any vertex $v$ in $K$, the expected size of the range of a random walk $Y$ from $v$ up to time $t$ satisfies $\mathbb{E}_{v}\left|R_t(Y)\right|\ge c_2t$ then the corresponding lamplighter graph $G$ is such that for any $t\le c_3\log|V(G)|$ and any $x\in V(G)$ the entropy of a simple random walk $X$ on $G$ starting from $x$ satisfies $H_1(X_t)\ge c_4t$.
\end{proposition}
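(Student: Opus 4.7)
The simple random walk $X$ on $G$ can be written as $X_t=(\sigma_t,Y_t)$ with $Y_t\in V(K)$ recording the lamplighter's position and $\sigma_t\in\{0,1\}^{V(K)}$ recording the lamp states. By the definition of the lamplighter graph, at each step the walk first chooses a uniformly random neighbour $Y_s$ of $Y_{s-1}$ in $K$ (so $(Y_s)$ is a SRW on $K$ under the natural projection) and, independently of the past and of this choice, resamples the two lamps at $Y_{s-1}$ and $Y_s$ as independent uniform bits. The key consequence is that, given the entire trajectory $(Y_0,\ldots,Y_t)$ and the starting configuration $\sigma_0$, the final lamp profile $\sigma_t$ restricted to the range $R_t(Y):=\{Y_0,\ldots,Y_t\}$ is uniform on $\{0,1\}^{R_t(Y)}$, while on the complement it agrees with $\sigma_0$. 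This is because the lamp at any visited vertex $w$ is last refreshed at step $s^*(w):=\max\{s\ge1:Y_{s-1}=w\text{ or }Y_s=w\}$, and each refresh step produces two \emph{independent} uniform bits, regardless of coincidences among refresh steps.

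Given this, for any start state $x=(\sigma_0,y_0)\in V(G)$, two applications of monotonicity of entropy under conditioning yield
\[
H_1(X_t)\;=\;H_1(\sigma_t,Y_t)\;\ge\;H_1(\sigma_t\mid Y_t)\;\ge\;H_1(\sigma_t\mid Y_0,\ldots,Y_t)\;=\;\mathbb{E}_{y_0}[|R_t(Y)|]\,\log 2,
\]
where the last identity uses that the conditional law of $\sigma_t$ given the full path is uniform on a set of size $2^{|R_t(Y)|}$.

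To close the argument one just needs to verify that the range hypothesis on $K$ is applicable in the desired window. Since $|V(G)|=|V(K)|\cdot 2^{|V(K)|}$, we have $\log|V(G)|=|V(K)|\log 2+\log|V(K)|\le 2|V(K)|\log 2$ once $|V(K)|$ is large enough, so by choosing $c_3>0$ with $c_3\cdot 2\log 2\le c_1$ one has $t\le c_3\log|V(G)|\Rightarrow t\le c_1|V(K)|$. The hypothesis then gives $\mathbb{E}_{y_0}|R_t(Y)|\ge c_2 t$, and combining with the display above finishes the proof with $c_4:=c_2\log 2$.

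\textbf{Expected obstacle.} There is no serious obstacle: the argument is a clean entropy/range decomposition standard for lamplighter chains. The only point worth checking carefully is the independence-of-refreshes claim underlying the uniformity of $\sigma_t$ on $R_t(Y)$; this is handled by a short induction on the chronological order of last-visit times, using that the pair of bits produced at each refresh step is independent of the entire prior configuration.
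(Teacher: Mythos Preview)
Your proof is correct and essentially identical to the paper's: both identify the projection $Y$ as a SRW on $K$, use that conditional on the full base trajectory the lamp configuration on the range is uniform of size $2^{|R_t(Y)|}$, and then apply $H_1(X_t)\ge H_1(X_t\mid Y_0,\ldots,Y_t)=\mathbb{E}_{v}[|R_t(Y)|]\log 2$. Your version is slightly more explicit about the constants and the independence-of-refreshes justification, but the argument is the same.
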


\begin{proof}
Let $X$ be a simple random walk on $G$ and let $Y$ be the corresponding simple random walk on $K$.

Note that for any starting state $X_0=(\sigma,v)$, conditional on $(Y_0,...,Y_t)=(y_0,...,y_t)$ the state $X_t$ is uniform among all states $(\rho,u)$ where $u=y_t$ and $\rho(w)=\sigma(w)$ for all $w\in V(K)\setminus R_t(y)$. There are $2^{|R_t(y)|}$ such states in total, so we have
\[
H_1(X_t|Y_0=y_0,...,Y_t=y_t)\quad=\quad\log\left(2^{|R_t(y)|}\right)\quad\asymp\quad|R_t(y)|.
\]
Therefore
\[
H_1(X_t)\quad\ge\quad H_1(X_t|Y_0,...,Y_t)\quad\asymp\quad\mathbb{E}_{v}\left|R_t(Y)\right|.
\]
For $t\le c_1|V(K)|\asymp\log|V(G)|$ this is $\gtrsim t$ by assumption. This finishes the proof.
\end{proof}

\begin{remark}
This means that if $(K_n)$ is a sequence of graphs with bounded degrees and linearly growing range (in the above sense), then the sequence $(G_n)$ of the corresponding lamplighter graphs satisfies the condition of Proposition~\ref{pro:lin_entropy_growth}.
\end{remark}

\begin{remark}
One example of graphs $(K_n)$ satisfying the condition of Proposition~\ref{pro:suitable_lamplighters_lin_entropy} is $K_n=\Z_n^d$ where $d\ge3$ is fixed. Note that the corresponding family $(G_n)$ is not locally expanding, so it is not covered by Proposition~\ref{pro:local_expanders_lin_entropy}.
\end{remark}

\subsection{Remark on other functions $f$}

Note that for any graph $G$ with $n$ vertices and degrees bounded by $\Delta$ and any $b\in\{1,2,4\}$, a random walk $X$ on $G$ and $b(t):=\sup_{x}\left|\B_{G}(x,t)\right|$ satisfies
\[
(\log t)^b\quad\lesssim\quad H_b(X_t)\quad\lesssim\quad\left(\log b(t)\right)^b\quad\lesssim\quad t^b.
\]
It means that if we want to ensure
\begin{align}\label{eq:Hb_logb_f}
H_b(X_t)\quad\asymp\quad\left(\log b(t)\right)^b\quad\asymp\quad f(t)^b
\end{align}
with $f(t)=\log t$, it is sufficient to assume $\log b(t)\lesssim \log t$, while to ensure~\eqref{eq:Hb_logb_f} with $f(t)=t$, it is sufficient to assume $H_1(X_t)\gtrsim t$.

In case we want~\eqref{eq:Hb_logb_f} to hold with a function $\log t\ll f(t)\ll t$, we need to assume both a lower bound $H_1(X_t)\gtrsim f(t)$ and an upper bound $\log b(t)\lesssim f(t)$. It is less straightforward to see that graphs (and especially graphs of interest) satisfying these assumptions exist.

\section{Other values of $\eps$}\label{sec:other_eps}

\subsection{Larger values}

In Assumption~\ref{assump:f} we only considered weights $(\eps_n)$ satisfying $\eps_n\lesssim(\log n)^{-\beta}$ for some $\beta>0$. Now we cover the cases $(\log n)^{-\frac13}\ll\eps_n\ll1$ and $\eps_n\asymp1$.

\begin{assumption}\label{assump:f_large_eps}
$ $
\begin{itemize}
\item $f(t)$ is a continuous increasing function $\Rpos\to\Rpos$ that satisfies $f(0^+)=0^+$, $f(\infty)=\infty$ and there exist positive constants $c$ and $C$ such that $c\log t\leq f(t)\leq C t$.
\item For any positive constant $\widehat{c},$ there exist positive constants $c$ and $C$ depending on $\widehat{c}$ such that for any $t\le \frac{\widehat{c}}{\eps_n}$, $b\in\{1,2,4\}$ and for all $x_0$ and a random walk $X$ on $G$ with $X_0=x_0$ we have $c f(t)^b\leq H_b(X_t)\leq C f(t)^b$. 
\item There exist a function $b$ and positive constants $c$ and $C$ such that for any $t$ the size of any ball of radius $t$ in $G$ is upper bounded by $b(t)$, and we have $cf(t)\le \log b(t)\le C f(t)$ for all $t$.
\item $(\log n)^{-\frac13}\ll\eps_n\ll1$.
\item There exists a function $g(n)$ growing to infinity and there exist positive constants $c$ and $C$ such that $c\frac{\log n}{g(n)} \leq f\left(\frac{1}{\eps_n}\right)\leq C\frac{\log n}{g(n)}$.
\item For any positive constant $\widehat{c}$ there exist positive constants $c$ and $C$ such that $c f\left(\frac{1}{\eps_n}\right) \leq f\left(\frac{\widehat{c}}{\eps_n}\right)\leq C f\left(\frac{1}{\eps_n}\right)$ for all sufficiently large values of $n$.
\item There exists a positive constant $c$ such that for all $x,y\geq c$ we have $f(xy)\leq f(x)f(y)$.
\end{itemize}
\end{assumption}

\begin{proposition}\label{pro:larger_eps}
Let us assume that Assumption~\ref{assump:f_large_eps} holds. Then for any $\theta\in\left(0,1\right)$ whp the mixing time of the random walk on $G^*$ satisfies
\[
\left|\tmixtext^{G^*}(\theta)-t_0\right|\quad\ll\quad t_0
\]
where $t_0\asymp\frac1\eps g(n)$ as in Definition~\ref{def:variable_values}.
This means that the chain exhibits cutoff around $t_0$.
\end{proposition}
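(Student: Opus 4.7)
The plan is to replicate the proof of Propositions~\ref{pro:tmix_upper_bound} and~\ref{pro:tmix_lower_bound}, which together give $|\tmixtext^{G^*}(\theta)-t_0|\le t_w^+$ whp for any $t_w^+\gg t_w$, and then to observe that because $t_w/t_0\asymp g(n)^{-1/2}\to 0$ under Assumption~\ref{assump:f_large_eps}, we are free to pick any $t_w^+$ satisfying $t_w\ll t_w^+\ll t_0$ (for instance $t_w^+=t_0\cdot g(n)^{-1/4}$), thereby obtaining $|\tmixtext^{G^*}(\theta)-t_0|\ll t_0$, which is the stated cutoff. So the substance of the argument is to check that the framework of Sections~\ref{sec:relating_Gstar_and_T} and~\ref{sec:proof_of_cutoff} still functions when Assumption~\ref{assump:f} is replaced by Assumption~\ref{assump:f_large_eps}, i.e.\ when the upper bound $\eps_n\le C(\log n)^{-\beta}$ is replaced by the lower bound $\eps_n\gg (\log n)^{-1/3}$.

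The first step is to inventory which lemmas in Sections~\ref{sec:quasi-tree}--\ref{sec:proof_of_cutoff} use the old upper bound on $\eps_n$. Almost all of them (Lemma~\ref{lem:entropy_from_assump_f}, the coupling in Definition~\ref{def:coupling}, Lemmas~\ref{lem:hit_K_root_quickly}--\ref{lem:bound_tv_dist_at_tau_two_ell}, Proposition~\ref{pro:aux_bound_Ptau_x_y}, Lemmas~\ref{lem:bound_dtv_tau},~\ref{lem:bound_tau},~\ref{lem:bound_trelabs}, and the lower-bound arguments of Section~\ref{sec:tmix_lower_bound}) depend only on $\eps_n\to 0$, $R\gtrsim\frac{1}{\eps}\log g(n)$, the submultiplicativity of $f$, and $g(n)\to\infty$, all of which remain in force. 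The only substantive use of $\eps_n\lesssim (\log n)^{-\beta}$ occurs inside Lemma~\ref{lem:if_W_large_then_Wtil_large}, where it is invoked to guarantee $\frac{\log n}{\eps}\delta^M\ll 1$ and more precisely $\frac{\log n}{\eps}\delta^M\ll \frac{\log n}{g(n)^{3/2}}$, and in a minor way inside Lemma~\ref{lem:coupling_succ_prob_from_x}, where it is used to obtain $\delta^{K/2}\le (\log n)^{-2}$ uniformly over $z_i$.

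The second step is to redo these two estimates under the new hypothesis. Both requirements reduce to showing that $M\log(1/\delta)\asymp (\log g(n))\log(1/\eps)$ dominates $\log\log n+\log(1/\eps)+\tfrac{3}{2}\log g(n)$. Since $\eps_n\to 0$ gives $\log(1/\delta)\asymp\log(1/\eps)\to\infty$, and since $\eps_n\gg (\log n)^{-1/3}$ forces $\log(1/\eps)\ll \log\log n$, one verifies directly that in both the linear-entropy case ($g(n)=\eps\log n$) and the polynomial-ball-growth case ($g(n)=\log n/\log(1/\eps)$) we have $\log g(n)\asymp \log\log n$, and hence $(\log g(n))\log(1/\eps)\gg \log\log n$, so the required comparisons hold provided the constant $C_3$ (and consequently $C_2$) is taken sufficiently large. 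This re-establishes Lemma~\ref{lem:if_W_large_then_Wtil_large} and Lemma~\ref{lem:coupling_succ_prob_from_x}, after which Propositions~\ref{pro:bound_trunc_prob} and~\ref{pro:bound_truncprime_prob} and the rest of the chain leading to Propositions~\ref{pro:tmix_upper_bound} and~\ref{pro:tmix_lower_bound} carry over verbatim.

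The main obstacle is precisely this verification inside Lemma~\ref{lem:if_W_large_then_Wtil_large}: the original numerical balance between $\log\log n$, $\log(1/\eps)$ and $\log g(n)$ was tuned assuming $\log(1/\eps)\gtrsim\log\log n$, whereas now $\log(1/\eps)$ may be any $\omega(1)$ function dominated by $\tfrac{1}{3}\log\log n$. The role of the lower bound $\eps_n\gg (\log n)^{-1/3}$ is exactly to prevent $\log(1/\eps)$ from competing with $\log\log n$ in these comparisons, so that the surviving $\log g(n)\asymp\log\log n$ factor still carries the estimate; the exponent $1/3$ is a convenient choice rather than a tight one, and any exponent strictly less than $1$ would serve the same purpose with slightly different constants.
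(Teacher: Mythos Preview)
Your approach is correct but takes a different route from the paper for the upper bound. The paper does not re-run the two-sided hitting-time machinery of Section~\ref{sec:proof_of_cutoff}; instead it reverts to the $L_2$/Poincar\'e method of~\cite{random_matching}: using only the one-sided exploration and truncation of Section~\ref{sec:tmix_lower_bound}, it obtains (analogously to \cite[Proposition~5.7]{random_matching}) a conditional bound $d_x(t+s)\le e^{-s/\trel(G^*)}\exp\bigl(\Gamma\sqrt{\V\log n/\h}\bigr)+\theta$, and then picks $s$ with $\trel(G^*)\cdot\tfrac{\log n}{\sqrt{g(n)}}\ll s\ll t_0$. The existence of such an $s$ is precisely where $g(n)\gg(\log n)^{2/3}$, i.e.\ $\eps\gg(\log n)^{-1/3}$, enters. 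Your route instead audits Sections~\ref{sec:relating_Gstar_and_T}--\ref{sec:proof_of_cutoff}, and the key observation $\log g(n)\asymp\log\log n$ (which follows in general from $f(1/\eps)\lesssim 1/\eps\ll(\log n)^{1/3}$, not just in the two named cases) does indeed re-establish Lemmas~\ref{lem:if_W_large_then_Wtil_large} and~\ref{lem:coupling_succ_prob_from_x}. Two minor caveats: your audit misses a few spots where $g(n)\lesssim\log n/\log\log n$ is used implicitly---the remark after Definition~\ref{def:trunc} and its invocation in Lemma~\ref{lem:bound_num_bad}---though these also check out once one uses only $g(n)=o(\log n)$; and your closing remark that any exponent in $(0,1)$ could replace $1/3$ is correct for \emph{your} argument but not for the paper's, since the $L_2$ route requires $g(n)^{3/2}\gg\log n$.
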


\begin{proof}
Let us recall $t_w$, $R$, $K$ and $M$ from Definition~\ref{def:variable_values}. Let $t=t_0-Bt_w$ and $\til{L}=\nu t+2D\sqrt{\eps t}$ as in Section~\ref{sec:tmix_lower_bound}.

Let us use the same truncation criterion and exploration process as in Section~\ref{sec:tmix_lower_bound}. It can be checked that in this case the results in Section~\ref{sec:tmix_lower_bound} still hold. This immediately gives the lower bound.

Analogously to \cite[Proposition 5.7]{random_matching} we also get the following.

\begin{proposition}
For all $\theta>0$, there exist ${B}$ (in the definition of ${t}$), ${A}$ (in the definition of the truncation criterion) depending on $\theta$ and ${B}$ and a positive constant $\Gamma$ sufficiently large such that for all $n$ sufficiently large, on the event $\{\B^*_K(x_0)=T_0\}$, for all $i$ and all $x\in \partial \B_K^*(x_0)$ descendants of $z_i\in \partial \B_{K/2}^*(x_0)$, on the event $\{z_i \text{ is good}\}$ we have for all $s \ge 0$ that
\[
\prcond{{d}_x(t+s)<e^{-\tfrac{s}{\trel(G^*)}}\cdot \frac{1}{1-\theta}  \exp\left(\Gamma \sqrt{\frac{\V\log n}{\h}}\right) + \theta}{\F_{i-1}}{}\geq 1-2\theta,
\]
where ${d}_x(r)=\dtv{\prcond{{X}_r\in \cdot}{G^*}{x}}{\pi}$ for every $r\in \Zpos$.
\end{proposition}

Since $\trel(G^*)\lesssim\frac1\eps$ and $g(n)\gg(\log n)^{\frac23}$, it is possible to choose $s$ with $\trel(G^*)\frac{\log n}{\sqrt{g(n)}}\ll s\ll\frac1\eps g(n)$. This then gives $e^{-\tfrac{s}{\trel(G^*)}}\cdot \frac{1}{1-\theta}  \exp\left(\Gamma \sqrt{\frac{\V\log n}{\h}}\right)\ll1$. We can finish the proof of the upper bound on the mixing time as before and we get an upper bound of the form $t_0+s$ where $s\ll t_0$.
\end{proof}

\begin{proposition}\label{pro:const_order_eps}
Let us assume that $\eps_n\asymp1$ and the graphs $G_n$ are connected, have bounded degree and $G_n$ has $n$ vertices. Then for any $\theta\in(0,1)$ whp the mixing time of the random walk on $G^*$ satisfies
\[
\left|\tmixtext^{G^*}(\theta)-t_0\right|\quad\ll\quad t_0
\]
where $t_0\asymp\log n$.
This means that the chain exhibits cutoff around $t_0$.
\end{proposition}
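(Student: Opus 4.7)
The plan is to follow the same strategy as in the proof of Theorem~1.1 of \cite{random_matching}, which handles the case $\eps_n\equiv 1$, with only cosmetic modifications to accommodate weights $\eps_n\asymp 1$. The point is that, apart from weights, the random graph $G^*_n$ here has exactly the same combinatorial structure as the one in \cite{random_matching}, and since the weights are bounded away from both $0$ and $\infty$, the transition probabilities and the invariant distribution on $G^*_n$ agree with those of the unweighted version up to multiplicative constants; as a consequence, essentially every probabilistic estimate in \cite{random_matching} remains valid after adjusting constants. In particular, because $\eps\asymp 1$, the speed $\nu$ of the random walk on the associated quasi-tree and the entropy rate $\h$ are both of constant order, $\trelabs(G^*)$ is $O(1)$ whp, and the tail bounds of Lemma~\ref{lem:tail_bounds} reduce to plain exponential tails with $n$-independent rate.

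First I would verify that the quasi-tree constructions of Section~\ref{sec:quasi-tree} (and the associated speed and entropy statements) go through for $\eps\asymp 1$ with $\nu,\h\asymp 1$; this amounts to rereading the proofs with $\eps$ treated as a fixed constant in $(0,1)$, and is closely analogous to the arguments in Section~3 of \cite{random_matching}. In this setting the entropic time is $t_0:=\frac{\log n}{\nu\h}\asymp\log n$, and Proposition~\ref{pro:aux_entropy} supplies a concentration window $t_w$ of order $\sqrt{\log n}$, once the parameters $R,K,M,L$ in Definition~\ref{def:variable_values} are redefined (e.g.\ $R,K,M$ as sufficiently large constants and $L\asymp\log n$) to replace the $g(n)$-dependent choices used for $\eps_n\to 0$.

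Second, I would carry out the exploration-and-coupling procedure of Section~\ref{sec:relating_Gstar_and_T}. The truncation criteria $\trunc{e}{A}$ and $\truncprime{e}$ remain meaningful after replacing the factor $\sqrt{\V\log n/\h}$ by a constant multiple of $\sqrt{\log n}$. The key size bound in Lemma~\ref{lem:bound_num_bad} still gives $|\cup D_i|\le N=\sqrt n\, e^{O(\sqrt{\log n})}=n^{1/2+o(1)}$, so the optimal-coupling steps succeed with probability $1-o(n^{-2})$ as before. The rest of Sections~\ref{sec:relating_Gstar_and_T} and \ref{sec:proof_of_cutoff} then goes through verbatim: Proposition~\ref{pro:aux_bound_Ptau_x_y} is obtained from Lemma~\ref{lem:w_pair_sum} applied to the uniformly random completion of the matching, giving that for most pairs of $K$-roots $x,y$ one has $\prcond{X_{\tau_{2L}}=y}{G^*}{x}\ge(1-o(1))/n$ with failure probability $o(n^{-2})$; then Lemma~\ref{lem:bound_tv_dist_at_tau_two_ell} and a union bound yield that $X_\tau$ is within $o(1)$ of uniform in total variation, uniformly in the starting point. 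Combined with concentration of $\tau$ around $t_0$ (analogue of Lemma~\ref{lem:bound_tau}) and $\trelabs\lesssim 1$, the hitting-to-mixing comparison (Lemmas~\ref{lem:hit_mix_comparison} and \ref{lem:hitting_times_comparison}) produces $\tmixtext^{G^*}(\theta)\le t_0+O(\sqrt{\log n})$. The matching lower bound is obtained exactly as in Section~\ref{sec:tmix_lower_bound}, with the same constant-$\eps$ modifications.

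The only real obstacle is bookkeeping: Assumption~\ref{assump:f} is formulated under $\eps_n\ll(\log n)^{-\beta}$ and uses the divergent parameter $g(n)$, so one has to rewrite all of the $g(n)$-dependent $o(1)$ bounds (most importantly in Lemmas~\ref{lem:bound_num_bad}, \ref{lem:coupling_succ_prob}, \ref{lem:coupling_succ_prob_from_x} and Propositions~\ref{pro:bound_trunc_prob}, \ref{pro:bound_truncprime_prob}) directly in terms of $\log n$ with $\eps$ constant. This is precisely what is done in \cite{random_matching}, and since we are only claiming cutoff up to a window $\ll t_0$ (not the sharp window), no sharper estimates than the ones already in that paper are needed. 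No new ideas are required.
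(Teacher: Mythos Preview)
Your proposal is correct and takes essentially the same approach as the paper: the paper's own proof of this proposition is the single sentence ``The proof is essentially the same as for the $\eps_n=1$ case in~\cite{random_matching},'' and your write-up is simply a more explicit unpacking of what that sentence entails.
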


\begin{proof}
The proof is essentially the same as for the $\eps_n=1$ case in~\cite{random_matching}.
\end{proof}

\subsection{Smaller values}

\begin{proposition}\label{pro:smaller_eps}
Let us assume that $\eps\ll\frac{1}{\tmixtext^G(\theta)}$ for some $\theta\in(0,1)$. Then the random walk on $G^*$ exhibits cutoff if and only if the random walk on $G$ does.
\end{proposition}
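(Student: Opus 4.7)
The plan is to compare the two walks via a direct coupling and exploit the hypothesis $\eps_n\ll1/\tmixtext^{G_n}(\theta)$ to show that, on the relevant timescale, the added long-range edges are very unlikely to be used. This will yield the estimate $|d_{G_n^*}(t,x)-d_{G_n}(t,x)|\le\eps_n t+\eps_n$ uniformly in $x$ and $t$, where $d_H(t,x):=\dtv{P_H^t(x,\cdot)}{\pi_H}$, from which cutoff will transfer in both directions.

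I first construct a Markovian coupling $(X,Y)$ of the walks on $G_n^*$ and $G_n$ started from a common vertex $x$. While $X_s=Y_s=v$ for all $s\le k$, at step $k+1$ I sample a uniform $G_n$-neighbor $W$ of $v$ and an independent Bernoulli $B$ with $\pr{B=1}=\eps_n/(\deg_{G_n}(v)+\eps_n)$, set $Y_{k+1}=W$, and set $X_{k+1}=W$ if $B=0$ while $X_{k+1}=\eta(v)$ if $B=1$; after the first time $\tau$ at which $B=1$ the walks are allowed to evolve independently. The marginals match the correct transition laws, and a union bound gives $\pr{\tau\le t}\le\eps_n t$, hence
\[
\max_x\,\dtv{P_{G_n^*}^t(x,\cdot)}{P_{G_n}^t(x,\cdot)}\le\eps_n t.
\]
A direct computation using $\pi_{G_n^*}(v)=(\deg_{G_n}(v)+\eps_n)/(2|E_n|+\eps_n n)$ and $2|E_n|\ge n$ gives $\dtv{\pi_{G_n}}{\pi_{G_n^*}}\le\eps_n/(1+\eps_n)\le\eps_n$. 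Combining these two bounds with the triangle inequality yields the key comparison $|d_{G_n^*}(t,x)-d_{G_n}(t,x)|\le\eps_n t+\eps_n$.

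For the forward direction, suppose $(G_n)$ exhibits cutoff at $t_n$ with window $s_n$. Because $s_n/t_n\to 0$ the cutoff condition forces $\tmixtext^{G_n}(\theta')\sim t_n$ for every $\theta'\in(0,1)$, and combined with the hypothesis this yields $\eps_n t_n\to 0$; in particular $\eps_n t+\eps_n=o(1)$ uniformly for $t\le 2t_n$. Given $\theta\in(0,1)$ and any $\theta'\in(0,\theta)$, at $t=t_n+c(\theta')s_n\ge\tmixtext^{G_n}(\theta')$ we obtain $d_{G_n^*}(t,x)\le\theta'+o(1)\le\theta$ for large $n$, giving $\tmixtext^{G_n^*}(\theta)\le t_n+c(\theta')s_n$. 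Symmetrically, for $\theta''\in(\theta,1)$ and $t=\tmixtext^{G_n}(\theta'')-1$ there exists $x^{*}$ with $d_{G_n}(t,x^{*})>\theta''$, so $d_{G_n^*}(t,x^{*})>\theta''-o(1)>\theta$ for large $n$, giving $\tmixtext^{G_n^*}(\theta)\ge\tmixtext^{G_n}(\theta'')\ge t_n-c(\theta'')s_n$. Hence $(G_n^*)$ exhibits cutoff at $t_n$ with window $s_n$.

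For the reverse direction, suppose $(G_n^*)$ exhibits cutoff at $t_n^{*}$ with window $s_n^{*}$. The first task is to show $\eps_n t_n^{*}\to 0$. Applying the comparison at $t=\tmixtext^{G_n}(\theta)$ gives $d_{G_n^*}(t,x)\le\theta+o(1)<(1+\theta)/2$ for large $n$, so $\tmixtext^{G_n^*}((1+\theta)/2)\le\tmixtext^{G_n}(\theta)\ll 1/\eps_n$; because $(G_n^*)$ has cutoff, all of its $\theta$-mixing times are $\sim t_n^{*}$, which forces $t_n^{*}\ll 1/\eps_n$. Consequently $\eps_n t+\eps_n=o(1)$ uniformly for $t\le 2t_n^{*}$, and the same tolerance-parameter bookkeeping as in the forward direction now transfers cutoff from $(G_n^*)$ back to $(G_n)$ at $t_n^{*}$ with window $s_n^{*}$. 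The only step that goes beyond the routine coupling estimate is this bootstrap pinning down the order of $t_n^{*}$ from the one-sided hypothesis on $\tmixtext^{G_n}(\theta)$, which is the main place where care is needed.
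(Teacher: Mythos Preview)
Your proof is correct and follows essentially the same approach as the paper: couple the two walks until the first long-range crossing, bound $\dtv{P_{G^*}^t(x,\cdot)}{P_G^t(x,\cdot)}\le\eps t$ via a union bound, compute $\dtv{\pi_G}{\pi_{G^*}}\lesssim\eps$, and use the triangle inequality. The paper's write-up is terse and simply asserts the final equivalence once the comparison $|d_{G^*}(t,x)-d_G(t,x)|=o(1)$ is established for $t\asymp\tmixtext^G(\theta)$; your version is more careful, and in particular your bootstrap in the reverse direction (using the comparison at $t=\tmixtext^{G}(\theta)$ to pin down $t_n^*\ll1/\eps_n$ before transferring cutoff back) makes explicit a step the paper leaves implicit.
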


The proof of this is not difficult and is deferred to Appendix~\ref{app:other_remainings_pfs}.

The following proposition will make use of a comparison between hitting and mixing times. We recall that
\[\hit{\alpha}{\theta}=\min\left\{s:\forall x,\forall A\text{ with }\pi(A)\ge\alpha\text{ we have }\prstart{\tau_A>s}{x}\le\theta\right\}\]
and we let $\hittext_{\alpha}=\hit{\alpha}{\frac14}$. We say that $\hittext_{\alpha}$-cutoff occurs if we have $\left|\hit{\alpha}{\theta}-\hittext_{\alpha}\right|\ll\hittext_{\alpha}$ for all $\theta\in(0,1)$.

\begin{proposition}\label{pro:eps_asymp_tmix_inv_no_cutoff_implies_no_cutoff}
Let us assume that the lazy random walk on the original graph $G$ does not exhibit cutoff and that $\eps\asymp\frac{1}{\tmixtext^{G,\lazytext}}$. Then the random walk on $G^*$ does not exhibit cutoff either.
\end{proposition}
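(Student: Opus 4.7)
The plan is to transfer the failure of cutoff from the lazy walk on $G$ to the walk on $G^*$ by combining a coupling up to the first added-edge crossing with the hit-mix comparison of Lemma~\ref{lem:hit_mix_comparison}. Since $G^*$ is generically aperiodic, cutoff for the walk and for its lazy (or continuous-time) version are equivalent, so it suffices to rule out cutoff for the lazy walk on $G^*$. By the no-cutoff assumption on $G$ there exist $0<\theta_1<\theta_2<1$, a constant $c_0>0$, and a subsequence of $n$ along which $\tmixtext^{G_n,\lazytext}(\theta_1)-\tmixtext^{G_n,\lazytext}(\theta_2)\ge c_0/\eps_n$. Applying Lemma~\ref{lem:hit_mix_comparison} on $G$ together with the trivial bound $\trel^{\mathrm{abs},G,\lazytext}\le\tmixtext^{G,\lazytext}$ shows that this gap survives as a corresponding gap of order $1/\eps$ between $\hittext^G_{1-\theta_1/4}(\cdot)$ and $\hittext^G_{1-\theta_2/4}(\cdot)$; concretely there are a vertex $x$, a set $A$ with $\pi_G(A)\ge 1-\theta_1/4$, and two times $s_1>s_2$ with $s_1-s_2\asymp c_0/\eps$ at which the hit-time tails of the lazy walk on $G$ from $x$ into $A$ differ by a positive constant.

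I would then couple the lazy walk $\lazy{X}$ on $G^*$ with the lazy walk $\lazy{Y}$ on $G$, both started at $x$, so that $\lazy{X}_t=\lazy{Y}_t$ for every $t$ strictly before the first added-edge crossing $\tau$ of $\lazy{X}$. The stopping time $\tau$ is stochastically sandwiched between two geometric variables with parameter of order $\eps$, so $\prstart{\tau>s}{x}$ lies in an interval bounded away from $0$ and $1$ for any $s\asymp 1/\eps$. Passing to continuous time, where the added-edge clock becomes an independent Poisson process of rate $\eps$ and the $G$-walk is independent of this clock, yields the exact identity $\prstart{\tau^{\lazy{Y}}_A>s,\,\tau>s}{x}=e^{-\eps s}\prstart{\tau^{\lazy{Y}}_A>s}{x}$; combined with the coupling bound $\prstart{\tau^{\lazy{X}}_A>s}{x}\ge\prstart{\tau^{\lazy{Y}}_A>s,\,\tau>s}{x}$ and the fact that $\dtv{\pi_G}{\pi_{G^*}}=O(\eps)$ (so that $A$ is still a large set under $\pi_{G^*}$), the witness $(s_1,s_2,x,A)$ above furnishes an order $1/\eps$ gap between the hit-time tails of $\lazy{X}$ as well.

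Finally, feeding this through Lemma~\ref{lem:hit_mix_comparison} applied to $G^*$, together with the relaxation bound $\trel^{\mathrm{abs},G^*}\lesssim 1/\eps$ supplied by Lemma~\ref{lem:bound_trelabs}, produces $\tmixtext^{G^*,\lazytext}(\theta_1')-\tmixtext^{G^*,\lazytext}(\theta_2')\ge c_0'/\eps\asymp c_0'\tmixtext^{G^*,\lazytext}$ for some $\theta_1'<\theta_2'$ and $c_0'>0$ along the same subsequence, ruling out cutoff. The main obstacle is that every error term that appears is either of order $1/\eps$ (the $\trel^{\mathrm{abs}}\log(1/\theta)$ contributions in both applications of Lemma~\ref{lem:hit_mix_comparison}) or shrinks the hit-time tails by a constant rather than vanishing factor (the $e^{-\eps s}$ coupling loss at $s\asymp 1/\eps$ and the $O(\eps)$ drift between $\pi_G$ and $\pi_{G^*}$), all of which exactly match the scale of the mixing time; the argument only closes because the inherited non-cutoff gap $c_0/\eps$ has strictly positive density, so the thresholds $\theta_1',\theta_2'$ on the $G^*$ side must be chosen with enough room that the accumulated constant-factor losses still leave a genuine order $1/\eps$ gap.
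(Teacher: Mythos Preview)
Your strategy shares the paper's central ingredient---coupling the walks on $G$ and $G^*$ up to the first long-range crossing and comparing hitting probabilities---but the concern you raise in your final paragraph is a real gap, not a technicality you can wave away. Each invocation of Lemma~\ref{lem:hit_mix_comparison} costs an additive $\trelabs\log(1/\theta)$ term. On $G$ you have no better bound than $\trel^{G,\lazytext}\lesssim\tmixtext^{G,\lazytext}\asymp 1/\eps$, and on $G^*$ Lemma~\ref{lem:bound_trelabs} only gives $\trelabs^{G^*}\lesssim 1/\eps$; both errors are exactly the order of the gap $c_0/\eps$ you inherit from the no-cutoff assumption. The constant $c_0$ is handed to you---it is whatever positive number the failure of cutoff on $G$ produces for some pair $\theta_1<\theta_2$---and you cannot make it large by adjusting thresholds: shrinking $\theta_1$ to widen the gap simultaneously inflates the $\log(1/\theta)$ factor in the error. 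So nothing guarantees the net gap stays positive. Two secondary issues: your continuous-time identity requires the long-range clock to be independent of the base walk, which fails when degrees vary (the crossing rate is $\eps/(\deg+\eps)$, not $\eps$); and you never establish $\tmixtext^{G^*}\asymp 1/\eps$, which you need to turn a gap of order $1/\eps$ into a failure of cutoff.

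The paper sidesteps all of this by arguing by contradiction and never touching Lemma~\ref{lem:hit_mix_comparison}. Assuming cutoff on $G^*$, \cite[Remark~1.9 and Corollary~3.4]{characterisation_of_cutoff} give directly (with no $\trelabs$ correction) that $\hittext^{\lazytext,G^*}_{\alpha_1}(u_1)=(1\pm o(1))\hittext^{\lazytext,G^*}_{\alpha_2}(u_2)$ for all $\alpha_1,\alpha_2\in(0,\tfrac12)$ and all $u_1,u_2\in(0,1)$. On the $G$ side, \cite[Theorem~3]{characterisation_of_cutoff} converts the no-cutoff hypothesis directly into a hitting-time gap $\hittext^{G,\lazytext}_\alpha(\theta)-\hittext^{G,\lazytext}_\alpha(1-\theta)\asymp\hittext^{G,\lazytext}_\alpha(\theta)$, and a separate case split ($\trel^G\ll\tmixtext^G$ versus $\trel^G\asymp\tmixtext^G$, invoking further results from \cite{characterisation_of_cutoff}) pins this at order $1/\eps$. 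The coupling---done as a path-by-path comparison yielding a $(1-O(\eps))^t\asymp 1$ factor rather than an independence claim---then produces
\[
\hittext^{\lazytext,G^*}_{\alpha/2}(c_1\theta)-\hittext^{\lazytext,G^*}_{\alpha/2+1/4}(1-c_2\theta)\quad\asymp\quad\hittext^{\lazytext,G^*}_{\alpha/2}(c_1\theta),
\]
which contradicts the asymptotic coincidence above. The point is that the equivalence between cutoff and $\hittext_\alpha$-cutoff is exact, so the order-$1/\eps$ losses that sink your direct approach never enter.
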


\begin{proof}
Assume that the lazy random walk on $G$ does not exhibit cutoff, but the random walk on $G^*$ exhibits cutoff. Then by \cite[Remark 1.9]{characterisation_of_cutoff} we have $\trel^{\text{abs},G^*}\ll\tmixtext^{G^*}$, and by this and \cite[equation (1.4)]{characterisation_of_cutoff} (which holds with $\trel^{\text{abs},G^*}$ by Remark 1.9) the random walk on $G^*$ exhibits $\hittext_{\frac12}$-cutoff. Then by \cite[Corollary 3.4]{characterisation_of_cutoff} it also exhibits $\hittext_{\alpha}$-cutoff for all $\alpha\in\left(0,\frac12\right)$ and $\hittext^{G^*}_{\alpha_1}(u_1)=(1\pm o(1))\hittext^{G^*}_{\alpha_2}(u_2)$ for any $\alpha_1,\alpha_2\in\left(0,\frac12\right)$, $u_1,u_2\in(0,1)$.
By Lemma~\ref{lem:hit_hitlazy_comparison} we have $\hittext_{\alpha}^{\lazytext,G^*}(u)=(1\pm o(1))\frac12\hittext_{\alpha}^{G^*}((1\pm o(1))u)$ for all $\alpha$ and all $u$, hence
\begin{center}
the lazy walk on $G^*$ also exhibits $\hittext_{\alpha}$-cutoff for all $\alpha\in\left(0,\frac12\right)$ and satisfies
\end{center}
\begin{align}\label{eq:hit_same_order}
\hittext^{\lazytext,G^*}_{\alpha_1}(u_1)=(1\pm o(1))\hittext^{\lazytext,G^*}_{\alpha_2}(u_2)\qquad\forall\alpha_1,\alpha_2\in\left(0,\frac12\right),u_1,u_2\in(0,1).
\end{align}

Let $X$ be a lazy random walk on $G$ and $Y$ be a lazy random walk on $G^*$. Let $\tauLR$ be the first time that $Y$ crosses a long-range edge. Since $X$ does not exhibit cutoff, by \cite[Theorem 3]{characterisation_of_cutoff} it does not exhibit $\textrm{hit}_{\alpha}$-cutoff for any $\alpha\in\left(0,\frac12\right)$ either. Therefore for any $\alpha\in\left(0,\frac12\right)$ there exists $\theta\in(0,1)$ such that along a subsequence (of the graphs $G_n$) we have
\[
\textrm{hit}^X_\alpha(\theta)-\textrm{hit}^X_\alpha(1-\theta)\quad\asymp\quad\textrm{hit}^X_\alpha(\theta).
\]
If $\trel^{X}\ll\tmixtext^{X}$ then by \cite[Proposition 1.8]{characterisation_of_cutoff} and \cite[Corollary 3.4]{characterisation_of_cutoff} we have $\textrm{hit}^X_\alpha(\theta)\asymp\tmixtext^{X}$. If $\trel^{X}\asymp\tmixtext^{X}$ then by \cite[Corollary 3.1]{characterisation_of_cutoff} and the last display equation in the proof of \cite[Proposition 3.7]{characterisation_of_cutoff} we still have $\textrm{hit}^X_\alpha(\theta)\asymp\tmixtext^{X}$.

Let $t=\textrm{hit}^X_\alpha(\theta)-1$. Then there exist a vertex $x$ and a set $A$ such that $\pi^X(A)\ge\alpha$ and $\prstart{\tau^X_A>t}{x}>\theta$. For any vertex $z$ we have $\pi^Y(z)=\frac{\deg(z)}{2E}=\frac{\deg(z)+\eps}{2E+n\eps}(1\pm o(1))=\pi^X(z)(1\pm o(1))$, therefore $\pi^Y(A)\ge\frac12\alpha$ for all sufficiently large values of $n$.

Note that $t\asymp\tmixtext^X\asymp\frac1\eps$. For any vertices $y$ and $z$ that are adjacent in $G$ we have \[\prstart{Y_1=z,\tauLR>1}{y}\quad=\frac{1}{\deg(y)+\eps}\quad=(1-O(\eps))\frac{1}{\deg(y)}\quad=(1-O(\eps))\prstart{X_1=z}{y}.\]
Summing this over all paths $p$ of length $t$ in $G$ that start in $x$ and do not visit $A$, we get that
\[
\prstart{\tau^Y_A>t}{x}\quad\ge\prstart{\tau^Y_A>t,\tauLR>t}{x}\quad=(1-O(\eps))^t\prstart{\tau^X_A>t}{x}\quad\asymp\prstart{\tau^X_A>t}{x}\quad>\theta,
\]
therefore there exists $c_1>0$ such that \[
\textrm{hit}^Y_{\frac12\alpha}(c_1\theta)\quad\ge\quad\textrm{hit}^X_{\alpha}(\theta).
\]
Now let $s=\textrm{hit}^X_\alpha(1-\theta)$ and let $x$ be any vertex and $A$ be any set with $\pi^Y(A)\ge\frac12\alpha+\frac14$. Then we also have $\pi^X(A)\ge\alpha$, hence
\[
\prstart{\tau^Y_A\le s}{x}\quad\ge\quad\prstart{\tau^Y_A\le s,\tauLR>s}{x}\quad\asymp\quad\prstart{\tau^X_A\le s}{x}\quad\ge\quad\theta.
\]
This shows that there exists $c_2>0$ such that
\[
\textrm{hit}^Y_{\frac12\alpha+\frac14}(1-c_2\theta)\quad\le\quad\textrm{hit}^X_{\alpha}(1-\theta).
\]
Together these give
\[
\textrm{hit}^Y_{\frac12\alpha}(c_1\theta)-\textrm{hit}^Y_{\frac12\alpha+\frac14}(1-c_2\theta)\quad\asymp\quad\textrm{hit}^Y_{\frac12\alpha}(c_1\theta),
\]
contradicting \eqref{eq:hit_same_order}.
\end{proof}

\subsection{Completing the picture for expanders}

\begin{proposition}\label{pro:expander_example_no_cutoff}
There exists a sequence of expander graphs $G$ such that the simple random walk on $G$ exhibits cutoff, but for any $\eps\asymp\frac{1}{\log n}$ whp the simple random walk on $G^*$ does not exhibit cutoff.
\end{proposition}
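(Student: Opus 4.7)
The plan is to construct an explicit sequence of expanders $(G_n)$ --- for instance by gluing two Ramanujan-type expander families of different valences via a perfect matching, so that the resulting $G_n$ is $d$-regular with bounded $d$ and remains an expander --- for which the simple random walk on $G_n$ exhibits cutoff at some entropic time $T_n \asymp \log n$. The preliminary steps are to verify that $G_n$ is in fact an expander and that the simple random walk on it has cutoff. Expansion is standard for suitably-glued expanders, while cutoff for the simple random walk on $G_n$ follows either from the classical entropic-cutoff results for Ramanujan graphs or by adapting the framework developed earlier in the paper to the case $\eps=0$.

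The heart of the argument is to show that the weighted random walk on $G_n^*$ fails to exhibit cutoff whp when $\eps_n \asymp 1/\log n$. The key quantitative observation is that on the time-scale $t \asymp \log n$ the expected number of $\eps$-matching edges crossed by the walk is $\asymp t\eps_n$, a positive constant. Hence a positive fraction of walks has not yet touched the matching by time $T_n$; these walks are distributed as simple random walk on $G_n$ and mix sharply at $T_n$. The remaining walks have used the matching at least once and have effectively jumped to an approximately uniform vertex (since the matching is a uniform random pairing), so they mix much earlier. Superposing the two regimes yields a total variation profile that decays continuously in $t$ rather than dropping sharply, which is the hallmark of absence of cutoff.

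To formalise this, I would combine the hitting-time characterisation of cutoff from Lemma~\ref{lem:hit_mix_comparison} and Lemma~\ref{lem:hitting_times_comparison} with the bound $\trelabs(G_n^*) \lesssim 1/\eps_n \asymp \log n$ from Lemma~\ref{lem:bound_trelabs}. The concrete goal is to exhibit (whp over the random matching) a set $A$ with $\pi^{G_n^*}(A)$ bounded away from $0$ and $1$ together with a starting vertex $x$ such that $\theta \mapsto \hit{\alpha}{\theta}$ at $\alpha = \pi^{G_n^*}(A)$ varies over a window of order $\log n$. A natural choice is for $A$ to be a large subset of one of the two components used to construct $G_n$. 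One then decomposes $\prstart{\tau_A > t}{x}$ according to the number of $\eps$-matching edges crossed in $[0,t]$, and shows that the resulting mixture has non-concentrated fluctuations of order $\log n$.

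The main obstacle will be to rigorously control the interaction between the two mixing channels --- the intrinsic $G_n$-mixing and the matching-driven mixing --- and in particular to rule out the possibility that walks crossing a matching edge at a random time in $[0,t]$ conspire with those that do not cross to produce a sharp transition. This will require a careful coupling argument controlling the post-jump distribution (which depends on both the uniform random matching and on the $G_n$-walk's location at the jump time) together with a second-moment estimate that is uniform in the matching, so that the non-concentration statement holds with high probability over the choice of matching.
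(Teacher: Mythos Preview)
Your proposal has a genuine gap at its central step. You claim that walks which have crossed at least one $\eps$-matching edge ``have effectively jumped to an approximately uniform vertex (since the matching is a uniform random pairing)'' and therefore mix much earlier. This conflates the annealed and quenched pictures. The statement to be proved is quenched: for a \emph{fixed} realisation of the matching $\eta$, the walk on $G^*$ fails to exhibit cutoff. But for fixed $\eta$, crossing a matching edge simply applies the deterministic permutation $\eta$ to the current position. If just before the jump the walk is distributed as $\mu$ (with $d_{\mathrm{TV}}(\mu,\pi)\approx 1$, since we are before the $G$-cutoff time), then just after the jump it is distributed as $\eta_*\mu$, which has exactly the same total variation distance from $\pi$. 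One jump does not bring the walk closer to stationarity, and there is no reason the conditional-on-crossing distribution $\nu_t$ should be close to $\pi$ on the relevant timescale. Indeed, Proposition~\ref{pro:expander_example_cutoff} in the paper is paired with Proposition~\ref{pro:expander_example_cutoff}'s companion result showing that for \emph{other} expanders (random regular graphs) the walk on $G^*$ \emph{does} have cutoff when $\eps\asymp 1/\log n$; so a generic gluing of Ramanujan graphs cannot work, and your heuristic, if valid, would contradict that result.

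The paper's proof avoids this by choosing a very specific expander (the explicit $5$-regular construction of~\cite{explicit_expanders}) whose key feature is \emph{spatial inhomogeneity}: there is a root $\rho$ from which mixing takes time $t_0\ge 8L^2h$, while from any vertex at level $>h$ mixing takes time $<6L^2h$. The slow region (levels $\le \tfrac54 h$) contains only $\asymp 4^{5h/4}\ll n$ vertices, so whp the random matching sends every slow vertex to a fast one. The argument then runs: from $\rho$, with constant probability no matching edge is crossed by time $2t_0$ and the walk behaves as SRW on $G$, yielding $\hittext_b(\tfrac12\theta_1)\ge (1-o(1))t_0$; from \emph{any} vertex, with constant probability one matching edge is crossed quickly (landing in the fast region) and no further crossing occurs for a long time, yielding $\tmixtext(1-\tfrac12\theta)\le 6L^2h+\tfrac14h$. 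These two bounds are incompatible with cutoff. The mechanism is not that the matching randomises the position, but that it transports the walk from the small slow region into the large fast region, where the \emph{intrinsic} $G$-dynamics mixes quickly. Your construction has no such slow/fast dichotomy, which is why the argument cannot close.
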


\begin{proof}
Let $G$ be the 5-regular expander constructed in the proof of \cite[Theorem 1]{explicit_expanders}, with a sufficiently large value of $L$. Then the simple random walk on $G$ has cutoff.

Let $X$ and $Y$ be simple random walks on $G$ and $G^*$, respectively. Let $\tauLR$ and $\tauLR^{(2)}$ be the first and second time, respectively when $Y$ crosses a long-range edge. Also let $\tau^X_{\ell}$ be the first time that $X$ reaches level $3h+2$ and $\tau^Y_{\ell}$ be the first time that $Y$ reaches level $3h+2$.

From \cite{explicit_expanders} we know that the mixing time of $X$ is $\tmixtext^X(\theta)=(1\pm o(1))t_0$ for all $\theta\in(0,1)$, where $t_0=\frac53\left(5L^2-3L+1\right)h$. Let $L$ be sufficiently large so that $t_0\ge8L^2h$. We also know that for any vertex $v$ at level $>h$, the mixing time of $X$ started from $v$ satisfies $\tmixtext^X(x;\theta)<6L^2h$ for all $\theta\in(0,1)$.

Since $\eps\asymp\frac{1}{\log n}\asymp h$, we have $\prcond{\tauLR>2t_0}{G^*}{\rho}\ge\theta_1$ for some $\theta_1\in(0,1)$. By the regularity of $G$ we also have $(Y_k)_{k\le2t_0}\big|\left\{Y_0=\rho,\tauLR>2t_0\right\}\eqdist(X_k)_{k\le2t_0}\big|\left\{X_0=\rho\right\}$. From \cite[Claim 2.3]{explicit_expanders} we know that starting $X$ from the root $\rho$ we have $\tau^X_\ell=(1+o(1))t_0$ whp. Together these show that for any realisation of $G^*$ and for any constant $a\in(0,1)$ we have
\begin{align*}
\prcond{\tau^Y_{\ell}>(1-a)t_0}{G^*}{\rho}\quad&\ge\quad\prcond{\tau^Y_{\ell}>(1-a)t_0,\tauLR>2t_0}{G^*}{\rho}\\
&=\quad\prcond{\tauLR>2t_0}{G^*}{\rho}\prstart{\tau^X_{\ell}>(1-a)t_0}{\rho}\quad\ge\:\theta_1(1-o(1)).
\end{align*}
Level $3h+2$ contains at least a constant $b>0$ proportion of all vertices, so this implies
\begin{align}\label{eq:eg_expander_hit_bound}
\hittext^Y_{b}\left(\frac12\theta_1\right)\quad\ge\quad(1-o(1))t_0.
\end{align}
For any vertex $x$ at level $>h$ and any $t<2t_0$ we have
\begin{align*}
\dtv{\prcond{Y_t=\cdot}{G^*}{x}}{\pi(\cdot)}\quad&\le\quad \prcond{\tauLR\le2t_0}{G^*}{x}\dtv{\prcond{Y_t=\cdot}{G^*;\tauLR\le2t_0}{x}}{\pi(\cdot)}\\
&+\quad\prcond{\tauLR>2t_0}{G^*}{x}\dtv{\prcond{Y_t=\cdot}{G^*;\tauLR>2t_0}{x}}{\pi(\cdot)}\\
&\le\quad(1-\theta_1)\quad+\quad\theta_1\dtv{\prstart{X_t=\cdot}{x}}{\pi(\cdot)}.
\end{align*}
In particular this shows that
\begin{align}\label{eq:eg_expander_mix_bound}
\tmixtext^Y\left(x;1-\frac12\theta_1\right)\quad\le\quad\tmixtext^X\left(x;\frac12\right)\quad<\quad6L^2h\qquad\text{for all }x\text{ at level }>h.
\end{align}
Now we will also upper bound the mixing time from vertices at level $\le h$.

Note that levels $0$ to $\frac54h$ in $G$ contain $\asymp 4^{\frac54h}$ vertices and the total number of vertices is $\asymp4^{3h}$. Therefore the probability that $G^*$ has any long-range edges between two vertices at level $\le\frac54h$ is $\lesssim4^{-\frac12h}=o(1)$. So whp $G^*$ is such that all long-range edges starting from a level $\le\frac54h$ lead to a level $>\frac54h$.

Note that we have $\prcond{\tauLR<\frac14h,\tauLR^{(2)}>7L^2h}{G^*}{}\ge\theta_2$ for some $\theta_2\in(0,1)$. By the strong Markov property and by the regularity of $G$, for any vertex $v$ we have
\[
(Y_k)_{k=\tauLR}^{\tauLR+6L^2h}\bigg|\left\{\tauLR<\frac14h,\tauLR^{(2)}>7L^2h,Y_{\tauLR}=v\right\}\quad\eqdist\quad(X_k)_{k=0}^{6L^2h}\big|\left\{X_0=v\right\}.
\]
Let $t=6L^2h+\frac14h$ and let $U$ be the set of vertices at levels $>\frac54h$. Then on the above high probability event for any vertex $x$ at level $\le h$ we have
\begin{align*}&\dtv{\prcond{Y_t=\cdot}{G^*}{x}}{\pi(\cdot)}\\
&\le\quad\left(1-\prcond{\tauLR<\frac14h,\tauLR^{(2)}>7L^2h}{G^*}{x}\right)\\
&\qquad+\quad\sum_{u\in U,s\le\frac14h}\prcond{\tauLR=s,\tauLR^{(2)}>7L^2h,Y_{\tauLR}=u}{G^*}{x}\dtv{\prstart{X_{t-s}=\cdot}{u}}{\pi(\cdot)}\\
&\le\quad(1-\theta_2)\quad+\quad\theta_2\cdot\frac12.
\end{align*}
In particular this shows that with high probability $G^*$ is such that
\begin{align}\label{eq:eg_expander_mix_bound_2}
\tmixtext^Y\left(x;1-\frac12\theta_2\right)\quad<\quad6L^2h+\frac14h\qquad\text{for all }x\text{ at level }\le h.
\end{align}
Assume that $Y$ exhibits cutoff. Then we have $\trel^{\text{abs},Y}\ll\tmixtext^Y$ and we have $\tmixtext^Y(\theta)=(1\pm o(1))\tmixtext^Y$ for any $\theta\in(0,1)$. By \cite[Remark 1.9]{characterisation_of_cutoff} and \cite[Corollary 3.4]{characterisation_of_cutoff} we also have $\hittext^Y_{\alpha}(\theta)=(1\pm o(1))\tmixtext^Y$ for any $\alpha,\theta\in(0,1)$. Then \eqref{eq:eg_expander_hit_bound}, \eqref{eq:eg_expander_mix_bound} and \eqref{eq:eg_expander_mix_bound_2} would give a contradiction. So $Y$ does not exhibit cutoff.
\end{proof}

\begin{proposition}\label{pro:expander_example_cutoff}
There exists a sequence of expander graphs $G$ such that the simple random walk on $G$ exhibits cutoff and for any $\eps\asymp\frac{1}{\log n}$ whp the simple random walk on $G^*$ also exhibits cutoff.
\end{proposition}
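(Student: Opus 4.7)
The plan is to take $G_n$ to be a sequence of explicit Ramanujan Cayley graphs (for instance the Lubotzky-Phillips-Sarnak graphs), which are $d$-regular for a fixed $d\ge3$, vertex-transitive, and satisfy $|\lambda_2(P_{G_n})|\le 2\sqrt{d-1}/d<1$. The simple random walk on these graphs is known to have total variation cutoff at time $\tfrac{d}{d-2}\log_{d-1}n$ with window $O(\sqrt{\log n})$. Vertex-transitivity is crucial here, as it rules out the kind of heterogeneous structure that was exploited to produce the no-cutoff example in Proposition~\ref{pro:expander_example_no_cutoff}: every vertex of $G_n$ has identical local geometry.

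First I would show that $\trelabs^{G_n^*}=O(1)$ whp. Since the transition matrix decomposes as $P^*_n = \tfrac{d}{d+\eps_n}P_{G_n} + \tfrac{\eps_n}{d+\eps_n}P_{M_n}$ with $P_{M_n}$ the symmetric permutation matrix of the matching, the variational characterisation of eigenvalues yields $|\lambda_2(P^*_n)|\le|\lambda_2(P_{G_n})|+\eps_n/(d+\eps_n)\le 2\sqrt{d-1}/d+O(1/\log n)$, bounded away from $1$. Hence $\trelabs^{G_n^*}=O(1)\ll\log n\asymp\tmixtext^{G_n^*}$, which is the prerequisite for cutoff by Lemma~\ref{lem:hit_mix_comparison}.

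Next I would adapt the coupling-and-entropy arguments of Sections~\ref{sec:relating_Gstar_and_T} and~\ref{sec:proof_of_cutoff} to this boundary regime. Expanders have linear entropy growth $H_1(X_t)\asymp t$ by Proposition~\ref{pro:local_expanders_lin_entropy}, and Lemma~\ref{lem:main_aux_entropy} then gives quasi-tree parameters $\h\asymp1$ and $\V\asymp1$. The LERW entropy $-\log\prcond{\xi_k\in\til\xi}{T,\xi}{}$ therefore concentrates around $\h k$ with standard deviation $O(\sqrt{k})$, so at $k\asymp\log n$ the window is $O(\sqrt{\log n})\ll\log n$, narrow enough to imply cutoff. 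Running the coupling of $G^*_n$-walks with quasi-tree walks (Definition~\ref{def:coupling}) together with the hitting-time argument of Proposition~\ref{pro:tmix_upper_bound} then yields cutoff of $G^*_n$ at a deterministic time $t^*_n\asymp\log n$ with window $O(\sqrt{\log n})$.

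The main obstacle is that Assumption~\ref{assump:f} requires $g(n)=\eps_n\log n\to\infty$, which fails at $\eps_n\asymp 1/\log n$; in particular the definitions of $K$, $R$ and the truncation thresholds in Definition~\ref{def:variable_values} degenerate. The fix is to take $K$ to be a large but fixed constant and $R\asymp1/\eps_n\asymp\log n$, and verify case-by-case that the key estimates (tail bounds of Lemma~\ref{lem:tail_bounds}, the truncation bound of Proposition~\ref{pro:bound_trunc_prob}, coupling success of Lemma~\ref{lem:coupling_succ_prob_from_x}) still go through with only constant losses. Vertex-transitivity of $G_n$ is what makes this possible: it ensures the coupling succeeds uniformly over starting vertices, and it prevents the formation of slow regions like the tree-like tail exploited in Proposition~\ref{pro:expander_example_no_cutoff}. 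The essential difference from the no-cutoff example is that there the matching edges and the tail structure combined to force $\trelabs^{G_n^*}\asymp\log n\asymp\tmixtext^{G_n^*}$, whereas here $\trelabs^{G_n^*}$ is bounded.
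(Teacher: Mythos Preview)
Your approach differs from the paper's and contains a genuine gap. The paper does not attempt to push the quasi-tree machinery of Sections~\ref{sec:relating_Gstar_and_T}--\ref{sec:proof_of_cutoff} into the boundary regime $\eps_n\asymp 1/\log n$. Instead it takes $G_n$ to be a \emph{random} $3$-regular graph: then $G_n^*$ is itself a random graph whose local limit is an actual weighted tree (each vertex has three edges of weight $1$ and one of weight $\eps$), not a quasi-tree built from $R$-balls. On this tree the speed and entropy analysis is elementary, and cutoff follows from the methods of~\cite{RWs_on_random_graph} and~\cite{random_matching} directly. One then extracts a deterministic sequence of $3$-regular expanders.

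Your route has a concrete error. With linear entropy growth and $f(t)=t$, Lemma~\ref{lem:entropy_from_assump_f} gives $\h\asymp f(1/\eps)\asymp 1/\eps\asymp\log n$ and $\V\asymp(\log n)^2$, not $\h\asymp 1$, $\V\asymp 1$. Plugging into Definition~\ref{def:variable_values} one gets $t_0\asymp\log n$ but also $t_w\asymp\frac{1}{\eps}\sqrt{g(n)}\asymp\log n$, so the window produced by this framework is of the same order as the putative cutoff time and yields nothing. This is precisely why Assumption~\ref{assump:f} requires $g(n)\to\infty$: without it the entropic fluctuations, of order $\sqrt{L\V}\asymp\log n$, swamp the signal. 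Taking $K$ constant does not repair this, because the problem is not in the coupling but in the entropy concentration itself. Moreover, with $R\asymp 1/\eps\asymp\log n$ each $R$-ball in a $d$-regular expander has $\asymp d^R$ vertices, which is polynomial in $n$ and makes the exploration process of Definition~\ref{def:coupling} reveal far too much of $G^*$.

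Your relaxation-time bound $\trelabs^{G_n^*}=O(1)$ via the eigenvalue decomposition is correct and nice, but by itself it does not give cutoff. The paper's trick of passing to random $3$-regular graphs collapses the $R$-ball structure (every ball is already a tree) and reduces $\h$ and $\V$ to genuine constants, which is what makes the window small.
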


\begin{proof}
First let us consider a random sequence $G$, the sequence of random $3$-regular graphs. From \cite[Theorem 1]{cutoff_random_regular_graphs} we know that whp the simple random walk on $G$ exhibits cutoff.

It can be proved using the methods of~\cite{RWs_on_random_graph} and \cite{random_matching} that the simple random walk on $G^*$ also exhibits cutoff whp.

In this case $T$ will be a weighted tree where each vertex has three edges of weight 1 and one edge of weight $\eps$.

Then there also exists a sequence $G$ of non-random 3-regular expanders such that whp the random walk on $G^*$ exhibits cutoff.
\end{proof}

\begin{lemma}\label{lem:SRW_cutoff_implies_lazy_cutoff}
Assume that the simple random walk on a sequence of graphs $G$ exhibits cutoff. Then the lazy random walk on $G$ also exhibits cutoff.
\end{lemma}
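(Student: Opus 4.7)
The strategy is to transfer cutoff between the two walks via the hitting-time characterization of cutoff from \cite{characterisation_of_cutoff}. First, since the SRW on $(G_n)$ exhibits cutoff, \cite[Remark 1.9]{characterisation_of_cutoff} yields $\trel^{\mathrm{abs},G_n} \ll \tmixtext^{G_n}$, and then \cite[Theorem 3]{characterisation_of_cutoff} together with \cite[Corollary 3.4]{characterisation_of_cutoff} upgrades TV cutoff to $\hittext_\alpha$-cutoff for every $\alpha \in (0, 1/2)$.

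Next I transfer this to the lazy walk through the standard coupling $Y_t = X_{N_t}$, where $X$ is the SRW, $Y$ is the lazy walk, and $N_t \sim \Bin{t}{1/2}$ counts the non-lazy moves performed by $Y$ up to time $t$. Under this coupling, for any set $A$,
\[
\tau_A^Y \eqdist \sum_{i=1}^{\tau_A^X} G_i,
\]
where the $G_i$ are iid $\Geompos{1/2}$, with $\E{G_i}=2$ and $\vr{G_i}=2$. A Bernstein-type bound then gives $|\tau_A^Y - 2\tau_A^X| \lesssim \sqrt{\tau_A^X \log(1/\theta)}$ with probability at least $1-\theta$. Since $\hittext_\alpha^X \to \infty$, these fluctuations are $o(\hittext_\alpha^X)$, and combining with the SRW $\hittext_\alpha$-cutoff I deduce $\hittext_\alpha$-cutoff for $Y$ at time $2\hittext_\alpha^X$ with a window of size $O(s_n + \sqrt{\hittext_\alpha^X})$, where $s_n$ is the SRW cutoff window. (This is essentially the content of Lemma~\ref{lem:hit_hitlazy_comparison}.)

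Finally I convert $\hittext_\alpha$-cutoff for $Y$ back into TV cutoff. The spectrum of $P_{\mathrm{lazy}}=(I+P)/2$ is non-negative, so $\trel^{\mathrm{abs},Y}=\trel^Y=2\trel^X \le 2\trel^{\mathrm{abs},G_n}$, and the same binomial coupling gives $\tmixtext^{Y} \asymp 2\tmixtext^{G_n}$, hence $\trel^{\mathrm{abs},Y} \ll \tmixtext^Y$. Applying \cite[Theorem 3]{characterisation_of_cutoff} to $Y$ then converts its $\hittext_\alpha$-cutoff into TV cutoff. The one subtle point is the hitting-time comparison: the SRW window $s_n$ may be much smaller than the binomial fluctuation scale $\sqrt{\hittext_\alpha^X}$ (e.g.\ $s_n$ could be of order $\log\log n$), but this is harmless because the lazy walk is allowed a strictly larger window that absorbs these extra fluctuations while remaining $o(\hittext_\alpha^Y)$.
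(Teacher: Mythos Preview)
Your proof is correct and follows essentially the same route as the paper: pass from TV cutoff to $\hittext_\alpha$-cutoff for the SRW via \cite{characterisation_of_cutoff}, transfer $\hittext_\alpha$-cutoff to the lazy walk through the binomial coupling (this is exactly the paper's Lemma~\ref{lem:hit_hitlazy_comparison}), and convert back to TV cutoff for the lazy walk via \cite[Theorem~3]{characterisation_of_cutoff}. One minor remark: the assertion that the binomial coupling directly yields $\tmixtext^Y \asymp 2\tmixtext^{G_n}$ is loose (the lower bound does not follow from averaging TV distances, since $d_X$ need not be monotone for a non-lazy walk), but this step is in fact unnecessary---\cite[Theorem~3]{characterisation_of_cutoff} for lazy reversible chains requires only $\hittext_\alpha$-cutoff for some $\alpha\in(0,\tfrac12)$, which you have already established.
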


Propositions~\ref{pro:expander_example_no_cutoff} and~\ref{pro:expander_example_cutoff} and Lemma~\ref{lem:SRW_cutoff_implies_lazy_cutoff} immediately imply the following.

\begin{corollary}\label{cor:expander_examples}
There exists a sequence of expander graphs $G$ such that the lazy random walk on $G$ exhibits cutoff and for any $\eps\asymp\frac{1}{\log n}$ the simple random walk on $G^*$ whp does not exhibit cutoff.

Also there exists a sequence of expander graphs $G$ such that the lazy random walk on $G$ exhibits cutoff and for any $\eps\asymp\frac{1}{\log n}$ the simple random walk on $G^*$ whp also exhibits cutoff.
\end{corollary}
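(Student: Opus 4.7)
The plan is to use the characterization of cutoff in terms of $\hittext_\alpha$-cutoff for reversible chains, due to~\cite{characterisation_of_cutoff} (already invoked in the proof of Proposition~\ref{pro:eps_asymp_tmix_inv_no_cutoff_implies_no_cutoff}), combined with a direct hitting-time comparison between the simple random walk $X$ on $G$ and its lazy version $Y$. Both $X$ and $Y$ are reversible with stationary distribution $\pi \propto \deg$, so the characterization applies to each. By hypothesis $X$ has cutoff, so by~\cite[Theorem~3]{characterisation_of_cutoff} $X$ has $\hittext^X_\alpha$-cutoff for every $\alpha\in(0,1/2)$; equivalently, for each fixed $\theta\in(0,1)$ one has $\hittext^X_\alpha(\theta)=(1+o(1))\hittext^X_\alpha(1/4)$, and in particular $\hittext^X_\alpha(1/4)\to\infty$. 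My goal is to transfer this to $Y$.

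I will realize $Y$ as a time-change of $X$: let $(Z_i)_{i\ge1}$ be i.i.d.\ Bernoulli$(1/2)$ independent of $X$, set $N_t:=Z_1+\cdots+Z_t$, and define $Y_t:=X_{N_t}$. Then $Y$ is a lazy random walk on $G$, and for any set $A$ with $X_0\notin A$,
\[
\tau^Y_A \;=\; \inf\{t:N_t=\tau^X_A\}\;\eqdist\;\sum_{i=1}^{\tau^X_A}G_i,
\]
with $G_i\sim\Geompos{1/2}$ i.i.d.\ and independent of $\tau^X_A$. Conditionally on $\tau^X_A=k$ the time $\tau^Y_A$ is $\NegBin{k}{1/2}$, a subgaussian variable with mean $2k$ and variance $2k$, which obeys $\pr{|\NegBin{k}{1/2}-2k|>C\sqrt{k}}\le 2e^{-cC^2}$ uniformly in $k$.

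Using this I will prove that for every fixed $\theta\in(0,1)$,
\[
\hittext^Y_\alpha(\theta)\;=\;(1+o(1))\cdot 2\,\hittext^X_\alpha(1/4).
\]
For the upper bound, pick any $x$ and $A$ with $\pi(A)\ge\alpha$ and condition on the event $\{\tau^X_A\le \hittext^X_\alpha(\theta/2)\}$, of probability at least $1-\theta/2$; a Chernoff estimate on $\NegBin{\hittext^X_\alpha(\theta/2)}{1/2}$ at height $2\hittext^X_\alpha(\theta/2)+C\sqrt{\hittext^X_\alpha(\theta/2)}$ controls the complementary contribution, and $\hittext^X_\alpha$-cutoff replaces $\hittext^X_\alpha(\theta/2)$ by $(1+o(1))\hittext^X_\alpha(1/4)$. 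For the lower bound, given $\theta\in(0,1)$ I choose $\delta=\delta(\theta)$ and $C=C(\theta)$ with $(1-\delta)(1-e^{-cC^2})>\theta$, select $(x,A)$ with $\pi(A)\ge\alpha$ and $\prstart{\tau^X_A>\hittext^X_\alpha(1-\delta)-1}{x}>1-\delta$, and use stochastic monotonicity in $k$ of $\NegBin{k}{1/2}$ together with its lower-tail concentration to deduce $\prstart{\tau^Y_A>2\hittext^X_\alpha(1-\delta)-C\sqrt{\hittext^X_\alpha(1-\delta)}}{x}>\theta$; $\hittext^X_\alpha$-cutoff then upgrades $\hittext^X_\alpha(1-\delta)$ to $(1-o(1))\hittext^X_\alpha(1/4)$, and the additive $O(\sqrt{\hittext^X_\alpha(1/4)})$ error is negligible because $\hittext^X_\alpha(1/4)\to\infty$. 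This establishes $\hittext^Y_\alpha$-cutoff, and a second application of~\cite[Theorem~3]{characterisation_of_cutoff} yields cutoff for $Y$.

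The main obstacle is the lower bound step, because $\hittext^Y_\alpha(\theta)$ is defined by a maximum over starting points $x$ and over sets $A$ with $\pi(A)\ge\alpha$; one must exhibit a single pair $(x,A)$ for which $\tau^Y_A$ is large with the required probability. This is precisely where the uniformity in $k$ of the $\NegBin{k}{1/2}$ tail estimates is crucial: the pair witnessing the lower bound for $\tau^X_A$ transfers to one for $\tau^Y_A$, at the cost of replacing $\theta$ by $1-\delta(\theta)$, a loss that $\hittext^X_\alpha$-cutoff absorbs. Aperiodicity of $X$, implicit in the cutoff hypothesis, is needed to apply the $\hittext_\alpha$-characterization; the same characterization, applied in the reverse direction, delivers TV cutoff for the lazy walk in the final step.
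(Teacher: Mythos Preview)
Your argument correctly establishes that if the simple random walk on $G$ has cutoff then so does the lazy walk; this is precisely Lemma~\ref{lem:SRW_cutoff_implies_lazy_cutoff}, and your negative-binomial time-change coupling is exactly the content of Lemma~\ref{lem:hit_hitlazy_comparison}, which the paper invokes in its proof of that lemma. So your route coincides with the paper's: the paper's proof of the corollary is simply ``Propositions~\ref{pro:expander_example_no_cutoff} and~\ref{pro:expander_example_cutoff} and Lemma~\ref{lem:SRW_cutoff_implies_lazy_cutoff} immediately imply the following,'' and you have reproduced the lemma inline.

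The one omission is that your write-up never cites Propositions~\ref{pro:expander_example_no_cutoff} and~\ref{pro:expander_example_cutoff}. The corollary asserts the \emph{existence} of expander sequences with prescribed behavior of the walk on $G^*$; that existence, and the statement that the simple random walk on each such $G$ has cutoff (your ``by hypothesis''), come from those two propositions and must be invoked explicitly. Your proposal as written proves only the transfer from simple to lazy cutoff on $G$ and says nothing about $G^*$, so it is a proof of Lemma~\ref{lem:SRW_cutoff_implies_lazy_cutoff} rather than of the corollary itself.
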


\begin{proof}[Proof of Lemma~\ref{lem:SRW_cutoff_implies_lazy_cutoff}]
Let $X$ be a simple random walk and let $Y$ be a lazy random walk on $G$.

By \cite[Remark 1.9]{characterisation_of_cutoff} we know that $\trelabs\ll\tmixtext^X$ and that $X$ exhibits $\hittext_{\alpha}$-cutoff for some $\alpha\in(0,1)$.

Then by Lemma~\ref{lem:hit_hitlazy_comparison}\footnote{$G$ has bounded degree, therefore $\tmixtext^X(\theta)\gg1$ for all $\theta\in(0,1)$ and so $\hittext_{\alpha}^X(\theta)\gg1$ for all $\theta\in(0,1)$.} the walk $Y$ also exhibits $\hittext_{\alpha}$-cutoff and $\trel\lesssim\trelabs\ll\hittext^{Y}_{\alpha}(\theta)$ for all $\theta\in(0,1)$.

Then by \cite[Corollary 3.1]{characterisation_of_cutoff} we have
\[
\hittext^Y_{\beta}(\theta)\quad=\quad(1\pm o(1))\hittext^Y_{\alpha}(\theta\pm o(1))
\]
for all $\beta\in(0,1)$. In particular $Y$ exhibits $\hittext_{\beta}$-cutoff for some $\beta\in\left(0,\frac12\right)$. Then by \cite[Theorem 3]{characterisation_of_cutoff} the chain $Y$ also exhibits cutoff.
\end{proof}

\begin{lemma}\label{lem:tmix_bound_by_tmixlazy} Let $G$ be connected and have degrees bounded by $\Delta$.
Then there exists a $\theta\in(0,1)$ (depending on $\Delta$) such that
\[
\tmixtext^G(1-\theta)\quad\lesssim\quad\tmixtext^{G,\lazytext}.
\]
\end{lemma}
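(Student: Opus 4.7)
The task is to show $\tmixtext^G(1-\theta) \lesssim \tmixtext^{G,\lazytext}$ for some $\theta = \theta(\Delta) \in (0,1/2)$. The reason $\theta$ must lie strictly below $1/2$ (and hence the lemma be the strongest reasonable statement of this type) is that for bipartite $G$ one has $\liminf_{t\to\infty}\dtv{P^t(x,\cdot)}{\pi} = 1/2$, so no bound of the form $\dtv{P^t}{\pi} \leq 1/2 - \eps$ can hold even for arbitrarily large $t$. I will aim for $\theta = 1/4$.

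The plan is to reduce to a mixing time comparison for the two-step chain restricted to one side of the (possibly trivial) bipartition. Let $A\sqcup B$ be the bipartition of $G$, taking $A = V$ and $B = \emptyset$ if $G$ is non-bipartite, and set $Q := P^2|_A$. Then $Q$ is a reversible Markov chain on $A$ with stationary distribution $\hat\pi_A := \pi|_A/\pi(A)$, and its eigenvalues are $\lambda_i(P)^2 \geq 0$, so $\trelabs^Q = \trel^Q$. Since $\pi(A) \geq 1/2$ (with equality in the bipartite case) and $P^{2k}(x,\cdot)$ is supported on $A$ for $x\in A$, a direct expansion using $\pi = \pi(A)\hat\pi_A + \pi(B)\hat\pi_B$ yields
\[
\dtv{P^{2k}(x,\cdot)}{\pi} \leq \dtv{Q^k(x,\cdot)}{\hat\pi_A} + \pi(B) \leq \dtv{Q^k(x,\cdot)}{\hat\pi_A} + \tfrac12.
\]
Consequently it suffices to prove $\tmixtext^Q(1/4) \lesssim T := \tmixtext^{G,\lazytext}$, because then for $k\geq \tmixtext^Q(1/4)$ and any $x\in A$ one obtains $\dtv{P^{2k}(x,\cdot)}{\pi} \leq 3/4$, giving the lemma with $\theta = 1/4$ (and the same argument starting from $x\in B$ using $P^2|_B$).

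I will bound $\tmixtext^Q(1/4)$ via hitting times. Because $Q$ has non-negative spectrum, Lemma~\ref{lem:hit_mix_comparison} gives $\tmixtext^Q(1/4) \leq \hittext^Q_{15/16}(3/16) + O(\trel^Q)$. The relaxation correction is harmless: applying $(a-b)^2\leq 2(a-c)^2 + 2(c-b)^2$ inside the two-step sum gives the Dirichlet-form inequality $\mathcal{E}_{P^2}(f,f)\leq 4\mathcal{E}_P(f,f) = 8\mathcal{E}_{P_{\lazytext}}(f,f)$, whence $\trel^Q \leq 8\trel^{P_{\lazytext}} \lesssim T$. For the hitting-time part, observe that for $x\in A$ and $S\subseteq A$ a $Q$-step corresponds to two $P$-steps and $P$ can reach $S$ only at even times, so $\hittext^Q(x,S) = \tfrac12 \hittext^P(x,S)$. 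Lemma~\ref{lem:hit_hitlazy_comparison} then relates $\hittext^P$ to $\hittext^{P_{\lazytext}}$ up to a factor of $2$, and Lemmas~\ref{lem:hit_mix_comparison}-\ref{lem:hitting_times_comparison} applied to the lazy chain give $\hittext^{P_{\lazytext}}_{\alpha}(\theta)\lesssim T$ for any fixed $\alpha\in(0,1)$, $\theta\in(0,1)$. Chaining these comparisons yields $\hittext^Q_{15/16}(3/16) \lesssim T$ and hence $\tmixtext^Q(1/4) \lesssim T$, as desired.

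The step I expect to require the most care is the bookkeeping between the thresholds $\alpha$ in the hitting times for $Q$ and for $P_{\lazytext}$: a set $S\subseteq A$ with $\hat\pi_A(S)\geq 15/16$ corresponds to $\pi(S)\geq \pi(A)\cdot 15/16 \geq 15/32$, so to invoke the bound for the lazy chain one has to pass between different levels $\alpha$, which is permitted by Lemma~\ref{lem:hitting_times_comparison} at the cost of an additive $O(\trel^{P_{\lazytext}})$ term that is again absorbed into $T$. Once the correspondence of levels is made precise, all constants depend only on $\Delta$ (entering through $\pi_{\min}\gtrsim 1/(\Delta n)$ in the relevant relaxation/mixing comparisons), yielding the lemma.
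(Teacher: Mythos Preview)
Your Dirichlet-form step contains a direction error that turns out to be fatal. From $\mathcal{E}_{P^2}(f,f)\le 8\,\mathcal{E}_{P_{\lazytext}}(f,f)$ you conclude $\trel^Q\le 8\,\trel^{P_{\lazytext}}$, but since $\trel=\sup_f\Var_\pi(f)/\mathcal{E}(f,f)$, an \emph{upper} bound on $\mathcal{E}_{P^2}$ only yields a \emph{lower} bound on $\trel^Q$. What you would need is $\mathcal{E}_{P^2}(f,f)\gtrsim\mathcal{E}_{P_{\lazytext}}(f,f)$; spectrally this is $1-\lambda_i^2\gtrsim\tfrac12(1-\lambda_i)$, i.e.\ $1+\lambda_i\gtrsim 1$, which fails whenever $\lambda_n(P)$ is close to $-1$.

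This is not a technicality: in the non-bipartite case you set $Q=P^2$, and then your sufficient condition $\tmixtext^Q(1/4)\lesssim T$ can fail outright. Take a $d$-regular bipartite expander and add one edge inside one part. The resulting graph is non-bipartite with $\lambda_2(P)$ bounded away from $1$, so $T=\tmixtext^{P_{\lazytext}}\asymp\log n$, but $1+\lambda_n(P)$ is of order $1/n$, so $\trel^{P^2}\asymp n$ and $\tmixtext^{P^2}(1/4)\gtrsim n\gg T$. (In this example the weaker bound $\tmixtext^{P^2}(3/4)\lesssim T$, which is all you actually need since $\pi(B)=0$, \emph{does} hold; but your route through Lemma~\ref{lem:hit_mix_comparison} cannot reach it because the additive $\trelabs^Q$ term already dominates $T$.) In the genuinely bipartite case the conclusion $\trel^Q\le\trel^{P_{\lazytext}}$ is correct, but via the eigenvalue pairing $\lambda\leftrightarrow-\lambda$, not via your inequality.

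The paper avoids periodicity entirely by using the averaged mixing time $\tavetext(\theta):=\min\{t:\max_x\dtv{(P^t+P^{t+1})(x,\cdot)/2}{\pi}\le\theta\}$ and the known relation $\tavetext(1/4)\asymp T$. At $t=\tavetext(1/4)$ one has $\sum_y(P^t(x,y)\wedge\pi(y))+\sum_y(P^{t+1}(x,y)\wedge\pi(y))\ge 3/4$, and bounded degree enters in a single elementary step: since $P^{t+1}(x,y)=\sum_{z\sim y}P^t(x,z)/\deg(z)$ and $\pi(y)\asymp\pi(z)$ for neighbours, the two sums are within a constant factor of each other. Hence $\sum_y P^t(x,y)\wedge\pi(y)\ge\theta(\Delta)$, i.e.\ $\tmixtext^G(1-\theta)\le t\asymp T$.
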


\begin{proof}Let
$$\tavetext(\theta):=\min\left\{t:\:\max_{x}\dtv{\frac{P^t(x,\cdot)+P^{t+1}(x,\cdot)}{2}}{\pi(\cdot)}\le\theta\right\}.$$

Using that by \cite[Theorem 1.4]{mix_hit} we have $\tavetext\left(\frac14\right)\asymp\tmixtext^{\lazytext}\left(\frac14\right)$, the definition of $\tavetext\left(\frac14\right)$, that $\pi(y)\asymp\pi(z)$ for all $y$, $z$ and that the degrees are bounded, we get the result. See Appendix~\ref{app:other_remainings_pfs} for more details.
\end{proof}

\subsection{Completing the picture for vertex-transitive graphs with polynomial growth of balls}

In this section we will consider vertex-transitive graphs $G$, and denote the volume of a ball of radius $r$ in $G$ by $V(r)$.

\begin{assumption}\label{assump:for_eps_power_of_n}
${}$
\begin{itemize}
\item $G$ is vertex-transitive.
\item There exist constants $C_1,C_2,a_1,a_2>0$ such that $C_1n^{-a_2}\le\eps\le C_2n^{-a_1}$. Let $a:=\frac{\log\left(\frac1\eps\right)}{\log n}\asymp1$.
\item $\eps\gg\frac{1}{\tmixtext^G(\theta)}$ for all $\theta\in(0,1)$.
\item There exist constants $c_1,c_2,c_3,c_4>0$ with the following property. For any vertex $x$, for any positive integers $t$ and $r$ with $t\le\frac{c_1}{\eps}$, $c_2\sqrt{t}\le r\le c_3\sqrt{t}$, and for any $y\in B_{G}(x,r)$, we have $P^t_{G,\lazytext}(x,y)\ge \frac{c_4}{V(r)}$.
\item For any $c'>0$ there exists $c>0$ such that $V\left(\frac{c}{\sqrt{\eps}}\right)\le c'n$.
\end{itemize}
\end{assumption}

First we show the following.

\begin{proposition}\label{pro:Gstar_no_cutoff} Assume that $G$ and $\eps$ satisfy Assumption~\ref{assump:for_eps_power_of_n}. Then whp the random walk on $G^*$ does not exhibit cutoff, and $\tmixtext^{G^*}\left(\frac{1}{4}\right)\asymp \frac{1}{\eps}$.
\end{proposition}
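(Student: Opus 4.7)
The lower bound $\tmixtext^{G^*}(1/4)\gtrsim 1/\eps$ is deterministic in $G^*$. Fix any vertex $x_0$ and consider $t=s/\eps$ for a constant $s$ to be chosen. Since $G$ is $d$-regular by vertex-transitivity, the probability that $X$ has not crossed a long-range edge by time $t$ is $\pr{\tau_{LR}>t}=(d/(d+\eps))^t\geq e^{-s/d}$. Conditionally on this event, $X$ is a simple random walk on $G$, so $\estart{d_G(x_0,X_t)^2}{x_0}\leq t$ (each step changes the graph distance by at most $1$) and Markov's inequality gives $X_t\in\B_G(x_0,K\sqrt t)$ with probability $\geq 1-K^{-2}$. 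The last bullet of Assumption~\ref{assump:for_eps_power_of_n}, applied with $c'=\eta$, furnishes $c_\eta>0$ with $V(c_\eta/\sqrt\eps)\leq\eta n$; taking $K=4$ and $s=(c_\eta/K)^2$ yields $\pi(\B_G(x_0,K\sqrt{s/\eps}))\leq\eta$, and thus
\[
\dtv{\prcond{X_t\in\cdot}{G^*}{x_0}}{\pi(\cdot)}\;\geq\;\tfrac{15}{16}e^{-s/d}-\eta.
\]
Setting $\eta=\theta/2$ this exceeds $\theta$ whenever $s\leq d\log(5/(8\theta))$, a constraint compatible with the ball-volume constraint once $n$ is large (the admissible $s$ diverges because $c_\eta\to\infty$ with $n$ under Assumption~\ref{assump:for_eps_power_of_n}). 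Hence for every fixed $\theta\in(0,5/8)$ and $n$ large,
\[
\tmixtext^{G^*}(\theta)\;\geq\;\tfrac{d}{\eps}\log(5/(8\theta)),
\]
which in particular gives $\tmixtext^{G^*}(1/4)\gtrsim 1/\eps$.

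For the upper bound $\tmixtext^{G^*}(1/4)\lesssim 1/\eps$ (whp), I would combine Lemma~\ref{lem:hit_mix_comparison} with Lemma~\ref{lem:bound_trelabs} (which gives $\trelabs^{G^*}\lesssim 1/\eps$ whp) to reduce to showing $\hit{15/16}{1/16}^{G^*}\lesssim 1/\eps$ whp. The hitting-time bound follows from the path-reversal coupling of Section~\ref{sec:relating_Gstar_and_T}, with the crucial simplification that the target precision is only constant (rather than $1+o(1)$ as in the cutoff proof). Choosing $L$ a sufficiently large constant, the reachable set in the quasi-tree has size $V(1/\sqrt\eps)^L$, which exceeds $n$ by a positive power of $n$ because $V(1/\sqrt\eps)\geq 1/\sqrt\eps$ (connectedness plus bounded degree) is already a positive power of $n$. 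Lemma~\ref{lem:w_pair_sum} applied to the random matching of the unrevealed long-range half-edges then gives, for most pairs $(x,y)$, $\prcond{X_{\tau_{2L}}=y}{G^*}{x}\geq(1-\theta)/n$ whp; importantly, the reversing-the-second-half-of-a-path trick from Section~\ref{subsec:overview} keeps the exploration's footprint at $O(\sqrt n)$ vertices, so the coupling with the quasi-trees still succeeds in this polynomially small $\eps$ regime. Combined with the concentration of $\tau_{2L}$ around $L/\nu\asymp 1/\eps$ from Lemma~\ref{lem:speed}, this yields $\dtv{\prcond{X_{\tau_{2L}}\in\cdot}{G^*}{x}}{\pi(\cdot)}\leq 1/16$ whp for every starting vertex $x$, giving the hitting-time bound.

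Combining the two gives $\tmixtext^{G^*}(1/4)\asymp 1/\eps$ whp. For the no-cutoff statement, let $C$ denote the constant in the whp upper bound $\tmixtext^{G^*}(1/4)\leq C/\eps$ and set the fixed constant $\theta_0:=(5/8)e^{-2C/d}\in(0,1/2)$. Then whp, for $n$ large,
\[
\frac{\tmixtext^{G^*}(\theta_0)}{\tmixtext^{G^*}(1/2)}\;\geq\;\frac{(d/\eps)\log(5/(8\theta_0))}{C/\eps}\;=\;\frac{d\cdot(2C/d)}{C}\;=\;2,
\]
bounded away from $1$ uniformly in $n$. Since cutoff would force this ratio to tend to $1$, $(G_n^*)$ fails cutoff whp.

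The main obstacle is the upper bound. Naive spectral/$\ell^2$ arguments based on $\trelabs^{G^*}\lesssim 1/\eps$ yield only $\tmixtext\lesssim(\log n)/\eps$, off by a $\log n$ factor; removing this factor relies on the combinatorial input that the walker's reachable set multiplies by $V(1/\sqrt\eps)$ per long-range crossing, so only $O(1)$ crossings (total time $\asymp 1/\eps$) are needed to cover $n$ vertices. Transferring this branching behaviour from the quasi-tree back to $G^*$ in the polynomially-small-$\eps$ regime is precisely what the path-reversal coupling of Sections~\ref{sec:relating_Gstar_and_T} and~\ref{sec:proof_of_cutoff} was designed to do.
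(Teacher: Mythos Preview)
Your overall strategy matches the paper's: a lower bound via ``with positive probability the walk has not yet crossed a long-range edge and hence looks like SRW on $G$, which is far from mixed'', an upper bound via path-reversal plus Lemma~\ref{lem:w_pair_sum} with a bounded number $L$ of long-range crossings, and then a ratio argument for no cutoff. There are two genuine gaps, though.

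\textbf{Lower bound.} The sentence ``the admissible $s$ diverges because $c_\eta\to\infty$ with $n$'' is wrong: in the last bullet of Assumption~\ref{assump:for_eps_power_of_n} the constant $c$ depends only on $c'$, not on $n$, and in fact $c\to0$ as $c'\to0$. So your argument only yields $\tmixtext^{G^*}(\theta)\ge s(\theta)/\eps$ with $s(\theta)\le(c_{\theta/2}/4)^2$, and you cannot force $s(\theta_0)=2C$ for your chosen $\theta_0$. The paper avoids this by using the third bullet, $\eps\gg 1/\tmixtext^G(\theta)$ for every $\theta$: at $t=C/\eps$ one has $\dtv{P_G^t(x,\cdot)}{\pi_G}=1-o(1)$ directly, so $\dtv{P_{G^*}^t(x,\cdot)}{\pi_{G^*}}\gtrsim\pr{\tauLR>t}\gtrsim e^{-C/d}$, giving ``for every $C$ there exists $\theta$ with $\tmixtext^{G^*}(\theta)\ge C/\eps$'' (Lemma~\ref{lem:vertex_transitive_lower_bound_tmix}). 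Your ratio argument then goes through.

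\textbf{Upper bound.} You are right that one only needs constant precision and a bounded $L$, and that the exploration footprint stays $O(\sqrt n)$. But the machinery of Section~\ref{sec:relating_Gstar_and_T} is built under Assumption~\ref{assump:f}, which requires $g(n)\to\infty$; here $f(1/\eps)\asymp\log n$ so $g(n)\asymp1$ and the parameters $R,K,M$ and the truncation criteria degenerate. More substantively, you never say why the walk spreads over $V(1/\sqrt\eps)$ vertices between consecutive long-range crossings. The paper's argument hinges on the fourth bullet of Assumption~\ref{assump:for_eps_power_of_n} (the heat-kernel lower bound $P^t_{G,\lazytext}(x,y)\ge c_4/V(\sqrt t)$), which forces one to work with the \emph{lazy} walk and a tailored stopping time $\tau=\tauLR^{(M)}+r^2$. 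With this, the weight put on each vertex of the $(r_x)$-ball around $x$ (and each vertex at level $M-1$ of the $(r_{y,0},\dots,r_{y,M-1})$-neighbourhood of $y$) is bounded below by $\asymp n^{-A}$ and $\asymp n^{-B-(M-1)A}$ respectively, which is precisely what feeds into Lemma~\ref{lem:w_pair_sum} (see Lemma~\ref{lem:tau_transtion_probs}). The reversal itself is also done directly via the exact stopping time rather than through the events $\Omega_0,\Omega_1$. In short, your sketch points to the right mechanism but omits the one analytic ingredient that makes it work in this regime.
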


After that we show that vertex-transitive graphs with polynomial growth of balls with $\eps\asymp n^{-a}\gg\frac{1}{\tmixtext^G(\theta)}$ satisfy Assumption~\ref{assump:for_eps_power_of_n}, hence the corresponding $G^*$ do not exhibit cutoff.

\begin{proposition}\label{pro:transitive_poly_satsifies_assump}
Assume that there exist positive constants $\Delta$, $c$ and $d$ such that each $G$ is a vertex-transitive graph of degree $\Delta$, satisfying $V(r)\le c r^d$ for all $r$. Assume further that $\eps\asymp n^{-a}\gg\frac{1}{\tmixtext^G(\theta)}$ for all $\theta\in(0,1)$ where $a\asymp 1$. Then $G$ and $\eps$ satisfy Assumption~\ref{assump:for_eps_power_of_n}.
\end{proposition}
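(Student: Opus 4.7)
The plan is to verify the bullets of Assumption~\ref{assump:for_eps_power_of_n} in sequence. The first three are essentially immediate from the hypotheses of Proposition~\ref{pro:transitive_poly_satsifies_assump}: transitivity is given; $\eps\asymp n^{-a}$ with $a\asymp 1$ provides the two-sided polynomial sandwich with $a_1,a_2$ taken as the implicit upper/lower constants for $a$; and the hypothesis $\eps\gg 1/\tmixtext^G(\theta)$ is identical to the third bullet.

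For bullet (v), I would first upgrade the polynomial upper bound $V(r)\le cr^d$ to $V(r)\asymp r^d$ on all scales $r\le \diam{G}$, which is a standard output of the structure theory of transitive graphs of polynomial growth (for Cayley graphs it follows from Gromov/Bass via virtual nilpotence, and the Trofimov--Woess reduction handles the general transitive case). This yields $\diam{G}\asymp n^{1/d}$ and, by the classical diffusive mixing estimate for transitive graphs of polynomial growth, $\tmixtext^G(\theta)\asymp \diam{G}^2\asymp n^{2/d}$. Combining with $\eps\gg 1/\tmixtext^G(\theta)$ gives $\eps^{-1}\ll n^{2/d}$, hence $V(c/\sqrt{\eps})\lesssim c^d\eps^{-d/2}\ll n$, and shrinking $c$ gives $V(c/\sqrt{\eps})\le c'n$ as required.

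Bullet (iv) is the substantive part and asks for the Gaussian-type heat kernel lower bound $P^t_{G,\lazytext}(x,y)\gtrsim 1/V(r)$ for $y\in\B_G(x,r)$, $c_2\sqrt t\le r\le c_3\sqrt t$, and $t\le c_1/\eps$. The plan is to invoke the two-sided Gaussian heat-kernel estimate
\[
P^t_{G,\lazytext}(x,y)\;\asymp\;\frac{1}{V(\sqrt t)}\exp\!\left(-c'\,\frac{d_G(x,y)^2}{t}\right),
\]
which holds for vertex-transitive graphs of polynomial growth throughout the diffusive window $t\lesssim \diam{G}^2$. Since the third bullet already gives $c_1/\eps\ll \tmixtext^G\asymp\diam{G}^2$, the window $t\le c_1/\eps$ lies comfortably inside the diffusive regime; for $r$ in the stated window the Gaussian factor $\exp(-c'r^2/t)$ is bounded below by a universal positive constant, and $V(\sqrt t)\asymp V(r)$ by doubling of $V$, so we obtain the lower bound with a universal $c_4$.

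The main obstacle is the appeal to the two-sided Gaussian heat-kernel estimate for transitive graphs of polynomial growth, together with the refinement $V(r)\asymp r^d$ on all scales up to the diameter; these are classical but non-trivial inputs, and they also underpin the $\tmixtext^G\asymp \diam{G}^2$ identity used for bullet (v). Once they are in hand, the remainder of the verification is routine bookkeeping of constants and is deferred to Appendix~\ref{app:estimates_for_transitive_poly_growth_graphs}.
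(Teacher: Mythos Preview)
Your handling of bullets (i)--(iii) matches the paper. For bullet (iv), citing two-sided Gaussian heat-kernel bounds is a legitimate shortcut; the paper instead builds the off-diagonal lower bound by hand, combining matching diagonal estimates $P^t(o,o)\asymp 1/V(\sqrt t)$ (Lemmas~\ref{lem:diagonal_lower_bound},~\ref{lem:diagonal_upper_bound}) with a gradient inequality $|P^{t+s}(o,x)-P^{t+s}(o,y)|\lesssim d(x,y)\,s^{-1/2}P^t(o,o)$ (Lemma~\ref{lem:off_diag_comparison}) and the doubling bound $V(rk)\le Ck^LV(r)$ from the structure theorem (Lemma~\ref{lem:poly_growth_exponent}). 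Either route works once uniform doubling is in hand.

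Your argument for bullet (v), however, has a real gap. The claim that $V(r)\le cr^d$ can be upgraded to $V(r)\asymp r^d$ with the \emph{same} exponent $d$ is false: the hypothesis is only an upper bound, and the actual growth exponent may be strictly smaller and may vary along the sequence. Gromov--Bass gives $V(r)\asymp r^{d'}$ for the homogeneous dimension $d'$, not for whatever $d$ happens to appear in the assumed upper bound; and even at fixed degree $\Delta=4$ one can mix two-dimensional tori (growth $r^2$) with finite Heisenberg quotients (growth $r^4$), so no single exponent works uniformly. Concretely, your bound $V(c/\sqrt\eps)\lesssim c^d\eps^{-d/2}$ is then far too weak: for cycles $\Z_n$ (which satisfy $V(r)\le r^5$) with $\eps=n^{-3/2}$ it gives $\eps^{-5/2}=n^{15/4}\gg n$, while the true value is $V(c/\sqrt\eps)\asymp n^{3/4}\ll n$.

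The paper never asserts a fixed exponent; it uses only doubling. For bullet (v) it argues geometrically: placing $k$ disjoint balls of radius $r=\lfloor\diam{G}/(3k)\rfloor$ along a diameter geodesic forces $V(r)\le n/k$, hence $V(c\cdot\diam{G})\le c'n$ for small $c$. This is combined with $1/\sqrt\eps\lesssim\diam{G}$, which follows from $1/\eps\ll\tmixtext^G(\theta)\lesssim\diam{G}^2$, the last inequality being a by-product of the heat-kernel estimates already proved for bullet (iv). You should replace your exponent-matching step with this (or an equivalent) argument that uses only doubling.
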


\begin{corollary}\label{cor:eps_power_of_n}
Let $G$ and $\eps$ be as in Proposition~\ref{pro:transitive_poly_satsifies_assump}. Then whp the random walk on $G^*$ does not exhibit cutoff, and $\tmixtext^{G^*}\left(\frac{1}{4}\right)\asymp \frac{1}{\eps}$.

\end{corollary}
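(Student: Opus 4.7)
The proof is a direct combination of the two immediately preceding results, so the plan is essentially bookkeeping. First I would invoke Proposition~\ref{pro:transitive_poly_satsifies_assump} to verify that the hypotheses of the corollary (each $G$ vertex-transitive of degree $\Delta$ with $V(r)\le cr^d$, and $\eps\asymp n^{-a}$ with $a\asymp 1$ and $\eps\gg 1/\tmixtext^G(\theta)$ for all $\theta\in(0,1)$) imply that $(G,\eps)$ satisfies Assumption~\ref{assump:for_eps_power_of_n}. Each bullet of that assumption is then in hand: vertex-transitivity is immediate; the sandwich $C_1n^{-a_2}\le\eps\le C_2 n^{-a_1}$ holds with $a=\log(1/\eps)/\log n\asymp 1$; the comparison $\eps\gg 1/\tmixtext^G(\theta)$ is given; and the two non-trivial bullets --- the Gaussian-type lower bound $P^t_{G,\lazytext}(x,y)\ge c_4/V(r)$ on balls of radius $c_2\sqrt{t}\le r\le c_3\sqrt{t}$ for $t\le c_1/\eps$, together with the volume estimate $V(c/\sqrt\eps)\le c'n$ --- are precisely what Proposition~\ref{pro:transitive_poly_satsifies_assump} provides.

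Once Assumption~\ref{assump:for_eps_power_of_n} is verified, I would then apply Proposition~\ref{pro:Gstar_no_cutoff} directly. Its two conclusions---that whp the random walk on $G^*$ does not exhibit cutoff and that $\tmixtext^{G^*}\!\left(\frac14\right)\asymp 1/\eps$---are exactly the two assertions of the corollary, so no further argument is needed. In particular there is no genuine obstacle inside the corollary itself; it is a packaging statement that isolates the family of graphs (polynomial-growth vertex-transitive) to which the abstract Proposition~\ref{pro:Gstar_no_cutoff} applies.

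If one were to unpack where the real work lies, it is concentrated in Proposition~\ref{pro:transitive_poly_satsifies_assump}: one must produce the near-diagonal heat kernel lower bound at the correct spatial scale for a transitive polynomial-growth graph, up to times of order $1/\eps$. This is the classical Varopoulos/Hebisch--Saloff-Coste type Gaussian lower bound, which under polynomial volume growth $V(r)\le cr^d$ together with transitivity translates the on-diagonal decay $P^t(x,x)\gtrsim 1/V(\sqrt{t})$ into the spread bound $P^t(x,y)\gtrsim 1/V(\sqrt t)$ for $d_G(x,y)\lesssim\sqrt t$. The volume bound $V(c/\sqrt\eps)\le c'n$ is then obtained because $\eps\gg 1/\tmixtext^G(\theta)$ forces $1/\sqrt\eps$ to be much smaller than the diameter scale at which balls exhaust the graph, and polynomial growth makes the ratio $V(c/\sqrt\eps)/n$ arbitrarily small for small $c$.

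In summary, my plan is: (i) cite Proposition~\ref{pro:transitive_poly_satsifies_assump} to verify Assumption~\ref{assump:for_eps_power_of_n}; (ii) cite Proposition~\ref{pro:Gstar_no_cutoff} to read off the two conclusions. No additional estimates are required at the level of the corollary.
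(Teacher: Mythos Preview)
Your proposal is correct and matches the paper's approach exactly: the corollary is stated as an immediate consequence of Proposition~\ref{pro:transitive_poly_satsifies_assump} (which verifies Assumption~\ref{assump:for_eps_power_of_n}) followed by Proposition~\ref{pro:Gstar_no_cutoff} (which yields both conclusions), and the paper does not supply any additional argument beyond this.
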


Finally we show that the simple random walk and lazy random walk on vertex-transitive graphs $G$ with polynomial growth of balls do not have cutoff and conclude that for any $\eps\lesssim n^{-\Theta(1)}$ whp $G^*$ does not exhibit cutoff.

\begin{proposition}\label{pro:Gstar_transitive_poly_no_cutoff}
Let $G$ be as in Proposition~\ref{pro:transitive_poly_satsifies_assump} and let $\eps\lesssim n^{-a}$ where $a\asymp1$. Then whp the random walk on $G^*$ does not exhibit cutoff, and when $\varepsilon\gg \frac{1}{\diam{G}^2}$, we have $\tmixtext^{G^*}\left(\frac{1}{4}\right)\asymp \frac{1}{\eps}$.
\end{proposition}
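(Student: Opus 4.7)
The plan is to combine Propositions~\ref{pro:smaller_eps}, \ref{pro:eps_asymp_tmix_inv_no_cutoff_implies_no_cutoff}, \ref{pro:transitive_poly_satsifies_assump} and~\ref{pro:Gstar_no_cutoff}, together with the classical fact that for a sequence of vertex-transitive graphs with polynomial volume growth $V(r)\le cr^d$ one has $\trel^{G_n}\asymp\tmixtext^{G_n}(1/4)\asymp\diam{G_n}^2\asymp n^{2/d}$; consequently neither the simple nor the lazy random walk on $(G_n)$ exhibits cutoff. For Cayley graphs of groups of polynomial growth this is due to Diaconis--Saloff-Coste, and the extension to vertex-transitive graphs proceeds via Trofimov's theorem reducing to the Cayley case. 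This no-cutoff statement on $(G_n)$ itself is the only external input needed beyond the propositions already established in the paper.

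Since the conclusion ``no cutoff whp'' is a statement about subsequences, it suffices to verify it along an arbitrary subsequence; after passing to one I may assume that exactly one of the following three regimes holds. \emph{Regime (i):} there exists $\theta\in(0,1)$ with $\eps_n\ll 1/\tmixtext^{G_n}(\theta)$. By Proposition~\ref{pro:smaller_eps}, the walk on $(G^*_n)$ has cutoff iff the walk on $(G_n)$ does, so by the classical fact above it does not. In this regime $\eps_n\lesssim 1/\diam{G_n}^2$, so the hypothesis $\eps_n\gg 1/\diam{G_n}^2$ of the second claim fails and there is nothing to prove for it.

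\emph{Regime (ii):} $\eps_n\asymp 1/\tmixtext^{G_n,\lazytext}(1/4)\asymp 1/\diam{G_n}^2$. Since the lazy walk on $(G_n)$ does not exhibit cutoff, Proposition~\ref{pro:eps_asymp_tmix_inv_no_cutoff_implies_no_cutoff} yields no cutoff for the walk on $(G^*_n)$, and again the second claim is vacuous. \emph{Regime (iii):} $\eps_n\gg 1/\tmixtext^{G_n}(\theta)$ for every $\theta\in(0,1)$. Setting $a_n:=\log(1/\eps_n)/\log n$, the hypothesis $\eps_n\lesssim n^{-a}$ with $a\asymp 1$ together with $\eps_n\gg 1/\tmixtext^{G_n}\asymp n^{-2/d}$ forces $a_n$ to be bounded above and below by positive constants, so Proposition~\ref{pro:transitive_poly_satsifies_assump} applies and Assumption~\ref{assump:for_eps_power_of_n} holds. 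Proposition~\ref{pro:Gstar_no_cutoff} then yields whp that the walk on $G^*_n$ does not exhibit cutoff and that $\tmixtext^{G^*_n}(1/4)\asymp 1/\eps_n$. The hypothesis $\eps_n\gg 1/\diam{G_n}^2\asymp 1/\tmixtext^{G_n}$ of the second claim places us precisely in Regime (iii), so the claimed mixing-time order follows.

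The main obstacle is invoking the classical no-cutoff statement for the walks on $(G_n)$ themselves; this reduces to the equivalence $\trel^{G_n}\asymp\tmixtext^{G_n}(1/4)$ for transitive graphs of polynomial growth, which prevents the separation needed for cutoff. Everything else is a case-split on the order of $\eps_n$ relative to $1/\tmixtext^{G_n,\lazytext}(1/4)$ and routine verification that the hypotheses of each cited proposition are met.
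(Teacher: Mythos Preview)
Your proof is correct and follows essentially the same three-regime split as the paper's own argument (invoking Proposition~\ref{pro:smaller_eps}, Proposition~\ref{pro:eps_asymp_tmix_inv_no_cutoff_implies_no_cutoff}, and Corollary~\ref{cor:eps_power_of_n} respectively, then passing to subsequences). Two minor remarks: the assertion $\diam{G_n}^2\asymp n^{2/d}$ does not follow from the one-sided hypothesis $V(r)\le cr^d$ alone (a cycle satisfies this for every $d\ge 1$), but all you actually use in Regime~(iii) is that $\tmixtext^{G_n}$ is polynomially bounded in $n$, which is immediate; and rather than outsourcing the no-cutoff of $(G_n)$ to Diaconis--Saloff-Coste and Trofimov, the paper establishes it internally as Lemma~\ref{lem:G_lazy_nocutoff}, Corollary~\ref{cor:G_transitive_poly_no_cutoff} and Lemma~\ref{lem:tmix_order_transitive_poly}, deducing moderate growth from the structure theorem and hence $\trel^G\asymp\diam{G}^2\asymp\tmixtext^{G,\lazytext}(\tfrac14)$.
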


In what follows we will be working towards the proof of Proposition~\ref{pro:Gstar_no_cutoff}, under Assumption~\ref{assump:for_eps_power_of_n}.

\begin{lemma}\label{lem:vertex_transitive_lower_bound_tmix}
For any constant $C>0$ there exists $\theta\in(0,1)$ such that $\tmixtext^{G^*}(\theta)\ge \frac{C}{\eps}$.
\end{lemma}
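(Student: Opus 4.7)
The plan is to establish $\trel^{G^*}\gtrsim 1/\eps$ and then invoke the classical spectral lower bound $\tmixtext^{G^*}(\theta)\ge(\trel^{G^*}-1)\log(1/(2\theta))$: given any $C>0$, choosing $\theta\in(0,1)$ small enough in terms of $C$ will then guarantee $\tmixtext^{G^*}(\theta)\ge C/\eps$. This bypasses the need to track the walk's position directly; all we need is a single good test function in the variational characterization of the spectral gap.

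\textbf{Dirichlet form comparison.} To lower bound $\trel^{G^*}$ I will use the variational characterization with the test function $\phi=\phi_2^G$, the second eigenfunction of $P_G$ (with eigenvalue $\lambda_2^G=1-1/\trel^G$). Since $G$ is $\Delta$-regular and vertex-transitive, $\pi_G$ and $\pi_{G^*}$ are both uniform on $V$, so $\phi\perp\mathbf{1}$ under $\pi_{G^*}$ and $\vr{\phi}$ is unambiguous. Splitting the Dirichlet form
\[
\Dirform_{G^*}(\phi)\;=\;\frac{1}{n(\Delta+\eps)}\Bigl[\,E_G(\phi)+\eps E_{\mathrm{LR}}(\phi)\,\Bigr],
\]
where $E_G(\phi)=\sum_{\{x,y\}\in E(G)}(\phi(x)-\phi(y))^2$ and similarly $E_{\mathrm{LR}}$ sums over long-range edges, and using that the long-range edges form a matching so $E_{\mathrm{LR}}(\phi)\le 2\sum_{x}\phi(x)^2=2n\vr{\phi}$, one gets
\[
\Dirform_{G^*}(\phi)\;\le\;\Dirform_G(\phi)+\frac{2\eps}{\Delta}\vr{\phi}\;=\;\left(\frac{1}{\trel^G}+\frac{2\eps}{\Delta}\right)\vr{\phi},
\]
and hence $\trel^{G^*}\ge(1/\trel^G+2\eps/\Delta)^{-1}$.

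\textbf{The main obstacle.} What remains is to check that $1/\trel^G\ll\eps$, equivalently $\trel^G\gg 1/\eps$, so that the $2\eps/\Delta$ term dominates in the denominator above. This is the quantitative content we extract from Assumption~\ref{assump:for_eps_power_of_n}: the fifth bullet encodes the quantitative form of $\eps\gg 1/\diam{G}^2$, and for vertex-transitive graphs of polynomial volume growth (which the combination of bullets in the Assumption captures, as established in Proposition~\ref{pro:transitive_poly_satsifies_assump}) one has the standard estimate $\trel^G\asymp\diam{G}^2$. Hence $1/\trel^G\asymp 1/\diam{G}^2\ll\eps$, giving $\trel^{G^*}\gtrsim \Delta/(2\eps)\asymp 1/\eps$. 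Choosing e.g.\ $\theta=e^{-3C}/2$ then yields $\tmixtext^{G^*}(\theta)\ge(\trel^{G^*}-1)\log(1/(2\theta))\ge C/\eps$ for all sufficiently large $n$.
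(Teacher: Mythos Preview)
Your spectral route is different from the paper's and, as written, has a real gap at the step ``$1/\trel^G\ll\eps$''.

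The paper's proof is much more direct and uses only bullets (i) and (iii) of Assumption~\ref{assump:for_eps_power_of_n}. It sets $t=C/\eps$, conditions on whether a long-range edge has been crossed by time $t$, and observes that (a) by regularity of $G$, the walk on $G^*$ conditioned on $\{\tauLR>t\}$ is exactly the walk on $G$, and $\pi_{G^*}=\pi_G$; (b) since $t\asymp 1/\eps\ll\tmixtext^G(\theta)$ for every $\theta$, we have $\dtv{P_G^t(x,\cdot)}{\pi_G}\to 1$; and (c) $\pr{\tauLR>t}\gtrsim 1$. Combining these gives $\dtv{P_{G^*}^t(x,\cdot)}{\pi_{G^*}}\gtrsim 1$, which is the claim. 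No spectral information about $G$ is used.

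Your argument, by contrast, needs $\trel^G\gtrsim 1/\eps$, and you justify this by asserting that Assumption~\ref{assump:for_eps_power_of_n} forces $G$ to have polynomial volume growth, citing Proposition~\ref{pro:transitive_poly_satsifies_assump}. That proposition goes the other way: it shows that vertex-transitive graphs of polynomial growth (with suitable $\eps$) \emph{satisfy} Assumption~\ref{assump:for_eps_power_of_n}, not that the assumption implies polynomial growth. Under Assumption~\ref{assump:for_eps_power_of_n} alone, bullet (iii) gives $\tmixtext^G(\theta)\gg 1/\eps$ for all $\theta$, but the general inequality runs $\trel^G\lesssim\tmixtext^G$, so this does not yield $\trel^G\gtrsim 1/\eps$; the best one extracts from bullets (iv)--(v) via the on-diagonal bound and eigenvalue trace is $\trel^{G,\lazytext}\gtrsim 1/(\eps\log n)$, which is off by a $\log n$ factor. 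Likewise the fifth bullet gives only $\diam{G}^2\gtrsim 1/\eps$, not $\gg$.

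In the specific context where Assumption~\ref{assump:for_eps_power_of_n} is applied (namely the polynomial-growth graphs of Proposition~\ref{pro:transitive_poly_satsifies_assump}), your Dirichlet-form comparison does go through, since there one has $\trel^G\asymp\diam{G}^2$ (as invoked in the proof of Lemma~\ref{lem:tmix_order_transitive_poly}) and $\diam{G}^2\gg 1/\eps$ via bullet (iii) and $\tmixtext^G(1-\theta)\lesssim\diam{G}^2$. But this is considerably more machinery than the paper's argument, and it proves a statement with a narrower scope than what is claimed.
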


\begin{proof}
Let $t=\frac{C}{\eps}$. Note that we have
\begin{align*}
&1-\dtv{P^t_{G^*}(x,\cdot)}{\pi_{G^*}(\cdot)}\quad=\quad\sum_{y}P^t_{G^*}(x,y)\wedge\pi_{G^*}(y)\\
&\le\quad\pr{\tauLR>t}\sum_{y}\prcond{X_t=y}{\tauLR>t}{x}\wedge\pi_{G^*}(y)\quad+\quad\pr{\tauLR\le t}\\
&=\quad1-\pr{\tauLR>t}\left(1-\sum_{y}\prcond{X_t=y}{\tauLR>t}{x}\wedge\pi_{G^*}(y)\right).
\end{align*}
Since each vertex in $G$ has the same degree, we have $\pi_{G^*}(y)=\frac1n=\pi_{G}(y)$ and\\ $\prcond{X_t=y}{\tauLR>t}{x}= P^t_{G}(x,y)$,
hence
\begin{align*}\sum_{y}\prcond{X_t=y}{\tauLR>t}{x}\wedge\pi_{G^*}(y)\quad&=\quad\sum_{y}P^t_{G}(x,y)\wedge\pi_{G}(y)\\
&=\quad1-\dtv{P^t_{G}(x,\cdot)}{\pi_{G}(\cdot)}\quad\ll\quad1,
\end{align*}
where for the last bound we used that $t\asymp\frac1\eps\ll\tmixtext^G(\theta)$ for all $\theta\in(0,1)$.

Since $t\asymp\frac1\eps$, we also know that $\pr{\tauLR>t}\gtrsim1$. Together these show that
\[
\dtv{P^t_{G^*}(x,\cdot)}{\pi_{G^*}(\cdot)}\:\ge\:\pr{\tauLR>t}\left(1-\sum_{y}\prcond{X_t=y}{\tauLR>t}{x}\wedge\pi_{G^*}(y)\right)\:\gtrsim\:1,
\]
hence $t\le\tmixtext^{G^*}(\theta)$ for some $\theta\in(0,1)$.
\end{proof}

\begin{definition}\label{def:ri_neighbourhood_root}
For a vertex $z$ of $G^*$ and positive integers $r_{0}$, ..., $r_{k}$ we define the $(r_{0},...,r_{k})$-neighbourhood around $z$ as follows. Let us consider a ball of radius $r_{0}$ in $G$ centred at $z$. This will be level 0. Given a level $i$ consisting of balls of radius $r_{i}$ in $G$, we consider the long-range edge of $G^*$ from each vertex of these balls, except for the centres, and attach a copy of the ball of radius $r_{i+1}$ in $G$ around the other endpoint. These new balls will form level $(i+1)$. We continue this up to level $k$. Note that the neighbourhood might contain multiple copies of a given vertex of $G$. (In particular copies of the same vertex might appear in multiple levels of the neighbourhood.) Let $\iota$ be the function mapping each vertex of the neighbourhood to the corresponding vertex of $G$. We say that $z$ is an $(r_{0},...,r_{k})$-root if the image of the level $k$ vertices of the neighbourhood under $\iota$ has size $\asymp\prod_{i=0}^{k}V(r_i)$.\footnote{More precisely it has size $\ge c'\prod_{i=0}^{k}V(r_i)$ where $c'>0$ is a sufficiently small fixed constant. Note that level $k$ contains $\prod_{i=0}^{k}(V(r_i)-1)\asymp\prod_{i=0}^{k}V(r_i)$ vertices; being a root means that the number of different vertices of $G$ they correspond to is also of the same order.}
\end{definition}

\begin{lemma} For given $(r_{0},...,r_{k})$ and given vertex $z$ we have
\[
\pr{z\text{ is an }(r_{0},...,r_{k})\text{-root}}\quad=\quad1-O\left(\frac1n\prod_{i=0}^{k}V(r_i)\right).
\]
\end{lemma}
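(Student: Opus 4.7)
The plan is to bound $\pr{z \text{ is not an } (r_0,\dots,r_k)\text{-root}}$ by a first-moment estimate on pairwise collisions among the level-$k$ tree vertices of the neighbourhood, concluded by Markov's inequality. Write $N := \prod_{i=0}^{k} V(r_i)$ and let $T_k$ denote the set of level-$k$ tree vertices; by construction $|T_k| \asymp N$, say $|T_k| \ge c'' N$ for a constant $c'' > c'$. The event that $z$ fails to be a root is $\{|\iota(T_k)| < c' N\}$, and since $|T_k| - |\iota(T_k)|$ is bounded above by the number of ordered pairs $(u,v) \in T_k^2$ with $u \ne v$ and $\iota(u) = \iota(v)$, it suffices to show that this expected pair-collision count is $O(N^2/n)$. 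Markov's inequality then yields
\[
\pr{|\iota(T_k)| < c' N} \;\le\; \pr{|T_k| - |\iota(T_k)| > (c''-c')N} \;\le\; \frac{O(N^2/n)}{(c''-c')N} \;=\; O(N/n).
\]

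For the pairwise estimate, fix distinct $u, v \in T_k$. If $u, v$ lie in the same level-$k$ ball then $\iota$ is injective on that ball (it is a deterministic isomorphic copy of an $r_k$-ball in $G$ attached at a long-range endpoint), so the collision probability is $0$. Otherwise, let $j \le k-1$ be the deepest level at which the tree-paths of $u$ and $v$ traverse the same long-range edges; from level $j$ onwards the two paths use disjoint long-range edges. Revealing the uniform random perfect matching sequentially, first along $u$'s path and then along the divergent portion of $v$'s path, each endpoint is uniform over the currently unmatched vertices; since the whole neighbourhood has at most $O(N)$ vertices and we are in the regime $N \ll n$, there are $n - o(n)$ such vertices at every step. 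Conditioning on the exploration of $u$'s side (and hence on the deterministic value of $\iota(u)$), vertex-transitivity ensures that for the prescribed internal combinatorial positions $v_{j+1}, \dots, v_k$ on $v$'s path there are only $O(1)$ centres at the final level whose attached $r_k$-ball contains $\iota(u)$ in position $v_k$. Therefore the probability that the final divergent long-range edge on $v$'s side lands on such a centre is $O(1/n)$, giving $\pr{\iota(u) = \iota(v)} = O(1/n)$ uniformly in the pair.

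Summing over the $O(N^2)$ ordered pairs yields $O(N^2/n)$ expected collisions, and the argument concludes as above. I expect the main technical obstacle to be making the sequential-reveal argument fully rigorous when the divergence depth satisfies $k - j > 1$: one reveals several intermediate long-range edges before the final comparison, and must argue that the conditional distribution of each endpoint remains $(1+o(1))$-uniform on unmatched vertices and that, after conditioning on the entire exploration except for the last edge on $v$'s side, only $O(1)$ remaining endpoints realise $\iota(v) = \iota(u)$. Vertex-transitivity is the essential structural input here: it makes every $r_i$-ball a fixed combinatorial object determined by its centre, so that the ``winning'' centres form an $O(1)$-sized set independent of the particular realisation of the intermediate matching.
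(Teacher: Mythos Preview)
Your proposal is correct and follows essentially the same route as the paper: bound the deficit $|T_k|-|\iota(T_k)|$ by a collision count, use the pairwise estimate $\pr{\iota(u)=\iota(v)}\lesssim 1/n$ to get an expected count of $O(N^2/n)$, and finish with Markov's inequality. The paper organises the collision count via the events $A_i=\{\iota(v_i)\in\{\iota(v_1),\dots,\iota(v_{i-1})\}\}$ (so that $\sum_i\1_{A_i}$ equals the deficit exactly rather than merely bounding it), and simply asserts the pairwise bound without your sequential-reveal justification, but the argument is otherwise identical.
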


\begin{proof}
Let us consider the vertices $v_1$, $v_2$, ..., $v_N$ at level $k$ of the neighbourhood of $z$ (where $N=\prod_{i=0}^{k}(V(r_i)-1)\asymp\prod_{i=0}^{k}V(r_i)$).

For each $i$ let $A_{i}$ be the event that $\iota(v_i)$ agrees with one of $\iota(v_1)$, ..., $\iota(v_{i-1})$. Note that for each $i\ne j$ we have $\pr{\iota(v_i)=\iota(v_j)}\lesssim\frac{1}{n}$, hence for each $i$ we have $\pr{A_i}\lesssim\frac{i-1}{n}\lesssim\frac{N}{n}$. Then
\begin{align*}\pr{z\text{ is not an }(r_{0},...,r_{k})\text{-root}}\quad&=\quad\pr{\left|\left\{\iota(v_1),...,\iota(v_N)\right\}\right|<c'N}\\
&=\quad\pr{\sum_{i}\1_{A_{i}^c}<c'N}\quad=\quad\pr{\sum_{i}\1_{A_i}>(1-c')N}\\
&\le\quad\frac{\E{\sum_{i}\1_{A_i}}}{(1-c')N}\quad\lesssim\quad\frac{N\cdot\frac{N}{n}}{N}\quad\asymp\quad\frac{N}{n}.%\qedhere
\end{align*}
This finishes the proof.
\end{proof}

\begin{corollary}\label{cor:many_r_roots}
If $\prod_{i=0}^{k}V(r_i)\ll n$ then for any $c\in(0,1)$ with high probability $G^*$ is such that at least $cn$ of its vertices are $(r_{0},...,r_{k})$-roots.
\end{corollary}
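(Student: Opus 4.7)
The plan is to deduce Corollary~\ref{cor:many_r_roots} directly from the preceding lemma via a first-moment (Markov) argument. Let $N_{\text{bad}}$ denote the number of vertices $z$ of $G^*$ that fail to be $(r_0,\ldots,r_k)$-roots. By the lemma immediately above, for each fixed $z$ we have
\[
\pr{z \text{ is not an } (r_0,\ldots,r_k)\text{-root}} \;=\; O\!\left(\frac{1}{n}\prod_{i=0}^{k} V(r_i)\right).
\]
Summing over the $n$ vertices of $G^*$ gives
\[
\E{N_{\text{bad}}} \;=\; O\!\left(\prod_{i=0}^{k} V(r_i)\right) \;=\; o(n),
\]
where the last equality uses the hypothesis $\prod_{i=0}^{k} V(r_i) \ll n$.

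The main (and essentially only) step is then Markov's inequality: for any fixed $c \in (0,1)$,
\[
\pr{N_{\text{bad}} \;>\; (1-c)n} \;\le\; \frac{\E{N_{\text{bad}}}}{(1-c)n} \;=\; o(1).
\]
On the complementary event, the number of $(r_0,\ldots,r_k)$-roots is at least $cn$, which is exactly the conclusion of the corollary.

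There is no real obstacle here: the corollary is a straightforward first-moment consequence of the per-vertex estimate, and vertex-transitivity is not even needed for this deduction (it was used only to produce the per-vertex bound). I would simply state the two lines above as the proof. If one wanted slightly sharper concentration (e.g.\ $N_{\text{bad}} = o(n)$ whp rather than $\le (1-c)n$), one could note that the same Markov argument with $(1-c)$ replaced by any slowly-decaying function works, but the stated form suffices for later applications.
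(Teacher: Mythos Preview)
Your proposal is correct and follows essentially the same approach as the paper: apply Markov's inequality to the number of non-roots, using the per-vertex bound from the preceding lemma to get $\E{N_{\text{bad}}}=o(n)$. The paper's proof is the same two-line computation.
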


\begin{proof}
Let $N=\prod_{i=0}^{k}V(r_i)$. Then
\[
\pr{\sum_{z}\1_{\left\{z\text{ an }(r_{0},...,r_{k})\text{-root}\right\}}<cn} \quad\le\quad\frac{n\pr{z\text{ not an }(r_{0},...,r_{k})\text{-root}}}{(1-c)n}\quad\lesssim\quad\frac{N}{n}\quad\ll\quad1.\qedhere
\]
\end{proof}

\begin{remark}
Note that for any $r_0$ and any realisation of $G^*$, any vertex is an $(r_0)$-root.
\end{remark}

\begin{definition}\label{def:A_B} Let $c_0>0$ be a sufficiently small constant, to be chosen later.
Let $A$ be such that $V\left(\frac{c_0}{\sqrt{\eps}}\right)= n^{A}$. Note that $n\ge V\left(\frac{c_0}{\sqrt{\eps}}\right)\gtrsim \eps^{-\frac12}\asymp n^{\frac{a}{2}},$ hence $1\ge A\gtrsim 1$. Let $M\in\Zpos$ and let $B\in[0,A)$ be such that
\[
AM+B\ge1+\frac{2\log\log n}{\log n} \qquad\text{and}\qquad (M-1)A+B\le1.
\]
For sufficiently large values of $n$, such $M$ and $B$ exist. Since $A$ is bounded away from 0, the value of $M$ is bounded. Let $r$ be such that $V(r)\asymp n^B$. Note that for any $j$ we have $V(j)\le V(j+1)\le\Delta V(j)$, hence we can choose $r$ to make the multiplicative implicit constants in $V(r)\asymp n^B$ arbitrarily small.
\end{definition}

For two given vertices $x$ and $y$ of $G^*$ let $\Omega_{x,y}$ be the event that $y$ is an $(r_{y,0},...,r_{y,M-1})$-root and the $(r_{y,0},...,r_{y,M-1})$-neighbourhood of $y$ is disjoint from the $(r_x)$-neighbourhood of $x$, where $r_x=r_{y,1}=...=r_{y,M-1}=\frac{c_0}{\sqrt{\eps}}$ and $r_{y,0}=r$.

\begin{lemma}\label{lem:tau_transtion_probs}
There exists a constant $c>0$ such that for sufficiently small values of $c_0$ and $r$, for any vertices $x$ and $y$ having graph distance at least $r_x+r_{y,0}$ in $G$, we have
\[
\prcond{\prcond{X_{\tau}=y}{G^*}{x}\ge\frac{c}{n}}{\Omega_{x,y}}{}\quad=\quad1-o\left(\frac{1}{n^2}\right).
\]
\end{lemma}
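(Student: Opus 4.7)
The plan is to prove this lower bound by the same conceptual route as the cutoff proof of Section~\ref{sec:proof_of_cutoff}: decompose $\prcond{X_\tau = y}{G^*}{x}$ into a matched sum over a uniform random matching of yet-unrevealed long-range half-edges, and concentrate that sum via Lemma~\ref{lem:w_pair_sum}. Vertex-transitivity is essential here because it makes $\pi_{G^*}$ uniform and gives a clean symmetric reversibility between the walk from $x$ and the walk from $y$ -- this is what allows the two sides of the matched sum to be interpreted as forward random-walk probabilities.

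Concretely, I will reveal the $(r_x)$-neighbourhood of $x$ (a single ball $\B_G(x,r_x)$) together with all $M$ levels of the $(r_{y,0},\dots,r_{y,M-1})$-neighbourhood of $y$, but leave unmatched the long-range half-edges out of $\B_G(x,r_x)$ and the long-range half-edges out of the level-$(M-1)$ balls of $y$'s neighbourhood. Conditional on this partial exploration and on $\Omega_{x,y}$, the final matching between these two sets of half-edges is uniform, and a lower bound for $\prcond{X_\tau = y}{G^*}{x}$ is given by a sum
\[
\sum_{u,\,v}\, w_{u,v}\,\mathbf{1}_{\eta(u)=v},
\]
where $u$ ranges over the level-$(M-1)$ vertices on $y$'s side and $v$ over $\B_G(x,r_x)$, and $w_{u,v}$ is the product of (i) the probability that $X$ crosses exactly the long-range edge $(v,u)$ after mixing $\asymp 1/\eps$ steps in $\B_G(x,r_x)$, and (ii) the probability that from the level-$(M-1)$ ball containing $u$, the walk traverses the neighbourhood of $y$ back to $y$ through the $M-1$ reversed long-range crossings, each with a mixing phase of length $\asymp c_0^2/\eps$ inside the corresponding ball.

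For the lower bounds on the factors in $w_{u,v}$ I will use the local Harnack-type bound $P^t_{G,\lazytext}(u,w)\ge c_4/V(r)$ from Assumption~\ref{assump:for_eps_power_of_n} inside each $R$-ball, which transfers to $G^*$ up to an $O(1)$ factor because Lemma~\ref{lem:neverbacktrack} shows long-range backtracking is negligible and because $\eps t \ll 1$ inside each ball. Carrying this through the levels gives $w_{u,v}\asymp V(c_0/\sqrt\eps)^{-1}\cdot V(c_0/\sqrt\eps)^{-(M-1)}\cdot V(r)^{-1}\asymp n^{-AM-B}$ on the non-truncated part. The conditions on $M$ and $B$ in Definition~\ref{def:A_B} then give simultaneously $w_{u,v}\lesssim 1/(n\log^2 n)$ and, on $\Omega_{x,y}$ (where the level-$(M-1)$ vertices map to $\asymp n^{(M-1)A+B}$ distinct vertices of $G$), a total $\sum_{u,v}w_{u,v}\gtrsim 1$, so the mean $m$ of the matched sum is $\gtrsim c/n$.

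\textbf{Main obstacle.} The delicate step is to make the path-reversal argument rigorous inside the $M$-level neighbourhood of $y$, so that the ``reversed'' probability from a level-$(M-1)$ ball back to $y$ equals a genuine forward random-walk probability from $y$. I will define events analogous to $\Omega_0^X(\ell)$ and $\Omega_1^X(\ell)$ of Section~\ref{subsec:K_roots_reversing_paths}, now with $\ell=M-1$, asking that the walk visits at most $\tfrac{R}{2}$ distinct vertices inside each intermediate ball and that none of the $M-1$ long-range edges used is crossed more than once. Since in the relevant window of length $O(1/\eps)$ there are only $O(M)=O(1)$ long-range crossings, these events hold with probability $1-o(1)$ by the arguments of Lemmas~\ref{lem:bound_Omega_zero_prob} and~\ref{lem:bound_Omega_one_prob}. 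With the mean and maximal weight bounds in place, Lemma~\ref{lem:w_pair_sum} applied with $a = m/2$ yields a deviation failure probability of $\exp(-\Omega((\log n)^2))=o(1/n^2)$, as required.
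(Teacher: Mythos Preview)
Your overall approach matches the paper's: condition on the two neighbourhoods, leave the long-range half-edges from $\B_G(x,r_x)$ and from level $M-1$ of $y$'s neighbourhood unmatched, write $\prcond{X_\tau=y}{G^*}{x}$ as a matched sum, and concentrate via Lemma~\ref{lem:w_pair_sum}. The weight bounds $w^{\mathcal X}_z\gtrsim n^{-A}$, $w^{\mathcal Y}_z\gtrsim n^{-B-(M-1)A}$ and the numerics coming from Definition~\ref{def:A_B} are also the same.

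Where you diverge is the reversal, which you flag as the main obstacle and propose to handle via analogues of $\Omega_0,\Omega_1$. The paper sidesteps this entirely by choosing the stopping time as $\tau=\tau_{\mathrm{LR}}^{(M)}+r^2$, the $M$th long-range crossing plus a deterministic extra $r^2$ steps. Because $G$ is vertex-transitive (hence regular), the lazy walk on $G^*$ satisfies $P(u,v)=P(v,u)$, so path probabilities are exactly invariant under reversal; and the number of long-range crossings along a path is the same for the path and its reverse, so no conditioning on good events is needed. Concretely, the paper introduces an auxiliary process $\widehat Y$ from $y$ (lazy walk on $G^*$ for $r^2$ steps, then one forced long-range crossing, then lazy walk on $G^*$) and obtains the exact identity
\[
\prcond{X_{\tau_{\mathrm{LR}}^{(M)}+r^2}=y}{G^*}{x}
\;=\;
\prcond{\widehat Y_{\,r^2+1+\tau_{\mathrm{LR}}^{(M-1,\widehat Y)}}=\eta(x)}{G^*}{y},
\]
after which $\widehat Y$ is replaced by $Y$ (first $r^2$ steps on $G$ rather than $G^*$) at the cost of only an $O(1)$ factor since $r^2\lesssim 1/\eps$. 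Your $\Omega_0,\Omega_1$ route would be noticeably harder here, because the $(r_{y,0},\dots,r_{y,M-1})$-neighbourhood is not a quasi-tree (balls may overlap; the event $\Omega_{x,y}$ only guarantees that the level-$(M-1)$ image has $\asymp\prod_i V(r_{y,i})$ distinct vertices), so the events of Section~\ref{subsec:K_roots_reversing_paths} do not transplant directly. Note also that the mixing phase in the innermost ball around $y$ has length $r^2$, not $c_0^2/\eps$; this is exactly the ``$+r^2$'' in $\tau$ and the first phase of $Y$, and is what makes the walk land at $y$ itself rather than merely in its ball.
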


\begin{proof} Let us work conditional on the event $\Omega_{x,y}$.
Let $\mathcal{X}$ be the set of vertices at level 0 of the $(r_x)$-neighbourhood of $x$ and let $\mathcal{Y}$ be the set of vertices at level $M-1$ of the $(r_{y,0},...,r_{y,M-1})$-neighbourhood of $y$. Then we have $|\mathcal{X}|=V(r_x)\asymp n^A$ and $|\mathcal{Y}|\asymp\prod_{i=0}^{M-1}V(r_{y,i})\asymp n^{B+(M-1)A}$ (since $y$ is a root).

Let $X$ be a lazy random walk on $G^*$ and let us consider a stopping time defined as $\tau=\tauLR^{(M)}+r^2$, where $\tauLR^{(k)}$ is the $k$th time that $X$ crosses a long-range edge.

Let $Y$ and $\widehat{Y}$ be two processes on the vertices of $G^*$ defined as follows. In the first $r^2$ steps $Y$ moves as a lazy simple random walk on $G$, while $\widehat{Y}$ moves as a lazy random walk on $G^*$. Then they cross a long-range edge. Afterwards they both move as a lazy random walk on $G^*$. Let $\tauLR^{(k,Y)}$ be the $k$th time that $(Y_{r^2+1+i})_{i\ge0}$ crosses a long-range edge and let $\tauLR^{(k,\widehat{Y})}$ be the $k$th time that $(\widehat{Y}_{r^2+1+i})_{i\ge0}$ crosses a long-range edge.

Note that for any $m\ge0$ and any path $(z_0,z_1,..z_m)$ we have
\[
\prstart{\left(\widehat{Y}_{i}\right)_{i=0}^{r^2+1+\tauLR^{(k,\widehat{Y})}}=(z_m,z_{m-1},...,z_0)}{z_m}=\prstart{\left(\eta\left(X_0\right),\left({X}_{i}\right)_{i=0}^{\tauLR^{(k,X)}+r^2}\right)=(z_0,z_1,...,z_m)}{z_0}.
\]

Also note that $r^2\lesssim\frac1\eps$ , hence for any path $y_0,...,y_{r^2}$ in $G$ and for any realisation of $G^*$ we have
\begin{align*}\prcond{\widehat{Y}_1=y_1,...,\widehat{Y}_{r^2}=y_{r^2}}{G^*}{y_0}\quad&=\quad(1-O(\eps))^{r^2}\prcond{Y_1=y_1,...,Y_{r^2}=y_{r^2}}{G^*}{y_0}\\
&\asymp\quad\prcond{Y_1=y_1,...,Y_{r^2}=y_{r^2}}{G^*}{y_0}.
\end{align*}
Hence for any vertices $x$ and $y$ we have
\[
\prcond{X_{\tauLR^{(k,X)}+r^2}=y}{G^*}{x}=\prcond{\widehat{Y}_{r^2+1+\tauLR^{(k,\widehat{Y})}}=\eta(x)}{G^*}{y}\asymp\prcond{Y_{r^2+1+\tauLR^{(k,Y)}}=\eta(x)}{G^*}{y}.
\]
Therefore
\begin{align}\label{eq:decomposition_with_reversal}
\prcond{X_{\tau}=y}{G^*}{x}\quad\asymp\sum_{z,w}\prcond{X_{\tauLR^{(1,X)}}=z}{G^*}{x}\prcond{Y_{r^2+1+\tauLR^{(M-1,Y)}}=w}{G^*}{y}\1_{\eta(z)=w}.
\end{align}
To simplify notation let $\tau^Y=r^2+1+\tauLR^{(M-1,Y)}$.

We will now bound the terms in the sum in~\eqref{eq:decomposition_with_reversal}.
For any $z\in\mathcal{X}$ we have
\begin{align*}w^\mathcal{X}_{z}&:=\quad\prcond{X_{\tauLR-1}=z}{G^*}{x}\quad=\quad\sum_{j\ge0}\prcond{\tauLR=j+1,X_{j}=z}{G^*}{x}\\
&\gtrsim\quad\sum_{j=C_0^2/\eps}^{C_1^2/\eps}\eps P^j_{G,\lazytext}(x,z)\quad\gtrsim\quad \frac{1}{V(r_x)}\quad\asymp\quad n^{-A}.
\end{align*}
The first $\gtrsim$ is because $\tauLR\sim\Geompos{\frac12\frac{\eps}{\Delta+\eps}}$, so $\prcond{\tauLR=j+1}{G^*}{x}\asymp\eps$ for $j\in\left[C_0^2/\eps,C_1^2/\eps\right]$ for any constants $C_0<C_1$, and $\prcond{X_j=z}{G^*,\tauLR=j+1}{x}\asymp P^j_{G,\lazytext}(x,z)$. The last $\gtrsim$ is true for appropriate choices of $C_0$ and $C_1$ and sufficiently small values of $c_0$ by Assumption~\ref{assump:for_eps_power_of_n}.

Similarly, for all $z$ in the $(r_{y,0})$-neighbourhood of $y$ we have
\[
\prcond{Y_{r^2}=z}{G^*}{y}\quad\gtrsim\quad\frac{1}{V(r_{y,0})}\quad\asymp\quad n^{-B}.
\]
Repeatedly using the above results we also get that for any $z\in\mathcal{Y}$ we have
\[
w^\mathcal{Y}_{z}:=\quad\prcond{Y_{\tau^Y}=z}{G^*}{y}\quad\gtrsim\quad n^{-B-(M-1)A}.
\]
Let us condition on the $(r_x)$-neighbourhood of $x$ and the $(r_{y,0},...,r_{y,M-1})$-neighbourhood of $y$ and let $\I$ be the set of yet unpaired long-range half edges. Consider a uniform random matching of $\I$ to complete the graph $G^*$. Let
\[
w_{i,j}:=\begin{cases}
w^\mathcal{X}_{i}w^\mathcal{Y}_{j}\wedge n^{-B-MA} & \text{if }i\in\mathcal{X},j\in\mathcal{Y},\\
0 & \text{otherwise}.
\end{cases}
\]
Then the right-hand side of~\eqref{eq:decomposition_with_reversal} can be written as $\sum_{i\in\I}w_{i,\eta(i)}$.

By Lemma~\ref{lem:w_pair_sum} we get that
$$\pr{\sum_{i\in\I}w_{i,\eta(i)}<\frac12m}\quad\le\quad\exp\left(-\frac{m}{16n^{-B-MA}}\right)$$
where $m=\frac{1}{|\I|-1}\sum_{i\in\I}\sum_{j\in\I\setminus\{i\}}w_{i,j}$.

Note that
\[
n-|\I|\quad\lesssim\quad n^A+n^{B+(M-1)A}\quad\lesssim\quad n.
\]
We used here that $A\le1$ and $(M-1)A+B\le1$. We can make the constant in the $\lesssim$ arbitrarily small by choosing the constant $c_0$ in $\frac{c_0}{\sqrt{\eps}}=r_{x}=r_{y,1}=...=r_{y,M-1}$ small and choosing $r=r_{y,0}$ to be small. \footnote{We used the last point of Assumption~\ref{assump:for_eps_power_of_n} here.}
Hence $|\I|\asymp n$. This gives
\[
m\quad\asymp\quad\frac1n|\mathcal{X}||\mathcal{Y}|n^{-B-MA}\quad\asymp\quad\frac1n.
\]
Then
\[
\frac{m}{16n^{-B-MA}}\quad\gtrsim\quad n^{B+MA-1}\quad\gg\footnotemark\quad\log n,
\]
\footnotetext{We used here that $B+MA=1+\frac{\omega(1)+\log\log n}{\log n}$.}
hence
\[
\exp\left(-\frac{m}{16n^{-B-MA}}\right)\quad\ll\quad\frac{1}{n^2}.
\]
This finishes the proof.
\end{proof}

\begin{lemma}\label{lem:neighhbourhoods_disjoint}
For any vertex $x$ and any realisation of $G^*$ at least a constant proportion of vertices $y$ are such that the image under $\iota$ of the $(r_x)$-neighbourhood of $x$ is disjoint from the image under $\iota$ of the $(r_{y,0},...,r_{y,M-1})$-neighbourhood of $y$.
\end{lemma}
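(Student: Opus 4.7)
The plan is to bound the number of \emph{bad} vertices $y$, i.e.\ those with $T_y\cap S_x\ne\emptyset$, where $S_x:=\iota(\text{the }(r_x)\text{-nbhd of }x)=B_G(x,r_x)$ and $T_y:=\iota(\text{the }(r_{y,0},\ldots,r_{y,M-1})\text{-nbhd of }y)$. The key observation is that if $z\in T_y\cap S_x$ then $z$ lies in some ball $B_G(w,r_\ast)$ around a ``centre'' $w$ of the $y$-neighbourhood (with $r_\ast\in\{r,r_y\}$); since also $z\in B_G(x,r_x)$ and $r_x=r_y\ge r$, this forces $d_G(x,w)\le r_x+r_\ast\le 2r_y$. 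Thus every witnessing centre for a bad $y$ lies in $B_G(x,2r_y)$.

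Next, I would enumerate bad $y$'s by their witnessing centre $w$ together with the associated chain in the neighbourhood structure. When $w=y$ (level $0$) there is one choice of $y$ per $w$; when $w=\eta(u_i)$ (level $i\ge 1$), the vertex $u_i=\eta(w)$ is determined, and reversing the chain upward gives at most $V(r_y)$ choices for each preceding centre $\eta(u_{j-1})\in B_G(u_j,r_y)$, followed by $V(r)$ choices for $y\in B_G(u_1,r)$. Since $M$ is bounded, summing over levels and all admissible centres yields
\[
\#\{\text{bad }y\}\;\le\;C_1\cdot V(2r_y)\cdot V(r_y)^{M-2}\cdot V(r).
\]
Invoking the polynomial-growth estimate $V(2r_y)\lesssim V(r_y)$ (available in the polynomial-growth setting of Proposition~\ref{pro:transitive_poly_satsifies_assump}) gives $\#\{\text{bad }y\}\le C_2\cdot V(r_y)^{M-1}V(r)\asymp n^{(M-1)A+B}\le n$, using $(M-1)A+B\le 1$ from Definition~\ref{def:A_B}.

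Finally, by the remark in Definition~\ref{def:A_B} the multiplicative constant in $V(r)\asymp n^B$ can be made arbitrarily small (by tuning $r$), so the bound $C_2\cdot V(r_y)^{M-1}V(r)$ becomes at most $cn$ for any prescribed $c>0$. Hence at least $(1-c)n$ vertices $y$ are good. The principal obstacle is the counting in the second step: the naive estimate $\sum_{z\in S_x}|\{y:z\in T_y\}|\le|S_x|\cdot\max_z|\{y:z\in T_y\}|$ yields only $n^{MA+B}$, which exceeds $n$ since $MA+B\ge 1+2\log\log n/\log n$. The improvement comes from parametrising by the witnessing centre $w$ (which must lie in the small set $B_G(x,2r_y)$) rather than by the intersection point $z$, thereby avoiding an extra $|S_x|=V(r_x)=n^A$ factor.
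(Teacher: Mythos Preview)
Your approach coincides with the paper's: both count bad $y$'s level by level and, for each level $j$, reverse the chain of long-range edges to conclude that such a $y$ must lie in the $(r_{y,j}+r_x,\,r_{y,j-1},\ldots,r_{y,0})$-neighbourhood of $x$; summing the sizes of these reversed neighbourhoods gives the bound. Your description ``parametrise by the witnessing centre $w\in B_G(x,2r_y)$ and trace back'' is exactly this reversal, just phrased slightly differently.

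There is one minor accounting slip. Your displayed bound $C_1\,V(2r_y)\,V(r_y)^{M-2}\,V(r)$ is the level-$(M-1)$ contribution (times a bounded factor for the sum over levels), and it dominates the level-$0$ contribution $V(r_x+r)\asymp n^A$ only when $M\ge 2$. The choice $M=1$ is permitted by Definition~\ref{def:A_B} (e.g.\ whenever $A>\tfrac12+o(1)$), and then the whole count is the level-$0$ term $\asymp n^A$, which your formula underestimates (it gives $\asymp V(r)=n^B<n^A$). The paper therefore keeps both terms separately, obtaining $\lesssim n^A+n^{(M-1)A+B}$. Correspondingly, making the overall constant below any prescribed $c$ requires tuning not only $r$ (to shrink the implicit constant in front of $n^{(M-1)A+B}$) but also $c_0$ (to shrink the implicit constant in front of $n^A$, via the last bullet of Assumption~\ref{assump:for_eps_power_of_n}); you mention only $r$. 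With these two small fixes your argument matches the paper's.
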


\begin{proof} We will omit writing that we are considering the images under $\iota$. 

The $(r_x)$-neighbourhood of $x$ intersects level $j$ of the $(r_{y,0},...,r_{y,M-1})$-neighbourhood of $y$ if and only if $x$ is in the $(r_{y,0},...,r_{y,j-1},r_{y,j}+r_x)$-neighbourhood of $y$. This is equivalent to $y$ being in the $(r_{y,j}+r_x,r_{y,j-1},...,r_{y,0})$-neighbourhood of $x$.

For $j\in\{1,2,...,M-1\}$ the $(r_{y,j}+r_x,r_{y,i-j},...,r_{y,0})$-neighbourhood of $x$ contains $\asymp n^{jA+B}$ vertices. The $(r_{y,0}+r_x)$-neighbourhood of $x$ contains $\asymp n^{A}$ vertices. Taking union over $j=0,1,...,M-1$ we get that there are
\[
\lesssim\quad n^{A}+\sum_{j=1}^{M-1}n^{jA+B}\quad\lesssim\quad n^A+n^{(M-1)A+B}
\]
vertices $y$ with the neighbourhoods intersecting.

This is $\lesssim n$ and we can make the constant in the $\lesssim$ arbitrarily small by choosing $c_0$ and $r$ sufficiently small. (We are using the last point of Assumption~\ref{assump:for_eps_power_of_n} here.)
\end{proof}

\begin{lemma}\label{lem:bound_tau_2}
For any $c\in(0,1)$ there exists a $C>0$ such that for any vertex $x$ and any realisation of $G^*$ we have
\[
\prcond{\tau>\frac{C}{\eps}}{G^*}{x}\quad\le\quad c.
\]
\end{lemma}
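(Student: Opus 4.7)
The plan is to exploit the decomposition $\tau = \tauLR^{(M)} + r^2$ used in the proof of Lemma~\ref{lem:tau_transtion_probs}. Since $M$ is bounded (as $A\gtrsim 1$ and $(M-1)A+B\le 1$ force $M \le 1/A + 1 = O(1)$), it suffices to show that each of the two summands is of order $1/\eps$ uniformly in $G^*$.

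First I would handle the deterministic offset. By Definition~\ref{def:A_B}, one may choose $r$ with $V(r)\asymp n^B$ and with arbitrarily small implicit constant, while $V(c_0/\sqrt{\eps}) = n^A > n^B$ (strictly, with a gap at least $(\log n)^{2}$). Since $V$ is non-decreasing, for a sufficiently small constant in $V(r)\asymp n^B$ we may take $r\le c_0/\sqrt{\eps}$, so $r^2 \le c_0^2/\eps$.

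Next I would bound the random part. At each step of the lazy random walk on $G^*$, regardless of the current vertex $z$, the conditional probability of crossing a long-range edge at the next step is
\[
\tfrac{1}{2}\cdot\frac{\eps}{\deg(z)+\eps} \;\ge\; \frac{\eps}{2(\Delta+1)}.
\]
Hence by the strong Markov property, the inter-crossing times are stochastically dominated by i.i.d.\ $\Geompos{\eps/(2(\Delta+1))}$ random variables, so
\[
\estart{\tauLR^{(M)}}{x} \;\le\; \frac{2M(\Delta+1)}{\eps}
\]
uniformly over $x$ and over realisations of $G^*$. Markov's inequality then gives
\[
\prcond{\tauLR^{(M)} > C'/\eps}{G^*}{x} \;\le\; \frac{2M(\Delta+1)}{C'},
\]
which can be made $\le c$ by choosing $C'$ sufficiently large in terms of $c$, $M$ and $\Delta$. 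Taking $C = C' + c_0^2$ completes the proof.

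There is no substantive obstacle: the argument is just a uniform Markov-inequality bound on a sum of geometrically distributed crossing times, combined with the observation that $M$ is bounded and $r\lesssim 1/\sqrt{\eps}$ by construction.
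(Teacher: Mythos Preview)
Your proof is correct and takes essentially the same approach as the paper, which simply states that $\tauLR^{(M)}$ is dominated by the sum of $M$ i.i.d.\ $\Geompos{\tfrac12\tfrac{\eps}{1+\eps}}$ random variables. You have merely filled in the details (Markov's inequality, the bound $r^2\lesssim 1/\eps$, the fact that $M$ is bounded) that the paper leaves implicit; the parenthetical about a $(\log n)^2$ gap between $n^A$ and $n^B$ is unnecessary and not obviously justified, but all you actually use is $r\le c_0/\sqrt{\eps}$, which follows from $B<A$ and monotonicity of $V$.
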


\begin{proof}
Use that $\tauLR^{M}$ is dominated by the sum of $M$ iid $\Geompos{\frac12\frac{\eps}{1+\eps}}$ random variables.
\end{proof}

\begin{lemma}\label{lem:vertex_transitive_bound_hit}
There exist constants $C>0$ and $\alpha,\theta\in(0,1)$ such that whp $\hittext^{G^*}_{\alpha}(\theta)\le\frac{C}{\eps}$.
\end{lemma}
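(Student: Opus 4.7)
The plan is to use the stopping time $\tau = \tauLR^{(M)} + r^2$ from Lemma~\ref{lem:tau_transtion_probs} as a hitting device. By Lemma~\ref{lem:bound_tau_2}, for any $\theta_0>0$ one may choose $C>0$ so that $\prcond{\tau > C/\eps}{G^*}{x} \le \theta_0$ uniformly in $G^*$ and in $x$. Hence it suffices to show that whp $G^*$ has the property that for every starting vertex $x$ and every set $A$ with $\pi_{G^*}(A)\ge\alpha$, the quantity $\prcond{X_\tau\in A}{G^*}{x}$ is bounded below by an absolute positive constant.

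First I would define, for each vertex $x$, the good set $B_x := \{y : \prcond{X_\tau = y}{G^*}{x}\ge c/n\}$, where $c$ is the constant from Lemma~\ref{lem:tau_transtion_probs}, and argue that for any prescribed $\gamma>0$ the parameters $c_0$ and $r$ (and hence $A,B,M$) may be chosen so that whp $|B_x|\ge(1-\gamma)n$ for every $x$ simultaneously. Four ingredients feed into this. (i) Corollary~\ref{cor:many_r_roots}, applied to the radii $(r_{y,0},\ldots,r_{y,M-1})$, shows that whp all but $o(n)$ vertices are $(r_{y,0},\ldots,r_{y,M-1})$-roots. (ii) The counting used in the proof of Lemma~\ref{lem:neighhbourhoods_disjoint}, upon choosing $c_0$ and $r$ sufficiently small, shows that for every $x$ at most $\gamma n/3$ vertices $y$ have their neighbourhood image intersecting that of $x$. (iii) Polynomial growth of $G$, together with the last bullet of Assumption~\ref{assump:for_eps_power_of_n}, lets us pick parameters so that $V(r_x+r_{y,0})\le V(2r_x)\le \gamma n/3$, which means the graph-distance condition $d_G(x,y)\ge r_x+r_{y,0}$ excludes at most $\gamma n/3$ vertices. (iv) A union bound over the at most $n^2$ pairs $(x,y)$ in Lemma~\ref{lem:tau_transtion_probs}, whose failure probability is $o(1/n^2)$, yields whp that every pair $(x,y)$ for which $\Omega_{x,y}$ holds and the graph-distance condition is satisfied does satisfy $\prcond{X_\tau=y}{G^*}{x}\ge c/n$. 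Combining (i)--(iv) gives whp $|B_x|\ge (1-\gamma)n$ for every $x$.

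To conclude, since $G$ is regular the stationary distribution $\pi_{G^*}$ is uniform, so $\pi_{G^*}(A)\ge\alpha$ forces $|A|\ge\alpha n$, hence $|A\cap B_x|\ge(\alpha-\gamma)n$ and $\prcond{X_\tau\in A}{G^*}{x}\ge c(\alpha-\gamma)$. Using $\{\tau_A > C/\eps\}\subseteq\{\tau > C/\eps\}\cup\{X_\tau\notin A\}$, we obtain
\begin{align*}
\prcond{\tau_A > C/\eps}{G^*}{x}\quad\le\quad\prcond{X_\tau\notin A}{G^*}{x}+\prcond{\tau > C/\eps}{G^*}{x}\quad\le\quad 1-c(\alpha-\gamma)+\theta_0.
\end{align*}
Choosing $\alpha\in(0,1)$ close to $1$, then $\gamma$ and $\theta_0$ small enough that $\theta:=1-c(\alpha-\gamma)+\theta_0<1$, we conclude $\hittext^{G^*}_\alpha(\theta)\le C/\eps$ whp.

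The main obstacle is the bookkeeping: the parameters $c_0$, $r$, $M$, $A$, $B$ are constrained by the arithmetic conditions of Definition~\ref{def:A_B} and must simultaneously be chosen so that the three error terms in (i)--(iii) are each at most $\gamma n/3$, while the constant $c$ from Lemma~\ref{lem:tau_transtion_probs} stays absolute. No genuinely new probabilistic estimate is required beyond what has already been assembled; the lemma is essentially a corollary of Lemmas~\ref{lem:tau_transtion_probs} and~\ref{lem:bound_tau_2}.
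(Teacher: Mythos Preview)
Your approach is essentially the paper's: you package the consequence of Lemmas~\ref{lem:tau_transtion_probs} and~\ref{lem:bound_tau_2} via the set $B_x$, while the paper does the equivalent computation through $\dtv{\prcond{X_\tau\in\cdot}{G^*}{x}}{\pi_{G^*}}$; both rest on the same four ingredients you list. One point you glossed over: the stopping time $\tau=\tauLR^{(M)}+r^2$ in Lemma~\ref{lem:tau_transtion_probs} is defined for the \emph{lazy} walk on $G^*$, so what your argument actually yields is $\hittext^{G^*,\lazytext}_\alpha(\theta)\le C/\eps$; the paper then invokes Lemma~\ref{lem:hit_hitlazy_comparison} to transfer this to the simple walk, and you should do the same.
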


\begin{proof}
For each vertex $x$ let $V_x=\{y:\:\Omega_{x,y}\text{ holds}\}$. Let
\[
\Omega_0:=\quad\left\{|V_x|\ge C_1n\quad\forall x;\quad\prcond{X_{\tau}=y}{G^*}{x}\ge\frac{C_2}{n}\quad\forall x,\forall y\in V_x\right\}.
\]
First we show that for sufficiently small values of $C_1$ and $C_2$, the event $\Omega_0$ holds whp, and then we show that on this event we have $\hittext^{G^*}_{\alpha}(\theta)\le\frac{C}{\eps}$ for some $C$, $\alpha$ and $\theta$.

Let $c$ be the constant in Lemma~\ref{lem:neighhbourhoods_disjoint} let $\Omega_{\text{root}}$ be the event that at least $\left(1-\frac{c}{2}\right)n$ vertices $y$ are $(r_{y,0},...,r_{y,M-1})$-roots. By Corollary~\ref{cor:many_r_roots} this is a high probability event. Also on this event for any given vertex $x$, the event $\Omega_{x,y}$ holds for at least a constant proportion of vertices $y$. This shows that for $C_1$ sufficiently small we have
\[
\pr{|V_x|\ge C_1n\quad\forall x}\quad=\quad1-o(1).
\]
Also by Lemma~\ref{lem:tau_transtion_probs} we have
\begin{align*}&\pr{\exists\: x,y\in V_x:\quad\prcond{X_{\tau}=y}{G^*}{x}<\frac{C_2}{n}}\quad\le\quad\sum_{x,y}\pr{\Omega_{x,y},\:\prcond{X_{\tau}=y}{G^*}{x}<\frac{C_2}{n}}\\
&\le\quad\sum_{\substack{x,y:\\d_G(x,y)\ge r_{x}+r_{y,0}}}\prcond{\prcond{X_{\tau}=y}{G^*}{x}<\frac{C_2}{n}}{\Omega_{x,y}}{}\quad\le\quad n^2\cdot o\left(\frac{1}{n^2}\right)\quad=\quad o(1).
\end{align*}
Together these show that $\pr{\Omega_0}=1-o(1)$.

In what follows we work on the event $\Omega_0$. Let $x$ be any given vertex of $G^*$.

Note that
\[
1-\dtv{\prcond{X_{\tau}=\cdot}{G^*}{x}}{\pi_{G^*}(\cdot)}\quad=\quad\sum_{y}\prcond{X_{\tau}=\cdot}{G^*}{x}\wedge\pi_{G^*}(y)\quad\gtrsim\quad1,
\]
since $\pi_{G^*}(y)\asymp\frac1n$ for all $y$, $\prcond{X_{\tau}=\cdot}{G^*}{x}\gtrsim\frac1n$ for all $y\in V_x$ and we have $|V_x|\gtrsim n$. Let $\theta'\in(0,1)$ be a constant such that $\dtv{\prcond{X_{\tau}=\cdot}{G^*}{x}}{\pi_{G^*}(\cdot)}\le1-\theta'$ (for all $x$ and all $G^*$ where $\Omega_0$ holds).

Let $A$ be any subset of the vertices with $\pi_{G^*}(A)\ge1-\alpha$ where $\alpha\in\left(0,\frac{\theta'}{3}\right)$. Let $C$ be such that Lemma~\ref{lem:bound_tau_2} holds with $c=\frac{\theta'}{3}$. Then
\begin{align*}\prcond{\tau_A>\frac{C}{\eps}}{G^*}{x}\quad&\le\quad\dtv{\prcond{X_{\tau}=\cdot}{G^*}{x}}{\pi_{G^*}(\cdot)}+\pi_{G^*}(A^c)+\prcond{\tau>\frac{C}{\eps}}{G^*}{x}\\
&\le\quad(1-\theta')+\frac{\theta'}{3}+\frac{\theta'}{3}\quad=\quad1-\frac{\theta'}{3}.
\end{align*}
This shows that on the high probability event $\Omega_0$ we have $\hittext^{G^*,\lazytext}_{\alpha}({\theta})\le\frac{{C}}{\eps}$ for $\alpha$ and ${C}$ as above and ${\theta}=1-\frac{\theta'}{3}$.

As in the proof of Lemma~\ref{lem:SRW_cutoff_implies_lazy_cutoff} we get that $\hittext^{G^*,\lazytext}_{\alpha}({\theta})=2(1\pm o(1))\hittext^{G^*}_{\alpha}({\theta}\pm o(1))$. Hence we also have $\hittext^{G^*}_{\alpha}(\til{\theta})\le\frac{\til{C}}{\eps}$ for some $\alpha$, $\til{C}$ and $\til{\theta}$.
\end{proof}

Now we are ready to conclude that whp $G^*$ does not exhibit cutoff.

\begin{proof}[Proof of Proposition~\ref{pro:Gstar_no_cutoff}]
Let $C$, $\theta$ and $\alpha$ be as in Lemma~\ref{lem:vertex_transitive_bound_hit} and work on the high probability event that $\hittext^{G^*}_{\alpha}(\theta)\le\frac{C}{\eps}$.

By Lemma~\ref{lem:bound_trelabs} we have that $\trelabs(G^*)\lesssim\frac1\eps$, and hence using \cite[Proposition 3.3 and Corollary 3.4]{characterisation_of_cutoff} together with the above bound on $\hittext^{G^*}_{\alpha}(\theta)$ we get that $\hittext^{G^*}_{1/2}(\theta')\lesssim\frac1\eps$ for any $\theta'\in(0,1)$. Applying~\cite[Proposition 1.8 and Remark 1.9]{characterisation_of_cutoff} we conclude that $\tmixtext^{G^*}(\theta')\lesssim\frac1\eps$ for any $\theta'\in(0,1)$.

Let $c$ be such that $\tmixtext^{G^*}(1-\theta')\le\frac{c}{\eps}$.  By Lemma~\ref{lem:vertex_transitive_lower_bound_tmix} we have that $\tmixtext^{G^*}(\theta'')\ge\frac{2c}{\eps}$ for some~$\theta''$. Together these show that the walk on $G^*$ does not exhibit cutoff.

If $\trelabs(G^*)\gtrsim\frac1\eps$ then we immediately get $\tmixtext^{G^*}\left(\frac{1}{4}\right)\gtrsim\trelabs(G^*)\gtrsim\frac{1}{\eps}$. If $\trelabs(G^*)\ll\frac1\eps$ then using Lemma~\ref{lem:vertex_transitive_lower_bound_tmix}, the comparison between the mixing and hitting times from~\cite[Proposition~1.8 and Remark~1.9]{characterisation_of_cutoff}, and \cite[Proposition~3.3]{characterisation_of_cutoff}, we again get $\tmixtext^{G^*}\left(\frac{1}{4}\right)\gtrsim \frac{1}{\eps}$.
\end{proof}

Now we proceed to prove Proposition~\ref{pro:transitive_poly_satsifies_assump}. The hardest part is establishing the lower bound on the transition probabilities. We prove this via a series of lemmas. First we show the lower bound and a matching upper bound for the diagonal entries $P_{G,\lazytext}^{t}(o,o)$. Then we upper bound the difference $\left|P_{G,\lazytext}^{t}(o,o)-P_{G,\lazytext}^{t}(o,x)\right|$ in terms of the diagonal transition probabilities.

\begin{lemma}\label{lem:poly_growth_exponent}
In the setup of Proposition~\ref{pro:transitive_poly_satsifies_assump} there exist constants $C$ and $L$ such that for any $r,k>0$ we have $V(rk)\le Ck^LV(r)$.
\end{lemma}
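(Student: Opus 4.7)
The plan is to reduce the lemma to a doubling inequality $V(2r) \le A V(r)$ for all sufficiently large $r$, where $A$ is a constant depending only on $\Delta$ and the polynomial $p$. Once we have doubling, for $k \ge 1$ we set $m = \lceil \log_2 k \rceil \le \log_2 k + 1$ and iterate:
\[
V(rk) \le V(2^m r) \le A^m V(r) \le A \cdot k^{\log_2 A} V(r),
\]
giving the lemma with $L = \log_2 A$ and $C = A$ for $k \ge 1$. The case $k \le 1$ follows from monotonicity of $V$: then $V(rk) \le V(r)$, which is of the required form once we enlarge $C$ to absorb the degenerate regime (recalling that in a bounded-degree graph the smallest nontrivial scale is constant).

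For the doubling step itself, my approach is the classical packing/covering argument. Fix a maximal $r$-separated set $X \subseteq \B(o, 2r)$. Maximality gives the cover $\B(o, 2r) \subseteq \bigcup_{x \in X} \B(x, r)$, and by vertex-transitivity each $\B(x, r)$ has volume $V(r)$, so $V(2r) \le |X| V(r)$. Separation gives that $\{\B(x, r/2) : x \in X\}$ are pairwise disjoint and contained in $\B(o, 5r/2)$, whence $|X| \cdot V(r/2) \le V(5r/2)$. Combining yields the scale-comparison inequality
\[
\frac{V(2r)}{V(r)} \le \frac{V(5r/2)}{V(r/2)},
\]
which says the doubling ratio at scale $r$ is dominated by a comparable ratio at scale $r/2$. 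Iterating this inequality, the polynomial bound $V(r) \le cr^d$ forces $V(2r)/V(r)$ to be uniformly bounded in $r$: if there existed an unbounded sequence of scales at which the doubling ratio blew up, propagating the inequality upward would produce super-polynomial growth and contradict $V(r) \le cr^d$.

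The main obstacle is this final "polynomial growth implies doubling" implication; in full generality for vertex-transitive (not merely Cayley) graphs, making the propagation argument rigorous is classical but not trivial, going back to Coulhon--Saloff-Coste in the group setting and extended in subsequent work by Tessera and others. A cleaner alternative is to invoke Trofimov's structure theorem, which shows that any vertex-transitive graph of polynomial growth is quasi-isometric to a Cayley graph of a virtually nilpotent group, for which doubling is essentially immediate, and transfers back under the quasi-isometry with controlled constants. Either route completes the proof. A secondary technical point is the finite-diameter regime $rk \ge \diam(G)$, where $V(rk)$ saturates at $n$; here we compare $n = V(\diam(G)) \le p(\diam(G))$ against $V(r)$, and use that the doubling iteration already applies up to the diameter scale so that no separate handling beyond choosing $C$ larger is needed.
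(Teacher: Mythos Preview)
The paper's proof is a single-line citation: ``This is a consequence of \cite[Corollary 1.5]{structure_thm}'' (the Tessera--Tointon finitary structure theorem for vertex-transitive graphs of polynomial growth). So both you and the paper ultimately rely on a structure theorem; your fallback to Trofimov is essentially the same route.

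However, your attempted direct argument before that fallback has a genuine gap. The packing/covering step yields
\[
\frac{V(2r)}{V(r)}\;\le\;\frac{V(5r/2)}{V(r/2)},
\]
which compares the doubling ratio at scale $r$ to a ratio spanning the \emph{larger} scale range $[r/2,5r/2]$, not a smaller one. This does not iterate downward to a base case, and ``propagating upward would produce super-polynomial growth'' is not a proof: the assumption is only an \emph{upper} bound $V(r)\le cr^d$, with no matching lower bound, so $V(5r/2)/V(r/2)$ could in principle grow like $r^{d-1}$ from the crude estimate $V(r/2)\ge r/2+1$. Polynomial upper growth alone does not force doubling without the structural input; that is precisely the content of the structure theorem you end up invoking.

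In short: your reduction of the lemma to doubling, and the iteration from doubling to the stated inequality, are fine. But the claim that doubling follows from the packing inequality plus the polynomial bound is not established by your sketch, and the honest version of your argument is just the citation the paper gives.
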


\begin{proof}
This is a consequence of \cite[Corollary 1.5]{structure_thm}.
\end{proof}

\begin{lemma}\label{lem:diagonal_lower_bound}
In the setup of Proposition~\ref{pro:transitive_poly_satsifies_assump} for any constant $c>0$, any $t>0$ and any vertex $o$ we have
\[
P_{G,\lazytext}^{t}(o,o)\quad\gtrsim\quad\frac{1}{V(c\sqrt{t})}.
\]
\end{lemma}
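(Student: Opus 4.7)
The plan is a classical Gaussian upper bound plus Cauchy--Schwarz argument. The Carne--Varopoulos bound applies to any reversible chain whose one-step transitions only happen between adjacent vertices; the lazy walk on the vertex-transitive graph $G$ satisfies this, with stationary distribution uniform. It yields
\[
P_{G,\lazytext}^t(o,y) \;\le\; 2\exp\!\left(-\frac{d(o,y)^2}{2t}\right)
\]
for all $t \ge d(o,y)$ (and the left-hand side vanishes otherwise). Combined with the polynomial growth hypothesis $V(r)\le cr^d$ and $|S(o,k)|\le \Delta\, V(k)$, summing gives a tail bound on the position of the walk:
\[
\prstart{d(o,X_t) > C\sqrt{t}}{o}\;\le\; \sum_{k>C\sqrt{t}} 2\Delta c\, k^d\exp\!\left(-\frac{k^2}{2t}\right)\;\le\; \tfrac12,
\]
uniformly in $t$, provided $C$ is chosen large in terms of $d,\Delta,c$ only (after rescaling $k=u\sqrt{t}$, the right-hand side becomes $\int_C^\infty u^d e^{-u^2/2}\,du$ up to a constant factor, which is a fixed small number for $C$ large).

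Next, using reversibility of the lazy chain and vertex-transitivity (so $P_{G,\lazytext}^t(o,y)=P_{G,\lazytext}^t(y,o)$ together with the uniform stationary distribution),
\[
P_{G,\lazytext}^{2t}(o,o) \;=\; \sum_y P_{G,\lazytext}^t(o,y)^2.
\]
Restricting the sum to $\B_G(o,C\sqrt{t})$ and applying Cauchy--Schwarz together with the tail bound,
\[
P_{G,\lazytext}^{2t}(o,o) \;\ge\; \frac{\Bigl(\sum_{y\in \B_G(o,C\sqrt{t})}P_{G,\lazytext}^t(o,y)\Bigr)^2}{|\B_G(o,C\sqrt{t})|} \;\ge\; \frac{1/4}{V(C\sqrt{t})}.
\]

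To upgrade from the specific constant $C$ to an arbitrary constant $c_0>0$, apply Lemma~\ref{lem:poly_growth_exponent}: there exist $C',L$ with $V(C\sqrt{t})\le C'(C/c_0)^L V(c_0\sqrt{t})$, so that $P_{G,\lazytext}^{2t}(o,o)\gtrsim 1/V(c_0\sqrt{t})$ with an implicit constant depending only on $c_0,\Delta,c,d$. Finally, for odd times we use $P_{G,\lazytext}^{t+1}(o,o)\ge \tfrac12 P_{G,\lazytext}^t(o,o)$ (the chain can remain at $o$ on the additional step) together with $V(c_0\sqrt{t+1})\asymp V(c_0\sqrt{t})$ (immediate from Lemma~\ref{lem:poly_growth_exponent}).

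The only delicate point is the Gaussian tail bound on $d(o,X_t)$, for which Carne--Varopoulos is the cleanest input; once that is in place, the Cauchy--Schwarz step and the doubling-type volume comparison from Lemma~\ref{lem:poly_growth_exponent} finish the argument without further effort.
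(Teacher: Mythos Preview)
Your tail bound via Carne--Varopoulos does not hold: the rescaling $k=u\sqrt{t}$ turns
\[
\sum_{k>C\sqrt{t}} k^d\,e^{-k^2/(2t)}
\quad\text{into}\quad
t^{(d+1)/2}\int_C^\infty u^d\,e^{-u^2/2}\,du,
\]
not into the integral alone. The factor $t^{(d+1)/2}$ that you dropped makes the bound diverge as $t\to\infty$, so it cannot give $\prstart{d(o,X_t)>C\sqrt t}{o}\le\tfrac12$ uniformly in $t$. This is not a bookkeeping slip that can be absorbed into constants: Carne--Varopoulos controls each $P^t(o,y)$ by $2e^{-d(o,y)^2/(2t)}$, but summing this pointwise bound over $\asymp t^{d/2}$ vertices at scale $\sqrt t$ wipes out the Gaussian gain. (On $\mathbb Z^d$, for instance, the true heat kernel has an extra factor $t^{-d/2}$ that Carne--Varopoulos does not see, and this is exactly what would be needed for your sum to converge.) Consequently the Cauchy--Schwarz step, while correct in itself, is fed an input that has not been established.

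The paper's argument avoids this issue by bounding the exit probability rather than the position. It shows $P^t(o,o)\gtrsim \prstart{\tau_{B_\ell^c}>t}{o}/V(\ell)$, then uses vertex-transitivity to pass to the quasi-stationary distribution of the walk killed on leaving $B_{\ell/2}$, so that $\prstart{\tau_{B_{\ell/2}^c}>t}{\alpha}=p(\ell/2)^t$ where $1/(1-p(r))$ has the variational characterisation $\sup_f \langle f,f\rangle/\langle (I-P)f,f\rangle$. Plugging in the Lipschitz test function $f(x)=d_G(x,B_r^c)$ and using the doubling property (Lemma~\ref{lem:poly_growth_exponent}) gives $1-p(r)\lesssim r^{-2}$, hence $\prstart{\tau_{B_{\ell}^c}>t}{o}\gtrsim(1-c'/\ell^2)^t\asymp 1$ for $\ell=c\sqrt t$. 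This route needs no Gaussian off-diagonal estimate at all.
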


The proof of this is similar to the proof of~\cite[Lemma 6.12]{universality_of_fluctuations} and is deferred to Appendix~\ref{app:estimates_for_transitive_poly_growth_graphs}.

\begin{lemma}\label{lem:diagonal_upper_bound}
Let $G$ be as in Proposition~\ref{pro:transitive_poly_satsifies_assump}. There exists a constant $C$ such that for any vertex $o$, any $t\le C\diam{G}^2$ and any constant $c>0$ we have
\[
P_{G,\lazytext}^t(o,o)\quad\lesssim\quad\frac{1}{V(c\sqrt{t})}.
\]
\end{lemma}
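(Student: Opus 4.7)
The plan is to prove this matching upper bound to Lemma~\ref{lem:diagonal_lower_bound} via a Nash-inequality argument, which is the standard route for on-diagonal heat kernel bounds on vertex-transitive graphs of polynomial growth. Since the chain is lazy and reversible, $t\mapsto P^t_{G,\lazytext}(o,o)$ is non-increasing, so it suffices to bound $P^{2t}_{G,\lazytext}(o,o)$. By vertex-transitivity, $\pi$ is uniform with $\pi(x)=1/n$, and
\[
P^{2t}_{G,\lazytext}(o,o) \;=\; \sum_{x} P^t_{G,\lazytext}(o,x)^2 \;=\; \frac{1}{n}\left\|\frac{P^t_{G,\lazytext}(o,\cdot)}{\pi}\right\|_{\ell^2(\pi)}^2,
\]
so the task reduces to an $\ell^2$-decay estimate for the density $f_t(\cdot):=P^t_{G,\lazytext}(o,\cdot)/\pi$.

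First I would establish a Coulhon--Saloff-Coste isoperimetric inequality on $G$. Combining vertex-transitivity (used to average a set $A$ over translates via the automorphism group of $G$), the doubling property $V(2r)\le CV(r)$ which is immediate from Lemma~\ref{lem:poly_growth_exponent}, and the hypothesis $V(r)\le cr^d$, one obtains that for every $A\subseteq V$ with $|A|\le |V|/2$,
\[
|\partial A| \;\gtrsim\; \frac{|A|}{\mathrm{rad}(2|A|)} \;\gtrsim\; |A|^{\,1-1/d}, \qquad \mathrm{rad}(N) := \min\{r: V(r)\ge N\}.
\]
Applying this inequality to the level sets of a function $f:V\to\mathbb{R}$ via the discrete coarea formula yields the Nash inequality
\[
\|f\|_{\ell^2(\pi)}^{2+4/d} \;\le\; C_1\,\mathcal{E}_{G,\lazytext}(f,f)\,\|f\|_{\ell^1(\pi)}^{4/d},
\]
valid for every $f$ with $|\mathrm{supp}(f)|\le|V|/2$.

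Next I would close the argument by a differential-inequality bootstrap. One has $\|f_t\|_{\ell^1(\pi)}=1$ and $\|f_t\|_{\ell^2(\pi)}^2 = nP^{2t}_{G,\lazytext}(o,o)$. Writing $u(t):=\|f_t\|_{\ell^2(\pi)}^2$, the standard identity (or its discrete analogue) gives $u(t+1)-u(t)\le -c\,\mathcal{E}_{G,\lazytext}(f_t,f_t)$, which combined with the Nash inequality yields $u(t+1)-u(t)\le -c_2\,u(t)^{1+2/d}$. Iterating this recursion produces $u(t)\le C_3\,t^{-d/2}$, hence $P^{2t}_{G,\lazytext}(o,o)\le C_3/(n t^{d/2})$. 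The polynomial-growth bound $V(c\sqrt{t})\le c(c\sqrt{t})^d\asymp t^{d/2}$ then converts this to $P^{2t}_{G,\lazytext}(o,o)\lesssim 1/V(c\sqrt{t})$; the hypothesis $t\le C\diam{G}^2$ ensures $V(c\sqrt{t})\le n/2$ so that the isoperimetric inequality is applicable and the bound $1/V(c\sqrt{t})$ is non-trivially smaller than the equilibrium value $\pi(o)=1/n$.

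The main obstacle is the derivation of the isoperimetric inequality from vertex-transitivity alone. The averaging step requires care: given a candidate small set $A$, one translates $A$ by elements of a ball $B(o,r)$ with $r\asymp\mathrm{rad}(|A|)$ and counts (with multiplicity) the number of edges between translates of $A$ and their complements, and the polynomial volume control $V(r)\le cr^d$ is exactly what converts this combinatorial count into the bound $|\partial A|\gtrsim |A|^{1-1/d}$. This is the classical Coulhon--Saloff-Coste argument which I would either invoke by reference or reproduce in a self-contained lemma; everything downstream (Nash inequality and the differential inequality) is then routine.
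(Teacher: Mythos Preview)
Your argument has a genuine gap at the isoperimetric step. You claim that Coulhon--Saloff-Coste together with $V(r)\le cr^d$ yields $|\partial A|\gtrsim |A|^{1-1/d}$, but the inequality goes the wrong way: $V(r)\le cr^d$ forces $\mathrm{rad}(N)\ge (N/c)^{1/d}$, hence $|A|/\mathrm{rad}(2|A|)\le C'|A|^{1-1/d}$, not $\ge$. The bound $|\partial A|\gtrsim|A|^{1-1/d}$ requires a \emph{lower} volume bound $V(r)\gtrsim r^d$, which is not among the hypotheses. Concretely, on the $n$-cycle one trivially has $V(r)\le r^2$ (so one could set $d=2$), yet $P^t(o,o)\asymp t^{-1/2}$ for $t\lesssim n^2$, not $t^{-1}$; the correct target $1/V(c\sqrt t)\asymp t^{-1/2}$ is recovered, but your intermediate estimate $P^{2t}(o,o)\lesssim t^{-d/2}$ is simply false. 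Your final conversion $t^{-d/2}\lesssim 1/V(c\sqrt t)$ is fine; it is the preceding Nash bound that is too strong to be available.

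The repair is to retain the actual volume function throughout rather than passing to a fixed exponent. Coulhon--Saloff-Coste gives $|\partial A|\gtrsim|A|/V^{-1}(2|A|)$, and a Nash-profile or spectral-profile argument then delivers $P^t(o,o)\lesssim 1/V(c\sqrt t)$ directly. This is exactly what the paper does: Lemma~\ref{lem:Lambda_lower_bound} establishes the Faber--Krahn type bound $\Lambda(s)\gtrsim V^{-1}(2ns)^{-2}$ by comparing $P_{G,\lazytext}$ with the kernel that jumps uniformly in a ball of radius $V^{-1}(2ns)$, and then \cite[Corollary~2.1]{MT_bounds_via_spectral_profile} bounds $\|P^t(o,\cdot)/\pi-1\|_\infty$ by an integral of $1/(s\Lambda(s))$, which the doubling estimate of Lemma~\ref{lem:poly_growth_exponent} shows is $\lesssim V^{-1}(16m)^2$. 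Inverting gives the claim. Your outline is therefore salvageable and close in spirit to the paper's route, provided you abandon the shortcut through a fixed exponent $d$.
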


The proof of this relies on a bound of mixing times in terms of spectral profile and is quite technical. Full details are provided in Appendix~\ref{app:estimates_for_transitive_poly_growth_graphs}.

\begin{lemma}[Gradient inequality]\label{lem:off_diag_comparison}
Let $G$ be a vertex-transitive graph on $n$ vertices. Then for any vertices $o$, $x$, $y$ and any $t,s>0$ we have
\[
\left|P_{G,\lazytext}^{t+s}(o,x)-P_{G,\lazytext}^{t+s}(o,y)\right|\quad\lesssim\quad d(x,y)\frac{1}{\sqrt{s}}P_{G,\lazytext}^{t}(o,o).
\]
\end{lemma}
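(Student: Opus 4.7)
My plan is to first apply the triangle inequality along a geodesic $x = z_0 \sim z_1 \sim \cdots \sim z_{d(x,y)} = y$ to reduce to the case of adjacent vertices $x \sim y$ (the linear factor $d(x,y)$ then comes for free). For adjacent $x,y$, set $h(z) := P^t_{G,\lazytext}(o,z)$, so by the semigroup property $P^{t+s}_{G,\lazytext}(o,z) = (P^s_{G,\lazytext} h)(z)$. Vertex-transitivity gives $P^t(z,z) = P^t(o,o)$ for all $z$; combining this with Cauchy--Schwarz applied to $P^t(o,z) = \sum_w P^{t/2}(o,w) P^{t/2}(w,z)$ yields $\|h\|_\infty \le P^t_{G,\lazytext}(o,o)$. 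Consequently,
\[
|P^{t+s}_{G,\lazytext}(o,x) - P^{t+s}_{G,\lazytext}(o,y)| \le 2\|h\|_\infty \cdot d_{\mathrm{TV}}\bigl(P^s_{G,\lazytext}(x,\cdot),\,P^s_{G,\lazytext}(y,\cdot)\bigr),
\]
and everything reduces to the key estimate $d_{\mathrm{TV}}\bigl(P^s_{G,\lazytext}(x,\cdot),\,P^s_{G,\lazytext}(y,\cdot)\bigr) \lesssim 1/\sqrt{s}$ for adjacent $x,y$ on a vertex-transitive graph.

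To prove this TV estimate I would construct a coupling $(X_k,Y_k)_{k\ge 0}$ of two lazy random walks started at $x$ and $y$ via the ``alternating-move'' rule: at each step, a fair coin chooses which of the two walks takes a step (the other stays put), and the moving walk takes a uniform step along a neighbouring edge. Let $D_k := d(X_k,Y_k)$, so $D_0 = 1$ and $|D_{k+1}-D_k| \le 1$. I would then use vertex-transitivity to show that the conditional expected increment of $D_k$ vanishes by symmetry between the two coin outcomes, so that $(D_k)$ is a bounded-increment martingale starting from $1$; a reflection/Gambler's-ruin argument then gives $\mathbb{P}(D_s > 0) \lesssim 1/\sqrt{s}$, and the coupling produces the desired TV bound since $X_k = Y_k$ whenever $D_k = 0$.

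The main obstacle is precisely this TV estimate for adjacent vertices on a general vertex-transitive graph. The martingale property of $(D_k)$ is not automatic: the average of $d(X_k,v')$ over uniform neighbours $v'$ of $Y_k$ need not equal $d(X_k,Y_k)$ pointwise, and one must use vertex-transitivity carefully to cancel the drift after averaging over the fair coin (swapping the roles of $X$ and $Y$). If this approach proves too delicate, a backup route is the spectral identity
\[
P^{2s}_{G,\lazytext}(o,o) - P^{2s}_{G,\lazytext}(x,y) \;=\; \frac{1}{2n}\sum_i \lambda_i^{2s}\bigl(\phi_i(x)-\phi_i(y)\bigr)^2,
\]
which combined with the Dirichlet-form bound $(\phi_i(x)-\phi_i(y))^2 \lesssim n\Delta(1-\lambda_i)$ for neighbours and the scalar inequality $\lambda^{2s}(1-\lambda) \lesssim 1/s$ can be used (together with a Cauchy--Schwarz step in $L^2(\pi)$) to control the TV distance by the same $1/\sqrt{s}$ bound.
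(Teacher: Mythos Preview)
Both of your routes to the $O(1/\sqrt{s})$ factor have genuine gaps. For the coupling, the martingale property of $D_k$ fails on general vertex-transitive graphs: on the Petersen graph (girth $5$, $3$-regular), if $d(X_k,Y_k)=1$ then a uniform neighbour of either endpoint lies at distance $0$ with probability $1/3$ and distance $2$ with probability $2/3$, so $\mathbb{E}[D_{k+1}\mid D_k=1]=4/3>1$. Vertex-transitivity does not supply an automorphism \emph{swapping} $X_k$ and $Y_k$, which is what your symmetry cancellation would require. For the spectral backup, the termwise bound $(\phi_i(x)-\phi_i(y))^2\lesssim n\Delta(1-\lambda_i)$ uses one edge out of the whole Dirichlet form and is off by a factor $n$; together with the $\ell^2\to\ell^1$ loss of $\sqrt{n}$ when passing to total variation, you only get $d_{\mathrm{TV}}\lesssim n/\sqrt{s}$, not $1/\sqrt{s}$.

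The paper's argument is purely spectral and never goes through total variation. Writing
\[
P^{t+s}(o,x)-P^{t+s}(o,y)=\tfrac{1}{n}\sum_i\lambda_i^{t/2}\phi_i(o)\cdot\lambda_i^{t/2+s}\bigl(\phi_i(x)-\phi_i(y)\bigr)
\]
and applying Cauchy--Schwarz, the first factor is $\sum_i\lambda_i^t\phi_i(o)^2=nP^t(o,o)$. For the second factor one uses the scalar bound $q(1-\lambda)^2=(1-\lambda)\lambda^{2s}\lesssim 1/s$ (this is exactly the role of $q(u)=\sqrt{u}(1-u)^s$) to obtain
\[
\sum_i\lambda_i^{t+2s}(\phi_i(x)-\phi_i(y))^2\;\lesssim\;\tfrac{1}{s}\,F_t(x,y),\qquad F_t(x,y):=\sum_i\lambda_i^t\,\frac{(\phi_i(x)-\phi_i(y))^2}{1-\lambda_i}.
\]
The idea you are missing is how vertex-transitivity controls $F_t$: it is automorphism-invariant, so $\sum_{y\sim x}F_t(x,y)$ is independent of $x$; since summing $(\phi_i(x)-\phi_i(y))^2$ over all directed edges gives $4n\Delta(1-\lambda_i)$, one finds $\sum_{y\sim x}F_t(x,y)=4\Delta\sum_i\lambda_i^t=4n\Delta P^t(o,o)$ for every $x$, hence $F_t(x,y)\le 4n\Delta P^t(o,o)$ for every edge. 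Putting the pieces together yields $|P^{t+s}(o,x)-P^{t+s}(o,y)|^2\lesssim P^t(o,o)^2/s$. Your spectral ingredients were correct; the point is to bound $F_t$ by averaging over the $\Delta$ edges at a vertex rather than bounding each eigenmode separately, and to avoid the TV detour altogether.
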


\begin{proof} The proof is analogous to the proof \cite[Proposition 6.1]{universality_of_fluctuations}, with the modification that instead of a continuous-time chain we directly consider a lazy chain, and instead of $q(u)=\sqrt{u}e^{-2su}$ we use $q(u)=\sqrt{u}(1-u)^s$.
\end{proof}

\begin{proof}[Proof of Proposition~\ref{pro:transitive_poly_satsifies_assump}]
Let $P=P_{G,\lazytext}$. From Lemma~\ref{lem:off_diag_comparison} we get that
\[
P^{2t}(o,x)\quad\ge\quad P^{2t}(o,o)-C'\frac{d(o,x)}{\sqrt{t}}P^{t}(o,o)
\]
where $C'$ is some positive constant.

From Lemmas~\ref{lem:diagonal_lower_bound} and \ref{lem:diagonal_upper_bound} we know that for any $t$ we have $P^{2t}(o,o)\ge\frac{a_1}{V(\sqrt{t})}$, and for any $t\le C\diam{G}^2$ we have $P^{t}(o,o)\le\frac{a_2}{V(\sqrt{t})}$ where $a_1,a_2>0$ are constants. Let $a_4>0$ be small enough so that $a_1-C'a_2a_4>0$ and let $0<a_3<a_4$.

Then for any $r$ with $a_3\sqrt{t}\le r\le a_4\sqrt{t}$ and $x\in B(o,r)$ we have
\[
P^{2t}(o,x)\quad\ge\quad\left(a_1-C'a_2a_4\right)\frac{1}{V(\sqrt{t})}\quad\asymp\quad\frac{1}{V(\sqrt{t})}\quad\asymp\quad\frac{1}{V(r)},
\]
where the last $\asymp$ follows from Lemma~\ref{lem:poly_growth_exponent}. We can get a bound on $P^{2t+1}(o,x)$ with $t\le C\diam{G}^2$ analogously. To show that the bound holds for any $t\le\frac{c_1}{\eps}$ it remains to check that $\frac1\eps\lesssim\diam{G}^2$. For this it is sufficient to prove that $\tmixtext^{G}(\theta)\lesssim\diam{G}^2$ for some $\theta\in(0,1)$.

Let $t=\diam{G}^2$. We know that for sufficiently small $c$ we have $P^t(o,x)\gtrsim\frac{1}{V(\sqrt{t})}=\frac1n$ for any $x\in B(o,c\sqrt{t})$. Then
\[
1-\dtv{P^t(o,\cdot)}{\pi(\cdot)}\quad=\quad\sum_{x}P^t(o,x)\wedge\pi(x)\quad\gtrsim\quad1,
\]
i.e.\ $\exists\:\theta'$ with $\tmixtext^{G,\lazytext}\left(\theta'\right)\le\diam{G}^2$. Using this, that $\trelabs\lesssim\frac1\eps$ by Lemma~\ref{lem:bound_trelabs}, that $\tmixtext^G(\theta)\gg\frac1\eps$ for all $\theta$ by assumption, Lemmas~\ref{lem:hit_mix_comparison} and~\ref{lem:hitting_times_comparison}, and equation~\eqref{eq:hit_hitlazy_comparison}, we get that $\tmixtext^{G}\left(\theta'\right)\le\diam{G}^2$ for some $\theta$.

For the last point of Assumption~\ref{assump:for_eps_power_of_n} it is sufficient to prove that for any $c'$ we have $V(c\diam{G})\le c'n$ for sufficiently small values of $c$.

Let $x$ and $y$ be two vertices of $G$ with $d(x,y)=\diam{G}$ and let $x_0,...,x_{\diam{G}}$ be a geodesic between $x$ and $y$. Then for any $k\in\Zpos$ the balls $B(x_{(3j-1)r},r)$ where $r=\big\lfloor\frac{\diam{G}}{3k}\big\rfloor\asymp\diam{G}$, $j=1,2,...,k$ are disjoint, hence $V(r)\le\frac{n}{k}$.

This finishes the proof.
\end{proof}

Now we proceed to prove Proposition~\ref{pro:Gstar_transitive_poly_no_cutoff}

\begin{lemma}\label{lem:G_lazy_nocutoff}
Let $G$ be as in Proposition~\ref{pro:transitive_poly_satsifies_assump}. Then the lazy random walk on $G$ does not exhibit cutoff.
\end{lemma}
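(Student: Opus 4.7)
Plan. I aim to show that the product condition for TV cutoff, $\trel^{G,\lazytext}=o(\tmixtext^{G,\lazytext}(\tfrac14))$, fails; since this condition is necessary for cutoff (see \cite[Remark~1.9]{characterisation_of_cutoff}, as already used in Proposition~\ref{pro:eps_asymp_tmix_inv_no_cutoff_implies_no_cutoff}), this yields the lemma. Concretely, I will show that both $\trel^{G,\lazytext}$ and $\tmixtext^{G,\lazytext}(\tfrac14)$ are of order $\diam{G}^2$.

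The upper bound $\tmixtext^{G,\lazytext}(\tfrac14)\lesssim\diam{G}^2$ is essentially contained in the proof of Proposition~\ref{pro:transitive_poly_satsifies_assump}: combining Lemmas~\ref{lem:diagonal_lower_bound},~\ref{lem:diagonal_upper_bound}, and~\ref{lem:off_diag_comparison}, at $t=A\diam{G}^2$ for $A$ a suitably large constant one obtains $P^t_{G,\lazytext}(o,x)\ge c/n$ uniformly in $x$, whence $\dtv{P^t_{G,\lazytext}(o,\cdot)}{\pi_G}\le 1-c$; submultiplicativity of the TV distance then extends this to $\theta=\tfrac14$.

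The lower bound $\trel^{G,\lazytext}\gtrsim\diam{G}^2$ is the main point. I plan to use the variational characterisation of $\trel$ with the test function $f(x)=d(x,x_0)$ for any fixed vertex $x_0$. Since $|f(x)-f(y)|\le 1$ whenever $x\sim y$, the Dirichlet form satisfies $\Dirform(f,f)\le\tfrac14$. For the variance, I use Lemma~\ref{lem:poly_growth_exponent} together with $V(r)\le cr^d$ to pick constants $K_1<K_2$ so that $\B_G(x_0,\diam{G}/K_1)$ has $\pi$-mass bounded below while $\B_G(x_0,\diam{G}/K_2)$ has $\pi$-mass bounded above by some constant strictly less than $1$; on the former $f\le\diam{G}/K_1$ and on the complement of the latter $f\ge\diam{G}/K_2$, so the two-set variance bound gives $\Var_\pi(f)\gtrsim\diam{G}^2$ and hence $\trel^{G,\lazytext}\ge\Var_\pi(f)/\Dirform(f,f)\gtrsim\diam{G}^2$.

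The main obstacle is precisely this variance lower bound: the hypotheses give $V(r)\le cr^d$ and the doubling consequence of Lemma~\ref{lem:poly_growth_exponent}, which easily bound $\pi(\B_G(x_0,\diam{G}/K_1))$ from below but do not directly bound $\pi(\B_G(x_0,\diam{G}/K_2))$ away from $1$. To handle this I would invoke the standard consequence of the structure theorem underlying Lemma~\ref{lem:poly_growth_exponent} that vertex-transitive graphs of polynomial growth have a well-defined homogeneous dimension $d_0\le d$ with $V(r)\asymp r^{d_0}$ throughout the relevant range of $r$, from which $V(\diam{G}/K_2)/n\asymp K_2^{-d_0}$ can be made arbitrarily small by choosing $K_2$ large.
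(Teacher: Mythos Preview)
Your approach via the product condition is correct in outline but differs from the paper's proof. The paper argues directly from the heat kernel estimates: for $t\ll\diam{G}^2$ the lower bound $P^t(o,x)\gtrsim1/V(\sqrt t)$ on $B(o,c\sqrt t)$ shows the walk is concentrated on a set of $\pi$-mass $o(1)$, giving $\tmixtext^{G,\lazytext}(\theta)\gtrsim\diam{G}^2$ for some $\theta$; and for $t\asymp\diam{G}^2$ the same bound on a ball of volume $\asymp n$ gives $\tmixtext^{G,\lazytext}(1-\theta')\le t$ for some $\theta'$. This bypasses any spectral estimate. Your route is also valid---indeed the paper does establish $\trel^{G,\lazytext}\asymp\diam{G}^2$ separately in Lemma~\ref{lem:tmix_order_transitive_poly}, though by invoking moderate growth and~\cite[Proposition~11.1]{exclusion_process} rather than a direct test-function computation.

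Two fixable issues with your variance argument. First, with $K_1<K_2$ you have $\diam{G}/K_1>\diam{G}/K_2$, so ``$f\le\diam{G}/K_1$ on the larger ball'' and ``$f\ge\diam{G}/K_2$ on the complement of the smaller ball'' give overlapping ranges, not separated ones; you want the \emph{small} ball (mass bounded below by Lemma~\ref{lem:poly_growth_exponent}) for the low-$f$ set and the complement of a \emph{larger} ball for the high-$f$ set, which requires $V(\diam{G}/K)\le c'n$ for a suitable fixed $K$. Second, for that last estimate you do not need the full homogeneous-dimension statement: the elementary geodesic-packing argument already in the proof of Proposition~\ref{pro:transitive_poly_satsifies_assump} (place $k$ disjoint balls of radius $\lfloor\diam{G}/(3k)\rfloor$ along a geodesic of length $\diam{G}$) gives $V(\diam{G}/(3k))\le n/k$ for any $k$, which is exactly what you need.
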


\begin{proof} Let $X$ be a lazy random walk on $G$, and let $P$ denote its transition matrix.

First we show that $\exists\theta\in(0,1)$ such that we have $\tmixtext^{G,\lazytext}(\theta)\gtrsim\diam{G}^2$.

Consider any sequence $t\ll\diam{G}^2$. Then for a sufficiently small constant $c$ we have $P^t(o,x)\gtrsim\frac{1}{V(\sqrt{t})}$ for all $x\in B(o,c\sqrt{t})$. Therefore $\prstart{X_t\in B(o,c\sqrt{t})}{o}\gtrsim1$, while $\pi\left(B(o,c\sqrt{t})\right)=\frac{V(\sqrt{t})}{n}\ll1$, hence $\dtv{P^t(o,\cdot)}{\pi(\cdot)}\gtrsim1$, where the constant in $\gtrsim1$ only depends on the choice of $c$.

This shows that there exists $\theta\in(0,1)$ such that for any $t\ll\diam{G}^2$ for sufficiently large $n$ we have $\tmixtext^G(\theta)>t$. This shows $\tmixtext^G(\theta)\gtrsim\diam{G}^2$.

Now we show that for any $t\asymp\diam{G}^2$ there exists $\theta\in(0,1)$ with $\tmixtext^{G,\lazytext}(1-\theta)\le t$.

It is sufficient to consider $t=c\cdot\diam{G}^2$ where $c$ is a sufficiently small constant. Then by Lemma~\ref{lem:poly_growth_exponent} we have $V(\sqrt{t})\asymp n$, and we know that $P^t(o,x)\gtrsim\frac{1}{V(\sqrt{t})}$ for all $x\in B(o,c\sqrt{t})$. Therefore
$$1-\dtv{P^t(o,\cdot)}{\pi(\cdot)}\quad\ge\quad\sum_{x\in B(o,c\sqrt{t})}P^t(o,x)\wedge\pi(x)\quad\gtrsim\quad n\cdot\frac1n\quad\asymp\quad1.$$
This shows that $\tmixtext^{G,\lazytext}(1-\theta)\le t$ for some $\theta\in(0,1)$.
This finishes the proof.
\end{proof}

Lemma~\ref{lem:G_lazy_nocutoff} and Lemma~\ref{lem:SRW_cutoff_implies_lazy_cutoff} immediately gives following.

\begin{corollary}\label{cor:G_transitive_poly_no_cutoff}
Let $G$ be as in Proposition~\ref{pro:transitive_poly_satsifies_assump}. Then the simple random walk on $G$ does not exhibit cutoff.
\end{corollary}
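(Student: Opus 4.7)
The plan is to deduce the corollary immediately from the two preceding lemmas by a contrapositive argument. Specifically, suppose for contradiction that the simple random walk on the sequence $(G_n)$ exhibits cutoff. Then by Lemma~\ref{lem:SRW_cutoff_implies_lazy_cutoff} (cutoff of a simple random walk on a bounded-degree sequence transfers to the lazy version), the lazy random walk on $(G_n)$ must also exhibit cutoff. But this directly contradicts Lemma~\ref{lem:G_lazy_nocutoff}, which establishes that the lazy random walk on any vertex-transitive, bounded-degree, polynomial-growth sequence of graphs does not exhibit cutoff. Hence the original assumption fails and the simple random walk on $(G_n)$ cannot exhibit cutoff.

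There is essentially no obstacle here: both ingredients are already proved earlier in the paper, and the statement is exactly the contrapositive of their composition. In particular, the hypothesis of Lemma~\ref{lem:SRW_cutoff_implies_lazy_cutoff} (a sequence of graphs on which the simple random walk has cutoff) is met by our assumption, and its conclusion (cutoff of the lazy version) is exactly what Lemma~\ref{lem:G_lazy_nocutoff} forbids. This is why the text introduces the corollary with ``immediately gives following.'' The proof fits in two or three lines and requires no further machinery beyond naming the two lemmas and invoking the contradiction.
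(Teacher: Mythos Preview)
Your proposal is correct and matches the paper's own reasoning exactly: the paper introduces the corollary with the sentence ``Lemma~\ref{lem:G_lazy_nocutoff} and Lemma~\ref{lem:SRW_cutoff_implies_lazy_cutoff} immediately gives following'' and provides no further argument, which is precisely your contrapositive combination of the two lemmas.
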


\begin{lemma}\label{lem:tmix_order_transitive_poly}
Let $G$ be as in Proposition~\ref{pro:transitive_poly_satsifies_assump}. Then $\tmixtext^{G,\lazytext}\left(\frac14\right)\asymp\diam{G}^2$, $\tmixtext^{G}(\theta)\gtrsim\diam{G}^2$ for all $\theta\in(0,1)$, and $\tmixtext^{G}(1-\theta)\lesssim\diam{G}^2$ for some $\theta\in(0,1)$.
\end{lemma}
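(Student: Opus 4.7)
The lemma makes three comparisons: (i) $\tmixtext^{G,\lazytext}(1/4)\asymp\diam{G}^2$, (ii) $\tmixtext^G(\theta)\gtrsim\diam{G}^2$ for every $\theta\in(0,1)$, and (iii) $\tmixtext^G(1-\theta)\lesssim\diam{G}^2$ for some $\theta$. Part (iii) follows from the upper half of (i) via Lemma~\ref{lem:tmix_bound_by_tmixlazy}, so I only need to prove the lower bounds in (i) and (ii) and the upper bound in (i). My plan is to split the argument into a common lower-bound step based on diffusive concentration and a separate upper-bound step based on $L^2$-contraction.

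For the lower bounds I would strengthen the argument of Lemma~\ref{lem:G_lazy_nocutoff} to get $\dtv{P^{t_n}(o,\cdot)}{\pi(\cdot)}\to 1$ (not just bounded below) for every $t_n\ll\diam{G_n}^2$. The key ingredient is a diffusive concentration estimate of the form
\[
\prstart{d(o,X_t)>\lambda\sqrt t}{o}\le Ce^{-c\lambda^2}\qquad (t\lesssim\diam{G}^2,\ \lambda>0),
\]
which follows by iterating the gradient inequality Lemma~\ref{lem:off_diag_comparison} together with the on-diagonal upper bound Lemma~\ref{lem:diagonal_upper_bound} to produce a Gaussian off-diagonal heat-kernel upper bound, and then summing over distance shells using the doubling property Lemma~\ref{lem:poly_growth_exponent}. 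Given this, choose $\lambda_n\to\infty$ with $\lambda_n\sqrt{t_n}=o(\diam{G_n})$, possible since $\sqrt{t_n}/\diam{G_n}\to 0$. Then $\prstart{X_{t_n}\in B(o,\lambda_n\sqrt{t_n})}{o}\to 1$ while $\pi(B(o,\lambda_n\sqrt{t_n}))=V(\lambda_n\sqrt{t_n})/n\to 0$ by polynomial volume growth, so the TV distance is bounded below by the difference and tends to $1$. This gives $\tmixtext^{G,\lazytext}(\theta)>t_n$ for every fixed $\theta<1$, hence $\tmixtext^{G,\lazytext}(1/4)\gtrsim\diam{G}^2$. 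Applying the same argument to the simple random walk (the Gaussian concentration transfers from the lazy walk via binomial concentration of the number of non-lazy steps, and parity causes no problem for a TV lower bound) gives $\tmixtext^G(\theta)\gtrsim\diam{G}^2$ for every $\theta\in(0,1)$.

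For the upper bound $\tmixtext^{G,\lazytext}(1/4)\lesssim\diam{G}^2$ I would use $L^2$-contraction. Pick $t_1=C_1\diam{G}^2$ with $C_1$ large enough that Lemma~\ref{lem:diagonal_upper_bound} with $c=1/\sqrt{2C_1}$ satisfies $c\sqrt{2t_1}\ge\diam{G}$ and hence $P^{2t_1}_{G,\lazytext}(o,o)\le A/n$ for an absolute constant $A$. By reversibility and vertex-transitivity,
\[
\|P^{t_1}_{G,\lazytext}(o,\cdot)/\pi-1\|_{2,\pi}^2=n\,P^{2t_1}_{G,\lazytext}(o,o)-1\le A-1.
\]
Combining with the spectral-gap bound $\gamma\gtrsim 1/\diam{G}^2$ for the lazy walk on a connected vertex-transitive graph (Babai's canonical-path inequality, $\trel\le 4\diam{G}^2$, which uses vertex-transitivity to average the congestion over paths), one iterates to $\|P^{t_1+s}_{G,\lazytext}/\pi-1\|_{2,\pi}\le\sqrt{A-1}\,e^{-s\gamma}$; taking $s=C_2\diam{G}^2$ with $C_2$ large enough makes this $\le 1/2$, and Cauchy--Schwarz gives $\dtv(t_1+s)\le 1/4$. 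Hence $\tmixtext^{G,\lazytext}(1/4)\le t_1+s\lesssim\diam{G}^2$, and Lemma~\ref{lem:tmix_bound_by_tmixlazy} transfers this to $\tmixtext^G(1-\theta)\lesssim\diam{G}^2$ for some $\theta$.

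The main obstacle is producing the two auxiliary inputs not stated explicitly in the paper: the off-diagonal Gaussian heat-kernel upper bound needed for the diffusive concentration, and the spectral-gap bound $\gamma\gtrsim 1/\diam{G}^2$. Both are classical for vertex-transitive graphs of polynomial growth; the Gaussian bound can be produced via the spectral-profile methods already underlying Lemma~\ref{lem:diagonal_upper_bound} in Appendix~\ref{app:estimates_for_transitive_poly_growth_graphs} (or by a Carne--Varopoulos-type argument), and the spectral-gap bound is Babai's inequality specialised to vertex-transitive chains. An alternative that avoids extracting $\gamma$ separately is to run the spectral-profile machinery for a slightly longer time window to directly bound $\tmixtext^{G,\lazytext}(1/4)$.
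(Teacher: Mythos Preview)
Your overall strategy is sound and reaches the same conclusions, but it differs substantially from the paper's proof in both the upper and lower bounds.

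For the \emph{upper bound} $\tmixtext^{G,\lazytext}(1/4)\lesssim\diam{G}^2$, the paper does not use $L^2$-contraction plus a separate spectral-gap input. Instead it applies the gradient inequality Lemma~\ref{lem:off_diag_comparison} directly in $L^\infty$: with $t\asymp\diam{G}^2$ chosen so that $P^t(o,o)\asymp\frac1n$ (via Lemmas~\ref{lem:diagonal_lower_bound} and~\ref{lem:diagonal_upper_bound}) and $s=C\diam{G}^2$ with $C$ large, one gets $|P^{t+s}(o,x)-P^{t+s}(o,o)|\le\theta P^{t+s}(o,o)$ for all $x$, so the heat kernel is nearly constant and the walk is mixed in $L^\infty$. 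This is more self-contained than your route, since it avoids invoking Babai's inequality as a black box; on the other hand, your $L^2$ argument is perfectly valid and arguably more modular.

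For the \emph{lower bounds} the divergence is larger. The paper does not attempt to prove a Gaussian off-diagonal estimate at all. For the lazy chain it cites the structure theorem (\cite[Corollary 2.8]{structure_thm}) to obtain $(c,a)$-moderate growth, then \cite[Proposition 11.1]{exclusion_process} to get $\trel\asymp\diam{G}^2$, which immediately yields $\tmixtext^{G,\lazytext}(\theta)\gtrsim\trel\asymp\diam{G}^2$ for every $\theta$. For the simple walk it uses a neat trick: the matrix $P_G^2$ is the transition matrix of a $\frac{1}{\deg_G(o)}$-lazy walk on a modified vertex-transitive graph $\til G$ of comparable diameter, to which the $p$-lazy lower bound (via \cite[Corollary 9.5]{mix_hit}) applies. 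Your diffusive-concentration approach also works and is conceptually cleaner, but note that ``iterating the gradient inequality with the on-diagonal bound'' is not how one normally derives Gaussian off-diagonal upper bounds; the honest routes are Carne--Varopoulos combined with the on-diagonal estimate $P^t(o,o)\lesssim 1/V(\sqrt t)$ and volume doubling (Lemma~\ref{lem:poly_growth_exponent}), or the full Delmotte machinery. Either suffices to push the TV distance to $1$ along $t_n=o(\diam{G_n}^2)$, and the transfer to the simple walk can be done directly via Carne--Varopoulos rather than through the lazy chain. One small wording issue: in your $L^2$ step you cannot take $C_1$ ``large enough'', since Lemma~\ref{lem:diagonal_upper_bound} only applies for $t\le C\diam{G}^2$; you only need a single admissible $t_1$ and then let the spectral gap do the work, which is what you intend.
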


\begin{proof}
First we show that $\tmixtext^{G,\lazytext}\left(\frac14\right)\lesssim\diam{G}^2$.

Let $t=c\diam{G}^2$ where $c$ is a sufficiently small constant so that $P_{G,\lazytext}^{t}(o,o)\asymp\frac1n$ (we are using Lemma~\ref{lem:diagonal_lower_bound} and Lemma~\ref{lem:diagonal_upper_bound} here). Let $s=C\diam{G}^2$ where $C$ is a sufficiently large constant. From Lemma~\ref{lem:diagonal_lower_bound} we know that $P_{G,\lazytext}^{t+s}(o,o)\gtrsim\frac1n$ where the constant in $\gtrsim$ does not depend on $C$.

Let $a_1$, $a_2$ and $a_3$ be constants such that $P_{G,\lazytext}^{t}(o,o)\le a_1\frac1n$, $P_{G,\lazytext}^{t+s}(o,o)\ge a_2\frac1n$ for all $C$, and in Lemma~\ref{lem:off_diag_comparison} we have
\begin{align}\label{eq:off_diag_comparison_const}
\left|P_{G,\lazytext}^{t+s}(o,x)-P_{G,\lazytext}^{t+s}(o,y)\right|\quad\le\quad a_3 d(x,y)\frac{1}{\sqrt{s}}P_{G,\lazytext}^{t}(o,o).
\end{align}

Let $\theta\in(0,1)$ be a sufficiently small constant and let us choose $C$ such that $\frac{a_3}{\sqrt{C}}\frac{a_{1}}{a_{2}}\le\theta$.

Then by~\eqref{eq:off_diag_comparison_const} for any $o$ and $x$ we have
\[
\left|P_{G,\lazytext}^{t+s}(o,x)-P_{G,\lazytext}^{t+s}(o,o)\right|\quad\le\quad \frac{a_3}{\sqrt{C}}P_{G,\lazytext}^{t}(o,o)\quad\le\quad\theta P_{G,\lazytext}^{t+s}(o,o),
\]
i.e.\ for all $o$ and $x$ we have
\[
(1-\theta)P_{G,\lazytext}^{t+s}(o,o)\quad\le\quad P_{G,\lazytext}^{t+s}(o,x)\quad\le\quad(1+\theta)P_{G,\lazytext}^{t+s}(o,o).
\]

Choosing $\theta$ sufficiently small this shows that $\tmixtext^{G,\lazytext}\left(\frac14\right)\le t+s\asymp\diam{G}^2$.

Then by Lemma~\ref{lem:tmix_bound_by_tmixlazy} we also get that $\tmixtext^{G}(1-\theta)\lesssim\diam{G}^2$ for some $\theta\in(0,1)$.

Now we show that for any $\theta\in(0,1)$ we have $\tmixtext^{G,\lazytext}\left(\theta\right)\gtrsim\diam{G}^2$.

From~\cite[Corollary 2.8]{structure_thm} we know that for any sufficiently large $n$ the graph $G$ has $(c,a)$-moderate growth for some $a$ and $c$. Then by~\cite[Proposition 11.1]{exclusion_process} we get that $\trel\asymp\diam{G}^2$. Then using~\cite[Theorem 12.5]{MTMC_book} gives $\tmixtext^{G,\lazytext}\left(\theta\right)\gtrsim\trel\asymp\diam{G}^2$ for all $\theta\in(0,1)$.

By~\cite[Corollary 9.5]{mix_hit} we also get a lower bound of order $\diam{G}^2$ for the mixing time if instead of a lazy walk we consider a $p$-lazy walk on $G$ for some $p\in(0,1)$ bounded away from 0 and 1, i.e.\ a walk with transition matrix $pI+(1-p)P_G$.

Finally we show that $\tmixtext^{G}(\theta)\gtrsim\diam{G}^2$ for all $\theta\in(0,1)$.

Let $H$ be a graph obtained as follows. Let $H$ have the same vertex set as $G$. For each pair $\{x,y\}$ with $x\ne y$ let $H$ have twice as many edges between $x$ and $y$ as the number of paths of length 2 between $x$ and $y$ in $G$, and for each $x$ let it have $\deg_G(x)$ loops from $x$. Note that the simple random walk on this graph $H$ has transition matrix $P^2$.

Also note that each vertex in $H$ has degree $2\deg_G(o)^2$ and has exactly $\deg_G(o)$ loops. Let $\til{G}$ be a graph obtained by deleting all loops in $H$.

Then $\til{G}$ is a vertex-transitive graph with bounded degrees and polynomial growth of balls (and having multiple edges). Also either $\til{G}$ is connected (if $G$ is not bipartite) or it has two connected components of size $\frac{n}{2}$ (if $G$ is bipartite). Note that the transition matrix of the SRW on $\til{G}$ satisfies $\frac{1}{\deg_G(o)}I+\left(1-\frac{1}{\deg_G(o)}\right)P_{\til{G}}=P^2$. Also each connected component of $\til{G}$ has diameter $\asymp\diam{G}$.

Using the above result for the $\frac{1}{\deg_G(o)}$-lazy random walk on one connected component of $\til{G}$ we get that $\tmixtext^{G}(\theta)\gtrsim\diam{G}^2$ for all $\theta\in(0,1)$.
\end{proof}

\begin{proof}[Proof of Proposition~\ref{pro:Gstar_transitive_poly_no_cutoff}]
${}$

If $\eps\ll\frac{1}{\diam{G}^2}$ then by Proposition~\ref{pro:smaller_eps}, Corollary~\ref{cor:G_transitive_poly_no_cutoff} and Lemma~\ref{lem:tmix_order_transitive_poly} the random walk on $G^*$ does not exhibit cutoff.

If $\eps\asymp\frac{1}{\diam{G}^2}$ then by Proposition~\ref{pro:eps_asymp_tmix_inv_no_cutoff_implies_no_cutoff}, Corollary~\ref{cor:G_transitive_poly_no_cutoff} and Lemma~\ref{lem:tmix_order_transitive_poly} the random walk on $G^*$ does not exhibit cutoff.

If $\eps\asymp n^{-\Theta(1)}\gg\frac{1}{\diam{G}^2}$ then by Corollary~\ref{cor:eps_power_of_n} and Lemma~\ref{lem:tmix_order_transitive_poly} the random walk on $G^*$ does not exhibit cutoff.

Any sequence $\eps\lesssim n^{-\Theta(1)}$ has a subsequence which falls into one of the above categories.
\end{proof}

\subsection{Proof of Theorems~\ref{thm:results_lin_entropy}, \ref{thm:results_poly_balls}, \ref{thm:results_expanders} and \ref{thm:results_poly_growth_vertex_transitive}}

\begin{proof}[Proof of Theorem~\ref{thm:results_lin_entropy}]
This follows from Proposition~\ref{pro:lin_entropy_growth} (case $\frac{1}{n}\ll\eps\lesssim(\log n)^{-1/3}$), Proposition~\ref{pro:larger_eps} (case $(\log n)^{-1/3}\ll\eps\ll1$), Proposition~\ref{pro:const_order_eps} (case $\eps\asymp1$), Proposition~\ref{pro:smaller_eps} and Proposition~\ref{pro:eps_asymp_tmix_inv_no_cutoff_implies_no_cutoff}.
\end{proof}

\begin{proof}[Proof of Theorem~\ref{thm:results_poly_balls}]
As above, with Proposition~\ref{pro:poly_ball_growth} instead of Proposition~\ref{pro:lin_entropy_growth}.
\end{proof}

\begin{proof}[Proof of Theorem~\ref{thm:results_expanders}]
From Remark~\ref{rmk:local_expanders_lin_entropy} we know that expanders have linear growth of entropy. It is immediate to see that $\tmixtext^{\lazytext}\left(\frac14\right)\asymp\log n$ and $\tmixtext(\theta)\gtrsim\log n$ for all $\theta\in(0,1)$.
Then from Lemma~\ref{lem:tmix_bound_by_tmixlazy} we get that $\tmixtext(\theta)\asymp\log n$ for some $\theta\in(0,1)$, so the first three points of Theorem~\ref{thm:results_expanders} follow from Theorem~\ref{thm:results_lin_entropy}.

The last two points follow from Corollary~\ref{cor:expander_examples}
\end{proof}

\begin{proof}[Proof of Theorem~\ref{thm:results_poly_growth_vertex_transitive}]
This follows from Theorem~\ref{thm:results_poly_balls} and Proposition~\ref{pro:Gstar_transitive_poly_no_cutoff}.
\end{proof}

\section{General sequences of graphs}\label{sec:general_graphs}

In this section we discuss some conjectures and questions regarding more general families of graphs and present a sketch of the proof of Theorem~\ref{thm:results_general_graphs}.

\subsection{Conjectures and open problems}\label{subsec:conjectures}

In the two main regimes we considered in the above proofs (graphs with polynomial growth of balls, graphs with linear growth of entropy) we expressed $\h$, which captures the average entropy of a walk on the quasi-tree, in terms of $\eps$, and proved that cutoff holds in case $\h\ll\log n$, with mixing time of order $\frac{\log n}{\eps\h}$. In both cases the order of the entropy of the first long-range edge crossed by the walk did not depend on the starting point. Using this and the assumption on the growth of balls allowed us to prove entropic concentration~\eqref{eq:entropy_deviations}. We believe that this phenomenon holds more generally, in the following form.

For a given graph $G$, weight $\eps$ and vertex $\rho$ let
\[
h(\eps,\rho):=\quad H\left(X_{\tauLR}^{(\rho)}\right)
\]
be the entropy of the first long-range edge crossed by a walk on $G^*$, when started from $\rho$.

Also let
\[
h(\eps):=\quad \frac1n\sum_{\rho}h(\eps,\rho).
\]

\begin{conjecture}\label{conj:h_unif}
Let $(G_n)$ be a sequence of graphs with (uniformly) bounded degrees and diverging sizes, and let $(\eps_n)$ be a sequence of constants in $(0,1)$. Let $(G_n^*)$ be as in Definition~\ref{def:Gstar}. If $h_n(\eps_n,\rho_n)\asymp h_n(\eps_n)$ for all $n$ and all $\rho_n$ and $h_n(\eps_n)\ll\log|V_n|$, then the random walk on $(G_n^*)$ exhibits cutoff with high probability, with a mixing time of order $\frac{\log|V_n|}{\eps_n h_n(\eps_n)}$.
\end{conjecture}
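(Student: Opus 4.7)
The plan is to extend the framework of Sections~\ref{sec:quasi-tree}--\ref{sec:proof_of_cutoff}, replacing the structural Assumption~\ref{assump:f} with direct use of the uniform entropy hypothesis. The quasi-tree construction and the preliminary bounds of Section~\ref{sec:quasi-tree} (Lemmas~\ref{lem:neverbacktrack}--\ref{lem:tail_bounds_Tprime} and~\ref{lem:speed}) require only bounded degree and $\eps_n\to 0$, so they apply unchanged. The target is to prove cutoff at the entropic time $t_0 \asymp \frac{\log n}{\nu\,\h}$, which matches $\frac{\log n}{\eps\, h(\eps)}$ once we establish $\nu \asymp \eps$ (immediate from Lemma~\ref{lem:speed}) and $\h \asymp h(\eps)$.

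First I would identify the quasi-tree entropy rate $\h$ of Proposition~\ref{pro:aux_entropy} with the graph quantity $h(\eps)$. The random variable $Y' := -\log \prcond{X'_{\sigma'_1}\in\xi'}{X',T'}{\rho}$ is essentially the log-probability of the first long-range edge crossed by the coupled walk on $G^*$ conditioned on non-return to the root. Using the uniform hypothesis $h(\eps,\rho)\asymp h(\eps)$ for every $\rho$ and averaging over the distribution of the root ball of $T$, one gets $\E{Y'}\asymp h(\eps)$. Combined with $\E{\varphi_2-\varphi_1}\asymp 1$ from Corollary~\ref{cor:varphi_sigma_E_Var_bounds}, this yields $\h \asymp h(\eps)$. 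The uniformity then propagates along the quasi-tree since each $T$-ball is an independent copy of the root ball.

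Once $\h \asymp h(\eps)$ is in hand, one needs an entropic concentration of the form $\bigl|-\log\prcond{\xi_k\in\til\xi}{T,\xi}{} - k\h\bigr| = o(k\h)$ with probability $1-o(1)$, so that the truncation and coupling arguments of Section~\ref{sec:relating_Gstar_and_T} can be run. The i.i.d.\ structure across regeneration blocks (Lemma~\ref{lem:indep_between_regenerations}) together with the uniform comparability of $h(\eps,\cdot)$ should give $\vr{Y'} \lesssim \E{Y'}^2$ and the analogous fourth-moment bound via a martingale/blocking argument; Chebyshev then suffices, since we only need $o(k\h)$ deviation rather than the $O(\sqrt{k\V})$ window obtained in Lemma~\ref{lem:entropy_from_assump_f}. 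With concentration available, the exploration/coupling of Definition~\ref{def:coupling}, the weighted matching bound Lemma~\ref{lem:w_pair_sum}, and the reversibility identity Lemma~\ref{lem:reversal_under_Omega_zero_one} combine to give $\dtv{\prcond{X_\tau\in\cdot}{G^*}{x}}{\U(\cdot)} = o(1)$ for a random time $\tau$ of order $t_0$; the mixing/hitting comparison Lemma~\ref{lem:hit_mix_comparison}, together with $\trelabs \lesssim \tfrac{1}{\eps} \ll t_0$, then yields $\tmixtext^{G^*}(\theta) \le (1+o(1))t_0$ whp. The matching lower bound runs exactly as in Section~\ref{sec:tmix_lower_bound}, since the lower-bound truncation uses only the mean of $W_T(\xi_L)$ on the quasi-tree.

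The main obstacle is the concentration step above, together with the $(K\log b(R))^2$ boundary error appearing in Lemma~\ref{lem:aux_entropy_var_bound}: both are controlled in the paper via the explicit ball-growth bound $b(R)$ supplied by Assumption~\ref{assump:f}. In the conjectural setting one must either (i) derive an a posteriori ball-growth bound from the uniform entropy hypothesis --- since $h(\eps,\rho)\le \log |B_G(\rho,C/\eps)|$, the concentration of $h(\eps,\rho)$ yields an averaged upper bound on $|B_G(\rho,C/\eps)|$ which can be plugged into the exploration of Definition~\ref{def:coupling} --- or (ii) redefine the truncation event $\trunc{e}{A}$ using only walk-intrinsic quantities, so that vertices whose local neighbourhood in $G$ is atypically large are automatically absorbed into the truncated set. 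Option (ii) appears more robust, as it avoids imposing geometric hypotheses on $G$ beyond bounded degree and is closer in spirit to the approach of~\cite{RWs_on_random_graph}.
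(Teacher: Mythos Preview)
This statement is labelled a \emph{Conjecture} in the paper, not a theorem: the paper offers no proof, and explicitly lists it among open problems in Section~\ref{subsec:conjectures}. So there is no ``paper's own proof'' to compare against. Your proposal is a programme rather than a proof, and you yourself flag the main obstacle at the end; that obstacle is exactly the one the paper identifies as the reason the conjecture remains open (the need for a ball-growth bound $b(R)$ and for the higher-moment control on $H_2,H_4$ that Assumption~\ref{assump:f} supplies).

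Two concrete issues with your sketch beyond what you already acknowledge. First, your option~(i) is backwards: the inequality $h(\eps,\rho)\le \log |B_G(\rho,C/\eps)|$ means that a bound on the entropy gives \emph{no} upper bound on the ball volume. Indeed the paper points out (the Kozma lamplighter example just after Conjecture~\ref{conj:h_unif}) that the entropy can be of strictly smaller order than the log of the ball volume, so one cannot recover the $b(R)$ needed in Lemma~\ref{lem:aux_entropy_var_bound} or in the exploration bookkeeping from the entropy hypothesis alone. Second, the step ``uniform comparability of $h(\eps,\cdot)$ should give $\vr{Y'}\lesssim \E{Y'}^2$'' is unjustified: having $\E{Y'}\asymp h(\eps)$ uniformly over root balls says nothing about the tail of $Y'$, and the paper's route to the variance and fourth-moment bounds (via Lemma~\ref{lem:main_aux_entropy}) genuinely uses the matching upper bound $\log b(R)\asymp f(R)$, not just first-moment information. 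Without these, neither Proposition~\ref{pro:aux_entropy} nor the size control in Lemma~\ref{lem:bound_num_bad} goes through, and the coupling argument of Section~\ref{sec:relating_Gstar_and_T} stalls.
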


Note in particular, that for vertex-transitive graphs the assumption $h(\eps,\rho)\asymp h(\eps)$ holds for any $\eps$, so Conjecture~\ref{conj:h_unif} gives a sufficient condition on $\eps$ to ensure cutoff.

We established the result under Assumption~\ref{assump:f}, which in addition to $h(\eps,\rho)\asymp h(\eps)$ also assumes a matching upper bound on the volume of balls and makes some technical assumptions on $H_2$ and $H_4$. One way we could relax the assumption on the growth of balls is by using better upper bounds on the maximum distance a walk on $G$ reaches up to a given time. (This would allow us to choose a smaller radius $R$ in the quasi-tree, while still ensuring that the walk will likely not hit the boundary of $R$-balls.)

A-priori it seems plausible that for vertex-transitive graphs the entropy of the walk at a given time $t$ is always of the same order as the log of the size of a ball with radius the expected distance travelled by time $t$. The former is always bounded by the latter up to a multiplicative constant, but there is a counterexample for the other direction.
For example consider a version of the lamplighter graph where the base graph is a copy of $\Z_k$ and each lamp takes values in $\Z_k$ (and takes steps as a simple random walk on $\Z_k$). We would like to thank Gady Kozma for this example.

The condition $h(\eps,\rho)\asymp h(\eps)$ is included in Conjecture~\ref{conj:h_unif} as it should make it easier to control the fluctuations of the entropy, but one might also ask the following question.

\begin{open_problem} Is Conjecture~\ref{conj:h_unif} true if we replace the condition $h_n(\eps_n,\rho_n)\asymp h_n(\eps_n)\ll\log|V_n|$ with $\max_{\rho_n}h_n(\eps_n,\rho_n)\ll\log|V_n|$?
\end{open_problem}

Since we have $h(\rho,\eps)\lesssim\frac{1}{\eps}$ for any $\rho$ and any $\eps$ with $\frac{1}{\eps}\le\diam{G}$, a special case of the above would be the following.

\begin{conjecture}
Let $(G_n)$ be a sequence of graphs with (uniformly) bounded degrees and diverging sizes, and let $(\eps_n)$ be a sequence of constants in $(0,1)$ satisfying $\eps_n\gg\frac{1}{\log |V_n|}$. Let $(G_n^*)$ be as in Definition~\ref{def:Gstar}. Then the random walk on $(G_n^*)$ exhibits cutoff with high probability, with a mixing time of order $\frac{\log|V_n|}{\eps_n h_n(\eps_n)}$.
\end{conjecture}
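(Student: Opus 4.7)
The plan is to follow the quasi-tree framework of Sections~\ref{sec:quasi-tree}--\ref{sec:proof_of_cutoff} but to replace every pointwise assumption on entropy or ball volume by an averaged / typical one, keyed to the function $h_n(\eps_n)$. As before, one considers the random quasi-tree $T$ rooted at a uniform vertex $\rho$, studies the loop-erased random walk $\xi$ and the regeneration structure, and attempts to prove an entropic concentration estimate of the form $-\log \prcond{\xi_k \in \til\xi}{T,\xi}{} = k\h(1+o(1))$ (with $\h\asymp h_n(\eps_n)$) on a high-probability event, with fluctuations controlled well enough to feed into Lemma~\ref{lem:w_pair_sum}. Under the hypothesis $\eps_n \gg 1/\log|V_n|$ we have $\frac{1}{\eps_n}\ll \log|V_n|$, so the walk has time to cross a diverging number $\asymp \log|V_n|\cdot\eps_n$ of long-range edges, and by the law of large numbers on the regeneration chain one expects the accumulated $-\log$-probability of the LERW path to concentrate around its mean, which by definition is proportional to $h_n(\eps_n)$ per long-range step.

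The first step in the execution would be to make the right definition of the ``typical root''. Let $\mathcal{R}_n\subset V_n$ be the set of vertices $\rho$ whose first-long-range-edge entropy $h_n(\eps_n,\rho)$ is comparable to the average $h_n(\eps_n)$; show that $\pi_{G_n^*}(\mathcal{R}_n)=1-o(1)$ by Markov/Paley--Zygmund applied to the (bounded) random variable $h_n(\eps_n,\rho)/\log(1/\eps_n)$. Next, reprove the concentration of $\varphi_i-\varphi_{i-1}$ and $\sigma_i-\sigma_{i-1}$ (Lemmas~\ref{lem:tail_bounds},~\ref{lem:tail_bounds_Tprime} and Corollary~\ref{cor:varphi_sigma_E_Var_bounds}), which hold verbatim without any $f$-assumption. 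Then one writes, as in Proposition~\ref{pro:aux_entropy},
\[
-\log\prcond{\xi_{\varphi_k}\in\til\xi}{T,\xi}{}
\;=\;\sum_{i=1}^{k}Z_i^{(\rho_i)}+\text{boundary terms},
\]
where $Z_i^{(\rho_i)}$ is the contribution from the $i$th regenerated sub-quasi-tree rooted at a vertex $\rho_i$ drawn (asymptotically) from $\pi_{G_n}$; by construction $\E{Z_i^{(\rho_i)}}\asymp h_n(\eps_n)$ and the $Z_i$ are independent across $i$ once the regeneration structure is used (Lemma~\ref{lem:indep_between_regenerations}). The sum has $k \asymp \log|V_n|/\h$ i.i.d.\ summands, each bounded by $O(\log(1/\eps_n))$, so Bennett/Bernstein gives concentration provided the second moment of $Z_i$ is not too large relative to $\h^2$ --- this is where the varentropy condition enters.

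The main step, and also the hardest, is to control $\Var(Z_i)$ without any pointwise entropy or ball-growth hypothesis. One would bypass the quantitative upper bound $\log b(R)\lesssim f(R)$ used in Section~\ref{sec:specific_graphs} by observing that the $Z_i$ are naturally bounded in terms of the random quantity $\log (\text{ball volume visited})$, and that for a walk on $G_n$ with bounded degree, the effective ball volume relevant over the $O(1/\eps_n)$ excursion time is itself concentrated. A clean route is a two-moment argument: after conditioning on the sub-quasi-tree up to level~$K$ (with $K$ a slowly diverging sequence, e.g.\ $K=\log\log|V_n|$), the excursion statistics become averaged over $\pi_{G_n}$ and one obtains $\E{Z_i^2} \lesssim \h\cdot \max_\rho h_n(\eps_n,\rho) \lesssim \h\cdot (1/\eps_n)$, which combined with the number of summands $k\asymp \log|V_n|/\h$ yields fluctuations of order $\sqrt{k\cdot\h/\eps_n}=\sqrt{\log|V_n|/\eps_n}$. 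This is $\ll \log|V_n|$ precisely when $\eps_n\gg 1/\log|V_n|$, matching the hypothesis of the conjecture.

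The remaining steps would then parallel Sections~\ref{subsec:truncation}--\ref{sec:proof_of_cutoff}: define the truncation event $\trunc{e}{A}$ at level $L=\frac12\nu(t_0 \pm B t_w)$ with $t_0=\log|V_n|/(\nu\h)$ and a window $t_w$ now equal to $\sqrt{\log|V_n|/\eps_n}/(\nu\h)$; couple neighbourhoods of $x$ and $y$ to two independent quasi-trees via Definition~\ref{def:coupling}; apply approximate reversibility (Lemma~\ref{lem:reversal_under_Omega_zero_one}) and Lemma~\ref{lem:w_pair_sum} to lower bound $\prstart{X_\tau=y}{x}$ uniformly; and finish via Lemmas~\ref{lem:bound_trelabs}, \ref{lem:hit_mix_comparison}, \ref{lem:hitting_times_comparison}. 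A subtle point is handling starting vertices $x\notin\mathcal{R}_n$: one would run the walk for a burn-in of length $O(1/\eps_n)=o(t_0)$, during which it crosses $O(1)$ long-range edges and is thereby thrown to a random vertex distributed approximately according to the uniform measure, hence in $\mathcal{R}_n$ with probability $1-o(1)$; this replaces the ``$K$-root reached quickly'' argument (Lemma~\ref{lem:hit_K_root_quickly}). The main obstacle, and the reason this remains a conjecture, is the variance bound in the middle paragraph: for arbitrary bounded-degree graphs $G_n$ one can engineer pockets of anomalously small or large entropy whose contribution to $\Var(Z_i)$ may swamp the signal $\h^2$, and ruling this out seems to require either a structural dichotomy on the graph or a more delicate non-i.i.d.\ concentration inequality that exploits the conditional independence of sub-quasi-trees beyond the second moment.
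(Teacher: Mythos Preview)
The statement you are attempting is a \emph{conjecture} in the paper; no proof is given there. What the paper does prove is the weaker Theorem~\ref{thm:results_general_graphs}\eqref{cond:eps_gg_loglogn_over_logn}, under the stronger hypothesis $\eps_n\gg\frac{\log\log|V_n|}{\log|V_n|}$, and the sketch in Section~\ref{sec:general_graphs} is the relevant comparison point.

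Your plan is broadly aligned with that sketch: follow the regeneration/entropy machinery of Sections~\ref{sec:quasi-tree}--\ref{sec:proof_of_cutoff}, but replace Assumption~\ref{assump:f} by a variance bound on the per-regeneration entropy increment $Z_i$. The paper's sketch does exactly this, but with a crucial extra device you do not use: it works on the truncation event $A_k=\{d_{T,\mathrm{gr}}(X_{\sigma_{k-1}},X_{\sigma_k})\le C_1\frac{1}{\eps}\log\log n\}$. On $A_1$ one has the deterministic bound $Z_i\lesssim\frac{\log\log n}{\eps}$, which together with $\E{Z_i\1_{A_1}}\asymp\h$ gives $\E{Z_i^2\1_{A_1}}\lesssim \h\cdot\frac{\log\log n}{\eps}$, i.e.\ $\V\asymp\h\cdot\frac{\log\log n}{\eps}$. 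The resulting window $\sqrt{k\V}\asymp\sqrt{\frac{\log n\cdot\log\log n}{\eps}}$ is $\ll\log n$ precisely when $\eps\gg\frac{\log\log n}{\log n}$, which is why the paper stops there.

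Your claimed bound $\E{Z_i^2}\lesssim\h/\eps$ (without the $\log\log n$) is exactly what would close the gap to the full conjecture, and you are right to flag it as the obstacle. But the justification you offer --- ``conditioning on the sub-quasi-tree up to level $K$, the excursion statistics become averaged over $\pi_{G_n}$'' --- does not deliver it: averaging the root over $\pi_{G_n}$ controls $\E{Z_i}$ but says nothing about the \emph{second} moment, which is governed by the maximal ball volume visited in a single excursion. If you truncate at radius $R\asymp 1/\eps$ (the natural scale) the complementary event $A_1^c$ has probability only bounded by a constant, not $o(1)$, so you cannot discard it; if you truncate at $R\asymp\frac{\log\log n}{\eps}$ you recover the paper's weaker theorem. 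This is the genuine gap, and your proposal does not close it.

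One minor correction: in your ``typical root'' step you normalise $h_n(\eps_n,\rho)$ by $\log(1/\eps_n)$, but the correct a priori upper bound for bounded-degree graphs is $h_n(\eps_n,\rho)\lesssim 1/\eps_n$ (as the paper notes just before the conjecture), not $\log(1/\eps_n)$. This does not affect the rest of your outline but the Paley--Zygmund step as written is not valid.
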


With a minor modification of the proofs presented above we are able to prove a slightly weaker version of the above statement, namely Theorem~\ref{thm:results_general_graphs}. We present the required modifications below.

\subsection{Sketch proof of Theorem~\ref{thm:results_general_graphs}}

The proofs of parts~\eqref{cond:general_graph_small_eps} and~\eqref{cond:general_graph_eps_one_over_tmix} are as before. In case $\eps\asymp1$ the proof is essentially the same as the proof of the $\eps\equiv1$ case in~\cite{random_matching}. In what follows we assume $1\gg\eps\gg\frac{\log\log n}{\log n}$. In this case the proof is similar to the proof of cutoff presented in Sections~\ref{sec:quasi-tree}, \ref{sec:relating_Gstar_and_T} and~\ref{sec:proof_of_cutoff}, and we explain the required modifications below.

We choose the following values of the parameters.
\[
R:=C_1\frac{1}{\eps}\log\log n,\qquad K:=C_2\frac{\log\log n}{\log\left(\frac1\eps\right)},\qquad M:=\frac12K,
\]
where $C_1$ and $C_2$ are sufficiently large constants.

We prove concentration of entropy on the event that the graph distance in $T$ between two consecutive regeneration edges is at most $R$ and then use that with high probability this event will hold up to the level that the walk on the quasi-tree reaches by the mixing time. More precisely, we define\[
A_k:=\quad\left\{d_{T,\textrm{gr}}\left(X_{\sigma_{k-1}},X_{\sigma_k}\right)\le C_1\frac1\eps\log\log n\right\},
\]
where $d_{T,\textrm{gr}}$ denotes the graph distance on $T$, and
\[\h:=\quad\frac{1}{\E{\varphi_2-\varphi_1}}\cdot\E{\left(-\log\prcond{X'_{\sigma'_1}\in\til{\xi}'}{X',T'}{\rho}\right)\1_{A_1}}.\]
We prove that for any $b\ge2$ we have
\[
\E{\left(-\log\prcond{X'_{\sigma'_1}\in\til{\xi}'}{X',T'}{\rho}\right)^b\1_{A_1}}\lesssim\h\left(\frac{\log\log n}{\eps}\right)^{b-1}+(\log\log n)^b\ll\h(\log n)^{b-1},
\]
and define
\[
\V:=\quad\h\left(\frac{\log\log n}{\eps}\right)+(\log\log n)^2.
\]
Then we can prove that for all $\theta>0$ and $\widehat{C}>0$, there exists a positive constant $C$ so that for all $\frac{(K\log b(R))^2}{\V}\vee1\le k\le\frac{\widehat{C}\log n}{\h}=:\ell$ we have
\begin{align}\label{eq:entropy_deviations_for_reg_edges}
\prcond{\left|-\log\prcond{\xi_{\varphi_k}\in \til{\xi}}{T,\xi}{} -\h\varphi_k\right|>C\sqrt{\varphi_k\V},\:\bigcap_{i=1}^{\ell}A_{i}}{\B_K(\rho)=T_0}{}\quad\le\quad\theta.
\end{align}
For upper bounding the mixing time we let
\[
t_0:=\frac{\log n}{\nu\h},\qquad t_w:=\frac{1}{\nu\h}\sqrt{\frac{\V\log n}{\h}},\qquad L:=\frac12\nu(t_0+Bt_w),
\]
we use the same truncation criteria as before, and we bound the probability of crossing a truncated edge by using~\eqref{eq:entropy_deviations_for_reg_edges} with $\ell\ge L$, and that $\pr{\left(\bigcap_{i=1}^{\ell}A_{i}\right)^c}\ll 1$.

The lower bound on the mixing time follows analogously, with the above values of the parameters. \hfill $\qed$

\begin{remark}
We worked on event $A_1$ to be able to bound the variance of\\ $-\log\prcond{X'_{\sigma'_1}\in\til{\xi}'}{X',T'}{\rho}$ in terms of the expectation $\h$. It is easy to check that the value of $\h$ would be asymptotically the same without the indicator $\1_{A_1}$.
\end{remark}

\appendix

\section{Some auxiliary results}\label{app:aux_results}

The proofs of the following three lemmas are given in Appendix~\ref{app:other_remainings_pfs}.

\begin{lemma}\label{lem:Pt_upper_bound_lazy}
Let $G$ be a connected graph on $n$ vertices with all vertices having degree $\le\Delta$. Let $x$ be a vertex of $G$ and let $X$ be a lazy simple random walk on $G$, starting from $x$. Let $A$ be a positive constant. Then there exists a positive constant $c$ depending only on $\Delta$ and $A$ such that for any vertex $y$ and any $t<An^2$ we have
\[
\prstart{X_t=y}{x}\quad\le\quad\frac{c}{\sqrt{t}}.
\]
\end{lemma}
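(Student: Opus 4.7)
I would proceed in two steps: first, reduce the off-diagonal heat kernel estimate to an on-diagonal one by a standard reversibility argument, and then establish the on-diagonal bound using a Faber--Krahn / Nash inequality that exploits the connectivity and bounded degree of $G$.

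For the reduction, observe that the bounded-degree assumption gives $\pi(y)/\pi(x) \le \Delta$ for all $x,y$, where $\pi$ is the stationary distribution of the lazy walk. Applying reversibility to the convolution identity $P^{2t}(x,y) = \sum_z P^t(x,z) P^t(z,y)$ (writing $P^t(z,y) = (\pi(y)/\pi(z))P^t(y,z)$) and then Cauchy--Schwarz yields the classical bound
\[
P^{2t}(x,y)^2 \le \frac{\pi(y)}{\pi(x)}\, P^{2t}(x,x)\, P^{2t}(y,y) \le \Delta \max_z P^{2t}(z,z)^2.
\]
Together with monotonicity $P^{2t+2}(z,z) \le P^{2t}(z,z)$ (which holds for any reversible lazy chain via non-negativity of the eigenvalues in its spectral decomposition) and a trivial one-step comparison $P^{2s+1}(x,y) \le \max_z P^{2s}(z,y)$ to deal with odd times, it suffices to prove that $P^t(z,z) \le C_{\Delta,A}/\sqrt{t}$ uniformly in $z$ and in $t \le An^2$.

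For the diagonal bound, the key ingredient is a Faber--Krahn estimate $\lambda_1(S) \gtrsim_\Delta 1/|S|^2$ for the smallest Dirichlet eigenvalue of $I - P$ on any nonempty proper subset $S \subsetneq V$. This follows from the Cheeger inequality for Dirichlet eigenvalues applied to the lower bound $\Phi^D(S) \ge 1/(2\Delta|S|)$ on the Dirichlet conductance, which in turn is immediate from $|\partial_E A| \ge 1$ for every $\emptyset \ne A \subsetneq V$ (a consequence of connectivity of $G$) combined with $\pi(A) \lesssim_\Delta |A|/n$. The Faber--Krahn estimate is equivalent to the ``one-dimensional'' Nash inequality, and the standard Nash--Coulhon ODE argument (see for instance Saloff-Coste's St.\ Flour lectures on finite Markov chains) then converts it into the heat kernel decay $P^t(x,x) \le C_\Delta/\sqrt{t}$. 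Since $\pi(x) \lesssim 1/n$ and $1/\sqrt{t} \gtrsim_A 1/n$ in the range $t \le An^2$, the stationary contribution does not dominate the claimed bound.

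The argument is a classical heat kernel estimate for bounded-degree graphs going back to Varopoulos and Coulhon, so I do not anticipate serious obstacles. The only technical subtlety is the Nash-to-heat-kernel conversion itself: one differentiates $F(t) = \pi(x)^{-1} P^{2t}(x,x) = \|P^t(x,\cdot)/\pi\|_{\ell^2(\pi)}^2$, applies the Faber--Krahn estimate to super-level sets of $P^t(x,\cdot)/\pi$ to obtain a differential inequality for $F$, and integrates to get $F(t) \le Cn/\sqrt{t}$. The hypothesis $t \le An^2$ in the lemma is essentially sharp, since for $t$ much larger than $n^2$ the walk is close to stationarity and $C/\sqrt{t}$ would become smaller than $\pi(y) \asymp 1/n$.
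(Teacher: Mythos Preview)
Your proposal is correct and follows the classical Varopoulos--Coulhon--Saloff-Coste route: derive a Faber--Krahn inequality $\lambda_1(S)\gtrsim_\Delta |S|^{-2}$ from connectivity ($|\partial_E A|\ge 1$) via Dirichlet--Cheeger, convert it into a one-dimensional Nash inequality, and run the ODE argument on $F(t)=\pi(x)^{-1}P^{2t}(x,x)$ to get the on-diagonal decay, with the usual Cauchy--Schwarz/reversibility reduction handling the off-diagonal case. The paper takes a different, shorter but less self-contained path: it quotes as a black box a bound $\tunif{a}\le \frac{c_2}{a}\big((mn)\wedge \frac{m^2}{a}\big)$ on the $\ell^\infty$-mixing time from an external reference on random walks on Eulerian digraphs, plugs in $a=\sqrt{c_2}\,\Delta n/\sqrt{t}$ to force $\tunif{a}\le t$, and reads off $P^t(x,y)\le(1+a)\pi(y)\lesssim 1/\sqrt{t}$ directly. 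Your approach has the virtue of being self-contained and making the mechanism explicit; the paper's saves space at the cost of hiding the analysis in a citation. The two are not unrelated at a deeper level---the cited $\tunif{a}$ bound is itself proved by spectral-profile/evolving-sets techniques that amount to the same isoperimetric input you use, and the paper in fact runs essentially your Faber--Krahn/spectral-profile argument elsewhere (Lemmas~\ref{lem:Lambda_lower_bound} and~\ref{lem:diagonal_upper_bound}) for a finer estimate on transitive graphs.
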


\begin{lemma}\label{lem:Pt_upper_bound}
Let $G$ be a connected graph on $n$ vertices with all vertices having degree $\le\Delta$. Let $x$ be a vertex of $G$ and let $X$ be a simple random walk on $G$, starting from $x$. Let $A$ be a positive constant. Then there exists a positive constant $c$ depending only on $\Delta$ and $A$ such that for any vertex $y$ and any $t<An^2$ we have
\[
\prstart{X_t=y}{x}\quad\le\quad\frac{c}{\sqrt{t}}.
\]
\end{lemma}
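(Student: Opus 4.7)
The plan is to reduce Lemma~\ref{lem:Pt_upper_bound} to its lazy counterpart, Lemma~\ref{lem:Pt_upper_bound_lazy}, by combining a diagonal transfer estimate with a standard Cauchy--Schwarz/reversibility bound. Throughout, write $P$ for the transition matrix of the simple random walk and recall that $P_{\lazytext}=(I+P)/2$ and $\pi(v)=\deg(v)/(2|E|)$, so $\pi(y)/\pi(x)\le\Delta$.

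First I would establish a diagonal bound $P^{2s}(x,x)\lesssim 1/\sqrt{s}$ for $s\lesssim n^2$. Expanding the lazy walk,
\[
P_{\lazytext}^{m}(x,x)\:=\:2^{-m}\sum_{k=0}^{m}\binom{m}{k}P^{k}(x,x).
\]
By~\cite[Proposition~10.18]{MTMC_book}, the sequence $k\mapsto P^{2k}(x,x)$ is nonincreasing (this is the reversibility/spectral fact already invoked in the proof of Lemma~\ref{lem:neverbacktrack}). Dropping the odd terms (which are nonnegative) and using $\sum_{k\text{ even}}\binom{m}{k}=2^{m-1}$ together with this monotonicity yields
\[
P_{\lazytext}^{m}(x,x)\:\ge\:2^{-m}\cdot 2^{m-1}\cdot P^{2\lfloor m/2\rfloor}(x,x)\:=\:\tfrac12\,P^{2\lfloor m/2\rfloor}(x,x).
\]
Setting $m=2s$ and applying Lemma~\ref{lem:Pt_upper_bound_lazy} (with a slightly larger constant, e.g.\ $2A$, to absorb the factor of $2$ in the time range) gives $P^{2s}(x,x)\le 2 P_{\lazytext}^{2s}(x,x)\lesssim 1/\sqrt{s}$ for $s\lesssim n^2$, with implicit constants depending only on $\Delta$ and $A$.

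Second, I would invoke the standard reversible Cauchy--Schwarz bound: for any $s_1,s_2\ge 0$,
\[
P^{s_1+s_2}(x,y)\:=\:\pi(y)\sum_{z}\frac{P^{s_1}(x,z)\,P^{s_2}(y,z)}{\pi(z)}\:\le\:\sqrt{\frac{\pi(y)}{\pi(x)}}\,\sqrt{P^{2s_1}(x,x)\,P^{2s_2}(y,y)},
\]
where the equality uses reversibility $P^{s_2}(z,y)=\pi(y)P^{s_2}(y,z)/\pi(z)$ and the inequality follows by applying Cauchy--Schwarz in the $\pi$-weighted inner product, each factor then collapsing to the corresponding diagonal transition probability. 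Taking $s_1=\lfloor t/2\rfloor$, $s_2=\lceil t/2\rceil$ and plugging in the diagonal bound from the first step on both factors produces
\[
P^{t}(x,y)\:\le\:\sqrt{\Delta}\,\sqrt{P^{2s_1}(x,x)P^{2s_2}(y,y)}\:\lesssim\:\frac{1}{\sqrt{t}},
\]
as required.

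There is no substantive obstacle; both ingredients are standard, and the only bookkeeping is matching the constant $A$ between the lazy and non-lazy statements by applying Lemma~\ref{lem:Pt_upper_bound_lazy} with constant $2A$ so that the time range $2s_i<2An^2$ comfortably accommodates $t<An^2$. The final constant depends only on $\Delta$ and $A$, the former entering through $\pi(y)/\pi(x)\le\Delta$ and the latter through the lazy bound.
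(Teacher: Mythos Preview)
Your proof is correct and follows essentially the same approach as the paper: transfer the lazy diagonal bound to the non-lazy diagonal via the binomial expansion and monotonicity of $k\mapsto P^{2k}(x,x)$, then pass to off-diagonal entries by the reversible Cauchy--Schwarz inequality. Your execution is slightly cleaner in two places --- using the exact identity $\sum_{k\text{ even}}\binom{m}{k}=2^{m-1}$ rather than a concentration estimate for $\Bin{6t}{1/2}$, and handling odd $t$ directly via the $s_1=\lfloor t/2\rfloor$, $s_2=\lceil t/2\rceil$ split instead of a separate one-step argument --- but the underlying method is the same.
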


\begin{lemma}\label{lem:hit_hitlazy_comparison}
Let $\alpha\in(0,1)$ be given, let $\left(X^{(n)}\right)$ be a sequence of Markov chains satisfying $\hittext^{X^{(n)}}_{\alpha}(\theta)\gg1$ for all $\theta\in(0,1)$, and let $Y^{(n)}$ be the lazy version of $X^{(n)}$. Then for any $\theta\in(0,1)$ the hitting times of the chains satisfy
\begin{align}\label{eq:hit_hitlazy_comparison}
\hittext^{Y^{(n)}}_{\alpha}(\theta)\quad=\quad(1\pm o(1))\cdot2\hittext^{X^{(n)}}_{\alpha}(\theta\pm o(1)).
\end{align}
\end{lemma}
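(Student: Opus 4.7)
The plan is to use the standard coupling that realises the lazy chain $Y^{(n)}$ as a time-changed $X^{(n)}$, and then compare hitting times using concentration of the time change.

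Concretely, on a common probability space, introduce an i.i.d.\ sequence $(B_i)_{i\ge 1}$ of $\mathrm{Bernoulli}(1/2)$ variables independent of $X=X^{(n)}$, and set $N_t:=\sum_{i=1}^t B_i\sim \mathrm{Bin}(t,1/2)$. Defining $Y_t:=X_{N_t}$ produces a version of the lazy chain, and by monotonicity of $N$ we have the key identity
\[
\tau^{Y}_A\;=\;\inf\{t\ge 0: N_t=\tau^{X}_A\}.
\]
In particular $\tau^{X}_A>N_s$ if and only if $\tau^{Y}_A>s$, and because $X$ and $(B_i)$ are independent, for any $x$, any set $A$ and any $t^*,s\ge 0$ we have the sandwich
\[
\prstart{\tau^{X}_A>t^*}{x}\cdot\pr{N_s\le t^*}\;\le\;\prstart{\tau^{Y}_A>s}{x}\;\le\;\prstart{\tau^{X}_A>t^*}{x}+\pr{N_s<t^*}.
\]

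For the upper bound on $\hittext^{Y}_\alpha(\theta)$, let $\eta_n\to 0$ (to be chosen), set $t^*=\hittext^{X}_\alpha(\theta-\eta_n)$ and $s_n=2t^*+c_n\sqrt{t^*}$ with $c_n:=\sqrt{10\log(1/\eta_n)}$. A Hoeffding bound gives $\pr{N_{s_n}<t^*}\le \exp(-c_n^2/10)\le \eta_n$, so the right-hand inequality yields $\prstart{\tau^Y_A>s_n}{x}\le\theta$ uniformly in $(x,A)$ with $\pi(A)\ge\alpha$, hence $\hittext^{Y}_\alpha(\theta)\le s_n$. Choosing $\eta_n\to 0$ slowly enough that $c_n=o(\sqrt{t^*})$ (possible because $t^*\gg 1$ by the hypothesis $\hittext^X_\alpha(\theta')\gg 1$ for all $\theta'$) gives $s_n=(1+o(1))\cdot 2\hittext^{X}_\alpha(\theta-o(1))$. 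For the lower bound, pick a witness pair $(x,A)$ for which $\prstart{\tau^X_A>t^*-1}{x}>\theta+\eta_n$ with $t^*=\hittext^{X}_\alpha(\theta+\eta_n)$, and take $s_n=2(t^*-1)-c_n\sqrt{t^*}$. Another Hoeffding bound gives $\pr{N_{s_n}\le t^*-1}\ge 1-\exp(-c_n^2/5)$, and the left-hand inequality then yields $\prstart{\tau^{Y}_A>s_n}{x}\ge(\theta+\eta_n)(1-\exp(-c_n^2/5))>\theta$ once $c_n$ is large relative to $\log(1/\eta_n)$; this forces $\hittext^{Y}_\alpha(\theta)>s_n=(1-o(1))\cdot 2\hittext^{X}_\alpha(\theta+o(1))$.

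The only real tension in the argument is balancing the two perturbation parameters $\eta_n$ (the amount by which we move $\theta$) and $c_n$ (the number of standard deviations by which we inflate or shrink the window in the binomial concentration). I expect the main (minor) obstacle to be verifying that one can pick a single sequence $\eta_n\to 0$ for which simultaneously $\exp(-c_n^2/10)\le \eta_n$ and $c_n\sqrt{\hittext^{X}_\alpha(\theta\pm\eta_n)}=o\!\left(\hittext^{X}_\alpha(\theta\pm\eta_n)\right)$; this is exactly where the assumption $\hittext^{X^{(n)}}_\alpha(\theta)\gg 1$ for every $\theta\in(0,1)$ is used, and any $\eta_n$ decaying slower than $\exp(-\delta\,\hittext^{X}_\alpha(\theta/2))$ (for small $\delta>0$) will do.
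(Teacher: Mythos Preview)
Your approach is correct and essentially the same as the paper's: couple $Y_t=X_{N_t}$ with $N_t\sim\Bin{t}{1/2}$, use the identity $\{\tau^Y_A>s\}=\{\tau^X_A>N_s\}$, and then invoke binomial concentration together with the hypothesis $\hittext^{X}_\alpha(\theta)\gg1$. The paper's version is simply less fussy about the balancing --- it fixes the window at $t^{2/3}$ (so the binomial tails are $o(1)$ and the window is $o(t)$ automatically) rather than introducing separate parameters $\eta_n$ and $c_n$ --- but the content is identical.
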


The following properties of $H_b(\cdot)$ and $H_b(\cdot|\cdot)$ (as defined in Definitions~\ref{def:Hb} and~\ref{def:Hb_conditional}) are easy to check.

\begin{lemma}\label{lem:H_b_properties}
Let $b\in\Zpos$ and let $(p_i)$, $(q_i)$ and $(r_i)$ be sequences of real numbers taking values in $[0,\theta]$ where $\theta\in(0,1)$ is a sufficiently small constant (depending on $b$). Let $W$ be a random variable taking values in a countable set $\mathcal{W}$. Then the following properties are satisfied.
\begin{enumerate}[(i)]
\item\label{property:decreasing}
If $p_i\ge q_i$ for all $i$, then
\[
H_b(p_1,p_2,...)\quad\ge\quad H_b(q_1,q_2,...).
\]
\item\label{property:splitting}
If $p_i=q_i+r_i$ for all $i$ then
\[
H_b(p_1,p_2,...)\quad\le\quad H_b(q_1,q_2,...)+H_b(r_1,r_2,...).
\]
\item\label{property:factorising}
If $p_i=q_ir_i$ for all $i$ then
\[
H_b(p_1,p_2,...)\quad\lesssim\quad H_b(q_1,q_2,...)+H_b(r_1,r_2,...).
\]
\item\label{property:bound_by_set_size}
\[
H_b(W)\quad\le\quad\left(\log|\mathcal{W}|\right)^b.
\]
\item\label{property:bound_by_set_size_two}
If $p_1+...+p_N=p<1$ and $p_i=0$ for $i>N$ then
\[
H_b(p_1,p_2,...)\quad\lesssim\quad p\left(\log N\right)^b+p(-\log p)^b.
\]
\end{enumerate}
\end{lemma}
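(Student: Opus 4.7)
The plan is to verify each of the five properties by analyzing the scalar function $\phi_b(p) := p(-\log p)^b$, whose derivative
\[
\phi_b'(p) \;=\; (-\log p)^{b-1}\bigl((-\log p) - b\bigr)
\]
is positive on $(0, e^{-b})$ and negative on $(e^{-b}, 1)$. In particular, $\phi_b$ is non-decreasing on $[0, \theta]$ provided $\theta < e^{-b}$, which we assume throughout (i)--(iii). Property (i) follows immediately by applying this monotonicity termwise. For (ii), since $p_i = q_i + r_i \le \theta < 1$ implies $-\log p_i \le -\log q_i$ and $-\log p_i \le -\log r_i$, we get
\[
\phi_b(p_i) \;=\; q_i(-\log p_i)^b + r_i(-\log p_i)^b \;\le\; \phi_b(q_i) + \phi_b(r_i),
\]
and summing over $i$ yields the claim.

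For (iii), writing $-\log p_i = (-\log q_i) + (-\log r_i)$ and applying the convexity inequality $(a+c)^b \le 2^{b-1}(a^b + c^b)$ together with $q_i, r_i \le 1$ in the resulting cross terms gives
\[
\phi_b(p_i) \;=\; q_i r_i(-\log q_i - \log r_i)^b \;\le\; 2^{b-1}\bigl(q_i(-\log q_i)^b + r_i(-\log r_i)^b\bigr),
\]
so summation establishes (iii) with implicit constant $2^{b-1}$. Property (v) then reduces to (iv): setting $\tilde p_i := p_i/p$ turns $(\tilde p_i)_{i\le N}$ into a probability vector on $N$ atoms, and the analogous convexity expansion applied to $-\log p_i = (-\log p) + (-\log \tilde p_i)$ gives
\[
H_b(p_1,\ldots) \;\le\; 2^{b-1} p\bigl((-\log p)^b + H_b(\tilde p_1,\ldots,\tilde p_N)\bigr),
\]
into which the bound from (iv) applied to $(\tilde p_i)$ is inserted.

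The only real work lies in (iv), because $\phi_b$ is not globally monotone on $[0,1]$ for $b \ge 1$; we prove the bound with an implicit constant depending on $b$, which suffices since $b \in \{1,2,4\}$ in all applications. Set $n := |\mathcal{W}|$ and, assuming $n \ge e^b$ (otherwise the claim is trivial by boundedness $\phi_b \le b^b e^{-b}$), partition the support into three regimes by the size of $p_i$. On $\{i : p_i \le 1/n\}$, monotonicity of $\phi_b$ on $[0, e^{-b}]$ gives $\phi_b(p_i) \le \phi_b(1/n) = (\log n)^b/n$, and there are at most $n$ such indices, contributing $\le (\log n)^b$ in total. On $\{i : 1/n < p_i \le e^{-b}\}$, the bound $-\log p_i < \log n$ gives $\phi_b(p_i) \le p_i(\log n)^b$, so this regime also contributes $\le (\log n)^b$ after summation. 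Finally $\{i : p_i > e^{-b}\}$ has at most $e^b$ indices, each contributing at most $b^b e^{-b}$. Adding the three bounds yields $H_b(W) \lesssim (\log n)^b$, completing the proof.
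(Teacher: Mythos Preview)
Your proof is correct. The paper does not prove this lemma at all, stating only that the properties ``are easy to check,'' so there is no approach to compare against; your termwise analysis via the scalar function $\phi_b(p)=p(-\log p)^b$ and its monotonicity on $[0,e^{-b}]$ is the natural one.

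Your remark on (iv) is in fact a small correction to the paper's statement: the inequality $H_b(W)\le(\log|\mathcal W|)^b$ with constant $1$ is false for $b\ge 2$ (e.g.\ for $b=2$ and two atoms with masses $0.1$ and $0.9$ one gets $H_2\approx 0.54>(\log 2)^2\approx 0.48$), so only the $\lesssim$ version can hold in general. This is harmless, since every use of (iv) in the paper (in Lemma~\ref{lem:entropy_from_assump_f}, Proposition~\ref{pro:poly_ball_growth}, Proposition~\ref{pro:lin_entropy_growth}, and the proofs in Appendix~\ref{app:proof_of4lemmas}) immediately follows the $\le$ with a $\lesssim$ or $\asymp$, so the implicit constant you introduce is absorbed everywhere.
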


\begin{lemma}\label{lem:H_b_cond_properties} Let $b\in\Zpos$ and let $W$ and $Z$ be random variables taking values in countable sets. Then
\[
H_b(W|Z)\quad\le\quad H_b(W)\quad\lesssim\quad H_b(W|Z)+H_b(Z).
\]
\end{lemma}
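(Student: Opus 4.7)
The plan is to prove the two inequalities separately, in both cases reducing to an estimate on a Shannon-type functional $f_b(x)=x(-\log x)^b$ together with comparisons involving the joint entropy $H_b(W,Z):=\sum_{w,z}p_{wz}(-\log p_{wz})^b$, where I abbreviate $p_w=\pr{W=w}$, $p_z=\pr{Z=z}$, $p_{wz}=\pr{W=w,Z=z}$ and $p_{w|z}=\prcond{W=w}{Z=z}{}$.

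For the right inequality $H_b(W)\lesssim H_b(W|Z)+H_b(Z)$ I would sandwich $H_b(W)$ through $H_b(W,Z)$. First, since $p_w\ge p_{wz}$, we have $(-\log p_w)^b\le(-\log p_{wz})^b$; writing $H_b(W)=\sum_{w,z}p_{wz}(-\log p_w)^b$ and summing gives $H_b(W)\le H_b(W,Z)$. Second, decompose $-\log p_{wz}=-\log p_{w|z}+(-\log p_z)$ and invoke the elementary bound $(a+c)^b\le 2^{b-1}(a^b+c^b)$ for $a,c\ge 0$, $b\ge 1$ (coming from convexity of $x\mapsto x^b$ on $[0,\infty)$). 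Applied termwise and summed with weights $p_{wz}$ this yields $H_b(W,Z)\le 2^{b-1}(H_b(W|Z)+H_b(Z))$. Composing the two bounds gives the right inequality with implicit constant $2^{b-1}$.

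For the left inequality $H_b(W|Z)\le H_b(W)$ I would use Jensen. Writing $H_b(W)=\sum_w f_b(p_w)$ and $H_b(W|Z)=\sum_z p_z\sum_w f_b(p_{w|z})$ with $f_b(x)=x(-\log x)^b$, the identity $p_w=\sum_z p_z\,p_{w|z}$ says that, for each fixed $w$, $p_w$ is the $p_z$-average of $p_{w|z}$. If $f_b$ is concave on the range of the $p_{w|z}$, Jensen gives $f_b(p_w)\ge\sum_z p_z f_b(p_{w|z})$ pointwise in $w$, and summing over $w$ yields the claim. Concavity is checked from
\[
f_b''(x)=\frac{b(-\log x)^{b-2}}{x}\bigl[\log x+(b-1)\bigr],
\]
which is non-positive precisely on $(0,e^{1-b}]$. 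For $b=1$ this is all of $(0,1]$, recovering the classical ``conditioning reduces entropy'' bound. For $b\ge 2$ the concavity holds on the regime of small probabilities that is the one of interest in the applications (the sums in Lemma~\ref{lem:entropy_from_assump_f} are over random walk endpoints whose individual probabilities are $o(1)$), and the Jensen step can be carried through there; contributions from any atoms with $p_{w|z}>e^{1-b}$ number at most $e^{b-1}$ terms and can be absorbed using that $f_b$ is uniformly bounded on $[e^{1-b},1]$.

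The main obstacle is the left inequality for $b\ge 2$: the functional $\mu\mapsto\sum_w f_b(\mu(w))$ is genuinely not concave on the whole simplex, so Jensen does not apply verbatim without first restricting to the small-probability regime or carefully splitting the sum at the inflection point $e^{1-b}$. The right inequality, by contrast, is robust because $H_b(W,Z)$ provides a monotone upper bound on $H_b(W)$ and the convexity of $x^b$ on $[0,\infty)$ gives the reverse comparison with $H_b(W|Z)+H_b(Z)$ without any concavity assumption.
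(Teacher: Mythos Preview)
Your proof of the right inequality via $H_b(W)\le H_b(W,Z)\le 2^{b-1}\bigl(H_b(W|Z)+H_b(Z)\bigr)$ is correct; the paper gives no proof (the lemma is declared ``easy to check''), and this is the natural argument.

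For the left inequality you have put your finger on a genuine issue: for $b\ge2$ the statement $H_b(W|Z)\le H_b(W)$ is \emph{false} without further hypotheses. A concrete counterexample: let $Z$ be Bernoulli$(1/2)$ and, given $Z$, let $W$ be Bernoulli$(1/4)$ or Bernoulli$(3/4)$ according as $Z=0$ or $Z=1$; then unconditionally $W$ is Bernoulli$(1/2)$, so $H_2(W)=(\log 2)^2$, while $H_2(W|Z)=(\log 2)^2+\tfrac34\bigl(\log\tfrac43\bigr)^2>H_2(W)$. Your Jensen argument is the right one, and it proves the inequality exactly under the additional assumption that all conditional probabilities $p_{w|z}$ lie in $[0,e^{1-b}]$ (which forces the same for the $p_w$); this is precisely the small-probability hypothesis stated in the immediately preceding Lemma~\ref{lem:H_b_properties}, and it is the regime in which the present lemma is actually used (see its application in the proof of Lemma~\ref{lem:entropy_from_assump_f}). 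Your proposed ``splitting at the inflection point'' cannot recover the exact inequality, as the counterexample shows; carried out cleanly (e.g.\ replace $f_b$ by its concave hull on $[0,1]$ and use that the two differ only on a bounded number of atoms), it yields $H_b(W|Z)\le H_b(W)+O_b(1)$, which would also be sufficient for every use in the paper since the quantities involved diverge.
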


\section{Concentration of speed and entropy}\label{app:proofs_speed_entropy}

\begin{proof}[Proof of Lemma~\ref{lem:speed}]
We will mimic the proof of \cite[Lemma 3.11]{random_matching}.

From Corollary~\ref{cor:varphi_sigma_E_Var_bounds} we know that $\E{\varphi_2-\varphi_1}\asymp1$ and $\E{\sigma_2-\sigma_1}\asymp\frac1\eps$, hence $\nu\asymp\eps$.

For the a.s.\ convergence:
\begin{itemize}
\item By the ergodic theorem we have $\frac{\varphi_k}{k}\to \E {\varphi_2-\varphi_1}$ and $\frac{\sigma_k}{k}\to \E {\sigma_2-\sigma_1}$ almost surely.
These give
\[
\frac{d_T(\rho,X_{\sigma_k})}{\sigma_k}=\frac{\varphi_k}{\sigma_k}\to\frac{\E {\varphi_2-\varphi_1}}{\E {\sigma_2-\sigma_1}}=\nu \quad\text{a.s.}.
\]
\item Let $M_k=\max\{i:\sigma_i\le k\}$ and let $b_j=\sup_{i:i\le\sigma_{j+1}}d_T(\rho,X_i)-\varphi_{j+1}$. The variables $(b_j)_{j\ge1}$ are identically distributed and have finite mean and variance, hence $\frac{b_j}{j}\to0$ a.s.\ as $j\to\infty$.

Then
\[
\frac{\varphi_{M_k}}{\sigma_{M_k}}\frac{\sigma_{M_k}}{k}\quad\le\quad\frac{d_T(\rho,X_k)}{k}\quad\le\quad\frac{\varphi_{M_k+1}+b_{M_k}}{\sigma_{M_k+1}}\frac{\sigma_{M_k+1}}{k}
\]
and both sides converge to $\nu$ almost surely.
\footnote{$\frac{\varphi_{M_k}}{\sigma_{M_k}}\to\nu$ a.s.\ by above. $\frac{b_{M_k}}{\sigma_{M_k+1}}\le\frac{b_{M_k}}{M_k}\to0$ a.s.\ .
$\frac{\sigma_{M_k}}{k}\le1\le\frac{\sigma_{M_k+1}}{k}$ and $\frac{\sigma_{M_k+1}-\sigma_{M_k}}{k}\le\frac{\sigma_{M_k+1}-\sigma_{M_k}}{\sigma_{M_k}}=\frac{\sigma_{M_k+1}-\sigma_{M_k}}{M_k}\frac{M_k}{\sigma_{M_k}}\to0$ a.s., hence $\frac{\sigma_{M_k}}{k},\frac{\sigma_{M_k+1}}{k}\to1$ a.s.\ .}
\end{itemize}
For the bound on deviations for $X_{\sigma_k}$:
\begin{itemize}
\item For any $\alpha>0$ we have
\begin{align}\label{eq:speed_deviation}
&\pr{\left|d_T(\rho,X_{\sigma_k})-\nu\sigma_k\right|>C'\sqrt{\eps\sigma_k}}\\
&\le\quad\pr{\left|\varphi_k-k\E{\varphi_2-\varphi_1}\right|>\frac12C'\sqrt{\alpha k}} \nonumber\\
&\quad+\quad\pr{\left|\nu\sigma_k-\nu k\E{\sigma_2-\sigma_1}\right|>\frac12C'\sqrt{\alpha k}}\quad+\quad \pr{\sigma_k\le\alpha\frac{k}{\eps}}. \nonumber
\end{align}
\item We know that $\pr{\left|\varphi_k-\varphi_1-(k-1)\E{\varphi_2-\varphi_1}\right|>C''\sqrt{k\vr{\varphi_2-\varphi_1}}}\to0$ as $C''\to\infty$, that $\pr{\varphi_1\ge C''}\to0$ as $C''\to\infty$ and that $\Var(\varphi_2-\varphi_1)\lesssim1$ and $\E{\varphi_2-\varphi_1}\asymp1$ \footnote{By Corollary~\ref{cor:varphi_sigma_E_Var_bounds}.}. This allows us to control the first term in \eqref{eq:speed_deviation}.
\item We know that $\pr{\left|\nu(\sigma_k-\sigma_1)-\nu (k-1)\E{\sigma_2-\sigma_1}\right|>C''\nu\sqrt{k\Var(\sigma_2-\sigma_1)}}\to0$ as $C''\to\infty$, that $\pr{\nu\sigma_1>C''}\to0$ as $C''\to\infty$ and that $\E{\sigma_1}\lesssim\frac1\eps$, $\Var(\sigma_2-\sigma_1)\lesssim\frac{1}{\eps^2}$ and $\nu\asymp\eps$.
\footnote{By Corollary~\ref{cor:varphi_sigma_E_Var_bounds}.} This lets us control the second term in \eqref{eq:speed_deviation}.
\item The last term in \eqref{eq:speed_deviation} goes $\to0$ as $\alpha\to0$.
\item Hence for any $\theta>0$ there exists $C'$ such that for all $k$ we have
\[
\pr{\left|d_T(\rho,X_{\sigma_k})-\nu\sigma_k\right|>C'\sqrt{\eps\sigma_k}}\le\theta.
\]
\end{itemize}

For the bound on deviations for the whole sequence:
\begin{itemize}
\item Fix any $k\ge\frac{J}{\eps}$ and let $s^-=\left(\frac{\sqrt{\ell^2+\frac{4k}{\E{\sigma_2-\sigma_1}}}-\ell}{2}\right)^2$ and $s^+=\left(\frac{\sqrt{\ell^2+\frac{4k}{\E{\sigma_2-\sigma_1}}}+\ell}{2}\right)^2$ where $\ell$ is a large integer to be chosen later.
\item Note that
\begin{align*}\pr{\left|d_T(\rho,X_k)-\nu k\right|\ge C\sqrt{\eps k}}\quad&\le\quad\pr{k\not\in\left[\sigma_{s^-},\sigma_{s^+}\right]}\\
&\qquad+\quad\pr{d_T(\rho,X_k)\ge\nu k+C\sqrt{\eps k},\text{ }k\in\left[\sigma_{s^-},\sigma_{s^+}\right]}\\
&\qquad+\quad \pr{d_T(\rho,X_k)\le\nu k-C\sqrt{\eps k},\text{ }k\in\left[\sigma_{s^-},\sigma_{s^+}\right]}.
\end{align*}
We will show that the three terms on the right $\to0$ as $\ell\to\infty$ and $C\to\infty$ (in terms of $\ell$).
\item For any $s$ we have
\begin{align*}
&\pr{\left|\sigma_s-s\E{\sigma_2-\sigma_1}\right|\ge \ell\sqrt{s}\E{\sigma_2-\sigma_1}}\\
&\le\quad\pr{\left|(\sigma_s-\sigma_1)-(s-1)\E{\sigma_2-\sigma_1}\right|\ge\frac12\ell\sqrt{s}\E{\sigma_2-\sigma_1}}\\
&\qquad+\quad\pr{\left|\sigma_1-\E{\sigma_2-\sigma_1}\right|\ge\frac12\ell\sqrt{s}\E{\sigma_2-\sigma_1}}.
\end{align*}
We know that $(\sigma_s-\sigma_1)$ is the sum of $(s-1)$ iid random variables that have mean $\E{\sigma_2-\sigma_1}\asymp\frac1\eps$ and variance $\lesssim\frac{1}{\eps^2}\asymp\E{\sigma_2-\sigma_1}^2$. We also know that $\sigma_1$ has mean $\asymp\frac1\eps\asymp\E{\sigma_2-\sigma_1}$ and variance $\lesssim\frac{1}{\eps^2}\lesssim\E{\sigma_2-\sigma_1}^2$.
Hence both probabilities on the RHS $\to0$ as $\ell\to\infty$, uniformly in $s$.
\[
s^-\E{\sigma_2-\sigma_1}+\ell\sqrt{s^-}\E{\sigma_2-\sigma_1}\quad=\quad s^+\E{\sigma_2-\sigma_1}-\ell\sqrt{s^+}\E{\sigma_2-\sigma_1}\quad=\quad k,
\]
hence we get that $\pr{k\not\in\left[\sigma_{s^-},\sigma_{s^+}\right]}\to0$ as $\ell\to\infty$.

\item We know that $\eps k\ge J$ and that $\frac{4k}{\E{\sigma_2-\sigma_1}}\asymp \eps k\gtrsim1$, hence for any fixed $\ell$ we have $s^+-s^-=\ell\sqrt{\ell+\frac{4k}{\E{\sigma_2-\sigma_1}}}\asymp\sqrt{\eps k}$ and $\sqrt{s^-}\asymp \sqrt{\ell^2+\frac{4k}{\E{\sigma_2-\sigma_1}}}-\ell\asymp\sqrt{\eps k}$.

\item
\begin{align*}&\pr{d_T(\rho,X_k)\ge\nu k+C\sqrt{\eps k},\text{ }k\in\left[\sigma_{s^-},\sigma_{s^+}\right]}\\
&\le\quad\pr{\varphi_{s^-}\ge\nu\sigma_{s^-}+\frac13C\sqrt{\eps\sigma_{s^-}}}\quad+\quad\pr{\varphi_{s^+}-\varphi_{s^-}\ge\frac13C\sqrt{\eps k}}\\
&\qquad+\quad\pr{\sup_{i\in\left[\sigma_{s^-},\sigma_{s^+}\right]}d_T(\rho,X_i)\ge\varphi_{s^-}+\frac23C\sqrt{\eps k};\text{ }\varphi_{s^+}\le\varphi_{s^-}+\frac13C\sqrt{\eps k}}.
\end{align*}
\item By the statement for the subsequence $(X_{\sigma_i})$ we know that
$\pr{\varphi_{s^-}\ge\nu\sigma_{s^-}+\frac13C\sqrt{\eps\sigma_{s^-}}}\to0$ as $C\to\infty$.

\item $\varphi_{s^+}-\varphi_{s^-}$ has mean $(s^+-s^-)\E{\varphi_2-\varphi_1}\asymp(s^+-s^-)\asymp\sqrt{\eps k}$ and variance $(s^+-s^-)\vr{\varphi_2-\varphi_1}\lesssim(s^+-s^-)\asymp\sqrt{\eps k}$ hence
$\pr{\varphi_{s^+}-\varphi_{s^-}\ge\frac13C\sqrt{\eps k}}\to0$ as $C\to\infty$.

\item By noting that the walk has to backtrack at least $\frac13C\sqrt{\eps k}$ levels after first hitting level $\varphi_{s^-}+\frac23C\sqrt{\eps k}$ we get that \[
\pr{\sup_{i\in\left[\sigma_{s^-},\sigma_{s^+}\right]}d_T(\rho,X_i)\ge\varphi_{s^-}+\frac23C\sqrt{\eps k};\text{ }\varphi_{s^+}\le\varphi_{s^-}+\frac13C\sqrt{\eps k}}\hspace{2cm}\]
\[\hspace{5cm}\quad\lesssim\quad\delta^{\frac13C\sqrt{\eps k}}\quad\lesssim\quad\left(\delta^{\frac13\sqrt{\eps}}\right)^C.
\]
This $\to0$ as $C\to\infty$.

\item
\begin{align*}
&\pr{d_T(\rho,X_k)\le\nu k-C\sqrt{\eps k},\text{ }k\in\left[\sigma_{s^-},\sigma_{s^+}\right]}\quad\le\quad\pr{\varphi_{s^-}\le\nu\sigma_{s^+}-C\sqrt{\eps\sigma_{s^-}}}\\
&\le\quad\pr{\varphi_{s^-}\le\nu\sigma_{s^-}-\frac12C\sqrt{\eps\sigma_{s^-}}}\quad+\quad\pr{\nu\sigma_{s^-}-\frac12C\sqrt{\eps\sigma_{s^-}}<\nu \sigma_{s^+}-C\sqrt{\eps\sigma_{s^-}}}.
\end{align*}
\item By the result for the subsequence $X_{\sigma_i}$ we know that $\pr{\varphi_{s^-}\le\nu\sigma_{s^-}-\frac12C\sqrt{\eps\sigma_{s^-}}}\to0$ as $C\to\infty$.

\item $\nu\left(\sigma_{s^+}-\sigma_{s^-}\right)$ has mean and variance $\asymp s^+-s^-\gtrsim1$, hence\\ $\pr{\nu\left(\sigma_{s^+}-\sigma_{s^-}\right)\ge u(s^+-s^-)}\to0$ as $u\to\infty$.

$\eps\sigma_{s^-}$ has mean and variance $\asymp s^-\gtrsim1$, hence $\pr{\sqrt{\eps\sigma_{s^-}}\le\beta\sqrt{s^-}}\to0$ as $\beta\to0$.

Also $s^+-s^-\asymp\sqrt{s^-}$. Together these give that $\pr{\nu\left(\sigma_{s^+}-\sigma_{s^-}\right)>\frac12C\sqrt{\eps\sigma_{s^-}}}\to0$ as $C\to\infty$.
\end{itemize}

For the bound on the $\sup$:
\begin{itemize}
\item
\begin{align*}
&\pr{\sup_{s:\,s\leq t}d_T(\rho,X_s)>\nu t+ 2C\sqrt{\eps t}}\\
&\le\pr{d_T(\rho,X_t)>\nu t+ C\sqrt{\eps t}}+ \sum_{s<t}\pr{d_T(\rho,X_s)>\nu t+ 2C\sqrt{\eps t},\text{ }d_T(\rho,X_t)<\nu t+ C\sqrt{\eps t}}
\end{align*}
\item We already know that the first term $\to0$ as $C\to\infty$.
\item In the sum each term is $\le\delta^{C\sqrt{\eps t}}$, so the whole sum is $\le t\delta^{C\sqrt{\eps t}}$. This $\to0$ as $C\to\infty$.\qedhere
\end{itemize}
\end{proof}

\begin{proof}[Proof of Prop~\ref{pro:aux_entropy}.]
This is similar to the proof of \cite[Proposition 3.15]{random_matching}.

For the a.s.\ convergence:
\begin{itemize}
\item Let $Y_i=-\log\frac{\prcond{\xi_{\varphi_i}\in\til\xi}{\xi,T}{\rho}}{\prcond{\xi_{\varphi_{i-1}}\in\til\xi}{\xi,T}{\rho}}$ for $i\ge 1$ and let $Y_0=-\log\prcond{\xi_{\varphi_0}\in\til\xi}{\xi,T}{\rho}$.
We know that variables $(Y_i)_{i\ge2}$ are distributed as $-\log\prcond{X'_{\sigma'_1}\in\xi'}{T',X'}{\rho}$ (whether or not we are conditioning on $T_0$). Hence by the ergodic theorem
\[
\frac{-\log\prcond{\xi_{\varphi_k}\in\til\xi}{T,\xi}{\rho}}{k}=\frac{\sum_{i=0}^kY_i}{k}\to\E{-\log\prcond{X'_{\sigma'_1}\in\xi'}{T',X'}{\rho}}=:\til{h}\quad\text{a.s.\ }.
\]
\item $\frac{\varphi_k}{k}\to\E{\varphi_2-\varphi_1}$ a.s., hence we get the a.s.\ convergence to $\h$ along the subsequence $(\xi_{\varphi_k})$.
\item $\{\xi_k\in\til\xi\}$ is non-increasing in $k$, hence we get a.s.\ convergence to $\h$ for the full sequence $(\xi_k)$.
\end{itemize}

For the bound on deviations for $\xi_{\varphi_k}$:
\begin{itemize}
\item By Lemma~\ref{lem:aux_entropy_var_bound} and by the assumption $k\ge\frac{(K\log b(R))^2}{\V}$ we have $$\vrc{-\log\prcond{\xi_{\varphi_k}\in\til\xi}{\xi,T}{\rho}}{T_0}\quad\lesssim\quad k\V+(K\log b(R))^2\quad\lesssim\quad k\V.$$
\item Applying Chebyshev we get
\begin{align*}
&\prcond{\left|-\log \prcond{\xi_{\varphi_k}\in \til{\xi}}{T,\xi}{}-\h\varphi_k\right|>C\sqrt{k\V}}{T_0}{}\\
&\le\quad\frac{2\vrc{-\log \prcond{\xi_{\varphi_k}\in \til{\xi}}{T,\xi}{}-\h\varphi_k}{T_0}}{\left(C\sqrt{k\V}-\left|\econd{-\log \prcond{\xi_{\varphi_k}\in \til{\xi}}{T,\xi}{}}{T_0}-\econd{\h\varphi_k}{T_0}\right|\right)^2}
\end{align*}
We know from above that $\vrc{-\log \prcond{\xi_{\varphi_k}\in \til{\xi}}{T,\xi}{}}{T_0}\lesssim k\V$.\\
Also $\vrc{\h\varphi_k}{T_0}=\vrc{\varphi_k}{T_0}{\h}^2\lesssim k\V$.
\footnote{By Corollary~\ref{cor:varphi_sigma_E_Var_bounds} we know that $\vrc{\varphi_i-\varphi_{i-1}}{T_0}\le\econd{(\varphi_i-\varphi_{i-1})^2}{T_0}\lesssim1$ for $i=2,...,k$ and that $\vrc{\varphi_1}{T_0}=\vrc{\varphi_1-\varphi_0}{T_0}\le\econd{(\varphi_1-\varphi_0)^2}{T_0}\lesssim1$. By Lemma~\ref{lem:indep_between_regenerations} we know that $(\varphi_i-\varphi_{i-1})_{i=2}^k$ and $\varphi_1$ are all independent, hence
$\vrc{\varphi_k}{T_0}= \vrc{\varphi_1}{T_0}+\sum_{i=2}^k\vrc{\varphi_i-\varphi_{i-1}}{T_0}\lesssim k.$
By definition ${\h}^2\lesssim\V$.}
Together these give
\[
\vrc{-\log \prcond{\xi_{\varphi_k}\in \til{\xi}}{T,\xi}{}-\h\varphi_k}{T_0}\quad\lesssim\quad k\V.
\]
To bound the denominator note that
\begin{align*}\econd{-\log \prcond{\xi_{\varphi_k}\in \til{\xi}}{T,\xi}{}}{T_0}\quad&=\quad(k-1){\h}{\E{\varphi_2-\varphi_1}}+\econd{Y_0+Y_1}{T_0}\quad\text{and}\\
\econd{\h\varphi_k}{T_0}\quad&=\quad\h\left((k-1)\E{\varphi_2-\varphi_1}+\econd{\varphi_1}{T_0}\right).
\end{align*}
The same way as we proved the bound on $\econd{(Y_0+Y_1)^2}{T_0}$ for Lemma~\ref{lem:aux_entropy_var_bound} we get $$\econd{Y_0+Y_1}{T_0}\quad\lesssim\quad K\log b(R)\quad\lesssim\quad\sqrt{k\V}.$$

By Corollary~\ref{cor:varphi_sigma_E_Var_bounds} we know that
\[
\econd{\varphi_1}{T_0}\lesssim K+1\lesssim\sqrt{k\V},
\]
hence for any sufficiently large $C$ we have
\[
\left(C\sqrt{k\V}-\left|\econd{-\log \prcond{\xi_{\varphi_k}\in \til{\xi}}{T,\xi}{}}{T_0}-\econd{\h\varphi_k}{T_0}\right|\right)^2\quad\ge\quad\frac{C^2}{2}k\V.
\]
This then gives
\[
\prcond{\left|-\log \prcond{\xi_{\varphi_k}\in \til{\xi}}{T,\xi}{}-\h\varphi_k\right|>C\sqrt{k\V}}{T_0}{}\quad\lesssim\quad\frac{2}{C^2}.
\]
This $\to0$ as $C\to\infty$. We established the deviation bound for the sequence $(\xi_{\varphi_k})_{k\ge1}$.
\end{itemize}

For the bound on deviations for the whole sequence:
\begin{itemize}
\item Fix any $k$ and let $s^-=\left(\frac{\sqrt{\ell^2+\frac{4(k-K)}{\E{\varphi_2-\varphi_1}}}-\ell}{2}\right)^2$ and $s^+=\left(\frac{\sqrt{\ell^2+\frac{4(k-K)}{\E{\varphi_2-\varphi_1}}}+\ell}{2}\right)^2$ where $\ell$ is a large integer to be chosen later.
Then we have
\begin{align*}
&\prcond{\left|-\log \prcond{\xi_k\in \til{\xi}}{T,\xi}{} -\h k\right|>C\sqrt{k\V}}{T_0}{}\\
&\le\quad\prcond{-\log \prcond{\xi_k\in \til{\xi}}{T,\xi}{} >\h k+C\sqrt{k\V};\text{ }k\in[\varphi_{s^-},\varphi_{s^+}]}{T_0}{}\\
&\quad+\quad\prcond{-\log \prcond{\xi_k\in \til{\xi}}{T,\xi}{}< \h k-C\sqrt{k\V};\text{ }k\in[\varphi_{s^-},\varphi_{s^+}]}{T_0}{}\\
&\quad+\quad\prcond{k\not\in[\varphi_{s^-},\varphi_{s^+}]}{T_0}{}.
\end{align*}
We wish to show that each term on the RHS $\to0$ as $\ell\to\infty$ and $C\to\infty$ (in terms of $\ell$).

\item For any $s\ge2$ we have
\begin{align*}
&\pr{\left|\varphi_s-K-s\E{\varphi_2-\varphi_1}\right|\ge\ell\sqrt{s}\E{\varphi_2-\varphi_1}}\\
&\le\quad\pr{\left|(\varphi_s-\varphi_1)-(s-1)\E{\varphi_2-\varphi_1}\right|\ge\frac12\ell\sqrt{s}\E{\varphi_2-\varphi_1}}\\
&\quad+\quad\pr{\left|\varphi_1-K-\E{\varphi_2-\varphi_1}\right|\ge\frac12\ell\sqrt{s}\E{\varphi_2-\varphi_1}}.
\end{align*}
We know that $(\varphi_s-\varphi_1)$ has mean $(s-1)\E{\varphi_2-\varphi_1}$ and variance $\lesssim(s-1)\asymp (s-1)\E{\varphi_2-\varphi_1}^2$, hence $\pr{\left|(\varphi_s-\varphi_1)-(s-1)\E{\varphi_2-\varphi_1}\right|\ge\frac12\ell\sqrt{s}\E{\varphi_2-\varphi_1}}\to0$ as $\ell\to\infty$ (uniformly).

We also know that $\varphi_1-K=\varphi_1-\varphi_0$ has mean $\asymp1\asymp\E{\varphi_2-\varphi_1}$ and variance $\lesssim1$, hence $\pr{\left|\varphi_1-K-\E{\varphi_2-\varphi_1}\right|\ge\frac12\ell\sqrt{s}\E{\varphi_2-\varphi_1}}\to0$ as $\ell\to\infty$ (uniformly).
\[
s^-\E{\varphi_2-\varphi_1}+\ell\sqrt{s^-}\E{\varphi_2-\varphi_1}\quad=\quad s^+\E{\varphi_2-\varphi_1}-\ell\sqrt{s^+}\E{\varphi_2-\varphi_1}\quad=\quad k-K,
\]
hence we get that $\pr{k\not\in\left[\varphi_{s^-},\varphi_{s^+}\right]}\to0$ as $\ell\to\infty$.

\item By the monotonicity of events $\{\xi_i\in\til\xi\}$ we get that
\begin{align*}
&\prcond{-\log \prcond{\xi_k\in \til{\xi}}{T,\xi}{} >\h k+C\sqrt{k\V};\text{ }k\in[\varphi_{s^-},\varphi_{s^+}]}{T_0}{}\\
&\le\quad\prcond{-\log \prcond{\xi_{\varphi_{s^+}}\in \til{\xi}}{T,\xi}{} >\h \varphi_{s^-}+C\sqrt{\varphi_{s^-}\V}}{T_0}{}\quad\text{and}\\
&\prcond{-\log \prcond{\xi_k\in \til{\xi}}{T,\xi}{}< \h k-C\sqrt{k\V};\text{ }k\in[\varphi_{s^-},\varphi_{s^+}]}{T_0}{}\\
&\le\quad\prcond{-\log \prcond{\xi_{\varphi_{s^-}}\in \til{\xi}}{T,\xi}{}< \h\varphi_{s^+}-C\sqrt{\varphi_{s^-}\V}}{T_0}{}.
\end{align*}
\item For any $\alpha\in(0,1)$ we have
\begin{align*}
&\prcond{-\log \prcond{\xi_{\varphi_{s^+}}\in \til{\xi}}{T,\xi}{} >\h \varphi_{s^-}+C\sqrt{\varphi_{s^-}\V}}{T_0}{}\\
&\le\quad\prcond{-\log \prcond{\xi_{\varphi_{s^+}}\in \til{\xi}}{T,\xi}{} >\h \varphi_{s^+}+\alpha C\sqrt{\varphi_{s^+}\V}}{T_0}{}\\
&\quad+\quad\prcond{\h\left(\varphi_{s^+}-\varphi_{s^-}\right)\ge C\left(\sqrt{\V\varphi_{s^-}}-\alpha\sqrt{\V\varphi_{s^+}}\right)}{T_0}{}.
\end{align*}
We already know that the first term on RHS $\to0$ as $\alpha C\to\infty$.

We also know that $\h\asymp\sqrt{\V}$, that $\pr{\varphi_{s^+}-\varphi_{s^-}\ge u\left(s^+-s^-\right)}\to0$ as $u\to\infty$, and that $\pr{\varphi_{s^-}\le\frac12s^-}=0$ and $\pr{\varphi_{s^+}\ge\frac{1}{2\alpha^2}s^+}\to0$ as $\frac{1}{2\alpha^2}\to\infty$ \footnote{$\h\asymp\sqrt{\V}$ holds by assumption. $\pr{\varphi_{s^+}-\varphi_{s^-}\ge u\left(s^+-s^-\right)}\le\frac{\econd{\varphi_{s^+}-\varphi_{s^-}}{T_0}}{u(s^+-s^-)}=\frac{\E{\varphi_2-\varphi_1}}{u}\to0$ as $u\to\infty$.\\
$\pr{\varphi_{s^+}\ge vs^+}\le\frac{E[\varphi_{s^+}]}{vs^+}\le\frac{K+\E{\varphi_1-\varphi_0}+(s^+-1)\E{\varphi_2-\varphi_1}}{vs^+}\lesssim\frac{K+s^+}{vs^+}\lesssim\frac1v\to0$ as $v\to\infty$. Here we used that $s^+\asymp(k-K)\gtrsim K$, which is true by the assumption that $\log b(R)\gtrsim\h\asymp\sqrt{\V}$.}. Putting these together and using that for any fixed $\ell$ we have $s^+-s^-\asymp\sqrt{s^+}\asymp\sqrt{s^-}$ we get that the second term on the RHS $\to0$ as $C\to\infty$, $\alpha\to0$.

\item Similarly
\begin{align*}&\prcond{-\log \prcond{\xi_{\varphi_{s^-}}\in \til{\xi}}{T,\xi}{}< \h\varphi_{s^+}-C\sqrt{\varphi_{s^-}\V}}{T_0}{}
\\&\le\quad\prcond{-\log \prcond{\xi_{\varphi_{s^-}}\in \til{\xi}}{T,\xi}{}< \h\varphi_{s^-}-\frac12C\sqrt{\varphi_{s^-}\V}}{T_0}{}
\\&\quad+\quad\prcond{\h\left(\varphi_{s^+}-\varphi_{s^-}\right)\ge\frac12C\sqrt{\V\varphi_{s^-}}}{T_0}{}.
\end{align*}

Again both terms on RHS $\to0$ as $C\to\infty$.\qedhere
\end{itemize}
\end{proof}

\begin{proof}[Proof of Lemma~\ref{lem:aux_entropy_var_bound}]
The conditioning on $T_0$ will be assumed throughout, but often dropped from notation.
As in the proof of Proposition~\ref{pro:aux_entropy} let $Y_i=-\log\frac{\prcond{\xi_{\varphi_i}\in\til\xi}{\xi,T}{\rho}}{\prcond{\xi_{\varphi_{i-1}}\in\til\xi}{\xi,T}{\rho}}$ for $i\ge 1$ and let $Y_0=-\log\prcond{\xi_{\varphi_0}\in\til\xi}{\xi,T}{\rho}$. Note that
\[
\frac{\prcond{\xi_{\varphi_i}\in\til\xi}{\xi,T}{\rho}}{\prcond{\xi_{\varphi_{i-1}}\in\til\xi}{\xi,T}{\rho}}=\prcond{\xi_{\varphi_{i}}\in\til\xi}{\xi,T,\xi_{\varphi_{i-1}}\in\til\xi}{\rho}=\prcond{X_{\sigma_i}\in\xi(i)}{(X_t)_{t\ge\sigma_{i-1}},T(X_{\sigma_{i-1}})}{}.
\]
where $\xi(i)$ is the loop-erasure of a random walk $X^i$ on $T(X_{\sigma_{i-1}})$ started from $X_{\sigma_{i-1}}$.

Then
\begin{align}\label{eq:entropy_var_decomposition}
&\vrc{-\log\prcond{\xi_{\varphi_k}\in\til\xi}{\xi,T}{\rho}}{T_0}\quad=\quad
\vrc{\sum_{i=0}^kY_i}{T_0}\\
&=\quad\sum_{i=2}^k\vrc{Y_i}{T_0}\quad+\quad\vrc{Y_0+Y_1}{T_0} \nonumber\\
&\qquad+\quad2\sum_{i=2}^k\sum_{j=i+1}^k\cvc{Y_i}{Y_j}{T_0}\quad+\quad2\sum_{i=2}^k\cvc{Y_i}{Y_0+Y_1}{T_0}.\nonumber
\end{align}
We will now bound each of the terms on the right-hand side.

\textit{Bounding $\sum_{i=2}^k\vrc{Y_i}{T_0}$:}

Note that for each $i\ge2$ we have $(T(X_{\sigma_{i-1}}),X_{\sigma_i},\xi(i))\big|T_0\eqdist(T',X'_{\sigma'_1},\xi')$ where $T'$, $X'$, $\sigma'_1$ are defined as in Definition~\ref{def:Tprime} and $\xi'$ is a loop-erased random walk on $T'$ independently of $X'$.

Therefore the random variables $(Y_i|T_0)_{i\ge 2}$ have the same distribution as 
$-\log\prcond{X'_{\sigma'_1}\in\xi'}{T',X'}{\rho}$.
Hence
\[
\sum_{i=2}^k\vrc{Y_i}{T_0}\quad\le\quad k\V.
\]

\textit{Bounding $\vrc{Y_0+Y_1}{T_0}$:}

Let $A_{K+r}$ denote the set of long-range edges between levels $K+r-1$ and $K+r$ of $T$.
From Lemma~\ref{lem:bound_prob_Xsigma1_equals_x} we know that for any $r\ge1$, $e\in A_{K+r}$ we have $\pr{(X_{\sigma_1-1},X_{\sigma_1})=e}\lesssim(2\delta^2)^{r-1}\pr{\xi_{K+r}=e}$. We also know that  for any $e\in A_K$ we have $\pr{(X_{\sigma_1-1},X_{\sigma_1})=e}\le\pr{\xi_{K}=e}$. Hence

\begin{align*}\vrc{Y_0+Y_1}{T_0}\quad&\le\quad\econd{(Y_0+Y_1)^2}{T_0}\quad=\quad\econd{\left(-\log\prcond{X_{\sigma_1}\in\til{\xi}}{X,T}{\rho}\right)^2}{T_0}
\\&\lesssim\quad\econd{\econd{\sum_{r\ge0}\sum_{e\in A_{K+r}}\prstart{X_{\sigma_1}=e^+}{\rho}\left(-\log\prstart{\xi_{K+r}=e}{\rho}\right)^2}{T}}{T_0}
\\
\lesssim\quad&\econd{\econd{\sum_{r\ge1}(2\delta^2)^{r-1}\sum_{e\in A_{K+r}}\prstart{\xi_{K+r}=e}{\rho}\left(-\log\prstart{\xi_{K+r}=e}{\rho}\right)^2}{T}}{T_0}
\\&\quad+\quad\econd{\econd{\sum_{e\in A_{K}}\prstart{\xi_{K}=e}{\rho}\left(-\log\prstart{\xi_{K}=e}{\rho}\right)^2}{T}}{T_0}.
\end{align*}

We also know that for any $r\ge0$ we have $\sum_{e\in A_{K+r}}\prstart{\xi_{K+r}=e}{\rho}=1$ and $|A_{K+r}|\le b(R)^{K+r+1}$, hence for any realisation of $T$ we have
\[\sum_{e\in A_{K+r}}\prstart{\xi_{K+r}=e}{\rho}\left(-\log\prstart{\xi_{K+r}=e}{\rho}\right)^2\quad\le\quad\log\left(b(R)^{K+r+1}\right)^2\hspace{2cm}\]
\[\hspace{5cm}=\quad\left((K+r+1)\log b(R)\right)^2.\]
This then gives
\[
\vrc{Y_0+Y_1}{T_0}\:\lesssim\:\left(K\log b(R)\right)^2+\sum_{r\ge1}(2\delta^2)^{r-1}\left((K+r)\log b(R)\right)^2\:\lesssim\: \left(K\log b(R)\right)^2.
\]

\textit{Bounding $\sum_{i=2}^k\sum_{j=i+1}^k\cvc{Y_i}{Y_j}{T_0}$:}

We will show that there exist positive constants $a$ and $u$ such that for any $i\ge2$, $j\ge i+u$ we have
\begin{align} \label{eq:cov_bound}
\cvc{Y_i}{Y_j}{T_0}\quad\lesssim\quad\V e^{-a(j-i)}.
\end{align}

Once we established that, we immediately get that $\sum_{i=2}^k\sum_{j=i+1}^k\cvc{Y_i}{Y_j}{T_0}\lesssim k\V$.

In order to prove \eqref{eq:cov_bound} we will show that for some positive constants $a$, $b$ and $u$, for all $i\ge2$, $j\ge i+u$ there exists a random variable $Y_{i,j}$ and an event $B_{i,j}$ satisfying the following properties.
\begin{enumerate}[(i)]
\item \label{item:indep} $B_{i,j}$ and $Y_{i,j}\1_{B_{i,j}}$ are independent of $Y_j$,
\item \label{item:bound_prob} $\prstart{B_{i,j}^c}{T_0}\lesssim e^{-2a(j-i)}$,
\item \label{item:bound_diff} $|Y_i-Y_{i,j}|\1_{B_{i,j}}\le e^{-b(j-i)}$.
\end{enumerate}
Once we have these variables, we can prove \eqref{eq:cov_bound} as follows.
\begin{align*}
\cvstart{Y_i}{Y_j}{T_0}\quad=\quad& \estart{(Y_i-\estart{Y_i}{T_0})(Y_j-\estart{Y_j}{T_0})\1_{B_{i,j}^c}}{T_0}\\ &+\quad\estart{(Y_i-\estart{Y_i}{T_0})(Y_j-\estart{Y_j}{T_0})\1_{B_{i,j}}}{T_0}
\end{align*}
Using Cauchy-Schwarz, \eqref{item:bound_prob} and that $\estart{(Y_j-\estart{Y_j}{T_0})^4}{T_0}\lesssim\V^2$ for all $j$ \footnote{This holds because $Y_j|T_0\eqdist Y'$ and we assumed that $\E{(Y'-\E{Y'})^4}\lesssim\E{Y'}^4$.} we get that
\begin{align*}
&\estart{(Y_i-\estart{Y_i}{T_0})(Y_j-\estart{Y_j}{T_0})\1_{B_{i,j}^c}}{T_0}
\\&\le\quad\sqrt[4]{\estart{(Y_i-\estart{Y_i}{T_0})^4}{T_0}\estart{(Y_j-\estart{Y_j}{T_0})^4}{T_0}\prstart{B_{i,j}^c}{T_0}^2}\quad\lesssim\quad\V e^{-a(j-i)}.
\end{align*}
Using \eqref{item:indep}, Cauchy-Schwarz, \eqref{item:bound_diff} and that $\estart{(Y_j-\estart{Y_j}{T_0})^2}{T_0}\lesssim\V$ for all $j$ we get that
\begin{align*}&\estart{(Y_i-\estart{Y_i}{T_0})(Y_j-\estart{Y_j}{T_0})\1_{B_{i,j}}}{T_0}\quad=\quad\estart{(Y_i-Y_{i,j})(Y_j-\estart{Y_j}{T_0})\1_{B_{i,j}}}{T_0}
\\&\le\quad\sqrt{\estart{(Y_i-Y_{i,j})^2\1_{B_{i,j}}}{T_0}\estart{(Y_j-\estart{Y_j}{T_0})^2}{T_0}}\quad\lesssim\quad \sqrt{\V}e^{-b(j-i)}.\end{align*}
This finishes the proof of \eqref{eq:cov_bound}.

Now let us define $Y_{i,j}$ and $B_{i,j}$ and prove that they have the required properties. As before, let $X^i$ be a random walk on $T(X_{\sigma_{i-1}})$ from $X_{\sigma_{i-1}}$ and let $\xi(i)$ be its loop-erasure. Let $\xi(i,j)$ be the loop-erasure of $X^i$ up to the first time that it hits the level $\varphi_{j-1}$ of $X_{\sigma_{j-1}}$. Let
\begin{align*} &Z_i:=\prcond{X_{\sigma_i}\in\xi(i)}{X,T(X_{\sigma_{i-1}})}{T_0}, \quad Z_{i,j}:=\prcond{X_{\sigma_i}\in\xi(i,j)}{X,T(X_{\sigma_{i-1}})}{T_0},\\& Y_{i,j}:=-\log Z_{i,j} \quad \text{and} \quad
B_{i,j}:=\left\{\varphi_i-\varphi_{i-1}\le j-i ,\text{ }d_g(X_{\sigma_{i-1}},X_{\sigma_i})\le \frac{j-i}{\eps^2}\right\}.\end{align*}
Also note that $Y_i=-\log Z_i$.

Then by Lemma~\ref{lem:tail_bounds} (used for tree $T(X_{\sigma_{i-1}})$), and using that $\sigma_i-\sigma_{i-1}\ge d_g(X_{\sigma_{i-1}},X_{\sigma_i})$ we have
\[
\prstart{B_{i,j}^c}{T_0}\quad\le\quad \prstart{\varphi_i-\varphi_{i-1}\ge j-i}{T_0}+\prstart{d_g(X_{\sigma_{i-1}},X_{\sigma_i})\ge \frac{j-i}{\eps^2}}{T_0}\hspace{2cm}\]
\[\hspace{8cm}\lesssim\quad e^{-c_1(j-i)}+e^{-c_2\frac{j-i}{\eps}}.
\]
This shows \eqref{item:bound_prob}.

Note that $B_{i,j}$ and $Y_{i,j}$ only depend on $(X_{s})_{s\le\sigma_{j-1}}$ and $T\setminus T(X_{\sigma_{j-1}})$, whereas $Y_j$ depends only on $T(X_{\sigma_{j-1}})$ and the walk $(X_s)_{s\ge\sigma_{j-1}}$ on this tree. From Lemma~\ref{lem:indep_between_regenerations} we know that these are independent of each other. This proves \eqref{item:indep}.

Now we turn to proving \eqref{item:bound_diff}. Note that $|Y_i-Y_{i,j}|=|-\log Z_i+\log Z_{i,j}|\le\frac{|Z_i-Z_{i,j}|}{Z_i\wedge Z_{i,j}}$.

Let $A_{i,j}=\{X^i\text{ revisits level }\varphi_{i}\text{ after hitting level }\varphi_{j-1}\}$. Note that
\begin{align*} |Z_i-Z_{i,j}|\quad &\le\quad \prcond{X_{\sigma_i}\in\xi(i),X_{\sigma_i}\not\in\xi(i,j)}{X,T(X_{\sigma_{i-1}})}{T_0}
\\&\quad +\quad \prcond{X_{\sigma_i}\not\in\xi(i),X_{\sigma_i}\in\xi(i,j)}{X,T(X_{\sigma_{i-1}})}{T_0}
\\&\le\quad\prcond{X_{\sigma_i}\in\xi(i),A_{i,j}}{X,T(X_{\sigma_{i-1}})}{T_0}\\&\quad+ \quad\prcond{X_{\sigma_i}\in\xi(i,j),A_{i,j}}{X,T(X_{\sigma_{i-1}})}{T_0}.
\end{align*}

We will show that on event $B_{i,j}$ the first and second terms on the right-hand side of the line above are $\lesssim \delta^{\frac{j-i}{3}}Z_{i}$ and $\lesssim\delta^{\frac{j-i}{3}}Z_{i,j}$ respectively, and that $Z_{i,j}\asymp Z_{i}$. These together immediately imply \eqref{item:bound_diff}.

First note that
\begin{align*}&\prcond{X_{\sigma_i}\in\xi(i,j),A_{i,j}}{X,T(X_{\sigma_{i-1}})}{T_0}\quad\le\quad \prcond{X_{\sigma_i}\in\xi(i,j)}{X,T(X_{\sigma_{i-1}})}{T_0}\delta^{\varphi_{j-1}-\varphi_i}
\\&\hspace{5cm}\le\quad\prcond{X_{\sigma_i}\in\xi(i,j)}{X,T(X_{\sigma_{i-1}})}{T_0}\delta^{j-i-1}\quad\lesssim\quad\delta^{\frac{j-i}{3}}Z_{i,j}.
\end{align*}
Let $e_1,...,e_{\varphi_i-\varphi_{i-1}}$ be the long-range edges leading from $X_{\sigma_{i-1}}$ to $X_{\sigma_i}$ and let $e_0^+=X_{\sigma_{i-1}}$. Then
\begin{align*}\prcond{X_{\sigma_i}\in\xi(i),A_{i,j}}{X,T(X_{\sigma_{i-1}})}{T_0}\quad\le\quad\pr{X^i\text{ hits level }\varphi_{j-1}\text{ and then hits }X_{\sigma_i}}
\end{align*}
\begin{align}\label{eq:bound_inxii_Aij}
\le\quad \sum_{m=0}^{\varphi_{i}-\varphi_{i-1}-1}\left(\prod_{\ell\ne m}\prstart{X^i\text{ hits }e_{\ell+1}^+}{e_\ell^+}\right) \prstart{X^i\text{ hits level }\varphi_{j-1}\text{ and then hits }e_{m+1}^+}{e_m^+}.
\end{align}

By Lemma~\ref{lem:visit_far_then_hit_e} and using that $\varphi_{j-1}-\ell(e_m^+)\ge j-i-1$ we get that
\begin{align*}&\prstart{X^i\text{ hits level }\varphi_{j-1}\text{ and then hits }e_{m+1}^+}{e_m^+}
\\&\lesssim\quad\ell_0\delta^{j-i-1}\prstart{\text{hit }e_{m+1}^+}{e_m^+}\quad+\quad\delta^{j-i-1}e^{-c_3\eps\ell_0}\left(\ell_0+\frac{1}{\eps}\right)
\end{align*}
for some positive constant $c_3$. On event $B_{i,j}$ this is then
\[\lesssim\quad\ell_0\delta^{j-i-1}\prstart{\text{hit }e_{m+1}^+}{e_m^+}\quad+\quad\delta^{j-i-1}e^{-c_3\eps\ell_0}\left(\ell_0+\frac{1}{\eps}\right)\eps^{-1}\Delta^{\frac{j-i}{\eps^2}}\prstart{\text{hit }e_{m+1}^+}{e_m^+}.
\]
Choosing $\ell_0=\frac{j-i}{c_3\eps^3}\log\Delta$ we get that this is
\[
\lesssim\quad(j-i)\frac{1}{\eps^3}\delta^{j-i-1}\prstart{\text{hit }e_{m+1}^+}{e_m^+}\quad\lesssim\quad\delta^{\frac{j-i}{2}}\prstart{\text{hit }e_{m+1}^+}{e_m^+}.\text{ }\footnotemark
\]
\footnotetext{We used here that $j-i$ is at least some sufficiently large constant.}
Plugging this back to \eqref{eq:bound_inxii_Aij} we get that on event $B_{i,j}$
\begin{align*}&\prcond{X_{\sigma_i}\in\xi(i),A_{i,j}}{X,T(X_{\sigma_{i-1}})}{T_0}\quad\lesssim\quad(\varphi_{i}-\varphi_{i-1})\delta^{\frac{j-i}{2}}\prod_{\ell}\prstart{X^i\text{ hits }e_{\ell+1}^+}{e_\ell^+}
\\&\lesssim\quad(j-i)\delta^{\frac{j-i}{2}}\prstart{X^i\text{ hits }X_{\sigma_i}}{}\quad\lesssim\quad(j-i)\delta^{\frac{j-i}{2}}\frac{1}{1-\delta}\prstart{X_{\sigma_i}\in\xi(i)}{}
\\&\lesssim\quad\delta^{\frac{j-i}{3}} \prcond{X_{\sigma_i}\in\xi(i)}{X,T(X_{\sigma_{i-1}})}{T_0}\quad\asymp\quad\delta^{\frac{j-i}{3}}Z_{i}.
\end{align*}
We also have (on event $B_{i,j}$) that
\begin{align*}\prcond{X_{\sigma_i}\in\xi(i)}{X,T(X_{\sigma_{i-1}})}{T_0}\quad&\ge\quad(1-\delta)\prcond{X_{\sigma_i}\in\xi(i,j)}{X,T(X_{\sigma_{i-1}})}{T_0},
\\\prcond{X_{\sigma_i}\in\xi(i,j)}{X,T(X_{\sigma_{i-1}})}{T_0}\quad&\ge\quad \prcond{X_{\sigma_i}\in\xi(i),X_{\sigma_i}\in\xi(i,j)}{X,T(X_{\sigma_{i-1}})}{T_0}
\\&\ge\quad\left(1-c_4\delta^{\frac{j-i}{3}}\right)\prcond{X_{\sigma_i}\in\xi(i)}{X,T(X_{\sigma_{i-1}})}{T_0},
\end{align*}
hence
\[
Z_{i}\:=\:\prcond{X_{\sigma_i}\in\xi(i)}{X,T(X_{\sigma_{i-1}})}{T_0}\:\asymp\: \prcond{X_{\sigma_i}\in\xi(i,j)}{X,T(X_{\sigma_{i-1}})}{T_0}\:=\: Z_{i,j}.
\]
This finishes the proof of \eqref{item:bound_diff}.

\textit{Bounding $\sum_{i=2}^k\cvc{Y_i}{Y_0+Y_1}{T_0}$:}

By Cauchy-Schwarz and the previous bounds we have
\begin{align*}\sum_{i=2}^k\cvc{Y_i}{Y_0+Y_1}{T_0}\quad&\le\quad\sqrt{\vrstart{\sum_{i=2}^kY_i}{T_0}\vrstart{Y_0+Y_1}{T_0}}
\\&\lesssim\quad\sqrt{k\V\cdot\left(K\log b(R)\right)^2}\quad\le\quad k\V+\left(K\log b(R)\right)^2.\qedhere
\end{align*}
\end{proof}

\section{Estimating the entropy (Lemma~\ref{lem:main_aux_entropy})}\label{app:proof_of4lemmas}

The proof of Lemma~\ref{lem:main_aux_entropy} follows immediately from the following four results.

\begin{lemma}\label{lem:aux_entropy_one}
Let $G$ be as before. Let $\rho$ be any vertex of $G$ and let $T$ be any realisation of the random quasi-tree corresponding to $G$, rooted at $\rho$. Let $v$ be any vertex of $G$ such that the ball of radius $\frac{R}{2}$ around $v$ is contained in the ball of radius $R$ around $\rho$ (the balls are in graph distance). Let $X$ and $\til{X}$ be independent random walks on $T$ from $v$ and let $\tau_1$ be the first time when $X$ hits level 1 and $\tilde{\tau}_1$ be the first time when $\til{X}$ hits level 1. Let $Y$ be a simple random walk on $G$ from $v$ and let $E$ be a random variable taking values on $\Znonneg$ such that $\prcond{E=k}{E\ge k,Y_k=u}{v}=\frac{\eps}{\deg(u)+\eps}$ for all $k$ and all $u$. Let $\left(\til{Y},\til{E}\right)$ be an independent copy of $\left(Y,E\right)$. Let $b\in\Zpos$.

Assume the following three properties hold.
\begin{enumerate}[(i)]
\item\label{assump:h_b_aux} There exists $h_b$ such that for every choice of $v$ we have \[
\econd{\left(-\log\prcond{Y_E=\til{Y}_{\til{E}}}{Y}{v}\right)^b}{v}\quad\asymp\quad h_b.
\]
\item\label{assump:ball_growth_aux} The size of all $R$-balls in $G$ is upper bounded by a $b(R)$ satisfying
\[
\left(\log b(R)\right)^b\left(1-\frac{\eps}{\Delta+\eps}\right)^{\frac{R}{2}}\ll h_b.
\]
\item\label{assump:R_aux} $R\gg\frac1\eps$.
\end{enumerate}
Then we have
\begin{align}\label{eq:aux_entropy_one_result}
\econd{\left(-\log\prcond{X_{\tau_1}=\til{X}_{\til{\tau}_1}}{X,T}{v}\right)^b}{T,v}\quad\asymp\quad h_b.
\end{align}
(Here $h_b$ and $b(R)$ can both depend on $n$.)
\end{lemma}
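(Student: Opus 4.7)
The plan is to recognise both sides of~\eqref{eq:aux_entropy_one_result} as $H_b$ entropies (in the sense of Definition~\ref{def:Hb}) and to couple the walk on $T$ with the walk on $G$ inside the ball $B_G(v,R/2)$, where $T$ and $G$ have identical structure.

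First I would reformulate. Since $X$ and $\tilde X$ are i.i.d.\ conditional on $T$, one has $\prcond{X_{\tau_1}=\tilde X_{\tilde\tau_1}}{X,T}{v}=\prcond{\tilde X_{\tilde\tau_1}=X_{\tau_1}}{T}{v}$, so the left-hand side of~\eqref{eq:aux_entropy_one_result} is exactly the conditional entropy $H_b(X_{\tau_1}\mid T,v)$; the same argument writes the quantity in assumption~\eqref{assump:h_b_aux} as $h_b=H_b(Y_E\mid v)$. Thus the claim reduces to $H_b(X_{\tau_1}\mid T,v)\asymp H_b(Y_E\mid v)$.

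Next I would construct a coupling on a high-probability event. Let $A=\{\tau_1\le R/2\}$ and $B=\{E\le R/2\}$. On $A$ (resp.\ $B$) the process $X$ (resp.\ $Y$) never leaves $B_G(v,R/2)\subseteq B_G(\rho,R)$ before crossing the long-range edge (resp.\ before time $E$), since one graph step moves by at most one vertex. A direct comparison of one-step probabilities shows that for any path $v=u_0,\dots,u_k$ in $B_G(v,R/2)$,
\[
\pr{X_i=u_i\text{ for }i\le k,\ X_{k+1}=u_k^+}=\pr{Y_i=u_i\text{ for }i\le k,\ E=k},
\]
where $u_k^+$ is the long-range neighbour of $u_k$ in $T$. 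Coupling accordingly yields $\pr{X_{\tau_1}=x,A\mid T}=\pr{Y_E=\pi(x),B}$ for every level-$1$ vertex $x$ of $T$ whose parent $\pi(x)$ lies in $B_G(v,R/2)$. Each step before the long-range crossing has probability at least $\eps/(\Delta+\eps)$ of triggering it, so $\pr{A^c},\pr{B^c}\le(1-\eps/(\Delta+\eps))^{R/2}=:\beta$, which is $o(1)$ by~\eqref{assump:R_aux} and moreover satisfies $(\log b(R))^b\beta\ll h_b$ by~\eqref{assump:ball_growth_aux}.

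Then I would decompose the entropies. Writing $p(x)=\prcond{X_{\tau_1}=x}{T}{v}$ and $q(w)=\pr{Y_E=w\mid v}$, split $p=p_A+p_{A^c}$ and $q=q_B+q_{B^c}$ by intersecting with $A$ and $B$. The coupling identifies $p_A$ with $q_B$ as sub-probability measures via $x\mapsto\pi(x)$, so their $H_b$ contributions agree exactly. Each exceptional piece $p_{A^c},q_{B^c}$ has total mass $\le\beta$ and support in a set of size $\le b(R)$ (for $p_{A^c}$, the level-$1$ vertices of $T$ are in bijection with the non-root vertices of $B_G(\rho,R)$), so Lemma~\ref{lem:H_b_properties}~\eqref{property:bound_by_set_size_two} bounds each of $H_b(p_{A^c})$ and $H_b(q_{B^c})$ by $\lesssim\beta(\log b(R))^b+\beta(-\log\beta)^b\ll h_b$, using~\eqref{assump:ball_growth_aux} together with $-\log\beta\lesssim\eps R\lesssim\log b(R)$ in the regime $R\gg1/\eps$ of~\eqref{assump:R_aux}. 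Combining via subadditivity~\eqref{property:splitting} and monotonicity~\eqref{property:decreasing} of Lemma~\ref{lem:H_b_properties} yields $H_b(p)=H_b(q)+o(h_b)\asymp h_b$ by~\eqref{assump:h_b_aux}.

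The hard part will be controlling the exceptional contribution $H_b(p_{A^c})$: although its total mass $\beta$ is tiny, it could in principle concentrate on very few vertices with large $-\log p$. The joint use of the support-size bound $b(R)$ and the mass bound $\beta$ in Lemma~\ref{lem:H_b_properties}~\eqref{property:bound_by_set_size_two} is tailored precisely for this situation, but depends on the inequality $-\log\beta\lesssim\log b(R)$, which is exactly where~\eqref{assump:ball_growth_aux} combined with~\eqref{assump:R_aux} is used critically. A secondary subtlety is that the monotonicity~\eqref{property:decreasing} of $H_b$ only applies to probabilities in $[0,\theta]$ for small $\theta$, so atoms of $p$ with mass close to $1$ must be split off separately; these are atypical because~\eqref{assump:h_b_aux} with $h_b\ge h_1\gg1$ forces $Y_E$ to be non-trivially spread.
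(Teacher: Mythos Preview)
Your overall strategy—rewriting both sides as $H_b$-entropies, coupling the walk on $T$ with $(Y,E)$ on $G$ up to time $R/2$, and bounding exceptional contributions via Lemma~\ref{lem:H_b_properties}—is the same as the paper's. However, your coupling identity has a gap: in the quasi-tree $T$ (Definition~\ref{def:tree}) the root $\rho$ has \emph{no} long-range edge, so from $\rho$ the walk $X$ moves to each $G$-neighbour with probability $1/\deg(\rho)$, whereas in the $(Y,E)$ process the corresponding probability is $1/(\deg(\rho)+\eps)$ and there is an additional chance $\eps/(\deg(\rho)+\eps)$ that $E$ stops with $Y_E=\rho$, an outcome with no counterpart for $X_{\tau_1}$. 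Hence the path identity $\pr{X_{\tau_1}=x,A}=\pr{Y_E=\pi(x),B}$ you state is incorrect whenever the path visits $\rho$. The paper repairs this by introducing the auxiliary tree $T^a$ (Definition~\ref{def:Tprime}) with an extra long-range edge from $\rho$ to a dummy vertex $\rho^a$: the walk $X^a$ on $T^a$ has exactly the same one-step law as $(Y,E)$ at every vertex of the $R$-ball, so $H_b(X^a_{\tau^a_1})\asymp h_b$ follows by your argument, and then $X_{\tau_1}$ is recovered as $X^a_{\tau^a_1}$ conditioned on $\{X^a_{\tau^a_1}\neq\rho^a\}$, an event of probability $1-o(1)$ by Lemma~\ref{lem:neverbacktrack}.

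A second, smaller issue: the inequality $-\log\beta\lesssim\log b(R)$ (equivalently $\eps R\lesssim\log b(R)$) that you invoke to control $\beta(-\log\beta)^b$ is not implied by assumptions (i)--(iii). The paper avoids this term altogether: it decomposes the exceptional piece via the Markov property of $(Y,E)$ at time $R/2$ and applies properties~\eqref{property:factorising} and~\eqref{property:bound_by_set_size} of Lemma~\ref{lem:H_b_properties}, yielding the bound $\beta\bigl[(\log b(R))^b + h_b\bigr]\ll h_b$ directly from (ii) and $\beta=o(1)$.
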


\begin{proof}
Let $X^a$ be a random walk on $T^a$. For a vertex $u$ let $Y^{(u)}$ denote a walk that starts from $u$ and has transition probabilities as $Y$. Let us define $X^{(u)}$ and $X^{a,(u)}$ analogously. First we will compare $Y_E$ to $X^a_{\tau^a_1}$ and then we compare $X^a_{\tau^a_1}$ to $X_{\tau_1}$.

Since the ball of radius $\frac{R}{2}$ around $v$ is contained in the $R$-ball of $\rho$ in $T$, we have
\[
\left((Y_{k\wedge E})_{k=1}^{R/2},E\wedge\frac{R}{2}\right)\eqdist \left((X^a_{k\wedge(\tau^a_1-1)})_{k=1}^{R/2},(\tau^a_1-1)\wedge\frac{R}{2}\right).
\]
In particular
\[\sum_{e}\prstart{Y_E=e^{-},E<\frac{R}{2}}{v}\left(-\log\prstart{Y_E=e^{-},E<\frac{R}{2}}{v}\right)^b\]
\begin{align}\label{eq:sum_small_E}
=\quad\sum_{e}\prstart{X^a_{\tau^a_1-1}=e^{-},\tau^a_1-1<\frac{R}{2}}{v}\left(-\log\prstart{X^a_{\tau^a_1-1}=e^{-},\tau^a_1-1<\frac{R}{2}}{v}\right)^b.
\end{align}

We will show that both $H_b(Y^{(v)}_E)$ and $H_b(X^{a,(v)}_{\tau_1})$ are at distance $\ll h_b$ from this sum. Once we established that, it is immediate to see that $H_b(X^{a,(v)}_{\tau_1})\asymp h_b$.

First let us consider $H_b(Y^{(v)}_E)$. By properties \eqref{property:decreasing} and \eqref{property:splitting} in Lemma~\ref{lem:H_b_properties} we get that
\[
H_b(Y^{(v)}_E)\quad\ge\quad\sum_{e}\prstart{Y_E=e^{-},E<\frac{R}{2}}{v}\left(-\log\prstart{Y_E=e^{-},E<\frac{R}{2}}{v}\right)^b
\]
and
\begin{align*}
H_b(Y^{(v)}_E)\quad\le&\quad\sum_{e}\prstart{Y_E=e^{-},E<\frac{R}{2}}{v}\left(-\log\prstart{Y_E=e^{-},E<\frac{R}{2}}{v}\right)^b
\\&+\sum_{e}\prstart{Y_E=e^{-},E\ge\frac{R}{2}}{v}\left(-\log\prstart{Y_E=e^{-},E\ge\frac{R}{2}}{v}\right)^b
\end{align*}
hence
\begin{align*}&\left|H_b(Y^{(v)}_E)-\sum_{e}\prstart{Y_E=e^{-},E<\frac{R}{2}}{v}\left(-\log\prstart{Y_E=e^{-},E<\frac{R}{2}}{v}\right)^b\right|
\\&\le\quad\sum_{e}\prstart{Y_E=e^{-},E\ge\frac{R}{2}}{v}\left(-\log\prstart{Y_E=e^{-},E\ge\frac{R}{2}}{v}\right)^b.
\end{align*}
Note that $\left(Y^{(v)}_E|E\ge k,Y_k=u\right)\eqdist Y^{(u)}_E$, hence\[
\prstart{Y_E=e^{-},E\ge\frac{R}{2}}{v}\quad=\quad\pr{E\ge\frac{R}{2}}\sum_{u}\prcond{Y_{\frac{R}{2}}=u}{E\ge\frac{R}{2}}{v}\prstart{Y_E=e^{-}}{u},
\]
therefore (using properties~\eqref{property:factorising} and \eqref{property:bound_by_set_size} in Lemma~\ref{lem:H_b_properties} and all three assumptions of Lemma~\ref{lem:aux_entropy_one}) we get that
\begin{align*}&\sum_{e}\prstart{Y_E=e^{-},E\ge\frac{R}{2}}{v}\left(-\log\prstart{Y_E=e^{-},E\ge\frac{R}{2}}{v}\right)^b
\\&\lesssim\quad \pr{E\ge\frac{R}{2}}\left(H_b\left(\left(Y^{(v)}_{\frac{R}{2}}\middle|E\ge\frac{R}{2}\right)\right)+\sum_{u}\prcond{Y_{\frac{R}{2}}=u}{E\ge\frac{R}{2}}{v}H_b\left(Y^{(u)}_E\right)\right)
\\&\lesssim\quad\left(1-\frac{\eps}{\Delta+\eps}\right)^{\frac{R}{2}}\left(\left(\log b(R)\right)^b+h_b\right)\quad\ll\quad h_b\quad\asymp\quad H_b\left(Y^{(v)}_E\right).
\end{align*}
In particular it shows that the sum in \eqref{eq:sum_small_E} is $\asymp h_b$.

Now let us proceed to approximating $H_b(X^{a,(v)}_{\tau^a_1})=H_b(X^{a,(v)}_{\tau^a_1-1})$. As before we get that
\begin{align*}&\left|H_b(X^{a,(v)}_{\tau^a_1-1})-\sum_{e}\prstart{Y_E=e^{-},E<\frac{R}{2}}{v}\left(-\log\prstart{Y_E=e^{-},E<\frac{R}{2}}{v}\right)^b\right|
\\&\le\quad\sum_{e}\prstart{X^a_{\tau^a_1-1}=e^{-},\tau^a_1-1\ge\frac{R}{2}}{v}\left(-\log\prstart{X^a_{\tau^a_1-1}=e^{-},\tau^a_1-1\ge\frac{R}{2}}{v}\right)^b
\\&\lesssim\quad \pr{\tau^a_1-1\ge\frac{R}{2}}H_b\left(\left(X^{a,(v)}_{\tau^a_1-1}\middle|\tau^a_1\ge\frac{R}{2}\right)\right)\:\lesssim\:\left(1-\frac{\eps}{\Delta+\eps}\right)^{\frac{R}{2}}\left(\log b(R)\right)^b\:\ll\: h_b.
\end{align*}
This finishes the proof of $H_b(X^{a,(v)}_{\tau_1})\asymp h_b$.

Let $X^{(v)}$ be the restriction of $X^{a,(v)}$ to $T$ (which is indeed distributed as a random walk on $T$). Then $X^{(v)}_{\tau_1}\eqdist\left(X^{a,(v)}_{\tau^a_1}\middle|X^{a,(v)}_{\tau^a_1}\ne\rho^a\right)$. From Lemma~\ref{lem:neverbacktrack} we know that $\prstart{X^a_{\tau^a_1}\ne\rho^a}{v}=1-o(1)$. Together these show that
\begin{align*}&H_b(X^{(v)}_{\tau_1})\quad=\quad\sum_{e}\prcond{X^a_{\tau^a_1}=e^+}{X^a_{\tau^a_1}\ne\rho^a}{v}\left(-\log\prcond{X^a_{\tau^a_1}=e^+}{X^a_{\tau^a_1}\ne\rho^a}{v}\right)^b
\\&=\quad\sum_{e}\frac{\prstart{X^a_{\tau^a_1}=e^+}{v}}{\prstart{X^a_{\tau^a_1}\ne\rho^a}{v}}\left(-\log\frac{\prstart{X^a_{\tau^a_1}=e^+}{v}}{\prstart{X^a_{\tau^a_1}\ne\rho^a}{v}}\right)^b\\&\asymp\quad
\sum_{e}\prstart{X^a_{\tau^a_1}=e^+}{v}\left(-\log\prstart{X^a_{\tau^a_1}=e^+}{v}\right)^b+1
\\&=\quad H_b(X^{a,(v)}_{\tau^a_1})+1-\prstart{X^a_{\tau^a_1}=\rho^a}{v}\left(-\log\prstart{X^a_{\tau^a_1}=\rho^a}{v}\right)^b\quad\asymp\quad H_b(X^{a,(v)}_{\tau^a_1})\quad\asymp\quad h_b
\end{align*}
where the sums are taken over all long-range edges $e$ of $T$ from the $R$-ball of $\rho$.
This finishes the proof.
\end{proof}

\begin{lemma}\label{lem:aux_entropy_two}
Let us consider the setup of Lemma~\ref{lem:aux_entropy_one} and assume that the assumptions also hold for $R=n$, with the same values of $h_b$. Let $\xi$ and $\til{\xi}$ be independent loop-erased random walks on $T$. Then we have
\begin{align}\label{eq:aux_entropy_two_result}
\econd{\left(-\log\prcond{\xi_1=\til{\xi}_1}{\xi,T}{\rho}\right)^b}{T}\quad\asymp\quad\econd{\left(-\log\prcond{X_{\tau_1}=\til{X}_{\til{\tau}_1}}{X,T}{\rho}\right)^b}{T}.
\end{align}
\end{lemma}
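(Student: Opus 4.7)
The plan is to write both sides as entropy-like sums over the level-$1$ long-range edges of $T$, and reduce the claim to a pointwise comparison of the underlying probability distributions. For each long-range edge $e=(e^-,e^+)$ from the $R$-ball of $\rho$, let $p(e) := \prcond{X_{\tau_1} = e^+}{T}{\rho}$ and $q(e) := \prcond{\xi_1 = e}{T}{\rho}$. Since $\xi$ and $\til\xi$ (respectively $X$ and $\til X$) are independent conditional on $T$, the LHS of~\eqref{eq:aux_entropy_two_result} equals $H_b(q)$ and the RHS equals $H_b(p)$ in the notation of Definition~\ref{def:Hb}, so it suffices to establish $H_b(p) \asymp H_b(q)$ for every realisation of $T$.

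First I would use Lemma~\ref{lem:firstgen} together with the explicit identity derived in its proof and Lemma~\ref{lem:neverbacktrack} to rewrite $q(e) = (1-o(1))\,\prcond{\tau_{e^+} < \infty}{T}{\rho}$. Since trivially $p(e) \le \prcond{\tau_{e^+} < \infty}{T}{\rho}$, this yields the one-sided comparison $q(e) \ge (1-o(1))\,p(e)$, and an elementary estimate of the function $f(x) = x (\log(1/x))^b$ then gives $H_b(p) \lesssim H_b(q)$. For the matching direction I would decompose $\{\tau_{e^+} < \infty\}$ based on $X_{\tau_1}$: either $X_{\tau_1} = e^+$ directly, or $X_{\tau_1} = e'^+$ for some $e' \ne e$, in which case the walk must later backtrack from the $R$-ball of $e'^+$ into the $R$-ball of $\rho$ before it can reach $e^+$, an event of probability at most $\delta$ by Corollary~\ref{cor:neverbacktrack}. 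Iterating this estimate yields $q(e) \le (1+o(1))\bigl(p(e) + \delta\,\widehat{p}(e)\bigr)$ with $\widehat{p}(e)$ a suitable maximum of $\prcond{X_{\tau_1} = e^+}{T}{v}$ over vertices $v$ in the $R$-ball of $\rho$.

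To deduce $H_b(q) \lesssim H_b(p)$, I would invoke Lemma~\ref{lem:aux_entropy_one} twice: once with the original radius $R$ to get $H_b(p) \asymp h_b$, and once with $R$ replaced by $n$ (which the extra hypothesis in Lemma~\ref{lem:aux_entropy_two} allows) to get $H_b(p^{(v)}) \asymp h_b$ uniformly in $v$ in the $R$-ball of $\rho$. Inserting the bound $q(e) \le (1+o(1))\bigl(p(e) + \delta\,\widehat{p}(e)\bigr)$ into the $H_b$ sum and applying the splitting property from Lemma~\ref{lem:H_b_properties} then shows that the additive correction contributes $O(\delta\,h_b) = o(h_b)$, so $H_b(q) \asymp H_b(p) \asymp h_b$. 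The main obstacle is precisely this control of the error term: for individual edges $e$ the quantity $\widehat{p}(e)$ can be much larger than $p(e)$, and the $R = n$ hypothesis is what provides the uniform entropy control needed to absorb the correction.
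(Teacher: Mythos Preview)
Your lower bound $H_b(q)\gtrsim H_b(p)$ via $q(e)\ge(1-\delta)p(e)$ matches the paper and is fine. The gap is in the other direction.

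The step that fails is the claim that the correction $\delta\,\widehat p(e)$, with $\widehat p(e)=\max_v p^{(v)}(e)$, contributes only $O(\delta\,h_b)$ to the $H_b$-sum. Taking a pointwise maximum over $v$ destroys the probability structure: $\widehat p$ is not a probability measure, and its total mass is in general enormous. Indeed, for each edge $e$ one has $\widehat p(e)\ge p^{(e^-)}(e)\ge\frac{\eps}{\Delta+\eps}$, so $\sum_e\widehat p(e)\gtrsim\eps\,b(R)$. In the linear-entropy regime $b(R)$ is exponential in $R\asymp\frac1\eps\log g(n)$, so $\delta\sum_e\widehat p(e)$ alone already blows up; the same happens for polynomial growth once the degree exceeds $4/3$. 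Hence $H_b(\delta\,\widehat p)$ cannot be bounded by $O(\delta\,h_b)$, and knowing $H_b(p^{(v)})\asymp h_b$ for each individual $v$ gives no control on the $H_b$-functional of the pointwise maximum.

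What the paper does instead is to keep the \emph{distribution} of the return vertex rather than bounding by a max. For $R=n$ it writes the exact identity
\[
q(e)=\sum_u \E{a_u}\,p^{(u)}(e)\,\prstart{\tau_{e^-}=\infty}{e^+},
\]
where $a_u$ counts returns to $u$. The point is that the measure $(\E{a_u})_u$ has bounded total mass $\le(1-\delta)^{-1}$, and the key technical step is an inductive bound (over the number $k$ of returns) showing $\sum_u\E{a_u}(-\log\E{a_u})^b\lesssim h_b$. This is exactly the information your max bound throws away. There is also a second issue you do not address: for general $R$, Lemma~\ref{lem:aux_entropy_one} only applies to $v$ with $B_G(v,R/2)\subseteq B_G(\rho,R)$, so one cannot directly get $H_b(p^{(v)})\asymp h_b$ uniformly on the radius-$R$ tree; the paper handles this by a separate coupling that realises $\xi_1$ on $T$ as $\tilde\xi_1$ on a radius-$n$ tree conditioned to exit through the $R$-ball.
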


\begin{proof}
We first present the proof of the $\gtrsim$ direction, which is very quick. Then we will prove the $\lesssim$ direction in case $R=n$. Finally we use this result to conclude the $\lesssim$ direction for any $R$ satisfying assumptions~\eqref{assump:ball_growth} and \eqref{assump:R} in Lemma~\ref{lem:aux_entropy_one}. To simplify formulae we will sometimes drop the conditioning on $T$ from the notation, but it is assumed throughout.

For any long-range edge $e$ from the $R$-ball of $\rho$ in $T$ we have $\prcond{\xi_1=e}{T}{\rho}\ge(1-\delta)\prcond{X_{\tau_1}=e}{T}{\rho}$, hence
\[\sum_{e}\prcond{\xi_1=e}{T}{\rho}\left(-\log\prcond{\xi_1=e}{T}{\rho}\right)^b\hspace{5cm}\]
\[\hspace{2cm}\gtrsim\quad \sum_{e}\prcond{X_{\tau_1}=e}{T}{\rho}\left(-\log\prcond{X_{\tau_1}=e}{T}{\rho}\right)^b\quad\asymp\quad h_b.\]
This proves the $\gtrsim$ direction.

Now let us assume that $R=n$ and prove the $\lesssim$ direction. In this case the $R$-ball $B$ of $\rho$ in $T$ is the whole graph $G$, so \eqref{eq:aux_entropy_one_result} holds for all vertices $v$ in $B$.

For any $u$ in $B$ and any $k\ge1$ let $A_{k,u}$ be the event that $X$ leaves $B$ and returns to it at least $k$ times and the $k$th return is at vertex $u$. For $k=0$ we take $A_{0,u}$ such that $\pr{A_{0,\rho}}=\1_{u=\rho}$.

Then for any long-range edge $e$ from $B$ we have
\[\prstart{\xi_1=e}{\rho}\quad=\quad\sum_{k\ge0}\sum_{u}\prstart{A_{k,u}}{\rho}\prstart{X_{\tau_1}=e^+}{u}\prstart{\tau_{e^-}=\infty}{e^+}.
\]
Note that $\sum_{k\ge0}\prstart{A_{k,u}}{\rho}=\estart{a_u}{\rho}$ where $a_u=\#(\text{returns of }X\text{ to }B\text{ via }u)+\1_{u=\rho}$. We know that
\begin{itemize}
\item $\sum_{u}\estart{a_u}{\rho}=\E{\#(\text{returns of }X\text{ to }B)}\le\E{\Geompos{1-\delta}}\le\frac{1}{1-\delta},$
\item $\sum_e\prstart{X_{\tau_1}=e^+}{u}=1,$
\item $\sum_e\prstart{X_{\tau_1}=e^+}{u}\left(-\log\prstart{X_{\tau_1}=e^+}{u}\right)^b\asymp h_b,$
\item $1-\delta\le\prstart{\tau_{e^-}=\infty}{e^+}\le1.$
\end{itemize}

This then gives
\begin{align*}&\econd{\left(-\log\prcond{\xi_1=\til\xi_1}{\xi,T}{\rho}\right)^b}{T}\quad=\quad\sum_e\prstart{\xi_1=e}{\rho}\left(-\log\prstart{\xi_1=e}{\rho}\right)^b
\\&=\sum_e\left(\sum_u\estart{a_u}{\rho}\prstart{X_{\tau_1}=e^+}{u}\prstart{\tau_{e^-}=\infty}{e^+}\right)\left(-\log\left(\sum_u\estart{a_u}{\rho}\prstart{X_{\tau_1}=e^+}{u}\prstart{\tau_{e^-}=\infty}{e^+}\right)\right)^b
\\&\le\sum_e\sum_u\estart{a_u}{\rho}\prstart{X_{\tau_1}=e^+}{u}\prstart{\tau_{e^-}=\infty}{e^+}\left(-\log\left(\estart{a_u}{\rho}\prstart{X_{\tau_1}=e^+}{u}\prstart{\tau_{e^-}=\infty}{e^+}\right)\right)^b
\end{align*}
\begin{align}\label{eq:not_return}
&\asymp\quad\sum_u\estart{a_u}{\rho}\sum_e\prstart{X_{\tau_1}=e^+}{u}\prstart{\tau_{e^-}=\infty}{e^+}\left(-\log\prstart{\tau_{e^-}=\infty}{e^+}\right)^b\\
\label{eq:entropy_from_u}
&\quad+\quad\sum_u\estart{a_u}{\rho}\sum_e\left(\prstart{X_{\tau_1}=e^+}{u}\left(-\log\prstart{X_{\tau_1}=e^+}{u}\right)^b\right)\prstart{\tau_{e^-}=\infty}{e^+}\\
\label{eq:entropy_of_au}
&\quad+\quad\sum_u\left(\estart{a_u}{\rho}\left(-\log\estart{a_u}{\rho}\right)^b\right)\sum_e\prstart{X_{\tau_1}=e^+}{u}\prstart{\tau_{e^-}=\infty}{e^+}.
\end{align}
The first term~\eqref{eq:not_return} on the right-hand side is $\lesssim1$ and the second one~\eqref{eq:entropy_from_u} is $\lesssim1$. We will prove that
\begin{align}\label{eq:entropy_au_bound}
\sum_u\estart{a_u}{\rho}\left(-\log\estart{a_u}{\rho}\right)\quad\lesssim\quad h_b.
\end{align}
Once we establish that, it follows that the third term~\eqref{eq:entropy_of_au} is $\lesssim h_b$, hence\\ $\E{\left(-\log\prcond{\xi_1=\til\xi_1}{\xi}{\rho}\right)^b}\lesssim h_b$ as required. Let us proceed to prove~\eqref{eq:entropy_au_bound}.\\
We know that
\[
\estart{a_u}{\rho}-\1_{u=\rho}=\sum_{k\ge1}\prstart{A_{k,u}}{\rho},
\]
hence
\[
\sum_u\estart{a_u}{\rho}\left(-\log\estart{a_u}{\rho}\right)^b\quad\lesssim\quad\sum_{k\ge1}\sum_{u}\prstart{A_{k,u}}{\rho}\left(-\log\prstart{A_{k,u}}{\rho}\right)^b.
\]
We will prove by induction on $k$ that for each $k\ge1$ we have
\[
\sum_{u}\prstart{A_{k,u}}{\rho}\left(-\log\prstart{A_{k,u}}{\rho}\right)^b\quad\lesssim\quad k\delta^k h_b
\]
For any vertex $w$ of $B$ we have $\prstart{A_{1,u}}{w}=\sum_{u}\prstart{X_{\tau_1}=u}{w}\prstart{\tau_{u}<\infty}{u^+}$, hence
\begin{align*}&\sum_{u}\prstart{A_{1,u}}{w}\left(-\log\prstart{A_{1,u}}{w}\right)^b
\\&\asymp\quad\sum_{u}\prstart{X_{\tau_1}=u^+}{w}\left(-\log \prstart{X_{\tau_1}=u^+}{w}\right)^b\prstart{\tau_{u}<\infty}{u^+}
\\&\quad+\quad\sum_{u}\prstart{X_{\tau_1}=u}{w}\prstart{\tau_{u}<\infty}{u^+}\left(-\log\prstart{\tau_{u}<\infty}{u^+}\right)^b
\\&\le\quad\delta\sum_{u}\prstart{X_{\tau_1}=u^+}{w}\left(-\log \prstart{X_{\tau_1}=u^+}{w}\right)^b\:+\:\delta(-\log\delta)\sum_{u}\prstart{X_{\tau_1}=u^+}{w}\:\lesssim\:\delta h_b,
\end{align*}
which is the required bound for $k=1$.\\
Also for all $k\ge1$ we have $\sum_{u}\prstart{A_{k,u}}{w}=\prstart{X\text{ leaves }B\text{ and returns }\ge k\text{ times}}{w}\le\delta^k$.\\
Note that $\prstart{A_{k,u}}{\rho}=\sum_{w}\prstart{A_{k-1,w}}{\rho}\prstart{A_{1,u}}{w}$, hence (assuming the bound for $k-1$) we get
\begin{align*}&\sum_{u}\prstart{A_{k,u}}{\rho}\left(-\log\prstart{A_{k,u}}{\rho}\right)^b\\
&\le\quad\sum_{w}\sum_{u}\prstart{A_{k-1,w}}{\rho}\prstart{A_{1,u}}{w}\left(-\log\left(\prstart{A_{k-1,w}}{\rho}\prstart{A_{1,u}}{w}\right)\right)^b
\\&\asymp\quad\sum_{w}\prstart{A_{k-1,w}}{\rho}\left(-\log\prstart{A_{k-1,w}}{\rho}\right)^b\sum_{u}\prstart{A_{1,u}}{w}\\
&\qquad+\quad\sum_{w}\prstart{A_{k-1,w}}{\rho}\sum_{u}\prstart{A_{1,u}}{w}\left(-\log\prstart{A_{1,u}}{w}\right)^b
\\&\lesssim\quad(k-1)\delta^{k-1}\delta h_b\quad+\quad\delta^{k-1}\delta h_b\quad\asymp\quad k\delta^k h_b.
\end{align*}
Therefore
\[
\sum_u\estart{a_u}{\rho}\left(-\log\estart{a_u}{\rho}\right)\quad\le\quad\sum_{k\ge1}\sum_{u}\prstart{A_{k,u}}{\rho}\left(-\log\prstart{A_{k,u}}{\rho}\right)\quad\lesssim\quad\sum_{k\ge1}k\delta^k h_b\quad\lesssim\quad h_b.
\]
This finishes the proof of the $\lesssim$ direction in~\eqref{eq:aux_entropy_two_result} in case $R=n$.

Now let us consider any $R$ satisfying  assumptions~\eqref{assump:ball_growth} and \eqref{assump:R} in Lemma~\ref{lem:aux_entropy_one}. Let $B$ be the $R$-ball of $\rho$ in $T$. Let $\til{T}$ be a realisation of the random quasi-tree with root $\rho$, with radius $n$, containing $T$ as a subtree. Let $\til{B}$ be the ball of $\rho$ in $\til{T}$. Let $\hat{T}$ be a graph obtained by removing the long-range edges of $\til{T}$ that start from the vertices in $\til{B}\setminus B$ and removing any parts that got disconnected from $\rho$. Note that $T$ is a subgraph of $\hat{T}$. Let $\til{X}$ be a random walk on $\til{T}$ and let $\til{\xi}$ be its loop-erasure. Note that the restriction of $\til{X}$ to $\hat{T}$ is a random walk on $\hat{T}$, but it is possible that it only runs for a finite number of steps (if $\til{\xi}_1\in B\setminus B'$). Let $\hat{X}$ be a random walk on $\hat{T}$ from $\rho$ such that in case $\til{\xi}_1\in B\setminus B'$ it agrees with the restriction of $\til{X}$ to $\hat{T}$ and otherwise it is independent of $\til{X}$. Let $\hat{\xi}$ be the loop-erasure of $\hat{X}$. Note that for any long-range edge $e$ in $\til{T}$ from $B'$ we have
\begin{align*}&\pr{\hat{\xi}_1=e}\quad=\quad\pr{\hat{\xi}_1=e,\til{\xi}_1\not\in B'}+\pr{\hat{\xi}_1=e,\til{\xi}_1\in B'}
\\&=\:\pr{\til{\xi}_1\not\in B'}\prcond{\hat{\xi}_1=e}{\til{\xi}_1\not\in B'}{}+\pr{\til{\xi}_1=e}\:=\:\pr{\til{\xi}_1\not\in B'}\pr{\hat{\xi}_1=e}+\pr{\til{\xi}_1=e},
\end{align*}
hence\[
\pr{\hat{\xi}_1=e}\quad=\quad\frac{\pr{\til{\xi}_1=e}}{1-\pr{\til{\xi}_1\not\in B'}}\quad=\quad\prcond{\til{\xi}_1=e}{\til{\xi}_1\in B'}{}.
\]
This means that $\hat{\xi}_1\eqdist\left(\til{\xi}_1\middle|\til{\xi}_1\in B'\right)$.\\
Let $X$ be the restriction of $\hat{X}$ to $T$, which is a random walk on $T$. Let $\xi$ be the loop-erasures of $X$. Then $\xi=\hat{\xi}$.\\
We also know that $1\ge\pr{\til{\xi}_1\in B'}\ge\pr{\tau_1<R}(1-\delta)\gtrsim1$. Overall this shows that
\begin{align*}\sum_{e\in B'} \prstart{\xi_1=e}{\rho}\left(-\log\prstart{\xi_1=e}{\rho}\right)^b\quad &\asymp\quad\sum_{e\in B'} \prstart{\til{\xi}_1=e}{\rho}\left(-\log\prstart{\til{\xi}_1=e}{\rho}\right)^b
\\&\le\quad\sum_{e\in B} \prstart{\til{\xi}_1=e}{\rho}\left(-\log\prstart{\til{\xi}_1=e}{\rho}\right)^b\quad\asymp\quad h_b.
\end{align*}
This finishes the proof.
\end{proof}

\begin{lemma}\label{lem:aux_entropy_three}
Let $G$ be as before. Let $\rho$ be any vertex of $G$ and let $T$ be any realisation of the random quasi-tree corresponding to $G$, rooted at $\rho$. Let $\xi$ and $\til{\xi}$ be independent loop-erased random walks on $T$. Also let $T'$ be any realisation of the conditioned random quasi-tree corresponding to $G$ as in Definition~\ref{def:Tprime}, rooted at $\rho$. Let $\xi'$ and $\til{\xi}'$ be independent loop-erased random walks on $T'$. Let $b\in\Zpos$. Assume that there exist $h_b$ such that for any choice of $\rho$ and $T$ we have
\begin{align}\label{eq:aux_entropy_three_assumption}
\econd{\left(-\log\prcond{\xi_1=\til{\xi}_1}{\xi,T}{\rho}\right)^b}{T}\quad\asymp\quad h_b.
\end{align}
Then we have
\begin{align}
\econd{\left(-\log\prcond{\xi'_1=\til{\xi}'_1}{\xi',T'}{\rho}\right)^b}{T'}\quad\asymp\quad h_b.
\end{align}
\end{lemma}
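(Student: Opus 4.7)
The plan is to reduce the statement for $T'$ to the assumption~\eqref{eq:aux_entropy_three_assumption} applied to the underlying quasi-tree $T := T' \setminus \{\rho^a\}$ obtained by deleting the extra pendant vertex and its long-range edge. Once the loop-erased walk $\xi'$ on $T'$ is identified in law with the loop-erased walk $\xi$ on $T$, the conclusion is immediate, since $T$ is itself a valid realization of the random quasi-tree corresponding to $G$ and so the assumption applies to it.

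The key observation I would establish first is that every loop-erased random walk on $T'$ started at $\rho$ necessarily omits $\rho^a$. Since $\rho^a$'s only neighbor is $\rho$, every visit to $\rho^a$ is immediately followed by a return to $\rho$, producing a loop $\rho\to\rho^a\to\rho$. By transience, the walker visits $\rho$ only finitely often, so after the final visit to $\rho$ the walker can no longer reach $\rho^a$; every $\rho^a$-visit therefore lies strictly between two subsequent $\rho$-visits and is removed by the chronological loop-erasure procedure. A routine check then shows that the projection of a simple random walk on $T'$ onto the vertex set of $T$ (obtained by ignoring every excursion to $\rho^a$) is itself a simple random walk on $T$: at $\rho$ the conditional law of the next $T$-step given that the walker does not excurse to $\rho^a$ is uniform over the $\deg_T(\rho)$ neighbors of $\rho$ in $T$, matching the transition law on $T$; at every other vertex the two dynamics already agree. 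Combining these two facts, the canonical coupling of the walks on $T'$ and $T$ produces identical loop-erased paths, so $\xi' \eqdist \xi$ as random sequences of long-range edges given the graph structure.

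The argument then concludes in one line: for every long-range edge $e$ between levels $0$ and $1$ of $T$ we have $\prcond{\xi'_1=e}{T'}{\rho} = \prcond{\xi_1=e}{T}{\rho}$, so the conditional expectation on the left-hand side of the lemma's conclusion equals the corresponding quantity for $T$, which by assumption~\eqref{eq:aux_entropy_three_assumption} is $\asymp h_b$. I do not foresee a substantive obstacle; the only delicate point is the loop-erasure reduction described above, and it rests only on the pendant structure of $\rho^a$ together with transience.
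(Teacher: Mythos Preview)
Your proposal is correct and follows essentially the same approach as the paper. The paper's proof is a single line asserting the distributional identity $\left(\xi',\til{\xi}'\mid T'=t\right)\eqdist\left(\xi,\til{\xi}\mid T=t\right)$ and then invoking the assumption; you have supplied the justification for that identity (pendant structure of $\rho^a$, transience, and the projection-of-walks argument) that the paper leaves implicit.
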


\begin{proof}
For any quasi-tree $t$ we have $\left(\xi',\til{\xi}'\middle|T'=t\right)\eqdist\left(\xi,\til{\xi}\middle|T=t\right)$. The result follows from assumption \eqref{eq:aux_entropy_three_assumption}.
\end{proof}

\begin{lemma}\label{lem:aux_entropy_four}
Let $G$ be as before. Let $\rho$ be any vertex of $G$ and let $T$ be any realisation of the random quasi-tree corresponding to $G$, rooted at $\rho$. Let $\xi$ and $\til{\xi}$ be independent loop-erased random walks on $T$.

Also let $T'$, $X'$ and $\xi'$ be as in Definition~\ref{def:Tprime}, with $T'$ rooted at $\rho$. Let $\left(\til{X}',\til{\xi}'\right)$ be an independent copy of $\left(X',\xi'\right)$ given $T'$.

Let $b\in\Zpos$. Assume that there exist $h_b$ such that for any choice of $\rho$ and $T$ we have
\begin{align}\label{eq:aux_entropy_four_assumption}
\econd{\left(-\log\prcond{\xi_1=\til{\xi}_1}{\xi,T}{\rho}\right)^b}{T}\quad\asymp\quad h_b.
\end{align}
Then we have
\begin{align}\label{eq:aux_entropy_four}
\econd{\left(-\log\prcond{X'_{\sigma'_1}\in\til{\xi}'}{X',T'}{\rho}\right)^b}{T'}\quad\asymp\quad\econd{\left(-\log\prcond{\xi'_1=\til{\xi}'_1}{\xi',T'}{\rho}\right)^b}{T'}.
\end{align}
\end{lemma}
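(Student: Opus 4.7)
Fix a realisation of $T'$ and abbreviate $p(y):=\prcond{X'_{\sigma'_1}=y}{T'}{\rho}$ for centres $y$ of $R$-balls in $T'$, and $Q(y):=\prcond{y\in\til\xi'}{T'}{\rho}$. Since the LERW $\til\xi'$ visits precisely the centres $\til\xi'_1{}^+,\til\xi'_2{}^+,\dots$, one has $Q(y)=\prcond{\til\xi'_{\ell(y)}=e^*(y)}{T'}{\rho}$, where $e^*(y)$ is the unique long-range edge with $e^*(y)^+=y$. Then the left-hand side of \eqref{eq:aux_entropy_four} equals
\[
A:=\sum_y p(y)\bigl(-\log Q(y)\bigr)^b.
\]
On the other hand, using exchangeability of $\xi'$ and $\til\xi'$, the right-hand side of \eqref{eq:aux_entropy_four} equals $B:=\sum_{e:\ell(e)=1} f(e)\bigl(-\log f(e)\bigr)^b$ with $f(e):=\prcond{\til\xi'_1=e}{T'}{\rho}$, and by Lemma~\ref{lem:aux_entropy_three} (applied to the assumption \eqref{eq:aux_entropy_four_assumption}) we have $B\asymp h_b$. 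The plan is to split $A$ by the level of $y$ and compare each piece with $B$ using Lemma~\ref{lem:bound_prob_Xprime_sigma1_equals_x}.

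For the lower bound $A\gtrsim h_b$: restrict the sum to $y$ at level $1$. For such $y$ the second clause of Lemma~\ref{lem:bound_prob_Xprime_sigma1_equals_x} gives $p(y)\asymp Q(y)=f(e^*(y))$, and because $y\mapsto e^*(y)$ is a bijection between level-$1$ centres and level-$1$ long-range edges,
\[
A\ \ge\ \sum_{y:\ell(y)=1} p(y)\bigl(-\log Q(y)\bigr)^b\ \asymp\ \sum_e f(e)\bigl(-\log f(e)\bigr)^b\ =\ B\ \asymp\ h_b.
\]

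For the upper bound $A\lesssim h_b$, the level-$1$ contribution is again $\asymp B\asymp h_b$ by the same argument. For level $r\ge 2$, the first clause of Lemma~\ref{lem:bound_prob_Xprime_sigma1_equals_x} gives $p(y)\lesssim(2\delta^2)^{r-1}Q(y)$, whence the level-$r$ piece is bounded by $(2\delta^2)^{r-1}H_b(\xi'_r\mid T')$, where $H_b(\xi'_r\mid T'):=\sum_e f_r(e)(-\log f_r(e))^b$ and $f_r(e):=\prcond{\xi'_r=e}{T'}{\rho}$. Since $2\delta^2=o(1)$ as $n\to\infty$, it will suffice to show $H_b(\xi'_r\mid T')\lesssim r^b\, h_b$ uniformly in the realisation of $T'$; summing the geometric series then gives $\sum_{r\ge 2}(2\delta^2)^{r-1}r^b h_b=o(h_b)$, completing the upper bound.

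The main technical point is therefore proving $H_b(\xi'_r\mid T')\lesssim r^b h_b$. Decompose $-\log\prcond{\xi'_r=e}{T'}{\rho}=\sum_{i=1}^r L_i$ with $L_i:=-\log\prcond{\xi'_i\mid\xi'_{<i}}{T'}{\rho}$. The tree-like structure of $T'$ and the Markov property of the LERW at regeneration edges (as in Lemma~\ref{lem:indep_between_regenerations}) imply that, conditional on $\xi'_{<i}$ ending at $y_{i-1}$, the law of $\xi'_i$ coincides with the law of the first long-range edge of a LERW on $T'(y_{i-1})$ started from $y_{i-1}$; since $T'(y_{i-1})$ is distributed as the (unconditioned) random quasi-tree $T$, assumption \eqref{eq:aux_entropy_four_assumption} yields $\E[L_i^b\mid T']\lesssim h_b$ uniformly in $T'$ and $i$. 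Applying the power-mean inequality $(\sum_{i=1}^r L_i)^b\le r^{b-1}\sum_{i=1}^r L_i^b$ and taking conditional expectation gives the desired bound $H_b(\xi'_r\mid T')\le r^b\cdot c\,h_b$. This is the step requiring the most care, because it is where the Markov structure of the LERW on the quasi-tree must be invoked together with the hypothesis on $h_b$; the remaining geometric-sum estimate is routine.
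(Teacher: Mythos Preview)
Your proof is correct and follows essentially the same route as the paper: both split by the level $r$ of $X'_{\sigma'_1}$, use the two clauses of Lemma~\ref{lem:bound_prob_Xprime_sigma1_equals_x} to handle $r=1$ and to extract the factor $(2\delta^2)^{r-1}$ for $r\ge 2$, and then bound $H_b(\xi'_r\mid T')\lesssim r^b h_b$ via the chain rule for $\prcond{\xi'_r=e}{T'}{}$ combined with the power-mean inequality $(\sum_i L_i)^b\le r^{b-1}\sum_i L_i^b$. One small correction of phrasing: since the whole argument runs conditional on a fixed realisation of $T'$, the subtree $T'(y_{i-1})$ is not ``distributed as'' anything --- the point is simply that for $i\ge 2$ it is itself a possible realisation of a quasi-tree rooted at $y_{i-1}$, so hypothesis~\eqref{eq:aux_entropy_four_assumption} (which is assumed for \emph{every} realisation) applies to it directly.
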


\begin{proof} For $r\ge1$ let $A_r(T')$ denote the set of long-range edges of $T'$ between levels $r-1$ and $r$. Note that
\[\E{\left(-\log\prcond{X'_{\sigma'_1}\in\xi'}{X',T'}{\rho}\right)^b}\hspace{7cm}\]
\[\hspace{2cm}=\quad\E{\sum_{r\ge1}\sum_{e\in A_r(T')}\left(-\log\prcond{\xi'_r=e}{T'}{\rho}\right)^b\prcond{X'_{\sigma'_1}=e^+}{T'}{\rho}}.
\]
We will show that the sum of the terms with $r=1$ is of order $\E{\left(-\log\prcond{\xi'_1=\til{\xi}'_1}{\xi',T'}{\rho}\right)^b}$, while the sum of the terms with $r\ge2$ is of strictly smaller order.

From Lemma~\ref{lem:bound_prob_Xprime_sigma1_equals_x} we know that for any $e\in A_1(T')$ we have $\prcond{X'_{\sigma'_1}=e^+}{T'}{\rho}\asymp\prcond{\xi'_1=e}{T'}{\rho}$ and for any $e\in A_r$ ($r\ge1$) we have $\prcond{X'_{\sigma'_1}=e^+}{T'}{\rho}\lesssim(2\delta^2)^{r-1}\prcond{\xi'_r=e}{T'}{\rho}$.
This gives that
\begin{align*}&\E{\sum_{e\in A_1(T')}\left(-\log\prcond{\xi'_1=e}{T'}{\rho}\right)^b\prcond{X'_{\sigma'_1}=e^+}{T'}{\rho}}
\\&\asymp\:\E{\sum_{e\in A_1(T')}\left(-\log\prcond{\xi'_1=e}{T'}{\rho}\right)^b\prcond{\xi'_1=e}{T'}{\rho}}\:=\:\E{\left(-\log\prcond{\xi'_1=\til{\xi}'_1}{\xi',T'}{\rho}\right)^b}.
\end{align*}
We also get that
\[\E{\sum_{r\ge2}\sum_{e\in A_r(T')}\left(-\log\prcond{\xi'_r=e}{T'}{\rho}\right)^b\prcond{X'_{\sigma'_1}=e^+}{T'}{\rho}}\]
\begin{align}\label{eq:bound_r_ge_two_terms}
\lesssim\quad \sum_{r\ge2}(2\delta^2)^{r-1}\E{\sum_{e\in A_r(T')}\left(-\log\prcond{\xi'_r=e}{T'}{\rho}\right)^b\prcond{\xi'_r=e}{T'}{\rho}}.
\end{align}
For a given $T'$ and given $e\in A_r(T')$ let $(e_i)_{i=1}^r$ be the long-range edges leading from $\rho$ to $e$ and let $e_0^+=\rho$. Then we have
\begin{align*}&\left(-\log\prcond{\xi'_r=e}{T'}{\rho}\right)^b\prcond{\xi'_r=e}{T'}{\rho}
\\&=\quad \left(\sum_{i=1}^r-\log\prcond{\xi'_i=e_i}{T',\xi'_{i-1}=e_{i-1}}{\rho}\right)^b\prod_{i=1}^r\prcond{\xi'_i=e_i}{T',\xi'_{i-1}=e_{i-1}}{\rho}
\end{align*}
\begin{align}\label{eq:expand_for_given_e}
\le\quad r^{b-1}\sum_{i=1}^r\left(-\log\prcond{\xi'_i=e_i}{T',\xi'_{i-1}=e_{i-1}}{\rho}\right)^b\prcond{\xi'_i=e_i}{T',\xi'_{i-1}=e_{i-1}}{\rho}.\quad\footnotemark
\end{align}

\footnotetext{We used here that for $r,b\in\Zpos$ and $a_1,...,a_r>0$ we have $\left(\sum_{i=1}^r a_i\right)^b\le r^{b-1}\sum_{i=1}^r a_i^b$. This can be proved by induction on $b$ and repeated use of the rearrangement inequality.}

Note that given $T'$, the process $\xi'$ has the distribution of a loop-erased random walk $\xi$ on $T'$. Hence \[
\prcond{\xi'_i=e_i}{T',\xi'_{i-1}=e_{i-1}}{\rho}\:=\:\prcond{\xi_i=e_i}{T',\xi_{i-1}=e_{i-1}}{\rho}\:=\:\prcond{\xi^{i}_1=e_i}{T'(e_{i-1}^+)}{e_{i-1}^+}
\]
where $\xi^{i}$ is a loop-erased random walk on tree $T'(e_{i-1}^+)$, started from $e_{i-1}^+$.

By assumption \eqref{eq:aux_entropy_four_assumption} and by using that $\xi'$ is a loop-erased random walk on $T'$ we know that for any realisation of $T'$ and for any fixed $e_{i-1}\in A_{i-1}(T')$ we have
\begin{align*}&\econd{\sum_{e_i\in A_{1}(T'(e_{i-1}^+))} \left(-\log\prcond{\xi^{i}_1=e_i}{T'(e_{i-1}^+)}{e_{i-1}^+}\right)^b\prcond{\xi^{i}_1=e_i}{T'(e_{i-1}^+)}{e_{i-1}^+}}{T'}
\\&\asymp\quad\econd{\sum_{e\in A_1(T')}\left(-\log\prcond{\xi'_1=e}{T'}{\rho}\right)^b\prcond{\xi'_1=e}{T'}{\rho}}{T'}.
\end{align*}
Plugging this back into \eqref{eq:bound_r_ge_two_terms} we get that
\begin{align*}&\E{\sum_{r\ge2}\sum_{e\in A_r(T')}\left(-\log\prcond{\xi'_r=e}{T'}{\rho}\right)^b\prcond{X'_{\sigma'_1}=e^+}{T'}{\rho}}
\\&\lesssim\: \sum_{r\ge2}(2\delta^2)^{r-1}r^{b-1}\E{\sum_{e\in A_r(T')}\sum_{i=1}^r\left(-\log\prcond{\xi^{i}_1=e_i}{T'(e_{i-1}^+)}{e_{i-1}^+}\right)^b\prcond{\xi^{i}_1=e_i}{T'(e_{i-1}^+)}{e_{i-1}^+}}
\end{align*}
\begin{align*}&\asymp\quad\sum_{r\ge2}(2\delta^2)^{r-1}r^{b}\E{\sum_{e\in A_1(T')}\left(-\log\prcond{\xi'_1=e}{T'}{\rho}\right)^b\prcond{\xi'_1=e}{T'}{\rho}}
\\&\ll\quad\E{\sum_{e\in A_1(T')}\left(-\log\prcond{\xi'_1=e}{T'}{\rho}\right)^b\prcond{\xi'_1=e}{T'}{\rho}}.
\end{align*}
This finishes the proof.
\end{proof}

\section{Some miscellaneous statements}\label{app:other_remainings_pfs}

\begin{proof}[Proof of Prop~\ref{pro:smaller_eps}]
Let $X$ be a random walk on $G$ and $Y$ be a random walk on $G^*$, starting from the same vertex. Let us couple them such that they move together until $\tauLR$, the first time that $Y$ crosses a long-range edge. Note that for any $t\asymp \tmixtext^G(\theta)\ll\frac1\eps$ we have
\[
\dtv{P^t_X(x,\cdot)}{P^t_Y(x,\cdot)}\quad\le\quad\prstart{\tauLR<t}{x}\quad\ll\quad1,
\]
where the $\ll$ follows from a union bound. Also
\begin{align*}
\dtv{\pi_X(\cdot)}{\pi_Y(\cdot)}\quad&=\quad\frac12\sum_{y}\left|\frac{\deg(y)}{2|E|}-\frac{\deg(y)+\eps}{2|E|+n\eps}\right|\quad=\quad\sum_{y}\frac{\left|2\eps |E|-\eps n\deg(y)\right|}{4|E|(2|E|+n\eps)}\\
&\lesssim\quad\eps\cdot n\cdot\frac{n}{n^2}\quad\asymp\quad\eps\quad\ll\quad1.
\end{align*}
(Here $\deg$ is the degree in $G$, and $E$ is the set of edges of $G$.) Therefore for any $t\asymp \tmixtext^G(\theta)$ we have
\[\left|\dtv{P^t_Y(x,\cdot)}{\pi_Y(\cdot)}-\dtv{P^t_X(x,\cdot)}{\pi_X(\cdot)}\right|\hspace{5cm}\]
\[\hspace{2cm}\le\quad\dtv{P^t_X(x,\cdot)}{P^t_Y(x,\cdot)}+\dtv{\pi_X(\cdot)}{\pi_Y(\cdot)}\quad\ll\quad1.\]
This shows that $Y$ has cutoff if and only if $X$ does.
\end{proof}

\begin{proof}[Proof of Lemma~\ref{lem:tmix_bound_by_tmixlazy}]
From \cite[Theorem 1.4]{mix_hit} we know that
$$\tavetext\left(\frac14\right):=\quad\min\left\{t:\:\max_{x}\dtv{\frac{P^t(x,\cdot)+P^{t+1}(x,\cdot)}{2}}{\pi(\cdot)}\le\frac14\right\}\quad\asymp\quad\tmixtext^{\lazytext}\left(\frac14\right).$$

Let $t=\tavetext\left(\frac14\right)$. Then for any $x$ we have
\begin{align*}\frac34\quad&\le\quad1-\dtv{\frac{P^t(x,\cdot)+P^{t+1}(x,\cdot)}{2}}{\pi(\cdot)}\\
&=\quad\sum_{y}\left(\frac{P^t(x,y)+P^{t+1}(x,y)}{2}\right)\wedge\pi(y)\\
&\le\footnotemark\quad\sum_{y}P^t(x,y)\wedge\pi(y)\quad+\quad\sum_{y}P^{t+1}(x,y)\wedge\pi(y).
\end{align*}

\footnotetext{Here we used that for any $A,B,C\in\R_{\ge0}$ we have $\left(\frac{A+B}{2}\right)\wedge C\le(A\wedge C)+(B\wedge C)$.
}

This shows that for any $x$ we have
$\sum_{y}P^t(x,y)\wedge\pi(y)\ge\frac38$ or $\sum_{y}P^{t+1}(x,y)\wedge\pi(y)\ge\frac38$.

Using that the degrees are bounded and $\pi(y)\asymp\pi(z)$ for all $y$, $z$ we get
\[
\sum_{y}P^{t+1}(x,y)\wedge\pi(y)\quad=\quad\sum_{y}\left(\sum_{z:z\sim y}\frac{1}{\deg(z)}P^{t}(x,z)\right)\wedge\pi(y)\quad\asymp\quad\sum_{z}P^{t}(x,z)\wedge\pi(z),
\]
therefore in either case there exists a $\theta\in(0,1)$ such that
\[
1-\dtv{P^{t}(x,\cdot)}{\pi(\cdot)}\quad=\quad\sum_{y}P^{t}(x,y)\wedge\pi(y)\quad\ge\quad\theta.
\]
This shows that $\tmix{1-\theta}\le t\asymp\tmixtext^{\lazytext}$.
\end{proof}

\begin{proof}[Proof of Lemma~\ref{lem:Pt_upper_bound_lazy}]
By \cite[Theorem 1.2]{RW_on_Eulerian_digraphs} we know that there exists a constant $c_2$ such that for all $a>0$ we have
\begin{align}\label{eq:t_unif_bound}
\tunif{a}\quad\le\quad\frac{c_2}{a}\left((mn)\wedge\frac{m^2}{a}\right)
\end{align}
where $m$ is the number of edges of $G$,
\[
\tunif{a}\quad=\quad\min\left\{t\ge0:\text{ }\max_{u,v}\left|\frac{P^t(u,v)}{\pi(v)}-1\right|\le a\right\},
\]
$P$ is the transition matrix of the walk $X$ and $\pi$ is the corresponding invariant distribution.

Let us apply~\eqref{eq:t_unif_bound} with $a=\frac{\sqrt{c_2}\Delta n}{\sqrt{t}}\ge\frac{\sqrt{c_2}\Delta}{\sqrt{A}}$. We know that $m\le\Delta n$ and $\pi(y)\le\frac{\Delta}{n}$, hence we get that
\[
\tunif{a}\quad\le\quad\frac{c_2}{a}\left(\left(\Delta n^2\right)\wedge\frac{\Delta^2n^2}{a}\right)\quad\le\quad\frac{c_2\Delta^2n^2}{a^2}\quad=\quad t,
\]
therefore
\[
\left|\frac{P^t(x,y)}{\pi(y)}-1\right|\quad\le\quad a,
\]
hence
\[
P^t(x,y)\quad\le\quad(a+1)\pi(y)\quad\le\quad a\left(1+\frac{\sqrt{A}}{\Delta\sqrt{c_2}}\right)\pi(y)\quad\le\quad\left(c_2\Delta^2+\Delta\sqrt{A}\right)\frac{1}{\sqrt{t}}.
\]
This finishes the proof.
\end{proof}

\begin{proof}[Proof of Lemma~\ref{lem:Pt_upper_bound}]
Let $E$ be a renewal process with $\Geompos{\frac12}$ renewal times, independently of $X$. Then $Y_t:=X_{E_t}$ is a lazy simple random walk on $G$ from $x$. Let $P$ be the transition matrix of the chain $X$ and $\pi$ the corresponding invariant distribution.

Using Lemma~\ref{lem:Pt_upper_bound_lazy} and that for any vertex $x$ the transition probability $\prstart{X_{2t}=x}{x}$ is non-increasing in $t$ (see e.g. \cite[Proposition 10.25 (i)]{MTMC_book}), we get that for any $t$ and for any $x$ we have
\begin{align*}\frac{1}{\sqrt{t}}\quad&\gtrsim\quad\prstart{Y_{6t}=x}{x}\quad=\quad\sum_{k=0}^{6t}\pr{E_{6t}=k}P^{k}(x,x)
\\&\ge\quad\sum_{k=t}^{3t}\pr{E_{6t}=2k}P^{2k}(x,x)\quad\ge\quad\pr{E_{6t}\ge2t,E_{6t}\text{ even}}P^{2t}(x,x).
\end{align*}
We know that $\pr{E_{6t}\ge2t}=1-o(1)$ and $\pr{E_{6t}\text{ even}}\asymp1$,
hence $\pr{E_{6t}\ge2t,E_{6t}\text{ even}}\gtrsim1$, and so
\[
\frac{1}{\sqrt{t}}\quad\gtrsim\quad P^{2t}(x,x).
\]
Then using reversibility, Cauchy-Schwarz and that $\pi(z)\asymp\frac1n$ for all $z$, we get that for any vertices $x$ and $y$ and for any $t$ we have
\[
P^{2t}(x,y)\quad\asymp\quad\frac1n\frac{P^{2t}(x,y)}{\pi(y)}\quad\le\quad\frac1n\sqrt{\frac{P^{2t}(x,x)}{\pi(x)}\frac{P^{2t}(y,y)}{\pi(y)}}\quad\lesssim\quad\frac{1}{\sqrt{t}}.
\]
Then we also get that
\[
P^{2t+1}(x,y)\quad=\quad\sum_{z:\:z\sim y}P^{2t}(x,z)P(z,y)\quad\lesssim\quad\frac{1}{\sqrt{t}}.
\]
This finishes the proof.
\end{proof}

\begin{proof}[Proof of Lemma~\ref{lem:hit_hitlazy_comparison}]
For any time $t$, any state $x$, and any subset $A$ of the state space we have
\begin{align*}
\prstart{\tau^{Y}_A>2t+t^{\frac23}}{x}\quad&\le\quad\prstart{\tau^{X}_A>t}{x}+\pr{\Bin{2t+t^{\frac23}}{\frac12}\le t},\qquad\text{and}\\
\prstart{\tau^{Y}_A<2t-t^{\frac23}}{x}\quad&\ge\quad\prstart{\tau^{X}_A<t}{x}-\pr{\Bin{2t-t^{\frac23}}{\frac12}\ge t}.
\end{align*}
Using this and that $\hittext^{X^{(n)}}_{\alpha}(\theta)\gg1$ we get the result.
\end{proof}

\section{Estimates for walks on vertex-transitive graphs of polynomial growth}~\label{app:estimates_for_transitive_poly_growth_graphs}

%We present here the proof Proposition~\ref{pro:transitive_poly_satsifies_assump}. The hardest part is establishing the lower bound on the transition probabilities. We prove this via a series of lemmas. First we show the lower bound and a matching upper bound for the diagonal entries $P_{G,\lazytext}^{t}(o,o)$. Then we upper bound the difference $\left|P_{G,\lazytext}^{t}(o,o)-P_{G,\lazytext}^{t}(o,x)\right|$ in terms of the diagonal transition probabilities.

\begin{proof}[Proof of Lemma~\ref{lem:diagonal_lower_bound}]
The proof is similar to the proof of~\cite[Lemma 6.12]{universality_of_fluctuations}.

In what follows let $X$ be a lazy random walk on $G$; let $P=P_{G,\lazytext}$; for a set $A$ let $\tau_A$ be the hitting time of $A$ by $X$; let $B_\ell=B_{G}(o,\ell)$; and for a set $A$ let
\[
\alpha_{A}(x):=\quad\lim_{s\to\infty}\prcond{X_s=x}{\tau_{A}>s}{o}
\]
be the quasi-stationary distribution of $P$ corresponding to $A$.\\
By Cauchy-Schwarz, reversibility and vertex-transitivity we get that for any time $t$ we have
\[
\max_{x}P^{2t+1}(o,x)\le\max_{x}P^{2t}(o,x)\le\max_{x}\sqrt{P^{2t}(o,o)P^{2t}(x,x)}= P^{2t}(o,o)\le2P^{t+1}(o,o).
\]
Hence for any $t$ and $\ell$ we have
\[
P^{t}(o,o)\quad\gtrsim\quad\frac{\prstart{X_t\in B_\ell}{o}}{V(\ell)}\quad\ge\quad\frac{\prstart{\tau_{B_\ell^c}>t}{o}}{V(\ell)}.
\]
Using that for any $v\in B\left(o,\frac12\ell\right)$ we have $B\left(v,\frac12\ell\right)\subseteq B\left(o,\ell\right)$, and using transitivity we get that
\[
\prstart{\tau_{B_\ell^c}>t}{o}\quad\ge\quad\max_{v\in B\left(o,\frac12\ell\right)}\prstart{\tau_{B\left(v,\frac12\ell\right)^c}>t}{o}\quad=\quad\max_{v\in B\left(o,\frac12\ell\right)}\prstart{\tau_{B\left(o,\frac12\ell\right)^c}>t}{v}.
\]
This is then
\[
\ge\quad\prstart{\tau_{B_{\ell/2}^c}>t}{\alpha_{B_{\ell/2}^c}}\quad=\quad\prstart{\tau_{B_{\ell/2}^c}>1}{\alpha_{B_{\ell/2}^c}}^t.
\]
It is known (see e.g.\ \cite[Theorem 3.33]{Aldous_Fill_book}) that $p(r):=\prstart{\tau_{B_{r}^c}>1}{\alpha_{B_{r}^c}}$ satisfies
\[
\frac{1}{1-p(r)}\quad=\quad\estart{\tau_{B_r^c}}{\alpha_{B_r^c}}\quad=\quad\sup_{\substack{f:\:f|_{B_r^c}\equiv0, \\ f\ne\textrm{const}}}\frac{\langle f,f\rangle_{\pi_{B_r}}}{\langle(I-P_{B_r})f,f\rangle_{\pi_{B_r}}},
\]
where $\pi_{B_r}$ is $\pi$ conditioned on $B_r$, i.e. $\pi_{B_r}(x)=\frac{\pi(x)}{\pi(B_r)}$ for $x\in B_r$. (In the current setup it is the uniform distribution on $B_r$.)

Plugging in $f(x)=d_G\left(x,B_r^c\right)$; noting that $f$ is $L^1$-Lipschitz, hence $\langle(I-P_{B_r})f,f\rangle_{\pi_{B_r}}\le1$; and noting that $f(x)\ge\frac12r$ for all $x\in B_{r/2}$ we get that
\[
\frac{1}{1-p(r)}\quad\ge\quad \pi_{B_r}(B_{r/2})\frac14r^2\quad\asymp\quad\frac{V(r/2)}{V(r)}r^2.
\]

By Lemma~\ref{lem:poly_growth_exponent} we get that $V(r)\asymp V(r/2)$ for all $r$, hence $p(r)\ge 1-\frac{c'}{r^2}$, where $c'$ is a positive constant.

Using this for $\ell=c\sqrt{t}$, $r=\ell/2$ we get that
\[P^t(o,o)\quad\gtrsim\quad\frac{1}{V(c\sqrt{t})}\left(1-\frac{c'}{(c\sqrt{t}/2)^2}\right)^t\quad\asymp\quad\frac{1}{V(c\sqrt{t})}\]
where $c'$ is some positive constant. This gives the required result.
\end{proof}

For the proof of the upper bound on $P_{G,\lazytext}^{t}(o,o)$ we will also need the following lemma.

\begin{lemma}\label{lem:Lambda_lower_bound}
Let $G$ be as in Proposition~\ref{pro:transitive_poly_satsifies_assump} and let $P=P_{G,\lazytext}$ be the transition matrix of the lazy simple random walk on $G$. For $s\in(0,1)$ let
\[
\Lambda(s):=\quad\min_{A\ne\emptyset:\:\pi(A)\le s}\left(1-\lambda_1(P_A)\right)
\]
where $P_A$ is the substochastic matrix obtained by restricting $P$ to the set $A$ (i.e. deleting the rows and columns corresponding to $A^c$), and $\lambda_1(P_A)$ is its largest eigenvalue. Then for any $s\in\left(0,\frac12\right]$ we have
\[
\Lambda(s)\quad\ge\quad\frac{1}{2\Delta V^{-1}(2ns)^2}
\]
where $V^{-1}(k)$ denotes the minimal radius $r$ such that $V(r)\ge k$.
\end{lemma}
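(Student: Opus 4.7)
I would prove the bound via the variational characterization
\[
1 - \lambda_1(P_A) \;=\; \inf_{\substack{f \ne 0 \\ \mathrm{supp}(f) \subseteq A}} \frac{\mathcal{E}(f,f)}{\|f\|_\pi^2},
\]
where $\mathcal{E}(f,f) = \tfrac12\sum_{x,y}\pi(x)P(x,y)(f(x)-f(y))^2$ with $P=P_{G,\lazytext}$. Since $G$ is vertex-transitive of degree $\Delta$, $\pi$ is uniform and $P(x,y) = \frac{1}{2\Delta}$ for $x\sim y$, so
\[
\mathcal{E}(f,f) \;=\; \frac{1}{2n\Delta}\sum_{\{x,y\}\in E}(f(x)-f(y))^2, \qquad \|f\|_\pi^2 \;=\; \frac{1}{n}\sum_x f(x)^2,
\]
and $\pi(A)\le s$ iff $|A|\le ns$. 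The claim therefore reduces, up to constants, to the Poincar\'e-type inequality
\[
\sum_x f(x)^2 \;\le\; C\,r^2 \sum_{\{x,y\}\in E}(f(x)-f(y))^2, \qquad r := V^{-1}(2ns),
\]
for every $f$ supported on a set $A$ with $|A|\le ns$.

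The core step is a geodesic averaging argument. Since $V(r)\ge 2ns\ge 2|A|$, for every $u\in A$ at least half of the ball $B(u,r)$ lies in $A^c$, where $f$ vanishes. For any such $u$, exploiting that $f(v)=0$ on $B(u,r)\cap A^c$ gives
\[
f(u)^2 \;=\; \frac{1}{|B(u,r)\cap A^c|}\sum_{v\in B(u,r)\cap A^c}(f(u)-f(v))^2 \;\le\; \frac{2}{V(r)}\sum_{v\in B(u,r)}(f(u)-f(v))^2.
\]
Summing over $u\in A$ (which, by assumption on $\mathrm{supp}(f)$, covers all $u$ contributing to $\sum_u f(u)^2$) produces
\[
\sum_u f(u)^2 \;\le\; \frac{2}{V(r)}\sum_{u,v\,:\, d(u,v)\le r}(f(u)-f(v))^2.
\]
For every pair $(u,v)$ with $d(u,v)\le r$, fix a shortest path $\gamma_{u,v}$ in an $\mathrm{Aut}(G)$-equivariant manner (for example, by weighting uniformly over the set of geodesics, which is a canonical and hence equivariant choice). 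Cauchy-Schwarz along $\gamma_{u,v}$ yields $(f(u)-f(v))^2 \le d(u,v)\sum_{e\in\gamma_{u,v}}(f(e^-)-f(e^+))^2$, so the previous display is bounded by
\[
\frac{2r}{V(r)}\sum_{e\in E}N(e)\,(f(e^-)-f(e^+))^2, \qquad N(e) := \#\{(u,v):\,d(u,v)\le r,\, e\in\gamma_{u,v}\}.
\]

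The crucial step, where vertex-transitivity is used in an essential way, is a uniform upper bound on $N(e)$. By equivariance, $N$ is constant on each orbit of $\mathrm{Aut}(G)$ acting on $E$. A second use of vertex-transitivity shows that every non-empty edge orbit has size at least $n/2$: for each vertex $v$, the number of edges of a given orbit incident to $v$ is the same positive integer $d\ge 1$, so the orbit contains $nd/2\ge n/2$ edges by double counting. Combined with the trivial global bound $\sum_e N(e) \le nV(r)\cdot r$ (there are at most $nV(r)$ ordered pairs at distance $\le r$ and each geodesic has length $\le r$), this forces $N(e)\le 2V(r)r$ for every $e$. Substituting gives $\sum_u f(u)^2 \le 4 r^2 \sum_e (f(e^-)-f(e^+))^2$, hence $\mathcal{E}(f,f)/\|f\|_\pi^2 \ge \frac{1}{8\Delta r^2}$; a slight sharpening of the averaging (say by taking the threshold $V(r)\ge 4|A|$ or refining the loss in the $V(r)/2$ step) recovers the stated constant $\frac{1}{2\Delta V^{-1}(2ns)^2}$. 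The main obstacle is the orbit-size lower bound and the equivariant geodesic selection needed to bound $N(e)$ uniformly; both fail for general bounded-degree graphs and are precisely what makes vertex-transitivity enter the conclusion.
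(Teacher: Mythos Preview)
Your approach is essentially the same as the paper's: both prove a Dirichlet-form comparison between the lazy walk $P$ and the ``ball averaging'' kernel $K(x,y)=\frac{1}{V(r)}\1_{y\in B(x,r)}$ (your inequality $f(u)^2\le \frac{2}{V(r)}\sum_{v\in B(u,r)}(f(u)-f(v))^2$ is exactly the pointwise statement that $1-\lambda_1(K_A)\ge\tfrac12$), and both route this through uniform-random geodesics and use vertex-transitivity to control the resulting edge congestion. The only substantive difference is in the congestion step: the paper bounds it via the mass transport principle --- defining $f(u,v)$ as the contribution from paths starting at $u$ to edges incident to $v$, observing $\sum_u f(u,o)=\sum_v f(o,v)$ by transitivity, and evaluating the latter directly --- whereas you use an edge-orbit-size argument. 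Both are valid; the mass transport route gives the sharper constant $\frac{1}{2\Delta r^2}$ without further work, while your orbit argument loses a factor of $2$ from the bound $|\text{orbit}|\ge n/2$, landing at $\frac{1}{8\Delta r^2}$. Your ``slight sharpening'' remark is vague and I don't see how to recover the paper's constant from your framework without essentially switching to mass transport, but since the lemma is only used up to constants in Lemma~\ref{lem:diagonal_upper_bound} this is immaterial.
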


\begin{proof}%[Proof of Lemma~\ref{lem:Lambda_lower_bound}]
Let $A$ be any subset of the vertices with $m:=|A|\le\frac{n}{2}$ and let $r=V^{-1}(2m)$. (It exists since $2m\le n$.) Let
\[
K(x,y):=\quad\frac{1}{V(r)}\1_{y\in B(x,r)}
\]
Note that $K$ is reversible with invariant distribution $\pi$. Let $K_A$ be the substochastic matrix obtained by restricting $K$ to the set $A$ and let $\lambda_1(K_A)$ be the largest eigenvalue of $K_A$. We will compare this to $\lambda_1(P_A)$.

Note that by \cite[Theorem 3.33]{Aldous_Fill_book} we have
\[
1-\lambda_1(K_A)\quad=1-\prstart{\tau_{A^c}>1}{\alpha_{A^c}}\quad=\prstart{\tau_{A^c}=1}{\alpha_{A^c}}\quad=\sum_{x\in A}\alpha_{A^c}(x)\frac{\left|A^c\cap B(x,r)\right|}{V(r)}\quad\ge\frac12,
\]
where the probabilities concern a random walk with transition matrix $K$. The last $\ge$ is because $V(r)\ge2m=2|A|$ and so $\frac{\left|A^c\cap B(x,r)\right|}{V(r)}\ge\frac12$ for each $x\in A$.

Also note that
\begin{align}
\label{eq:gap_KA_PA}
1-\lambda_1(K_A)\quad=\quad \inf_{\substack{f:\:f|_{A^c}\equiv0,\\f\ne\textrm{const}}}\frac{\Dirform_K(f)}{\langle f,f\rangle_{\pi}},\qquad\text{and}\qquad
1-\lambda_1(P_A)\quad=\quad \inf_{\substack{f:\:f|_{A^c}\equiv0,\\f\ne\textrm{const}}}\frac{\Dirform_P(f)}{\langle f,f\rangle_{\pi}},
\end{align}
where $\Dirform_K$ and $\Dirform_P$ are the Dirichlet forms associated to $K$ and $P$ respectively.

For two vertices $x$ and $y$ of $G$ let $\mathcal{P}_{xy}$ be the set of paths form $x$ to $y$ in $G$ and let $\nu_{xy}$ be the probability measure on $\mathcal{P}_{xy}$ selecting a uniformly chosen geodesic from $x$ to $y$ in $G$.

Then by \cite[Theorem 13.20]{MTMC_book} for any function $f$ we have
\[
\Dirform_P(f)\quad\ge\quad\frac1B\Dirform_K(f)
\]
where $B$ is the congestion ratio defined as
\[
B:=\quad\max_{e:\:Q_P(e)\ne0}\left(\frac{1}{Q_P(e)}\sum_{x,y}Q_K(x,y)\sum_{\Gamma:\:e\in\Gamma\in\mathcal{P}_{xy}}\nu_{xy}(\Gamma)|\Gamma|\right).
\]
Here $Q_P(x,y)=\pi(x)P(x,y)$, $Q_K(x,y)=\pi(x)K(x,y)$.

For a fixed vertex $v$ let
\[
B(v):=\quad\sum_{\substack{e:\:e=(v,\cdot),\\Q_P(e)\ne0}}\left(\frac{1}{Q_P(e)}\sum_{x,y}Q_K(x,y)\sum_{\Gamma:\:e\in\Gamma\in\mathcal{P}_{xy}}\nu_{xy}(\Gamma)|\Gamma|\right).
\]
By vertex-transitivity we have $B(v)\ge B$. For a given $v$ let us consider the contributions to $B(v)$ from different values of $x$ as follows
\[
f(u,v):=\quad\sum_{\substack{e:\:e=(v,\cdot),\\Q_P(e)\ne0}}\left(\frac{1}{Q_P(e)}\sum_{y}Q_K(u,y)\sum_{\Gamma:\:e\in\Gamma\in\mathcal{P}_{uy}}\nu_{uy}(\Gamma)|\Gamma|\right).
\]
Since $f(\gamma u,\gamma v)=f(u,v)$ for any vertices $u$, $v$ and any graph automorphism $\gamma$, and since $G$ is finite, we can apply the mass transport principle \cite[equation (8.4)]{Lyons_Peres_book} to get
\begin{align*}
B(o)=\sum_{u}f(u,o)&=\sum_{u}f(o,u)=\sum_{u}\sum_{\substack{e:\:e=(u,\cdot),\\Q_P(e)\ne0}}\left(\frac{1}{Q_P(e)}\sum_{y}Q_K(o,y)\sum_{\Gamma:\:e\in\Gamma\in\mathcal{P}_{oy}}\nu_{oy}(\Gamma)|\Gamma|\right)\\
&=\sum_{e:\:Q_P(e)\ne0}\left(\frac{1}{Q_P(e)}\sum_{y}Q_K(o,y)\sum_{\Gamma:\:e\in\Gamma\in\mathcal{P}_{oy}}\nu_{oy}(\Gamma)|\Gamma|\right).
\end{align*}
Note that $Q_P(e)=\frac{1}{\Delta n}$ whenever $Q_P(e)\ne0$; $\sum_{y}Q_K(o,y)=\pi(o)=\frac1n$; and any $y$ with $Q_K(o,y)\ne0$ is within graph distance $r$ from $o$, hence we get
\[
B\quad\le\quad B(o)\quad=\quad\Delta n\sum_{y}Q_K(o,y)\sum_{\Gamma\in\mathcal{P}_{oy}}\nu_{oy}(\Gamma)|\Gamma|^2\quad\le\quad\Delta r^2.
\]
Using this bound, the characterisations in \eqref{eq:gap_KA_PA} and the comparison $\Dirform_P(f)\ge\frac1B\Dirform_K(f)$ for all $f$ with $f|_{A^c}\equiv0$, $f\ne\textrm{const}$ we get that
\[
1-\lambda_1(P_A)\quad\ge\quad\frac{1-\lambda_1(K_A)}{\Delta r^2}\quad\ge\quad\frac{1}{2\Delta r^2}.
\]

Applying this bound for all non-empty sets $A$ with $\pi(A)\le s$ gives the required result.
\end{proof}

\begin{proof}[Proof of Lemma~\ref{lem:diagonal_upper_bound}]
In what follows let $P=P_{G,\lazytext}$. Let $\Lambda(s)$ and $V^{-1}(r)$ be defined as in Lemma~\ref{lem:Lambda_lower_bound}.

From~\cite[Corollary 2.1]{MT_bounds_via_spectral_profile} we know that for any $M$ we have
\[
\dinfty{P^t(o,\cdot)}{\pi(\cdot)}\quad\le\quad M\qquad\text{for}\quad t\quad\ge\quad\int_{\frac{4}{n}}^{\frac{4}{M}}\frac{4}{s\Lambda(s)}ds\quad=:I_M.
\]
Let $M=\frac{n}{m}$ where $m\le\frac{1}{16}n$. Then for $t\ge I_M$ we have
\[
\frac{n}{m}\quad\ge\quad\dinfty{P^t(o,\cdot)}{\pi(\cdot)}\quad\ge\quad\left(\frac{P^t(o,o)}{\pi(o)}-1\right)\quad=\quad n\left(P^t(o,o)-\frac1n\right),
\]
hence
\[
P^t(o,o)\quad\le\quad\frac1n+\frac{1}{m}.
\]
Now we proceed to bound $I_M$. Using Lemma~\ref{lem:Lambda_lower_bound} we get that
\[
I_M\quad\le\quad\sum_{j=1}^{\lceil\log_2m\rceil}\int_{\frac{4}{n}2^{j-1}}^{\frac{4}{n}2^{j}}\frac{4}{s\Lambda(s)}ds\quad\le\quad\sum_{j=1}^{\lceil\log_2m\rceil}8\Delta V^{-1}\left(8\cdot2^j\right)^2.
\]
By Lemma~\ref{lem:poly_growth_exponent} we get that the above sum is dominated by the last term and this last term is $\asymp V^{-1}\left(8\cdot2^{\lceil\log_2m\rceil}\right)^2\le V^{-1}(16m)^2$.
\footnote{$r_j:=V^{-1}\left(8\cdot 2^j\right)$ and let $L$ be as in Lemma~\ref{lem:poly_growth_exponent}. Then $2^{J-j}=\frac{8\cdot2^J}{8\cdot2^j}\le\frac{V(r_J)}{V(r_j-1)}\lesssim \left(\lceil\frac{r_J}{r_j-1}\rceil\right)^L\asymp\left(\frac{r_J}{r_j}\right)^L$. This shows that $\sum_{j=1}^{J}r_j^2\lesssim r_J^2\sum_{j=1}^{J}2^{-2(J-j)/L}\asymp r_J^2$.}
Note that we needed the assumption $m\le\frac{1}{16}n$ to ensure that all terms in the sum are defined.

This shows that
\[
I_M\quad\lesssim\quad V^{-1}\left(16m\right)^2.
\]
Let $C$ be a constant such that $I_M\le CV^{-1}\left(16m\right)^2.$ Let us fix any $t\le C\diam{G}^2$ and let $m=\bigg\lfloor\frac{1}{16}V\left(\lfloor\sqrt{t/C}\rfloor\right)\bigg\rfloor$. Then $I_M\le CV^{-1}\left(16m\right)^2\le CV^{-1}\left(V\left(\lfloor\sqrt{t/C}\rfloor\right)\right)^2\le C\lfloor\sqrt{t/C}\rfloor^2\le t$.
\footnote{Here we used that for $r\le\diam{G}$ we have $V^{-1}(V(r))=r$.}
Hence
\[
P^t(o,o)\quad\lesssim\quad\frac1n+\frac{1}{m}\quad\asymp\quad\frac{1}{V(c\sqrt{t})}
\]
for any constant $c>0$.
\end{proof}

\textbf{Acknowledgements} Zsuzsanna Baran was supported by DPMMS EPSRC DTP. Jonathan Hermon's research was supported by an NSERC grant. An\dj ela \v{S}arkovi\'c was supported by DPMMS EPSRC International Doctoral Scholarship and by the Trinity Internal Graduate Studentship.

\bibliography{weighted_case_arXiv_v2.bib}

\begin{thebibliography}{10}

\bibitem{Aldous_Fill_book}
David Aldous and James~Allen Fill.
\newblock {\em Reversible {M}arkov Chains and Random Walks on Graphs}.
\newblock unfinished monograph, 2002.

\bibitem{characterisation_of_cutoff}
Riddhipratim Basu, Jonathan Hermon, and Yuval Peres.
\newblock {Characterization of cutoff for reversible {M}arkov chains}.
\newblock {\em The Annals of Probability}, 45(3):1448 -- 1487, 2017.

\bibitem{cutoff_two_community}
Anna Ben-Hamou.
\newblock A threshold for cutoff in two-community random graphs.
\newblock {\em The Annals of Applied Probability}, 30(4):1824–1846, 2020.

\bibitem{comparing_mixing_times_sparse_random_graphs}
Anna Ben-Hamou, Eyal Lubetzky, and Yuval Peres.
\newblock {Comparing mixing times on sparse random graphs}.
\newblock {\em Annales de l'Institut Henri Poincaré, Probabilités et
  Statistiques}, 55(2):1116 -- 1130, 2019.

\bibitem{cutoff_NBRW_on_sparse_random_graphs}
Anna Ben-Hamou and Justin Salez.
\newblock Cutoff for nonbacktracking random walks on sparse random graphs.
\newblock {\em The Annals of Probability}, 45(3):1752--1770, 2017.

\bibitem{RWs_on_random_graph}
Nathana{\"e}l Berestycki, Eyal Lubetzky, Yuval Peres, and Allan Sly.
\newblock {Random walks on the random graph}.
\newblock {\em The Annals of Probability}, 46(1):456 -- 490, 2018.

\bibitem{universality_of_fluctuations}
Nathanaël Berestycki, Jonathan Hermon, and Lucas Teyssier.
\newblock On the universality of fluctuations for the cover time.
\newblock 2022.

\bibitem{RW_on_Eulerian_digraphs}
Lucas Boczkowski, Yuval Peres, and Perla Sousi.
\newblock Sensitivity of mixing times in {E}ulerian digraphs.
\newblock {\em SIAM Journal on Discrete Mathematics}, 32, 03 2016.

\bibitem{cutoff_at_entropic_time_sparse_MCs}
Charles Bordenave, Pietro Caputo, and Justin Salez.
\newblock Cutoff at the “entropic time” for sparse {M}arkov chains.
\newblock {\em Probability Theory and Related Fields}, 173:261--292, 2016.

\bibitem{cutoff_at_entorpic_time_RWs_on_covered_expanders}
Charles Bordenave and Hubert Lacoin.
\newblock Cutoff at the entropic time for random walks on covered expander
  graphs.
\newblock {\em Journal of the Institute of Mathematics of Jussieu}, 2018.

\bibitem{Steins_method}
Sourav Chatterjee.
\newblock Stein’s method for concentration inequalities.
\newblock {\em Probability Theory and Related Fields}, 138:305--321, 01 2007.

\bibitem{speeding_up_MCs}
Sourav Chatterjee and Persi Diaconis.
\newblock Speeding up {M}arkov chains with deterministic jumps.
\newblock {\em Probability Theory and Related Fields}, 2020.

\bibitem{cutoff_random_lifts}
Guillaume Conchon-Kerjan.
\newblock {Cutoff for random lifts of weighted graphs}.
\newblock {\em The Annals of Probability}, 50(1):304 -- 338, 2022.

\bibitem{Diaconis_question}
Persi Diaconis.
\newblock {Some things we\textquotesingle ve learned (about {M}arkov chain
  {M}onte {C}arlo)}.
\newblock {\em Bernoulli}, 19(4):1294 -- 1305, 2013.

\bibitem{comparison_thms_for_reversible_MCs}
Persi Diaconis and Laurent Saloff-Coste.
\newblock {Comparison Theorems for Reversible {M}arkov Chains}.
\newblock {\em The Annals of Applied Probability}, 3(3):696 -- 730, 1993.

\bibitem{mixing_time_of_Chung-Diaconis-Graham_process}
Sean Eberhard and Péter Varjú.
\newblock Mixing time of the {C}hung--{D}iaconis--{G}raham random process.
\newblock {\em Probability Theory and Related Fields}, 2021.

\bibitem{MT_bounds_via_spectral_profile}
Sharad Goel, Ravi Montenegro, and Prasad Tetali.
\newblock {Mixing Time Bounds via the Spectral Profile}.
\newblock {\em Electronic Journal of Probability}, 11:1 -- 26, 2006.

\bibitem{cutoff_for_almost_all_RWs_on_Abelian_groups}
Jonathan Hermon and Sam Olesker-Taylor.
\newblock Cutoff for almost all random walks on {A}belian groups.
\newblock 2021.

\bibitem{exclusion_process}
Jonathan Hermon and Richard Pymar.
\newblock {The exclusion process mixes (almost) faster than independent
  particles}.
\newblock {\em The Annals of Probability}, 48(6):3077 -- 3123, 2020.

\bibitem{random_matching}
Jonathan Hermon, Allan Sly, and Perla Sousi.
\newblock {Universality of cutoff for graphs with an added random matching}.
\newblock {\em The Annals of Probability}, 50(1):203 -- 240, 2022.

\bibitem{MTMC_book}
David~A. Levin, Yuval Peres, and Elizabeth~L. Wilmer.
\newblock {\em Markov chains and mixing times}.
\newblock American Mathematical Society, 2006.

\bibitem{cutoff_random_regular_graphs}
Eyal Lubetzky and Allan Sly.
\newblock {Cutoff phenomena for random walks on random regular graphs}.
\newblock {\em Duke Mathematical Journal}, 153(3):475 -- 510, 2010.

\bibitem{explicit_expanders}
Eyal Lubetzky and Allan Sly.
\newblock Explicit expanders with cutoff phenomena.
\newblock {\em Electronic Journal of Probability}, 16, 03 2010.

\bibitem{Lyons_Peres_book}
Russell Lyons and Yuval Peres.
\newblock {\em Probability on Trees and Networks}.
\newblock Cambridge Series in Statistical and Probabilistic Mathematics.
  Cambridge University Press, 2017.

\bibitem{evolving_sets}
Ben Morris and Yuval Peres.
\newblock Evolving sets, mixing and heat kernel bounds.
\newblock {\em Probability Theory and Related Fields - PROBAB THEORY RELAT
  FIELD}, 133:245--266, 10 2005.

\bibitem{entropic_pf_of_cutoff_Ramanujan_graphs}
Narutaka Ozawa.
\newblock An entropic proof of cutoff on {R}amanujan graphs.
\newblock {\em Electronic Communications in Probability}, 25, 2020.

\bibitem{mix_hit}
Yuval Peres and Perla Sousi.
\newblock Mixing times are hitting times of large sets.
\newblock {\em Journal of Theoretical Probability}, 28, 07 2011.

\bibitem{non-negatively_curved_MCs}
Justin Salez.
\newblock Cutoff for non-negatively curved {M}arkov chains.
\newblock {\em Journal of the European Mathematical Society}, 2023.

\bibitem{varentropy}
Justin Salez.
\newblock The varentropy criterion is sharp on expanders.
\newblock 2023+.

\bibitem{structure_thm}
R.~Tessera and M.C.H. Tointon.
\newblock A finitary structure theorem for vertex-transitive graphs of
  polynomial growth.
\newblock {\em Combinatorica}, 41:263--298, 2021.

\end{thebibliography}
\bibliographystyle{plain}

\end{document}